\newcommand\myurl[1]{\url{#1}}
\def\ie{{\it i.e.\ }}
\def\eg{{\it e.g.\ }}
\def\cf{{\it cf.\ }}
\def\rhs{{\it r.h.s.\ }}
\def\lhs{{\it l.h.s.\ }}
\def\End{\mathop{{\rm End}}\nolimits}
\def\Der{\mathop{{\rm Der}}\nolimits}
\def\Hom{\mathop{{\rm Hom}}\nolimits}
\def\im{\mathop{{\rm im}}\nolimits}
\def\deg{ \mathop{{\rm deg}}\nolimits }
\def\p{^{\prime}}
\def\rank{\mathop{{\rm rank}}\nolimits}
\def\mod#1{\;(\promod #1)}
\def\End{\mathop{{\rm End}}\nolimits}
\def\pr#1#2{ \noindent{\em Proof of #1~\ref{#2}.} }
\def\qed{ \hfill $\Box$ }
\def\lrbc#1{ \left( #1 \right) }
\def\inbar{\vrule height1.5ex width.4pt depth0pt}
\def\IC{\relax\,\hbox{$\inbar\kern-.3em{\rm C}$}}
\def\IN{\relax{\rm I\kern-.18em N}}
\def\IQ{\relax\,\hbox{$\inbar\kern-.3em{\rm Q}$}}
\def\IR{\relax{\rm I\kern-.18em R}}
\def\ZZ{\relax{\sf Z\kern-.4em Z}}
\newtheorem{theorem}{Theorem}[section]
\newtheorem{proposition}[theorem]{Proposition}
\newtheorem{corollary}[theorem]{Corollary}
\newtheorem{conjecture}[theorem]{Conjecture}
\newtheorem{lemma}[theorem]{Lemma}
\newtheorem{definition}[theorem]{Definition}
\newtheorem{remark}[theorem]{Remark}
\newif\if@fewtab\@fewtabtrue
\newif\if@fewtab\@fewtabtrue
\xdef\hourmin{\number\count255} \multiply\count255
\xdef\hourmin{\hourmin:\ifnum\count255<10 0\fi\the\count255}}
\def\ps@draft{\let\@mkboth\@gobbletwo
    \def\@oddhead{}
    \def\@oddfoot
      {\hbox to 7 cm{\footnotesize {\em Draft of \jobname:} \draftdate
       \hfil}\hskip -7cm\hfil\rm\thepage \hfil}
    \def\@evenhead{}\let\@evenfoot\@oddfoot}
\def\ceqno{\global\@fewtabfalse
    \ifcase\@eqcnt \def\@tempa{& & &}\or \def\@tempa{& &}
      \or \def\@tempa{&}
      \or\def\@tempa{}\fi\@tempa
{\rm(\theequation)}}
\def\aeqno#1{\global\@fewtabfalse
    \ifcase\@eqcnt \def\@tempa{& & &}\or \def\@tempa{& &}
      \or \def\@tempa{&}
      \or\def\@tempa{}\fi\@tempa
{\rm(\theequation,#1)}}
\def\label#1{\ifnum\draftcontrol=1
 \global\def\draftnote{$\scriptstyle #1$}\fi
 \@bsphack\if@filesw {\let\thepage\relax
   \def\protect{\noexpand\noexpand\noexpand}%
\xdef\@gtempa{\write\@auxout{\string
      \newlabel{#1}{{\@currentlabel}{\thepage}}}}}\@gtempa
   \if@nobreak \ifvmode\nobreak\fi\fi\fi
  \@esphack}
\def\alabel#1#2{\label{#1}\global\@fewtabfalse
    \ifcase\@eqcnt \def\@tempa{& & &}\or \def\@tempa{& &}
      \or \def\@tempa{&}
      \or\def\@tempa{}\fi\@tempa
{\hbox to 3cm{\phantom{\rm(\theequation,#2)} \draftnote
\hfil}\hskip -3cm {\rm(\theequation,#2)}}}
\def\clabel#1{\label{#1}\global\@fewtabfalse
    \ifcase\@eqcnt \def\@tempa{& & &}\or \def\@tempa{& &}
      \or \def\@tempa{&}
      \or\def\@tempa{}\fi\@tempa
{\hbox to 3cm{\phantom{\rm(\theequation)} \draftnote \hfil}\hskip
-3cm{\rm(\theequation)}}}
\def\eqnarray{\def\draftnote{{}}\global\@fewtabtrue
\stepcounter{equation}\let\@currentlabel=\theequation
\global\@eqnswtrue
\global\@eqcnt\z@\tabskip\@centering\let\\=\@eqncr
$$\halign to \displaywidth\bgroup\@eqnsel\hskip\@centering\@eqcnt\z@
  $\displaystyle\tabskip\z@{##}$&\global\@eqcnt\@ne
  \hskip 1\arraycolsep \hfil$\displaystyle{##}$\hfil
  &\global\@eqcnt\tw@ \hskip 1\arraycolsep
$\displaystyle\tabskip\z@{##}$ \hfil
\tabskip\@centering&\global\@eqcnt\thr@@\llap{##}\tabskip\z@ \cr}
\def\endeqnarray{\@@eqncr\egroup
      \global\advance\c@equation\m@ne$$\global\@ignoretrue}
\def\@eqnnum{\hbox to 3cm{\phantom{\rm(\theequation)} \draftnote
                         \hfil}\hskip -3cm {\rm(\theequation)}}
\def\@@eqncr{\let\@tempa\relax
    \ifcase\@eqcnt \def\@tempa{& & &}\or \def\@tempa{& &}
      \or \def\@tempa{&}
      \or\def\@tempa{}
\fi\@tempa \if@eqnsw \if@fewtab\@eqnnum\fi
\stepcounter{equation}\fi\global
\@eqnswtrue\global\@eqcnt\z@\global\@fewtabtrue\cr}
\def\draftcite#1{\ifnum\draftcontrol=1#1\else{}\fi}
\def\@lbibitem[#1]#2{\item{}\hskip -3cm \hbox to 2cm
{\hfil$\scriptstyle\draftcite{#2}$}\hskip
1cm[\@biblabel{#1}]\if@filesw
     {\def\protect##1{\string ##1\space}\immediate
      \write\@auxout{\string\bibcite{#2}{#1}}}\fi\ignorespaces}
\def\@bibitem#1{\item\hskip -3cm \hbox to 2cm
{\hfil $\scriptstyle\draftcite{#1}$}\hskip 1cm \if@filesw
\immediate\write\@auxout
       {\string\bibcite{#1}{\the\value{\@listctr}}}\fi\ignorespaces}
\def\draftdate{\number\month/\number\day/\number\year\ \ \ \hourmin }
 \global\def\draftcontrol{0}
\def\theequation{{\thesection.\arabic{equation}}}
\def\qq{\begin{eqnarray}}
\def\qqq{\end{eqnarray}}
\def\ee{\begin{eqnarray}}
\def\eee{\end{eqnarray}}
\def\rx#1{~(\ref{#1})}
\def\rxw#1{(\ref{#1})}
\def\ex#1{eq.\hspace*{-3pt}\rx{#1}}
\def\eex#1{eqs.\hspace*{-3pt}\rx{#1}}
\def\cx#1{~\cite{#1}}
\def\rw#1{~\ref{#1}}
\def\xlee#1{ \begin{eqnarray} \label{#1} }
\def\xeee{ \end{eqnarray} }
\def\ylee#1{ \begin{eqnarray}\nonumber }
\def\yeee{ \end{eqnarray} }
\def\zlee#1{ \begin{displaymath} }
\def\zleee{ \end{displaymath} }
\def\wlee#1{ $ }
\def\weee{ $ }
\def\fg#1{Fig.~\ref{#1}}
\newlength{\shiftwidth}
\def\shift#1{&&\hbox to \shiftwidth{\hfill $\displaystyle#1$}}
\newlength{\sshiftwidth}
\def\sshift#1{\lefteqn{\hbox to
\sshiftwidth{\hfill$\displaystyle#1$}}}
\def\qbezier{\bezier{120}}
\def\DottedCircle{
\bezier{4}(0.966,-0.259)(1.04,0)(0.966,0.259)
\bezier{4}(0.966,0.259)(0.897,0.518)(0.707,0.707)
\bezier{4}(0.707,0.707)(0.518,0.897)(0.259,0.966)
\bezier{4}(0.259,0.966)(0,1.04)(-0.259,0.966)
\bezier{4}(-0.259,0.966)(-0.518,0.897)(-0.707,0.707)
\bezier{4}(-0.707,0.707)(-0.897,0.518)(-0.966,0.259)
\bezier{4}(-0.966,0.259)(-1.04,0)(-0.966,-0.259)
\bezier{4}(-0.966,-0.259)(-0.897,-0.518)(-0.707,-0.707)
\bezier{4}(-0.707,-0.707)(-0.518,-0.897)(-0.259,-0.966)
\bezier{4}(-0.259,-0.966)(0,-1.04)(0.259,-0.966)
\bezier{4}(0.259,-0.966)(0.518,-0.897)(0.707,-0.707)
\bezier{4}(0.707,-0.707)(0.897,-0.518)(0.966,-0.259) }
\def\Endpoint[#1]{
\ifcase#1 \put(1,0){\circle*{0.15}}
\or\put(0.866,0.5){\circle*{0.15}}
\or\put(0.5,0.866){\circle*{0.15}} \or\put(0,1){\circle*{0.15}}
\or\put(-0.5,0.866){\circle*{0.15}}
\or\put(-0.866,0.5){\circle*{0.15}} \or\put(-1,0){\circle*{0.15}}
\or\put(-0.866,-0.5){\circle*{0.15}}
\or\put(-0.5,-0.866){\circle*{0.15}} \or\put(0,-1){\circle*{0.15}}
\or\put(0.5,-0.866){\circle*{0.15}}
\or\put(0.866,-0.5){\circle*{0.15}} \fi}
\def\Arc[#1]{
\thicklines         
\ifcase#1 \bezier{25}(0.966,-0.259)(1.04,0)(0.966,0.259) \or
\bezier{25}(0.966,0.259)(0.897,0.518)(0.707,0.707) \or
\bezier{25}(0.707,0.707)(0.518,0.897)(0.259,0.966) \or
\bezier{25}(0.259,0.966)(0,1.04)(-0.259,0.966) \or
\bezier{25}(-0.259,0.966)(-0.518,0.897)(-0.707,0.707) \or
\bezier{25}(-0.707,0.707)(-0.897,0.518)(-0.966,0.259) \or
\bezier{25}(-0.966,0.259)(-1.04,0)(-0.966,-0.259) \or
\bezier{25}(-0.966,-0.259)(-0.897,-0.518)(-0.707,-0.707) \or
\bezier{25}(-0.707,-0.707)(-0.518,-0.897)(-0.259,-0.966) \or
\bezier{25}(-0.259,-0.966)(0,-1.04)(0.259,-0.966) \or
\bezier{25}(0.259,-0.966)(0.518,-0.897)(0.707,-0.707) \or
\bezier{25}(0.707,-0.707)(0.897,-0.518)(0.966,-0.259) \fi}
\def\DottedArc[#1]{
\ifcase#1 \bezier{4}(0.966,-0.259)(1.04,0)(0.966,0.259) \or
\bezier{4}(0.966,0.259)(0.897,0.518)(0.707,0.707) \or
\bezier{4}(0.707,0.707)(0.518,0.897)(0.259,0.966) \or
\bezier{4}(0.259,0.966)(0,1.04)(-0.259,0.966) \or
\bezier{4}(-0.259,0.966)(-0.518,0.897)(-0.707,0.707) \or
\bezier{4}(-0.707,0.707)(-0.897,0.518)(-0.966,0.259) \or
\bezier{4}(-0.966,0.259)(-1.04,0)(-0.966,-0.259) \or
\bezier{4}(-0.966,-0.259)(-0.897,-0.518)(-0.707,-0.707) \or
\bezier{4}(-0.707,-0.707)(-0.518,-0.897)(-0.259,-0.966) \or
\bezier{4}(-0.259,-0.966)(0,-1.04)(0.259,-0.966) \or
\bezier{4}(0.259,-0.966)(0.518,-0.897)(0.707,-0.707) \or
\bezier{4}(0.707,-0.707)(0.897,-0.518)(0.966,-0.259) \fi}
\def\Chord[#1,#2]{
\thinlines \ifnum#1>#2\Chord[#2,#1] \else\ifnum#1<#2 \ifcase#1
\ifcase#2 \or\qbezier(1,0)(0.516,0.138)(0.866,0.5)
\or\qbezier(1,0)(0.45,0.26)(0.5,0.866)
\or\qbezier(1,0)(0.327,0.327)(0,1)
\or\qbezier(1,0)(0.179,0.311)(-0.5,0.866)
\or\qbezier(1,0)(0.0536,0.2)(-0.866,0.5) \or\put(1, 0){\line(-2,
0){2}} \or\qbezier(1,0)(0.0536,-0.2)(-0.866,-0.5)
\or\qbezier(1,0)(0.179,-0.311)(-0.5,-0.866)
\or\qbezier(1,0)(0.327,-0.327)(0,-1)
\or\qbezier(1,0)(0.45,-0.26)(0.5,-0.866)
\or\qbezier(1,0)(0.516,-0.138)(0.866,-0.5) \fi \or\ifcase#2\or
\or\qbezier(0.866,0.5)(0.378,0.378)(0.5,0.866)
\or\qbezier(0.866,0.5)(0.26,0.45)(0,1)
\or\qbezier(0.866,0.5)(0.12,0.446)(-0.5,0.866)
\or\qbezier(0.866,0.5)(0,0.359)(-0.866,0.5)
\or\qbezier(0.866,0.5)(-0.0536,0.2)(-1,0) \or\put(0.866,
0.5){\line(-5, -3){1.73}}
\or\qbezier(0.866,0.5)(0.146,-0.146)(-0.5,-0.866)
\or\qbezier(0.866,0.5)(0.311,-0.179)(0,-1)
\or\qbezier(0.866,0.5)(0.446,-0.12)(0.5,-0.866)
\or\qbezier(0.866,0.5)(0.52,0)(0.866,-0.5) \fi \or\ifcase#2\or\or
\or\qbezier(0.5,0.866)(0.138,0.516)(0,1)
\or\qbezier(0.5,0.866)(0,0.52)(-0.5,0.866)
\or\qbezier(0.5,0.866)(-0.12,0.446)(-0.866,0.5)
\or\qbezier(0.5,0.866)(-0.179,0.311)(-1,0)
\or\qbezier(0.5,0.866)(-0.146,0.146)(-0.866,-0.5) \or\put(0.5,
0.866){\line(-3, -5){1}} \or\qbezier(0.5,0.866)(0.2,-0.0536)(0,-1)
\or\qbezier(0.5,0.866)(0.359,0)(0.5,-0.866)
\or\qbezier(0.5,0.866)(0.446,0.12)(0.866,-0.5) \fi
\or\ifcase#2\or\or\or \or\qbezier(0,1.)(-0.138,0.516)(-0.5,0.866)
\or\qbezier(0,1.)(-0.26,0.45)(-0.866,0.5)
\or\qbezier(0,1.)(-0.327,0.327)(-1,0)
\or\qbezier(0,1.)(-0.311,0.179)(-0.866,-0.5)
\or\qbezier(0,1.)(-0.2,0.0536)(-0.5,-0.866) \or\put(0, 1){\line(0,
-2){2}} \or\qbezier(0,1.)(0.2,0.0536)(0.5,-0.866)
\or\qbezier(0,1.)(0.311,0.179)(0.866,-0.5) \fi
\or\ifcase#2\or\or\or\or
\or\qbezier(-0.5,0.866)(-0.378,0.378)(-0.866,0.5)
\or\qbezier(-0.5,0.866)(-0.45,0.26)(-1,0)
\or\qbezier(-0.5,0.866)(-0.446,0.12)(-0.866,-0.5)
\or\qbezier(-0.5,0.866)(-0.359,0)(-0.5,-0.866)
\or\qbezier(-0.5,0.866)(-0.2,-0.0536)(0,-1) \or\put(-0.5,
0.866){\line(3, -5){1}}
\or\qbezier(-0.5,0.866)(0.146,0.146)(0.866,-0.5) \fi
\or\ifcase#2\or\or\or\or\or
\or\qbezier(-0.866,0.5)(-0.516,0.138)(-1,0)
\or\qbezier(-0.866,0.5)(-0.52,0)(-0.866,-0.5)
\or\qbezier(-0.866,0.5)(-0.446,-0.12)(-0.5,-0.866)
\or\qbezier(-0.866,0.5)(-0.311,-0.179)(0,-1)
\or\qbezier(-0.866,0.5)(-0.146,-0.146)(0.5,-0.866) \or\put(-0.866,
0.5){\line(5, -3){1.73}} \fi \or\ifcase#2\or\or\or\or\or\or
\or\qbezier(-1,0)(-0.516,-0.138)(-0.866,-0.5)
\or\qbezier(-1,0)(-0.45,-0.26)(-0.5,-0.866)
\or\qbezier(-1,0)(-0.327,-0.327)(0,-1)
\or\qbezier(-1,0)(-0.179,-0.311)(0.5,-0.866)
\or\qbezier(-1,0)(-0.0536,-0.2)(0.866,-0.5) \fi
\or\ifcase#2\or\or\or\or\or\or\or
\or\qbezier(-0.866,-0.5)(-0.378,-0.378)(-0.5,-0.866)
\or\qbezier(-0.866,-0.5)(-0.26,-0.45)(0,-1)
\or\qbezier(-0.866,-0.5)(-0.12,-0.446)(0.5,-0.866)
\or\qbezier(-0.866,-0.5)(0,-0.359)(0.866,-0.5) \fi
\or\ifcase#2\or\or\or\or\or\or\or\or
\or\qbezier(-0.5,-0.866)(-0.138,-0.516)(0,-1)
\or\qbezier(-0.5,-0.866)(0,-0.52)(0.5,-0.866)
\or\qbezier(-0.5,-0.866)(0.12,-0.446)(0.866,-0.5) \fi
\or\ifcase#2\or\or\or\or\or\or\or\or\or
\or\qbezier(0,-1.)(0.138,-0.516)(0.5,-0.866)
\or\qbezier(0,-1.)(0.26,-0.45)(0.866,-0.5) \fi
\or\ifcase#2\or\or\or\or\or\or\or\or\or\or
\or\qbezier(0.5,-0.866)(0.378,-0.378)(0.866,-0.5) \fi\fi\fi\fi}
\def\FullChord[#1,#2]{
\Endpoint[#1] \Endpoint[#2] \Arc[#1] \Arc[#2] \Chord[#1,#2] }
\def\EndChord[#1,#2]{
\Endpoint[#1] \Endpoint[#2] \Chord[#1,#2] }
\def\Picture#1{
\begin{picture}(2,1)(-1,-0.167)
#1
\end{picture}
}
\def\DottedChordDiagram[#1,#2]{
\Picture{\DottedCircle \FullChord[#1,#2]} }
\def\ZZ{ \mathbb{Z} }
\def\IQ{ \mathbb{Q} }
\def\IC{ \mathbb{C} }
\def\IR{ \mathbb{R} }
\def\bfx{ \mathbf{x} }
\def\bfy{ \mathbf{y} }
\def\hlf{ {1\over 2} }
\def\xS{ \mathbb{S} }
\def\xSv#1{ \xS^{#1} }
\def\xSo {\xSv{1} }
\def\xSt{\xSv{2} }
\def\xStSo{\xSt\times\xSo}
\def\xD{ \mathbb{D} }
\def\xDp{ \xD' }
\def\xB{ \mathbb{B} }
\def\xT{ \mathbb{T} }
\def\xTv#1{ \xT^{#1} }
\def\xTt{ \xTv{2} }
\def\yT{ \mathrm{T} }
\def\yTvv#1#2{ \yT_{#1,#2} }
\def\yTmn{ \yTvv{m}{n} }
\def\yTmmn{ \yTvv{m}{-n} }
\def\Hom{ \mathop{\mathrm{Hom}}\nolimits }
\def\xId{ \mathbbm{1} }
\def\hlf{ \frac{1}{2} }
\def\thlf{ \frac{3}{2} }
\def\frth{ \frac{1}{4} }
\def\shlf{ \tfrac{1}{2} }
\def\sfrth{ \tfrac{1}{4} }
\def\sthlf{ \tfrac{3}{2} }
\def\hem{\bullet}
\def\TQFT{TQFT}
\def\tKbr{Kauffman bracket}
\def\tJpol{Jones polynomial}
\def\tKhom{Khovanov homology}
\def\tKhoms{Khovanov homologies}
\def\tKhbr{Khovanov bracket}
\def\tJW{Jones-Wenzl}
\def\tJWp{\tJW\ projector}
\def\cJWp{categorified \tJWp}
\def\tTL{Temperley-Lieb}
\def\taTL{TL}
\def\tTLc{\tTL\ category}
\def\tTLt{\tTL\ tangle}
\def\taTLt{\taTL\ tangle}
\def\tTLa{\tTL\ algebra}
\def\trbr{torus braid}
\def\tmcn{multi-cone}
\def\thead{tail}
\def\Asplng{A-splicing}
\def\Bsplng{B-splicing}
\def\Arpl{A-replacement}
\def\Arpld{A-replaced}
\def\Brpl{B-replacement}
\def\Brpld{B-replaced}
\def\tBcr{B-circle}
\def\tadq{adequate}
\def\tBadq{B-\tadq}
\def\tBiadq{B-inadequate}
\def\tBdg{B-diagram}
\def\trBdg{reduced \tBdg}
\def\tinadq{inadequate}
\def\tnBadq{not \tBadq}
\def\tBrdc{B-reduction}
\def\uclrd{unicolored}
\def\splng{splicing}
\def\nsplcd{negatively spliced}
\def\twdfc{\twd\ deficit}
\def\twd{width}
\def\tbdgr{bi-degree}
\def\ZZt{ \ZZ_2 }
\def\ZZZtt{\ZZ\times\ZZ\times\ZZt}
\def\ZZtqqi{ \ZZ[q^{\pm 2} ] }
\def\QQ{ \mathbb{Q} }
\def\QQqqi{ \QQ[q^{\pm 1} ] }
\def\QQqqip{ \QQ[[q,q^{-1}] }
\def\ctfont{ \mathsf }
\def\chfont{ \mathbf }
\def\stfont{ \mathfrak }
\def\tcat#1{ \ddot{#1} }
\def\cTng{ \ctfont{Tng} }
\def\ctTng{ \tcat{\cTng} }
\def\cTL{ \ctfont{TL} }
\def\cTLp{ \cTL^+ }
\def\ctTL{ \tcat{\cTL} }
\def\caC{ \ctfont{C} }
\def\caCt{ \tilde{\caC} }
\def\ctaC{ \tcat{\caC} }
\def\caA{ \ctfont{A} }
\def\cKom{ \mathop{\mathbf{Kom}} }
\def\cKomm{ \cKom^+ }
\def\cKommA{ \cKomm(\caA) }
\def\chA{ \chfont{A} }
\def\chB{ \chfont{B} }
\def\stA{ \stfont{A} }
\def\oba{ \alpha }
\def\obO{ O }
\def\hmord#1{ |#1|_{\mathrm h} }
\def\hlmord#1{ \left| #1 \right|_{\mathrm h} }
\def\drsys{direct system}
\def\drlim{direct limit}
\def\SUv#1{ \SU({#1}) }
\def\SUt{ \SUv{2} }
\def\aTL{ \mathrm{TL} }
\def\aTLv#1{ \aTL_{#1} }
\def\aTLmn{ \aTLv{m,n} }
\def\pJ{ \mathrm{J} }
\def\pJv#1{ \pJ_{#1} }
\def\pJqv#1{ \pJv{#1}(q) }
\def\pJvv#1#2{ \pJ_{#1,#2} }
\def\pJqvv#1#2{ \pJvv{#1}{#2}(q) }
\def\pJqL{ \pJqv{\xL} }
\def\pJqLi{ \pJqvv{\xL}{\infty} }
\def\pJqaL{ \pJqvv{\xca}{\xL} }
\def\pJqaD{ \pJqvv{\xca}{\xD} }
\def\zpol{ \mathrm{T} }
\def\zpolnvv#1#2{ \zpol^{#1}_{#2} }
\def\zpolnqvv#1#2{ \zpolnvv{#1}{#2}(q) }
\def\zpolnqnL{ \zpolnqvv{\xnu}{\xL} }
\def\ypolvv#1#2{ \zpol_{#1,#2} }
\def\ypolqvv#1#2{ \ypolvv{#1}{#2}(q) }
\def\ypolqnL{ \ypolqvv{\xn}{\xL} }
\def\ypolqnD{ \ypolqvv{\xn}{\xD} }
\def\ypolv#1{ \zpol_{#1} }
\def\ypolqtv#1{ \ypolv{#1}(\xt,q) }
\def\ypolqtL{ \ypolqtv{\xL} }
\def\ypolqtD{ \ypolqtv{\xD} }
\def\xdmm{ - }
\def\xKbrv#1{ \langle #1 \rangle }
\def\xKbrBv#1{ \Big \langle\, #1 \,\Big \rangle }
\def\xKbrd{ \xKbrv{\xdmm} }
\def\xKhv#1{ \langle\!\langle #1 \rangle\!\rangle }
\def\xvKhv#1{ \left \langle\!\!\!\left\langle #1 \right\rangle\!\!\!\right\rangle }
\def\Cnv#1{ \boxed{#1} }
\def\sdff{ \circlearrowleft }
\def\Pcnv#1{ \boxed{#1}_{\,\displaystyle \sdff} }
\def\Conv#1{ \mathrm{Cone} (#1) }
\def\hteqv{ \sim }
\def\dlm{ \lim\limits_{\rightarrow}}
\def\xbul{ \bullet }
\def\Kh{\scriptscriptstyle{\mathrm{Kh}} }
\def\Hm{ \mathrm{H} }
\def\KHm{ \Hm^{\Kh} }
\def\KHmvv#1#2{ \KHm_{#1,#2} }
\def\KHmvb#1{ \KHmvv{#1}{\hem} }
\def\KHmib{\KHmvv{i}{\xbul} }
\def\tKHm{ \tilde{\Hm}^{\Kh} }
\def\tKHmvv#1#2{ \tKHm_{#1,#2} }
\def\tKHmvb#1{ \tKHmvv{#1}{\hem} }
\def\tlH{ \Hm^{\infty} }
\def\tlHvv#1#2{ \tlH_{#1,#2} }
\def\ttlH{ \Hm^{\boldsymbol{\infty}} }
\def\ttlH{ \Hm^{\thicksim} }
\def\ttlHvvv#1#2#3{ \ttlH_{#1,#2,#3} }
\def\hgrshv#1#2#3{ [#1,#2,#3] }
\def\dgh{ \deg_{\mathrm{h}} }
\def\dgq{ \deg_{\mathrm{q}} }
\def\dgb{ \deg_{\mathrm{b}} }
\def\tqdgr{$q$-degree}
\def\thdgr{$h$-degree}
\def\tbdgr{$b$-degree}
\def\tbgrd{$b$-grading}
\def\smxnzi{ \sum_{\xn=0}^\infty }
\def\wdv#1{ |#1|_{\mathrm{wd}} }
\def\swdv#1{ \left| #1 \right|_{\mathrm wd} }
\def\wdfcv#1{ |#1|_{\mathrm{df}} }
\def\sTL{ \stfont{T} }
\def\sTLv#1{ \sTL_{#1} }
\def\sTLa{ \sTLv{a} }
\def\sTLab{ \sTLv{a,b} }
\def\sTLabc{ \sTLv{a,b}^{\supset} }
\def\shcr{ \mathsf{h} }
\def\shfr{ \mathsf{q} }
\def\shcrh{\shcr^{\hlf} }
\def\shcrmh{ \shcr^{-\hlf} }
\def\clN{ N }
\def\clNo{ \clN + 1}
\def\xDclv#1{ \xD_{#1} }
\def\xDclN{ \xDclv{\clN} }
\def\xDclNo{ \xDclv{\clNo} }
\def\xDclvv#1#2{ \xD_{#1,#2} }
\def\xDNoa{ \xDclvv{\xca+1}{\incra} }
\def\xDNoamo{ \xDclvv{\xca+1}{\incra-1} }
\def\xDpclvv#1#2{ \xD'_{#1,#2} }
\def\xDpNoa{ \xDpclvv{\xca+1}{\incra} }
\def\xDpNof{ \xDpclvv{\xca+1}{\ncrD} }
\def\xtD{ \tilde{\xD} }
\def\xtDv#1{ \xtD_{#1} }
\def\xtDNo{ \xtDv{\xca+1} }
\def\xtDNo{ \xtDv{\xca} }
\def\xtDN{ \xtDv{\xca} }
\def\xtDNov#1{ \xtD_{\xca+1;#1} }
\def\xtDNob{ \xtDNov{\crcb} }
\def\xtDNobo{ \xtDNov{\crcb+1} }
\def\xtDNobv#1{ \xtDNov{\crcb,#1} }
\def\xtDNobg{ \xtDNobv{\prjg} }
\def\xtDNobgo{ \xtDNobv{\prjg+1} }
\def\xtDNobf{ \xtDNobv{\pncrb-1} }
\def\xhD{ \hat{\xD} }
\def\xhDNov#1{ \xhD_{\xca+1,#1} }
\def\xhDNob{ \xhDNov{\crcb} }
\def\xDs{ \xD_{\spmp} }
\def\xDscr{ \xD_{\spmp,\scs} }
\def\xDcir{ \xD_{\circ} }
\def\xDov#1{ \xD_{#1} }
\def\xDon{ \xDov{n} }
\def\xDom{ \xDov{m} }
\def\xBov#1{ \xB_{#1} }
\def\xBmpn{ \xBov{m+n} }
\def\eqdiam{equatorial diameter}
\def\ttngl{tangle}
\def\ttnglv#1#2{$(#1,#2)$-\ttngl}
\def\TLttnglv#1#2{\taTL\ \ttnglv{#1}{#2}}
\def\ttnglmn{\ttnglv{m}{n}}
\def\ttnglnn{\ttnglv{n} {n}}
\def\TLttnglmn{\TLttnglv{m}{n}}
\def\TLttnglnn{\TLttnglv{n}{n}}
\def\qi{ q^{-1} }
\def\qpqi{ q + \qi }
\def\mqpqi{ - (\qpqi) }
\def\tHom{ \widetilde{\Hom} }
\def\xlam{ \lambda }
\def\xKhl{ \xKhv{\xlam} }
\def\shm{ \mu }
\def\hgrshklm{ \hgrshv{k}{l}{\shm} }
\def\elcf{ f }
\def\elcfv#1{ f_{#1} }
\def\Zcat{ \mathrm{Z} }
\def\Zcatv#1{ \Zcat(#1) }
\def\gAB{ g_{AB} }
\def\xIdv#1{ \xId_{#1} }
\def\xIdn{ \xIdv{n} }
\def\xIdvv#1#2{ \xId_{#1,#2} }
\def\xIdnL{ \xIdvv{n}{\xL} }
\def\xL{ L }
\def\xLp{ \xL' }
\def\xLb{ \bar{\xL} }
\def\xLcv#1{ \xL_{#1} }
\def\xLcN{ \xLcv{\xca} }
\def\xLcNo{ \xLcv{\xca+1} }
\def\xLpcv#1{ \xLp_{#1} }
\def\xLpcN{ \xLpcv{\xca} }
\def\xD{ D }
\def\xczt{ \zeta }
\def\xcztv#1{ \xczt(#1) }
\def\xcztL{ \xcztv{\xL} }
\def\cmfont{ \mathbf }
\def\cmA{ \cmfont{A} }
\def\xca{ N }
\def\xnu{ \nu }
\def\sB{ \mathrm{B} }
\def\nBv#1{ \gvv{#1} }
\def\nBD{ \nBv{\xD} }
\def\ncr{ \mathrm{cr} }
\def\ncr{ \times }
\def\ncr{ \chi }
\def\ncrv#1{ \ncr_{\!#1} }
\def\ncrD{ \ncrv{\xD} }
\def\ncrL{ \ncrv{\xL} }
\def\ncrm{ \chi^{!} }
\def\ncrmv#1{ \ncrm_{\!#1} }
\def\ncrmL{ \ncrmv{\xL} }
\def\ncriv#1{ \ncr_{\!#1}^{\mathrm in} }
\def\ncriD{ \ncriv{\xD} }
\def\ncrt{ \tilde{\ncr} }
\def\ncrtv#1{ \ncrt_{#1} }
\def\ncrtD{ \ncrtv{\xD} }
\def\sBv#1{ \sB(#1) }
\def\sBD{ \sBv{\xD} }
\def\sBDp{ \sBv{\xDp} }
\def\xfs{ s }
\def\xfsv#1{ \xfs_{#1} }
\def\xfsvv#1#2{ \xfsv{#1}(#2) }
\def\xfsLN{ \xfsvv{\xL}{\xca} }
\def\xn{ n }
\def\xt{ t }
\def\degq{ \deg_q }
\def\qhlf{ q^{ \frac{1}{2} } }
\def\qmhlf{ q^{-\frac{1}{2} } }
\def\gl{\mathrm{ l} }
\def\glv#1{ \gl_{#1} }
\def\glD{ \glv{\xD} }
\def\glL{ \glv{\xL} }
\def\ge{ \mathrm{e} }
\def\gev#1{ \ge_{#1} }
\def\geD{ \gev{\xD} }
\def\geL{ \gev{\xL} }
\def\gv{\kappa}
\def\gvv#1{ \gv_{#1} }
\def\gvD{ \gvv{\xD} }
\def\gvL{ \gvv{\xL} }
\def\xfrm{ \phi }
\def\xfrmv#1{ \xfrm_{#1} }
\def\xfrmD{ \xfrmv{\xD} }
\def\pncr{ \pi }
\def\pncrv#1{ \pncr_{#1} }
\def\pncrb{ \pncrv{\crcb} }
\def\xM{ M }
\def\bdA{ A }
\def\Nsgn{ (-1)^{\xca} }
\def\brbet{ \beta }
\def\xnp{ n_+ }
\def\xnm{n_- }
\def\stlcl#1{ {\scriptstyle #1 } }
\def\drlmA{ \dlm \chA_i }
\def\xalg{ \IQ }
\def\mnf{ f }
\def\mnfv#1{ \mnf_{#1} }
\def\mnfN{ \mnfv{\xca} }
\def\mnfi{ \mnfv{\mathrm i} }
\def\mnff{ \mnfv{\mathrm f} }
\def\mntf{ \tilde{\mnf} }
\def\mntfv#1{ \mntf_{#1} }
\def\mntfN{ \mntfv{\xca} }
\def\incra{ \alpha }
\def\incrb{ \alpha\p }
\def\xlbv#1{ v_{#1} }
\def\xlba{ \xlbv{\incra} }
\def\xlbao{ \xlbv{\incra+1} }
\def\xlbb{ \xlbv{\incrb} }
\def\crcb{ \beta }
\def\crcbp{ \beta\p }
\def\ylbv#1{ c_{#1} }
\def\ylbb{ \ylbv{\crcb} }
\def\ylbbp{ \ylbv{\crcbp} }
\def\prjg{ \gamma }
\def\prjgp{ \gamma\p }
\def\zlbvv#1#2{ p_{#1,#2} }
\def\zlbbv#1{ \zlbvv{\crcb}{#1} }
\def\zlbbg{ \zlbbv{\prjg} }
\def\zlbbgp{ \zlbbv{\prjgp} }
\def\xmg{ g }
\def\xmgNv#1{ \xmg_{\xca,#1} }
\def\xmgNa{ \xmgNv{\incra} }
\def\xmgN{ \xmg_{\xca} }
\def\xtmg{ \tilde{\xmg} }
\def\xtmgv#1{ \xtmg_{#1} }
\def\xtmgbg{ \xtmgv{\crcb,\prjg} }
\def\ltrf{local transformation}
\def\Ltrf{Local transformation}
\def\lrpl{local replacement}
\def\Lrpl{Local replacment}
\def\ytng{ \tau }
\def\ytngsv#1{ \ytng_{#1} }
\def\ytngsi{ \ytngsv{\mathrm i} }
\def\ytngsf{ \ytngsv{\mathrm f} }
\def\ytngse{ \ytngsv{\mathrm e} }
\def\ytngsfp{ \ytngsf\p }
\def\ytngsc{ \ytngsv{\mathrm c} }
\def\ytngsci{ \ytngsv{\mathrm{c},\xki} }
\def\ytngki{ \xKhv{\ytngsi} }
\def\ytngkf{ \xKhv{\ytngsf} }
\def\ytngkc{ \xKhv{\ytngsc} }
\def\ytngkfp{ \xKhv{\ytngsf'}}
\def\xDsv#1{ \xD_{#1} }
\def\xDsi{ \xDsv{\mathrm i} }
\def\xDsf{ \xDsv{\mathrm f} }
\def\xDsc{ \xDsv{\mathrm c} }
\def\xDscv#1{ \xDsv{\mathrm{c},#1} }
\def\xDsci{ \xDscv{\xki} }
\def\xDscip{ \xDsci' }
\def\xDscipp{ \xDsci''}
\def\xDse{\xDsv{\mathrm e} }
\def\xDsep{ \xDse'}
\def\xDscp{ \xDsc\p }
\def\hbnd{ M_{\mathrm h} }
\def\sgmm{ s }
\def\yncr{ n_{\times} }
\def\yncrv#1{ \yncr(#1) }
\def\yncrpv#1{ \yncr'(#1) }
\def\yncrD{ \yncrv{\xD} }
\def\yncc{ n_{\circ} }
\def\ynccv#1{ \yncc(#1) }
\def\tdgpr{degree preserving}
\def\xhsh{ m }
\def\xhshv#1{ \xhsh_{#1} }
\def\xhshf{ \xhshv{\mathrm f}}
\def\xqsh{ n }
\def\xqshv#1{ \xqsh_{#1} }
\def\xqshf{ \xqshv{\mathrm f}}
\def\ztau{ \tau }
\def\svrt{ \mathfrak{V} }
\def\svrta{ \svrt_{\mathrm{ad}} }
\def\svrti{ \svrt_{\mathrm{in}} }
\def\xvrt{ v }
\def\spmp{ s }
\def\sipvr{ \spmp_{\xvrt} }
\def\spvr{ \sipvr }
\def\xabms{ |\!|\spmp|\!| }
\def\tstt{state}
\def\tstrt{strut}
\def\tstrtl{\tstrt\ line}
\def\tjmp{jumping}
\def\tjmpc{\tjmp\ circle}
\def\trlx{relaxed}
\def\trlxc{\trlx\ circle}
\def\crb{ c }
\def\nvcr#1{ n_{\mathrm #1} }
\def\njcr{ \nvcr{j} }
\def\nscr{ \nvcr{s} }
\def\nscrb{ n_{\mathrm{s},\crb} }
\def\nwcr{ \nvcr{w} }
\def\nwcrb{ n_{\mathrm{w},\crb} }
\def\njrc{ n_{\mathrm{r},c} }
\def\tstrght{straight}
\def\txstr{straight}
\def\txstrs{\txstr\ segment}
\def\tstrghtc{\tstrght\ circle}
\def\twndg{winding}
\def\twndgc{\twndg\ circle}
\def\ntwd{not widening}
\def\scs{ c }
\def\xtusc{ \tau_{s,c} }
\def\xE{ E }
\def\xEo{\xE^1}
\def\xEovv#1#2{ \xEo_{#1,#2} }
\def\xEoij{ \xEovv{i}{j} }
\def\xki{ k }
\def\yki{ k }
\def\yvki{ \yki(i) }
\def\ysvki{ \yki^2(i) }
\def\zmi{ i }
\def\zmj{ j }
\def\xfd{ d }
\def\xfdv#1{ \xfd_{#1} }
\def\xfdN{ \xfdv{\xca} }
\def\xti{ \tilde{i}}
\def\ptJqLi { \mathrm{J}_{\xL,\thicksim}(b,q)}
\def\Stosic{Sto\v si\'c}
\def\SUv#1{ \mathrm{SU}(#1) }
\def\xltone{I}
\def\xlttwo{II}
\def\xltthree{III}
\def\prmlt{ \mu }
\def\prmltv#1{ \prmlt_{#1} }
\def\prmltijg{ \prmltv{ij,\gamma} }
\def\prmltij{ \prmltv{ij} }
\def\prmltijk{ \prmltv{ij,\xki} }
\def\tprmlt{ \tilde{\prmlt} }
\def\tprmltv#1{ \tprmlt_{#1} }
\def\tprmltij{ \tprmltv{ij} }
\def\lumps{lump sum}
\def\mtot{ m^{\mathrm{tot}} }
\def\mtotv#1{ \mtot_{#1} }
\def\mtotja{ \mtotv{j,a} }
\def\betbr{ \beta }
\def\xLv#1{ \xL_{#1} }
\def\xLb{ \xLv{\betbr} }
\def\xcrsp{
\xygraph{
!{0;/r1.5pc/:}
[u(0.5)]
!{\xoverv}
}
}
\def\xpver{
\xygraph{
!{0;/r1.5pc/:}
[u(0.5)]
!{\xunoverv}
}
}
\def\xphor{
\xygraph{
!{0;/r1.5pc/:}
[u(0.5)]
!{\xunoverh}
}
}
\def\zoverv{
\begin{tikzpicture} \draw + (1,0) -- ++(0,1);
\draw  [line width=6pt, draw=black] (0,0) -- ++(1,1);
\draw (0,0) -- ++(1,1);
\end{tikzpicture}
 }
 \def\zoverv#1#2{
 \begin{tikzpicture}[scale=#1,baseline=11*#1-#2*#1]
 \path[use as bounding box] (-0.1,-0.1) rectangle (1.1,1.1);
 \draw (0,1) -- ++(1,-1);
 \draw[line width=6pt, draw=white] (0,0) -- ++(1,1);
 \draw (0,0) -- ++(1,1);
\end{tikzpicture}
 }
\def\zunoverv#1#2{
 \begin{tikzpicture}[scale=#1,baseline=11*#1-#2*#1]
 \path[use as bounding box] (-0.1,-0.1) rectangle (1.1,1.1);
 \draw (0,0) to [out=45,in=-45] (0,1);
 \draw (1,0) to [out=135,in=-135] (1,1);
\end{tikzpicture}
 }
 \def\zunoverh#1#2{
 \begin{tikzpicture}[scale=#1,baseline=11*#1-#2*#1]
 \path[use as bounding box] (-0.1,-0.1) rectangle (1.1,1.1);
 \draw (0,0) to [out=45,in=135] (1,0);
 \draw (0,1) to [out=-45,in=-135] (1,1);
\end{tikzpicture}
 }
\def\zcirc#1#2{
 \begin{tikzpicture}[scale=#1,baseline=11*#1-#2*#1]
 \path[use as bounding box] (-0.1,-0.1) rectangle (1.1,1.1);
 \draw (0.5,0.5) circle (0.5);
\end{tikzpicture}
 }
\def\zposfr#1#2{
 \begin{tikzpicture}[scale=#1,baseline=11*#1-#2*#1]
 \path[use as bounding box] (-0.1,-0.1) rectangle (0.9,1.1);
 \draw (0.6,0.25) to [out=180,in=-90] (0,1);
 \draw [line width=6pt,draw=white] (0,0) to [out=90,in=180] (0.6,0.75);
 \draw (0,0) to [out=90,in=180] (0.6,0.75);
 \draw (0.6,0.75) .. controls +(0:0.4) and +(0:0.4) .. (0.6,0.25);
\end{tikzpicture}
 }
 \def\zstvr#1#2{
 \begin{tikzpicture}[scale=#1,baseline=11*#1-#2*#1]
 \path[use as bounding box] (-0.1,-0.1) rectangle (0.6,1.1);
 \draw (0.25,0) -- (0.25,1);
\end{tikzpicture}
 }
 \def\zcposfr#1#2{
 \begin{tikzpicture}[scale=#1,baseline=11*#1-#2*#1-0.3cm]
 \path[use as bounding box] (-1.2,-0.1) rectangle ++(2.4,0.8);
 \begin{scope}[rotate=90]
 \draw [thkln] (0.6,0.25) to [out=180,in=-90] (0,1);
 \draw [line width=6pt,draw=white] (0,0) to [out=90,in=180] (0.6,0.75);
 \draw [thkln] (0,0) to [out=90,in=180] (0.6,0.75);
 \draw [thkln] (0.6,0.75) .. controls +(0:0.4) and +(0:0.4) .. (0.6,0.25);
 \draw [ptzer] (-0.6,-0.3) rectangle ++ (1.2,0.3);
 \draw [thkln] (0,-0.3) -- (0,-0.9) node [near end, above] {$\scriptstyle a$};
 \end{scope}
\end{tikzpicture}
 }
  \def\zcstvr#1#2{
 \begin{tikzpicture}[scale=#1,baseline=11*#1-#2*#1-0.3cm]
 \path[use as bounding box] (-0.9,-0.5) rectangle (0.9,0.5);
\draw [ptzer] (-0.15,-0.6) rectangle ++(0.3,1.2);
\draw [thkln] (-0.75,0) -- (-0.15,0) (0.15,0) -- (0.75,0) node [near end,above] {$\scriptstyle a$};
\end{tikzpicture}
 }
\def\cblth{1.3pt}
\def\ljwp{1pt}
\def\ocrw{6pt}
\tikzset{ptzer/.style={line width=\ljwp} }
\tikzset{ptzert/.style={line width=\ljwp,fill=white}}
\tikzset{ptone/.style={line width=\ljwp,fill=gray!30}}
\tikzset{pttwo/.style={line width=\ljwp,fill=white,pattern=horizontal lines} }
\tikzset{ptthr/.style={line width=\ljwp,pattern=north east lines} }
\tikzset{ptfour/.style={line width=\ljwp,pattern=crosshatch} }
\tikzset{vthln/.style={line width=\ljwp} }
\tikzset{menvone/.style={scale=0.5,baseline=-1.5} }
\tikzset{menvtwo/.style={scale=0.70,baseline=-1.5} }
\tikzset{menvthree/.style={scale=0.35, baseline=-1.5} }
\tikzset{menvfour/.style={scale=0.25, baseline=-1.5} }
\tikzset{menvzer/.style={scale=1,baseline=-1.5} }
\tikzset{menvrtone/.style={scale=0.45,rotate=-45,baseline=-3.5} }
\tikzset{menvrtsone/.style={scale=0.65,rotate=-45,baseline=-3.5} }
\tikzset{menvrtonep/.style={scale=0.5,baseline=-3.5} }
\tikzset{lnovr/.style={line width=6pt,color=white}}
\tikzset{lnovrtw/.style={line width=4pt,color=white}}
\tikzset{thkln/.style={line width=\cblth} }
\tikzset{thnln/.style={} }
\tikzset{thkc/.style={line width=0.6pt} }
\tikzset{bcrc/.style={line width=0.2cm,color=gray!40,opacity=0.5} }
\tikzset{bcrct/.style={line width=0.25cm,color=gray!20} } 
\tikzset{middle segment/.style={decoration={middle},decorate, segment length=#1}}
\def\xId{ \mathbbm{1} }
\def\hmpt{HOMFLY-PT}
\def\tsq{sequence}
\def\Tsq{Sequence}
\def\drv{derivation}
\def\rdc{relative derived category}
\def\forfun{forgetful functor}
\def\nstd{nested}
\def\qiso{$\Ffr$-isomorphism}
\def\qisoo{$\Ff_1$-isomorphism}
\def\qisot{$\Ff_2$-isomorphism}
\def\xqiso{quasi-isomorphism}
\def\qisc{$\Ffr$-isomorphic}
\def\xqisc{quasi-isomorphic}
\def\sdr{semi-direct}
\def\sdrpr{\sdr\ product}
\def\Wfl{$\aWp$-flat}
\def\ChE{Chevalley-Eilenberg}
\def\ChEr{\ChE\ resolution}
\def\Sgl{Soergel}
\def\Sglb{\Sgl\ \bmdl}
\def\Hchs{Hochschild}
\def\Hchsh{\Hchs\ homology}
\def\Hchshs{\Hchs\ homologies}
\def\tgr{triply graded}
\def\tglh{\tgr\ link homology}
\def\tgh{\tgr\ homology}
\def\enfn{endo-functor}
\def\stact{standard action}
\def\xstv{strand variable}
\def\brstv{braid \xstv}
\def\bmdl{bimodule}
\def\brgr{braid-graph}
\def\brwd{braid word}
\def\smgr{semi-group}
\def\brwdsg{\brwd\ \smgr}
\def\smpr{semi-free}
\def\cnn{connection}
\def\qhteq{quasi-homotopy equivalence}
\def\qhtet{quasi-homotopy equivalent}
\def\Wpeq{$\aWp$-equivariant}
\def\Wpec{$\aWp$-equivariance}
\def\weqxyp{$\aWp$-equivariant $\Qbxy$-projective}
\def\weqxyop{$\aWp$-equivariant $\Qxyo$-projective}
\def\xLv#1{ L_{#1} }
\def\xLm{ \xLv{m} }
\def\xLn{ \xLv{n} }
\def\xhLv#1{ \hat{L}_{#1} }
\def\xhLm{ \xhLv{m} }
\def\xhLn{ \xhLv{n} }
\def\Qv#1{ \IQ[#1] }
\def\Qx{ \Qv{x} }
\def\Qy{ \Qv{y} }
\def\Qxyo{ \Qv{x_1,y_1} }
\def\Qxyt{ \Qv{x_2,y_2} }
\def\Qbx{ \Qv{\bfx} }
\def\Qby{ \Qv{\bfy} }
\def\Qbz{ \Qv{\bfz} }
\def\Qbxy{ \Qv{\bfx,\bfy} }
\def\Qbxz{ \Qv{\bfx,\bfz} }
\def\Qbxw{ \Qv{\bfx,\bfw} }
\def\Qblc{ \Qv{\bflc} }
\def\bfv#1{ \mathbf{#1} }
\def\bfa{ \bfv{a} }
\def\bfx{ \bfv{x} }
\def\bfy{ \bfv{y} }
\def\bfz{ \bfv{z} }
\def\bfw{ \bfv{w} }
\def\bfp{ \bfv{p} }
\def\bfk{ \bfv{k} }
\def\bfth{ \boldsymbol{\theta} }
\def\bfsdf{ \boldsymbol{\sdf} }
\def\bfshf{ \boldsymbol{\shf} }
\def\bfsdfot{ \bfsdf_{12} }
\def\bfsdfp{ \bfsdf' }
\def\sdfp{ \sdf' }
\def\bsdfpv#1{ \bfsdfp(#1) }
\def\bsdfpx{ \bsdfpv{x} }
\def\bsdfpli{ \bsdfpv{\lc_i} }
\def\bsdfpxt{ \bsdfpv{x_2} }
\def\bsdfpyo{ \bsdfpv{y_1} }
\def\bsdfpxtz{ \bsdfpv{x_2 + \frz } }
\def\sdfpvv#1#2{ \sdfp_{#1}(#2) }
\def\spdpmv#1{ \sdfpvv{m}{#1} }
\def\spdpmx{ \spdpmv{x} }
\def\bsdfz{ \bfa_{\frz} }
\def\brnk{ k }
\def\hchi{ \chi }
\def\hchiv#1{ \hchi_{#1} }
\def\hchipo{ \hchiv{+} }
\def\hchine{ \hchiv{-} }
\def\hchivv#1#2{ \hchi_{#1;#2} }
\def\hchipov#1{ \hchivv{#1}{+} }
\def\hchinev#1{ \hchivv{#1}{-} }
\def\hchipoi{ \hchipov{i} }
\def\hchinei{ \hchinev{i} }
\def\setmod#1{\mathfrak{#1}}
\def\setA{ \setmod{A} }
\def\setT{ \setmod{T} }
\def\tsetT{ \setT }
\def\tcCat{ \cCat }
\def\catmod#1{ \mathbf{#1} }
\def\xcatmod#1{ \mathbf{#1} }
\def\funmod#1{ \mathcal{#1} }
\def\ctD{ \catmod{D} }
\def\ctDr{ \ctD_{\mathrm{r}} }
\def\ctDFfr{ \ctDr }
\def\ctA{ \xcatmod{A} }
\def\ctB{ \xcatmod{B} }
\def\ctKom{ \catmod{Com} }
\def\ctmod{ \catmod{mod} }
\def\ctgmod{ \catmod{gm} }
\def\cCat{ \catmod{C} }
\def\ctAd{ \catmod{Ad} }
\def\ctCh{ \catmod{Ch} }
\def\ctK{ \catmod{K} }
\def\cCatbxy{ \cCatv{\bfx,\bfy} }
\def\cCatbyz{ \cCatv{\bfy,\bfz} }
\def\cCatbxz{ \cCatv{\bfx,\bfz} }
\def\cCatbxw{ \cCatv{\bfx,\bfw} }
\def\tCat{ \widetilde{\cCat} }
\def\tCatv#1{ \tCat_{#1} }
\def\tCatbxy{ \tCatv{\bfx,\bfy} }
\def\Ffr{ \Ff }
\def\functs#1{ \funmod{#1} }
\def\Ff{ \functs{F} }
\def\Gf{ \functs{G} }
\def\Qf{ \functs{Q} }
\def\Ffuvv#1#2{ \Ff^{(#1)}_{#2} }
\def\Ffusv#1{ \Ffuvv{s}{#1} }
\def\Ffusxl{ \Ffusv{\bfx;\bflc} }
\def\Ffukv#1{ \Ffuvv{\bfk}{#1} }
\def\FfukXl{ \Ffukv{\bfvbr;\bflc} }
\def\Ffukxl{ \Ffukv{\bfx;\bflc} }
\def\Ffpukv#1{ \Ffuvv{\bfk'}{#1} }
\def\FfpukXl{ \Ffpukv{\bfvbr;\bflc} }
\def\Ffvav#1#2{ \Ff_{#1\rightarrow #2} }
\def\FfxaX{ \Ffvav{\bfx}{\bfvbr} }
\def\FfA{ \Ff_{\ctA} }
\def\FfB{ \Ff_{\ctB} }
\def\Ffvv#1#2{ \Gf_{#1=#2} }
\def\FfXv#1{ \Ffvv{\bfvbr}{#1} }
\def\Ffbev#1{ \Ffvv{\bfbe}{#1} }
\def\Ffbey{ \Ffbev{\bfy} }
\def\FfXx{ \FfXv{\bfx} }
\def\FfXy{ \FfXv{\bfy} }
\def\Ffav#1{ \Ffvv{\vbr}{#1} }
\def\Ffaxi{ \Ffav{x_i} }
\def\Ffaxj{ \Ffav{x_j} }
\def\Fffq{ \Ff_{/\simeq} }
\def\Fffq{ \Ff_{\heartsuit}}
\def\ctDW{ \ctD_{\aWp} }
\def\aWpgmd{ \aWp\sctgmod }
\def\uWplcgmdh{ \uWplc\sctgmodh }
\def\foth{ f^{(13)} }
\def\Foth{ \funmod{F}^{(13)} }
\def\Fothv#1{ \Foth_{#1} }
\def\Fothxy{ \Fothv{\bfx,\bfy} }
\def\Fothxw{ \Fothv{\bfx,\bfw} }
\def\otdr{ \stackrel{\mathrm{L}}{\otimes} }
\def\otdrv#1{ \otdr_{#1} }
\def\otdrvv#1#2{ {}_{\;\,#1\!\!}\otdrv{#2} }
\def\otdrxyt{ \otdrv{x_2,y_2} }
\def\otdrxyo{ \otdrv{x_1,y_1} }
\def\otdry{ \otdrv{\bfy} }
\def\otdrxy{ \otdrv{\bfx,\bfy} }
\def\otdruxy{ \otdrv{\tnsp{\bfx},\bfy} }
\def\otdruxxpy{ \otdrv{x_1,\tnsp{\bfx}',\bfy} }
\def\otdruxpyp{ \otdrv{\tnsp{\bfx}',\bfy'} }
\def\otdryz{ \otdrv{\bfy,\bfz} }
\def\otdrxw{ \otdrv{\bfx,\bfw} }
\def\otdrxyzw{ \otdrv{\bfx,\bfy,\bfz,\bfw} }
\def\otdrlv#1{\otdrvv{\mathrm{l}}{#1}}
\def\otdrly{ \otdrlv{\bfy} }
\def\otdrfvv#1#2{ {}_{\;\,#1}\otdrv{#2} }
\def\otdrflv#1{\otdrfvv{\mathrm{l}}{#1}}
\def\otdrfly{ \otdrflv{\bfy} }
\def\otdrx{ \otdrv{\bfx} }
\def\otdry{ \otdrv{\bfy} }
\def\otv#1{ \otimes_{#1} }
\def\oty{ \otv{\bfy} }
\def\otz{ \otv{\bfz} }
\def\otsz{ \otv{z} }
\def\otw{ \otv{\bfw} }
\def\otxw{ \otv{\bfx,\bfw} }
\def\otyz{ \otv{\bfy,\bfz} }
\def\otxyo{ \otv{x_1,y_1} }
\def\tnsp#1{ \bar{#1} }
\def\otuy{ \otv{\tnsp{\bfy}} }
\def\otux{ \otv{\tnsp{\bfx}} }
\def\ctQ{ \xcatmod{Q} }
\def\ctW{ \xcatmod{W} }
\def\ctQv#1{ \ctQ_{#1} }
\def\ctQxy{ \ctQv{\bfx,\bfy} }
\def\ctQyz{ \ctQv{\bfy,\bfz} }
\def\ctQxz{ \ctQv{\bfx,\bfz} }
\def\ctWv#1{ \ctW^+_{#1} }
\def\ctWxy{ \ctWv{\bfx,\bfy} }
\def\ctWxyt{ \ctWv{x_2,y_2} }
\def\ctWyz{ \ctWv{\bfy,\bfz} }
\def\ctWxz{ \ctWv{\bfx,\bfz} }
\def\ctWxw{ \ctWv{\bfx,\bfw} }
\def\ctWx{ \ctWv{\bfx} }
\def\ctWxp{ \ctWv{\bfx'} }
\def\ctWX{ \ctWv{\bfvbr} }
\def\ctWl{ \ctWv{\bflc} }
\def\ctvWv#1#2{ {}_{#1}\ctWv{#2} }
\def\ctXWv#1{ \ctvWv{\bfvbr}{#1} }
\def\ctxWv#1{ \ctvWv{\vbr}{#1} }
\def\ctYWv#1{ \ctvWv{\bfbe}{#1} }
\def\ctXWx{ \ctXWv{\bfx} }
\def\ctXWxy{ \ctXWv{\bfx,\bfy} }
\def\ctYWyz{ \ctYWv{\bfy,\bfz} }
\def\ctXWxz{ \ctXWv{\bfx,\bfz} }
\def\ctxWx{ \ctxWv{\bfx} }
\def\ctxWxy{ \ctxWv{\bfx,\bfy} }
\def\alA{ \alggmod{A} }
\def\alB{ \alggmod{B} }
\def\algmod#1{ \mathcal{#1}}
\def\malgmod#1{ #1 }
\def\salgmod#1{\mathscr{#1}}
\def\aA{ \malgmod{A} }
\def\aB{ \malgmod{B} }
\def\aBt{ \widetilde{\aB} }
\def\aC{ \malgmod{C} }
\def\aD{ \malgmod{D} }
\def\aE{ \malgmod{E} }
\def\aF{ \malgmod{F} }
\def\bulv#1{ #1_\bullet }
\def\aAb{ \bulv{\aA} }
\def\aBb{ \bulv{\aB} }
\def\aBtb{ \bulv{\aBt} }
\def\aCb{ \bulv{\aC} }
\def\aDb{ \bulv{\aD} }
\def\aEb{ \bulv{\aE} }
\def\aalmod#1{\mathcal{#1} }
\def\alA{ \aalmod{A} }
\def\alB{ \aalmod{B} }
\def\rsP{ \algmod{P} }
\def\rsPb{ \rsP^{\bullet} }
\def\rsPW{ \rsP_{\uWp} }
\def\rsPWb{ \rsPW^{\bullet} }
\def\mM{ M }
\def\mN{ N }
\def\mMv#1{ \mM_{#1} }
\def\lfsin#1#2{ {}^{\;\,#1}_{#2} }
\def\mvMv#1#2{ {}_{#1}\mMv{#2} }
\def\mXMv#1{ \mvMv{\bfvbr}{#1} }
\def\mXMx{ \mXMv{\bfx} }
\def\mXMxy{ \mXMv{\bfx,\bfy} }
\def\mMxy{ \mMv{\bfx,\bfy} }
\def\mMx{ \mMv{\bfx} }
\def\mMy{ \mMv{\bfy} }
\def\mNv#1{ \mN_{#1} }
\def\mNyz{ \mNv{\bfy,\bfz} }
\def\mNxy{ \mNv{\bfx,\bfy} }
\def\mNyz{ \mNv{\bfy,\bfz} }
\def\aI{ \malgmod{I} }
\def\aIsv#1{ \aI_{S_{#1}} }
\def\aIsn{ \aIsv{\nst} }
\def\uW{ \aalmod{W} }
\def\aW{ \mathfrak{W} }
\def\aWp{ \aW^+ }
\def\uWp{ \uW^+ }
\def\aWpeq{$\aWp$-equivariant}
\def\uWpv#1{ \uWp_{#1} }
\def\uWpx{ \uWpv{\bfx} }
\def\uWpsx{ \uWpv{x} }
\def\uWpy{ \uWpv{\bfy} }
\def\uWpxyo{ \uWpv{x_1,y_1} }
\def\uWpxyt{ \uWpv{x_2,y_2} }
\def\uWpxy{ \uWpv{\bfx,\bfy} }
\def\uWpyz{ \uWpv{\bfy,\bfz} }
\def\uWpxz{ \uWpv{\bfx,\bfz} }
\def\uWpxw{ \uWpv{\bfx,\bfw} }
\def\uWplc{ \uWpv{\bflc} }
\def\uWpX{ \uWpv{\bfvbr} }
\def\uWpxyt{ \uWpv{x_2,y_2} }
\def\uWpfv#1#2{ {}_{#1}\uWpv{#2} }
\def\uWpXv#1{ \uWpfv{\bfvbr}{#1} }
\def\uWpXxy{ \uWpXv{\bfx,\bfy} }
\def\uWpXx{ \uWpXv{\bfx} }
\def\uWpav#1{ \uWpfv{\vbr}{#1} }
\def\uWpaxy{ \uWpav{\bfx,\bfy} }
\def\mS{ \salgmod{S} }
\def\mSv#1{ \mS_{#1} }
\def\mSn{ \mSv{\nst} }
\def\mSmv#1{ \mSv{m;#1} }
\def\mSmxypp{ \mSmv{\bfx'',\bfy''} }
\def\mSmbxy{ \mSmv{\bfx,\bfy} }
\def\mSvv#1#2#3{ \mS_{#1;#2}^{#3} }
\def\mSmxyi{ \mSvv{m}{\bfx,\bfy}{i} }
\def\brgr{ \setmod{B} }
\def\brwgr{ \widetilde{\brgr} }
\def\brgrv#1{ \brgr_{#1} }
\def\brwgrv#1{ \brwgr_{#1} }
\def\brgrn{ \brgrv{\nst} }
\def\brgrns{ \brgrn^{(s)} }
\def\brwgrn{ \brwgrv{\nst} }
\def\lnks{ \setmod{L} }
\def\lnksv#1{ \lnks_{#1} }
\def\lnksn{ \lnksv{\nst} }
\def\lnksm{ \lnksv{\mlk} }
\def\sggn{ \sigma }
\def\sggnpm{ \sggn^{\pm 1} }
\def\sggnv#1{ \sggn_{#1} }
\def\sggni{ \sggnv{i} }
\def\sggnj{ \sggnv{j} }
\def\sggno{ \sggnv{1} }
\def\sggnt{ \sggnv{2} }
\def\sggxi{ \sggn^{(-1)} }
\def\sggxiv#1{ \sggxi_{#1} }
\def\sggxii{ \sggxiv{i} }
\def\sggxio{ \sggxiv{1} }
\def\sggi{ \sigma^{-1} }
\def\sggiv#1{ \sggi_{#1} }
\def\sggii{ \sggiv{i} }
\def\sggio{ \sggiv{1} }
\def\sggit{ \sggiv{2} }
\def\xarc{ \shortmid }
\def\xpar{ \shortparallel }
\def\xpdr{ \shortmid \cdot\shortmid }
\def\xcrs{\!\times }
\def\xthr{ \Asterisk }
\def\xcpr{ \not\, \parallel}
\def\xcpr{ \nparallel}
\def\xipr{ \smallsetminus\!\!\!\! \parallel}
\def\xcra{ \times\shortmid }
\def\xacr{ \shortmid\times }
\def\xtha{ \shortmid\shortmid\shortmid }
\def\xsta{ \bigstar }
\def\xbta{ \between\mid }
\def\xcrc{ \reflectbox{\varpropto} }
\def\xcrc{\rotatebox[origin=c]{180}{\ensuremath\varpropto}}
\def\xprc{ \shortmid \circ }
\def\xgen{ \text{\textcurrency} }
\def\mMv#1{ \mM_{#1} }
\def\mMarc{ \mMv{\xarc} }
\def\mMpr{ \mMv{\xpar} }
\def\mMprlv#1{ \mMv{\xpar;#1} }
\def\mMprlxyp{ \mMprlv{\bfx'',\bfy''} }
\def\mMcr{ \mMv{\xcrs} }
\def\mMcrlv#1{ \mMv{\xcrs;#1} }
\def\mMcrlxyp{ \mMcrlv{\bfx'',\bfy''} }
\def\mMth{ \mMv{\xthr} }
\def\mMpd{ \mMv{\xpdr} }
\def\mPcr{ \rsPWb(\mMcr) }
\def\mPpr{ \rsPWb(\mMpr) }
\def\dfv#1{ d_{#1} }
\def\dfpr{ \dfv{\xpar} }
\def\dfcr{ \dfv{\xcrs} }
\def\xcdfv#1{\xcdf_{#1} }
\def\xcdfvv#1#2{ \xcdfv{#1;#2} }
\def\xcdfprv#1{ \xcdfvv{\xpar}{#1} }
\def\xcdfcrv#1{ \xcdfvv{\xcrs}{#1} }
\def\xcdfzv#1{ \xcdfvv{0}{#1} }
\def\xcdfprm{ \xcdfprv{m} }
\def\xcdfcrm{ \xcdfcrv{m} }
\def\xcdfzm{ \xcdfzv{m} }
\def\xbcdfv#1{ \boldsymbol{\xcdf_{#1}} }
\def\xbcdfpr{ \xbcdfv{\xpar} }
\def\xbcdfcr{ \xbcdfv{\xcrs} }
\def\prthv#1{ \partial_{\theta_{#1} } }
\def\prtho{ \prthv{1} }
\def\prtht{ \prthv{2} }
\def\prthi{ \prthv{i} }
\def\aIv#1{ \aI_{#1} }
\def\aIpr{ \aIv{\xpar} }
\def\aIcr{ \aIv{\xcrs} }
\def\aIthr{ \aIv{\xthr} }
\def\aIcra{ \aIv{\xcra} }
\def\aIxyz{ \aIv{\bfx,\bfy,\bfz} }
\def\aIvv#1#2{ \aIv{#1;#2} }
\def\mMvrv#1#2{ \mMv{#1;#2} }
\def\mMprvr#1{ \mMvrv{\xpar}{#1} }
\def\mMcrvr#1{ \mMvrv{\xcrs}{#1} }
\def\mMpdvr#1{ \mMvrv{\xpdr}{#1} }
\def\mMthavr#1{ \mMvrv{\xtha}{#1} }
\def\mMcravr#1{ \mMvrv{\xcra}{#1} }
\def\mMacrvr#1{ \mMvrv{\xacr}{#1} }
\def\mMthrvr#1{ \mMvrv{\xthr}{#1} }
\def\mMcprvr#1{ \mMvrv{\xcpr}{#1} }
\def\mMiprvr#1{ \mMvrv{\xipr}{#1} }
\def\mMstvr#1{ \mMvrv{\xsta}{#1} }
\def\mMbtavr#1{ \mMvrv{\xbta}{#1} }
\def\mMbvrv#1#2{ \mMvrv{#1}{#2}^\bullet }
\def\mMcrcvr#1{ \mMbvrv{\xcrc}{#1} }
\def\mMprcvr#1{ \mMbvrv{\xprc}{#1} }
\def\mMgenvr#1{ \mMvrv{\xgen}{#1} }
\def\mMgenrxy{ \mMgenvr{\bfx,\bfy} }
\def\mMcrcxyo{ \mMcrcvr{x_1,y_1} }
\def\mMprcxyo{ \mMprcvr{x_1,y_1} }
\def\mMpdrxw{ \mMpdvr{\bfx,\bfw} }
\def\mMtharxw{ \mMthavr{\bfx,\bfw} }
\def\mMacrrxw{ \mMacrvr{\bfx,\bfw} }
\def\mMcrarxw{ \mMcravr{\bfx,\bfw} }
\def\mMthrrxw{ \mMthrvr{\bfx,\bfw} }
\def\mMcprrxw{ \mMcprvr{\bfx,\bfw} }
\def\mMiprrxw{ \mMiprvr{\bfx,\bfw} }
\def\mMstarxw{ \mMstvr{\bfx,\bfw} }
\def\mMbtarxw{ \mMbtavr{\bfx,\bfw} }
\def\mMcrrxy{ \mMcrvr{\bfx,\bfy} }
\def\mMprrxy{ \mMprvr{\bfx,\bfy} }
\def\prbv#1{ {}_{#1} }
\def\prba{ \prbv{\vbr} }
\def\maMcrrxy{ \prba\mMcrrxy }
\def\maMprrxy{ \prba\mMprrxy }
\def\plbu#1{ #1^{\mathrm{u} } }
\def\plbd#1{ #1^{\mathrm{d} } }
\def\plbm#1{ #1^{\mathrm{m} } }
\def\mMcrarxwu{ \plbu{\mMcrarxw} }
\def\mMcrarxwd{ \plbd{\mMcrarxw} }
\def\mMacrrxwm{ \plbm{\mMacrrxw} }
\def\mMvv#1#2{ \mM_{#1}^{#2} }
\def\mMcrv#1{ \mMvv{\xcrs}{#1} }
\def\mMcri{ \mMcrv{i} }
\def\mMarcxyo{ \mMlarcv{x_1,y_1} }
\def\mMarcxyos{ \mMarcxyo^{\mathrm{s}} }
\def\mMarcxyt{ \mMlarcv{x_2,y_2} }
\def\mMlvv#1#2{ \mM_{#1;#2} }
\def\mMlarcv#1{ \mMlvv{\xarc}{#1} }
\def\mMlpd#1{ \mMlvv{\xpdr}{#1} }
\def\mMvvv#1#2#3{ \mM_{#1;#2}^{#3} }
\def\mMcrvv#1#2{ \mMvvv{\xcrs}{#1}{#2} }
\def\mcnv#1#2{ \left( #1 , #2 \right) }
\def\mcnbv#1#2{ \big( #1, #2 \big) }
\def\mcnbvv#1#2#3{ \mcnbv{#1}{\bsdfv{#2}{#3}} }
\def\mcnnbp#1{ \mcnv{#1}{-\bfsdf} }
\def\mcnbpbp#1{ \mcnbv{#1}{\bfsdf} }
\def\mcnbnbp#1{ \mcnbv{#1}{-\bfsdf} }
\def\mcnbpbf#1{ \mcnbv{#1}{\bfshf} }
\def\mcnbnbf#1{ \mcnbv{#1}{-\bfshf} }
\def\mcnbpbpot#1{ \mcnbv{#1}{\bfsdfot} }
\def\bsdfuv#1{ \bfsdf^{#1} }
\def\bsdfui{ \bsdfuv{i} }
\def\mcnnpi#1{ \mcnv{#1}{-\bsdfui} }
\def\mgen{ v }
\def\mgenv#1{ \mgen_{#1} }
\def\mgenvv#1#2{ \mgen_{#1;#2} }
\def\mgenvvv#1#2#3{ \mgenvv{#1}{#2}^{\mathrm{#3}}}
\def\mgencravv#1#2{ \mgenvvv{\xcra}{#1}{#2} }
\def\mgencraxwv#1{ \mgencravv{\bfx,\bfw}{#1} }
\def\mgencraxwu{ \mgencraxwv{u} }
\def\mgencraxwd{ \mgencraxwv{d} }
\def\xcdf{ \nabla }
\def\xcdfv#1{ \xcdf_{#1} }
\def\xcdfm{ \xcdfv{m} }
\def\xcdfn{ \xcdfv{n} }
\def\dfpvv#1#2{ #1^{#2} \partial_{#1} }
\def\dfpmv#1{ \dfpvv{#1}{m+1} }
\def\shf{ \varphi }
\def\sdf{ \pi }
\def\sdfv#1#2{ \sdf(#1,#2) }
\def\bsdfv#1#2{ \bfsdf(#1,#2) }
\def\bsdfxot{ \bsdfv{x_1}{x_2} }
\def\bsdfxoyt{ \bsdfv{x_1}{y_2} }
\def\bsdfxyt{  \bsdfv{x_2}{y_2} }
\def\bsdfxtyo{ \bsdfv{x_2}{y_1} }
\def\bsdfxoyxo{ \bsdfv{x_1}{y+x_1} }
\def\bshfv#1{ \bfshf(#1) }
\def\bshfxo{ \bshfv{x_1} }
\def\bshfyo{ \bshfv{y_1} }
\def\bshfxoy{ \bshfv{x_1 + y} }
\def\shfvv#1#2{ \shf_{#1}(#2) }
\def\shfmv#1{ \shfvv{m}{#1} }
\def\shfmx{ \shfmv{x} }
\def\sdfvv#1#2#3{ \sdf_{#1}(#2,#3) }
\def\sdfmv#1#2{ \sdfvv{m}{#1}{#2} }
\def\sdfmyot{ \sdfmv{y_1}{y_2} }
\def\sdfmxot{ \sdfmv{x_1}{x_2} }
\def\ssdfvv#1#2#3#4{ \sdf_{#1}(#2,#3,#4) }
\def\ssdfmov#1#2#3{ \ssdfvv{m-1}{#1}{#2}{#3} }
\def\ssdfmv#1#2#3{ \ssdfvv{m}{#1}{#2}{#3} }
\def\spdf{ \rho }
\def\nst{ n }
\def\mlk{ m }
\def\trkv#1{$#1$-rank}
\def\trkq{\trkv{q}}
\def\tdmv#1{$#1$-dimension}
\def\tdmq{\tdmv{q}}
\def\tdgv#1{$#1$-degree}
\def\tdgq{\tdgv{q}}
\def\tdga{\tdgv{a}}
\def\tgrddv#1{$#1$-graded}
\def\tgrddq{\tgrddv{q}}
\def\tgrdv#1{$#1$-grading}
\def\tgrdq{\tgrdv{q}}
\def\tgrda{\tgrdv{a}}
\def\tgrdt{\tgrdv{t}}
\def\xdg{ \mathrm{deg} }
\def\dgv#1{ \xdg_{#1} }
\def\dgq{ \dgv{q} }
\def\fbr{ \mathop{\mathrm{br} } }
\def\fcl{ \mathop{\mathrm{cl} } }
\def\ctmv#1{ [\![ #1 ] \! ] }
\def\ctmdv#1{\ctmv{#1}^{\cdot} }
\def\ctmdvr#1#2{ \ctmdv{#1}_{#2} }
\def\ctmdvbxy#1{ \ctmdvr{#1}{\bfx,\bfy} }
\def\ctmdvbyz#1{ \ctmdvr{#1}{\bfy,\bfz} }
\def\ctmdvbxz#1{ \ctmdvr{#1}{\bfx,\bfz} }
\def\ctmdvbzw#1{ \ctmdvr{#1}{\bfz,\bfw} }
\def\ctmdvbxw#1{ \ctmdvr{#1}{\bfx,\bfw} }
\def\ctmvr#1#2{ \ctmv{#1}_{#2} }
\def\ctmvbzy#1{ \ctmvr{#1}{\bfz,\bfy} }
\def\ctmvbxy#1{ \ctmvr{#1}{\bfx,\bfy} }
\def\ctmvbyz#1{ \ctmvr{#1}{\bfy,\bfz} }
\def\ctmvbzy#1{ \ctmvr{#1}{\bfz,\bfy} }
\def\ctmvbzw#1{ \ctmvr{#1}{\bfz,\bfw} }
\def\ctmvbxz#1{ \ctmvr{#1}{\bfx,\bfz} }
\def\ctmvbxw#1{ \ctmvr{#1}{\bfx,\bfw} }
\def\ctmvbwy#1{ \ctmvr{#1}{\bfw,\bfy} }
\def\vctmv#1#2{ {}_{#1}\ctmv{#2} }
\def\vctmvr#1#2#3{ {}_{#1}\ctmvr{#2}{#3} }
\def\vctmdvr#1#2#3{ {}_{#1}\ctmdvr{#2}{#3} }
\def\actmvr#1#2{ \vctmvr{\bfvbr}{#1}{#2} }
\def\actmdvr#1#2{ \vctmdvr{\bfvbr}{#1}{#2} }
\def\actmvbxy#1{ \actmvr{#1}{\bfx,\bfy} }
\def\actmvbyz#1{ \actmvr{#1}{\bfy,\bfz} }
\def\actmdvbxy#1{ \actmdvr{#1}{\bfx,\bfy} }
\def\lctmv#1{ \vctmv{\bflc}{#1} }
\def\lctdmv#1{ \lctmv{#1}^{\cdot} }
\def\vlctmvr#1#2#3{ \lfsin{\mathrm{l}}{#1}\ctmvr{#2}{#3} }
\def\vlctmdvr#1#2#3{ \lfsin{\mathrm{l}}{#1}\ctmdvr{#2}{#3} }
\def\alctmvr#1#2{ \vlctmvr{\bfvbr}{#1}{#2} }
\def\alctmdvr#1#2{ \vlctmdvr{\bfvbr}{#1}{#2} }
\def\alctmvbxy#1{ \alctmvr{#1}{\bfx,\bfy} }
\def\alctmdvbxy#1{ \alctmdvr{#1}{\bfx,\bfy} }
\def\vrctmvr#1#2#3{ \lfsin{\mathrm{r}}{#1}\ctmvr{#2}{#3} }
\def\arctmvr#1#2{ \vrctmvr{\bfvbr}{#1}{#2} }
\def\arctmvbxy#1{ \arctmvr{#1}{\bfx,\bfy} }
\def\olds{ \check{\;} }
\def\zctmv#1{ \olds[\![ #1 ] \! ] }
\def\zctmdv#1{\zctmv{#1}^{\cdot} }
\def\zctmdvr#1#2{ \zctmdv{#1}_{#2} }
\def\zctmdvbxy#1{ \zctmdvr{#1}{\bfx,\bfy} }
\def\zctmvr#1#2{ \zctmv{#1}_{#2} }
\def\zctmvbxy#1{ \zctmvr{#1}{\bfx,\bfy} }
\def\dmmy{ - }
\def\bdmm{}
\def\Hhom{ \mathrm{HH} }
\def\Hhomv#1{ \Hhom_{#1} }
\def\Hhomxy{ \Hhomv{\bfx,\bfy} }
\def\Hhomxz{ \Hhomv{\bfx,\bfz} }
\def\Hhomtxy{ \Hhomv{\tnsp{\bfx},\bfy} }
\def\tHhomv#1{ \widehat{\Hhom}_{#1} }
\def\tHhomtxy{ \tHhomv{\tnsp{\bfx},\bfy} }
\def\Hml{ \mathrm{H} }
\def\HmWx{ \Hml_{\ctWx} }
\def\HmWX{ \Hml_{\ctWX} }
\def\HmWxp{ \Hml_{\ctWxp} }
\def\Htg{ \mathcal{H} }
\def\zHtg{ \olds\Htg }
\def\HmlK{ \Hml_{\ctKom} }
\def\mod{ \mathop{\mathrm{mod}}\, }
\def\smpe{ e }
\def\smpev#1{ \smpe_{#1} }
\def\smpeo{\smpev{1} }
\def\smpet{ \smpev{2} }
\def\xbbr#1{ \big( #1 \big) }
\def\botarcv#1#2{ \bigotimes_{j=#1}^{#2} \mMlarcv{x_j,y_j} }
\def\xqh{ \simeq_{\catmod{K}} }
\def\xcnA{ A }
\def\xcnAv#1{ \xcnA_{#1} }
\def\xcnAm{ \xcnAv{m} }
\def\xcnAn{ \xcnAv{n} }
\def\xcnhA{ \hat{\xcnA} }
\def\xcnhAv#1{ \xcnhA_{#1} }
\def\xcnhAm{ \xcnhAv{m} }
\providecommand{\mtre}[1]{\lVert#1\rVert}
\def\xcna{ \mathbf{a} }
\def\xcnavv#1#2{ \xcna^{#1}_{#2} }
\def\xcnav#1{ \xcna^{#1} }
\def\xcnacrz{ \xcnavv{\xcrs}{0} }
\def\xcnaprz{ \xcnavv{\xpar}{0} }
\def\xcnacro{ \xcnav{\xcrs} }
\def\xcnalno{ \xcnav{\shortmid} }
\def\xcnapro{ \xcnav{\xpar} }
\def\xcnao{ \xcnav{1} }
\def\xcnxb{ \mathbf{b} }
\def\xcnbvv#1#2{ \xcnxb^{#1} }
\def\xcnbcro{ \xcnbvv{\xcrs}{1} }
\def\xcnbpro{ \xcnbvv{\xpar}{1} }
\def\xcnbA{ \mathbf{A} }
\def\mMarcxyoy{\mMarcxyo\otimes \Qy}
\def\cBgbr#1{ \Big[ #1 \Big] }
\def\xoy{ 1 \otimes y }
\def\shv#1{ \langle #1\rangle }
\def\shvv#1#2{ #1 \shv{#2} }
\def\lc{ \lambda }
\def\bflc{ \boldsymbol{\lc} }
\def\vbr{ \alpha }
\def\be{ \beta }
\def\bfvbr{ \boldsymbol{\vbr} }
\def\bfbe{ \boldsymbol{\be} }
\def\brw{ \mathfrak{w} }
\def\smgb#1{ \check{#1} }
\def\brr{ \mathfrak{b} }
\def\brrws{ \smgb{\brr} }
\def\brrpmo{ \brr_{\pm 1} }
\def\uin{\mathrm{in}}
\def\undrl#1#2{\underaccent{\;\;\;\;\;\;#2}{#1}}
\def\smin{ \undrl{\sim}{\uin} }
\def\smout{ \sim }
\def\eqsmin{ \undrl{\simeq}{\uin} }
\def\eqsmout{ \simeq }
\def\br{ \beta }
\def\sctgmod{\!-\!\ctgmod}
\def\sctmod{\!-\!\ctmod}
\def\IQgmv#1{ \IQ\sctgmodv{#1} }
\def\IQgmt{ \IQgmv{2} }
\def\IQgmh{ \IQgmv{3} }
\def\sctgmodv#1{ \sctgmod_{#1} }
\def\sctgmodt{ \sctgmodv{2} }
\def\sctgmodh{ \sctgmodv{3} }
\def\sctgmodi{ \sctgmodv{i} }
\def\mQv#1{ \IQ_{#1} }
\def\tmQv#1{ \widetilde{\IQ}_{#1} }
\def\mQx{ \mQv{\bfx} }
\def\mQax{ \mQv{\vbr,x} }
\def\mQxy{ \mQv{\bfx,\bfy} }
\def\mQxw{ \mQv{\bfx,\bfw} }
\def\mQxyo{ \mQv{x_1,y_1} }
\def\mQxytzw{ \mQv{x_2,y_2,z,w} }
\def\mQxytz{ \mQv{x_2,y_2,z} }
\def\tmQxytz{ \tmQv{x_2,y_2,z} }
\def\ytmxt{ y_2 - x_2 }
\def\mQxyt{ \mQv{x_2,y_2} }
\def\xnum#1{ |#1| }
\def\xass{associate}
\def\vass#1{$#1$-\xass}
\def\vbrass{\vass{\bfvbr}}
\def\Dlv#1{ \Delta_{#1} }
\def\DlXx{ \Dlv{\bfvbr;\bfx} }
\def\Dlxy{ \Dlv{\bfx;\bfy} }
\def\Dlyz{ \Dlv{\bfy;\bfz} }
\def\Dlyw{ \Dlv{\bfy;\bfw} }
\def\Dlxw{ \Dlv{\bfx;\bfw} }
\def\Dlxyo{ \Dlv{x_1;y_1} }
\def\Dlxyt{ \Dlv{x_2;y_2} }
\def\Dlxytsz{ \Dlxyt[z]}
\def\Dlxytsoz{ \Dlxyt\otimes \Qfrz }
\def\Dlxyp{ \Dlv{\bfx';\bfy'} }
\def\xccl{ \mathfrak{c} }
\def\uWpxybm{$\uWpxy$-module}
\def\Kzc{ \mathrm{K} }
\def\Kzcv#1{ \Kzc(#1) }
\def\smmatr#1{
\bigl( \begin{smallmatrix}
#1
\end{smallmatrix} \bigr)
}
\def\ssmmatr#1{
( \begin{smallmatrix}
#1
\end{smallmatrix} )
}
\def\bfca{ \bfa }
\def\bfcA{ \mathbf{A} }
\def\fqsoA{ f_{\aA;\eqsmout} }
\def\fqsoB{ f_{\aB;\eqsmout} }
\def\fisoAB{ f_{\aA\aB;\cong} }
\def\encv#1{ \hat{#1} }
\def\yL{ L }
\def\liealg#1{ \mathfrak{#1} }
\def\lieg{ \liealg{g} }
\def\Uliev#1{ \mathrm{U}{#1} }
\def\Ulieg{ \Uliev{\lieg} }
\def\Der{ \mathrm{Der}}
\def\bbrs#1{ \bigl( #1 \bigr) }
\def\Bbrs#1{ \Bigl( #1 \Bigr) }
\def\bsbrs#1{ \bigl[ #1 \bigr] }
\def\Ffbrox{ \Ffvv{\brrws_1(\bfvbr)}{\bfx} }
\def\Ffbroz{ \Ffvv{\brrws_1(\bfvbr)}{\bfz} }
\def\xcnb#1#2{ #1 \sqcup #2 }
\def\idbrv#1{ \shortparallel_{#1} }
\def\idbro{ \idbrv{1} }
\def\idbrn{ \idbrv{\nst} }
\def\idbrno{ \idbrv{\nst-1} }
\def\yomo{ y_1 - x_1 }
\def\yomt{ y_1 - x_2 }
\def\yomot{ (\yomo)(\yomt) }
\def\ycna{ a^{\xpar} }
\def\ycnavv#1#2{ \ycna_{#1;#2}}
\def\ycnamv#1{ \ycnavv{m}{#1}}
\def\ycnamoo{ \ycnamv{11}}
\def\ycnamtt{ \ycnamv{22}}
\def\ycnamot{ \ycnamv{12}}
\def\ycnamto{ \ycnamv{21}}
\def\ycnamij{ \ycnamv{ij}}
\def\ycnb{ a^{\xcrs} }
\def\ycnbvv#1#2{ \ycnb_{#1;#2}}
\def\ycnbmv#1{ \ycnbvv{m}{#1}}
\def\ycnbmoo{ \ycnbmv{11}}
\def\ycnbmtt{ \ycnbmv{22}}
\def\ycnbmot{ \ycnbmv{12}}
\def\ycnbmto{ \ycnbmv{21}}
\def\ycnbmij{ \ycnbmv{ij}}
\def\spyom{ \bsdfpyo }
\def\spxtm{ \bsdfpxt }
\def\xgn{ v }
\def\bgn{ \mathbf{\xgn} }
\def\acpvv#1#2{ a_{#1;#2} }
\def\acpmv#1{ \acpvv{m}{#1} }
\def\acpmij{ \acpmv{ij} }
\def\vcp{ \vec{p} }
\def\crF{\mathrm{F}}
\def\crFv#1#2{ \crF_{#1}[#2] }
\def\crFmn#1{ \crFv{m,n}{#1} }
\def\xdgv#1{ \xdg_{\mathrm{#1}} }
\def\xdgq{ \xdgv{q} }
\def\xdga{ \xdgv{a} }
\def\xdgt{ \xdgv{t} }
\def\dgshv#1{ \mathsf{#1} }
\def\dgshq{ \dgshv{q} }
\def\dgsha{ \dgshv{a} }
\def\dgsht{ \dgshv{t} }
\def\dgrv#1{$#1$-degree}
\def\dgrq{\dgrv{q}}
\def\dgra{\dgrv{a}}
\def\dgrt{\dgrv{t}}
\def\grddv#1{$#1$-graded}
\def\grddq{\grddv{q}}
\def\shnst{ (\dgsha\dgsht^{-1})^{\frac{1}{2}\nst} }
\def\wdeq{\;\;=\;\;}
\def\wdcng{\;\;\cong\;\;}
\def\pthe#1{ \partial_{\theta_{#1}} }
\def\xxp{ \tilde{x} }
\def\xxpv#1{ \xxp_{#1} }
\def\xxpo{ \xxpv{1} }
\def\xxpt{ \xxpv{2} }
\def\xxph{ \xxpv{3} }
\def\xxpi{ \xxpv{i} }
\def\mMcrarxwv#1{ \mMcrarxw^{#1} }
\def\mMcrarxwo{ \mMcrarxwv{1} }
\def\mMcrarxwt{ \mMcrarxwv{2} }
\def\xprP{ P }
\def\xprPv#1{ \xprP_{#1}}
\def\xprPt{ \xprPv{2} }
\def\xxf{ f }
\def\xxg{g}
\def\xxfv#1{ \xxf_{#1} }
\def\xxfp{ \xxfv{+} }
\def\xxfm{ \xxfv{-} }
\def\xpp{ p }
\def\xppo{ \xpp_1 }
\def\xppt{ \xpp_2 }
\def\xxqp{ \tilde{q} }
\def\xyq{ q }
\def\xdmv#1{ \dim_{#1} }
\def\xdmq{ \xdmv{q} }
\def\xdoq{ d(q) }
\def\xrkv#1{ \rank_{#1} }
\def\xrkqv#1{ \xrkv{#1;q} }
\def\xrkqx{ \xrkqv{\bfx} }
\def\xrkqy{ \xrkqv{\bfy} }
\def\xrkq{ \xrkv{q} }
\def\qnmv#1{ [#1]_{q} }
\def\lasl{ \mathfrak{sl} }
\def\laslv#1{ \lasl(#1) }
\def\laslt{ \laslv{2} }
\def\laslk{ \laslv{k} }
\def\Sq{ \mathrm{Sq} }
\def\Sqv#1{ \Sq^{#1} }
\def\Sqo{ \Sqv{1} }
\def\Sqt{ \Sqv{2} }
\def\mMp{ \aWp }
\def\xkf{ \mathbbm{k} }
\def\frz{ z }
\def\frw{ w }
\def\Qfrz{ \Qv{\frz} }
\def\zQfrz{ z\Qfrz }
\def\zez{ |_{\frz=0} }
\def\spmf{f}
\def\spmg{g}
\def\spmh{h}
\def\mvcn#1{ \vcenter{#1}}
\colorlet{shadecolor}{blue!10}
\numberwithin{equation}{section}
\title[Witt algebra and link homology]
{Positive half of the Witt algebra acts on triply graded link homology}
\author[M.~Khovanov]{Mikhail Khovanov}
\address{M.~Khovanov\\
Mathematics Department,
Columbia University \\
2990 Broadway,
New York, NY 10027}
\email{khovanov@math.columbia.edu}
\thanks{The work of M.K. was supported in part by the NSF grants DMS-0739392 and DMS-1005750}
\author[L.~Rozansky]{Lev Rozansky}
\address{
L.~Rozansky\\
Department of Mathematics\\
University of North Carolina at Chapel Hill\\
CB \# 3250, Phillips Hall\\
Chapel Hill, NC 27599
}
\email{rozansky@math.unc.edu}
\thanks{The work of L.R. was supported in part by the NSF grants DMS-0808974 and DMS-1108727}
\date{May 7, 2013}
\begin{document}
\maketitle
\begin{abstract}
The positive half of the Witt algebra is the Lie algebra spanned by vector fields $x^{m+1}\frac{d}{dx}$ acting as differentiations on the polynomial algebra $\IQ[x]$ upon which the Soergel bimodule construction of triply graded link homology is based. We show that this action of Witt algebra can be extended to the link homology.
\end{abstract}

\begin{spacing}{0.65}
\tableofcontents
\end{spacing}

\section{Introduction}
\subsection{Motivation}
Stable cohomological operations on cohomology of topological spaces exhibit the deepest structure,
that of the Steenrod algebra, when the ring of coefficients is $\ZZ/p$ for a prime $p$, while
nontrivial operations don't even exist when the coefficient ring is $\IQ$.
The work of Lipshitz and Sarkar~\cite{LS1, LS2}, see also Kriz, Kriz, and Po~\cite{KKP}, shows that
the Steenrod algebra acts, in the homological direction, on Khovanov $\laslt$ link homology with
coefficients in $\ZZ/p$, and
this action is already highly nontrivial when $p=2$ for the subalgebra generated by $\Sqo$, $\Sqt$,
see \cite{LS2,Seed}.

Gorsky~\cite{Go} and Gorsky, Oblomkov, Rasmussen~\cite{GOR} conjectured
that large algebras related
to affine Lie algebras act on $\laslt$ homology and triply graded homology of torus knots. In this
conjecture the ground ring is $\ZZ$ rather than torsion.  One wonders what part of this structure will
act on cohomology groups of arbitrary links.

A variety of spectral sequences acting between various link and 3-manifold homologies, including
Ozsvath-Szabo 3-manifold homology, Ozsvath-Rasmussen-Szabo link
homology and $\laslk$ homology indicates the existence of a rich structure of cohomological
operations in these theories. Baston-Seed spectral sequence~\cite{BS} from $\laslt$ homology of a link
to tensor product of homology groups of its components comes from one family of such operations.

Reduced Khovanov homology of alternating and quasi-alternating knots lies on one diagonal in the
bigrading plane; for general knots this diagonal direction is the preferred one for the growth of
homology group ranks. It would be strange if the groups living on the same diagonal
were completely unrelated, and we conjecture the existence of cohomological operations acting
along this direction. The knot Floer homology case is similar, and such homological operations,
if found, might help to prove the Fox conjecture that coefficients of the Alexander
polynomial of an alternating knot constitute a trapezoidal sequence.

Categorifications of quantum groups at roots of unity of prime order $p$ over a field $\xkf$ of characteristic
$p$, investigated in~\cite{KQY, QYE}, use derivation $\partial = x^2 \partial / \partial x$ acting
first on the polynomial algebra $\xkf[x]$ in one variable $x$, then on $\xkf[x_1, \dots, x_n]$,
and later on suitable endomorphism algebras of these polynomial spaces, including the nilHecke algebras,
KLR and Webster algebras. Characteristic $p$ is needed to make $\partial$ nilpotent, $\partial^p=0$,
and work in the hopfological setting, over the base monoidal triangulated category of stable graded
modules over the Hopf algebra $\xkf[\partial]/(\partial^p)$, generalizing homotopy category of complexes.

This fails in characteristic zero, but one can at least study the symmetries $L_m = x^{m+1}
\partial/\partial x$, $m\geq 0$, acting on $\xkf[x]$. Symmetries $L_m$ span the positive
half $\mMp$ of the Witt Lie algebra, and their action  induces an action of $\mMp$ on nilHecke, KLR, and other
algebras categorifying quantum groups and their representations. One might hope that the entire
Webster's categorification of Reshetikhin-Turaev link invariants for arbitrary simple Lie algebras
and their representations~\cite{Web1,Web2} can be done $\mMp$-equivariantly, leading to an action
of the latter on all these link homology theories.

In finite characteristic one can expect an action of a larger algebra.
Recent papers of Beliakova, Cooper~\cite{BC} and Kitchloo~\cite{Kit} exhibit a Steenrod algebra action on the nil-Hecke algebra. If this action extends naturally to Webster algebras, associated bimodules, and link homology in an invariant way, there will be a Steenrod algebra action on bigraded link homology groups. Curiously, the direction of that action (if it exists) in the bigrading plane will not match that of the Lipshitz-Sarkar Steenrod algebra action~\cite{LS1,LS2}. Overall, we expect the existence of very large algebras of cohomological operations on link homology for various homology theories and coefficient rings.

%
%

In this paper we work in the simplest instance of the categorified Schur-Weyl dual set-up. Instead
of looking at the action of $\mMp$ on rings categorifying quantum groups, we consider an action of
$\mMp$ on Soergel bimodules serving as building blocks for the Soergel category, which
categorifies the Hecke algebra of the symmetric group. We make this action compatible with
the Rouquier braid group action~\cite{Rou, Rousl2} via complexes of Soergel bimodules and with the Hochschild
homology description of triply-graded categorification of the HOMFLYPT polynomial,
resulting in an action of $\mMp$ (and its universal enveloping algebra) on the triply-graded
link homology of~\cite{KR2,Khtgd}.

\subsection{Notations and definitions}
Throughout the paper we use notation $\bfa=a_1,a_2,\ldots$ for a finite or infinite \tsq\ of elements of a set $\setA$ and $\xnum{\bfa}$ for the length of $\bfa$.
We also use a shortcut $\bfa\in\setA$ for $a_i\in\setA$, $i\geq 1$.
In particular,
$\bfx=x_1,\ldots,x_n$ is a \tsq\ of variables with $\xnum{\bfx}=n$.

All modules in this paper are $\ZZ$-graded; we refer to this grading as \tgrdq\ and denote it $\xdgq$. The \tdgq\ of all main variables such as $\bfx$, $\bfvbr$ and $\bflc$ is 2. We work over the base field $\IQ$ of rational numbers.
The positive half $\aWp$ of the Witt algebra is a \tgrddq\ Lie algebra with generators $\xLm$, $m\geq 0$, $\xdgq\xLm=2m$, and relations
\begin{equation}
\label{eq:cmrl}
[\xLm,\xLn] = (n-m) \xLv{m+n}.
\end{equation}
This algebra acts by differentiations on the polynomial algebra $\Qx$: $\xhLm =  \dfpmv{x}$.
 More generally, it acts on the algebra $\Qbx$, $\xnum{\bfx}=\brnk$, as
$\xhLm = \sum_{i=1}^{\brnk}  \dfpmv{x_i}$. We refer to this action as the \stact.

Let $\uWp$ denote the universal enveloping algebra of $\aWp$. The \stact\ of $\uWp$ on $\Qbx$ allows us to define a \sdrpr\ $\uWpx = \Qbx\rtimes\uWp$ with the multiplication
\begin{equation}
\label{eq:scndrl}
(p\otimes \xLm)(q\otimes \xLn) = pq\otimes \xLm\xLn + p(\xhLm q)\otimes \xLn,
\end{equation}
where $p,q\in\Qbx$.
For two \tsq s of variables $\bfvbr$ and $\bfx$ we also use notation
\[
\uWpXx=\uWpv{\bfvbr,\bfx}=\Qv{\bfx,\bfvbr}\rtimes\uWp,
\]
if we want to separate $\bfvbr$ from $\bfx$ in order to emphasize that the variables $\bfvbr$ are treated differently from $\bfx$. Note that $\uWp$ acts on both $\bfx$ and $\bfvbr$.

We use a subscript $\bfx$, as in $\mMx$, to denote a $\uWpx$-module. Then for another \tsq\ of variables $\bfy$, $\xnum{\bfy}=\xnum{\bfx}$, $\mMy$ denotes the corresponding module over $\uWpy$. Similarly, we use notation
$\mXMx$ for a $\uWpXx$-module.
We denote by $\mQx$ the algebra $\Qbx$ considered as a $\uWpx$-module with the \stact\ of $\uWp$.

%

Define the curvature of an infinite \tsq\ of polynomials $\bfa=a_0,a_1,\ldots\in\Qbx$ as a double \tsq
\begin{equation}
\label{eq:odcrv}
\crFmn{\bfa} = \xhLm a_n - \xhLn a_m - (n-m)\,a_{m+n}.
\end{equation}
A \tsq\ $\bfa$
is called \Wfl\ if its curvature is zero:
\begin{equation}
\label{eq:shprp}
\crFmn{\bfa} = 0,\qquad m,n\geq 0.
\end{equation}
For example, the \tsq\
\begin{equation}
\label{eq:dfppr}
\bsdfpx\in\Qv{x},\qquad \spdpmx = (m+1)x^m,
\end{equation}
is obviously \Wfl.


A \Wfl\ \tsq\ determines an automorphism of $\uWpx$:
\begin{equation}
\label{eq:autfp}
p\otimes \xLm \longmapsto p \otimes (\xLm + a_m).
\end{equation}
This automorphism, in turn, determines an invertible \enfn\ $\shv{\bfa}$ of the category $\uWpx\sctgmod$
of  graded left $\uWpx$-modules and its derived category $\ctD(\uWpx\sctgmod)$: the action of $\uWpx$ on a module $\mMx\shv{\bfa}$ is the action on $\mMx$ modified by the automorphism~\eqref{eq:autfp}.

\subsection{Nested derived categories}
\label{sbs:ndc}

In this paper we consider only small categories.
Let $\ctAd$ be an additive category. Following the notations of Weibel's book~\cite{Weibel}, $\ctCh(\ctAd)$ denotes the chain category, whose objects are complexes over $\ctAd$, its morphisms being chain maps, while $\ctK(\ctAd)$ denotes the homotopy category, its morphisms being chain maps modulo null-homotopies.

Since old and new constructions of \tglh\ involve `\nstd' homotopy and derived categories, we will use the following shortcut notations for derived categories of graded modules over algebras:
\begin{equation}
\label{eq:twocatnots}
\ctQv{\bfx} =\ctD( \Qbx\sctgmod),\qquad\ctWx = \ctD(\uWpx\sctgmod),
\qquad \ctXWx = \ctD(\uWpXx\sctgmod).
\end{equation}
%
The categories~\eqref{eq:twocatnots}
are additive, hence we may use the categories of complexes and homotopy categories over
them, such as
$\ctCh(\ctWx)$ and $\ctK(\ctWx)$.
We call these categories `\nstd'.

In addition to outer chain and homotopy categories, we also use `relative' \nstd\ derived categories $\ctDr(\ctWx)$ and $\ctDr(\ctXWx)$. First, consider a general setup. Suppose that two additive categories $\ctA$ and $\ctB$ are related by an additive `forgetful' functor $\Ffr\colon\ctA\rightarrow\ctB$ which is extended to the chain categories: $\Ffr\colon\ctCh(\ctA)\rightarrow\ctCh(\ctB)$.
\begin{definition}
\label{df:relder}
A morphism $f\in\Hom_{\ctCh(\ctA)}(\aAb,\aBb)$ between two complexes in $\ctCh(\ctA)$ is called a \emph{\qiso} if $\Ffr(f)\colon\Ffr(\aAb)\rightarrow\Ffr(\aBb)$ is an isomorphism in
$\ctK(\ctB)$.
\end{definition}

If $f,g\colon\aAb\rightarrow\aBb$ are homotopic and $f$ is a \qiso, then $g$ is also a \qiso, hence the notion of \qiso\ extends to the homotopy category $\ctK(\ctA)$.
%

The collection of \qiso s in $\ctK(\ctA)$ is a saturated localizing system compatible with triangulation. Denote by $\ctDFfr(\ctA)$ the localization of $\ctK(\ctA)$ relative to \qiso s (the index $\mathrm{r}$ means "relative"). It is a triangulated category and the localization functor $\Qf\colon\ctK(\ctA)\rightarrow\ctDFfr(\ctA)$ is exact (see Lemma~5.3 and Proposition 5.5 in~\cite[Section 5]{dcr}). The image $\Qf(\aAb)$ of an object $\aAb$ in $\ctK(\ctA)$ is isomorphic to the zero object in $\ctDFfr(\ctA)$ iff $0\rightarrow\aAb$ is a \qiso. This is also equivalent to the existence of a distinguished triangle
\[
\xymatrix{
\aAb'
\ar[r]^-{f}
&
\aAb''
\ar[r]
&
\aAb
\ar[r]
&
\aAb'[1]
}
\]
in $\ctK(\ctA)$
such that $f\colon \aAb'\rightarrow\aAb''$ is a \qiso\ (\cite[Lemma 6.7]{dcr}).
%

The chain, homotopy, and relative derived category are related by functors:
\begin{equation}
\label{eq:chofmps}
\xymatrix{\ctCh(\ctA)\ar@/^2pc/[rr]^-{\Fffq} \ar[r] & \ctK(\ctA) \ar[r]_-{\Qf} & \ctDr(\ctA),}
\end{equation}
where we denote by $\Fffq$ the composition of the quotient functor and the localization functor $\Qf$.

The definition of $\ctDFfr(\ctA)$ as the localization relative to \qiso s specializes to the definition of the derived category in two familiar cases.
The first case is when $\ctA$ is the category of modules over an algebra $\alA$ (over $\IQ$), $\ctB$ is the category of $\IQ$-vector spaces and $\Ffr$ forgets the $\alA$-module structure. Then $\ctDr(\ctA)$ is the derived category of $\alA$-modules. The second case is when $\ctA$ is again the category of modules over $\alA$,  $\ctB$ is the category of modules over its subalgebra $\alB\subset \alA$ and $\Ffr\colon\alA\sctmod\longrightarrow \alB\sctmod$ is the restriction functor. Then $\ctDr(\ctA)$ is the relative derived category $\ctD\bbrs{(\alA,\alB)\sctmod}$.

In this paper we use the `nested' version of the second example. We set $\ctA = \ctD(\alA\sctgmod)$, $\ctB=\ctD(\alB\sctgmod)$ (algebras $\alA$, $\alB$ are now graded), and $\Ffr$ is the restriction functor extended to these derived categories. The result is the nested relative derived category
$
\ctDr\bbrs{\ctD(\alA\sctgmod)}$. Note that its objects are triply graded: the first grading comes from $\alA$, the second is the homological grading of $\ctD$ and the third is the homological grading of $\ctDr$.

More specifically, we take $\alA = \uWpXx$, $\alB=\Qbx$, and $\Ffr$ restricts $\uWpXx$-modules to $\Qbx$-modules. Then we get the nested relative derived category $\ctDr(\ctXWx)$.
Overall, there is a chain of functors
\begin{equation}
\label{eq:chct}
\begin{tikzpicture}[normal line/.style={->}]
  \matrix (m) [matrix of math nodes,row sep=3em,column sep=2em,minimum width=2em]
  {
    \uWpXx\sctgmod
  & \ctXWx = \ctD(\uWpXx\sctgmod)
  & \ctCh(\ctXWx)
  & \ctK(\ctXWx)
  & \ctDr(\ctXWx)\\};
     \path[normal line,font=\scriptsize]
     (m-1-1) edge (m-1-2)
     (m-1-2) edge (m-1-3)
     (m-1-3) edge (m-1-4)
     (m-1-4) edge node[auto] {$\Qf$} (m-1-5)
     (m-1-3.35) edge [bend left=20] node[auto] {$\Fffq$}  (m-1-5.145);
\draw[decorate,decoration={brace,mirror}]
  (m-1-1.south west) -- (m-1-1.south east) node[midway,auto,swap,font=\scriptsize] {abelian};
\draw[decorate,decoration={brace,mirror}]
  (m-1-2.south west) -- (m-1-2.south east) node[midway,auto,swap,font=\scriptsize] {derived};
\draw[decorate,decoration={brace,mirror}]
  (m-1-3.south west) -- (m-1-5.south east) node[midway,auto,swap,font=\scriptsize] {nested};
\end{tikzpicture}
\end{equation}
The category $\ctDr(\ctWx)$ is a particular case of $\ctDr(\ctXWx)$ when the \tsq\ $\bfvbr$ is empty. Often we use a combined \tsq\ of variables $\bfx,\bfy$ instead of $\bfx$, thus getting the categories $\ctDr(\ctXWxy)$ and
$\ctDr(\ctWxy)$.
%

Since objects in nested categories are `complexes of complexes', we have to distinguish between homotopy equivalences in the inner and outer categories. Hence we use notations $\smin$ and $\eqsmin$ for homotopy equivalence and \xqiso\ in the inner category, such as $\ctCh(\uWpXx\sctgmod)$, and $\smout$, $\eqsmout$ for homotopy equivalence and \qiso\ in the outer category, such as $\ctCh(\ctXWx)$. When we need to distinguish between complexes in the inner and outer categories, we use a square bracket notation $\bsbrs{\cdots \rightarrow M_i\rightarrow M_{i+1}\rightarrow\cdots}$ for a complex in the inner category and a boxed notation
$\boxed{\cdots \rightarrow M_i\rightarrow M_{i+1}\rightarrow\cdots}$ for a complex in the outer category, with each $M_i$ itself a complex.

\subsection{Derived partial tensor products}

In this paper we use two versions of the partial tensor product:
\[
\xymatrix{
&(\uWpxy\sctgmod)\times (\uWpyz\sctgmod)
\ar[ld]_(0.6){\otuy}
\ar[rd]^(0.6){\oty}
\\
\uWpv{\bfx,\bfy,\bfz}\sctgmod
&&
\uWpxz\sctgmod
}
\]
Both versions correspond to the usual tensor product $\otv{\Qby}$, while the action of the $\aWp$ generators $\xLm$ is defined by the Leibnitz rule: $\xLm$ acts on a tensor product $\mMxy\otv{\Qby}\mNyz$ of $\uWp$-modules as $\xLm\otimes\xId + \xId\otimes\xLm$.
The difference between $\otuy$ and $\oty$ is that the former remembers the $\Qby$-module structure, while the latter forgets it. In other words, the functor $\oty$ is the composition of $\otuy$ and the functor of forgetting the $\Qby$-module structure. The following properties are shared by both products $\otuy$ and $\oty$, and we will formulate them mostly for $\oty$.

Tensor products $\oty$ and $\otuy$ extend in left-derived form to derived categories, \eg%
%
\begin{equation}
\label{eq:drtnpr}
\xymatrix{
\ctWxy\times \ctWyz
\ar[r]^-{\otdry}
&
\ctWxz
}
\end{equation}
and, further, to the chain and homotopy categories built over them:
\begin{equation}
\label{eq:tpKm}
\xymatrix{
\ctK(\ctWxy)\times \ctK(\ctWyz)
\ar[r]^-{\otdry}
&
\ctK(\ctWxz)
}
\end{equation}
Theorem~\ref{th:drtnpr} says
that the latter tensor product descends directly to the nested relative derived categories
such as $\ctDr(\ctWxy)$ without the need for any additional derivation:
\begin{equation}
\label{eq:tpDr}
\xymatrix{
\ctDr(\ctWxy)\times \ctDr(\ctWyz)
\ar[r]^-{\otdry}
&
\ctDr(\ctWxz).
}
\end{equation}

When extra variables $\bfvbr$ are present, we use the `left' version of the tensor product~\eqref{eq:drtnpr}:
\begin{equation}
\label{eq:lwfntso}
\xymatrix@C=1.60cm@R=0.2cm{
\ctXWxy \times \ctYWyz
\ar[r]^-{\otdrfly}
&
\ctXWxz
}
\end{equation}
The name `left' has nothing to do with derivation. It stems from the fact that
the product $\otdrly$ keeps the $\Qv{\bfvbr}$-module structure, while forgetting the $\Qv{\bfbe}$-module structure. This tensor product
also extends to nested relative derived categories
\[
\xymatrix@C=1.60cm@R=0.2cm{
\ctDr(\ctXWxy) \times \ctDr(\ctYWyz)
\ar[r]^-{\otdrfly}
&
\ctDr(\ctXWxz).
}
\]

 extends to the homotopy category $\ctK(\ctXWxy)$ and to the derived category $\ctDr(\ctXWxy)$.

For two finite \tsq s of variables $\bfx$, $\bfy$ of the same length we use notation
\[
(\bfy-\bfx) = (y_1-x_1,\ldots,y_n-x_n)
\]
for the ideal in $\Qbxy$ generated by the pairwise differences of corresponding variables.
We introduce a special `diagonal' $\uWpxy$-module
\[
\Dlxy = \mQxy/(\bfy-\bfx),
\]
which has an obvious property: for any $\uWpx$-module $\mMx$ there is an isomorphism
\linebreak
$\mMx\oty\Dlxy\cong \mMy$.
This property persists at the derived level:
\[
\mMx\otdry\Dlxy\simeq\mMy.
\]
Recall that $\xLm 1 =0$, where $1$ is the unit element of $\mQxy$.

\subsection{Gradings}
\label{ss:grd}

For an algebra $\alA$, let $\alA\sctgmodi$ denote the category of $\ZZ^i$-graded $\alA$-modules. If $\alA$ is graded, then one grading matches the grading of $\alA$, while others have external origin. Sometimes we drop $i$ and use notation $\alA\sctgmod$.

The \tglh\ construction involves three gradings called $q$-, $a$- and $t$-gradings. The corresponding degrees are denoted $\xdgq$, $\xdga$ and $\xdgt$ and the degree shifting functors are denoted $\dgshq$, $\dgsha$ and $\dgsht$ (\eg $\dgshq^2$ is the functor that shifts the \dgrq\ up by two).

We have already introduced the \tdgq. The algebras $\Qbx$ and $\uWpXx$ and their modules are \grddq:
\begin{equation}
\label{eq:dgrs}
\xdgq \bfx =
\xdgq\bfvbr =
2,\qquad\xdgq \xLm = 2m.
\end{equation}
Within the nested categories
such as
$\ctCh(\ctWx)$ and $\ctDr(\ctWx)$
the \dgra\ is the homological degree in the inner category, while
the \dgrt\ is the homological degree in the outer category. The \dgra\ and the \dgrt\ may simultaneously take half-integer values, however, the total homological degree is their sum, hence all sign factors associated with homological parity are well-defined.

In order to avoid overloading the formulas, we will ignore the gradings everywhere, except in most important formulas defining the categorification of elementary braids~\eqref{eq:bcmps} and the application of \Hchsh~\eqref{eq:hho} and~\eqref{eq:hht}
which define our construction.

\subsection{Outline of the homology construction}

For a set $\setA$ and a small category $\cCat$ notation $f\colon\setA\rightarrow\cCat$ means a map from the set $\setA$ to the set of isomorphism classes of objects of $\cCat$. If $\setA$ is a \smgr\ and $\cCat$ has a monoidal structure, then we assume that $f$ is a homomorphism: it intertwines the product in $\setA$ with the product on the set of isomorphism classes induced by the monoidal structure.

For a topological object (a \brwd, a braid or a link) $\tau$ we use notation $\ctmv{\tau}$ for the associated object or complex constructed though \emph{categorification} of an algebraic invariant such as the \hmpt\ polynomial~\cite{HOMFLY, PT}. The upper-left `check' decoration $\zctmv{\tau}$ indicates that this is the original categorification of~\cite{Khtgd}. The upper-right dot $\ctmdv{\tau}$ indicates that 
a map $\ctmdv{\dmmy}\colon\tsetT\rightarrow\tcCat$ from a set of topological objects $\tsetT$ to the (isomorphism classes of) objects of a category $\tcCat$ has a built-in invariance: there is an equivalence relation within $\tsetT$ and an equivalence relation between the objects of $\tcCat$, so that if $\tau_1\sim\tau_2$, then $\ctmdv{\tau_1}\sim\ctmdv{\tau_2}$, and as a result there is an unmarked bracket map $\ctmv{\dmmy}$ between the two quotient sets $\tsetT/\sim$ and
$\tcCat/\sim$.
%

\subsubsection{Original construction}
The following commutative diagram summarizes the \Sgl\ \bmdl~\cite{Sgl1,Sgl2} construction of the \tgh\ in~\cite{Khtgd}:
\begin{equation}
\label{eq:oldd}
\mvcn{
\xymatrix@C=4pc@R=4pc{
\brwgrn \ar @{->>}[r]^-{\fbr} \ar[d]^-{\zctmdvbxy{\dmmy}}
&
\brgrn
\ar[d]^-{\zctmvbxy{\dmmy}}
\ar[r]^-{\fcl}
&
\lnks
\ar[dr]^-{\zHtg\bdmm}
\ar@{-->}[d]
\\
\ctCh(\ctQxy)
\ar[r]
&
\ctK(\ctQxy)
\ar[r]^-{\Hhom\bdmm}
& \ctK(\IQgmt) \ar[r]^-{\Hm\bdmm}
&\IQgmh
}
}
\end{equation}
where the category $\ctQxy=  \ctD( \Qbx\sctgmod)$,  see also formula~\eqref{eq:twocatnots}.

The top row of the diagram is purely topological. $\brwgrn$ is the semigroup of $\nst$-strand \brwd s, that is, a semigroup freely generated by the elements $\sggni$ and $\sggxii$, $1\leq i\leq\nst -1$. $\brgrn$ is the braid group and the map $\fbr$ turns $\sggni$ and $\sggxii$ into a braid group generator $\sggni$  and its inverse $\sggni^{-1}$. $\lnks$ is the set of framed oriented links and the map $\fcl$ performs the circular closure of a braid.


\Tsq s $\bfx$ and $\bfy$ appearing in $\ctQxy$ contain $\nst$ variables each, one per braid strand position: $\xnum{\bfx}=\xnum{\bfy}=\nst$.
The bracket
$\zctmdvbxy{\dmmy}$
 maps \brwd s into Rouquier complexes~\cite{Rou} of \Sglb s. The category $\ctQxy$ has a monoidal structure coming from the derived tensor product over intermediate variables:
\begin{equation}
\label{eq:dtpr}
\ctQxy \times \ctQyz\xrightarrow{\;\;\otdr_{\Qby}\;\;} \ctQxz,
\end{equation}
and this structure extends to the chain category $\ctCh(\ctQxy)$. The bracket $\zctmdvbxy{\dmmy}$ turns the product in $\brwgrn$ into the tensor product~\eqref{eq:dtpr}, hence it is determined by its value on generators $\sggni$ and $\sggxii$.

In order to prove the existence of the map
$\zctmvbxy{\dmmy}$
taking braids to isomorphism classes of objects in $\ctK(\ctQxy)$
one has to verify that if two \brwd s $\brw_1$ and $\brw_2$ represent the same braid, $\fbr(\brw_1)=\fbr(\brw_2)$, then their complexes are homotopy equivalent:
$\zctmdv{\brw_1} \smout \zctmdv{\brw_2}$.

The bracket
$\zctmvbxy{\dmmy}$
has a `variable sliding' property.
Let $\brrws\in S_{\nst}$ be the symmetric group element corresponding to a braid $\brr$, so that $x_i$ and $y_{\brrws(i)}$ are variables at opposite ends of the same braid strand. Then
endomorphisms
$
\encv{x}_i,\encv{y}_{\brrws(i)}\in\End_{\ctCh(\ctQxy)}\bigl(\zctmvbxy{\brr}\bigr)
$
of multiplication by $x_i$ and $y_{\brrws(i)}$ are homotopic to each other:
\begin{equation}
\label{eq:endsld}
\encv{x}_i \smout \encv{y}_{\brrws(i)}.
\end{equation}

\Hchsh\ is a functor $\Hhom\colon\ctQxy\rightarrow\IQgmt$, and the functor $\Hhom$ in the diagram is its extension to the homotopy categories over $\ctQxy$ and $\IQgmt$. Finally, homology functor $\Hm(\dmmy)$ establishes an equivalence between categories $\ctK(\IQgmt)$ and $\IQgmh$, so there is no additional benefit in introducing a dashed arrow mapping links into complexes of $\IQ$-vector spaces. In order to prove the existence of the homology map $\zHtg(\dmmy)$, one has to verify that if the closures of two braids $\br_1$ and $\br_2$ represent isotopic framed oriented links, then their homologies are isomorphic: $\Hm\bigl(\Hhom(\zctmvbxy{\br_1})\bigr)\cong\Hm\bigl(\Hhom(\zctmvbxy{\br_2})\bigr)$.

Property~\eqref{eq:endsld} allows one to endow the link homology $\zHtg(\yL)$ of a $\mlk$-component link with the structure of a $\Qblc$-module, where the variables $\bflc$, $\xnum{\bflc}=\mlk$ are assigned bijectively to  link components.
%
Since $\Hhom(\dmmy)$ and $\Hm(\dmmy)$ are functors,
there is a map
\[
\End_{\ctK(\ctQxy)}\bigl(\zctmvbxy{\brr}\bigr) \longrightarrow
\End_{\IQ-\ctgmod}\bigl( \zHtg(\yL)\bigr),
\]
where $\yL = \fcl(\brr)$. This map turns $\encv{x}_i$ and $\encv{y}_i$ into linear operators acting on $\zHtg(\yL)$. For each link component $\yL_k$, $1\leq k\leq \mlk$ we choose a strand in the braid $\brr$ which is a part of this component and define the action of $\lc_k$ either as $\encv{x}_i$ or as $\encv{y}_{\brrws(i)}$, the variables $x_i$ and $y_{\brrws(i)}$ corresponding to the endpoints of that strand. Equation~\eqref{eq:endsld} guarantees that the action of $\bflc$ does not depend on the choice of  strands which represent link components.

\subsubsection{New construction}
The following commutative diagram outlines our modification of the previous \tgh\ construction:
%
%
%
%
\begin{equation}
\label{eq:mndiag}
\xymatrix@C=3.5pc@R=4pc{
\brwgrn \ar @{->>}[r]^-{\fbr}
\ar@/_3pc/[dd]_(0.3){\actmdvbxy{\dmmy}}
\ar[d]^-{\ctmdvbxy{\dmmy}}
&
\brgrn
\ar@/^3pc/[dd]^(0.25){\actmvbxy{\dmmy}}
\ar[d]_-{\ctmvbxy{\dmmy}}
&
\brgrns
\ar@{_{(}->}[l]
\ar[r]^-{\fcl}
\ar[dr]^(0.45){\lctdmv{\dmmy}}
&
\lnksm
\ar[d]^-{\lctmv{\dmmy}}
\ar
@/^1pc/
[dr]^-{\Htg\bdmm}
\\
\ctCh\bigl(\ctWxy\bigr)
\ar[d]^-{\FfXx}
\ar[r]^-{\Fffq}
&
\ctDr\bigl(\ctWxy\bigr)
\ar[d]_-{\FfXx}
\ar[r]^(0.675){\tHhomtxy\bdmm}|(0.4)\hole
&
\ctWx
\ar[d]_-{\FfxaX}
\ar[r]^-{\Ffukxl}
&
\ctWl
\ar[r]^-{\Hml\bdmm}
&
\uWplcgmdh
\\
\ctCh\bigl(\ctXWxy\bigr)
\ar[r]^-{\Fffq}
&
\ctDr\bigl(\ctXWxy\bigr)
\ar[r]^-{\Hhomxy\bdmm}
&
\ctWX
\ar[ur]_-{\FfukXl}
}
\end{equation}
The relative complexity of this diagram is due to the appearance of extra variables $\bfvbr$, $\xnum{\bfvbr}=\nst$, associated with braid strands and variables $\bflc$, $\xnum{\bflc}=\mlk$, associated with link components.

All categories in the diagram~\eqref{eq:mndiag} have $q$-, $a$- and $t$-gradings. The grading of nested categories is described in subsection~\ref{ss:grd}. \tgrdt\ is also the homological grading of derived categories and $\ctWx$, $\ctWl$ and $\ctWX$, while their modules have \tgrdq\ coming from~\eqref{eq:dgrs} as well as \tgrda.


The top line in the diagram~\eqref{eq:mndiag} is almost the same as in the diagram~\eqref{eq:oldd}, except that in the middle we had to restrict the braid group $\brgrn$ to the subset of braids $\brgrns\subset \brgrn$ which correspond to a permutation $s\in\mathrm{S}_{\nst}$. This restriction is convenient, because the permutation $s$ determines the replacement of variables $\bfx$ and braid strand variables $\bfvbr$ by link component variables $\bflc$ performed by the functors $\Ffukxl$ and $\FfukXl$. $\lnksm$ is the set of $\mlk$-component \emph{framed} oriented links.

The bracket $\ctmdvbxy{\dmmy}$ is essentially the same as $\zctmdvbxy{\dmmy}$ in the previous diagram~\eqref{eq:oldd}: it turns a \brwd\  into a complex of \Sglb s (with the differential acting in the second homological direction) representing an object of the category $\ctCh\bigl(\ctWxy\bigr)$, while intertwining the products of \brwd s and braids with the tensor products~\eqref{eq:tpKm} and~\eqref{eq:tpDr}
\begin{equation}
\label{eq:smbrpr}
\ctmdvbxz{\brw_1\brw_2} \cong \ctmdvbxy{\brw_1}\oty\ctmdvbyz{\brw_2}
\end{equation}
(according to Remark~\ref{rm:tpsbm}, the ordinary tensor product in the \rhs is \qisc\ to the derived one with respect to $\ctWxy$).
However, this time \Sglb s are endowed with the $\ctWxy$-module structure coming from the \stact\ of $\aWp$ on $\bfx$ and $\bfy$.
As a result of this additional structure, the map $\ctmdvbxy{\dmmy}$ preserves the braid relation only up to \qiso, hence the definition of the map $\ctmvbxy{\dmmy}$
requires taking a quotient over \qiso s performed by the functor $\Fffq$.

The functor $\tHhomtxy$ in the diagram is a modified \Hchsh.
The \Hchsh\ specialized to our case
is the functor
%
\begin{equation}
\label{eq:hho}
\xymatrix@C=5pc{
\ctWxy
\ar[r]^-{\Hhomtxy\bdmm}
&
\uWpx\sctgmodt,
}
\qquad
\Hhomtxy(\dmmy) =\shnst\, \Hml\bigl(\dmmy\otdrv{\tnsp{\bfx},\bfy}\Dlxy\bigr),
\end{equation}
where the bar over $\bfx$ indicates that we remember the action of the whole $\uWpx$, including $\Qbx$, on the \Hchsh.
%
This functor extends to the functor from nested relative derived category to an ordinary relative derived category:
\[
\xymatrix@C=5pc{
\ctDr(\ctWxy)
\ar[r]^-{\Hhomtxy}
&
\ctD\bbrs{(\uWpx,\Qbx)\sctgmodt}.
}
\]
The functor $\tHhomtxy$ is the composition of the latter functor with the standard forgetful functor from the relative to the absolute derived category:
\begin{equation}
\label{eq:hhhom}
\xymatrix@C=5pc{
\ctDr(\ctWxy)
\ar[r]_-{\Hhomtxy\bdmm}
\ar@/^2pc/[rr]^-{\tHhomtxy\bdmm}
&
\ctD\bbrs{(\uWpx,\Qbx)\sctgmodt}
\ar[r]
&
\ctWx.
}
\end{equation}

Keeping the $\Qbx$-module structure within the \Hchsh\ might violate its trace-like property important for ensuring the invariance under the first Markov move. However, we will show that this problem disappears for \Sglb s after the application of the renaming functor $\Ffusxl$. The permutation $s\in S_n$ corresponding to a braid determines the split of initial strand positions $\{1,\ldots,\nst\}$ into cycle subsets:
\[
\{1,\ldots,\nst\} = \bigcup_{i=1}^{\mlk} \xccl_i,
\]
each subset corresponding to a component of the link constructed by the cyclic closure of the braid. We choose an element $k_i\in\xccl_i$ in each cycle subset thus forming a \tsq\ $\bfk$, $\xnum{\bfk}=\mlk$. The functor $\Ffukxl$ renames variables $x_{k_i}$ into $\lc_i$, while forgetting the action of all other variables $\{\bfx\}\setminus\{x_{k_1},\ldots,x_{k_{\mlk}}\}$. We will show that the application of this functor to the \Hchsh~\eqref{eq:hhhom} of the bracket of a braid
\begin{equation}
\label{eq:longcomp}
\Ffukxl\Bigl( \tHhomtxy \bigl( \ctmvbxy{\dmmy}\bigr)\Bigr)
\end{equation}
does not depend on the choice of the representatives $k_i$ within each cycle.

\subsubsection{Braid strand variables}

We introduce auxiliary variables $\bfvbr$, $\xnum{\bfvbr}=\nst$, into the categorification construction. 
They help to prove that the composition of maps~\eqref{eq:longcomp} does not depend on the choice of cycle representatives in the definition of $\Ffusxl$.
In addition, we will use them to capitalize on
the property~\eqref{eq:endsld} of the original categorification complex $\zctmvbxy{\brr}$.
%
%
%

The \brstv s $\bfvbr$ are introduced through the functors
\begin{equation}
\label{eq:dblvr}
\xymatrix@C=1.5cm{
\uWpxy\sctgmod
\ar[r]^-{\FfXx}
&
\uWpXxy\sctgmod,}
\end{equation}
which turn a $\uWpxy$-module $\mMxy$ into a $\uWpXxy$-module $\mXMxy=\mMxy$ by making generators $\bfvbr$ act as $\bfx$. This functor $\FfXx (\dmmy) = \dmmy\otux\DlXx$ essentially doubles variables $\bfx$ to variables $\bfx,\bfvbr$. We will also use a similarly defined  functor $\FfXy$ which doubles variables $\bfy$.
Functors $\FfXx$ appearing in the diagram~\eqref{eq:mndiag} are the extensions of the original functor~\eqref{eq:dblvr} to the nested categories.


The new feature of $\bfvbr$ is that these variables can `slide' along braid strands. Let $\brrws$ be a permutation associated with a braid $\brr$ and let $\brrws(\bfvbr)$ denote the variables $\bfvbr$ permuted by $\brrws$. In subsection~\ref{ss:strvs} we prove the following theorem:
\begin{theorem}
\label{th:totsld}
The result of doubling the action of $\bfx$ on $\ctmvbxy{\brr}$ with $\bfvbr$ is \qisc\ (in the outer category $\ctCh$) to the doubling of $\bfy$ with $\brrws(\bfvbr)$:
\begin{equation}
\label{eq:slprop}
\FfXx\bigl(\ctmvbxy{\brr} \bigr) \eqsmout \Ffvv{\brrws(\bfvbr)}{\bfy}\bigl(\ctmvbxy{\brr} \bigr).
\end{equation}
\end{theorem}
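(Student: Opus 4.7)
The plan is to use the multiplicativity property~\eqref{eq:smbrpr} of the bracket to reduce to the case of the elementary braidword generators $\sigma_i^{\pm 1}$, and then verify the sliding property directly on each generator, with the key input being the strand-sliding homotopy~\eqref{eq:endsld}, now refined to respect the $\uWp$-action.

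\textbf{Reduction to elementary crossings.} Write $\brr=\brw_1\brw_2$ with permutations $\brrws_1,\brrws_2$ composing to $\brrws=\brrws_2\brrws_1$, and introduce an intermediate tuple $\bfz$ of length $\nst$, so that \eqref{eq:smbrpr} identifies $\ctmvbxy{\brr}$ with the derived tensor product (over $\bfz$) of the complexes associated to $\brw_1$ (output variables $\bfz$) and $\brw_2$ (input variables $\bfz$). Since $\FfXx$ acts only on the first tensor factor, the induction hypothesis applied to $\brw_1$ replaces $\FfXx$, up to outer quasi-isomorphism, by doubling $\bfz$ with $\brrws_1(\bfvbr)$ inside the first factor. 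But doubling $\bfz$ on either side of the $\bfz$-tensor product gives the same $\uWpXxz$-module (both simply impose $v_{\brrws_1(j)}=z_j$), so we may transfer the doubling across $\otimes_{\bfz}$ and apply the induction hypothesis to $\brw_2$, producing a final doubling of $\bfy$ by $\brrws_2\brrws_1(\bfvbr)=\brrws(\bfvbr)$. The base cases to check are $\brr=\sigma_i^{\pm 1}$ (and the trivial braidword, where the statement is immediate).

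\textbf{The elementary case.} For $\brr=\sigma_i^{\pm 1}$ the permutation is the transposition $s_i$. Strands $j\notin\{i,i+1\}$ pass straight through the crossing, and the corresponding Rouquier complex already carries the relation $x_j=y_j$, so $v_j$ acting as $x_j$ coincides with $v_j$ acting as $y_{s_i(j)}=y_j$ on these components. The genuine content concerns $j\in\{i,i+1\}$: the two target modules arise as Koszul-type quotients of the free $\uWpXxy$-module $\ctmvbxy{\sigma_i^{\pm 1}}\otimes_{\IQ}\IQ[v_i,v_{i+1}]$ by the regular sequences $(v_i-x_i,\,v_{i+1}-x_{i+1})$ and $(v_i-y_{i+1},\,v_{i+1}-y_i)$ respectively. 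The pairwise differences of the two sequences are exactly the strand-endpoint differences $x_j-y_{s_i(j)}$, each of which acts null-homotopically on $\ctmvbxy{\sigma_i^{\pm 1}}$ by~\eqref{eq:endsld}. Bundling the resulting homotopies $h_j$ into a chain map between the two Koszul resolutions yields an explicit quasi-isomorphism, whose bijectivity on cohomology follows from a standard Koszul bidegree spectral sequence that collapses on the first page.

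\textbf{Main obstacle.} The principal difficulty is not combinatorial but equivariant: the homotopies $h_j$ furnished by~\eqref{eq:endsld} are polynomial-linear and need not commute with the Witt generators $\xLm$, so the quasi-isomorphism they produce must be upgraded to respect the full $\uWpXxy$-module structure. This is where working in the relative derived category $\ctDr(\ctXWxy)$ pays off: quasi-isomorphism there demands invertibility only after forgetting to $\Qbxy$-modules, so the failure of the $h_j$ to be $\uWp$-equivariant (measured by a curvature-type expression analogous to~\eqref{eq:odcrv}) can be absorbed into an inner-homotopy correction. The careful construction of this correction—choosing homotopies whose curvature vanishes modulo inner boundaries, so that the assembled chain map descends to a genuine morphism in $\ctDr(\ctXWxy)$—is the step I expect to require the most delicate bookkeeping. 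Once it is carried out for $\sigma_i^{\pm 1}$, the induction on braidword length above yields the global outer-quasi-isomorphism~\eqref{eq:slprop}.
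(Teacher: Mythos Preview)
Your reduction to elementary crossings via induction on braid-word length is exactly what the paper does (Theorem~\ref{th:slvbr} and its proof). The paper in fact proves the single-variable statement first---sliding one auxiliary variable $\vbr$ from $x_i$ to $y_{\brrws(i)}$---and then observes that Theorem~\ref{th:totsld} follows immediately by applying this $\nst$ times, one strand at a time.

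For the elementary case your strategy diverges from the paper's in two respects. First, the paper does not treat $\sigma_i$ and $\sigma_i^{-1}$ symmetrically: it proves the sliding for $\sigma_i^{-1}$ by an explicit computation (Lemma~\ref{lm:smpsl}) and then deduces the case of $\sigma_i$ by a short trick using the second Reidemeister move (Lemma~\ref{lm:smpsli}): compose $\sigma$ with $\sigma^{-1}$ on the right, slide through the resulting identity and back through $\sigma^{-1}$, and cancel. This avoids a second direct computation.

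Second, and more substantively, the paper's treatment of $\sigma_i^{-1}$ does not build a chain map out of the null-homotopies~\eqref{eq:endsld}. Instead it tensors the Rouquier complex with the Koszul resolution of $\Delta_{\vbr,x_1}$ (respectively $\Delta_{\vbr,y_2}$), obtaining explicit two-term complexes $\aAb$ and $\aBb$ in $\ctCh(\ctXWxy)$, and writes down an upper-triangular matrix giving an honest \emph{isomorphism} $\aAb\to\aBb$ of complexes of $\uWpXxy$-modules (Figure~\ref{fg:ssv}). The catch---and this is where your ``curvature absorption'' intuition lands---is that $\aBb$ is \emph{not} the naive Koszul resolution on the $y_2$-side: its $\aWp$-connection has been modified by an explicit off-diagonal term (the matrix $\bfcA$). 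With this tweak, $\aBb$ is simultaneously isomorphic to $\aAb$ and outer-quasi-isomorphic to the $y_2$-doubling. So rather than correcting a non-equivariant map, the paper corrects the \emph{connection on one of the resolutions} and then exhibits a genuinely equivariant isomorphism.

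Your proposal is not wrong in outline, but the step you flag as ``most delicate'' is left entirely open: you would need the assembled map to commute with every $\xLm$ on the nose, since a morphism in $\ctCh(\ctXWxy)$ must be an honest $\uWp$-equivariant chain map at each degree---the relative derived category relaxes invertibility, not equivariance of the forward map. The paper's connection-tweak sidesteps this and reduces the verification to inspecting a $2\times 2$ matrix.
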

%

Monoidal structure within the categories $\ctCh\bigl(\ctXWxy\bigr)$ and $\ctDr\bigl(\ctXWxy\bigr)$ is defined by the left tensor product $\otdrly$ of the diagram~\eqref{eq:lwfntso}. Since this tensor product does not impact the action of $\bfvbr$, the doubling functor $\FfXx$ intertwines the tensor products $\otdry$ and $\otdrly$: for any two modules $\mMxy$ and $\mNxy$,
\[
\FfXx(\mMxy)\otdrly\Ffbey(\mNyz)\cong
\FfXx(\mMxy\otdry\mNyz).
\]
As a consequence, the composite brackets
\[
\actmdvbxy{\dmmy} = \FfXx\bigl(\ctmdvbxy{\dmmy}\bigr),
\qquad
\actmvbxy{\dmmy} = \FfXx\bigl(\ctmvbxy{\dmmy}\bigr)
\]
intertwine the products within $\brwgrn$ and $\brgrn$ with $\otdrly$.

The \Hchsh\ in the presence of \brstv s is a functor
\begin{equation}
\label{eq:hht}
\xymatrix@C=1.5cm{
 \ctXWxy
\ar[r]^-{\Hhomxy}
&
\uWpX\sctgmod,
}
\qquad
\Hhomxy(\dmmy) = \shnst\,\Hml\bigl(\dmmy\otdrxy\Dlxy\bigr),
\end{equation}
extended to derived categories:
\begin{equation}
\label{eq:hhmdg}
\xymatrix@C=7pc{
\ctDr\bbrs{\ctXWxy}
\ar[r]^-{\dmmy\otdrxy\Dlxy}
\ar@/_2pc/[rr]_-{\Hhomxy}
&
\ctDr\bbrs{\ctWX}
\ar[r]^-{\shnst\,\HmWX}
&
\ctWX,
}
\end{equation}
where
$\HmWX$
is the homology taken with respect to the differential of the inner category $\ctWX$ in the nested category
$\ctDr\bbrs{\ctWX}$.

Note that the diagonal module $\Dlxy$ has no $\bfvbr$ action. The functor $\FfxaX$ renames the variables $\bfx$ into $\bfvbr$, and the commutativity of the subdiagram
\[
\xymatrix@R=1.5cm@C=1.75cm{
\ctDr\bigl(\ctWxy\bigr)
\ar[d]_-{\FfXx}
\ar[r]^-{\tHhomtxy\bdmm}
&
\ctWx
\ar[d]_-{\FfxaX}
\ar[r]^-{\Ffukxl}
&
\ctWl
\\
\ctDr\bigl(\ctXWxy\bigr)
\ar[r]^-{\Hhomxy\bdmm}
&
\ctWX
\ar[ur]_-{\FfukXl}
}
\]
of the diagram~\eqref{eq:mndiag} is obvious.

The trace-like property of this \Hchsh:
\[
\Hhomxz\Bigl(\actmvbxy{\brr_1}\otdrly\actmvbyz{\brr_2}\Bigr) \simeq
\Hhomxz\Bigl(
\vctmvr{\brrws_1(\bfvbr)}{\brr_2}{\bfx,\bfy}
\otdrly\actmvbyz{\brr_1}\Bigr)
\]
is established with the help of the sliding property~\eqref{eq:slprop}. The sliding property
also leads to the following theorem proved in subsection~\ref{ss:prthrnind}:
\begin{theorem}
\label{th:rnind}
The action of the renaming functor $\FfukXl$ on the Hochschild homology of the braid bracket does not depend on the choice of braid strands $k_i$ representing link components: for two \tsq s of strands $\bfk$ and $\bfk'$ there is a \qiso
\[
\FfukXl\bigl(\Hhomxy(\actmvbxy{\brr}) \bigr)\eqsmout
\FfpukXl\bigl(\Hhomxy(\actmvbxy{\brr}) \bigr),
\]
\end{theorem}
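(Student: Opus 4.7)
The plan is to reduce the theorem to a single-component, adjacent-representative comparison, and then to extract the needed equivalence from Theorem~\ref{th:totsld} together with the diagonal identification built into $\Hhomxy$.

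First, since $\FfukXl$ and $\FfpukXl$ act independently on the variables attached to distinct cycles $\xccl_1,\ldots,\xccl_{\mlk}$, I may assume that $\bfk$ and $\bfk'$ differ in exactly one coordinate, say the $i$-th; iterating one cycle at a time handles the general case. Fix $i$ and two representatives $k_i,k_i'\in\xccl_i$. Since $\xccl_i$ is a single $\brrws$-orbit, I further reduce to the case $k_i' = \brrws(k_i)$, the general intra-cycle case following by walking around the orbit.

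Second, apply Theorem~\ref{th:totsld} to $\ctmvbxy{\brr}$ to obtain the outer \qiso
\[
\FfXx\bigl(\ctmvbxy{\brr}\bigr) \eqsmout \Ffvv{\brrws(\bfvbr)}{\bfy}\bigl(\ctmvbxy{\brr}\bigr),
\]
between two $\uWpXxy$-module structures on a common underlying $\uWpxy$-module: on the left, $v_j$ acts as $x_j$; on the right, $v_j$ acts as $y_{\brrws(j)}$. Push this \qiso\ through $\Hhomxy$, which preserves outer \qiso s and, via tensoring with $\Dlxy$, identifies $x_j$ with $y_j$. The outcome is an outer \qiso\ in $\ctWX$ between two $\bfvbr$-extensions of a common underlying Hochschild complex $H := \Hhomxy(\actmvbxy{\brr})$: on the left, $v_j$ acts as $x_j$; on the right, $v_j$ acts as $x_{\brrws(j)}$.

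Third, apply $\FfukXl$ to this \qiso. On the left one obtains $\FfukXl(H)$, in which $\lc_i$ acts as $x_{k_i}$. On the right one obtains the underlying complex $H$, with the non-representative $v_j$'s forgotten and $\lc_i$ acting as $x_{\brrws(k_i)}=x_{k_i'}$. But this is literally $\FfpukXl(H)$, since $\FfpukXl$ selects $v_{k_i'}$, which already acts as $x_{k_i'}$ on $H$, and renames it $\lc_i$. Hence $\FfukXl(H)\eqsmout\FfpukXl(H)$ in $\ctWl$, as required.

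The main obstacle I anticipate is the on-the-nose identification at the end: verifying that applying $\FfukXl$ to $\Hhomxy\bigl(\Ffvv{\brrws(\bfvbr)}{\bfy}(\ctmvbxy{\brr})\bigr)$ produces precisely $\FfpukXl(H)$, not merely something \qisc\ to it. This requires careful bookkeeping through the two doublings, the diagonal specialization by $\Dlxy$, and the renaming/forgetting conventions, together with a check that the sliding \qiso, produced at the chain level, descends compatibly all the way to $\ctWl$.
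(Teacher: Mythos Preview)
Your reduction in the first step and the overall strategy match the paper's. However, there is a real gap in your third step, and it is not the bookkeeping concern you flag at the end.

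You reduce to $\bfk$ and $\bfk'$ differing only in the $i$-th coordinate, with $k_i' = \brrws(k_i)$. But then you invoke Theorem~\ref{th:totsld}, which slides \emph{all} the strand variables simultaneously: on the right-hand side $v_j$ acts as $y_{\brrws(j)}$ for every $j$, not just for $j = k_i$. After passing through $\Hhomxy$ and applying $\FfukXl$, $\lc_i$ indeed acts as $x_{\brrws(k_i)} = x_{k_i'}$, but for each $j \neq i$, $\lc_j$ now acts as $x_{\brrws(k_j)}$ rather than $x_{k_j}$. So the right-hand side is not $\FfpukXl(H)$: it is the renaming functor for the globally shifted choice $\brrws(\bfk) = (\brrws(k_1),\ldots,\brrws(k_{\mlk}))$. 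Iterating this move only reaches representatives of the form $\brrws^m(\bfk)$, which for links with several components of differing cycle lengths does not exhaust all admissible $\bfk'$.

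The paper avoids this by working one auxiliary variable at a time. It isolates an intermediate statement (Theorem~\ref{th:slstvlc}): if $i$ and $j$ lie in the same cycle of $\brrws$, then
\[
(\Ffaxi\ctmvbxy{\brr})\otdrxy\Dlxy \;\eqsmout\; (\Ffaxj\ctmvbxy{\brr})\otdrxy\Dlxy.
\]
This is proved from the single-variable sliding Theorem~\ref{th:slvbr}, together with a short lemma that moves the doubling functor $\Ffav{y_i}$ across the derived tensor product onto $\Dlxy$, where the diagonal identification lets one replace $\Ffav{y_{\brrws(i)}}$ by $\Ffav{x_{\brrws(i)}}$. Since auxiliary variables attached to distinct components are independent, this one-variable statement gives Theorem~\ref{th:rnind} directly. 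The fix to your argument is the same: replace Theorem~\ref{th:totsld} by Theorem~\ref{th:slvbr} and slide only the single variable doubling $x_{k_i}$, leaving the others in place.
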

\begin{corollary}
The object $\FfukXl\bigl(\Hhomxy(\actmvbxy{\brr}) \bigr)$ of $\ctWX$ is determined up to isomorphism only by the braid $\brr$, so there exists a map $\lctdmv{\dmmy}$ which makes the following diagram commutative:
\begin{equation*}
\xymatrix@C=4pc@R=4pc{
\brgrn
\ar[d]_-{\actmvbxy{\dmmy}}
&
\brgrns
\ar@{_{(}->}[l]
\ar[dr]^-{\lctdmv{\dmmy}}
\\
\ctDr\bigl(\ctXWxy\bigr)
\ar[r]_-{\Hhomxy\bdmm}
&
\ctWX
\ar[r]_-{\FfukXl}
&
\ctWl
}
\end{equation*}
\end{corollary}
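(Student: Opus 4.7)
My plan is to define $\lctdmv{\brr}$ for $\brr \in \brgrns$ as the isomorphism class in $\ctWl$ of the object $\FfukXl\bigl(\Hhomxy(\actmvbxy{\brr})\bigr)$, and then to verify that this class depends only on the braid $\brr$. Two sources of potential ambiguity must be eliminated: (i) the presentation of $\brr$ as a word in $\brwgrn$, and (ii) the choice of cycle representatives $\bfk = (k_1,\ldots,k_\mlk)$, $k_i\in\xccl_i$, used to specify the renaming functor $\FfukXl$.

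For (i), I will recall that the bracket $\actmvbxy{\dmmy}$ has already been shown to factor through the braid group $\brgrn$ itself, and not merely through the word semigroup $\brwgrn$. This is precisely why the localization $\Fffq\colon\ctCh(\ctXWxy)\to\ctDr(\ctXWxy)$ was introduced in the construction: the Rouquier braid relations are only satisfied at the chain level up to \qiso, so $\actmvbxy{\brr}$ is well-defined in $\ctDr(\ctXWxy)$ up to isomorphism. Because $\Hhomxy$ and $\FfukXl$ are honest functors, they respect isomorphism, so for a fixed $\bfk$ the class of $\FfukXl\bigl(\Hhomxy(\actmvbxy{\brr})\bigr)$ in $\ctWl$ depends only on $\brr$.

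For (ii), I will appeal directly to Theorem~\ref{th:rnind}: for any two \tsq s of cycle representatives $\bfk, \bfk'$, it supplies a \qiso
\[
\FfukXl\bigl(\Hhomxy(\actmvbxy{\brr})\bigr) \eqsmout \FfpukXl\bigl(\Hhomxy(\actmvbxy{\brr})\bigr),
\]
which descends to a genuine isomorphism in the derived category $\ctWl$. Once (i) and (ii) are settled, the rule $\brr\mapsto \FfukXl\bigl(\Hhomxy(\actmvbxy{\brr})\bigr)$ gives a well-defined map $\lctdmv{\dmmy}\colon\brgrns\to\ctWl$, and commutativity of the triangle in the statement is then tautological, since $\lctdmv{\dmmy}$ is built as the composite of the other two sides of the triangle.

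The only substantive input is the appeal to Theorem~\ref{th:rnind}; everything else is formal bookkeeping that unpacks how $\actmvbxy{\dmmy}$, $\Hhomxy$, and $\FfukXl$ were set up in the preceding subsections. I therefore expect no obstacle beyond that which has already been absorbed into Theorem~\ref{th:rnind}.
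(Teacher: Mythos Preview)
Your proposal is correct and follows exactly the reasoning the paper intends: the Corollary is stated immediately after Theorem~\ref{th:rnind} with no separate proof, because independence from the braid word is already built into the definition of $\actmvbxy{\dmmy}$ (via Theorem~\ref{th:first} and the functor $\FfXx$), and independence from the choice of cycle representatives $\bfk$ is precisely Theorem~\ref{th:rnind}. Your observation that the $\eqsmout$ in Theorem~\ref{th:rnind} is an honest isomorphism in $\ctWl$ (since that is already a derived category) is the only point that needed to be made explicit, and you handled it correctly.
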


Finally, we show in subsection~\ref{ss:mmi} that the bracket $\lctdmv{\dmmy}$
is invariant under Markov moves up to a connection shift $\shv{\bsdfpli}$,  where $\bsdfpx$ is defined by~\eqref{eq:dfppr}, caused by the change in framing of the $i$-th link component, so this bracket is an invariant of the framed link $\xL$ constructed by closing the braid $\brr$.

\subsection{Results} Our main results are Theorems~\ref{th:first} and~\ref{th:scnd} which state the existence and uniqueness of the maps
\begin{equation*}
\xymatrix@C=2cm{
\brgrn \ar[r]^-{\actmvbxy{\dmmy}}
&
\ctDr\bigl(\ctXWxy\bigr)
},
\qquad
\xymatrix@C=1.5cm{
\lnksm \ar[r]^-{\ctmv{\dmmy}}
&
\ctWl
}
\end{equation*}
in the diagram~\eqref{eq:mndiag} after we define properly the map
\begin{equation}
\label{eq:bstnf}
\xymatrix@C=2cm{
\brwgrn \ar[r]^-{\actmdvbxy{\dmmy}}
&
\ctCh\bigl(\ctXWxy\bigr)
}.
\end{equation}
Moreover, we show that if a framed link $\xLp$ is obtained from a link $\xL$ by increasing the framing of its $i$-th component by one, then
$
\ctmv{\xLp} \simeq\shvv{\ctmv{\xL}}{-\bsdfpli}.
$

\section{Preliminaries}

\subsection{Simplifying objects in a \rdc}
Consider a general definition of a \rdc\ $\ctDr(\ctA)$ described in subsection~\ref{sbs:ndc}:  $\ctA$ and $\ctB$ are additive categories related by an additive functor $\Ffr\colon\ctA\rightarrow\ctB$.
%
\begin{theorem}
\label{pr:qsbc}
Suppose that for three complexes $\aAb$, $\aBb$ and $\aCb$ of $\ctCh(\ctA)$ there exists a sequence of chain maps
\[
\xymatrix{
\aBb\ar[r]^-{f}
&
\aAb\ar[r]^-{g}
&
\aCb
}
\]
such that after the application of $\Ffr$ it splits:
\[
\xymatrix@C=3pc{
\Ffr(\aBb)\ar[r]^-{\Ffr(f)}
\ar[dr]_-{(\xId,0)}
&
\Ffr(\aAb)\ar[r]^-{\Ffr(g)}
\ar[d]^-{\cong}
&
\Ffr(\aCb)
\\
&
\Ffr(\aBb)\oplus\Ffr(\aCb)
\ar[ur]_-{(0,\xId)}
}
\]
Then
\begin{equation}
\label{eq:smpqiso_1}
\aAb \eqsmout
\begin{cases}
\aCb,&\text{if $\Ffr(\aBb)$ is contractible,}
\\
\aBb,&\text{if $\Ffr(\aCb)$ is contractible,}
\end{cases}
\end{equation}
where $\eqsmout$ denotes
a \qiso\ in $\ctCh(\ctA)$.
\end{theorem}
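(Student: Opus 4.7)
The plan is to verify directly that, depending on which hypothesis holds, one of the given chain maps $f$ or $g$ is itself a \qiso\ in the sense of Definition~\ref{df:relder}, i.e.\ that its image under $\Ffr$ is an isomorphism in $\ctK(\ctB)$.

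First I would unpack the hypothesis. The given diagram says that, up to an isomorphism in $\ctCh(\ctB)$, the map $\Ffr(f)$ is the canonical inclusion $(\xId,0)\colon\Ffr(\aBb)\to\Ffr(\aBb)\oplus\Ffr(\aCb)$ and $\Ffr(g)$ is the canonical projection $(0,\xId)\colon\Ffr(\aBb)\oplus\Ffr(\aCb)\to\Ffr(\aCb)$. So we have a strict split short exact sequence in $\ctCh(\ctB)$, not merely a distinguished triangle; this is the place where the strength of the assumption really bites.

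Next I would invoke the standard fact that in any additive category of complexes, if one summand of a direct sum is contractible then the projection onto, and the inclusion from, the other summand are mutually inverse homotopy equivalences. Concretely, suppose $\Ffr(\aBb)$ is contractible with chain null-homotopy $h$. Then the map $(0,\xId)\colon\Ffr(\aBb)\oplus\Ffr(\aCb)\to\Ffr(\aCb)$ has a homotopy inverse given by the inclusion of $\Ffr(\aCb)$, as one checks by a direct computation using $h$ on the $\Ffr(\aBb)$-summand. Composing with the isomorphism of the hypothesis, $\Ffr(g)$ becomes an isomorphism in $\ctK(\ctB)$, so $g$ is a \qiso\ and $\aAb\eqsmout\aCb$. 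The symmetric argument, applied to $\Ffr(\aCb)$ contractible, shows that $\Ffr(f)$ is an isomorphism in $\ctK(\ctB)$, whence $\aAb\eqsmout\aBb$.

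Since $\eqsmout$ denotes \qiso\ in $\ctCh(\ctA)$, the conclusion follows immediately from the existence of the direct chain map $g$ (resp.\ $f$) just shown to be a \qiso; no zig-zag through $\ctDr(\ctA)$ is necessary. There is no real obstacle in this argument: the content is almost entirely bookkeeping once one notes that the splitting is assumed at the chain level in $\ctB$, so contractibility of a summand automatically upgrades to contractibility of the mapping cone, hence to the desired isomorphism in $\ctK(\ctB)$.
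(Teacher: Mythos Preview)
Your proposal is correct and follows essentially the same route as the paper: both arguments simply observe that under the splitting hypothesis, contractibility of one summand makes $\Ffr(f)$ or $\Ffr(g)$ a homotopy equivalence in $\ctCh(\ctB)$, hence $f$ or $g$ is a \qiso\ by Definition~\ref{df:relder}. You spell out slightly more detail (the explicit null-homotopy on the summand), but the logic is identical.
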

\begin{proof}
If $\Ffr(\aCb)$ is contractible, then $\Ffr(f)$ is a homotopy equivalence in $\ctCh(\ctB)$, hence $f$ is a \qiso\ by definition.

If $\Ffr(\aBb)$ is contractible, then $\Ffr(g)$ is a homotopy equivalence in $\ctCh(\ctB)$, hence $g$ is a \qiso\ by definition.
\end{proof}

\subsection{Derived partial tensor product}
\subsubsection{Equivariant resolutions}
The derived partial tensor product~\eqref{eq:drtnpr}
can be computed with the help of a projective resolution. It is sufficient to resolve one factor: for example,
\[
\mMxy\otdry\mNyz \simeq \rsPb(\mMxy)\oty\mNyz,
\]
the resolution complex $\rsPb(\mMxy)$ consisting of projective $\uWpxy$-modules.

\begin{definition}
Consider a \sdr\ product $\alA\rtimes\Ulieg$, where $\lieg\subset\Der(\alA)$ is a Lie algebra acting on an algebra $\alA$ by derivations.
A $\lieg$-equivariant $\alA$-projective resolution of an $\alA\rtimes\Ulieg$-module $\mM$ is a complex in $\ctCh(\alA\rtimes\Ulieg-\ctmod)$ which is a projective resolution of $\mM$ in $\ctCh(\alA-\ctmod)$.
\end{definition}
\begin{theorem}
\label{th:drortp}
If $\rsPWb(\mMxy)$ is a \weqxyp\ resolution of $\mMxy$, then
\[
\mMxy\otdry\mNyz \simeq \rsPWb(\mMxy)\oty\mNyz.
\]
\end{theorem}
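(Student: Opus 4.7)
The strategy is to compare $\rsPWb(\mMxy)$ with a genuine $\uWpxy$-projective resolution, using the PBW structure of $\uWpxy$ to turn $\uWp$-equivariance combined with $\Qbxy$-projectivity into $\Qby$-flatness, which is what is really needed for the partial tensor product.

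First, I would observe that the semidirect product $\uWpxy = \Qbxy \rtimes \uWp$ is a free left $\Qbxy$-module: by PBW the ordered monomials in the generators $\xLm$ form a $\IQ$-basis of $\uWp$, and the commutation rule~\eqref{eq:scndrl} then lets one write every element of $\uWpxy$ uniquely as a finite $\Qbxy$-linear combination of such monomials. Hence every projective $\uWpxy$-module is $\Qbxy$-projective, and since $\Qbxy$ is itself a free $\Qby$-module with basis the monomials in $\bfx$, every projective $\uWpxy$-module is $\Qby$-flat. The same argument applied termwise shows that each term of the equivariant resolution $\rsPWb(\mMxy)$ is $\Qbxy$-projective, hence $\Qby$-flat.

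Next, choose an honest $\uWpxy$-projective resolution $\rsPb \to \mMxy$, so that by the definition of the partial derived tensor product $\mMxy \otdry \mNyz \simeq \rsPb \oty \mNyz$ in $\ctWxz$. The equivariant resolution $\rsPWb(\mMxy)$ is simultaneously an acyclic augmented complex of $\uWpxy$-modules (acyclicity is a statement about the underlying $\IQ$-vector spaces and is therefore insensitive to which structure one remembers), so the standard comparison lemma for projective resolutions produces a $\uWpxy$-equivariant chain map $\phi : \rsPb \to \rsPWb(\mMxy)$ covering $\xId_{\mMxy}$, and $\phi$ is automatically a quasi-isomorphism since both its source and target resolve $\mMxy$. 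Because both complexes are termwise $\Qby$-flat, applying $\dmmy \oty \mNyz$ to $\phi$ preserves the quasi-isomorphism, giving
\[
\phi \oty \xId_{\mNyz} : \rsPb \oty \mNyz \xrightarrow{\;\simeq\;} \rsPWb(\mMxy) \oty \mNyz
\]
as a morphism in $\ctCh(\uWpxz\sctgmod)$ (via the Leibnitz rule for the $\uWp$-action on tensor products). Passing to $\ctWxz$ delivers the claimed equivalence $\mMxy \otdry \mNyz \simeq \rsPWb(\mMxy) \oty \mNyz$.

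The only conceptual ingredient is the PBW identification of $\uWpxy$ as a free left $\Qbxy$-module; this is exactly what lets equivariant $\Qbxy$-projectivity substitute for genuine $\uWpxy$-projectivity in the derived-functor computation, and it is where (modest) care is required. Once it is in hand, the rest of the argument is a textbook flat-resolution comparison and presents no real obstacle.
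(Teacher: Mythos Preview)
Your argument is correct, but it takes a different route from the paper.  The paper's proof is a one-liner built on the Chevalley--Eilenberg resolution $\aCb$ of the trivial $\uWp$-module: since each term of $\aCb$ is $\uWp$-free, the total tensor product $\aCb\otimes_{\IQ}\rsPWb(\mMxy)$ (with the Leibnitz $\uWp$-action and the $\Qbxy$-action on the second factor) is a genuine $\uWpxy$-projective resolution of $\mMxy$.  One then writes
\[
\mMxy\otdry\mNyz \;\simeq\; \bigl(\aCb\otimes\rsPWb(\mMxy)\bigr)\oty\mNyz \;\cong\; \aCb\otimes\bigl(\rsPWb(\mMxy)\oty\mNyz\bigr) \;\simeq\; \rsPWb(\mMxy)\oty\mNyz,
\]
the last step because $\aCb\simeq\IQ$ over $\IQ$.

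Your approach instead fixes an honest $\uWpxy$-projective resolution $\rsPb$, uses the comparison lemma to produce an equivariant quasi-isomorphism $\rsPb\to\rsPWb(\mMxy)$, and then argues that tensoring with $\mNyz$ over $\Qby$ preserves this quasi-isomorphism because both complexes are bounded-above with $\Qby$-flat terms.  The PBW step you flag is really just the observation that $\uWpxy\cong\Qbxy\otimes_{\IQ}\uWp$ as a left $\Qbxy$-module, which follows immediately from the semidirect-product description; no ordered basis of $\uWp$ is actually needed.  The paper's argument is more compact and gives an explicit projective resolution for free, while yours stays entirely within textbook flat-resolution technology and avoids introducing the Chevalley--Eilenberg complex.
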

In other words, a $\uWp$-equivariant resolution is sufficient for the computation of the derived partial tensor product $\otdry$.
\begin{proof}
If $\aCb$ is the \ChEr\ of the trivial $\uWp$-module, then the tensor product $\aCb\otimes\rsPWb(\mMxy)$ (with $\uWp$ acting on this tensor product by the Leibnitz rule) is a $\uWpxy$-projective resolution of $\mMxy$, hence
\[
\begin{split}
\mMxy\otdry\mNyz & \simeq \bigl(\aCb\otimes\rsPWb(\mMxy)\bigl)\oty\mNyz
\cong\aCb\otimes\bigl(\rsPWb(\mMxy)\oty\mNyz\bigr)
\\
&\simeq \rsPWb(\mMxy)\oty\mNyz.
\end{split}
\]
\end{proof}

\subsubsection{Partial tensor products of \smpr\ modules}
A $\uWpxy$-module $\mMxy$ is called \smpr\ if it is free as a $\Qbx$-module and as a $\Qby$-module.
All elementary \Sglb s introduced in subsection~\ref{ss:elsbm} are \smpr, and their partial derived tensor products can be replaced by the ordinary ones in view of the following corollary of Theorem~\ref{th:drortp}:
\begin{corollary}
\label{cr:smprtp}
If modules $\mMxy$ and $\mNxy$ are \smpr\ then
\[
\mMxy\otdry\mNyz \simeq \mMxy\oty\mNyz,
\]
and this tensor product is also \smpr\ as a $\uWpxz$-module.
\end{corollary}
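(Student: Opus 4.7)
I plan to establish the corollary by splitting it into two independent claims: (i) the ordinary tensor product $\mMxy\oty\mNyz$ already computes the derived one, and (ii) the resulting $\uWpxz$-module is again \smpr.

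For (i), I would invoke Theorem~\ref{th:drortp}. Pick a $\uWp$-equivariant $\Qbxy$-projective resolution $\rsPWb(\mMxy)$; the theorem gives $\mMxy\otdry\mNyz \simeq \rsPWb(\mMxy)\oty\mNyz$. The augmentation $\rsPWb(\mMxy)\to\mMxy$ is a quasi-isomorphism of complexes of $\Qby$-modules, and since $\mNyz$ is $\Qby$-free (hence $\Qby$-flat), tensoring with $\mNyz$ over $\Qby$ preserves this quasi-isomorphism. The resulting map $\rsPWb(\mMxy)\oty\mNyz \to \mMxy\oty\mNyz$ is $\uWpxz$-equivariant because $\oty$ is built equivariantly (with the Leibnitz $\uWp$-action on both factors), yielding $\mMxy\otdry\mNyz \simeq \mMxy\oty\mNyz$.

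For (ii), I would exploit the free bases directly. Fix a $\Qby$-basis $\{m_i\}$ of $\mMxy$; then as $\Qbz$-modules
\[
\mMxy \oty \mNyz \cong \bigoplus_i (\Qby\cdot m_i)\otimes_{\Qby}\mNyz \cong \bigoplus_i \mNyz,
\]
which is $\Qbz$-free because $\mNyz$ is. Symmetrically, a $\Qby$-basis of $\mNyz$ exhibits $\mMxy\oty\mNyz$ as a direct sum of copies of $\mMxy$ viewed as a $\Qbx$-module, hence $\Qbx$-free. Thus $\mMxy\oty\mNyz$ is \smpr.

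The main obstacle I anticipate is a bookkeeping one for claim (i): ensuring that the equivariant version of ``flatness kills derivation'' goes through cleanly in the nested category with its $\uWp$-action. Theorem~\ref{th:drortp} supplies the scaffolding, so the verification reduces to tracking how a $\uWp$-equivariant projective resolution of $\mMxy$ degenerates, via $\Qby$-flatness of $\mNyz$, into the identity on $\mMxy\oty\mNyz$ --- routine but requiring care with the gradings, the Leibnitz rule, and preservation of the $\Qbx$- and $\Qbz$-actions.
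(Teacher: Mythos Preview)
Your proposal is correct and matches the paper's implicit reasoning: the paper states the result as an immediate corollary of Theorem~\ref{th:drortp} without writing out a proof, and your argument---use the theorem to reduce to an equivariant $\Qbxy$-projective resolution, then exploit $\Qby$-flatness of $\mNyz$ to collapse the resolution, and verify \smpr-ness of the result by decomposing along free bases---is exactly the intended unpacking. The bookkeeping concern you flag is not a real obstacle: the augmentation map is $\uWp$-equivariant by construction, and flatness over $\Qby$ suffices because the underlying tensor product and quasi-isomorphism are detected at the level of $\Qby$-modules.
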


By definition, a free finitely generated \tgrddq\ $\Qbx$-module $M$ has a presentation
\[
M = \bigoplus_{i\in\ZZ} m_i \dgshq^i\Qbx,
\]
where $\dgshq$ is the \tdgq\ shift functor and $m_i\in\ZZ$ are the multiplicities. We define the \trkq\ of $M$ as a Laurent polynomial
\begin{equation}
\label{eq:dfqrk}
\xrkq M = \sum_{i\in\ZZ} m_i q^i.
\end{equation}
Let $\xrkqx\mMxy$ denote the $\Qbx$ \trkq\ of a \smpr\ module $\mMxy$. The following is obvious:
\begin{proposition}
\label{pr:rkpr}
If modules $\mMxy$ and $\mNyz$ are \smpr\ then
\[
\xrkqx (\mMxy\oty\mNyz) = (\xrkqx\mMxy)(\xrkqy\mNyz).
\]
\end{proposition}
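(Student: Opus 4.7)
The plan is straightforward: use freeness of $\mNyz$ as a graded $\Qby$-module to reduce the tensor product computation to counting generators with $q$-grading shifts, then invoke freeness of $\mMxy$ as a $\Qbx$-module to read off the $q$-rank. First I would fix a homogeneous basis $\{n_\alpha\}_{\alpha\in I}$ of $\mNyz$ as a free graded $\Qby$-module, with $\xdgq n_\alpha = s_\alpha$. This yields an isomorphism of graded $\Qby$-modules
\[
\mNyz \;\cong\; \bigoplus_{\alpha\in I}\dgshq^{s_\alpha}\Qby,
\]
so that by definition~\eqref{eq:dfqrk} one has $\xrkqy\mNyz=\sum_{\alpha\in I} q^{s_\alpha}$.

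Next I would tensor on the left with $\mMxy$ over $\Qby$. Because $\mMxy\oty(-)$ commutes with direct sums of graded $\Qby$-modules and $\mMxy\oty\Qby\cong \mMxy$ as graded $\Qbx$-modules, I obtain
\[
\mMxy\oty\mNyz \;\cong\; \bigoplus_{\alpha\in I} \dgshq^{s_\alpha}\,\mMxy
\]
as graded $\Qbx$-modules, where the $\Qbx$-action comes from the first tensor factor (recall that $\oty$ forgets the $\Qby$-module structure on the result). The second half of the \smpr\ hypothesis, freeness of $\mMxy$ over $\Qbx$, then ensures that each summand is itself a free graded $\Qbx$-module with $\xrkqx\bigl(\dgshq^{s_\alpha}\mMxy\bigr)=q^{s_\alpha}\,\xrkqx\mMxy$.

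Finally, $q$-rank is additive over direct sums of free graded $\Qbx$-modules, so
\[
\xrkqx(\mMxy\oty\mNyz) \;=\; \sum_{\alpha\in I} q^{s_\alpha}\,\xrkqx\mMxy \;=\; \bigl(\xrkqx\mMxy\bigr)\bigl(\xrkqy\mNyz\bigr),
\]
which is the desired identity. I do not anticipate any genuine obstacle: the argument is purely linear-algebraic and ignores the $\uWp$-action entirely, since the statement concerns only the underlying graded $\Qbx$- and $\Qby$-module structures, and Corollary~\ref{cr:smprtp} has already guaranteed that the left-hand side is \smpr\ so that $\xrkqx$ makes sense. The one point deserving a brief mention is that when $I$ is infinite the sums should be interpreted as formal Laurent series in $q$ in which each homogeneous component contributes finitely many terms, which is the standard convention compatible with~\eqref{eq:dfqrk}.
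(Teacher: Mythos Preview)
Your argument is correct and is precisely the straightforward computation the paper has in mind; the paper itself labels the proposition ``obvious'' and omits any proof. One tiny remark: in the paper's setup $q$-rank is defined only for \emph{finitely generated} free graded modules (see \eqref{eq:dfqrk}), so the index set $I$ is necessarily finite and your closing caveat about infinite sums is unnecessary.
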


\subsubsection{Partial tensor product in nested derived categories}
\begin{theorem}
\label{th:drtnpr}
 There exists a unique (up to isomorphism) functor~\eqref{eq:tpDr} which makes the following diagram commutative:
\[
\xymatrix@C=1.25cm@R=1.25cm{
\ctCh(\ctWxy)\times \ctCh(\ctWyz)
\ar[r]^-{\otdry}
\ar[d]^-{\Fffq}
&
\ctCh(\ctWxz)
\ar[d]^-{\Fffq}
\\
\ctDr(\ctWxy)\times \ctDr(\ctWyz)
\ar[r]^-{\otdry}
&
\ctDr(\ctWxz)
}
\]
\end{theorem}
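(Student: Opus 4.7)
The plan is to invoke the universal property of the localization functor twice. Starting from the tensor product $\otdry\colon\ctCh(\ctWxy)\times\ctCh(\ctWyz)\to\ctCh(\ctWxz)$, I would first descend it to $\ctK(\ctWxy)\times\ctK(\ctWyz)\to\ctK(\ctWxz)$, and then descend the resulting functor (composed with $\Qf$) through $\ctK\to\ctDr$ on each factor. By the defining property of Verdier-style localization recalled in subsection~\ref{sbs:ndc}, existence and uniqueness of the dashed functor on $\ctDr\times\ctDr$ reduce to verifying that outer homotopies, and then outer relative \qiso s, in one variable (the other being fixed) are sent to outer homotopies, respectively \qiso s, in the target.

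For the first descent, given $f\smout g$ in $\ctCh(\ctWxy)$ and any $B\in\ctCh(\ctWyz)$, the standard totalization formula shows that $f\otdry B\smout g\otdry B$ in $\ctCh(\ctWxz)$: an outer chain homotopy is simply tensored componentwise with $B$ using the fixed bifunctor $\otdry\colon\ctWxy\times\ctWyz\to\ctWxz$, and similarly on the right. For the second descent I would argue as follows. Let $f\colon\aAb\to\aAb'$ be a relative \qiso\ in $\ctCh(\ctWxy)$, i.e.\ $\Ffr(f)$ is a homotopy equivalence in $\ctK(\ctQv{\bfx,\bfy})$, where $\Ffr$ is the restriction functor that forgets the Witt action. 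The key compatibility is that $\Ffr$ intertwines the partial derived tensor products: because any $\uWp$-equivariant $\Qby$-projective resolution is automatically $\Qby$-projective, Theorem~\ref{th:drortp} lets the same resolution compute $\otdry$ in $\ctWxy$ and in $\ctQv{\bfx,\bfy}$, yielding a natural isomorphism $\Ffr(\aAb\otdry\aBb)\cong\Ffr(\aAb)\otdry\Ffr(\aBb)$ in $\ctCh(\ctQv{\bfx,\bfz})$. Hence $\Ffr(f\otdry\aBb)\cong\Ffr(f)\otdry\Ffr(\aBb)$, which is a homotopy equivalence in $\ctK(\ctQv{\bfx,\bfz})$ by the first descent applied to the base category $\ctQv{-}$. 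An identical argument handles quasi-isomorphisms in the second variable. Uniqueness of $\otdry$ on $\ctDr\times\ctDr$ is then immediate from the universal property, and the commuting square in the statement holds by construction.

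The main obstacle is the compatibility step $\Ffr(\aAb\otdry\aBb)\cong\Ffr(\aAb)\otdry\Ffr(\aBb)$. Naively, the two derived tensor products live in different environments (Witt-module complexes versus plain $\Qbxy$-module complexes), and one must check that this equivalence is natural enough to transport both outer chain homotopies and the outer relative \qiso\ criterion along $\Ffr$. The resolution of this point is precisely the content of Theorem~\ref{th:drortp}: a single $\uWp$-equivariant $\Qby$-projective resolution computes the derived partial tensor product in either category, so naturality of the resulting isomorphism in $\aAb$ and $\aBb$ follows. Everything else in the proof is formal manipulation with the universal property of localization.
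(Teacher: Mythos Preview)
Your proposal is correct and follows essentially the same approach as the paper. The paper packages the argument as an abstract lemma: if a functor $\FfA\colon\ctA_1\to\ctA_2$ fits into a commuting square with the forgetful functors $\Ff_i\colon\ctA_i\to\ctB_i$ and some $\FfB\colon\ctB_1\to\ctB_2$, then $\FfA$ descends to the relative derived categories; the theorem then follows by taking $\FfB$ to be the ordinary derived tensor product~\eqref{eq:dtpr} on the $\Qbx$-side. Your argument unpacks exactly this square (your compatibility $\Ffr(\aAb\otdry\aBb)\cong\Ffr(\aAb)\otdry\Ffr(\aBb)$ is the commutativity of that square), and you are slightly more explicit than the paper in justifying it via Theorem~\ref{th:drortp} and in handling the two variables separately.
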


This theorem is a particular case of the following lemma:
\begin{lemma}
Suppose that we have two pairs of additive categories related by additive functors:
$\ctA_1 \xrightarrow{\Ff_1} \ctB_1$ and $\ctA_2\xrightarrow{\Ff_2} \ctB_2$. If for a functor $\ctA_1 \xrightarrow{\FfA}\ctA_2$ there exists a functor $\ctB_1\xrightarrow{\FfB}\ctB_2$ such that the following square is commutative:
\begin{equation}
\label{eq:lcmsq}
\mvcn{
\xymatrix{
\ctA_1
\ar[r]^-{\FfA}
\ar[d]_-{\Ff_1}
&
\ctA_2
\ar[d]^-{\Ff_2}
\\
\ctB_1
\ar[r]^-{\FfB}
&
\ctB_2
}
}
\end{equation}
(functors $\Ff_2\circ\FfA$ and $\FfB\circ\Ff_1$ are isomorphic),
then the functor $\FfA$ extends to the derived categories:
\[
\xymatrix{
\ctD_{\Ff_1}(\ctA_1) \ar[r]^-{\FfA}
&
\ctD_{\Ff_2}(\ctA_2).
}
\]
\end{lemma}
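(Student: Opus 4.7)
The plan is to invoke the universal property of the Gabriel--Zisman localization $\Qf_1\colon\ctK(\ctA_1)\to\ctD_{\Ff_1}(\ctA_1)$ already used to define $\ctD_{\Ff}(\ctA)$ in subsection~\ref{sbs:ndc}. First I would extend $\FfA$ degree-wise to a functor $\ctCh(\ctA_1)\to\ctCh(\ctA_2)$. Since $\FfA$ is additive, this extension carries null-homotopies to null-homotopies and therefore descends to a functor $\FfA\colon\ctK(\ctA_1)\to\ctK(\ctA_2)$. The same recipe applied to $\Ff_1$, $\Ff_2$, and $\FfB$ yields the analogous functors on chain and homotopy categories, and the natural isomorphism $\Ff_2\circ\FfA\cong\FfB\circ\Ff_1$ of~\eqref{eq:lcmsq} propagates degree-wise to chain categories and then to homotopy categories.

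The crucial step is to verify that $\FfA$ carries \qiso s with respect to $\Ff_1$ to \qiso s with respect to $\Ff_2$. Given such an $f\colon\aAb\to\aBb$ in $\ctCh(\ctA_1)$, by definition $\Ff_1(f)$ is an isomorphism in $\ctK(\ctB_1)$; applying $\FfB$ yields an isomorphism $\FfB(\Ff_1(f))$ in $\ctK(\ctB_2)$, and the commutative square furnishes a natural isomorphism $\Ff_2(\FfA(f))\cong\FfB(\Ff_1(f))$. Hence $\Ff_2(\FfA(f))$ is invertible in $\ctK(\ctB_2)$, so $\FfA(f)$ is a \qiso\ with respect to $\Ff_2$, as required.

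With this in hand, the composite $\Qf_2\circ\FfA\colon\ctK(\ctA_1)\to\ctD_{\Ff_2}(\ctA_2)$ sends every \qiso\ with respect to $\Ff_1$ to an isomorphism. The universal property of $\Qf_1$ (a direct application of Lemma~5.3 and Proposition~5.5 of~\cite[Section 5]{dcr}) then provides a unique-up-to-isomorphism factorization $\FfA\colon\ctD_{\Ff_1}(\ctA_1)\to\ctD_{\Ff_2}(\ctA_2)$, which is the sought extension.

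The only point requiring care is that~\eqref{eq:lcmsq} commutes only up to natural isomorphism rather than on the nose; but since functors preserve isomorphisms and natural isomorphisms are preserved under passage to chain categories, homotopy categories, and localizations, this causes no genuine obstacle. I expect the entire argument to amount to formal bookkeeping, with no technical difficulty beyond keeping track of the isomorphism in~\eqref{eq:lcmsq}.
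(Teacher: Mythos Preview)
Your proof is correct and follows essentially the same approach as the paper: the paper's argument reduces to showing that $\FfA$ sends \qiso s to \qiso s, proved exactly as you do via the commutative square and the fact that $\FfB$ preserves isomorphisms in $\ctK(\ctB_2)$. You spell out more carefully the extension to chain and homotopy categories and the invocation of the universal property of localization, which the paper leaves implicit in the phrase ``it is sufficient to show.''
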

\begin{proof}
It is sufficient to show that if there is a \qisoo\ $\aAb\xrightarrow{f}\aAb'$ between the objects of $\ctCh(\ctA_1)$, then its image $\FfA(\aAb)\xrightarrow{\FfA(f)}\FfA(\aAb')$ is a \qisot.
Indeed, by Definition~\ref{df:relder} $\Ff_1(f)$ is an isomorphism in $\ctK(\ctB_1)$, hence $\FfB\circ\Ff_1(f)$ is an isomorphism in $\ctK(\ctB_2)$. Commutativity of the square~\eqref{eq:lcmsq} means that $\Ff_2\circ\FfA(f) = \FfB\circ\Ff_1(f)$, hence
$\Ff_2\circ\FfA(f)$ is an isomorphism in $\ctK(\ctB_2)$ and by Definition~\ref{df:relder} $\FfA(f)$ is a \qisoo\ in
$\ctD_{\Ff_2}(\ctA_2)$.
%
\end{proof}
\begin{proof}[Proof of Theorem~\ref{th:drtnpr}]
The theorem follows from the previous lemma when we observe that the original derived tensor product~\eqref{eq:dtpr} plays the role of the functor $\FfB$.
\end{proof}

%
%
%
%
%
%
%
%
%
\subsection{$\uWpx$-modules and $\aWp$-connections}
\subsubsection{Notations for connections}
If two $\uWpx$-modules $M$ and $N$ have finite numbers of generators as $\Qbx$-modules, then we describe the homomorphisms between them by matrices relative to these generators: $M\xrightarrow {A} N$, where $A$ is a matrix with polynomial entries

If a $\uWpx$-module $M$ has a finite number of generators $\bgn=\xgn_1,\ldots,\xgn_k\in M$ as a $\Qbx$-module, then the action of $\aWp$ on $M$ can be described by a \tsq\ of $k\times k$ matrices with polynomial entries $\xcnbA=\xcnA_0,\xcnA_1,\ldots$, $\xcnA_m =\mtre{a_{m;ij}}$ giving the action of derivations
$\xcdfm$, representing the algebra generators $\xLm$, on module generators $\bgn$:
\[
\xcdfm \xgn_i = \sum_{j=1}^k a_{m;ij} \xgn_j.
\]
We refer to the \tsq\ $\xcnbA$ as \emph{connection} and use notation $\mcnv{M}{\xcnbA}$ for this $\uWpx$-module, reserving a simple notation $M$ for the case when all matrices $\xcnbA$ are zero. If $M$ has a single generator $\xgn$, that is, $M\cong \Qbx/\aI$, where $\aI$ is a $\aWp$-invariant $\Qbx$-ideal, then the connection matrices $\xcnbA$ are reduced to numbers $\xcna$ such that $\xLm\xgn=a_m\,\xgn$, and the notation for the module becomes $\mcnv{M}{\xcna}$.

\begin{proposition}
A $\Qbx$-homomorphism between two $\uWpx$-modules with single generators
\begin{equation*}
\label{eq:phmm}
\xymatrix{ f\colon\mcnv{M}{\bfa}
\ar[r]^-{p}
&
\mcnv{M'}{\bfa'}
}
\end{equation*}
is a $\uWpx$-homomorphism iff the polynomial $p\in\Qbx$ satisfies the condition
\begin{equation}
\label{eq:hmcnd}
\xhLm p = (a_m - a'_m)\,p\quad \mod \aI',
\end{equation}
where $\aI'\subset\Qbx$ is the ideal such that $M'\cong\Qbx/\aI'$.
\end{proposition}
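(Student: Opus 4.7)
The plan is to reduce the $\uWpx$-linearity of $f$ to a single family of identities at the generator $\xgn$ of $\mcnv{M}{\bfa}$, and then translate them using the semi-direct product multiplication rule~\eqref{eq:scndrl} into the stated congruence on $p$. Since $\mcnv{M}{\bfa}\cong\Qbx/\aI$ is cyclic, any $\Qbx$-homomorphism $f$ into $\mcnv{M'}{\bfa'}\cong\Qbx/\aI'$ is determined by $f(\xgn)=p\,\xgn'$, and the requirement that $f$ be $\Qbx$-linear (i.e.\ well defined on the quotient) forces $p\,\aI\subset\aI'$. This is the starting point I would take as given, focusing the remaining work entirely on the compatibility of $f$ with the generators $\xLm$ of $\aWp$.

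The key computation is to expand both sides of the equation $f(\xLm\xgn)=\xLm f(\xgn)$ and compare the results in $M'$. On the left, the definition of the connection gives $f(\xLm\xgn)=f(a_m\,\xgn)=a_m p\,\xgn'$. On the right, the Leibnitz rule contained in~\eqref{eq:scndrl} yields
\[
\xLm(p\,\xgn')=(\xhLm p)\,\xgn'+p(\xLm\xgn')=\bigl(\xhLm p+p\,a'_m\bigr)\xgn'.
\]
Equating these in $M'=\Qbx/\aI'$ and rearranging produces exactly~\eqref{eq:hmcnd}. This proves the "only if" direction.

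For the "if" direction I would verify that the condition at the generator propagates to every element $q\xgn\in M$. Applying $\xLm$ to $qp\,\xgn'$ and using $\xhLm(qp)=(\xhLm q)p+q(\xhLm p)$ together with~\eqref{eq:hmcnd}, one gets
\[
\xLm f(q\xgn)=\bigl((\xhLm q)p+q(\xhLm p)+qp\,a'_m\bigr)\xgn'\equiv\bigl((\xhLm q)p+q a_m p\bigr)\xgn'\pmod{\aI'},
\]
which matches $f(\xLm(q\xgn))=f\bigl((\xhLm q+qa_m)\xgn\bigr)=(\xhLm q+qa_m)p\,\xgn'$. Thus the condition on the single polynomial $p$ is both necessary and sufficient. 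There is no substantive obstacle: the proposition is essentially a dictionary entry translating a module homomorphism of cyclic $\uWpx$-modules into the polynomial language of connections, and the only item to watch is that the congruence is taken modulo $\aI'$ (reflecting that $p$ itself is only determined up to $\aI'$ by $f$).
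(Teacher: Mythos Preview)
Your proposal is correct and follows essentially the same approach as the paper: both compute $\xLm f(\xgn)-f(\xLm\xgn)$ at the generator and read off the congruence~\eqref{eq:hmcnd}. The paper simply asserts that checking at the generator suffices, whereas you explicitly verify the propagation to $q\xgn$; this extra paragraph is a welcome clarification but not a different method.
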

\begin{proof}
It is sufficient to verify $\aWp$-equivariance of the action of $f$ on the generator $\xgn$ of $M$:
\[
\xcdfm' f(\xgn) - f(\xcdfm \xgn) = (\xhLm p)\,\xgn' + p a'_m\,\xgn' - pa_m\xgn' =
\bbrs{\xhLm p - p (a_m - a'_m)}\xgn',
\]
where $\xgn'$ is the generator of $M'$.
\end{proof}

\subsubsection{Flat \tsq s}
As we explained in the introduction, a \tsq\ of polynomials $\bfa\in\Qbx$ is called \emph{flat} if it satisfies the property~\eqref{eq:shprp}. A flat \tsq\ $\bfa$ determines an automorphism~\eqref{eq:autfp} of $\uWpx$ and, consequently, an \enfn\ $\shv{\bfa}$. Obviously, $\mcnv{M}{\xcnbA}\shv{\bfa} = \mcnv{M}{\xcnbA + \bfa\xId}$, where $\xId$ is the $k\times k$ identity matrix.

We have two examples of flat \tsq s. The first one is described in the introduction: it is
$\bsdfpx\in\Qv{x}$, where $\spdpmx = (m+1)x^m$, and its flatness is verified by a direct calculation.

The second example originates from a `gauge transformation'.
\begin{theorem}
\label{th:lgder}
If for a polynomial $p\in\Qbx$ there exists a \tsq\ of polynomials $\bfa\in\Qbx$ such that
\begin{equation}
\label{eq:admpl}
\xhLm p = a_m\,p
\end{equation}
for all $m\geq 0$, then the \tsq\ $\bfa$ is flat.
\end{theorem}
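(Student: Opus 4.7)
The plan is to evaluate the Witt commutator relation $[\xhLm,\xhLn] = (n-m)\xhLv{m+n}$ on the polynomial $p$ in two ways and compare. First, I would apply the hypothesis~\eqref{eq:admpl} twice together with the Leibniz rule (using that $\xhLm$ acts as a derivation on $\Qbx$) to obtain
\[
\xhLm\xhLn p \;=\; \xhLm(a_n\, p) \;=\; (\xhLm a_n)\,p + a_n\,(\xhLm p) \;=\; (\xhLm a_n)\,p + a_n a_m\, p.
\]
Swapping $m$ and $n$ gives a symmetric expression, and subtracting, using commutativity of $\Qbx$ to cancel the cross terms $a_n a_m$ and $a_m a_n$, yields
\[
[\xhLm,\xhLn]\, p \;=\; \bigl(\xhLm a_n - \xhLn a_m\bigr)\, p.
\]

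Next I would compute the same commutator via~\eqref{eq:cmrl} and the hypothesis one more time:
\[
[\xhLm,\xhLn]\, p \;=\; (n-m)\,\xhLv{m+n}\, p \;=\; (n-m)\,a_{m+n}\, p.
\]
Equating the two expressions and rearranging produces the identity $\crFmn{\bfa}\cdot p = 0$ in $\Qbx$, which is precisely the curvature~\eqref{eq:odcrv} of $\bfa$ multiplied by $p$.

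Finally, I would invoke that $\Qbx$ is an integral domain to cancel $p$. The only thing to check is that this cancellation is legitimate, i.e., $p\neq 0$. The degenerate case $p=0$ is harmless: the hypothesis is then compatible with the flat sequence $\bfa=0$, so one may take that as the sequence produced. In the main case $p\neq 0$ the conclusion $\crFmn{\bfa}=0$ for all $m,n\geq 0$ follows immediately, establishing flatness~\eqref{eq:shprp}.

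There is no real obstacle in this argument; the proof is a direct consequence of the Witt bracket and the Leibniz rule. The only subtlety worth flagging is the integrality of $\Qbx$, which is what allows the passage from $\crFmn{\bfa}\cdot p = 0$ to the vanishing of $\crFmn{\bfa}$, and the mild observation that commutativity of $\Qbx$ is what makes the cross terms drop out cleanly when forming the commutator.
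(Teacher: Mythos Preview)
Your argument is correct and is essentially the same computation as the paper's, packaged differently. The paper passes to the field of fractions of $\Qbx$ and observes that the operators $\xhLm' := \xhLm + a_m$ arise from $\xhLm$ by conjugation with multiplication by $p$, i.e.\ $\xhLm' = \widehat{p^{-1}}\circ \xhLm\circ \hat{p}$; since conjugation preserves Lie brackets, the $\xhLm'$ satisfy the Witt relations, which (by the discussion around~\eqref{eq:autfp}) is precisely flatness of $\bfa$. Your direct evaluation of $[\xhLm,\xhLn]$ on $p$ followed by cancellation in the integral domain is the pedestrian form of the same idea: instead of inverting $p$ as an operator on the fraction field, you multiply through and cancel at the end. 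Both arguments rely on $p\neq 0$.

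One small correction on the degenerate case: when $p=0$ the hypothesis $\xhLm p = a_m p$ is satisfied by \emph{every} sequence $\bfa$, so the conclusion that the given $\bfa$ is flat fails in general; you cannot repair this by selecting a different $\bfa$, since the theorem speaks of the sequence already in hand. The statement tacitly assumes $p\neq 0$ (as does the paper's proof, which uses $p^{-1}$), so simply record that assumption rather than arguing around it.
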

\begin{proof}
Consider the action of $\aWp$ on the field of fractions of $\Qbx$. Then the modified generators $\xLm' = \xLm + a_m$ are the result of `conjugation of $\xLm$ by $p$:
\[
\xhLm' = \widehat{p^{-1}}\circ \xhLm\circ \hat{p},
\]
where $\hat{p}$ denotes the operator of multiplication by $p$.
Hence the commutation relations between the generators $\xLm'$ are the same as those between $\xLm$.
\end{proof}

The polynomial $y-x\in\Qv{x,y}$ satisfies the condition~\eqref{eq:admpl}:
\[
\xhLm (y-x) = \sdfmv{x}{y}\,(y-x),
\]
where
\begin{equation}
\label{eq:dfpm}
\sdfmv{x}{y} = \frac{y^{m+1} - x^{m+1}}{y-x} = \sum_{i=0}^m x^i y^{m-1},
\end{equation}
hence $\bsdfv{x}{y}$ is a flat \tsq.

Note that
flat \tsq s for $\Qbx$ form a $\IQ$-vector space, since a linear combination of flat \tsq s is flat.


\subsubsection{Connections in Koszul complexes}
\label{sss:ksres}

For a \tsq\ of polynomials $\bfp\in\Qbx$, $\xnum{\bfx}=\nst$, $\xnum{\bfp}=k$, consider the quotient $\Qbx$-module $M=\Qbx/(\bfp)$. This module has a single generator $\xgn$, which represents $1\in\Qbx$.

Suppose that the ideal $(\bfp)$ is invariant under the \stact\ of $\aWp$ on $\Qbx$, that is, there exist
$k\times k$ matrices $\xcnbA= \xcnAv{0},\xcnAv{1},\ldots$
with polynomial entries: $\xcnAm=\mtre{\acpmij}$, $\acpmij\in\Qbx$, $m\geq 0$, $1\leq i,j\leq k$, which satisfy the relation
\begin{equation}
\label{eq:mdfc}
\xhLm \vcp = \xcnAm\, \vcp,\qquad \vcp=\begin{pmatrix}p_1 \\ \vdots \\ p_k\end{pmatrix},\qquad
\vcp\in\Qbx^k.
\end{equation}
Then $(p)$ is invariant under the full action of $\uWpx$ and $M$ has a structure of $\uWpx$-module with trivial connection: $\xcdfm\xgn=0$.

As a $\Qbx$-module, $M=\Qbx/(\bfp)$ has an associated Koszul complex $\rsPb(M)=\Qv{\bfx,\bfth}$, where $\bfth$, $\xnum{\bfth}=k$, are odd variables ($\theta_i\theta_j + \theta_j\theta_i=0$, $\theta_i^2 = 0$) of homological degree $-1$ and the differential is
\begin{equation}
\label{eq:kszdf}
d = \sum_{i=1}^n p_i \pthe{i},
\end{equation}
where $\pthe{i}\theta_{j} = \delta_{ij}$.
In order to endow $\rsPb(M)$ with a \Wpeq\ structure, we define an action of the generators $\xLm$ on $\rsPb(M)$ by the formula
\begin{equation}
\label{eq:dfkro}
\xcdfm =
\xhLm+ \xcnhAm,\qquad\xcnhAm=\sum_{i,j=1}^{k}\acpmij\, \theta_j \prthi.
\end{equation}
\begin{proposition}
The derivations~\eqref{eq:dfkro} commute with the differential $d$: $[d,\xcdfm]=0$.
\end{proposition}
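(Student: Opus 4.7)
The plan is to expand the graded commutator $[d, \xcdfm]$ according to the decomposition $\xcdfm = \xhLm + \xcnhAm$ and show that the two resulting pieces cancel, with the cancellation being forced precisely by the invariance hypothesis~\eqref{eq:mdfc}. Since $d$ has homological (and thus $\theta$-)parity one and both $\xhLm$ and $\xcnhAm$ have parity zero, the relevant brackets are ordinary commutators.

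First I would compute $[d,\xhLm]$. Since $\xhLm$ is a derivation of $\Qbx$ that commutes with each $\pthe{i}$, only the Leibniz action on the coefficients $p_i$ contributes, yielding
\[
[\xhLm, d] \;=\; \sum_{i=1}^{k} (\xhLm p_i)\,\pthe{i}.
\]
Applying the hypothesis~\eqref{eq:mdfc} componentwise gives $\xhLm p_i = \sum_j \acpmij\, p_j$, so
\[
[d, \xhLm] \;=\; -\sum_{i,j} \acpmij\, p_j\, \pthe{i}.
\]

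Next I would compute $[d,\xcnhAm]$ by the straightforward odd-variable calculus $\pthe{l}\theta_j = \delta_{lj} - \theta_j\pthe{l}$ and $\pthe{l}\pthe{i} = -\pthe{i}\pthe{l}$. Since $p_l$ is even and $\acpmij$ commutes with all $\theta$-variables, pushing $d$ past $\xcnhAm$ and past $d$ the other way produces, after cancellation of the double-derivative terms $\theta_j\pthe{l}\pthe{i}$, the expression
\[
[d, \xcnhAm] \;=\; \sum_{i,j} \acpmij\, p_j\, \pthe{i}.
\]
Adding the two brackets gives $[d, \xcdfm] = 0$, as required.

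I do not anticipate a serious obstacle: the proof is a direct Koszul-calculus computation whose outcome is rigged by~\eqref{eq:mdfc}. The only subtlety worth pointing out in the write-up is the bookkeeping of signs coming from $\pthe{l}$ being odd, so that one sees clearly why the two terms involving $\theta_j\pthe{l}\pthe{i}$ that appear in $d\,\xcnhAm$ and in $\xcnhAm\, d$ cancel, leaving exactly the term $\sum_{i,j}\acpmij p_j\pthe{i}$ that annihilates $[d,\xhLm]$.
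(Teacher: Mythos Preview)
Your proposal is correct and matches the paper's approach exactly: the paper's proof simply records the identity $-[d,\xhLm] = [d,\xcnhAm] = \sum_{i,j} \acpmij\, p_j\, \prthi$ as a ``direct calculation'', which is precisely the two commutators you compute. Your write-up is more explicit about the odd-variable sign bookkeeping than the paper's one-line verification, but the argument is the same.
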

\begin{proof}
The proposition is proved by a direct calculation:
\[
- [d,\xhLm] = [d,\xcnhAm] =  \sum_{i,j=1}^{k} \acpmij\,p_j\prthi.
\]
\end{proof}
%

Define the curvature of a \tsq\ $\xcnbA$ of $k\times k$ matrices with polynomial entries as a double \tsq\ of matrices
\[
\crFmn{\xcnbA} = \xhLm\xcnAn - \xhLn\xcnAm + [\xcnAn,\xcnAm] - (n-m)\xcnAv{m+n}
\]
(\cf \ex{eq:odcrv}).
\begin{theorem}
The Koszul complex $\rsPb(M)=\Qv{\bfx,\bfth}$ with the differential~\eqref{eq:kszdf} and the action of $\aWp$ given by the derivations~\eqref{eq:dfkro} is \Wpeq\ if the matrices $\xcnbA$ satisfy the relation
\begin{equation}
\label{eq:kscnrl}
\crFmn{\xcnbA} = 0,\qquad m,n\geq 0.
\end{equation}
\end{theorem}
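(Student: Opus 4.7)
The plan is to verify directly that the derivations $\xcdfm = \xhLm + \xcnhAm$ obey the Witt algebra commutation relations
\[
[\xcdfm,\xcdfn] = (n-m)\,\xcdfv{m+n},
\]
which, combined with the already-established identity $[d,\xcdfm]=0$ from the preceding proposition, will give the full $\uWp$-equivariance of the complex $\rsPb(M)$.

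I would expand the bracket by bilinearity into four summands:
\[
[\xcdfm,\xcdfn] = [\xhLm,\xhLn] + [\xhLm,\xcnhAn] + [\xcnhAm,\xhLn] + [\xcnhAm,\xcnhAn].
\]
The first term is $(n-m)\xhLv{m+n}$ by the Witt relation~\eqref{eq:cmrl} for the standard action. For the two mixed terms, observe that $\xhLm$ acts only on the polynomial variables $\bfx$ and commutes with $\theta_j$ and $\prthi$, so the commutator with $\xcnhAn = \sum \acnij\,\theta_j \prthi$ simply differentiates the coefficients: writing $\widehat{B}=\sum B_{ij}\theta_j\prthi$ for the fermionic operator attached to a matrix $B$, one gets
\[
[\xhLm,\xcnhAn] = \widehat{\xhLm\,\xcnAn}, \qquad [\xcnhAm,\xhLn] = -\,\widehat{\xhLn\,\xcnAm}.
\]

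The main technical step is the purely odd bracket $[\xcnhAm,\xcnhAn]$. Using the anticommutation relations $\prthi\theta_l + \theta_l\prthi = \delta_{il}$ together with $\theta_j\theta_l + \theta_l\theta_j = 0$ and $\prthi\prthk + \prthk\prthi = 0$, a direct computation gives
\[
[\theta_j\prthi,\theta_l\prthk] = \delta_{il}\,\theta_j\prthk - \delta_{jk}\,\theta_l\prthi,
\]
and contracting with the coefficients $\acpmij\acnkl$ collapses the sums to matrix products, yielding
\[
[\xcnhAm,\xcnhAn] = \widehat{[\xcnAn,\xcnAm]}.
\]
This is the standard identity expressing that $B \mapsto \widehat{B}$ is an anti-Lie-homomorphism from $\mathrm{Mat}_k(\Qbx)$ to the endomorphism algebra of $\Lambda^{\bullet}\bfth \otimes \Qbx$; the only obstacle is keeping the sign conventions from the odd variables straight.

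Adding the four contributions and comparing with the definition~\eqref{eq:odcrv} of the curvature, I obtain
\[
[\xcdfm,\xcdfn] - (n-m)\xcdfv{m+n} = \widehat{\xhLm\xcnAn - \xhLn\xcnAm + [\xcnAn,\xcnAm] - (n-m)\xcnAv{m+n}} = \widehat{\crFmn{\xcnbA}},
\]
which vanishes by the hypothesis~\eqref{eq:kscnrl}. Hence the $\xcdfm$ satisfy the Witt relations; together with $[d,\xcdfm]=0$ this gives the desired $\uWp$-equivariant structure on $\rsPb(M)$.
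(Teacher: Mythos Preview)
Your proof is correct and follows exactly the approach the paper indicates: the paper simply asserts that the curvature condition~\eqref{eq:kscnrl} is equivalent to the Witt commutation relation $[\xcdfm,\xcdfn]=(n-m)\xcdfv{m+n}$, and you have carefully supplied the computation (splitting into four brackets, using the fermionic commutator $[\theta_j\prthi,\theta_l\prthk]=\delta_{il}\theta_j\prthk-\delta_{jk}\theta_l\prthi$, and recognizing the anti-homomorphism $B\mapsto\widehat{B}$) that the paper leaves implicit.
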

\begin{proof}
Is is easy to check that the relation~\eqref{eq:kscnrl} is equivalent to the condition $[\xcdfm,\xcdfn]=(n-m)\xcdfv{m+n}$, the latter guaranteeing that derivations $\xcdfm$ represent the algebra $\aWp$.
\end{proof}

\begin{remark}
The condition~\eqref{eq:mdfc} does not define the matrices $\xcnAm$ uniquely.
\end{remark}
\begin{remark}
Combining the structure relations~\eqref{eq:cmrl} with relations~\eqref{eq:mdfc} we find that the matrices $\xcnAm$ satisfy the relations
\begin{equation}
\label{eq:rlAp}
\crFmn{\xcnbA} \,\vcp = 0.
\end{equation}
This relation is weaker than the condition~\eqref{eq:kscnrl} for the Koszul complex $\rsPb(M)$ to be \Wpeq.
\end{remark}

\section{Categorification of the \brwdsg\ by \Wpeq\ \Sglb s}
\subsection{Elementary \Wpeq\ \Sgl\ \bmdl s}
\label{ss:elsbm}
\def\nst{m}
For a positive integer $\nst$, the elementary \Wpeq\ \Sgl\ \bmdl\ $\mSn=\mSmbxy$ is a $\uWpxy$-module
\[
\mSn = \mQxy/\aIsn,\qquad\aIsn=\big(\smpev{1}(\bfy)-\smpev{1}(\bfx),\ldots,\smpev{\nst}(\bfy)-\smpev{\nst}(\bfx) \big),
\]
where $\xnum{\bfx}=\xnum{\bfy}=\nst$
and the
$\Qbxy$-ideal $\aIsn$ is generated by the differences of elementary symmetric polynomials $\smpev{k}(\bfx) = \sum_{1\leq i_1<\ldots<i_k\leq\nst} x_{i_1}\cdots x_{i_k}$.
%
$\xLn \,\smpev{i}(\bfx)$ is also a symmetric polynomial, hence $\xLn \xbbr{\smpev{i}(\bfy) - \smpev{i}(\bfx)}\in\aIsn$, and $\mSn$ has a $\uWpxy$-module structure. 
\def\nst{n}

For $\xnum{\bfx}=\xnum{\bfy}=\nst$ an extended elementary \Sglb\ $\mSmxyi$ ($i+m\leq \nst+1$) is a $\uWpxy$-module defined as the following tensor product:
\begin{equation}
\label{eq:exsbm}
\mSmxyi = \Dlv{\bfx',\bfy'}\otimes \mSmxypp \otimes \Dlv{\bfx''',\bfy'''},
\end{equation}
where $\bfx' = x_1,\ldots,x_{i-1}$, $\bfx'' = x_i,\ldots,x_{i+m-1}$, $\bfx''' = x_{i+m},\ldots,x_{\nst}$ and $\bfy'$, $\bfy''$ and $\bfy'''$ are defined analogously.

\begin{remark}
\label{rm:tpsbm}
All extended elementary \Sglb s are \smpr\ with \trkq s
\begin{equation}
\label{eq:rkesbm}
\xrkqx \mSmxyi = \xrkqy\mSmxyi = \qnmv{m}!,
\end{equation}
where
\[
\qnmv{m} = \frac{q^{2m}-1}{q^2-1},\qquad \qnmv{m}! = \prod_{i=1}^m \qnmv{i}.
\]
According to Corollary~\ref{cr:smprtp}, all other \Sglb s defined as their partial tensor products, are also \smpr, their derived partial tensor products are \xqisc\ to the ordinary ones and their ranks are determined by Proposition~\ref{pr:rkpr}
\end{remark}

First three \Wpeq\ elementary \Sgl\ \bmdl s are especially important and we denote them as
\begin{equation}
\label{eq:thsgbm}
\mMarc = \mSv{1},\qquad \mMcr = \mSv{2},\qquad \mMth = \mSv{3}.
\end{equation}

The \bmdl\ $\mMarc = \Dlv{x,y}=\mQv{x,y}/(y-x)$ represents the identity \enfn\ acting on the category of $\uWpsx$-modules: for any such module $\mM_x$ there is a canonical isomorphism
\[
\mM_y \cong
\mM_x \otimes_{\Qx} \mMlarcv{x,y}.
\]

Now we set $\nst = 2$, that is, $\bfx = x_1,x_2$ and $\bfy=y_1,y_2$, and consider two $\uWpxy$-modules: 
\begin{equation}
\label{eq:dfsmdl}
\begin{aligned}
\mMpr & = \Dlxy
= \mQxy/\aIpr,
&&&&&\aIpr&= (y_1-x_1,y_2-x_2),
\\
\mMcr & = \mSv{2} = \mQxy/\aIcr,
&&&&&
\aIcr &=
(\spdf,y_1 y_2 - x_1 x_2),
\end{aligned}
\end{equation}
where $\spdf = y_1+y_2-x_1-x_2$.
Sometimes it is convenient to choose an `asymmetric' set of generators for the ideals:
\begin{equation}
\label{eq:asmi}
\aIpr=(\spdf,y_2-x_2),\qquad \aIcr = \big(\spdf,
(y_2 - x_2)(y_2 - x_1)\big).
\end{equation}

For brevity, we use notation $\bfsdf = \bsdfv{x_1}{x_2}$, where $\bfsdf = \sdf_0,\sdf_1,\ldots$. Note the relations between various polynomials $\sdf$ as they act on these \bmdl s:
\begin{align}
\label{eq:pepr}
\bfsdf & = \bsdfv{x_1}{x_2} = \bsdfv{y_1}{y_2} &  \mod \aIcr, 
\\
\bfsdf & = \bsdfv{x_1}{x_2} = \bsdfv{y_1}{y_2} = \bsdfv{x_1}{y_2} & \mod \aIpr. 
\end{align}

There are two important homomorphisms between the \uWpxybm s $\mMpr$ and $\mMcr$:
\begin{equation}
\label{eq:mhoms}
%
\hchine\colon \mMcr \xrightarrow{\;\;1\;\;}\mMpr,\qquad
\hchipo\colon \mMpr \xrightarrow{\;\;\frac{1}{2}(y_2 - y_1 + x_2 - x_1)\;\;} \mcnnbp{\mMcr}.
\end{equation}
Sometimes it is convenient to describe the second homomorphism equivalently as
\begin{equation}
\label{eq:scchp}
\hchipo\colon \mMpr \xrightarrow{\;\;y_2 - x_1\;\;} \mcnnbp{\mMcr}.
\end{equation}
Obviously $\hchine$ commutes with \drv s of $\aWp$. The same is true for $\hchipo$: condition~\eqref{eq:hmcnd} is satisfied in view of the following chain of equalities (the first one is modulo $\aIcr$):
\begin{multline}
\shlf(y_2 - y_1 + x_2 - x_1)\,\sdfmv{x_1}{x_2} =
\shlf(y_2 - y_1)\,\sdfmv{y_1}{y_2} + \shlf(x_2 -x_1)\,\sdfmv{x_1}{x_2}
\\
=
\shlf(y_2^{m+1} - y_1^{m+1} + x_2^{m+1} - x_1^{m+1}) =\shlf \xLm\,(y_2 - y_1 + x_2 - x_1)\quad\mod \aIcr.
\end{multline}

\subsection{Categorification bracket}

In order to define the map
$\ctmdvbxy{\dmmy}\colon\brwgrn\rightarrow \ctCh\bigl(\ctWxy\bigr)$ which categorifies \brwd s, we introduce extended versions of \bmdl s  $\mMpr$ and $\mMcr$ as \uWpxybm s with $\xnum{\bfx}=\xnum{\bfy}=\nst$ for any $\nst\geq 2$.

First, we define the \bmdl
\[
\mMpd = \Dlxy = \botarcv{1}{\nst}. 
\]
which determines the identity \enfn\ in the category of \Wpeq\ $\Qbx$-modules: for any module $\mM_{\bfx}$ there is a canonical isomorphism
\begin{equation}
\label{eq:idmpr}
\mM_{\bfy} \cong \mM_{\bfx} \otimes_{\Qbx}\mMlpd{\bfx,\bfy}.
\end{equation}

Second, we define the \bmdl
\begin{equation}
\label{eq:bmpr1}
\mMcri =
\Dlv{\bfx',\bfy'}\otimes \mMcrlxyp
\otimes\Dlv{\bfx''',\bfy'''}
\end{equation}
(\cf the general definition~\eqref{eq:exsbm}).
Note that $\mMpd$ has a similar presentation
\begin{equation}
\label{eq:bmpr2}
\mMpd = \Dlv{\bfx',\bfy'}\otimes\mMprlxyp\otimes\Dlv{\bfx''',\bfy'''}.
\end{equation}

Finally, we define the $\aWp$-invariant homomorphisms
\begin{align}
\label{eq:hmio}
\hchinei\colon &
\mMcri \rightarrow \mMpd, \qquad
&
\hchinei & =
\xId'
\otimes \hchine \otimes
\xId''',
\\
\label{eq:hmit}
\hchipoi\colon &
\mMpd \rightarrow \mcnnpi{\mMcri}, \qquad
&
\hchipoi & =
\xId'
\otimes \hchipo \otimes
\xId''',
\end{align}
where
$\bsdfui  = \bsdfv{x_i}{x_{i+1}}$,
$\xId'$ and $\xId'''$ are the identity endomorphism of $\Dlv{\bfx',\bfy'}$ and $\Dlv{\bfx''',\bfy'''}$ and the formulas for $\hchinei$ and $\hchipoi$ are written relative to the presentations~\eqref{eq:bmpr1} and~\eqref{eq:bmpr2}.

%
%
%
Now to elementary \brwd s $\sggni$ and $\sggxii$ we associate cones in the category $\ctCh$ of the homomorphisms~\eqref{eq:hmio} and~\eqref{eq:hmit}:
\begin{equation}
\begin{split}
\label{eq:bcmps}
\ctmdv{\sggni} &=(\dgsha\dgsht)^{-\frac{1}{2}}\dgshq^2\, \boxed{
\mMpd \xrightarrow{\;\;\hchipoi\;\;} \dgshq^{-2}\,\dgsht\mcnnpi{\mMcri}
}\; ,
\\
\ctmdv{\sggxii}
&
=(\dgsha\dgsht)^{\frac{1}{2}}\dgshq^{-2}\, \boxed{
\dgsht^{-1}\mMcri \xrightarrow{\;\;\hchinei\;\;} \mMpd
}\;.
\end{split}
\end{equation}
The action of the map $\ctmdv{\dmmy}$ on other elements of $\brwgrn$ is defined by the relation~\eqref{eq:smbrpr}.



\section{\Wpeq\ categorification of the braid group}
Throughout the paper, by generators of a $\uWpxy$-module $\mMxy$ we mean the generators of $\mMxy$ as a $\Qbxy$-module.

\subsection{Main theorem}

\begin{theorem}
\label{th:first}
There exists a unique homomorphism
$\ctmvbxy{\dmmy}\colon\brgrn \rightarrow \ctDr\bigl(\ctWxy\bigr)$ which makes the following diagram commutative:
\begin{equation*}
\xymatrix@R=1.5cm@C=1.25cm{
\brwgrn \ar @{->>}[r]^-{\fbr}
\ar[d]_-{\ctmdvbxy{\dmmy}}
&
\brgrn
\ar[d]^-{\ctmvbxy{\dmmy}}
\\
\ctCh\bigl(\ctWxy\bigr)
\ar[r]^-{\Fffq}
&
\ctDr\bigl(\ctWxy\bigr)
}
\end{equation*}
\end{theorem}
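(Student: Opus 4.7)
The plan has two parts. Uniqueness is immediate: surjectivity of $\fbr$ forces any $\ctmvbxy{\dmmy}$ making the square commute to send a braid $\brr$ to $\Fffq\bigl(\ctmdvbxy{\brw}\bigr)$ for any word $\brw$ with $\fbr(\brw)=\brr$, and the semigroup compatibility~\eqref{eq:smbrpr} of $\ctmdvbxy{\dmmy}$ guarantees that the induced assignment is a group homomorphism once well-definedness is established.

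For existence, the task is to show that $\Fffq\circ\ctmdvbxy{\dmmy}$ descends through $\fbr$: whenever two braid words $\brw_1,\brw_2$ represent the same braid, $\ctmdvbxy{\brw_1}\eqsmout\ctmdvbxy{\brw_2}$ in $\ctCh\bigl(\ctWxy\bigr)$. Using the standard presentation of $\brgrn$ together with~\eqref{eq:smbrpr}, this reduces to three families of checks: (R1) the inverse relations $\sigma_i\sigma_i^{-1}\eqsmout\mathrm{id}\eqsmout\sigma_i^{-1}\sigma_i$; (R2) far commutativity $\sigma_i\sigma_j\eqsmout\sigma_j\sigma_i$ for $|i-j|\ge 2$; and (R3) the Yang-Baxter relation $\sigma_i\sigma_{i+1}\sigma_i\eqsmout\sigma_{i+1}\sigma_i\sigma_{i+1}$. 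Relation (R2) is essentially tautological: the homomorphisms $\hchinei,\hchipoi$ and their analogues for position $j$ live on disjoint pairs of strands, so by Corollary~\ref{cr:smprtp} the partial tensor products reduce to ordinary ones, and the symmetry swap of tensor factors is an honest isomorphism of complexes in $\ctCh\bigl(\ctWxy\bigr)$, not merely a quasi-isomorphism.

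Relations (R1) and (R3) are handled by the same mechanism: Theorem~\ref{pr:qsbc} applied to the forgetful functor $\Ffr\colon\ctWxy\to\ctQxy$ restricting $\uWpxy$-modules to $\Qbxy$-modules (extended as required to the chain categories). The strategy on each side of the relation is to expand the multi-step total complex produced by~\eqref{eq:bcmps} and exhibit a $\uWpxy$-equivariant chain map to or from a canonical target whose mapping cone, after $\Ffr$ is applied, splits off a contractible summand in $\ctCh(\ctQxy)$ by a classical $\Qbxy$-linear Gaussian-elimination homotopy. For (R1) the target is the identity bimodule $\mMpd$ and the contraction is the usual Rouquier null-homotopy of $\ctmdv{\sggni}\oty\ctmdv{\sggxii}$. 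For (R3) the target is the $\uWpxy$-equivariant analogue of the Soergel bimodule $\mSv{3}$ attached to the longest element of $S_3$ on strands $i,i+1,i+2$; each of the two triple cones maps equivariantly to a complex built from $\mSv{3}$, and the $\Qbxy$-linear contractions of the two comparison cones produce, through Theorem~\ref{pr:qsbc}, a zig-zag of quasi-isomorphisms identifying the two sides.

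The main obstacle is the $\uWp$-equivariant version of (R3), specifically the construction of the canonical $\mSv{3}$-complex carrying a flat connection in the sense of~\eqref{eq:shprp}. Theorem~\ref{th:lgder} supplies the individual flat scalar connections associated with each symmetric-polynomial generator of the defining ideal of $\mSv{3}$; the Koszul framework of Section~\ref{sss:ksres}, together with the curvature-vanishing condition~\eqref{eq:kscnrl}, assembles these into a genuine $\uWpxy$-module. Once this equivariant model is in place, the $\aWp$-equivariance of the comparison chain maps is a mechanical application of the criterion~\eqref{eq:hmcnd} together with the identities~\eqref{eq:pepr}, while the base-category contractibility of the comparison cones is the standard Rouquier calculation. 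Feeding these contractions into Theorem~\ref{pr:qsbc} completes the proof of (R1) and (R3), hence of the theorem.
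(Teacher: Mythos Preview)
Your outline has the right skeleton---reduce to the generating relations via~\eqref{eq:smbrpr}, handle far commutativity by the tensor-factor swap, and invoke Theorem~\ref{pr:qsbc} for the non-equivariant contractions---but it diverges from the paper in two substantive ways, and your ``main obstacle'' paragraph is aimed at a non-issue.

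For the inverse relation (your R1), the paper obtains a genuine \emph{homotopy equivalence} in $\ctCh(\ctWxy)$, not merely a quasi-isomorphism: Lemma~\ref{lm:smspl} shows that the decomposition $\mMcrvr{\bfx,\bfy}\oty\mMcrvr{\bfy,\bfz}\cong\mMcrvr{\bfx,\bfz}\oplus\mcnbpbp{\mMcrvr{\bfx,\bfz}}$ is already $\uWpxz$-equivariant, so the usual Gaussian elimination runs entirely inside the equivariant category and Theorem~\ref{pr:qsbc} is never invoked. Your plan works, but it is weaker than necessary.

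For the braid relation, the paper does \emph{not} build a target complex and compare both sides to it. Instead it exploits the involution $\Fothxw$ swapping strands $1$ and $3$: since $\Fothxw$ interchanges the two sides of~\eqref{eq:fhteq}, it suffices to show that $\ctmdvbxw{\sggio\sggit\sggio}$ is quasi-isomorphic to a single complex $\aDb$ (equation~\eqref{eq:thbcmp}) that is \emph{manifestly} $\Fothxw$-invariant. The reduction to $\aDb$ uses Theorem~\ref{pr:qsbc} exactly once, to peel off a contractible summand arising from the exact sequence of Lemma~\ref{lm:thsplt}, which splits only as $\Qbxw$-modules and not equivariantly; the remaining simplifications (via Lemma~\ref{lm:thsplo}) are honest homotopy equivalences because that splitting \emph{is} equivariant.

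Your final paragraph misidentifies the difficulty. The bimodule $\mSv{3}=\mMth$ already carries a $\uWp$-structure with trivial connection, since its defining ideal is generated by differences of symmetric polynomials and is therefore $\aWp$-invariant; no Koszul resolution or curvature check is required to define it. The Koszul/curvature machinery of Section~\ref{sss:ksres} is used elsewhere in the paper (for Markov~II), not here. The genuine obstacle in (R3) is the one you do not name: the sequence $0\to\mMthrrxw\to\mMstarxw\to\mcnbpbpot{\mMcravr{\bfx,\bfw}}\to 0$ of Lemma~\ref{lm:thsplt} fails to split $\uWp$-equivariantly, and locating that single failure is what makes the paper's argument go through.
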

\begin{proof}
The uniqueness of $\ctmvbxy{\dmmy}$ follows from the surjectivity of the map $\fbr$.
In order to establish its existence we have to prove  that the complex $\ctmdv{\sggni}$ is the inverse of the complex $\ctmdv{\sggxii}$ with respect to the monoidal structure of $\ctK(\ctWxy)$, and that the map $\ctmdv{\dmmy}$ respects the braid relations. For obvious reasons we will refer to these properties as second and third Reidemeister moves invariance.
The factorization of \bmdl s~\eqref{eq:bmpr1} and~\eqref{eq:bmpr2} and the local nature of the homomorphisms~\eqref{eq:hmio}, \eqref{eq:hmit} implies that it is sufficient to prove the second Reidemeister move invariance for the 2-strand braid and the third Reidemeister move for the 3-strand braid. This will be done in the next two subsections.
\end{proof}

%
%

\subsection{Second Reidemeister move invariance}

Let $\bfx = x_1,x_2$ , $\bfy = y_1,y_2$ and $\bfz = z_1,z_2$. Recall that by the definition of the map $\ctmdvbxy{\dmmy}$ the complex associated to the product of elementary \brwd s
$\sggno \sggxio$ is the tensor product of elementary complexes over the intermediate algebra:
\begin{equation}
\label{eq:dftwrd}
\ctmdvbxz{ \sggno \sggxio  } = \ctmdvbxy{\sggno} \oty \ctmdvbyz{\sggxio}
\end{equation}

\begin{theorem}
\label{th:rdmtwo}
There is a homotopy equivalence of complexes
\begin{equation}
\label{eq:fhteqt}
\ctmdvbxz{ \sggno \sggxio  } \sim \mMprvr{\bfx,\bfz}
\end{equation}
within the category $\ctCh(\ctWxy)$.
\end{theorem}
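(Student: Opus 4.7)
The plan is to compute the double complex $\ctmdvbxz{\sggno \sggxio} = \ctmdvbxy{\sggno}\oty\ctmdvbyz{\sggxio}$ explicitly, identify two contractible subcomplexes inside it, and strip them off via Theorem~\ref{pr:qsbc} to leave exactly $\mMprvr{\bfx,\bfz}$. All four elementary Soergel bimodules appearing in \eqref{eq:bcmps} are smooth-projective (Remark~\ref{rm:tpsbm}), so Corollary~\ref{cr:smprtp} lets me replace $\otdry$ by the ordinary $\oty$ throughout, and homotopy equivalences produced below will live in $\ctCh(\ctWxy)$, matching the $\sim$ in \eqref{eq:fhteqt}.

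First I would unfold the bicomplex. Its four corners, up to overall $q,a,t$-shifts, are $\mMpdxy\oty\mMpdyz$, $\mcnnp{\mMcrxy}\oty\mMpdyz$, $\mMpdxy\oty\mMcryz$, and $\mcnnp{\mMcrxy}\oty\mMcryz$, with differentials induced from $\hchine$ and $\hchipo$. Using the identity property \eqref{eq:idmpr} of the diagonal bimodule $\mMpd$, three of these corners collapse canonically, and equivariantly in $\uWp$, to $\mMprxz$, $\mcnnp{\mMcrxz}$, and $\mMcrxz$ respectively; the induced horizontal and vertical arrows connecting them to the remaining corner are the images under $\mMpd\oty(\dmmy)$ and $(\dmmy)\oty\mMpd$ of the two homomorphisms $\hchinei$ and $\hchipoi$ from \eqref{eq:hmio}--\eqref{eq:hmit}.

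Second, I would analyze the `hard' corner $\mcnnp{\mMcrxy}\oty\mMcryz$. Its $\Qbxz$-rank is $4$ by Proposition~\ref{pr:rkpr} and \eqref{eq:rkesbm}, and I expect a $\uWpxz$-equivariant direct sum decomposition into two rank-$2$ summands, one isomorphic to $\mcnnp{\mMcrxz}$ and one to $\mMcrxz$, generalizing the classical Soergel identity $B\otimes B\simeq B\{1\}\oplus B\{-1\}$. Concrete candidates for the splitting generators are built from the polynomial $\tfrac12(y_2-y_1+x_1-x_2)$ of \eqref{eq:mhoms} and its image on the $\bfz$-side; the computation in \eqref{eq:pepr} showing that $\bsdfv{x_1}{x_2}=\bsdfv{y_1}{y_2}$ modulo $\aIcr$ is exactly what will make these splitting projectors commute with the derivations $\xLm$ after the connection shift $\mcnnp{\cdot}$, invoking Theorem~\ref{th:lgder} where necessary to check flatness of the resulting connection.

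Third, with the decomposition in hand, I would assemble two contractible subcomplexes: the lower-left $\mMcrxz$ maps isomorphically (up to a unit) via $\hchinei\otimes\mathrm{id}$ onto the $\mMcrxz$-summand of the hard corner, and the $\mcnnp{\mMcrxz}$-summand of the hard corner maps isomorphically via $\mathrm{id}\otimes\hchipoi$ onto the upper-right $\mcnnp{\mMcrxz}$. Applying Theorem~\ref{pr:qsbc} twice cancels these two acyclic pairs and leaves only the upper-left corner $\mMprxz$, giving the desired equivalence. The overall grading shifts $(ab)^{\pm 1/2}q^{\pm 2}$ in \eqref{eq:bcmps} are arranged to cancel exactly, which I would verify at the end.

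The main obstacle is unambiguously the equivariant decomposition of $\mcnnp{\mMcrxy}\oty\mMcryz$: the classical Soergel splitting is straightforward, but here every splitting map must intertwine the full $\aWp$-action and track the connection shift $\mcnnp{\cdot}$ on one factor. Once this equivariant splitting (with its flat connection correctly identified via Theorem~\ref{th:lgder}) is established, the remaining Gaussian elimination and the verification that the leftover differential between $\mMprxz$ and the cancelled pieces is zero are both routine.
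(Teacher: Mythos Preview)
Your approach is the paper's: unfold the four-term bicomplex, split the hard corner $\uWpxz$-equivariantly (this is precisely Lemma~\ref{lm:smspl}, and your anticipated splitting $\mcnnp{\mMcrxz}\oplus\mMcrxz$ with generators built from $y_2-y_1$ is exactly what that lemma supplies), then perform two contractions to leave $\mMprxz$.

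Two corrections. First, your arrow labels are swapped: the map from the $\mMcrxz$-corner into the hard corner is $\hchipo\otimes\xId$ (since $\sggno$ carries $\hchipo$), and the map out of the hard corner to the $\mcnnp{\mMcrxz}$-corner is $\xId\otimes\hchine$ (since $\sggxio$ carries $\hchine$). Second, and more substantively, you should not invoke Theorem~\ref{pr:qsbc} here. Because Lemma~\ref{lm:smspl} gives a splitting that is already $\uWpxz$-equivariant, the maps $\tfrac12$ and $\xId$ you contract along are isomorphisms of $\uWpxz$-modules, so ordinary cone contraction in the homotopy category applies and yields the honest homotopy equivalence $\sim$ asserted in \eqref{eq:fhteqt}. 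Theorem~\ref{pr:qsbc} would only produce a quasi-isomorphism $\eqsmout$, which is weaker than what is claimed; the paper accordingly does the two reductions as straightforward homotopy-category cone contractions.
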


\begin{lemma}
\label{lm:smspl}
The following tensor product splits as a $\uWpxz$-module:
\begin{equation}
\label{eq:spldcr}
\mMcrvr{\bfx,\bfy}\oty \mMcrvr{\bfy,\bfz} \cong
\mMcrvr{\bfx,\bfz} \oplus \mcnbpbp{\mMcrvr{\bfx,\bfz}}.
\end{equation}
%
%
The generators of the modules $\mMcrvr{\bfy,\bfz}$ in the \rhs may be chosen as
$\mgen_1 = \mgenv{\bfx,\bfy}\otimes\mgenv{\bfy,\bfz}$ and
$\mgen_2 = (y_2 - y_1)  \mgen_1$,
where $\mgenvv{\xcrs}{\bfx,\bfy}$ and $\mgenvv{\xcrs}{\bfy,\bfz}$ are the generators of
$\mMcrvr{\bfx,\bfy}$ and $\mMcrvr{\bfy,\bfz}$ respectively. The following diagram is commutative:
\begin{equation}
\label{eq:fcmdg}
\mvcn{
\xymatrix@C=1.2cm@R=1.2cm{
\mMprvr{\bfx,\bfy}\oty \mMcrvr{\bfy,\bfz}
\ar[r]^-{\hchipo \otimes \xId }
\ar[d]^-{\cong}
&
\mcnbnbp{\mMcrvr{\bfx,\bfy}\oty \mMcrvr{\bfy,\bfz} }
\ar[r]^-{\xId\otimes\hchine}
\ar[d]^-{\cong}
&
\mcnbnbp{\mMcrvr{\bfx,\bfy}\oty \mMprvr{\bfy,\bfz} }
\ar[d]^-{\cong}
\\
\mMcrvr{\bfx,\bfz}
\ar[r]^-{\hlf(x_2 - x_1,\xId)}
&
\mcnbnbp{\mMcrvr{\bfx,\bfz}} \oplus \mMcrvr{\bfx,\bfz}
\ar[r]^-{(\xId,z_2-z_1)}
&
\mcnbnbp{\mMcrvr{\bfx,\bfz} }
}
}
\end{equation}
\end{lemma}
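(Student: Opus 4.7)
The plan is to prove the lemma in three steps: (i) show the tensor product is free of rank two over $\mMcrvr{\bfx,\bfz}$ with basis $\{\mgen_1,\mgen_2\}$; (ii) compute the $\aWp$-action on these generators to identify the two summands; (iii) verify the commutative diagram~\eqref{eq:fcmdg} by chasing $\mgenv{\bfx,\bfy}\otimes\mgenv{\bfy,\bfz}$.

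For step (i) I would tensor the defining relations of $\mMcrvr{\bfx,\bfy}$ and $\mMcrvr{\bfy,\bfz}$ over $\mQv{\bfy}$: this yields $y_1+y_2\equiv x_1+x_2\equiv z_1+z_2$ and $y_1y_2\equiv x_1x_2\equiv z_1z_2$, so the tensor product is canonically an $\mMcrvr{\bfx,\bfz}$-module. The quadratic $y_1^2\equiv(x_1+x_2)y_1-x_1x_2$ inherited from $\aIcr$ then shows that $\{\mgen_1,\mgen_2\}$, with $\mgen_2=(y_2-y_1)\mgen_1=(x_1+x_2-2y_1)\mgen_1$, spans the tensor product over $\mMcrvr{\bfx,\bfz}$, while Proposition~\ref{pr:rkpr} together with~\eqref{eq:rkesbm} and Remark~\ref{rm:tpsbm} forces this spanning set to be free of rank two. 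For step (ii), since $1\in\mMcr$ is $\aWp$-invariant the Leibniz rule gives $\xhLm\mgen_1=0$ and
\[
\xhLm\mgen_2=\bigl(\xhLm(y_2-y_1)\bigr)\mgen_1=(y_2-y_1)\,\sdfmv{y_1}{y_2}\,\mgen_1=\sdfmv{x_1}{x_2}\,\mgen_2,
\]
where $\sdfmv{y_1}{y_2}\equiv\sdfmv{x_1}{x_2}$ follows from~\eqref{eq:pepr} because $\sdfmv{y_1}{y_2}$ is symmetric in $y_1,y_2$ and so is a polynomial in $y_1+y_2$ and $y_1y_2$. This identifies $\mMcrvr{\bfx,\bfz}\mgen_1\cong\mMcrvr{\bfx,\bfz}$ and $\mMcrvr{\bfx,\bfz}\mgen_2\cong\mcnbpbp{\mMcrvr{\bfx,\bfz}}$, establishing~\eqref{eq:spldcr}.

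For step (iii), $\hchipo\otimes\xId$ sends $\mgenv{\bfx,\bfy}\otimes\mgenv{\bfy,\bfz}$ to $\shlf(y_2-y_1+x_2-x_1)\mgen_1=\shlf(x_2-x_1)\mgen_1+\shlf\,\mgen_2$. Under the middle vertical isomorphism, the outer shift $\mcnbnbp{\cdots}$ by $-\bsdfv{x_1}{x_2}$ combines additively with the internal connections: $\mgen_1$ (internal connection $0$) acquires effective connection $-\bsdfv{x_1}{x_2}$ and becomes the generator of the summand $\mcnbnbp{\mMcrvr{\bfx,\bfz}}$, while $\mgen_2$ (internal connection $+\bsdfv{x_1}{x_2}$) cancels to $0$ and becomes the generator of $\mMcrvr{\bfx,\bfz}$. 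Thus the image reads $\bigl(\shlf(x_2-x_1),\shlf\bigr)$, matching the first bottom map $\hlf(x_2-x_1,\xId)$. Next, $\xId\otimes\hchine$ identifies $y_i$ with $z_i$ through $\mMprvr{\bfy,\bfz}$, sending $\mgen_1\mapsto 1$ and $\mgen_2=(y_2-y_1)\mgen_1\mapsto(z_2-z_1)\cdot 1$ in $\mcnbnbp{\mMcrvr{\bfx,\bfz}}$, which reproduces $(\xId,z_2-z_1)$. Both composite paths end at $\shlf(x_2-x_1+z_2-z_1)$, confirming commutativity.

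The main obstacle will be the connection book-keeping in step (iii): carefully tracking how the outer shift $\mcnbnbp{\cdots}$ distributes over the two internal summands of $\mMcrvr{\bfx,\bfy}\oty\mMcrvr{\bfy,\bfz}$ so that the decomposition $\mcnbnbp{\mMcrvr{\bfx,\bfz}}\oplus\mMcrvr{\bfx,\bfz}$ emerges with the correct labels. All remaining verifications reduce to direct substitution.
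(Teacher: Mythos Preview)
Your proposal is correct and follows essentially the same approach as the paper's proof: establish the $\Qbxz$-module splitting on the chosen generators, compute the $\aWp$-action on $\mgen_1$ and $\mgen_2$ via~\eqref{eq:pepr}, and verify the diagram by chasing generators. The only minor difference is in step~(i): the paper rewrites the ideal $\aIxyz$ to exhibit an isomorphism $\mMcrvr{\bfx,\bfy}\otimes_{\Qby}\mMcrvr{\bfy,\bfz}\cong\mMcrvr{\bfx,\bfz}\otimes_{\Qbx}\mMcrvr{\bfx,\bfy}$ and then invokes the known $\Qbx$-splitting of the second factor, whereas you argue spanning plus graded-rank counting directly; both routes are equally short.
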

\begin{proof}
It is easy to establish the isomorphism~\eqref{eq:spldcr} in the category of $\Qbxz$-modules, the \rhs ones having generators $\mgen_1$ and $\mgen_2$. Indeed, the tensor product in the \lhs can be presented as the quotient $\Qv{\bfx,\bfy,\bfz}$-module
\[
\mMcrvr{\bfx,\bfy}\otimes_{\Qby} \mMcrvr{\bfy,\bfz} \cong \Qv{\bfx,\bfy,\bfz}/\aIxyz,
\]
where
\[
\aIxyz =
\begin{pmatrix}
\smpeo(\bfy)-\smpeo(\bfx)
\\
\smpet(\bfy)-\smpet(\bfx)
\\
\smpeo(\bfz)-\smpeo(\bfy)
\\
\smpet(\bfz)-\smpet(\bfy)
\end{pmatrix}
=
\begin{pmatrix}
\smpeo(\bfz)-\smpeo(\bfx)
\\
\smpet(\bfz)-\smpet(\bfx)
\\
\smpeo(\bfy)-\smpeo(\bfx)
\\
\smpet(\bfy)-\smpet(\bfx)
\end{pmatrix}
,
\]
while $\smpeo$ and $\smpet$ are elementary symmetric polynomials of degrees one and two.
The second choice of generators of the ideal $\aIxyz$ implies the isomorphism of $\Qbxz$-modules:%
\begin{equation}
\label{eq:fvsiso}
\mMcrvr{\bfx,\bfy}\otimes_{\Qby} \mMcrvr{\bfy,\bfz} \cong
\mMcrvr{\bfx,\bfz}\otimes_{\Qbx} \mMcrvr{\bfx,\bfy}.
\end{equation}
At the same time, there is an isomorphism of $\Qbx$-modules
\begin{equation}
\label{eq:vsmspl}
\mMcrvr{\bfx,\bfy}
\cong \Qbx \oplus \Qbx
\end{equation}
and the generators of the latter modules $\Qbx$ can be chosen as $\mgen_1'=\mgenv{\bfx,\bfy}$ and
\\
$\mgen_2'=(y_2 - y_1)\mgenv{\bfx,\bfy}$, where $\mgenv{\bfx,\bfy}$ is the generator of $\mMcrvr{\bfx,\bfy}$.

The action of a $\aWp$ generator $\xLm$ on the generator $\mgen_2'$ of the second module in the \rhs of \ex{eq:vsmspl} is
\[
\xcdf_m\mgen_2' = \big( \xLm\, (y_2 - y_1)\big) \mgenv{\bfx,\bfy} = \sdfmv{y_1}{y_2}\, \mgen_2' =
\sdfmv{x_1}{x_2}\,\mgen_2',
\]
hence the splitting~\eqref{eq:vsmspl} is \Wpeq\ if we add connection $\bfsdf$ to the second summand in its \rhs:
\begin{equation}
\nonumber
\mMcrvr{\bfx,\bfy}
\cong \mQx \oplus \mcnbpbp{\mQx}.
\end{equation}
Combining this splitting with the isomorphism~\eqref{eq:fvsiso} we get the splitting~\eqref{eq:spldcr}.

In order to verify the commutativity of the diagram~\eqref{eq:fcmdg} we check the action of the upper horizontal homomorphisms on the generators of modules:
\begin{gather*}
(\hchipo \otimes \xId)\, (\mgenv{\bfx,\bfy}\otimes\mgenv{\bfy,\bfz}) =
\shlf(x_2 - x_1 + y_2 - y_1)\,\mgenv{\bfx,\bfy}\otimes\mgenv{\bfy,\bfz} = \shlf(x_2 - x_1)\,\mgen_1 + \shlf \mgen_2,
\\
(\xId\otimes\hchine)\, (\mgen_1) =
(\xId\otimes\hchine)\, (\mgenv{\bfx,\bfy}\otimes\mgenv{\bfy,\bfz}) = \mgenv{\bfx,\bfz},
\\
(\xId\otimes\hchine)\, (\mgen_2) =
(\xId\otimes\hchine)\, \big((y_2-y_1)\,\mgenv{\bfx,\bfy}\otimes\mgenv{\bfy,\bfz}\big) =
(y_2-y_1)\,\mgenv{\bfx,\bfy}\otimes\mgenv{\bfy,\bfz}
=(z_2-z_1)\,\mgenv{\bfx,\bfy}\otimes\mgenv{\bfy,\bfz}
\end{gather*}


\end{proof}

\begin{proof}[Proof of Theorem~\ref{th:rdmtwo}]
We prove the theorem by a sequence of isomorphisms and homotopy equivalences:
\begin{multline*}
\ctmdvbxz{ \sggno \sggxio  } = \ctmdvbxy{\sggno} \oty \ctmdvbyz{\sggxio}
\\[15pt]
\shoveleft{
=\;\;
\boxed{
\vcenter{
\xymatrix@C=0.95cm@R=0.5cm{
&
\mcnbnbp{\mMcrvr{\bfx,\bfy}\oty \mMcrvr{\bfy,\bfz} }
\ar[dr]^-{\xId\otimes\hchine}
\\
\mMprvr{\bfx,\bfy}\oty \mMcrvr{\bfy,\bfz}
\ar[ur]^-{\hchipo \otimes \xId }
\ar[dr]^-{\xId\otimes\hchine}
&&
\mcnbnbp{\mMcrvr{\bfx,\bfy}\oty \mMprvr{\bfy,\bfz} }
\\
&
\mMprvr{\bfx,\bfy}\oty \mMprvr{\bfy,\bfz}
\ar[ur]^-{-\hchipo \otimes \xId }
}
}
}
}
\displaybreak[0]
\\[15pt]
\shoveleft{
\cong\;\;
\boxed{
\vcenter{
\xymatrix@C=2cm@R=0.5cm{
&
\mcnbnbp{\mMcrvr{\bfx,\bfz}} \oplus \mMcrvr{\bfx,\bfz}
\ar[dr]^-{-(\xId,z_2-z_1)}
\\
\mMcrvr{\bfx,\bfz}
\ar[ur]^-{\hlf(x_2 - x_1,\xId) \;\;\;\;}
\ar[dr]^-{1}
&&
\mcnbnbp{\mMcrvr{\bfx,\bfz} }
\\
&
\mMprvr{\bfx,\bfz}
\ar[ur]_-{\;\;\;\;\;\;\hlf(x_2 - x_1 + z_2 - z_1) }
}
}
}
}
\displaybreak[0]
\\[15pt]
\shoveleft{
\sim\;\;
\boxed{
\vcenter{
\xymatrix@R=0.25cm@C=3cm{
\mcnbnbp{\mMcrvr{\bfx,\bfz}}
\ar[dr]^-{-\xId}
\\
&
\mcnbnbp{\mMcrvr{\bfx,\bfz} }
\\
\mMprvr{\bfx,\bfz}
\ar[ur]_-{\;\;\;\;\;\;\hlf(x_2 - x_1 + y_2 - y_1) }
}
}
}
}
\displaybreak[0]
\\[15pt]
\shoveleft{
\sim \;\;\mMprvr{\bfx,\bfz}.
}
\hfill
\end{multline*}
Here the first isomorphism is the definition~\eqref{eq:dftwrd}, the second isomorphism is the definition of the tensor product of complexes~\eqref{eq:bcmps} and the third isomorphism comes from Lemma~\ref{lm:smspl}. The last two homotopy equivalences are contractions of a part of a cone within the homotopy category:
\[
\boxed{ A\longrightarrow B} \sim
\begin{cases}
A[1], & \text{if $B$ is contractible,}
\\
B, & \text{if $A$ is contractible.}
\end{cases}
\]
\end{proof}

\subsection{Third Reidemeister move invariance}

\def\cCatbxy{ \ctDr(\ctWxy) }
\def\cCatbxz{ \ctDr(\ctWxz) }
\def\cCatbyz{ \ctDr(\ctWyz) }
\def\cCatbxw{ \ctDr(\ctWxw) }
\def\xqh{ \eqsmout }

Let $\bfx = x_1,x_2,x_3$, $\bfy = y_1,y_2,y_3$, $\bfz = z_1,z_2,z_3$ and $\bfw=w_1,w_2,w_3$.
Recall that by definition of the map $\ctmdv{\dmmy}$
the categorification of the triple products of \brwd s appearing at both sides of the third Reidemeister move takes the form
\begin{equation*}
\label{eq:dftwrd.2}
\ctmdvbxw{ \sggni \sggnj \sggni  } = \ctmdvbxy{\sggni} \oty \ctmdvbyz{\sggnj} \otz \ctmdvbzw{\sggni}.
\end{equation*}

\begin{theorem}
\label{th:rdmthr}
The following complexes are \qisc\ in the category $\ctCh(\ctWxw)$:
\begin{equation}
\label{eq:fhteq}
\ctmdvbxw{ \sggno \sggnt \sggno } \eqsmout \ctmdvbxw{ \sggnt \sggno \sggnt }.
\end{equation}
\end{theorem}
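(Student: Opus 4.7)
The plan is to mirror the structure of the R2 proof (Theorem~\ref{th:rdmtwo}) at one higher dimension, by expanding both sides of~\eqref{eq:fhteq} as iterated mapping cones and then systematically applying Theorem~\ref{pr:qsbc} to strip off contractible pieces. First, I would unfold both $\ctmdvbxw{\sggno\sggnt\sggno}$ and $\ctmdvbxw{\sggnt\sggno\sggnt}$ using~\eqref{eq:bcmps} and~\eqref{eq:dftwrd} into three-dimensional ``cubes'' with eight vertices each. Every vertex is a triple tensor product, over the intermediate variable groups, of extended elementary \Sglb s drawn from $\mMpd$ and $\mMcri$ (for $i=1,2$), equipped with the $q$-, $a$-, $t$-shifts and connection twists $\shv{\bsdfui}$ prescribed by~\eqref{eq:bcmps}.

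The technical heart is an $\aWp$-equivariant analogue of Lemma~\ref{lm:smspl} for the triple crossing: both products of extended crossings appearing in the $121$ and $212$ orderings should decompose $\uWp$-equivariantly as a direct sum containing a common summand isomorphic to $\mMth = \mSv{3}$ in the variables $\bfx,\bfw$ (the \Sglb\ for the longest element on three strands), plus auxiliary summands built from $\mMcri$ and $\mMpd$ with explicit connection shifts in the $\shv{\bfsdf}$ family. The $\mMth$ summand is generated by the canonical triple product of the three $\mMcri$ generators, which represents an $\aWp$-invariant element modulo the defining ideal in either ordering; the remaining summands are generated by multiplying this generator by antisymmetrizing polynomials in the intermediate variables, whose $\aWp$-action produces only $\bfsdf$-type shifts, exactly as in the 2-strand splitting. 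With this decomposition in place, I would expand both cubes as iterated cones and apply Theorem~\ref{pr:qsbc} repeatedly to cancel each pair of summands that becomes contractible after forgetting the $\uWp$-action. These are precisely the pairs that already cancel in the classical Soergel--Rouquier proof of R3, so both cubes collapse to the same minimal complex built from $\mMth$, $\mMcri$ ($i=1,2$), and $\mMpd$ in the variables $\bfx,\bfw$.

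The main obstacle is verifying that the two triple-product decompositions, coming from the $121$ and $212$ orderings, carry matching connection data on their common $\mMth$ summand and on the auxiliary $\mMcr$ summands. Lemma~\ref{lm:smspl} already required a non-trivial $\bfsdf$-shift on one of its summands; in the 3-strand setting several such shifts will appear simultaneously, and the two orderings will a priori produce different-looking connection matrices. Reconciling them may require an explicit flat gauge transformation $\shv{\bfa}$, whose existence is guaranteed by Theorem~\ref{th:lgder} applied to a suitable antisymmetrizing polynomial in three variables. Once this match is established, the rest of the argument is formal and structurally identical to the cone-simplification step carried out in the proof of Theorem~\ref{th:rdmtwo}.
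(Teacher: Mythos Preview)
Your proposal has a genuine gap at its technical heart. You assert that the triple tensor products ``should decompose $\uWp$-equivariantly as a direct sum containing a common summand isomorphic to $\mMth$\ldots\ plus auxiliary summands.'' This is false, and its failure is precisely what makes the $\aWp$-equivariant R3 argument nontrivial. In the paper's Lemma~\ref{lm:thsplt}, the module $\mMstarxw = \mMcravr{\bfx,\bfy}\oty \mMacrvr{\bfy,\bfz}\otz\mMcravr{\bfz,\bfw}$ sits in an exact sequence
\[
0\longrightarrow\mMthrrxw\longrightarrow\mMstarxw\longrightarrow\mcnbpbpot{\mMcravr{\bfx,\bfw}}\longrightarrow 0
\]
that splits only as $\Qbxw$-modules, \emph{not} as $\uWpxw$-modules: the computation~\eqref{eq:drscg} shows $\xcdfm$ acting on the quotient generator produces a nonzero term $q'$ in the submodule $\mMthrrxw$. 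So $\mMth$ is a genuine submodule, not a direct summand, and no flat gauge transformation will fix this---the extension class is the obstruction, not a mismatch of connections on summands. Your later invocation of Theorem~\ref{pr:qsbc} is the right tool, but it must be applied to this non-split filtration, not to a putative direct sum.

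The paper also takes a structurally different route that you should note. Rather than comparing the $121$ and $212$ cubes directly, it uses the involution $\Fothxw$ swapping strand indices $1\leftrightarrow 3$ to reduce~\eqref{eq:fhteq} to showing that a \emph{single} complex $\ctmdvbxw{\sggio\sggit\sggio}$ (for the inverse generators, which have simpler maps) is $\Fothxw$-invariant up to \qiso. It then builds chain maps $\aBb\to\ctmdvbxw{\sggio\sggit\sggio}\to\aCb$ from the non-split exact sequence above, applies Theorem~\ref{pr:qsbc} (since the sequence splits after forgetting $\uWp$ and $\aCb$ is contractible), and simplifies further to a complex $\aDb$ that is manifestly $\Fothxw$-symmetric. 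Your plan of matching two cubes would require comparing two non-split extensions rather than one, with no obvious reason they agree.
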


The algebra $\Qbxy$ has an automorphism $\foth$ which swaps simultaneously $x_1$ with $x_3$ and $y_1$ with $y_3$:
$\foth(x_1) = x_3$, $\foth(x_2)=x_2$, $\foth(x_3) = x_1$ and the same for $y_1$, $y_2$ and $y_3$. This automorphism generates an involutive \enfn\ $\Fothxy$ of the category $\ctCh(\ctWxy)$. The obvious isomorphism of \bmdl\ complexes
$\Fothxy\big(\ctmdvbxy{\sggno}\big) \cong \ctmdvbxy{\sggnt}$ and the commutativity of a diagram
\[
\xymatrix@R=1.5cm@C=1.5cm{
\cCatbxy \times \cCatbyz \ar[r]^-{\otdry}
\ar[d]_-{\Fothxy\times\Fothv{\bfy,\bfz}}
& \cCatbxz
\ar[d]^-{\Fothv{\bfx,\bfz}}
\\
\cCatbxy \times \cCatbyz \ar[r]^-{\otdry}
& \cCatbxz
}
\]
imply the isomorphism of complexes
$\ctmdvbxw{ \sggnt \sggno \sggnt } \cong \Fothxy\big( \ctmdvbxw{ \sggno \sggnt \sggno }\big)$, hence the homotopy equivalence~\eqref{eq:fhteq} follows from the homotopy equivalence
\begin{equation}
\label{eq:mrsht}
\ctmdvbxw{ \sggno \sggnt \sggno } \xqh
\Fothxw\big( \ctmdvbxw{ \sggno \sggnt \sggno }\big).
\end{equation}
In other words, it is sufficient to show that the \rhs of \ex{eq:fhteq} is symmetric under the transposition of indices 1 and 3 of the variables.

Before proceeding with the proof of \ex{eq:mrsht} we introduce new notations and establish decomposition of some \bmdl s appearing in the complex $\ctmdvbxw{ \sggno \sggnt \sggno }$. The simplest \bmdl s are
\begin{align*}
\mMtharxw & = \mMpdrxw =\Dlv{\bfx,\bfw},
\\
\mMcrarxw & = \mMcrvv{\bfx,\bfw}{1},
\\
\mMacrrxw & = \mMcrvv{\bfx,\bfw}{2},
\end{align*}
and the \Sgl\ \bmdl\ $\mMthrrxw$ of \ex{eq:thsgbm}. The action of the transposition \enfn\ $\Foth$ on these \bmdl s is obvious:
\[
\Fothxw(\mMtharxw) \cong \mMtharxw,\qquad
\Fothxw(\mMcrarxw) \cong \mMacrrxw,\qquad
\Fothxw(\mMthrrxw) \cong \mMthrrxw.
\]

\begin{lemma}
The $\uWpxw$-module
\[
\mMcprrxw = \mMcravr{\bfx,\bfy}\oty \mMthavr{\bfy,\bfz} \otz
\mMacrvr{\bfz,\bfw}
\]
has a single generator $\mgenvv{\xcpr}{\bfx,\bfw} = \mgenvv{\xcra}{\bfx,\bfy}\otimes
\mgenvv{\xtha}{\bfy,\bfz}\otimes\mgenvv{\xacr}{\bfz,\bfx}$ and it can be presented as a quotient
$\mMcprrxw \cong \mQxw/\aIvv{\xcpr}{\bfx,\bfw}$, where
\[\aIvv{\xcpr}{\bfx,\bfw} = \big(w_1 + w_2 + w_3 - (x_1 + x_2 + x_3), (w_1-x_1)(w_1 - x_2), (w_3 - x_3)(w_3 - x_2)\big).\]
\end{lemma}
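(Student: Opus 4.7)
The lemma has two parts: cyclicity of $\mMcprrxw$ with the stated generator, and the explicit form of its defining ideal in $\mQv{\bfx,\bfw}$. I would address these in sequence.

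For the first part, each of the three tensor factors $\mMcravr{\bfx,\bfy}$, $\mMthavr{\bfy,\bfz}$, and $\mMacrvr{\bfz,\bfw}$ is cyclic over its ambient polynomial ring, since each is a quotient of that ring. The partial tensor product over the intermediate polynomial subrings of cyclic modules remains cyclic, generated by the tensor of the factor generators. This gives that $\mMcprrxw$ is generated over $\mQv{\bfx,\bfw}$ by $\mgenvv{\xcpr}{\bfx,\bfw}$.

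For the second part, I would present
\[
\mMcprrxw \cong \mQv{\bfx,\bfy,\bfz,\bfw}\big/\bigl(\aIvv{\xcra}{\bfx,\bfy}+\aIvv{\xtha}{\bfy,\bfz}+\aIvv{\xacr}{\bfz,\bfw}\bigr),
\]
then eliminate $\bfy$ and $\bfz$ to extract the annihilator in $\mQv{\bfx,\bfw}$. The crossing factor $\mMcra$ forces $y_3=x_3$ and that $y_1,y_2$ are the two roots of $(t-x_1)(t-x_2)$; dually $\mMacr$ forces $w_1=z_1$ and that $w_2,w_3$ are the two roots of $(t-z_2)(t-z_3)$. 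The factor $\mMtha$ transports $\bfy$-data to $\bfz$-data across the middle. Chaining these identifications yields three relations in $\mQv{\bfx,\bfw}$: the trace identity $e_1(\bfw)-e_1(\bfx)$; the quadratic $(w_1-x_1)(w_1-x_2)$ inherited from identifying $w_1$ with $y_1$; and the symmetric $(w_3-x_2)(w_3-x_3)$ obtained analogously from the $\mMacr$ end.

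To prove that these three relations generate the entire annihilator I would argue by rank comparison. Proposition~\ref{pr:rkpr} combined with formula~\eqref{eq:rkesbm} computes the $\mQv{\bfx}$-rank of $\mMcprrxw$ as the product of the ranks of its three factors. The candidate quotient $\mQv{\bfx,\bfw}/\aIvv{\xcpr}{\bfx,\bfw}$ is manifestly a free $\mQv{\bfx}$-module with basis $\{1,w_1,w_3,w_1w_3\}$ of rank $4$. Matching these two ranks upgrades the containment of ideals to equality.

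The main obstacle will be extracting the third relation in the stated decoupled form. A direct elimination of $\bfy,\bfz$ delivers a coupled relation of the type $w_2w_3-(x_1+x_2-w_1)x_3$, which intertwines $w_1$ and $w_3$. Rewriting this purely as $(w_3-x_2)(w_3-x_3)$ modulo the other two relations requires careful use of the specific bridging structure of $\mMtha$ between the two crossings, and probably a switch to the asymmetric generating sets~\eqref{eq:asmi} on the $\mMacr$ side.
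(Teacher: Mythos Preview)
Your approach is correct and is essentially what the paper has in mind (it declares the proof ``obvious'' and omits it). However, the obstacle you anticipate with the third generator is not a technicality to be worked around: the stated ideal is in fact \emph{not} the annihilator of the cyclic generator.

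Direct elimination of $\bfy,\bfz$---noting that the middle factor $\mMthavr{\bfy,\bfz}$ is the diagonal bimodule, so there is no ``bridging structure'' to exploit beyond setting $\bfz=\bfy$---yields the annihilator
\[
A = \bigl(w_1+w_2+w_3 - x_1-x_2-x_3,\; (w_1-x_1)(w_1-x_2),\; w_2 w_3 - (x_1+x_2-w_1)x_3\bigr),
\]
and your rank comparison confirms that $A$ is the full annihilator. But $A$ does not coincide with the ideal $\aIvv{\xcpr}{\bfx,\bfw}$ written in the lemma: the point $\bfw=(x_2,x_3,x_1)$ lies in $V(A)$ while $(w_3-x_3)(w_3-x_2)=(x_1-x_3)(x_1-x_2)\neq 0$ there; conversely $\bfw=(x_2,\,x_1{+}x_3{-}x_2,\,x_2)$ satisfies the stated relations but not $A$. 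The latter point is not a permutation of $\bfx$, so the stated quotient cannot be this tensor product of Soergel bimodules. A further internal check: the stated ideal is visibly invariant under $\Fothxw$, which would force $\mMcprrxw\cong\mMiprrxw$, in tension with the sentence immediately following the lemma.

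So your coupled relation is the correct third generator, and the attempt to rewrite it in the decoupled form must fail. The slip is harmless for the rest of the paper, which uses only the cyclicity of $\mMcprrxw$ and the relation $\mMiprrxw \cong \Fothxw(\mMcprrxw)$; both hold with the correct ideal $A$.
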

The proof is obvious and we omit it.
The \bmdl\ $\mMiprrxw \cong \Fothxw(\mMcprrxw)$ has a similar property.

Consider three $\uWpxw$-modules defined as tensor products
\begin{align}
\label{eq:thtpr}
\mMcrarxwd & =
\mMcravr{\bfx,\bfy}\oty\mMthavr{\bfy,\bfz}\otz \mMthavr{\bfz,\bfw}
\\
\mMcrarxwu & =
\mMthavr{\bfx,\bfy}\oty\mMthavr{\bfy,\bfz}\otz \mMcravr{\bfz,\bfw}
\\
\mMbtarxw & = \mMcravr{\bfx,\bfy}\oty\mMthavr{\bfy,\bfz}\otz \mMcravr{\bfy,\bfz}
\end{align}
and the homomorphisms
\begin{equation}
\label{eq:twhmso}
\xymatrix@C=3cm{
\mMcrarxwd
&
\mMbtarxw
\ar[l]_-{
\hchine\otimes\xId\otimes\xId}
\ar[r]^-{
\xId\otimes\xId\otimes\hchine}
&
\mMcrarxwu
}
\end{equation}
Canonical isomorphisms
\[
\mMcrarxwd \cong\mMcrarxwu \cong \mMcrarxw
\]
imply that $\mMcrarxwd$ and $\mMcrarxwu$ have single generators
$\mgencraxwu$ and $\mgencraxwd$, which are tensor products of the generators of factor modules in the tensor products~\eqref{eq:thtpr}.
\begin{lemma}
\label{lm:thsplo}
The $\uWpxw$-module $\mMbtarxw$
splits:
\begin{equation}
\label{eq:spldcra}
\mMbtarxw \cong
\mMcrarxwo \oplus \mcnbpbpot{\mMcrarxwt},
\end{equation}
where
$\mMcrarxwo\cong\mMcrarxwt\cong\mMcrarxw$ as $\Qbxw$-modules, while the connection $\bfsdfot$ of
$\mMcrarxwt$ is
\begin{equation}
\label{eq:cpot}
\bfsdfot = \bsdfv{x_1}{x_2} = \bsdfv{w_1}{w_2}\qquad\mod \aIv{\xcra}.
\end{equation}
The generators of the modules $\mMcrarxwo$ and $\mMcrarxwt$ may be chosen as
\begin{equation}
\label{eq:gnssm}
\mgen_1 = \mgenv{\bfx,\bfy}\otimes\mgenv{\bfy,\bfz}\otimes\mgenv{\bfz,\bfw},\qquad
\mgen_2 = (y_2 - y_1)  \mgen_1 = (z_2-z_1)\mgen_1,
\end{equation}
where $\mgenvv{\xcrs}{\bfx,\bfy}$ and $\mgenvv{\xcrs}{\bfy,\bfz}$ are the generators of
$\mMcravr{\bfx,\bfy}$, $\mMthavr{\bfy,\bfz}$ and $\mMcravr{\bfz,\bfw}$ respectively.
The homomorphisms~\eqref{eq:twhmso}
are presented by matrices
\[
\hchine\otimes\xId\otimes\xId =
\bbrs{ 1\;\;\;w_2-w_1},
\qquad
\xId\otimes\xId\otimes\hchine =
\bbrs{ 1\;\;\;x_2-x_1}
\]
relative to the generators $\mgen_1$, $\mgen_2$ of the middle module and $\mgencraxwd$, $\mgencraxwu$ of the target modules.
%
%
\end{lemma}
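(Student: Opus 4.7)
My strategy is to parallel the proof of Lemma~\ref{lm:smspl}, exploiting the fact that $\mMthavr{\bfy,\bfz}$ is the identity bimodule $\Dlyz$. Contracting with $\Dlyz$ identifies $z_i$ with $y_i$ and reduces the problem to a two-strand one:
\[
\mMbtarxw \;\cong\; \mMcravr{\bfx,\bfy}\oty\mMcravr{\bfy,\bfw}.
\]
Each factor is, by its definition, a genuine two-strand crossing bimodule on the variables $x_1,x_2;y_1,y_2$ (respectively $y_1,y_2;w_1,w_2$) tensored with a diagonal bimodule on the third strand, so the overall product factors as a two-strand crossing problem tensored with $\Dlv{x_3,w_3}$ on the third strand.

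Step 1: Invoking Lemma~\ref{lm:smspl} on the two-strand piece gives the $\aWp$-equivariant splitting
\[
\mMcravr{\bfx,\bfy}\oty\mMcravr{\bfy,\bfw} \;\cong\; \mMcrarxw \;\oplus\; \mcnbpbp{\mMcrarxw}
\]
with generators $\mgen_1 = \mgenv{\bfx,\bfy}\otimes\mgenv{\bfy,\bfz}\otimes\mgenv{\bfz,\bfw}$ and $\mgen_2 = (y_2-y_1)\mgen_1$ and connection $\bsdfv{x_1}{x_2}$ on the second summand, transcribed directly from Lemma~\ref{lm:smspl}. The remaining identities $(y_2-y_1)\mgen_1 = (z_2-z_1)\mgen_1$ and $\bsdfv{x_1}{x_2} = \bsdfv{w_1}{w_2}$ modulo $\aIv{\xcra}$ are immediate: the first because $y_i = z_i$ act identically on the central $\Dlyz$ factor, and the second because the elementary symmetric polynomials in $(x_1,x_2)$ and $(w_1,w_2)$ agree modulo the crossing ideal.

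Step 2: I compute the matrix form of each homomorphism in \eqref{eq:twhmso} by evaluating on $\mgen_1$ and $\mgen_2$. Since $\hchine$ is defined as multiplication by $1$, each homomorphism sends $\mgen_1$ to the corresponding target generator ($\mgencraxwd$ or $\mgencraxwu$), accounting for the first matrix entry $1$. On $\mgen_2 = (y_2-y_1)\mgen_1$, the image is $(y_2-y_1)$ times that same target generator. The identifications within each target then finish the calculation: in $\mMcrarxwd$, the two identity factors $\mMthavr{\bfy,\bfz}$ and $\mMthavr{\bfz,\bfw}$ impose $y_i = z_i = w_i$, so $y_2-y_1 = w_2-w_1$; in $\mMcrarxwu$, the identity factors $\mMthavr{\bfx,\bfy}$ and $\mMthavr{\bfy,\bfz}$ impose $x_i = y_i = z_i$, so $y_2-y_1 = x_2-x_1$. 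This yields the displayed matrices $[\,1\ \ w_2-w_1\,]$ and $[\,1\ \ x_2-x_1\,]$.

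The only genuine subtlety is careful bookkeeping of which diagonal identifications hold in each target bimodule; no computation harder than those already carried out in Lemma~\ref{lm:smspl} is required.
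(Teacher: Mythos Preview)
Your proposal is correct and follows essentially the same approach as the paper's proof, which is a single sentence: ``This lemma follows easily from Lemma~\ref{lm:smspl} if we tensor multiply its formulas by the bimodules $\mMarc$ over the polynomial algebras of variables with index 3.'' Your Step~1 is exactly this reduction, and your Step~2 fills in the matrix computations (which the paper leaves implicit, since the homomorphisms in~\eqref{eq:twhmso} are both of $\hchine$ type rather than the $\hchipo$/$\hchine$ pair of Lemma~\ref{lm:smspl}, but the method---evaluate on generators and simplify $(y_2-y_1)$ using the diagonal relations in each target---is the same).
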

\begin{proof}
This lemma follows easily from Lemma~\ref{lm:smspl} if we tensor multiply its formulas by the \bmdl s $\mMarc$ over the polynomial algebras of variables with index 3.
\end{proof}

Consider the $\uWpxw$-module
\[
\mMstarxw = \mMcravr{\bfx,\bfy}\oty \mMacrvr{\bfy,\bfz}\otz\mMcravr{\bfz,\bfw}.
%
\]
and the homomorphism
\begin{equation}
\label{eq:ohmso}
\xymatrix@C=2.5cm{
\mMstarxw
\ar[r]^-{\xId\otimes\hchine\otimes \xId}
&
\mMbtarxw
}
\end{equation}

\begin{lemma}
\label{lm:thsplt}
There is an exact sequence of $\uWpxw$-modules
\begin{equation}
\label{eq:exseq}
0\longrightarrow\mMthrrxw\longrightarrow\mMstarxw\longrightarrow\mcnbpbpot{\mMcravr{\bfx,\bfw}}\longrightarrow 0,
\end{equation}
which splits in the category of $\Qbxw$-modules:
\begin{equation}
\label{eq:spldthcr}
\mMstarxw \cong \mMthrrxw \oplus \mMcrarxw.
\end{equation}
The generators of the modules in the \rhs may be chosen as
\begin{equation}
\label{eq:gnstr}
\mgenvv{\xthr}{\bfx,\bfw}=\mgenv{\bfx,\bfy}\otimes\mgenv{\bfy,\bfz}\otimes\mgenv{\bfz,\bfw},
\qquad
\mgenvv{\xcra}{\bfx,\bfw} =  y\mgenvv{\xthr}{\bfx,\bfw},
\end{equation}
where
\begin{equation}
\label{eq:dfy}
y = 2(z_3-y_2) =  y_1 - y_2 - x_1 - x_2 + 2w_3,
\end{equation}
while $\mgenv{\bfx,\bfy}$, $\mgenv{\bfy,\bfz}$ and $\mgenv{\bfz,\bfw}$ are generators of the \bmdl s $\mMcravr{\bfx,\bfy}$, $\mMacrvr{\bfy,\bfz}$ and $\mMcravr{\bfz,\bfw}$ respectively.
The homomorphism~\eqref{eq:ohmso} is described by the matrix
\begin{equation}
\label{eq:fmtr}
\xId\otimes\hchine\otimes \xId =
\begin{pmatrix}
1 & -x_1 - x_2 + 2w_3
\\
0 & 1
\end{pmatrix}
\end{equation}
relative to the generators~\eqref{eq:gnstr} of $\mMstarxw$ and generators~\eqref{eq:gnssm} of $\mMbtarxw$.
%
%
\end{lemma}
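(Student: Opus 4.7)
The plan is to proceed in parallel with the proof of Lemma~\ref{lm:smspl}, now in the three-strand setting. First I would present $\mMstarxw$ as the quotient of $\Qv{\bfx,\bfy,\bfz,\bfw}$ by the ideal generated by the three sets of factor relations: $x_3 - y_3$ and $\smpev{i}(x_1,x_2) - \smpev{i}(y_1,y_2)$ for $i=1,2$ from $\mMcravr{\bfx,\bfy}$; $y_1 - z_1$ and $\smpev{i}(y_2,y_3) - \smpev{i}(z_2,z_3)$ from $\mMacrvr{\bfy,\bfz}$; and $z_3 - w_3$ and $\smpev{i}(z_1,z_2) - \smpev{i}(w_1,w_2)$ from $\mMcravr{\bfz,\bfw}$. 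A telescoping combination of these yields $\smpev{i}(\bfx) = \smpev{i}(\bfw)$ for $i = 1,2,3$, so the ideal contains the defining ideal of $\mMthrrxw$, giving a canonical $\uWpxw$-equivariant map $\mMthrrxw \to \mMstarxw$ sending the generator to $\mgenvv{\xthr}{\bfx,\bfw} = \mgenv{\bfx,\bfy}\otimes\mgenv{\bfy,\bfz}\otimes\mgenv{\bfz,\bfw}$.

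Next I would establish the $\Qbxw$-splitting~\eqref{eq:spldthcr} via a rank count and an explicit lift. By Proposition~\ref{pr:rkpr} and~\eqref{eq:rkesbm}, the $\Qbx$-rank of $\mMstarxw$ equals $(1+q^2)^3 = \qnmv{3}! + q^2\,\qnmv{2}!$, matching the sum of graded ranks of $\mMthrrxw$ and $\dgshq^2\mMcrarxw$. It therefore suffices to produce a lift of the generator of $\mMcrarxw$, which I take as $\mgenvv{\xcra}{\bfx,\bfw} = y\,\mgenvv{\xthr}{\bfx,\bfw}$ with $y = 2(z_3 - y_2)$; the second presentation $y = y_1 - y_2 - x_1 - x_2 + 2w_3$ in~\eqref{eq:dfy} follows from $z_3 = w_3$ and $y_1 + y_2 = x_1 + x_2$ in $\mMstarxw$. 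Direct symmetric-polynomial reduction then shows that the defining relations of $\mMcrarxw$ hold on this lifted generator modulo the first summand. For the connection: since $\mgenvv{\xthr}{\bfx,\bfw}$ has trivial $\aWp$-action (its ideal is $\aWp$-invariant), one computes
\[
\xhLm\bigl(y\,\mgenvv{\xthr}{\bfx,\bfw}\bigr) = 2(z_3^{m+1} - y_2^{m+1})\,\mgenvv{\xthr}{\bfx,\bfw} = y\cdot\sdfmv{y_2}{z_3}\,\mgenvv{\xthr}{\bfx,\bfw},
\]
and then reduces $\sdfmv{y_2}{z_3}$ to $\sdfmv{x_1}{x_2}$ modulo the defining ideal of $\mMcrarxw$ by combining the relation $(z_3 - y_2)(z_3 - y_3) = 0$ (which comes from expanding $\smpev{i}(y_2,y_3) = \smpev{i}(z_2,z_3)$) with the identifications $y_3 = x_3$ and $z_3 = w_3$, thereby establishing~\eqref{eq:cpot} and the desired connection.

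Finally, the matrix~\eqref{eq:fmtr} for $\xId\otimes\hchine\otimes\xId$ is computed by evaluating on the basis~\eqref{eq:gnstr}: the image of $\mgenvv{\xthr}{\bfx,\bfw}$ is $\mgen_1$ since $\hchine$ sends the middle crossing generator to the arc generator, producing the first column, and the image of $\mgenvv{\xcra}{\bfx,\bfw} = (y_1 - y_2 - x_1 - x_2 + 2w_3)\mgen_1$ is expanded in the basis $\{\mgen_1, \mgen_2\}$ of Lemma~\ref{lm:thsplo} using the defining relation $(y_2 - y_1)\mgen_1 = \mgen_2$, producing the second column. The main obstacle I expect is the connection verification in the previous paragraph: establishing that $\sdfmv{y_2}{z_3}$ descends to $\sdfmv{x_1}{x_2}$ modulo the crossing ideal requires careful bookkeeping of how symmetric functions of the intermediate variables $y_2, z_3$ reduce to those of the external variables $x_1, x_2$, analogous to but more intricate than the reduction $\sdfmv{x_1}{x_2} = \sdfmv{y_1}{y_2}$ encoded in~\eqref{eq:pepr}.
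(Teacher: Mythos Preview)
Your overall plan parallels the paper's: present $\mMstarxw$ as a quotient, locate the canonical generator, identify a degree-$2$ element $y$ giving the second generator, match graded ranks, and then compute the connection and the matrix of $\xId\otimes\hchine\otimes\xId$. Two points of comparison are worth noting.

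\textbf{The splitting.} Your rank-plus-lift sketch is correct in spirit but leaves implicit the two verifications that make it rigorous: that $y\cdot\aIcra\subset\aIv{\mathrm{star}}$ (so $1\mapsto y$ defines a $\Qbxw$-map $\mMcrarxw\to\mMstarxw$) and that $1,y$ generate $\mMstarxw$ over $\Qbxw$ (so the combined map from $\mMthrrxw\oplus\mMcrarxw$ is surjective before the rank count finishes the job). The paper handles this differently: it builds the retraction $\xxfp\colon\mMstarxw\to\mMcrarxw$ as the composite of $\xId\otimes\hchine\otimes\xId$ with the projection onto the second summand from Lemma~\ref{lm:thsplo}, checks $\xxfp\circ\xxfm=\xId$, and then identifies $\ker\xxfp$ with $\mMthrrxw$ via the same $q$-dimension count you propose.

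\textbf{The connection.} This is where the paper's route is substantially cleaner than yours. Rather than use $y=2(z_3-y_2)$, the paper writes $y=(y_1-y_2)+q$ with $q=-x_1-x_2+2w_3\in\Qbxw$, so that
\[
\xhLm y=\sdfmv{y_1}{y_2}(y_1-y_2)+\xhLm q=\sdfmv{x_1}{x_2}\,y+\bigl(\xhLm q-\sdfmv{x_1}{x_2}\,q\bigr),
\]
the key step being $\sdfmv{y_1}{y_2}=\sdfmv{x_1}{x_2}$ in $\mMstarxw$, which holds \emph{immediately} because $\sdfmv{y_1}{y_2}$ is symmetric in $y_1,y_2$ and the first factor $\mMcravr{\bfx,\bfy}$ forces $e_i(y_1,y_2)=e_i(x_1,x_2)$. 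The correction term lies in $\Qbxw$, hence in the image of $\mMthrrxw$, and you are done. Your route via $\sdfmv{y_2}{z_3}$ does not enjoy this shortcut: $y_2$ and $z_3$ have no direct symmetric-function relation to $x_1,x_2$, so the reduction you flag as ``the main obstacle'' really is one---the relation $(z_3-y_2)(z_3-y_3)=0$ you cite gives only $y\,(w_3-x_3)=0$, which helps identify the annihilator of the quotient generator but does not by itself convert $\sdfmv{y_2}{z_3}$ into $\sdfmv{x_1}{x_2}$. Switching to the $(y_1-y_2)+q$ decomposition removes the obstacle entirely.
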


\begin{proof}

The splitting~\eqref{eq:spldthcr} and the generators~\eqref{eq:gnstr} are well-known in the theory of \Sglb s, but we will derive these results here for completeness.
The rest of the lemma is an easy corollary: the matrix presentation~\eqref{eq:fmtr} and the connections in the submodule and quotient module in the exact sequence~\eqref{eq:exseq} follow from the action of the homomorphism $\xId\otimes\hchine\otimes \xId$ and the generators  $\xLm\in\aWp$ on the module generators~\eqref{eq:gnstr}.
%

%
If we use the asymmetric formulas~\eqref{eq:asmi}, then
it is easy to see that the tensor product in the \lhs of \ex{eq:spldthcr} has the following presentation as a
$\Qbxw$-module:
\begin{equation}
\label{eq:mdprq}
\mMstarxw
 \cong
\Qv{\bfx,\bfw,y_1,y_2,z_2}/\aIv{1},
\end{equation}
where
\begin{equation}
\label{eq:fstid}
\aIv{1} =
\begin{pmatrix}
y_2 - (x_1 + x_2 - y_1)
\\
(y_1 - x_1)(y_1-x_2)
\\
z_2 - (w_1 + w_2 - y_1)
\\
(y_1-w_1)(y_1-w_2)
\\
z_2 + w_3 - (y_2 + x_3)
\\
z_2 w_3 - y_2 x_3
\end{pmatrix},
\end{equation}
and elements~\eqref{eq:gnstr} become
\begin{equation}
\label{eq:gnrpq}
\mgenvv{\xthr}{\bfx,\bfw}=1,
\qquad
\mgenvv{\xcra}{\bfx,\bfw} =  y.
\end{equation}
%

%
%
%

After eliminating the variables $y_2$ and $z_2$ with the help of the first and third lines and replacing $y_1$ with $y$ in accordance with~\eqref{eq:dfy} we find the following presentation for the tensor product:
\[
\mMstarxw
\cong
\Qv{\bfx,\bfw,y}/\aIv{2},
\]
where
\[
\aIv{2}
=
\begin{pmatrix}
\smpev{1}(\bfw) - \smpev{1}(\bfx)
\\
\smpev{2}(\bfw)-\smpev{2}(\bfx)
\\
\xppo
\\
\xppt
\end{pmatrix},
\qquad
\xppo = \xxph\,y,\qquad \xppt = y^2 + 2 (\xxpo + \xxpt)\,y + 4 \xxpo\xxpt,
\]
and we used shortcut notations $\xxpi = x_i - w_3$, $i=1,2,3$.
The other two modules of the sequence~\eqref{eq:exseq} have similar presentations:
\begin{equation}
\label{eq:othtwm}
\mMthrrxw \cong\Qbxw/\aIthr,\qquad
\mMcrarxw \cong\Qbxw/ \aIcra,
\end{equation}
where
\[
\aIthr =
\begin{pmatrix}
\smpev{1}(\bfw) - \smpev{1}(\bfx)
\\
\smpev{2}(\bfw)-\smpev{2}(\bfx)
\\
\xxpo\xxpt\xxph
\end{pmatrix},
\qquad
\aIcra =
\begin{pmatrix}
\smpev{1}(\bfw) - \smpev{1}(\bfx)
\\
\smpev{2}(\bfw)-\smpev{2}(\bfx)
\\
\xxph
\end{pmatrix}
\]
and we used an asymmetric choice for the generators of $\aIthr$ (\cf \ex{eq:asmi}).

The modules~\eqref{eq:othtwm} are both generated by $1\in\Qbxw$.
The relation $\xppt=0$ appearing at the bottom of $\aIv{2}$ expresses $y^2$ in terms of lower powers of $y$, hence
$\mMstarxw$ is generated as a $\Qbxw$-module by its elements~\eqref{eq:gnrpq} and the action of the homomorphism~\eqref{eq:ohmso} is described by the matrix~\eqref{eq:fmtr} relative to these generators and the target module generators~\eqref{eq:gnssm}.

Consider two special homomorphisms defined by their action on module generators:
\begin{equation*}
\xymatrix@C=1.5cm{
\mMcrarxw \ar[r]^-{\xxfm =
\left(\begin{smallmatrix}0 \\1\end{smallmatrix}\right)
}
&
\mMstarxw
\ar[r]^-{\xxfp = \left(\begin{smallmatrix}0 &1\end{smallmatrix}\right)}
&
\mMcrarxw
}
\end{equation*}
The homomorphism $\xxfm$ is well-defined because $y\aIcra\subset \aIv{2}$. The homomorphism $\xxfp$ is well-defined, because, in view of the matrix presentation~\eqref{eq:fmtr}, it can be expressed as a composition
%
$\xxfp = \xprPt\circ(\xId\otimes\hchine\otimes \xId$),
\begin{equation*}
\xymatrix@C=2cm{
\mMstarxw
\ar[r]^-{\xId\otimes\hchine\otimes \xId}
\ar@/_1.5pc/[rr]_-{\xxfp}
&
\mMbtarxw
\ar[r]^-{\xprPt}
&
\mMcrarxw,
}
\end{equation*}
where $\xprPt$ is the projection on the second module in the decomposition~\eqref{eq:spldcra}. Since $\xxfp\circ\xxfm=\xId$,
the $\Qbxw$-module $\mMstarxw$ decomposes:
\begin{equation}
\label{eq:dcmps}
\mMstarxw = \im \xxfm \oplus \ker\xxfp
\end{equation}
Also it follows that $\xxfm$ is injective, hence
\[
\mMcrarxw \cong \im\xxfm = (y)\subset\mMstarxw,
\]
where $(y)\subset\mMstarxw$ is the $\Qbxw$-submodule generated by $y$.

The matrix form of $\xxfp$ indicates that $\xxfp(1)=0$, hence $(1)\subset\ker\xxfp$. At the same time, $1$ and $y$ generate $\mMstarxw$, hence $(1)+(y)=\mMstarxw$. Then it follows from the decomposition~\eqref{eq:dcmps} that $\ker \xxfp = (1)$, so~\eqref{eq:dcmps} becomes
\begin{equation}
\label{eq:dcmpt}
\mMstarxw = (1) \oplus (y).
\end{equation}

We have already established the isomorphism $(y)\cong \mMcrarxw$, so  it remains to show that
\begin{equation}
\label{eq:rmpr}
(1)\cong \mMthrrxw.
\end{equation}
We will use dimension counting arguments. For a \tgrddq\ module $\mM=\bigoplus_{i\in\ZZ}\mM_i$ whose grading components $\mM_i$ have finite dimension as $\IQ$-vector spaces, define \tdmq\ as
\[
\xdmq \mM = \sum_{i\in\ZZ} q^i\dim\mM_i.
\]
If $\mM$ is a free \tgrddq\ $\Qbx$-module
then $\xdmq\mM = \xdoq\,\xrkq\mM$, where $\xrkq\mM$ is \trkq\ defined by~\eqref{eq:dfqrk}, while $\xdoq = (1-q^2)^{-3}$, because $\xnum{\bfx}=3$ and $\xdgq \bfx=2$.

According to Remark~\ref{rm:tpsbm}, the following $\Qbxw$-modules are free as $\Qbx$-modules with ranks
\[
\rank \mMthrrxw =\qnmv{3}!,\qquad
\rank \mMstarxw =\qnmv{2}^3,\qquad
\rank \mMcrarxw = \qnmv{2}.
\]
In view of the decomposition~\eqref{eq:dcmpt}, this means that
\[
\xdmq (1) = \xdoq\bbrs{\qnmv{2}^3 - q^2\qnmv{2} } = \xdoq\qnmv{3}!,
\]
the factor $q^2$ in $q^2\qnmv{2}$ being due to the fact that $\xdgq y = 2$ in accordance with~\eqref{eq:dgrs}.
Hence
\begin{equation*}
\xdmq(1) = \xdmq \mMthrrxw.
\end{equation*}
%
%
A homomorphism
\begin{equation}
\label{eq:exsq}
\xymatrix@C=2cm{
\mMthrrxw
\ar[r]^-{\xxg = \left(\begin{smallmatrix}1 \\ 0 \end{smallmatrix}\right)}
&
\mMstarxw
}
\end{equation}
is well-defined, because $\xxg(\aIthr)\subset\aIv{2}$ in view of relation $\xxpo\xxpt\xxph = \xxph\xppt - (y+\xxpo+\xxpt)\xppo$. A composition of $\xxg$ with the projection of $\mMthrrxw$ on $(1)$ is a surjective homomorphism
\[
\xymatrix{
\mMthrrxw
\ar@{->>}[r]
&
(1)
}
\]
Since $\mMthrrxw$ and $(1)$ have equal  $q$-dimensions, it follows that this is an isomorphism, and this proves~\eqref{eq:rmpr}.

Thus we established the splitting~\eqref{eq:spldthcr} with generators~\eqref{eq:gnstr}. Now we check the action of a $\aWp$ generator $\xLm$ on these generators within the module $\mMstarxw$ in presentation~\eqref{eq:mdprq}. The module~\eqref{eq:mdprq} has zero connection, hence $\xcdfm \mgenvv{\xthr}{\bfx,\bfw} =\xhLm 1 = 0$ and the module $\mMthrrxw$ with zero connection is a submodule of $\mMstarxw$ as a $\uWpxw$-module, the quotient module being $\mMcravr{\bfx,\bfw}$.

In order to find the connection of $\mMcravr{\bfx,\bfw}$, we compute the action of  $\xLm$ on its generator $y$ defined by \ex{eq:dfy} as $y = y_1 - y_2 + \xyq$, where $\xyq = - x_1 - x_2 + 2w_3\in\Qbxw$:
\begin{multline}
\label{eq:drscg}
\xcdfm \mgenvv{\xcra}{\bfx,\bfw} =\xhLm y =  \xhLm\bbrs{
(y_1 - y_2) + q
} = \sdfmyot (y_1 - y_2) + \xhLm q
\\
=
 \sdfmxot (y_1 - y_2) + \xhLm q
=  \sdfmxot \,y + \xxqp
= \sdfmxot  \mgenvv{\xcra}{\bfx,\bfw} + \xxqp,
\end{multline}
where
\[
\xxqp = -\sdfmxot\, q + \xhLm q \in\mMthrrxw\subset\mMstarxw.
\]
In deriving the formula~\eqref{eq:drscg} we used the relation
\[
\sdfmyot = \sdfmxot\qquad \mod \aIv{1},
\]
which is due to the first two rows in the presentation~\eqref{eq:fstid} of $\aIv{1}$ establishing the equality between symmetric polynomials in $x_1,x_2$ and in $y_1,y_2$.
Since $\xxqp\in\mMthrrxw\subset\mMstarxw$, the formula~\eqref{eq:drscg} shows that
$\xcdfm \mgenvv{\xcra}{\bfx,\bfw} = \sdfmxot \mgenvv{\xcra}{\bfx,\bfw}$ within the quotient module
$\mMcrarxw\cong \mMstarxw/\mMthrrxw$, hence this quotient has a connection $\bfsdfot$ defined by \ex{eq:cpot}.
\end{proof}

\begin{proof}[Proof of Theorem~\ref{th:rdmthr}]
As we have observed, the \qiso~\eqref{eq:fhteq} is equivalent to~\eqref{eq:mrsht}. In fact, it is easier to prove the same relation for the inverse generators:
\begin{equation}
\label{eq:mrshtz}
\ctmdvbxw{ \sggio \sggit \sggio } \eqsmout
\Fothxw\big( \ctmdvbxw{ \sggio \sggit \sggio }\big).
\end{equation}
The \lhs of this relation is presented by a complex of \Wpeq\ $\Qbxw$-modules:
\begin{multline}
\label{eq:bgthcm}
\ctmdvbxw{ \sggio \sggit \sggio }
=
\boxed{
\vcenter{
\xymatrix@C=1.5cm@R=1.5cm{
&
\mMcprrxw
\ar[r] \ar[rd]_(0.25){-1}
&
\mMcrarxwd
\ar[dr]
\\
\mMstarxw
\ar[ur] \ar[r] \ar[dr]
&
\mMbtarxw
\ar[ru]^(0.25){-1}
\ar[rd]
&
\mMacrrxwm
\ar[r]
&
\mMtharxw
\\
&
\mMiprrxw
\ar[ru] \ar[r]^-{-1}
&
\mMcrarxwu
\ar[ur]
}
}
}\;,
\end{multline}
where $\mMacrrxwm = \mMthavr{\bfx,\bfy}\oty\mMacrvr{\bfy,\bfz}\otz\mMthavr{\bfz,\bfw}$. All modules in this complex are tensor products of elementary \Sgl\ \bmdl s over the intermediate algebras $\Qby$ and $\Qbz$ and, in accordance with definitions~\eqref{eq:bcmps},~\eqref{eq:hmio} and~\eqref{eq:mhoms}, all arrows represent either the homomorphisms $1\otimes  1\otimes  1$ (unmarked arrows) or $-1\otimes 1\otimes 1$ (arrows marked by  $-1$).

According to lemmas~\ref{lm:thsplt} and~\ref{lm:thsplo},
there are chain maps
\begin{equation}
\label{eq:sqtwo}
\xymatrix{
\aBb \ar[r]
&
\ctmdvbxw{ \sggio \sggit \sggio }
\ar[r]
&
\aCb,
}
\end{equation}
where
\begin{equation}
\label{eq:cmpas}
\aBb \wdeq
\boxed{
\vcenter{
\xymatrix@C=1.5cm@R=1.5cm{
&
\mMcprrxw
\ar[r] \ar[rd]_(0.25){-1}
&
\mMcrarxw
\ar[dr]
\\
\mMthrrxw
\ar[ur] \ar[r] \ar[dr]
&
\mMcrarxw
\ar[ur]^(0.25){-1} \ar[dr]
&
\mMacrrxw
\ar[r]
&
\mMtharxw
\\
&
\mMiprrxw
\ar[ru] \ar[r]^-{-1}
&
\mMcrarxw
\ar[ur]
}
}
}
\end{equation}
all unmarked arrows representing homomorphisms $1$, and
\[
\aCb = \boxed{\xymatrix{
\mcnbpbpot{\mMcravr{\bfx,\bfw}}
\ar[r]^-{\xId}
&
\mcnbpbpot{\mMcravr{\bfx,\bfw}}
}
}\;,
\]
such that after the application of the \forfun\ $\Fffq$ the sequence~\eqref{eq:sqtwo} becomes a part of an exact triangle. Since the complex $\aCb$ is obviously contractible, by Theorem~\ref{pr:qsbc}
 there is a \qiso\
$
\ctmdvbxw{ \sggio \sggit \sggio } \smout \aBb.
$

An obvious change of generators in the sum of modules $\mMcrarxw\oplus \mMcrarxw$ appearing in the third column of the complex~\eqref{eq:cmpas} allows us to present it in the following form:
\begin{equation*}
\aBb
\wdcng
\boxed{
\vcenter{
\xymatrix@C=1.5cm@R=1.5cm{
&
\mMcprrxw
\ar[r] \ar[rd]_(0.25){-1}
&
\mMcrarxw
\ar[dr]
\\
\mMthrrxw
\ar[ur] \ar[r] \ar[dr]
&
\mMcrarxw
\ar[dr]
&
\mMacrrxw
\ar[r]
&
\mMtharxw
\\
&
\mMiprrxw
\ar[ru] \ar[r]^-{-1}
\ar[uur]^(0.2){-1}
&
\mMcrarxw
}
}
}
\end{equation*}
This complex has a cone presentation: $\aBb\cong \boxed{\aDb\rightarrow\aEb}$, where
\begin{equation}
\label{eq:thbcmp}
\aDb
\wdeq
\boxed{
\vcenter{
\xymatrix@C=1.75cm@R=0.75cm{
&
\mMcprrxw
\ar[r] \ar[rdd]_(0.25){-1}
&
\mMcrarxw
\ar[dr]
\\
\mMthrrxw
\ar[ur] \ar[dr]
&
&
&
\mMtharxw
\\
&
\mMiprrxw
\ar[r]
\ar[uur]^(0.2){-1}
&
\mMacrrxw
\ar[ur]
}
}
}
\end{equation}
while
\[
\aEb = \boxed{
\xymatrix{
\mMcrarxw
\ar[r]^-{\xId}
& \mMcrarxw
}
}\;.
\]
Since $\aEb$ is contractible, there is a homotopy equivalence $\aBb \smout \aDb$, hence
\begin{equation}
\label{eq:thrcb}
\ctmdvbxw{ \sggio \sggit \sggio } \eqsmout \aCb.
\end{equation}
It is easy to verify that the complex~\eqref{eq:thbcmp} is symmetric under the transposition of indices $1\leftrightarrow 3$ in the variables $\bfx$ and $\bfw$, that is, $\Fothxw(\aCb)\cong\aCb$, hence the \qiso~\eqref{eq:mrshtz} follows from \ex{eq:thrcb}.
\end{proof}

\section{Link invariant as an object of the derived category of $\uWplc$-modules}

\subsection{Categorification of the braid group with \xstv s}
\label{ss:strvs}

Our immediate goal is to prove the \qiso~\eqref{eq:slprop} and its analogs. We begin by studying the sliding properties of a single \brstv\ $\vbr$.
Consider the functor
\[
\xymatrix@C=1.25cm{
\uWpx\sctgmod \ar[r]^-{\Ffaxi} & \uWpXx\sctgmod,
}
\]
which defines the action of $\vbr$ on a $\uWpx$-module $\mMx$ as being equal to that of $x_i$. In other words,
\begin{equation}
\label{eq:fftp}
\Ffaxi = \dmmy\otv{\bar{x}_i}\Dlv{\vbr-x_i}.
\end{equation}
The functor $\Ffaxi$ extends along the chain of categories~\eqref{eq:chct}  to  functors
\[
\xymatrix@C=1.25cm{
\ctWx \ar[r]^-{\Ffaxi} & \ctxWx,
}
\qquad
\xymatrix@C=1.25cm{
\ctCh(\ctWx)
 \ar[r]^-{\Ffaxi} &
\ctCh(\ctxWx),
}
\qquad
\xymatrix@C=1.25cm{
\ctDr(\ctWx)
 \ar[r]^-{\Ffaxi} &
\ctDr(\ctxWx).
}
\]
%
\begin{theorem}
\label{th:slvbr}
For any braid $\brr$ there is an isomorphism in the category $\ctDr(\ctxWxy)$
\begin{equation}
\label{eq:ssqiso}
\Ffaxi(\ctmvbxy{\brr}) \eqsmout \Ffav{y_{\brrws(i)}}(\ctmvbxy{\brr}),
\end{equation}
where $\brrws\in \mathrm{S}_n$ is the permutation associated with $\brr$.
\end{theorem}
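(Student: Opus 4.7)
My plan is to induct on the length of a braid word for $\brr$, reducing to the case of elementary braid generators $\sggni, \sggxii$ and then to the 2-strand subcase by locality. For the induction step, the monoidal structure~\eqref{eq:smbrpr} combines with two compatibility observations: the doubling functor $\Ffaxi$ commutes with tensoring by $\ctmvbyz{\brr_2}$ over $\bfy$ (since $x_i$ lies in the outer $\Qbx$-factor), and $\Ffav{y_k}$ applied to either factor of an $\otdry$-tensor product agrees with its application to the entire product (because $y_k$ acts coherently on both sides of $\otdry$). Chaining the inductive hypothesis for $\brr_1$ (sliding $x_i$ to $y_{\brrws_1(i)}$) with that for $\brr_2$ (sliding $y_{\brrws_1(i)}$ to $z_{\brrws_2\brrws_1(i)}$) yields the conclusion for $\brr_1\brr_2$. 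The empty-braid base case is immediate since on $\ctmvbxy{e}=\Dlxy$ the relation $y_i=x_i$ holds. The factorization~\eqref{eq:bmpr1}--\eqref{eq:bmpr2} further reduces elementary braids to two strands: on non-crossing strands $\mMarc=\Dlv{x,y}$ identifies $x_j=y_j$, so sliding is trivial there.

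For the 2-strand case, one must verify $\Ffav{x_1}(\ctmv{\sggno})\eqsmout\Ffav{y_2}(\ctmv{\sggno})$ together with three analogous statements. The key algebraic input is the congruence
\[
(\vbr-x_1)(\vbr-x_2)\equiv(\vbr-y_1)(\vbr-y_2)\pmod{\aIcr},
\]
immediate from the generators $y_1+y_2-x_1-x_2$ and $y_1y_2-x_1x_2$ of $\aIcr$. This yields a canonical isomorphism of $\uWpXxy$-modules $\mMcr\otimes\Qv{\vbr}/\bigl((\vbr-x_1)(\vbr-x_2)\bigr)\cong\mMcr\otimes\Qv{\vbr}/\bigl((\vbr-y_1)(\vbr-y_2)\bigr)$, linking the $\bfx$- and $\bfy$-doublings of $\mMcr$ through a common middle module. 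Combining the resulting short exact sequences with the trivial sliding on $\mMpr$, and invoking Theorem~\ref{pr:qsbc} to contract the appropriate Koszul-cone summands, one extracts the required \qiso\ between the doubled complexes $\Ffav{x_1}(\ctmv{\sggno})$ and $\Ffav{y_2}(\ctmv{\sggno})$.

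The principal obstacle is maintaining $\uWp$-equivariance through this construction. The shifted module $\mcnnbp{\mMcr}$ contributes the flat connection $\bsdfv{x_1}{x_2}$ to $\ctmv{\sggno}$, and each doubled module $\Ffav{a}(\mcnnbp{\mMcr})$ inherits a modified connection depending on the sliding variable $a$. The canonical isomorphism above must intertwine these connection data, which works because $\bsdfv{x_1}{x_2}\equiv\bsdfv{y_1}{y_2}\pmod{\aIcr}$ by~\eqref{eq:pepr}. Threading this through the Koszul-resolution computation yields a chain map between the doubled complexes that is genuinely $\uWpXxy$-linear and whose image under the forgetful functor $\uWpXxy\sctgmod\to\Qbxy\sctgmod$ is a homotopy equivalence, corresponding to the original variable-sliding homotopy~\eqref{eq:endsld}. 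This coordinated treatment of the $\uWp$-structure, which has no analogue in the original Khovanov argument of~\cite{Khtgd}, is the main technical content of the proof.
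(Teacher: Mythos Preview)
Your inductive framework and the reduction to 2-strand generators match the paper precisely, and your handling of the tensor-product compatibilities in the induction step is correct. One minor streamlining you miss: the paper proves the base case only for the negative generator $\sggi$ (Lemma~\ref{lm:smpsl}) and then deduces the positive generator $\sggn$ by sandwiching with $\sggi$ and using $\ctmv{\sggn}\oty\ctmv{\sggi}\eqsmout\Dlxy$ (Lemma~\ref{lm:smpsli}), rather than treating both signs by hand.

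The 2-strand base case is where your sketch has a genuine gap. Your congruence $(\vbr-x_1)(\vbr-x_2)\equiv(\vbr-y_1)(\vbr-y_2)\pmod{\aIcr}$ is correct and the middle module is well-defined, but it is unclear how to extract a map between $\Ffav{x_1}(\mMcr)$ and $\Ffav{y_2}(\mMcr)$ from it: both are quotients of the middle module by \emph{different} ideals, and Theorem~\ref{pr:qsbc} requires a chain $\aBb\to\aAb\to\aCb$ splitting under $\Ffr$, which your short exact sequences do not assemble into. More seriously, the phrase ``trivial sliding on $\mMpr$'' is false in this context: on $\mMpr=\Dlxy$ one has $x_1=y_1$ and $x_2=y_2$, but the transposition associated to $\sggi$ sends $1\mapsto 2$, so what is needed is $\Ffav{x_1}(\mMpr)\eqsmout\Ffav{y_2}(\mMpr)$, i.e.\ $\vbr=x_1$ versus $\vbr=x_2$. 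These are non-isomorphic $\uWpv{\vbr,\bfx,\bfy}$-modules (any $\Qbxy[\vbr]$-linear map between them must vanish), so no termwise argument can work.

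The paper's approach is instead to replace each $\Ffav{a}$ by tensoring with the two-term Koszul complex $\Kzcv{\vbr-a}\eqsmout\Dlv{\vbr,a}$, producing explicit three-term complexes $\aAb=\ctmv{\sggi}\otv{x_1}\Kzcv{\vbr-x_1}$ and $\aBtb=\ctmv{\sggi}\otv{y_2}\Kzcv{\vbr-y_2}$, each quasi-isomorphic to its target. The crux is an explicit chain \emph{isomorphism} $\aAb\to\aBb$, where $\aBb$ is $\aBtb$ with its $\aWp$-connection modified; the modification absorbs precisely the $x_1-y_2$ discrepancy on the $\mMpr$-component, and the map in the middle degree is $\left(\begin{smallmatrix}1&x_1-y_2\\0&1\end{smallmatrix}\right)$ (Figure~\ref{fg:ssv}). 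You correctly flag the connection compatibility as the main technical hurdle, but the paper resolves it by this direct Koszul computation, not via a middle-module or short-exact-sequence argument.
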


We will prove this theorem by establishing \qiso~\eqref{eq:ssqiso} for elementary braids $\sggii$  in the category $\ctCh(\ctxWxy)$ and then use multiplicativity property~\eqref{eq:smbrpr} in order to extend it to all braids.

\begin{lemma}
\label{lm:smpsl}
The \qiso~\eqref{eq:ssqiso} holds for the elementary braid $\sggii$.
\end{lemma}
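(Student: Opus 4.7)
The plan is to leverage the Soergel-bimodule structure of the elementary bracket to produce an explicit $\Qbxy$-linear null-homotopy for multiplication by $y_{i+1}-x_i$ on $\ctmdv{\sggii}$, and then transfer this null-homotopy to the Koszul resolutions of the diagonal modules $\Dlv{\vbr-x_i}$ and $\Dlv{\vbr-y_{i+1}}$ that underlie $\Ffaxi$ and $\Ffav{y_{i+1}}$ via \eqref{eq:fftp}.

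Set $\mathcal{C}:=\ctmdvbxy{\sggni}$; the case $\sggxii$ is symmetric. By \eqref{eq:bcmps} together with the asymmetric presentation \eqref{eq:scchp} of $\hchipo$, the differential of $\mathcal{C}$ acts as multiplication by $y_{i+1}-x_i$ (up to identity factors on the non-crossing strands), while $\hchinei$ is multiplication by $1$. Hence both compositions $\hchinei\circ\hchipoi$ and $\hchipoi\circ\hchinei$ equal multiplication by $y_{i+1}-x_i$ on $\mMpd$ and $\mcnnpi{\mMcri}$ respectively, so the degree $-1$ endomorphism $h:=\hchinei$ of $\mathcal{C}$ satisfies $dh+hd=(y_{i+1}-x_i)\,\xId_{\mathcal{C}}$. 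This is the Witt-equivariant incarnation of \eqref{eq:endsld} that the proof requires.

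Next, I would identify $\Ffaxi(\mathcal{C})$ and $\Ffav{y_{i+1}}(\mathcal{C})$ with the totalizations of the Koszul complexes $[\mathcal{C}[\vbr]\xrightarrow{\cdot(\vbr-x_i)}\mathcal{C}[\vbr]]$ and $[\mathcal{C}[\vbr]\xrightarrow{\cdot(\vbr-y_{i+1})}\mathcal{C}[\vbr]]$, which serve as free $\Qbxy[\vbr]$-resolutions of $\Dlv{\vbr-x_i}$ and $\Dlv{\vbr-y_{i+1}}$. The $\vbr$-linear extension of $h$ is a null-homotopy for multiplication by $(\vbr-x_i)-(\vbr-y_{i+1})=y_{i+1}-x_i$ on $\mathcal{C}[\vbr]$, hence the two Koszul differentials are chain-homotopic as $\Qbxy[\vbr]$-linear endomorphisms. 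Standard cone-equivalence then produces a homotopy equivalence of their totalizations as complexes of $\Qbxy[\vbr]$-modules, which amounts to an outer quasi-iso $\eqsmout$ in $\ctCh(\ctxWxy)$ after applying the restriction functor $\Ffr$ dictating the meaning of $\eqsmout$.

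The main obstacle is producing an honestly $\uWpXxy$-equivariant chain map representing this quasi-iso, since $h=\hchinei$ is only $\Qbxy$-linear (the connection shift $-\bsdfui$ on $\mcnnpi{\mMcri}$ obstructs Witt-equivariance). I expect to handle this via Theorem~\ref{pr:qsbc}, applied to a three-term sequence of $\uWpXxy$-module complexes whose image under $\Ffr$ splits with a contractible summand (the contractibility supplied precisely by the $\vbr$-linear extension of $h$), thereby upgrading the $\Qbxy[\vbr]$-level homotopy equivalence to the desired outer quasi-iso in $\ctCh(\ctxWxy)$.
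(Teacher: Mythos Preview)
Your setup is essentially the same as the paper's: both arguments replace $\Ffav{x_i}(\mathcal{C})$ and $\Ffav{y_{i+1}}(\mathcal{C})$ by their Koszul models $\mathcal{C}[\vbr]\xrightarrow{\vbr-x_i}\mathcal{C}[\vbr]$ and $\mathcal{C}[\vbr]\xrightarrow{\vbr-y_{i+1}}\mathcal{C}[\vbr]$, and both recognize that the $\Qbxy[\vbr]$-linear comparison between these two is governed by a null-homotopy of multiplication by $y_{i+1}-x_i$ furnished by $\hchinei$ (or $\hchipoi$, depending on the sign of the crossing).  You also correctly isolate the obstacle: this homotopy is not $\aWp$-equivariant because of the connection shift $-\bsdfui$.

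The gap is in your last paragraph.  Invoking Theorem~\ref{pr:qsbc} is not free: you must produce an actual three-term sequence of $\uWpXxy$-complexes $\aBb\xrightarrow{f}\aAb\xrightarrow{g}\aCb$ with $f,g$ genuinely $\aWp$-equivariant and with the required $\Ffr$-splitting.  You do not name $f$ and $g$.  The obvious candidates (built from the homotopy $h$, e.g.\ the standard cone-comparison map with $h$ on the off-diagonal) fail for exactly the reason you flagged, and there is no formal mechanism that manufactures equivariant replacements for them.  So as written the argument stops precisely at the hard step.

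What the paper does at this point is concrete rather than abstract.  It writes the Koszul model for $\Ffav{x_1}(\ctmvbxy{\sggi})$ as a three-term complex $\aAb$, and then constructs a second three-term complex $\aBb$ which has the same underlying $\Qbxy[\vbr]$-modules as the Koszul model $\aBtb$ for $\Ffav{y_2}(\ctmvbxy{\sggi})$ but carries a \emph{tweaked} $\aWp$-connection
\[
\bfcA=\begin{pmatrix}0&(x_1-y_2)\bigl(\bsdfv{\vbr}{x_1}-\bsdfv{x_1}{y_2}\bigr)\\ 0&\bsdfv{\vbr}{x_1}\end{pmatrix}
\]
on the middle term.  With this choice the $\Qbxy[\vbr]$-linear map
\[
\fisoAB=\Bigl(\,1,\ \begin{pmatrix}1&x_1-y_2\\0&1\end{pmatrix},\ 1\,\Bigr)\colon \aAb\longrightarrow\aBb
\]
is checked by direct computation to be an honest isomorphism in $\ctCh(\ctXWxy)$, not merely after forgetting.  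The tweaked $\aBb$ still maps $\aWp$-equivariantly to $\Ffav{y_2}(\ctmvbxy{\sggi})$ by the projection $(1\ 0)$ on the middle term, and that projection is a $\Ffr$-homotopy-equivalence for the same Koszul reason as before.  One then reads off the zigzag
\[
\Ffav{x_1}(\ctmvbxy{\sggi})\ \xleftarrow{\ \fqsoA\ }\ \aAb\ \xrightarrow{\ \fisoAB\ }\ \aBb\ \xrightarrow{\ \fqsoB\ }\ \Ffav{y_2}(\ctmvbxy{\sggi}),
\]
with all three arrows $\aWp$-equivariant and the outer two $\Ffr$-homotopy-equivalences.  The off-diagonal entry $x_1-y_2$ in $\fisoAB$ is exactly your null-homotopy, and the matrix $\bfcA$ is what one is forced to write down to make that entry compatible with $\aWp$.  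Completing your sketch therefore amounts to carrying out this connection computation; Theorem~\ref{pr:qsbc} does not let you bypass it.
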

\begin{proof}
We will prove the \qiso\ for a 2-strand elementary braid $\brr = \sggi$, so that $\xnum{\bfx}=\xnum{\bfy}=2$.
The locality of the formulas~\eqref{eq:bcmps} extends this result to the general case of $\nst$-strand elementary braid $\sggii$.
Thus we want to establish a \qiso
\begin{equation}
\label{eq:smlqs}
\Ffav{x_1}(\ctmvbxy{\sggi}) \eqsmout \Ffav{y_2}(\ctmvbxy{\sggi}),
\end{equation}
where
\[
\ctmvbxy{\sggi} = \boxed{
\mMcrrxy \xrightarrow{\;\;\hchine\;\;} \mMprrxy
}\;.
\]
The case of $\vbr=x_2$ versus $\vbr=y_1$ is treated similarly.

The \qiso~\eqref{eq:smlqs} is established by the
diagram of Fig.~\ref{fg:ssv},
\begin{figure}[h]
\[
\xymatrix@C=0.5cm@R=2cm{
\Ffav{x_1}(\ctmvbxy{\sggi})
&
&&&
\Ffav{x_1}(\mMcrrxy)
\ar[rr]^-{1}
&&&
\hspace{-1cm}\Ffav{x_1}(\mMprrxy)
\\
\aAb
\ar[u]^-{\fqsoA}
\ar[d]_-{\fisoAB}
&
\hspace{-1.25cm}
\mcnv{\maMcrrxy}{\bfca}
\ar[rrr]^-{\smmatr{\vbr-x_1\\1}}
\ar[d]_-{1}
&&&
\maMcrrxy \oplus
\mcnv{\maMprrxy}{\bfca}
\ar[rrr]^-{\smmatr{1 & x_1-\vbr}}
\ar[d]_-{\smmatr{1 & x_1 - y_2 \\ 0 & 1}}
\ar[u]^-{\ssmmatr{ 1 & 0}}
&&&
\maMprrxy
\ar[d]_-{1}
\ar[u]^-{1}
\\
\aBb
\ar[d]_-{\fqsoB}
&
\hspace{-1.25cm}
\mcnv{\maMcrrxy}{\bfca}
\ar[rrr]^-{\smmatr{\vbr-y_2\\1}}
&&&
\mcnv{\maMcrrxy \oplus
\maMprrxy}{\bfcA}
\ar[rrr]^-{\smmatr{1 & y_2-\vbr}}
\ar[d]_-{\ssmmatr{1 & 0}}
&&&
\maMprrxy
\ar[d]^-{1}
\\
\Ffav{y_2}(\ctmvbxy{\sggi})
&
&&&
\Ffav{y_2}(\mMcrrxy)
\ar[rr]^-{1}
&&&
\hspace{-0.5cm}\Ffav{y_2}(\mMprrxy)
}
\]
\caption{Sliding of a strand variable}
\label{fg:ssv}
\end{figure}
in which
we denote $\prba \mMxy = \mMxy\otimes \Qv{\vbr}$.
Each row in this diagram represents a complex in $\ctCh(\ctXWxy)$ (the rows have to be `boxed' in our notations) and we used the following abbreviations for $\aWp$ connections: $\bfca = \bsdfv{\vbr}{x_1}$,
\[
\bfcA =
\begin{pmatrix}
0 & (x_1 - y_2)\,\bigl(\bsdfv{\vbr}{x_1} -\bsdfv{x_1}{y_2}\bigr)
\\
0 & \bsdfv{\vbr}{x_1}
\end{pmatrix}
\]

That $\fisoAB$ is an isomorphism in $\ctCh(\ctXWxy)$ is established by inspection. That $\fqsoA$ and $\fqsoB$ are \qiso s can be established similarly, but it is easier to observe this by explaining the origin of complexes $\aAb$ and $\aBb$.

There is an obvious \qiso\ $\Kzcv{\vbr-x}\eqsmout\Dlv{\vbr,x}$ within the category $\ctCh( \ctvWv{\vbr}{x})$ between a Koszul complex
\[
\Kzcv{\vbr - x} =
\boxed{
\xymatrix@C=1.5cm{
\mcnv{\mQax}{ \bsdfv{\vbr}{x} }
\ar[r]^-{\vbr-x}
&
\mQax
}
}
\]
and the diagonal \bmdl\ $\Dlv{\vbr,x}=\mQax/(\vbr-x)$, where, as usual, $\mQax$ is $\Qv{\vbr,x}$ viewed as a module over $\uWpfv{\vbr}{x}$.
The complex $\aAb$ is the Koszul resolution of the relation $a=x_1$ in the complex $\Ffav{x_1}(\ctmvbxy{\sggi})$:
\[
\aAb = \ctmvbxy{\sggi} \otv{x_1} \Kzcv{\vbr-x_1},\qquad
\aAb \eqsmout \Ffav{x_1}(\ctmvbxy{\sggi}).
\]
A similar \qiso\
holds between $\Ffav{y_2}(\ctmvbxy{\sggi})$ and $\aBtb=\ctmvbxy{\sggi} \otv{y_2} \Kzcv{\vbr-y_2}$. However,  $\aBtb$ is not isomorphic to $\aAb$ as a complex of $\uWpaxy$-modules, unless we `tweak' the $\aWp$-connection in $\aBtb$. This tweaking transforms $\aBtb$ into $\aBb$, while keeping it \qisc\ to $\Ffav{y_2}(\ctmvbxy{\sggi})$.
%
\end{proof}

\begin{lemma}
\label{lm:smpsli}
\qiso~\eqref{eq:ssqiso} holds for the elementary braid $\sggni$.
\end{lemma}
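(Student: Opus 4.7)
The plan is to mirror the proof of Lemma~\ref{lm:smpsl} with the positive-crossing complex substituted for the negative-crossing one. By the locality of the formulas~\eqref{eq:bcmps}, \eqref{eq:hmio}, \eqref{eq:hmit}, it suffices to handle the 2-strand elementary braid $\sggo$ (so $\bfx = x_1,x_2$, $\bfy = y_1,y_2$, and $\brrws = (1,2)$) and establish
\[
\Ffav{x_1}(\ctmvbxy{\sggo}) \eqsmout \Ffav{y_2}(\ctmvbxy{\sggo})
\]
in $\ctCh(\ctXWxy)$; the companion case $\vbr = x_2$ versus $\vbr = y_1$ is symmetric. The complex
\[
\ctmvbxy{\sggo} = \boxed{\mMprrxy \xrightarrow{\;\hchipo\;} \mcnnp{\mMcrrxy}}
\]
(suppressing $q,a,t$ shifts) differs from the complex of $\sggio$ used in Lemma~\ref{lm:smpsl} only by the reversal of its arrow and by the extra connection twist $-\bsdf$ on the target $\mcnnp{\mMcrrxy}$.

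First I would replace the one-term diagonals $\Dlv{\vbr - x_1}$ and $\Dlv{\vbr - y_2}$ implicit in $\Ffav{x_1}$ and $\Ffav{y_2}$ by their Koszul resolutions
\[
\Kzcv{\vbr-x_1} = \boxed{\mcnv{\mQax}{\bsdfv{\vbr}{x_1}} \xrightarrow{\;\vbr-x_1\;} \mQax}, \qquad \Kzcv{\vbr-y_2} = \boxed{\mcnv{\mQay}{\bsdfv{\vbr}{y_2}} \xrightarrow{\;\vbr-y_2\;} \mQay},
\]
which are quasi-isomorphisms in $\ctCh(\ctWfv{\vbr}{x_1})$ and $\ctCh(\ctWfv{\vbr}{y_2})$ respectively. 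Tensoring each with $\ctmvbxy{\sggo}$ over the relevant variable produces a $4$-term complex $\aAb$ (resp.\ $\aBtb$) in $\ctCh(\ctXWxy)$ that is quasi-isomorphic to $\Ffav{x_1}(\ctmvbxy{\sggo})$ (resp.\ $\Ffav{y_2}(\ctmvbxy{\sggo})$).

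Next I would construct a genuine isomorphism $\aAb \cong \aBb$ in the inner chain category, where $\aBb$ is obtained from $\aBtb$ by a ``connection tweak'' that records the mismatch between $\bsdfv{\vbr}{x_1}$ and $\bsdfv{\vbr}{y_2}$ modulo the $-\bsdf$ twist already present on $\mcnnp{\mMcrrxy}$. The isomorphism should be an upper-triangular change of basis on the middle column, exactly as in Fig.~\ref{fg:ssv}, whose off-diagonal entry is a polynomial proportional to $x_1 - y_2$, chosen so as to absorb the difference in connections; the alternative description~\eqref{eq:scchp} of $\hchipo$ as multiplication by $y_2 - x_1$ makes the required cancellation transparent.

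The main obstacle is again the bookkeeping of the $\aWp$-connection on the middle column: one must pin down the precise matrix $\bfcA$ analogous to the one in Fig.~\ref{fg:ssv} so that the change-of-basis commutes with every $\xcdfm$, using the flatness criterion of Theorem~\ref{th:lgder} applied to $y_2 - x_1$ and the identities~\eqref{eq:pepr} for $\bsdf$ modulo $\aIcr$. Once the 2-strand statement is in hand, tensoring the diagram with identity strands $\Dlv{\bfx',\bfy'} \otimes (-) \otimes \Dlv{\bfx''',\bfy'''}$ and invoking the multiplicativity~\eqref{eq:smbrpr} of $\ctmvbxy{\dmmy}$ propagates the quasi-isomorphism to $\sggni$ at any position in an $n$-strand braid, completing the proof.
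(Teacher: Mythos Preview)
Your approach is plausible and likely works, but it is genuinely different from the paper's. You propose to rerun the direct Koszul-resolution construction of Lemma~\ref{lm:smpsl} with the positive-crossing complex in place of the negative one, tracking the extra $-\bsdf$ connection on $\mcnnp{\mMcrrxy}$ and producing an explicit upper-triangular change of basis. The paper does something slicker: it \emph{deduces} the sliding property for $\sggn$ from the already-proved Lemma~\ref{lm:smpsl} for $\sggi$, using the second Reidemeister move (Theorem~\ref{th:rdmtwo}). Concretely, one tensors $\Ffav{x_1}(\ctmvbxy{\sggn})$ on the right by $\ctmvbyz{\sggi}$, contracts to the diagonal $\Dlv{\bfx,\bfz}$ (where $\Ffav{x_1}$ and $\Ffav{z_1}$ trivially agree), then re-expands using the known sliding for $\sggi$ to reach $\Ffav{y_2}(\ctmvbxy{\sggn})\oty\ctmvbyz{\sggi}$; a final tensor with $\ctmvbzw{\sggn}$ cancels the extra $\sggi$.

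The trade-off: your route is self-contained and does not invoke R2, but you must redo the connection bookkeeping from scratch (and you acknowledge that the matrix $\bfcA$ still needs to be pinned down). The paper's route is computation-free once R2 and Lemma~\ref{lm:smpsl} are in hand, and it illustrates a general principle---that sliding for a generator implies sliding for its inverse---which is the real reason one only needs to do the explicit diagram once.
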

\begin{proof}
Again, it is sufficient to prove the lemma for the 2-strand braid generator $\sggn$.
Consider the sequence of \qiso s:
\begin{multline*}
\Ffav{x_1}(\ctmvbxy{\sggn})\oty\ctmvbyz{\sggi} \eqsmout
\Ffav{x_1}\Dlv{\bfx,\bfz}\cong\Ffav{z_1}\Dlyz \eqsmout
\ctmvbxy{\sggn}\oty\Ffav{z_1}(\ctmvbyz{\sggi})
\\
\eqsmout\ctmvbxy{\sggn}\oty\Ffav{y_2}(\ctmvbyz{\sggi})
\cong(\Ffav{y_2}\ctmvbxy{\sggn})\oty\ctmvbyz{\sggi}.
\end{multline*}
Finally, we apply the tensor multiplication $\dmmy\otz\ctmvbzw{\sggn}$ to the first and last complex in this chain and use the \qiso\ $\ctmvbyz{\sggi}\otz\ctmvbzw{\sggn}\eqsmout \Dlyw$ as well as  $\dmmy\oty\Dlyw$ being the identity functor.
\end{proof}

\begin{proof}[Proof of Theorem~\ref{th:slvbr}]
We prove the theorem by induction over the length of the minimal \brwd\ presentation of a braid. Lemmas~\ref{lm:smpsl} and~\ref{lm:smpsli} prove \qiso~\eqref{eq:ssqiso} for elementary braids. Suppose that the theorem holds for braids of minimal length $k$. If a braid $\brr$ has minimal length $k+1$ then it can be presented as a composition of a length $k$ braid $\brr_1$ and an elementary braid $\brr_2$. Thus we have a sequence of isomorphisms and \qiso s:
\begin{equation*}
\begin{split}
\Ffaxi\ctmvbxz{\brr}
&\eqsmout
\Ffaxi\ctmvbxy{\brr_1\brr_2} \cong \Ffaxi\bigl(\ctmvbxy{\brr_1}\oty\ctmvbyz{\brr_2}\bigr)
\cong \bigl (\Ffaxi\ctmvbxy{\brr_1}\bigr)\oty \ctmvbyz{\brr_2}
\\
& \eqsmout
\bigl (\Ffav{y_{\brrws_1(i)}}\ctmvbxy{\brr_1}\bigr)\oty \ctmvbyz{\brr_2}
\cong\ctmvbxy{\brr_1}\oty \bigl(\Ffav{y_{\brrws_1(i)}}\ctmvbyz{\brr_2}\bigr)
\\
&\eqsmout\ctmvbxy{\brr_1}\oty \bigl(\Ffav{z_{\brrws_2\brrws_1(i)}}\ctmvbyz{\brr_2}\bigr)
\eqsmout \Ffav{z_{\brrws(i)}}(\ctmvbxz{\brr})
\end{split}
\end{equation*}
\end{proof}

Theorem~\ref{th:totsld} is an obvious corollary of Theorem~\ref{th:slvbr}.

\subsection{Proof of Theorem~\ref{th:rnind}}
\label{ss:prthrnind}
%
Theorem~\ref{th:rnind} is an obvious corollary of the following theorem:
\begin{theorem}
\label{th:slstvlc}
Suppose that $i$-th and $j$-th strands of a braid $\brr$ belong to the same component of the link constructed by circular closure. Then there is a \qiso\ in the category $\ctDr(\ctWX)$:
\begin{equation}
\label{eq:qslbrcl}
\bigl(\Ffaxi \ctmvbxy{\brr}\bigr) \otdrxy \Dlxy \eqsmout
\bigl(\Ffaxj \ctmvbxy{\brr}\bigr) \otdrxy \Dlxy
\end{equation}
\end{theorem}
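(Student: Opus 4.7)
The plan is to reduce Theorem~\ref{th:slstvlc} to a finite iteration of the sliding theorem (Theorem~\ref{th:slvbr}) combined with the collapse enforced by the diagonal module. The key observation is that two strands $i,j$ of $\brr$ lie on the same component of the circular closure if and only if $i$ and $j$ are in the same cycle of the permutation $\brrws$, i.e.\ $j = \brrws^{k}(i)$ for some integer $k\ge 0$. So it suffices to prove the statement for $j=\brrws(i)$ and then iterate.

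First I would invoke Theorem~\ref{th:slvbr} to get the \qiso\
\[
\Ffaxi\bigl(\ctmvbxy{\brr}\bigr) \eqsmout \Ffav{y_{\brrws(i)}}\bigl(\ctmvbxy{\brr}\bigr)
\]
in $\ctDr(\ctxWxy)$, and then apply the functor $\dmmy\otdrxy\Dlxy$. By Theorem~\ref{th:drtnpr} the derived partial tensor product descends to the nested relative derived categories, in particular it preserves \qiso s, so we obtain a \qiso\ between the corresponding objects of $\ctDr(\ctXWx)$.

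Second I would make the key identification
\[
\Ffav{y_{\brrws(i)}}\bigl(\ctmvbxy{\brr}\bigr) \otdrxy \Dlxy \;\cong\; \Ffav{x_{\brrws(i)}}\bigl(\ctmvbxy{\brr}\bigr) \otdrxy \Dlxy.
\]
Using the definition $\Ffav{y_k}(\dmmy) = \dmmy\otv{\bar y_k}\Dlv{\vbr-y_k}$ and the fact that $\Dlxy=\mQxy/(\bfy-\bfx)$ identifies $y_k$ with $x_k$ together with their $\aWp$-actions (\emph{cf.}~the relation $\bsdfv{y_1}{y_2}=\bsdfv{x_1}{x_2}\mod(\bfy-\bfx)$ used in subsection~\ref{ss:elsbm}), the two partial-tensor-product modules are canonically isomorphic as objects of $\uWpXx\sctgmod$, hence the desired isomorphism holds already before passing to derived categories. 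Combining these two steps yields
\[
\bigl(\Ffaxi\ctmvbxy{\brr}\bigr)\otdrxy\Dlxy \;\eqsmout\; \bigl(\Ffav{x_{\brrws(i)}}\ctmvbxy{\brr}\bigr)\otdrxy\Dlxy,
\]
i.e.\ the assertion of the theorem for the pair $(i,\brrws(i))$.

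Finally, iterating this move $k$ times along the $\brrws$-orbit of $i$, we land on $j=\brrws^k(i)$ and obtain a chain of \qiso s connecting the two sides of~\eqref{eq:qslbrcl}, completing the proof. The main point of care is the second step: one must verify that the identification of the $\vbr$-action and of the induced $\aWp$-action survives the passage to the derived tensor product with $\Dlxy$. This is essentially bookkeeping once one notes that $\Dlxy$ has no independent $\vbr$-structure and that the diagonal module equates each $y_k$ with $x_k$ at the level of both the $\Qbxy$-action and the Leibniz-rule extension of $\xLm$. No further resolution machinery is required, as Theorem~\ref{th:slvbr} has already handled the nontrivial homotopical content in the sliding step.
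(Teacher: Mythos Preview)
Your argument is correct and follows the same outline as the paper: reduce to $j=\brrws(i)$ by iterating along the $\brrws$-cycle, apply Theorem~\ref{th:slvbr} to slide $\vbr$ from $x_i$ to $y_{\brrws(i)}$, then use the diagonal $\Dlxy$ to trade the $y_{\brrws(i)}$-doubling for the $x_{\brrws(i)}$-doubling.

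The paper organizes your second step a bit more carefully. Rather than asserting directly that $\Ffav{y_k}(M)\otdrxy\Dlxy\cong\Ffav{x_k}(M)\otdrxy\Dlxy$, it first isolates a lemma: for any $M\in\ctWxy$ and $N\in\ctWyz$, $(\Ffav{y_i}M)\otdry N\simeq M\otdry(\Ffav{y_i}N)$, proved by taking an equivariant $\Qby$-projective resolution of $M$ and noting that $\Ffav{y_i}$ does not disturb it. One then moves the $\vbr$-structure onto $\Dlxy$, uses the tautology $\Ffav{y_k}\Dlxy=\Ffav{x_k}\Dlxy$, and moves it back. This is exactly the ``bookkeeping'' you flag, but your remark that ``no further resolution machinery is required'' is slightly too optimistic: the honest way to see that the ordinary-tensor identity $y_k=x_k$ in $\Dlxy$ propagates to the \emph{derived} tensor product is to compute the latter via a resolution and check the $\vbr$-actions term by term---which is precisely what the paper's lemma encodes. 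Once you acknowledge that, your direct argument and the paper's factored one are the same.
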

Indeed, this theorem means that a strand variable can slide around the closed braid, thus moving between all strands belonging to the same link component.

We prove Theorem~\ref{th:slstvlc} with the help of the following lemma:
\begin{lemma}
For any object $\mMxy$ of $\ctWxy$ and any object $\mNyz$ of $\ctWyz$ there is a \qiso
\[
(\Ffav{y_i} \mMxy) \otdry \mNyz \simeq \mMxy\otdry (\Ffav{y_i}\mNyz).
\]
\end{lemma}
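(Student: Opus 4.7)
The plan is to exhibit both sides as being \qisc\ to the common object
\[
(\mMxy \otdry \mNyz) \otv{y_i} \Dlv{\vbr - y_i}
\]
in $\ctDr(\ctxWxy)$, by exploiting the associativity of tensor products together with the fact that $\Dlv{\vbr - y_i}$ is free of rank one as a $\Qv{y_i}$-module (with generator on which $\vbr$ acts as $y_i$). Unfolding the definition $\Ffav{y_i}(\dmmy) = \dmmy \otv{y_i} \Dlv{\vbr - y_i}$ from~\eqref{eq:fftp}, the left-hand side becomes $(\mMxy \otv{y_i} \Dlv{\vbr - y_i}) \otdry \mNyz$ and the right-hand side becomes $\mMxy \otdry (\mNyz \otv{y_i} \Dlv{\vbr - y_i})$.

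To handle the derived tensor product I would invoke Theorem~\ref{th:drortp} and pick a $\uWp$-equivariant $\Qby$-projective resolution $\rsPWb \rightarrow \mMxy$. Since $\Dlv{\vbr - y_i}$ is $\Qv{y_i}$-free, the functor $(\dmmy) \otv{y_i} \Dlv{\vbr - y_i}$ is exact and preserves both $\Qby$-projectivity and $\uWp$-equivariance; hence $\Ffav{y_i}(\rsPWb)$ is again a $\uWp$-equivariant $\Qby$-projective resolution, this time of $\Ffav{y_i}\mMxy$. By Theorem~\ref{th:drortp}, both sides of the claimed \qiso\ are then computed by
\[
\rsPWb \oty \mNyz \otv{y_i} \Dlv{\vbr - y_i},
\]
the two outermost tensor operations acting on disjoint variable sets and so commuting via a canonical associativity isomorphism. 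The same argument works symmetrically if one chooses instead to resolve $\mNyz$.

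The main obstacle I expect is the bookkeeping of $\uWp$-equivariance for the associativity isomorphism that identifies the two ways of inserting $\otv{y_i} \Dlv{\vbr - y_i}$. Concretely, one must check that the action of each generator $\xLm$ on the common object agrees whether the variable $\vbr$ is adjoined before or after the derived tensor product over $\Qby$. This follows from the Leibnitz rule for the action of $\aWp$ on tensor products (as in the definition of $\otdry$): after the identification of the two copies of $y_i$ carried out by $\otdry$ and of $\vbr$ with this shared $y_i$ carried out by $\Dlv{\vbr - y_i}$, the derivations $\xhLm$ act compatibly, and the associativity isomorphism respects these compatible actions. Once this equivariance is verified, the identification of both sides of the lemma with the common expression is routine and the proof is complete.
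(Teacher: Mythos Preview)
Your proposal is correct and follows essentially the same approach as the paper: both take a $\uWp$-equivariant $\Qby$-projective resolution of $\mMxy$, observe that $\Ffav{y_i}$ preserves this property (since $\Dlv{\vbr-y_i}$ is free over $\Qv{y_i}$), and then use an associativity isomorphism to move the functor $\Ffav{y_i}$ across the ordinary tensor product $\oty$. The paper's proof is terser --- it records the chain of isomorphisms $\Ffav{y_i}\rsPb(\mMxy)\oty\mNyz \cong \rsPb(\mMxy)\oty\Ffav{y_i}\mNyz$ without unpacking the associativity or the equivariance check --- but your expanded discussion of why the Leibnitz rule makes this identification $\aWp$-equivariant is exactly the content of that middle step.
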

\begin{proof}
Let $\rsPb(\mMxy)$ be a resolution of $\mMxy$ which is a chain complex of $\uWpxy$-modules, each of which is projective as a $\Qby$-module. Then $\Ffav{y_i}\rsPb(\mMxy)$ is a similar resolution of $\Ffav{y_i}\mMxy$, and there is a sequence of \qiso s, which proves the lemma:
\begin{equation*}
\begin{split}
(\Ffav{y_i} \mMxy) \otdry \mNyz
& \simeq \Ffav{y_i}\rsPb(\mMxy)\oty\mNyz
\\
&\cong\rsPb(\mMxy)\oty\Ffav{y_i}\mNyz
\simeq
\mMxy\otdry\Ffav{y_i}\mNyz.
\end{split}
\end{equation*}
\end{proof}
\begin{proof}[Proof of Theorem~\ref{th:slstvlc}]
Since $\brrws$ performs a cyclic permutation of strand indices corresponding to the same link component, it is sufficient to prove the \qiso~\eqref{eq:qslbrcl} in the case when $j = \brrws(i)$. The latter is proved by the following sequence of \qiso s:
\begin{equation*}
\begin{split}
\bigl(\Ffaxi \ctmvbxy{\brr}\bigr) \otdrxy \Dlxy \eqsmout
\bigl(\Ffav{y_{\brrws(i)}}\ctmvbxy{\brr}\bigr) \otdrxy \Dlxy \eqsmin
\ctmvbxy{\brr}\otdrxy \bigl(\Ffav{y_{\brrws(i)}}\Dlxy\bigr)
\\
\cong
\ctmvbxy{\brr}\otdrxy \bigl(\Ffav{x_{\brrws(i)}}\Dlxy\bigr)
\eqsmin
\bigl(\Ffav{x_{\brrws(i)}} \ctmvbxy{\brr}\bigr) \otdrxy \Dlxy
\end{split}
\end{equation*}
\end{proof}

%
%

\subsection{Markov move invariance}
\label{ss:mmi}
\begin{theorem}
\label{th:scnd}
There exists a unique map $\lctmv{\dmmy}$ which makes the upper right triangle in the following diagram commutative:
\begin{equation}
\mvcn{
\xymatrix@C=4pc@R=5pc{
\brgrn
\ar[d]_-{\actmvbxy{\dmmy}}
&
\brgrns
\ar@{_{(}->}[l]
\ar[r]^-{\fcl}
\ar[dr]_-{\lctdmv{\dmmy}}
&
\lnksm
\ar[d]^-{\lctmv{\dmmy}}
\\
\ctDr\bigl(\ctXWxy\bigr)
\ar[r]_-{\Hhomxy(\dmmy)}
&
\ctWX
\ar[r]_-{\FfukXl}
&
\ctWl
}
}
\end{equation}
Moreover, if a framed link $\xL'$ is isotopic to a framed link $\xL$, except that the $i$-th component of $\xL'$ has an extra unit of framing, then their brackets are related by the connection shift endofunctor:
\[
\ctmv{\xLp} \simeq\shvv{\ctmv{\xL}}{\bsdfpli},
\]
where $\bsdfpx$ is defined in~\eqref{eq:dfppr}.
\end{theorem}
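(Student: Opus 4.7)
The plan is to invoke the framed Markov theorem: two braids close to isotopic framed oriented links iff they are related by a finite sequence of (i) conjugations $\brr\mapsto\alpha\brr\alpha^{-1}$ in $\brgrn$, and (ii) single-strand stabilizations $\brr\mapsto\brr\,\sigma_n^{\pm 1}$, with the convention that the two signs alter the framing of the link component containing the newly inserted strand by $\pm 1$. Combined with Theorem~\ref{th:first} and the construction $\lctdmv{\dmmy}=\FfukXl\circ\Hhomxy\circ\actmvbxy{\dmmy}$, existence and uniqueness of $\lctmv{\dmmy}$ reduces to two verifications: (a) conjugation invariance of $\lctdmv{\dmmy}$ up to quasi-isomorphism in $\ctWl$, and (b) an explicit transformation formula for $\lctdmv{\brr\,\sigma_n^{\pm 1}}$ in terms of $\lctdmv{\brr}$, from which the framing-shift identity $\ctmv{\xLp}\simeq\shvv{\ctmv{\xL}}{\bsdfpli}$ will fall out. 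Uniqueness of $\lctmv{\dmmy}$ is forced by the surjectivity of $\fcl\colon\brgrns\to\lnksm$.

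For (a), I rely on the trace-like property of $\Hhomxy$ recorded after~\eqref{eq:hhmdg} and on Theorem~\ref{th:rnind}. Writing $\brr=\alpha\brr_0\alpha^{-1}$, a cyclic rotation of the three tensor factors under $\Hhomxy$ converts $\actmvbxy{\brr}$ into $\actmvbxy{\brr_0}$ up to a relabeling of strand variables induced by $\alpha$; since $\alpha$ preserves the partition of strands into link components, this relabeling permutes strand variables only within each $\brrws$-cycle, and Theorems~\ref{th:totsld} and~\ref{th:rnind} together absorb it into the renaming functor $\FfukXl$. Hence $\lctdmv{\dmmy}$ descends from $\brgrns$ to its quotient by conjugation.

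For (b), I compute $\FfukXl\bigl(\Hhomxy(\actmvbxy{\brr\,\sigma_n^{\pm 1}})\bigr)$ directly via the locality of~\eqref{eq:bmpr1}--\eqref{eq:bmpr2} and the definitions~\eqref{eq:bcmps} of the elementary complexes. The newly added strand is traced against the diagonal by the partial Hochschild contribution, and the whole calculation reduces to a two-strand trace of the elementary complexes built from $\hchine\colon\mMcr\to\mMpr$ and $\hchipo\colon\mMpr\to\mcnnbp{\mMcr}$. The decisive algebraic input is the specialization $\sdfmv{x}{y}|_{y=x}=(m+1)x^m=\spdpmx$ implicit in~\eqref{eq:dfpm}: the flat connection $\bsdfv{x_1}{x_2}$ on $\mMcr$, once the stabilizing strand is traced against the diagonal, descends to the flat connection $\bsdfpx$ on the single surviving variable, which after $\FfukXl$ becomes $\lc_i$ and hence the flat sequence $\bsdfpli$. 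A Theorem~\ref{pr:qsbc}-style splitting then collapses the cone to $\lctdmv{\brr}$ twisted by $\shv{\bsdfpli}$ for $\sigma_n^{+}$ and by $\shv{-\bsdfpli}$ for $\sigma_n^{-}$.

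The main obstacle will be the stabilization computation. In the classical Soergel setting the Markov~II argument proceeds by exhibiting a contractible direct summand and cancelling it; the present $\aWp$-equivariant version requires carrying the connection through that splitting, because what was a trivial rank-one summand now carries the $\bsdfv{x_1}{x_2}$-connection inherited from $\mMcr$, which does not simply vanish under the trace. Tracking this connection correctly is precisely what turns a would-be isomorphism into the connection-twisted quasi-isomorphism $\shvv{\lctdmv{\brr}}{\pm\bsdfpli}$; verifying that $\bsdfpli$ is indeed a flat sequence on $\Qv{\lc_i}$ is immediate from Theorem~\ref{th:lgder} (or by direct check on $\spdpmx$). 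Once this bookkeeping is in place, combining (a) and (b) with the framed Markov theorem yields simultaneously the map $\lctmv{\dmmy}$ and the advertised framing-shift formula.
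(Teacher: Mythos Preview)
Your proof plan is correct and follows the same route as the paper: reduce to Markov moves, establish conjugation invariance via the trace-like property of $\Hhomxy$ together with the sliding theorems (Theorems~\ref{th:totsld} and~\ref{th:rnind}), and handle stabilization by a local two-strand computation that produces the connection shift $\shv{\mp\bsdfpli}$ via Theorem~\ref{pr:qsbc}. The paper's execution of the stabilization step (Lemma~\ref{lm:smmtw}) is carried out by tensoring the elementary complexes with an explicit $\aWp$-equivariant Koszul resolution of $\Dlxyo$, then tracking the connections $\bsdfv{x_1}{y_1}$, $\bsdfv{y_1}{y_1}$, etc.\ through short exact sequences that split only after forgetting the $\aWp$-structure---exactly the bookkeeping you anticipate, and your identification of the specialization $\sdfmv{x}{y}|_{y=x}=(m+1)x^m$ as the mechanism producing $\bsdfpli$ is on the mark.
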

\begin{proof}
The claim of this theorem follows from the invariance of $\lctdmv{\dmmy}$ under Markov moves up to the connection shift functor. We prove this invariance in the next two subsections.
\end{proof}
\subsubsection{First Markov move}
\begin{theorem}
\label{th:fmm}
For two $\nst$-strand braids $\brr_1$ and $\brr_2$ there is an isomorphism in $\ctWl$:
\begin{equation}
\label{eq:qhhm}
\lctdmv{\brr_1\brr_2}\eqsmout \lctdmv{\brr_2\brr_1}.
\end{equation}
\end{theorem}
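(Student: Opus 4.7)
\medskip

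\noindent\textbf{Proof proposal.} The plan is to deduce this statement from the trace-like property of the modified Hochschild homology stated just before Theorem~\ref{th:rnind}, combined with the renaming invariance of that theorem.

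First, I would unwind the definition
\[
\lctdmv{\brr_1\brr_2} = \FfukXl\bigl(\Hhomxy(\actmvbxy{\brr_1\brr_2})\bigr),
\]
and use the multiplicativity of $\actmvbxy{\dmmy}$ with respect to $\otdrly$ (which is in turn inherited from~\eqref{eq:smbrpr} after application of $\FfXx$) to factor
\[
\actmvbxy{\brr_1\brr_2} \cong \actmvbxz{\brr_1} \otdrlz \actmvbzy{\brr_2}, \qquad
\actmvbxy{\brr_2\brr_1} \cong \actmvbxz{\brr_2} \otdrlz \actmvbzy{\brr_1},
\]
for an intermediate sequence of variables $\bfz$ with $\xnum{\bfz}=\nst$. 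This reduces the target isomorphism in $\ctWl$ to a comparison of $\FfukXl$ applied to the two Hochschild homologies obtained from these factorizations.

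Second, I would apply the trace-like property of $\Hhomxy$ already recorded in the excerpt,
\[
\Hhomxz\Bigl(\actmvbxy{\brr_1}\otdrly\actmvbyz{\brr_2}\Bigr)\simeq
\Hhomxz\Bigl(\vctmvr{\brrws_1(\bfvbr)}{\brr_2}{\bfx,\bfy}\otdrly\actmvbyz{\brr_1}\Bigr),
\]
whose proof goes through the strand-variable sliding Theorem~\ref{th:totsld}. Specializing this identity with the roles of the variable sequences matched to our factorizations yields a quasi-isomorphism between $\Hhomxy\bigl(\actmvbxz{\brr_1}\otdrlz\actmvbzy{\brr_2}\bigr)$ and $\Hhomxy\bigl(\actmvbxz{\brr_2}\otdrlz\actmvbzy{\brr_1}\bigr)$, up to a permutation of the braid strand variables $\bfvbr$ by $\brrws_1$.

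Third, I would apply $\FfukXl$ and invoke Theorem~\ref{th:rnind}. The closures of $\brr_1\brr_2$ and $\brr_2\brr_1$ yield isotopic links, and the permutations $\brrws_2\brrws_1$ and $\brrws_1\brrws_2$ are conjugate, so their cycle decompositions (and hence the partition of strands into link components) match under the obvious bijection. Consequently the permutation $\brrws_1$ that appears in the trace property shuffles strand variables only within cycles belonging to the same link component; by Theorem~\ref{th:rnind} this shuffle becomes trivial after $\FfukXl$, and the desired quasi-isomorphism $\lctdmv{\brr_1\brr_2}\eqsmout\lctdmv{\brr_2\brr_1}$ in $\ctWl$ follows.

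The main obstacle I anticipate is a bookkeeping one: carefully tracking the permutation $\brrws_1$ as it acts on $\bfvbr$ through the chain of quasi-isomorphisms and verifying that the reshuffled choice of cycle representatives $\bfk$ still corresponds to a valid choice for the same set of link components. Once this is aligned properly, the argument collapses onto the two already-established inputs, namely the trace property and Theorem~\ref{th:rnind}.
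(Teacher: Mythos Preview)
Your approach is correct and essentially matches the paper's. The paper packages the trace-like property you cite as a lemma stated and proved immediately after the theorem (establishing $\FfXx\ctmvbxy{\brr_1\brr_2}\otdrxy\Dlxy\eqsmout\Ffbrox \ctmvbxy{\brr_2\brr_1}\otdrxy\Dlxy$ via the same sliding and multiplicativity ingredients), then takes inner homology and applies $\FfukXl$; you are more explicit than the paper about needing Theorem~\ref{th:rnind} to absorb the residual permutation $\brrws_1$ on the strand variables, which the paper leaves implicit in its final sentence.
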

This theorem follows easily from the next lemma. Consider the functor $\dmmy\otdrxy\Dlxy$ in the diagram~\eqref{eq:hhmdg}.
\begin{lemma}
There is a \qiso\ in the category $\ctCh\bbrs{\ctWX}$:
\begin{equation}
\label{eq:slhbr}
\FfXx\ctmvbxy{\brr_1\brr_2}\otdrxy\Dlxy\eqsmout\Ffbrox \ctmvbxy{\brr_2\brr_1}
\otdrxy\Dlxy.
\end{equation}
\end{lemma}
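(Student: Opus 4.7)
The plan is to combine the sliding property of strand variables (Theorem~\ref{th:totsld}) with the cyclic trace property of the functor $\otdrxy\Dlxy$. Geometrically, both sides of~\eqref{eq:slhbr} represent the same closed braid with strand variables attached, read from two different cut points, and the permutation $\brrws_1$ appearing in $\Ffbrox$ accounts for how the strand variables are transported between these cuts.

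First I would introduce a fresh intermediate sequence of variables $\bfz$ of the same length as $\bfx$ and factor
\[
\ctmvbxy{\brr_1\brr_2}\cong\ctmvbxz{\brr_1}\otz\ctmvbzy{\brr_2},\qquad
\ctmvbxy{\brr_2\brr_1}\cong\ctmvbxz{\brr_2}\otz\ctmvbzy{\brr_1}.
\]
Since $\FfXx$ only doubles the $\bfx$-action and $\bfx$ occurs only in the first factor on the left, we have $\FfXx\ctmvbxy{\brr_1\brr_2}\cong\bigl(\FfXx\ctmvbxz{\brr_1}\bigr)\otz\ctmvbzy{\brr_2}$. Applying Theorem~\ref{th:totsld} to $\brr_1$ slides the strand variables from $\bfx$ to $\bfz$, producing the permutation by $\brrws_1$:
\[
\FfXx\ctmvbxz{\brr_1}\eqsmout\Ffvv{\brrws_1(\bfvbr)}{\bfz}\ctmvbxz{\brr_1}.
\]
Since $\bfz$ is contracted in the tensor product, the strand-variable action can be transferred from the first factor to the second:
\[
\bigl(\Ffvv{\brrws_1(\bfvbr)}{\bfz}\ctmvbxz{\brr_1}\bigr)\otz\ctmvbzy{\brr_2}\cong\ctmvbxz{\brr_1}\otz\bigl(\Ffvv{\brrws_1(\bfvbr)}{\bfz}\ctmvbzy{\brr_2}\bigr).
\]

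The central step is the cyclic trace identity. In the absence of strand variables, standard Hochschild cyclicity yields
\[
\bigl(\ctmvbxz{\brr_1}\otz\ctmvbzy{\brr_2}\bigr)\otdrxy\Dlxy\simeq\bigl(\ctmvbxz{\brr_2}\otz\ctmvbzy{\brr_1}\bigr)\otdrxy\Dlxy,
\]
realized by interchanging the roles of $(\bfx,\bfy)$ and $\bfz$: the two outer variables that $\Dlxy$ identifies become the new intermediate ones, and vice versa. Because the elementary Soergel bimodules in question are semiprojective by Remark~\ref{rm:tpsbm}, derived and ordinary tensor products coincide and this relabeling lifts to a chain-level isomorphism in $\ctCh(\ctXWxy)$. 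Tracking the strand variables through this rearrangement, the functor $\Ffvv{\brrws_1(\bfvbr)}{\bfz}$ (which acted on the old intermediate variable $\bfz$) becomes $\Ffvv{\brrws_1(\bfvbr)}{\bfx}=\Ffbrox$ (acting on the new leftmost variable $\bfx$), so concatenating the quasi-isomorphisms above produces~\eqref{eq:slhbr}.

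The main obstacle is verifying that the cyclic trace identity extends from the plain bimodule setting to the $\aWp$-equivariant, nested chain setting. The key point is that the generators $\xLm$ act on the tensor product $\ctmvbxz{\brr_1}\otz\ctmvbzy{\brr_2}$ symmetrically by the Leibnitz rule and annihilate the unit of $\Dlxy$, so the relabeling $(\bfx,\bfy)\leftrightarrow\bfz$ is automatically a $\uWp$-equivariant isomorphism of complexes. Once this compatibility is established, the precise permutation $\brrws_1$ appearing in $\Ffbrox$ is dictated entirely by Theorem~\ref{th:totsld}, and no further bookkeeping of the strand variables is required.
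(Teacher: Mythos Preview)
Your proposal is correct and uses the same two ingredients as the paper: the sliding property (Theorem~\ref{th:totsld}) and Hochschild cyclicity. The execution differs in one technical point worth noting. You introduce a single auxiliary sequence $\bfz$ and then invoke the cyclic trace identity as a separate step, after which you must track how the strand-variable functor $\Ffvv{\brrws_1(\bfvbr)}{\bfz}$ transforms under the relabeling $(\bfx,\bfy)\leftrightarrow\bfz$. This works, but the relabeling is slightly awkward because one sequence $\bfz$ must split into the roles of two outer sequences after the swap. The paper instead introduces \emph{two} auxiliary sequences $\bfz,\bfw$ (so four in total) and rewrites each side of~\eqref{eq:slhbr} directly as
\[
\Bigl[\ctmvbxy{\brr_1}\otimes\bigl(\Ffbroz\ctmvbzw{\brr_2}\bigr)\Bigr]\otdrxyzw\bigl(\Dlxw\otimes\Dlyz\bigr),
\]
so that both sides become literally the same expression and no separate cyclicity lemma or tracking of strand variables through a relabeling is required. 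Your argument that the $\aWp$-action, being Leibniz and annihilating the units of the diagonal modules, is compatible with the rearrangement is exactly what makes the paper's four-variable identification $\uWp$-equivariant as well.
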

\begin{proof}
Consider a sequence of \qiso s:
\begin{equation*}
\begin{split}
\Ffbroz\ctmvbzy{\brr_2\brr_1}
&\eqsmout
\bbrs{\Ffbroz\ctmvbzw{\brr_2}}\otw\ctmvbwy{\brr_1}
\\
&\eqsmout
\Bbrs{\Ffbroz\ctmvbzw{\brr_2}\otimes\ctmvbxy{\brr_1}}
\otxw \Dlxw.
\end{split}
\end{equation*}
We substitute this \qiso\ into the \rhs of \ex{eq:slhbr} and rename some variables:
\begin{equation}
\label{eq:veron}
\Ffbroz \ctmvbzy{\brr_2\brr_1}\otdryz\Dlyz \eqsmout
\Bbrs{\Ffbroz\ctmvbzw{\brr_2}\otimes\ctmvbxy{\brr_1}}\otdrxyzw
\bbrs{\Dlxw\otimes\Dlyz}.
\end{equation}
Next consider the following sequence of \qiso s
\begin{multline*}
\FfXx\ctmvbxw{\brr_1\brr_2} \eqsmout
\bbrs{\FfXx\ctmvbxz{\brr_1}}\otz\ctmvbzw{\brr_2} \eqsmout
\bbrs{\Ffbroz\ctmvbxz{\brr_1}}\otz \ctmvbzw{\brr_2}
\\
\eqsmout
\ctmvbxz{\brr_1}\otz\bbrs{\Ffbroz\ctmvbzw{\brr_2}}
\eqsmout
\Bbrs{\ctmvbxy{\brr_1}\otz\bbrs{\Ffbroz\ctmvbzw{\brr_2}}}
\otyz\Dlyz.
\end{multline*}
We substitute it into the \lhs of \ex{eq:slhbr} and rename some variables:
\begin{equation}
\label{eq:vertw}
\FfXx\ctmvbxw{\brr_1\brr_2}\otdrxw\Dlxy
\eqsmout
\Bbrs{\ctmvbxy{\brr_1}\otz\bbrs{\Ffbroz\ctmvbzw{\brr_2}}}\otdrxyzw
\bbrs{\Dlxw\otimes\Dlyz}.
\end{equation}
Comparing the \rhs of equations~\eqref{eq:veron} and~\eqref{eq:vertw}, we come to \ex{eq:slhbr}.
\begin{proof}[Proof of Theorem~\ref{th:fmm}]
Applying the inner homology $\HmWX$ of diagram~\eqref{eq:hhmdg} to both sides of~\eqref{eq:slhbr} we get the \xqiso\ of \Hchshs
\[
\Hhomxy\bbrs{\FfXx\ctmvbxy{\brr_1\brr_2}}
\sim
\Hhomxy\bbrs{\Ffbrox \ctmvbxy{\brr_2\brr_1}}
\]
in $\ctWX$. Applying the relabelling functor $\FfukXl$ to both sides we get the isomorphism~\eqref{eq:qhhm}.
\end{proof}
%
%
%
%
\end{proof}

\subsubsection{Second Markov move}

For a $\nst_1$-strand braid $\brr_1$ and a $\nst_2$-strand braid $\brr_2$ let $\xcnb{\brr_1}{\brr_2}$ denote the $(\nst_1+\nst_2)$-strand braid constructed by placing $\brr_1$ and $\brr_2$ side by side. In other words, $\xcnb{\brr_1}{\brr_2}$ is the result of applying the injection $\brgrv{\nst_1}\times\brgrv{\nst_2}\hookrightarrow\brgrv{\nst_1+\nst_2}$ to the pair $(\brr_1,\brr_2)$.

Let $\idbrn$ denote the $\nst$-strand identity braid. The second Markov move relates an $\nst$-strand braid $\brr$ with two $(\nst+1)$-strand braids
\begin{equation}
\label{eq:prbmp}
\brrpmo=(\xcnb{\sggnpm}{\idbrno})(\xcnb{\idbro}{\brr}),
\end{equation}
where $\sggn$ is the elementary 2-strand braid and $\sggi$ is its inverse. The cyclic closures of all three braids are isotopic links, and the first strand of $\brr$ as well as the first two strands of $\brrpmo$ belong to the same link component. We refer to it as the first component, so that its component variable is $\lc_1$. The framing of the first link component in the closure of $\brrpmo$ differs by $\pm 1$ from the one within the closure of $\brr$.

\begin{theorem}
\label{th:mmtwo}
For any $\nst$-strand braid $\brr$ there is a \xqiso\ in the category $\ctWX$:
\[
\lctdmv{\brrpmo}\simeq \lctdmv{\brr}\shv{\mp\bsdfpli}.
\]
\end{theorem}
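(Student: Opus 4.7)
The plan is to reduce the statement to a local Hochschild homology computation on the stabilization crossing, by combining the multiplicativity of the bracket, the trace-like property of Hochschild homology from Theorem~\ref{th:fmm}, and the variable-sliding property from Theorem~\ref{th:totsld}.

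First I would expand $\actmvbxy{\brrpmo}$ using the braid decomposition $\brrpmo=(\xcnb{\sggnpm}{\idbrno})(\xcnb{\idbro}{\brr})$ and the multiplicativity~\eqref{eq:smbrpr}. The locality of the bracket splits $\actmv{\xcnb{\sggnpm}{\idbrno}}$ as the elementary bracket $\actmv{\sggnpm}$ on the first two strands tensored with diagonal bimodules on the remaining ones, and similarly factors $\actmv{\xcnb{\idbro}{\brr}}$ as a diagonal on the new strand tensored with $\actmv{\brr}$ on strands $2,\ldots,n+1$. Using Theorem~\ref{th:fmm} to cyclically rearrange when convenient, I would then compute the Hochschild contraction $\otdrxy\Dlxy$ one strand pair at a time, starting with the stabilization strand, so as to isolate its local contribution.

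Second, the local computation: for the positive crossing one must evaluate the derived partial tensor product of the cone $\boxed{\mMpr\to\dgshq^{-2}\dgsht\,\mcnnbp{\mMcr}}$ with $\Dlv{x_1,y_1}$ over $\Qv{x_1,y_1}$. My expectation, which I would verify by working with a Koszul resolution of $\Dlv{x_1,y_1}$, is that after this partial contraction both $\mMpr$ and $\mcnnbp{\mMcr}$ reduce in the relative derived category to objects built from the diagonal $\Dlv{x_2,y_2}$ on the remaining strand, the only difference being a connection shift of $-\bsdfv{x_1}{x_2}$ on the $\mMcr$-term together with a cohomological degree shift. Theorem~\ref{pr:qsbc} would then let one term of the cone be discarded as contractible in the outer category, and re-assembling with the remaining diagonal factors and $\actmvbxy{\brr}$ produces $\Hhomxy(\actmvbxy{\brr})$ carrying the connection shift $\shv{-\bsdfv{x_1}{x_2}}$. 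The negative-crossing case is dual, with the surviving term switched and the opposite sign on the shift.

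Third, applying the renaming functor $\FfukXl$ identifies both $x_1$ and $x_2$, which by hypothesis belong to the same link component $i$, with the component variable $\lc_i$. Under this identification the flat sequence $\bsdfv{x_1}{x_2}$, whose $m$-th term is $\sum_{k=0}^{m}x_1^{k}x_2^{m-k}$, specializes at $x_1=x_2=\lc_i$ to $(m+1)\lc_i^{m}$, which is precisely $\bsdfpli$ from~\eqref{eq:dfppr}. This reproduces the shift $\shv{\mp\bsdfpli}$ in the statement.

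The main obstacle will be the second step: verifying that $\mMpr$ and $\mcnnbp{\mMcr}$, after the partial derived Hochschild homology over $(x_1,y_1)$, reduce to compatible modules on the $(x_2,y_2)$-strand in a way that allows Theorem~\ref{pr:qsbc} to apply with the correct contractible summand, and that the surviving $\uWp$-connection is exactly $-\bsdfv{x_1}{x_2}$ rather than some other flat sequence. This is a bookkeeping task analogous to the connection computations in Lemmas~\ref{lm:thsplo} and~\ref{lm:thsplt}, and relies on Theorem~\ref{th:lgder} to identify $\bsdfv{x_1}{x_2}$ as the canonical flat sequence governing the framing shift.
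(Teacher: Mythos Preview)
Your overall strategy---factor the braid, isolate the stabilization crossing, compute its partial Hochschild contraction locally, then reassemble---is exactly the paper's approach. But two points diverge, and the second is a genuine gap.

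First, a simplification you are missing: the paper explicitly drops the strand variables for this proof, using the presentation $\lctdmv{\dmmy} = \Ffukxl\bigl(\tHhomtxy(\ctmvbxy{\dmmy})\bigr)$ from~\eqref{eq:lbrwa}. Since the first two strands of $\brrpmo$ lie in the same link component, one may choose the representative set $\bfk$ to omit the first strand and forget $x_1$ immediately after the derived tensor product. This makes Theorem~\ref{th:fmm} and Theorem~\ref{th:totsld} unnecessary here; no cyclic rearrangement is used.

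Second, and more seriously, your expected local connection is wrong, and your third step compounds the error. The renaming functor $\Ffukxl$ does \emph{not} identify $x_1$ with $x_2$; it renames one chosen representative $x_{k_i}$ to $\lc_i$ and \emph{forgets} the action of the others. So after the partial Hochschild contraction over $(x_1,y_1)$ the variable $x_1$ is gone, and the connection shift you produce must already be a polynomial in $x_2$ (and $y_2$) alone---there is no later stage at which to specialize $x_1=x_2$. The paper isolates this computation as Lemma~\ref{lm:smmtw}, whose content is precisely that
\[
\ctmvbxy{\sggnpm}\otdrxyo\Dlxyo \;\eqsmout\; \Dlxyt\shv{\mp\bsdfpxt},
\]
with $\bsdfpxt$ the sequence $(m+1)x_2^m$ from~\eqref{eq:dfppr}. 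The proof of this lemma is a careful Koszul-resolution computation (subsection~5.3.3): one tensors the elementary cone with the equivariant Koszul resolution of $\Dlxyo$, tracks the induced connections on the resulting $\uWpxyt$-modules, and uses Theorem~\ref{pr:qsbc} on a short exact sequence that splits only after forgetting the $\aWp$-structure. The connection that survives is $\bsdfv{x_2}{y_2}$, which becomes $\bsdfpxt$ modulo $y_2-x_2$; the sequence $\bsdfv{x_1}{x_2}$ you anticipate never appears in the final answer. Your intuition that the original connection $\bfsdf=\bsdfv{x_1}{x_2}$ on $\mMcr$ should propagate is reasonable, but the partial contraction reshuffles it, and the bookkeeping is the substance of the lemma.
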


The proof of this theorem is based on the local version of the second Markov move:
\begin{lemma}
\label{lm:smmtw}
Let $\xnum{\bfx}=\xnum{\bfy}=2$.
There is a \qiso\ in the category $\ctCh(\ctWxyt)$
\begin{equation}
\label{eq:frdm}
\ctmvbxy{ \sggnpm } \otdrxyo\Dlxyo
\eqsmout \Dlxyt\shv{\mp \bsdfpxt}
\end{equation}
\end{lemma}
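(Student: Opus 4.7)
The plan is to compute $\ctmvbxy{\sggnpm} \otdrxyo \Dlxyo$ via the $\uWp$-equivariant Koszul resolution of $\Dlxyo$ from subsection~\ref{sss:ksres}, and then apply Theorem~\ref{pr:qsbc} to extract its single-degree cohomology.

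Following subsection~\ref{sss:ksres}, the equivariant resolution of $\Dlxyo = \Qv{x_1,y_1}/(y_1-x_1)$ is the inner two-term complex
\[
\bsbrs{\,\Qv{x_1,y_1}\shv{\sdfmv{x_1}{y_1}}\, \xrightarrow{\,y_1-x_1\,}\, \Qv{x_1,y_1}\,},
\]
the connection shift $\sdfmv{x_1}{y_1}$ on the source being forced by $\xLm(y_1-x_1) = \sdfmv{x_1}{y_1}(y_1-x_1)$. Tensoring a $\uWpxy$-module $M$ over $\Qv{x_1,y_1}$ produces $\bsbrs{M\shv{\sdfmv{x_1}{y_1}} \xrightarrow{y_1-x_1} M}$. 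Two elementary observations are essential: $y_1-x_1$ acts as $0$ on $\mMpd$ (since $\mMpd = \Dlxy$ has $y_1=x_1$ identically), and as $-(y_2-x_2)$ on $\mMcr$ (from the relation $y_1+y_2-x_1-x_2 \in \aIcr$).

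For the positive braid $\sggn$, I apply this term-wise to the outer complex $\boxed{\mMpd \xrightarrow{\hchipo} \mMcr\shv{-\bfsdf}}$ and totalize the resulting $2\times 2$ double complex. Exploiting $(y_2-x_1)(y_2-x_2)=0$ in $\mMcr$, a direct computation shows the total cohomology is concentrated in outer degree $1$ and equals $\mMcr\shv{-\bfsdf}/(y_2-x_1,\,y_2-x_2)\mMcr$. The ideal forces $y_2=x_1=x_2$, collapsing the quotient to a one-variable module, and the connection $-\bfsdf_m=-\sdfmv{x_1}{x_2}$ evaluates to $-(m+1)x_2^m$, identifying it with $\Dlxyt\shv{-\bsdfpxt}$.

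For the negative braid $\sggi$, I apply the Koszul tensor to $\boxed{\mMcr \xrightarrow{\hchine} \mMpd}$. The bracket itself carries no connection shift, but the Koszul shift $\sdfmv{x_1}{y_1}$ survives into the unique nontrivial cohomology in outer degree $-1$, which one computes to be $\mMpd\shv{\sdfmv{x_1}{y_1}}/(x_2-x_1)\mMpd$. Restricting by $y_1=x_1$ (from $\mMpd$) and then by $x_2=x_1$ (from the quotient), the shift becomes $(m+1)x_2^m$, producing $\Dlxyt\shv{+\bsdfpxt}$.

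The main obstacle is upgrading these cohomology identifications to outer quasi-isomorphisms in $\ctCh\bbrs{\ctWxyt}$. The strategy will be to apply Theorem~\ref{pr:qsbc} iteratively, with $\aBb$ and $\aCb$ chosen to correspond to the $\Qbx$-module splittings of $\mMcr$ into its $(y_2-x_1)\mMcr$ and $(y_2-x_2)\mMcr$ components. These splittings exist at the $\Qbx$-level because $\mMcr$ and $\mMpd$ are sm-pr by Remark~\ref{rm:tpsbm}, so the forgetful functor exhibits the required split exact sequences. Cancelling the resulting contractible pieces leaves precisely $\Dlxyt\shv{\mp\bsdfpxt}$ in the appropriate outer degree, yielding the asserted $\eqsmout$ equivalence.
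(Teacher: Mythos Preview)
Your heuristic computation of the total cohomology is correct, and your use of the Koszul resolution matches the paper's setup. But the passage from ``total cohomology is concentrated in one degree'' to an outer quasi-isomorphism in $\ctCh(\ctWxyt)$ is where the real work lies, and your plan for it has gaps.

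First, a point of language: $\mMcr$ does not split as $(y_2-x_1)\mMcr \oplus (y_2-x_2)\mMcr$ over $\Qbx$. Each is a rank-one submodule and their intersection is zero, but their sum is a proper submodule (the quotient has $(x_2-x_1)$-torsion). What you actually need are the short exact sequences $0\to (y_2-x_1)\mMcr\to\mMcr\to\mMcr/(y_2-x_1)\to 0$ (and the analogue for $\mMpd$), which do split over $\Qbx$; your phrasing obscures this.

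More seriously, to invoke Theorem~\ref{pr:qsbc} you must build the sub- and quotient complexes as genuine $\uWpxyt$-module complexes, which means tracking the connections through every step. This is exactly the content you skip. The paper handles it by changing variables to $z=y_1-x_2$, $w=y_1-x_1$; note that in $\mMcr$ one has $y_2-x_1=-z$, so your filtration is secretly the paper's $z$-filtration. The point of writing everything in terms of $z$ is that $\xLm z = \sdfmv{x_2}{x_2+z}\cdot z$, which makes the $z$-filtration visibly $\uWp$-stable and lets one compute the induced connections on sub and quotient explicitly (the paper's $\xcnacro$, $\xcnalno$, $\bsdfz$, etc.). Without this bookkeeping you cannot verify that the pieces you cancel are contractible \emph{after forgetting $\uWp$} while the maps you use are $\uWp$-equivariant---both are required by Theorem~\ref{pr:qsbc}.

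Finally, the negative case $\sggi$ is not symmetric to the positive one at the level of the argument: the paper needs two short exact sequences (one for $\mMcr$, one for $\mMpd$) assembled into a larger diagram, followed by a further cone reduction. Your one-line sketch for $\sggi$ does not reflect this additional layer.
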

\begin{proof}[Proof of Theorem~\ref{th:mmtwo}]
In this case it is convenient to employ the presentation of the bracket $\lctdmv{\dmmy}$ which does not use \xstv s:
\begin{equation}
\label{eq:lbrwa}
\lctdmv{\dmmy} = \Ffukxl \Bbrs{ \tHhomtxy\bbrs{\ctmvbxy{\dmmy}}}
=\Ffukxl\Bbrs{\HmWx\bbrs{\ctmvbxy{\dmmy}\otdruxy\Dlxy}}.
\end{equation}
The first two strands of the closure of $\brrpmo$ are parts of the same link component, so we can assume that the choice $\bfk$ of braid strands representing link components does not include the first strand. Hence applying the formula~\eqref{eq:lbrwa} to $\brrpmo$ we may forget $x_1$ immediately after taking the derived tensor product:
\begin{equation}
\label{eq:lbmpo}
\lctdmv{\brrpmo} = \Ffukxl\Bbrs{\HmWxp\bbrs{\ctmvbxy{\brrpmo}\otdruxxpy\Dlxy}},
\end{equation}
where $\bfx' = x_2,\ldots,x_{n+1}$.

The formula~\eqref{eq:prbmp} for $\brrpmo$ implies the tensor product presentation of its bracket:
\[
\ctmvbxy{\brrpmo} \eqsmout \ctmvr{\sggnpm}{x_1,x_2,y_1,z} \otsz
\ctmvr{\brr}{z,\bfx'',\bfy'},
\]
where $\bfx'' = x_3,\ldots,x_{n+1}$ and $\bfy'=y_2,\ldots,y_{n+1}$. Therefore,
\[
\begin{split}
\ctmvbxy{\brrpmo}\otdruxxpy\Dlxy
&\eqsmout
\bbrs{
\ctmvr{\sggnpm}{x_1,x_2,y_1,z} \otsz \ctmvr{\brr}{z,\bfx'',\bfy'}
}
\otdruxxpy
\bbrs{\Dlxyo\otimes\Dlxyp}
\\
&
\eqsmout
\Bbrs{
\bbrs{
\ctmvr{\sggnpm}{x_1,x_2,y_1,z}\otdrxyo\Dlxyo
}
\otsz
\ctmvr{\brr}{z,\bfx'',\bfy'}
}
\otdruxpyp
\Dlxyp
\end{split}
\]
According to Lemma~\ref{lm:smmtw}
\[
\ctmvr{\sggnpm}{x_1,x_2,y_1,z}\otdrxyo\Dlxyo \eqsmout
\Dlv{x_2;z}\shv{\pm\bsdfpxt},
\]
while $\Dlv{x_2;z}\otsz\ctmvr{\brr}{z,\bfx'',\bfy'} \cong \ctmvr{\brr}{\bfx',\bfy'}$, hence
\[
\ctmvbxy{\brrpmo}\otdruxxpy\Dlxy \eqsmout
\bbrs{\ctmvr{\brr}{\bfx',\bfy'}\otdruxpyp\Dlxyp}\shv{\pm\bsdfpxt}
\]
Substituting this relation to \ex{eq:lbmpo} we find
\[
\begin{split}
\lctdmv{\brrpmo}
&\eqsmout
\Ffukxl\Bbrs{\HmWxp\bbrs{\ctmvr{\brr}{\bfx',\bfy'}\otdruxpyp\Dlxyp}\shv{\pm\bsdfpxt} }
\\
&\cong
\Ffukxl\Bbrs{\HmWxp\bbrs{\ctmvr{\brr}{\bfx',\bfy'}\otdruxpyp\Dlxyp} }\shv{\pm\bsdfpli}
\cong
\lctdmv{\brr}\shv{\pm\bsdfpxt}.
\end{split}
\]
\end{proof}

\subsubsection{Proof of Lemma~\ref{lm:smmtw}}
According to Theorem~\ref{th:drortp}, in order to calculate the derived tensor product in the \lhs of \ex{eq:frdm} we can use a \weqxyop\ Koszul resolution of the $\uWpxyo$-module $\Dlxyo$:
\begin{equation}
\label{eq:kzarc}
\rsPb(\Dlxyo) = \bsbrs{
\xymatrix@C=1.5cm{
\mcnbvv{\mQxyo}{x_1}{y_1}
\ar[r]^-{y_1-x_1}
&
\mQxyo
}
}
\end{equation}

For any $\uWpxy$-module $M$ the tensor product
$M\otxyo\mcnbv{\Qxyo}{\bfa}$, $\bfa\in\Qbxy$ is isomorphic to $M\shv{\bfa} $ considered as a $\uWpxyt$-module, that is, the tensor product functor
\[
\xymatrix@C=4cm{
\ctWxy\ar[r]^{\xdmm\otxyo\mcnbv{\mQxyo}{\bfa}}
&\ctWxyt
}
\]
is isomorphic to the functor of shifting the $\uWp$ connection by $\bfa$ and forgetting the $\Qxyo$-module structure.
It is convenient to replace the forgotten variables $x_1$ and $y_1$ with new variables
\[
\frz = y_1 - x_2,\qquad\frw = y_1 - x_1.
\]
We present $\uWpxy$-modules $\mMpr$ and $\mMcr$ as quotients
\[
\mMpr \cong \mQxytzw/(\spdf,\frw),\qquad
\mMcr \cong \mQxytzw/\bbrs{\spdf,\frz\frw},
\]
where $\spdf = \frw+\ytmxt$. Taking the quotient over $\spdf$ in these expressions implies that when $\Qv{\frw}$-module structure is forgotten, the variable $\frw$ can be eliminated from module generators with the help of the relation $\frw = x_2 - y_2$ in the quotient. Hence
\begin{equation}
\label{eq:mdl}
\begin{aligned}
\mMpr \otxyo\mQxyo &\cong  \mQxytz/(\ytmxt)\cong
\Dlxytsz,
\\
\mMcr\otxyo\mQxyo &\cong \tmQxytz,
\end{aligned}
\end{equation}
where we used shortcut notations
\[
\tmQxytz = \mQxytz/\bbrs{(\ytmxt)\frz},\qquad \Dlxytsz=\Dlxytsoz
\]
and generators $\xLm$ of $\uWp$ have the \stact\ on $x_2$ and $y_2$, while
\begin{equation}
\label{eq:lmzac}
\xLm\,\frz
=\bsdfz\,\frz,\qquad\text{
$\bsdfz = \bsdfv{x_2}{y_1}=\bsdfv{x_2}{x_2+\frz}$}.
\end{equation}

$\Qfrz$ splits as a $\IQ$-vector space: $\Qfrz = \IQ \oplus \zQfrz$,
and there is a canonical isomorphism of $\Qfrz$-modules $\zQfrz\cong\Qfrz$. The $\uWpxyt$-modules~\eqref{eq:mdl} split accordingly:
\begin{align*}
\mMpr \otxyo\mQxyo &\cong \Dlxytsz \cong \Dlxyt \oplus \mcnbv{\Dlxytsz}{\bsdfz},
\\
\mMcr\otxyo\mQxyo &\cong  \tmQxytz\cong \mQxyt \oplus \mcnbv{\Dlxytsz}{\bsdfz},
\end{align*}
the presence of connection $\bsdfz$ being due to~\eqref{eq:lmzac}.
If the $\uWp$ action on the \lhs modules is modified by a connection $\bfa$, whose elements $a_m$ depend on $z$, then the splitting remains only at the level of $\Qxyt$-modules, while $\uWpxyt$-modules form non-split exact sequences
\begin{gather}
\label{eq:fexseq}
\xymatrix{
0 \ar[r] &
\mcnbv{\Dlxytsz}{\bfa+\bsdfz}
\ar[r]^-{\frz}
&
\mcnbv{\Dlxytsz}{\bfa}
\ar[r]^-{1}
&
\mcnbv{\Dlxyt}{\bfa\zez}\ar[r]
&
0,
}
\\
\label{eq:sexseq}
\xymatrix{
0 \ar[r] &
\mcnbv{\Dlxytsz}{\bfa+\bsdfz}
\ar[r]^-{\frz}
&
\mcnbv{ \tmQxytz }{\bfa}
\ar[r]^-{1}
&
\mcnbv{\mQxyt}{\bfa\zez}
\ar[r]
&
0.
}
\end{gather}

%
Consider tensor products of the modules $\mMpr$ and
$\mMcr$ with the Koszul resolution~\eqref{eq:kzarc}. We use shortcut notations
\begin{align*}
\bfsdf & = \bsdfv{x_1}{x_2} = \bsdfv{x_2}{y_2+\frz},
\\
\xcnapro & =\bsdfv{y_1}{y_1}= \bsdfpxtz,
\\
\xcnacro & = \bsdfv{x_1}{y_1} = \bsdfv{x_2+z}{y_2+z},
\\
\xcnalno & =\bsdfv{x_2}{y_2}.
\end{align*}
Since
\begin{gather*}
y_1-x_1=0,\quad
\xcnacro = \xcnapro
\qquad\mod (\spdf,\frw),
\\
y_1 - x_1 = x_2 - y_2\quad\mod \spdf,
\end{gather*}
then in view of presentations~\eqref{eq:mdl} we find
\begin{align}
\mMpr \otxyo \rsPb(\Dlxyo) & \cong
\bsbrs{
\xymatrix{ \mcnbv{\Dlxytsz}{
\xcnapro
}
\ar[r]^-{0} &
\Dlxytsz
}
},
\\
\mMcr\otxyo \rsPb(\Dlxyo)&\cong
\bsbrs{
\xymatrix@C=1.5cm{
\mcnbv{\tmQxytz}{\xcnacro}
\ar[r]^-{y_2-x_2}
&
\tmQxytz
}
}.
\end{align}

Now we are ready to establish the \qiso s~\eqref{eq:frdm}.
\begin{proof}[Proof of \qiso~\eqref{eq:frdm} for $\sggn$]
Exact sequence~\eqref{eq:sexseq} of $\uWpxyt$-modules implies short exact sequence in the category of chain complexes $\ctCh(\uWpxyt\sctgmod)$ represented by the first column of the diagram (we omitted zeroes at the top and bottom):
\begin{equation}
\label{eq:extra}
\mvcn{
\xymatrix@R=1.2cm{
\bbrs{\mMpr \otxyo \rsPb(\Dlxyo)}
\shv{\bsdfz}
\ar[d]^-{\hchipo\otimes\xId= \frz}
\ar@{}[r]|(0.55){\cong}
&
\bigl[
\mcnbv{\Dlxytsz}{
\xcnapro + \bsdfz
}
\ar[d]^-{\frz}
\ar[r]^-{0} &
\mcnbv{\Dlxytsz}{\bsdfz}
\ar[d]^-{\frz}
\bigr]
\\
\mMcr\otxyo \rsPb(\Dlxyo)
\ar@{}[r]|(0.55){\cong}
\ar[d]^{\spmf}
&
\bigl[\mcnbv{\tmQxytz}{\xcnacro}
\ar[r]^-{y_2-x_2}
\ar[d]^-{1}
&
\tmQxytz
\ar[d]^-{1}
\bigr]
\\
\aAb
\ar@{}[r]|(0.55){=}
&
\bigl[\mcnbv{\Qxyt}{\xcnalno}
\ar[r]^-{y_2 - x_2}
&
\Qxyt\bigr],
}
}
\end{equation}
where $\aAb$ denotes the complex at the bottom right, $\spmf$ is a quotient homomorphism
and we used the relation $\xcnacro\zez =\xcnalno$ in the middle of the bottom row.

According to definition~\eqref{eq:bcmps} and due to relation
\[
\bfsdf = \bsdfz\quad\mod y_1-x_1,\qquad
\]
the complex in the \lhs of the \qiso~\eqref{eq:frdm}
corresponding to $\sggn$
has a presentation as the cone of the morphism $\hchipo\otimes\xId$ in the category $\ctDr(\ctWxyt)$:
\[
\ctmvbxy{ \sggn } \otdrxyo\Dlxyo
\cong
\boxed{
\xymatrix@C=3pc{
\bbrs{\mMpr \otxyo \rsPb(\Dlxyo)}
\shv{\bsdfz}
\ar[r]^-{\hchipo\otimes\xId}
&
\mMcr\otxyo \rsPb(\Dlxyo)
}
}\;\shv{-\bfsdf}.
\]

Since $\spmf\circ(\hchipo\otimes\xId)=0$ in the diagram~\eqref{eq:extra}, and due to relation
$\bfsdf\zez=\xcnalno$
there is a morphism in $\ctCh(\ctWxyt)$:
\begin{equation}
\label{eq:qmor}
\mvcn{
\xymatrix@C=3pc@R=2.5pc{
\bbrs{\mMpr \otxyo \rsPb(\Dlxyo)}
\shv{\bsdfz}
\ar[r]^-{\hchipo\otimes\xId}
&
\mMcr\otxyo \rsPb(\Dlxyo)
\save[]+<2.8cm,0cm>*\txt<8pc>{$\shv{-\bfsdf}$}
\restore
\\
&
\aAb\;\shv{- \xcnalno}
\save "1,2". "1,1"
*[F]\frm{}
\ar"2,2"^-{\spmf}
\restore
}
}
\end{equation}
The exact sequence~\eqref{eq:extra} splits if we forget the $\uWp$-module structure, that is, it splits in the category $\ctCh(\Qxyt\sctgmod)$. Hence the morphism~\eqref{eq:qmor} is a
homotopy equivalence in $\ctCh(\Qxyt\sctgmod)$ and, as a consequence, it is a
\qiso\ in $\ctCh(\Qxyt\sctgmod)$, so there is a \qiso
\begin{equation*}
\ctmvbxy{ \sggn } \otdrxyo\Dlxyo\eqsmout\aAb\shv{- \xcnalno}.
\end{equation*}
Now the \qiso~\eqref{eq:frdm} follows from the \xqiso\ in $\ctWxyt$
\[
\aAb\shv{-\xcnalno}\simeq\Dlxyt\shv{-\bsdfpxt}
\]
which is similar to \eqref{eq:kzarc}.
\end{proof}
\begin{proof}[Proof of \qiso~\eqref{eq:frdm} for $\sggn^{-1}$]
The exact sequence~\eqref{eq:fexseq} of $\uWpxyt$-modules leads to a short exact sequence
in the category of chain complexes $\ctCh(\uWpxyt\sctgmod)$ which is similar to~\eqref{eq:extra}:
\begin{equation}
\label{eq:extrtw}
\vcenter{
\xymatrix@R=1.2cm{
\bbrs{\mMpr \otxyo \rsPb(\Dlxyo)}
\shv{\bsdfz}
\ar[d]^-{\frz}
\ar@{}[r]|(0.55){\cong}
&
\bigl[
\mcnbv{\Dlxytsz}{
\xcnapro + \bsdfz
}
\ar[d]^-{\frz}
\ar[r]^-{0} &
\mcnbv{\Dlxytsz}{\bsdfz}
\ar[d]^-{\frz}
\bigr]
\\
\mMpr\otxyo \rsPb(\Dlxyo)
\ar@{}[r]|(0.55){\cong}
\ar[d]^{\spmg}
&
\bigl[\mcnbv{\Dlxytsz}{\xcnapro}
\ar[r]^-{0}
\ar[d]^-{1}
&
\Dlxytsz
\ar[d]^-{1}
\bigr]
\\
\aBb
\ar@{}[r]|(0.55){=}
&
\bigl[\mcnbv{\Dlxyt}{\bsdfpxt}
\ar[r]^-{0}
&
\Dlxyt\bigr],
}
}
\end{equation}
where $\aBb$ denotes the complex at the bottom right, $\spmg$ is a quotient homomorphism and we used a relation $\xcnapro\zez=\bsdfpxt$.
Combining short exact sequences~\eqref{eq:extra} and~\eqref{eq:extrtw} we construct a chain of morphisms in the category $\ctCh(\ctWxyt)$:
\begin{equation}
\label{eq:bgdga}
\vcenter{
\xymatrix@R=3pc@C=5pc{
\bbrs{\mMpr \otxyo \rsPb(\Dlxyo)}
\shv{\bsdfz}
\ar[d]^-{\xId}
\ar[r]^-{\frz}
&
\mMcr\otxyo \rsPb(\Dlxyo)
\ar[d]^-{\hchipo\otimes\xId=1}
\ar[r]^-{\spmf}
&
\aAb
\ar[d]^-{\spmh}
\\
\bbrs{\mMpr \otxyo \rsPb(\Dlxyo)}
\shv{\bsdfz}
\ar[r]^-{\frz}
&
\mMpr \otxyo \rsPb(\Dlxyo)
\ar[r]^-{\spmg}
&
\aBb
\save
"1,1". "2,1"
*[F]\frm{}
\restore
\save
"1,2". "2,2"
*[F]\frm{}
\restore
\save
"1,3". "2,3"
*[F]\frm{}
\restore
}
}
\end{equation}
where the chain map $\spmh$ is defined by the diagram
\begin{equation}
\label{eq:dgrdh}
\mvcn{
\xymatrix@R=1.2cm{
\aAb
\ar@{}[r]|(0.3){=}
\ar[d]^{\spmh}
&
\bigl[\mcnbv{\Qxyt}{\xcnalno}
\ar[r]^-{y_2-x_2}
\ar[d]^-{1}
&
\Qxyt
\ar[d]^-{1}
\bigr]
\\
\aBb
\ar@{}[r]|(0.3){=}
&
\bigl[\mcnbv{\Dlxyt}{\bsdfpxt}
\ar[r]^-{0}
&
\Dlxyt\bigr].
}
}
\end{equation}
Exact sequences of modules in the rows of the diagram~\eqref{eq:bgdga} split if we forget the $\uWp$-module structure, and the complex in the first column is contractible. Hence, according to Theorem~\ref{pr:qsbc}, the second and third columns are \qisc\ in $\ctCh(\ctWxyt)$. Since the second column is isomorphic to $\ctmvbxy{ \sggnpm } \otdrxyo\Dlxyo$, we have a \qiso
\begin{equation}
\label{eq:lstq}
\ctmvbxy{ \sggnpm } \otdrxyo\Dlxyo \eqsmout
\boxed{
\xymatrix{
\aAb\ar[r]^-{\spmh} & \aBb
}
}.
\end{equation}

Diagram~\eqref{eq:dgrdh} shows that the complex in the \rhs of \qiso~\eqref{eq:lstq} can be presented as a cone in $\ctCh(\ctWxyt)$:
\begin{equation}
\label{eq:qstwcn}
\boxed{
\xymatrix{
\aAb\ar[r]^-{\spmh} & \aBb
}
}
\eqsmout
\boxed{
\xymatrix{
\aDb \ar[r] & \mcnbv{\Dlxyt}{\bsdfpxt}
}
},
\end{equation}
where
\[
\aDb =
\boxed{
\vcenter{
\xymatrix@C=2.5pc{
\bigl[\mcnbv{\Qxyt}{\xcnalno}
\ar[r]^-{y_2-x_2}
&
\Qxyt
\ar[d]^-{1}
\bigr]
\\
&
\Dlxyt
}
}
}
\]
The latter complex is contractible, because its upper row is \xqisc\ to $\Dlxyt$ in $\ctWxyt$, so
\[
\aDb\eqsmout
\boxed{
\xymatrix{
\Dlxyt \ar[r]^-{\xId} &
\Dlxyt
}
}.
\]
Hence the complex in the \rhs of~\eqref{eq:qstwcn} is \qisc\ to $\mcnbv{\Dlxyt}{\bsdfpxt}$ and together with~\eqref{eq:lstq} this proves the \qiso~\eqref{eq:frdm} for $\sggn^{-1}$.

\end{proof}

\begin{bibdiv}
\begin{biblist}
\bib{dcr}{webpage}
{
title={Stacks Project, Chapter 11}
chapter={11}
subtitle={Derived categories}
myurl={http://stacks.math.columbia.edu}
}


\bib{BS}{article}
{
author={Batson, Joshua}
author={Seed, Cotton}
title={A link splitting spectral sequence in Khovanov homology}
eprint={arXiv:math.QA/1303.6240}
}

\bib{BC}{article}
{
author={Beliakova, Anna}
author={Cooper, Benjamin}
title={Steenrod structures on categorified quantum groups}
eprint={arXiv:math.QA/1304.7152}
}

\bib{DGR}{article}
{
author={Dunfield, Nathan M.}
author={Gukov, Sergei}
author={Rasmussen, Jacob}
title={The superpolynomial for knot homologies}
journal={Experimental Mathematics}
volume={15}
number={2}
year={2006}
pages={129-160}
eprint={arXiv:math.GT/0505662}
}

\bib{QYE}{article}
{
author={Elias, Ben}
author={Qi, You}
title={An approach to categorification of some small quantum groups II}
eprint={arXiv:math.QA/1302.5478}
}

\bib{Go}{article}
{
author={Gorsky, Eugene}
title={$q,t$-Catalan numbers and knot homology}
eprint={arXiv:math.AG/1003.0916}
year={2010}
}

\bib{GOR}{article}
{
author={Gorsky, Eugene}
author={Oblomkov, Alexei}
author={Rasmussen, Jacob}
title={On stable Khovanov homology of torus knots}
eprint={arXiv:math.GT/1206.2226}
}

\bib{HOMFLY}{article}
{
author={Freyd, P.}
author={Hoste, J.}
author={Lickorish, W.B.R.}
author={Millet, K.}
author={Ocneanu, A.}
author={Yetter, D.}
title={A new polynomial invariant of knots and links}
journal={Bulletin of the American Mathematical Society}
volume={12}
year={1985}
pages={239-246}
}

\bib{Khtgd}{article}
{
author={Khovanov, Mikhail}
title={Triply-graded link homology and Hochschild homology of Soergel bimodules}
journal={International journal of mathematics}
volume={18}
number={8}
year={2007}
pages={869-885}
eprint={arXiv:math.GT/0510265 }
}

\bib{KR2}{article}
{
author={Khovanov, Mikhail}
author={Rozansky, Lev}
title={Matrix factorizations and link homology II}
journal={Geometry and Topology}
volume={12}
year={2008}
number={3}
pages={1387-1425}
eprint={arXiv:math.QA/0505056}
}

\bib{KQY}{article}
{
author={Khovanov, Mikhail}
author={Qi, You}
title={An approach to categorification of some small quantum groups}
eprint={arXiv:math.QA/1208.0616}
}

\bib{Kit}{webpage}
{
author={Kitchloo, Nitya}
title={Cohomology operations and the nil-Hecke algebra}
date ={preprint 2013}
myurl={http://www.math.jhu.edu/~nitu/papers/NH.pdf}
}

\bib{KKP}{article}
{
author={Kriz, Daniel}
author={Kriz, Igor}
author={Hu, Po}
title={Field theories, stable homotopy theory and Khovanov homology}
eprint={arXiv:math.GT/1203.4773}
}

\bib{LS1}{article}
{
author={Lipshitz, Robert}
author={Sarkar, Sucharit}
title={A Khovanov homotopy type}
eprint={arXiv:math.GT/1112.3932}
}

\bib{LS2}{article}
{
author={Lipshitz, Robert}
author={Sarkar, Sucharit}
title={A Steenrod square on Khovanov homology}
eprint={arXiv:math.GT/1204.5776}
}

\bib{PT}{article}
{
author={Przytycki, Jozef H.}
author={Traczyk, Pawel}
title={Conway algebras and skein equivalence of links}
journal={Proceedings of the American Mathematical Society}
volume={100}
year={1987}
pages={744-748}
}

\bib{Rou}{article}
{
author={Rouquier, Rapha\"{e}l}
title={Categorification of the braid group}
eprint={arXiv:math.RT/0409593}
}


\bib{Rousl2}{article}
{
author={Rouquier, Rapha\"{e}l}
title={Categorification of $\mathfrak{sl}_2$ and braid groups}
conference={
title={Trends in representation theory of algebras and related topics}
pages={137-167}
}
book={
series={Contemporary Mathematics}
volume={406}
year={2006}
publisher={American Mathematical Society}
address={Providence, RI}
}
}


\bib{Seed}{article}
{
author={Seed, Cotton}
title={Computations of the Lipshitz-Sarkar Steenrod square on Khovanov homology}
eprint={arXiv:GT/1210.1882}
}

\bib{Sgl1}{article}
{
author={Soergel, Wolfgang}
title={The combinatorics of Harish-Chandra bimodules}
journal={Journal f\"{u}r die reine und angewandte Mathematik}
volume={429}
year={1992}
pages={49-74}
}

\bib{Sgl2}{article}
{
author={Soergel, Wolfgang},
title={Gradings on representation categories},
conference={
title={Proceedings of the International Congress of Mathematicians}
address={Z\"{u}rich}
date={1994}
},
book={
volume={1,2}
pages={800-806}
publisher={Birkh\"{a}user}
address={Basel}
date={1995}
}
}

\bib{Web1}{article}
{
author={Webster, Benjamin}
title={Knot invariants and higher representation theory I: diagrammatic and geometric categorification of tensor products}
eprint={arXiv:math.GT/1001.2020}
}

\bib{Web2}{article}
{
author={Webster, Benjamin}
title={Knot invariants and higher representation theory II: the categorification of quantum knot invariants}
eprint={arXiv:math.GT/1005.4559}
}

\bib{Weibel}{book}
{
author={Weibel, Charles}
title={An introduction to homological algebra}
publisher={Cambridge university press, UK}
series={Cambridge studies in advanced mathematics}
volume={38}
year={1994}
}


\end{biblist}
\end{bibdiv}

\end{document}

The module $\Qv{y}$ has a submodule $y\Qv{y}$ generated by $y^i$, $i\geq 1$. Accordingly, the $\uWpxyt$-module $\mcnbv{\Dlxyt\otimes\Qv{y}}{\spyom}$ has a submodule
$\mcnbv{\Dlxyt\otimes y\Qv{y}}{\bsdfxtyo+\spyom}$, the quotient being $\mcnbv{\Dlxyt}{\spxtm}$,
and the complex in the \rhs of \ex{eq:bcmo} has a contractible subcomplex
\[
\boxed{
\xymatrix{
\mcnbv{\Dlxyt\otimes\Qv{y}}{\bsdfxtyo+\spyom} \ar[r]^-{1}
&
\mcnbv{\Dlxyt\otimes\Qv{y}}{\bsdfxtyo+\spyom}
}
}
\]
If we forget the $\aWp$-module structure, then the complex in the \rhs of \ex{eq:bcmo} splits, hence by Theorem~\ref{pr:qsbc} we have a \qiso
\[
\aBb \eqsmout \mcnbv{\Dlxyt}{\spxtm}.
\]

\begin{multline*}
\ctmvbxy{ \sggni } \otdrxyo\Dlxyo
\\
\eqsmout
\boxed{
\xymatrix@C=2.5cm{
\bbrs{\Qv{x_2,y_2,y,\bfth},\xbcdfcr,\dfcr}
\ar[r]^-{1 + (y-1)\theta_2\prtht}
&
\bbrs{\Qv{x_2,y_2,y,\bfth},\xbcdfpr,\dfpr}
}
}
\end{multline*}

As $\Qbxy$-modules, the $\aWp$-equivariant Koszul resolutions $\mPpr$ and $\mPcr$ have the same form:
\[
\mPpr \cong \mPcr \cong \mQxy\otimes\Qv{\bfth},
\]
where $\bfth=\theta_1,\theta_2$ are odd variables, while the Koszul differentials~\eqref{eq:kszdf} are
\begin{equation}
\label{eq:dfkr_3}
\dfpr = \spdf\prtho + (\yomo)\prtht,\qquad
\dfcr = \spdf\prtho + \yomot\prtht.
\end{equation}
The action of the generators $\xLm$ of $\aWp$ on the resolutions has the form~\eqref{eq:dfkr}:
\begin{equation}
\label{eq:derkr}
\xcdfprm = \xcdfzm + \sum_{i,j=1,2}\ycnamij\theta_j\partial_{\theta_i},\qquad
\xcdfcrm = \xcdfzm + \sum_{i,j=1,2}\ycnbmij\theta_j\partial_{\theta_i},
\end{equation}
where $\xcdfzm$ represents the \stact\ of $\aWp$ on $\Qbxy$:
\[
\xcdfzm = \sum_{i=1,2}(x_i^{m+1}\partial_{x_i} + y_i^{m+1}\partial_{y_i}),
\]
while the coefficients $\ycnamij$ and $\ycnbmij$ are expressed in terms of divided differences
\begin{equation}
\label{eq:cfsa}
\begin{aligned}
\ycnamoo & = \sdfmv{x_2}{y_2} - \sdfmv{y_1}{y_2}+\sdfmv{x_1}{y_2},
&&&&&&&
\ycnamot & = (y_1 - x_2)\, \ycnbmot
\\
\ycnamto &= 0,
&&&&&&&
\ycnamtt & = \sdfmv{x_1}{y_1}
\end{aligned}
\end{equation}
\begin{equation}
\label{eq:cfsb}
\begin{aligned}
\ycnbmoo &= \ycnamoo,
&
\ycnbmot & = \ssdfmov{x_1}{y_1}{y_2} + \ssdfmov{x_2}{y_1}{y_2}
\\
\ycnbmto & = 0,
&
\ycnbmtt &= \sdfmv{x_1}{y_1} + \sdfmv{x_2}{y_1},
\end{aligned}
\end{equation}
and we define $\ssdfmv{x}{y}{z}$ as a divided difference
\[
\ssdfmv{x}{y}{z} = \frac{\sdfvv{m+1}{x}{z}-\sdfvv{m+1}{y}{z}}{x-y} =
\sum_{i,j,k\atop i + j + k = m}x^i y^j z^k.
\]

The action of the homomorphisms~\eqref{eq:hmsmds} on the resolutions is presented by the formulas
\begin{equation}
\label{eq:hmsres}
\hchine = 1 + (\yomt-1)\theta_2\prtht,\qquad
\hchipo = \yomt + (1 - \yomt)\theta_2\prtht
\end{equation}

A direct calculation shows that the derivations~\eqref{eq:derkr} with coefficients~\eqref{eq:cfsa} and~\eqref{eq:cfsb} satisfy the commutation relations~\eqref{eq:cmrl} and commute with the differentials~\eqref{eq:dfkr} and with the homomorphisms~\eqref{eq:hmsres}.

%
%
%
%

The use of \Wpeq\ resolutions allows us to replace the derived tensor product in the \rhs of \ex{eq:frdm} with the ordinary one. Since $\Qbxy\otv{x_1,y_1}\Dlv{x_1,y_1}\cong \Qbxy/(y_1-x_1)\cong\Qv{y_1,x_2,y_2}$, the impact of the tensor product with $\Dlv{x_1, y_1}$ is to replace $x_1$ with $y_1$ in all the formulas~\eqref{eq:cfsa}--\eqref{eq:hmsres} and `forget' about the $y_1$ action. It is convenient to replace $y_1$ with another forgotten variable $y = y_1 - x_2$. Thus the derived tensor products within the \rhs of \ex{eq:frdm} take the form
\[
\mPpr\otv{x_1,y_1}\Dlv{x_1,y_1} \cong \mPcr \otv{x_1,y_1}\Dlv{x_1,y_1} \cong \Qv{x_2,y_2,y,\bfth}
\]
while differentials~\eqref{eq:dfkr} become
\[
\dfpr = \dfcr=(y_2 - x_2)\prtho,
\]
the coefficients of the derivations~\eqref{eq:cfsa} and~\eqref{eq:cfsb} become
\begin{align*}
\ycnamoo &= \sdfmv{x_2}{y_2}, &\ycnamtt & = (m+1)(y+x_2)^m, &
\ycnamto = 0,
\\
\ycnbmoo &=\sdfmv{x_2}{y_2}, &\ycnbmtt &= (m+1)(y+x_2)^m + \sdfmv{x_2}{y+x_2}
\end{align*}
(the value of  and is unimportant)
and the homomorphisms~\eqref{eq:hmsres} become
\[
\hchine = 1 + (y-1)\theta_2\prtht,\qquad
\hchipo = y + (1 -y)\theta_2\prtht.
\]

Now we consider the cases of $\sggn$ and $\sggni$ in \ex{eq:frdm} separately. We begin with $\sggni$:
\begin{multline*}
\ctmvbxy{ \sggni } \otdrxyo\Dlxyo
\\
\eqsmout
\boxed{
\xymatrix@C=2.5cm{
\bbrs{\Qv{x_2,y_2,y,\bfth},\xbcdfcr,\dfcr}
\ar[r]^-{1 + (y-1)\theta_2\prtht}
&
\bbrs{\Qv{x_2,y_2,y,\bfth},\xbcdfpr,\dfpr}
}
}
\end{multline*}
The complex in the \rhs has a subcomplex
\[
\aBb =
\boxed{
\xymatrix@C=2.5cm{
\bbrs{\theta_2\Qv{x_2,y_2,y,\theta_1},\xbcdfcr,\dfcr}
\ar[r]^-{y}
&
\bbrs{\theta_2\Qv{x_2,y_2,y,\theta_1},\xbcdfpr,\dfpr}
}
}
\]

****************************
%
%

\end{document}

by complexes of $\uWpXxy$ \Sglb s

\subsection{Elementary \Sglb s and the map $\actmdvbxy{\dmmy}$}

\subsubsection{General construction}
We are going to define the homomorphism $\actmdvbxy{\dmmy}\colon\brwgrn \rightarrow \ctKom\bigl(\ctXWxy\bigr)$ which converts the product of \brwd s into the left partial derived tensor product: $\actmdvbxy{\brw_1\brw_2 } \eqsmout \actmdvbxy{\brw_1} \otdrly \actmdvbxy{\brw_2}$.
We will define the left and right versions of the bracket $\actmdvbxy{\dmmy}$ in such a way that both brackets are left (resp. right) \vbrass:
\[
\alctmvbxy{\dmmy} = \ctmvbxy{\dmmy}\otdrx\bigl(\Qv{\bfvbr,\bfx}/(\bfvbr-\bfx)\bigr),\qquad
\arctmvbxy{\dmmy} = \ctmvbxy{\dmmy}\otdry\bigl(\Qv{\bfvbr,\bfy}/(\bfvbr-\bfy)\bigr)
\]
where $\ctmvbxy{\dmmy}$

and use the left version as the definition of $\actmdvbxy{\dmmy}$:
\[
\actmdvbxy{\dmmy} = \alctmdvbxy{\dmmy}.
\]
However, both versions will be \qisc\ after a permutation of the generators $\bfvbr$ by the symmetric group element corresponding to the \brwd:

\vspace*{1in}
*****************

\section{Triply graded link homology of a \brwd\ closure}


We present a \Wpeq\ version of the \Sgl\ \bmdl\ construction~\cite{Khtgd} of the \tglh. In order to simplify notations we fix $m>0$ and consider the action of a single \drv\ $\xLm$. We will abbreviate the notation~\eqref{eq:dfpm}:
$\sdfv{x}{y} = \sdfmv{x}{y}$.
Since all connections for $\xLm$ will be of the form~\eqref{eq:dfpm}, then according to Corollary~\ref{cor:cnLm} the commutation relations
are preserved and at every important stage in our construction we will have fully \Wpeq\ modules, complexes and homology.

Sometimes for a module $\mM$ over an algebra $\Qbx$, $\bfx = x_1,\ldots,x_n$ we use a special notation $\mM_{\bfx}$ in order to emphasize the set of variables $\bfx$. Then $\mM_{\bfy}$ would denote the same module over the algebra $\Qby$.

\subsection{A complex of a \brwd\ and a homology of its closure}

For a fixed positive integer $\nst$ we consider a graded algebra $\Qbx$, $\bfx=x_1,\ldots,x_\nst$ with the \stact\ of $\aWp$. All variables $x_i$ have the same `\tdgq': $\dgq x_i = 2$ and the algebra $\aWp$ is also graded: $\dgq \xLm = 2m$.

The objects of the bounded derived category of \tgrddq\ \aWpeq\
$\Qbx$-\bmdl s $\ctDW(\Qbxy-\ctgmod)$ are graded by homological degree which is called `\tdga'. This category is additive and we consider the homotopy category of complexes over it:
\begin{equation}
\label{eq:mncate}
\tCatbxy = \ctKom\big(\ctDW(\Qbxy-\ctgmod)\big).
\end{equation}
This category has a triple grading: in addition to \tgrdq\ and \tgrda\ coming from $\ctDW(\Qbxy-\ctgmod)$  it has a \tgrdt\ which is the homological grading of the complexes of $\ctKom$.
There is a forgetful functor from this category to the similar category which ignores the \Wpeq\ structure of modules:
$\Ff\colon \tCatbxy\rightarrow \ctKom\big(\ctD(\Qbxy-\ctgmod)\big)$

Rather than a homotopy category of complexes~\eqref{eq:mncat}, we need a derived category over $\ctDW(\Qbxy-\ctgmod)$ with respect to the $\aWp$ action:
\begin{equation}
\label{eq:mncat}
\cCatbxy = \ctD\big( \ctDW(\Qbxy-\ctgmod) \big)
\end{equation}
Its objects are the same as those of $\tCatbxy$, but more pairs of objects are considered isomorphic. Namely, the isomorphism relation within $\tCatbxy$ is generated by \emph{\qhteq s}.
\begin{definition}
\label{df:qhmeq}
Two complexes $\aAb$ and $\aBb$ of the category $\cCatbxy$ are \qhtet: $\aAb\xqh \aBb$ if there is a  morphism $f\in\Hom_{\cCatbxy}(\aAb,\aBb)$ such that its cone is contractible within $\ctKom\big(\ctD(\Qbxy-\ctgmod)\big)$, that is,
\[
\Ff\left( \boxed{\aAb\xrightarrow{f}\aBb}\right)
\sim 0.
\]
\end{definition}
There is an equivalent `techical' definition:
\begin{definition}
Two complexes $\aAb$ and $\aBb$ of the category $\cCatbxy$ are \emph{\qhtet} if they are homotopy equivalent as objects of $\ctKom\big(\ctD(\Qbxy-\ctgmod)\big)$, that is $\Ff(\aAb) \sim \Ff(\aBb)$, and one of the homotopy equivalence chain maps $\aAb\xrightarrow{f} \aBb$ is \Wpeq, that is, it represents a morphism within $\tCatbxy$.
\end{definition}

We will use a particular case of \qhteq.
%
\begin{proposition}
\label{pr:qsbc_1}
Suppose that a $\ctKom$-complex $\aAb$ of \Wpeq\ $\Qbxy$-modules representing an object of $\tCatbxy$ has a \Wpeq\ subcomplex $\aBb\subset\aAb$. Let $\aCb$ be the quotient complex: $\aCb = \aAb/\aBb$. If after forgetting the \Wpeq\ structure each module of $\aAb$ splits: $\Ff(\aA^i) \cong \Ff(\aB^i) \oplus \Ff(\aC^i)$ (that is, $\Ff(\aAb)$ has a structure of a cone: $\Ff(\aAb) \cong \boxed{\Ff(\aCb)\rightarrow\Ff(\aBb)}$), then
\[
\aAb \xqh
\begin{cases}
\aCb,&\text{if $\aBb$ is contractible,}
\\
\aBb,&\text{if $\aCb$ is contractibe.}
\end{cases}
\]
\end{proposition}
The category $\cCatbxy$ has a monoidal structure coming from derived tensor product of \bmdl s:
\begin{equation}
\label{eq:bmtp}
\cCatbxy \times \cCatbyz \xrightarrow{\;\;\otdr_{\Qby}\;\;} \cCatbxz.
\end{equation}
In fact, all $\Qbxy$-modules appearing in this paper are semi-free, that is, they are free as $\Qbx$-modules and as $\Qby$-modules, therefore in our computations the derived tensor product may be replaced by an ordinary one.

A group of \brwd s with $\nst$ strands $\brwgrn$ is a free \smgr\ on $2\nst-2$ generators $\sggni$, $\sggxii$, $i = 1,\ldots,\nst-1$.
There is a sequence of two obvious surjections
$\xymatrix{\brwgrn \ar @{->>}[r]^-{\fbr}&\brgrn \ar @{->>}[r]^-{\fcl} &\lnksn},$
where $\brgrn$ is a braid group and $\lnksn$ is a set of oriented links which can be constructed as circular closures of $\nst$-strand braids. The map $\fbr$ turns $\sggni$ and $\sggxii$ into a braid group generator $\sggni$  and its inverse $\sggni^{-1}$, while $\fcl$ performs the circular closure of a braid.

The weak categorification procedure defines the remaining maps in the following commutative diagram:
\begin{equation}
\label{eq:mndiag}
\xymatrix@C=4pc@R=3pc{
\brwgrn \ar @{->>}[r]^-{\fbr} \ar[dr]_-{\ctmdv{\dmmy}}&
\brgrn \ar @{->>}[rr]^-{\fcl} \ar[d]^-{\ctmv{\dmmy}}&&
\lnksn \ar[d]^-{\Hml(\dmmy)}
\\
& \cCatbxy \ar[r]^-{\Hhom(\dmmy)}
& \ctKom(\aWpgmd) \ar[r]^-{\HmlK(\dmmy)}
&\aWpgmd
}
\end{equation}
where $\ctKom(\aWpgmd)$ is the homotopy category of graded $\aWp$-modules (over $\IQ$), $\Hhom(\dmmy)$ is the \Hchs\ homology within the category $\ctDW(\Qbxy-\ctgmod)$ and $\HmlK(\dmmy)$ is the homology within the homotopy category $\ctKom$.
The maps $\ctmdv{\dmmy}$ and $\ctmv{\dmmy}$ have to convert the composition in $\brwgrn$ and $\brgrn$ into the \bmdl\ tensor product~\eqref{eq:bmtp}

We will define the map $\ctmdv{\dmmy}$ and then show that the maps $\ctmv{\dmmy}$ and $\Hml(\dmmy)$ exist. The uniqueness of the latter maps follows from the surjectivity of $\fbr$ and $\fcl$.

**********************************************

\section{\Wpeq\ categorification of links}

It is sufficient to verify a local version of the second Markov move, which involves a partial circular closure of a braid with two strands. Let $\sggn$ denote the generator of the 2-strand braid group, $\bfx=x_1,x_2$ and $\bfy=y_1,y_2$.
We introduce a notation
\[
\shfmx = \sdfmv{x}{x} = (m+1)x^m.
\]
Since $\bshfxo = \bshfyo \mod y_1 - x_1$, we simply use notation $\bfshf=\bshfxo=\bshfyo$ in that case.
\begin{theorem}[Markov move IIa]
\label{th:mmtwa}
There is a \qhteq\ of complexes
\[
\ctmvbxy{ \sggn } \otdrxyt \mMarcxyt \xqh
\mcnbnbf{\mMarcxyo}.
\]
\end{theorem}

\begin{proof}
By definition~\eqref{eq:bcmps} of the complex of an elementary braid, we have to prove the following:
\begin{equation}
\label{eq:frdacn}
\boxed{
\xymatrix@C=1.5cm{
\mMprcxyo \ar[r]^-{\hchipo\otimes\xId} & \mcnnbp{\mMcrcxyo}
}
}
\xqh
\mcnbnbf{\mMarcxyo},
\end{equation}
where the complexes of \Wpeq\ $\Qv{x_1,y_1}$-modules in the \lhs are the derived tensor  products
\[
\mMprcxyo = \mMprrxy \otdrxyt  \mMarcxyt,\qquad
\mMcrcxyo = \mMcrrxy \otdrxyt  \mMarcxyt.
\]
The latter can be computed with the help of partial (that is, with respect to $\Qv{x_2,y_2}$-module structure) resolution of the left modules, which are presented in the  following diagram together with the action of the homomorphism $\hchipo$:
\begin{equation}
\label{eq:reshomo}
\xymatrix@R=1.25cm@C=0.5cm{
\aF(\mMprrxy)
\ar[d]_-{\hchipo}
\ar @{}[r]|(0.52){=}
&
\cBgbr{
\mcnbv{\mMgenrxy}{\xcnao}
\ar[rrr]^-{y_2 - x_2}
\ar[d]_-{1}
&&&
\mcnbv{\mMgenrxy}{\xcnaprz}
}
\ar[d]^-{y_2 - x_1}
\ar[rr]^-{1}
&&
\mMprrxy
\ar[d]^-{y_2 - x_1}
\\
\aF\mcnnbp{\mMcrrxy}
\ar @{}[r]|(0.52){=}
&
\cBgbr{
\mcnbv{\mMgenrxy}{\xcnao}
\ar[rrr]^-{(y_2 - x_1)(y_2 - x_2)}
&&&
\mcnbv{\mMgenrxy}{\xcnacrz}
}
\ar[rr]^-{1}
&&
\mcnnbp{\mMcrrxy}
}
\end{equation}
where the \cnn s of the resolution modules are
\[
\xcnacrz = - \bsdfxot,\qquad
\xcnaprz = \bsdfxoyt - \bsdfxot,\qquad
\xcnao = \bsdfxyt + \bsdfxoyt - \bsdfxot.
\]
The ordinary tensor multiplication $\otimes_{\Qv{x_2,y_2}}\mMarcxyt$ amounts to taking the quotient by $y_2 - x_2$. Since the resolution differentials in the diagram~\eqref{eq:reshomo} become zero upon taking this quotient, while
$\mMgenrxy/(y_2 - x_2) \cong \mMarcxyoy$, where we denote $y = y_2 - x_1$, then the cone in the \lhs of \ex{eq:frdacn} has a presentation
\[
\vcenter{
\boxed{
\xymatrix@R=1.5cm{
\mMprcxyo \ar[d]_-{\hchipo\otimes\xId}
\\
\mcnnbp{\mMcrcxyo}
}
}
}
\hspace{-4.75in}
\simeq
\vcenter{
\boxed{
\xymatrix@R=1.5cm{
\cBgbr{
\mcnbv{\mMarcxyoy}{\xcna}
\ar[d]_-{\xId}
\ar@{}[r]|{\oplus}
&
\mMarcxyoy
\ar[d]^-{\xoy}
}
\\
\cBgbr{
\mcnbv{\mMarcxyoy}{\xcna}
\ar@{}[r]|(0.4){\oplus}
&
\mcnbv{\mMarcxyoy}{-\bsdfv{x_1}{y+x_1}}
}
}
}
}
\]
where $\xcna = \bsdfv{y+x_1}{y+x_1}$.
The cone in the \lhs splits and the first summand contracts, hence
\begin{equation}
\label{eq:smcno}
\vcenter{
\boxed{
\xymatrix@R=1.5cm{
\mMprcxyo \ar[d]_-{\hchipo\otimes\xId}
\\
\mcnnbp{\mMcrcxyo}
}
}
}
\hspace{-4.75in}
\simeq
\vcenter{
\boxed{
\xymatrix@R=1.5cm{
\mMarcxyoy
\ar[d]^-{\xoy}
\\
\mcnbv{\mMarcxyoy}{-\bsdfxoyxo}
}
}
}
\end{equation}
Since $\xLm y = \bsdfxoyxo\,y$, the \Wpeq\ $\Qxyo$-module at the bottom of the \rhs has a submodule
\[
\mMarcxyos\cong\mMarcxyoy
\]
freely generated over $\Qxyo$ by the elements $1\otimes y^i$, $i\geq 1$, while the quotient is
\[
\mcnbv{\mMarcxyoy}{-\bsdfxoyxo}/\mMarcxyos \cong \mcnbpbf{\mMarcxyo}.
\]
If we forget the \Wpec\ structure, then the latter relation splits:
\begin{equation}
\label{eq:smspl}
\mMarcxyoy \cong \mMarcxyos \oplus \mMarcxyo,
\end{equation}
and the cone in the \rhs of \ex{eq:smcno} splits into a sum, the second summand of which is contractible:
\[
\vcenter{
\boxed{
\xymatrix@R=1.5cm{
\mMarcxyoy
\ar[d]^-{\xoy}
\\
\mMarcxyoy
}
}
}
\hspace{-5in}
\cong
\mMarcxyo \oplus
\vcenter{
\boxed{
\xymatrix@R=1.5cm{
\mMarcxyoy
\ar[d]^-{\xId}
\\
\mMarcxyos
}
}
}
\]
Hence by Proposition~\ref{pr:qsbc} there is a \qhteq
\[
\vcenter{
\boxed{
\xymatrix@R=1.5cm{
\mMarcxyoy
\ar[d]^-{\xoy}
\\
\mcnbv{\mMarcxyoy}{-\bsdfxoyxo}
}
}
}
\hspace{-3.5in}
\xqh
\mcnbnbf{\mMarcxyo},
\]
which together with \ex{eq:smcno} implies \ex{eq:frdacn}.
\end{proof}

\begin{theorem}[Markov move IIb]
There is a \qhteq\ of complexes
\begin{equation}
\label{eq:frdacni}
\ctmvbxy{ \sggi } \otdrxyt \mMarcxyt \xqh
\mcnbpbf{\mMarcxyo}.
\end{equation}
\end{theorem}
\begin{proof}
The proof is similar to that of Theorem~\ref{th:mmtwa}, so we will just sketch the details.
We have to prove the following \qhteq:
\begin{equation}
\label{eq:frdacnb}
\boxed{
\xymatrix@C=1.5cm{
\mMcrcxyo \ar[r]^-{\hchine\otimes\xId} & \mMprcxyo
}
}
\xqh
\mcnbpbf{\mMarcxyo}.
\end{equation}
The free resolutions of \Wpeq\ \bmdl s together with the homomorphism $\hchine$ are presented by the diagram
\begin{equation}
\label{eq:reshomt}
\xymatrix@R=1.25cm@C=0.5cm{
\aF(\mMcrrxy)
\ar[d]_-{\hchine}
\ar @{}[r]|(0.52){=}
&
\cBgbr{
\mcnbv{\mMgenrxy}{\xcnacro}
\ar[rrr]^-{(y_2-x_1)(y_2 - x_2)}
\ar[d]_-{y_2 - x_1}
&&&
\mMgenrxy
}
\ar[d]^-{1}
\ar[rr]^-{1}
&&
\mMcrrxy
\ar[d]^-{1}
\\
\aF(\mMprrxy)
\ar @{}[r]|(0.52){=}
&
\cBgbr{
\mcnbv{\mMgenrxy}{\xcnapro}
\ar[rrr]^-{y_2 - x_2}
&&&
\mMgenrxy
}
\ar[rr]^-{1}
&&
\mMprrxy
}
\end{equation}
where the \cnn s of the resolution modules are
\[
\xcnapro = \bsdfxyt,\qquad
\xcnacro = \bsdfxyt + \bsdfxoyt.
\]
Note that these connections are different from the ones in the free resolutions~\eqref{eq:reshomo}.

After a tensor multiplication $\otimes_{\Qv{x_2,y_2}}\mMarcxyt$ of the diagram~\eqref{eq:reshomt}, which amounts to taking a quotient by $y_2-x_2$, the cone of resolutions splits:
\begin{equation}
\label{eq:smcns}
\vcenter{
\boxed{
\xymatrix@R=1.5cm{
\mMcrcxyo \ar[d]_-{\hchine\otimes\xId}
\\
\mMprcxyo
}
}
}
\hspace{-5.25in}
\simeq
\vcenter{
\boxed{
\xymatrix@R=1.5cm{
\cBgbr{
\mcnbv{\mMarcxyoy}{\xcnbcro}
\ar[d]_-{\xoy}
\ar@{}[r]|(0.55){\oplus}
&
\mMarcxyoy
\ar[d]^-{\xId}
}
\\
\cBgbr{
\mcnbv{\mMarcxyoy}{\xcnbpro}
\ar@{}[r]|(0.55){\oplus}
&
\mMarcxyoy
}
}
}
}
\end{equation}
where
\[
\xcnbpro = \bshfxoy,\qquad
\xcnbcro = \bshfxoy + \bsdfxoyxo
\]
The second column in \rhs of the diagram contracts. The lower module in the first column has a $\aWp$-invariant submodule
\[
\mcnbv{\mMarcxyos}{\xcnbcro}\subset \mcnbv{\mMarcxyoy}{\xcnbpro},
\]
such that
\[
\mcnbv{\mMarcxyoy}{\xcnbpro}/\mcnbv{\mMarcxyos}{\xcnbcro} \cong
\mcnbpbf{\mMarcxyo}
\]
and if we forget the \Wpec\ structure, then there is a splitting~\eqref{eq:smspl}.
Hence by Proposition~\ref{pr:qsbc} there is a \qhteq
\[
\vcenter{
\boxed{
\xymatrix@R=1.5cm{
\mcnbv{\mMarcxyoy}{\xcnbcro}
\ar[d]_-{\xoy}
\\
\mcnbv{\mMarcxyoy}{\xcnbpro}
}
}
}
\hspace{-4.5in}
\xqh
\mcnbpbf{\mMarcxyo},
\]
which together with \ex{eq:smcns} implies \ex{eq:frdacni}.
\end{proof}

%
%
%
%
%
%
%
%
%


\begin{bibdiv}
\begin{biblist}

\bib{Khtgd}{article}
{
author={Khovanov, Mikhail}
title={Triply-graded link homology and Hochschild homology of Soergel bimodules}
journal={International journal of mathematics}
volume={18}
number={8}
year={2007}
pages={869-885}
eprint={math.GT/0510265 }
}

\end{biblist}
\end{bibdiv}

\end{document}

Each crossing of a link diagram $\xD$ can be `spliced' in two ways, we call them \Asplng\ and \Bsplng:
\def\zpsh{1}
%
\[
\begin{tikzpicture}[menvthree,commutative diagrams/every diagram]
\node (ul) at ( -2,-0.5){};
\node (dl) at (-6,-2){};
\path[commutative diagrams/.cd, every arrow, every label]
(ul) edge[commutative diagrams/squiggly] node[pos=0.4,swap] {A} (dl);
\node (ur) at ( 2,-0.5){};
\node (dr) at (6,-2){};
\path[commutative diagrams/.cd, every arrow, every label]
(ur) edge[commutative diagrams/squiggly] node[pos=0.4] {B} (dr);
\begin{scope}
\draw (-\zpsh,\zpsh) -- (\zpsh,-\zpsh);
\draw [lnovr] (-\zpsh,-\zpsh) -- (\zpsh,\zpsh);
\draw (-\zpsh,-\zpsh) -- (\zpsh,\zpsh);
\end{scope}
\begin{scope}[xshift=-8cm,yshift=-3cm,rotate=90]
\draw(-\zpsh,-\zpsh) to [out=45,in=-180] (0,-\zpsh*0.5) to [out=0,in=135] (\zpsh,-\zpsh);
\draw(-\zpsh,\zpsh) to [out=-45,in=-180] (0,\zpsh*0.5) to [out=0,in=-135] (\zpsh,\zpsh);
\end{scope}
\begin{scope}[xshift=8cm,yshift=-3cm]
\draw(-\zpsh,-\zpsh) to [out=45,in=-180] (0,-\zpsh*0.5) to [out=0,in=135] (\zpsh,-\zpsh);
\draw(-\zpsh,\zpsh) to [out=-45,in=-180] (0,\zpsh*0.5) to [out=0,in=-135] (\zpsh,\zpsh);
\end{scope}
\end{tikzpicture}
\]

Let $\sBD$ denote the diagram which consists of two parts: the circles resulting from \Bsplng\ of all crossings of $\xD$ (\tBcr s) and segments connecting those circles at places where crossings were in $\xD$ (\tstrt s). Schematically, one passes from $\xD$ to $\sBD$ in the following way:
\def\zpsh{1}
\[
\begin{tikzpicture}[menvthree]
\node (l) at (-2.2,0) {};
\node (r) at (2.2,0) {};
\path[commutative diagrams/.cd, every arrow, every label]
(l) edge[commutative diagrams/squiggly] (r);
\begin{scope}[xshift=-4cm]
\draw (-\zpsh,\zpsh) -- (\zpsh,-\zpsh);
\draw [lnovr] (-\zpsh,-\zpsh) -- (\zpsh,\zpsh);
\draw (-\zpsh,-\zpsh) -- (\zpsh,\zpsh);
\end{scope}
\begin{scope}[xshift=4cm]
\draw(-\zpsh,-\zpsh) to [out=45,in=-180] (0,-\zpsh*0.5) to [out=0,in=135] (\zpsh,-\zpsh);
\draw(-\zpsh,\zpsh) to [out=-45,in=-180] (0,\zpsh*0.5) to [out=0,in=-135] (\zpsh,\zpsh);
\draw [densely dashed] (0,-\zpsh*0.5) -- (0,\zpsh*0.5);
\end{scope}
\end{tikzpicture}
\]
where the arcs in the right diagram are parts of \tBcr s and the dashed segment is a \tstrt.

A crossing of $\xD$ and its \tstrt\ in $\sBD$ are called \tBadq\ if the \tstrt\ connects two different \tBcr s. A framed diagram $\xD$ is \tBadq\ if all of its crossings are \tBadq. Finally, a framed link is \tBadq\ if it can be represented by a \tBadq\ framed diagram. An unframed link $\xL$ is called \tBadq\ if there is at least one framed \tBadq\ diagram that represents it. Note that if an unframed link is \tBadq, then, generally, it can not be represented by a \tBadq\ framed diagram for all framings.

All alternating links are \tBadq, but not all \tBadq\ links are alternating: an example of this is a torus knot $\yTmmn$, $n \geq m \geq 3$. More generally, a link constructed by closing a totally negative braid is \tBadq. Torus knots $\yTmn$, $n\geq m \geq 3$ provide examples of links which are not \tBadq.

Here are some notations associated with a link diagram $\xD$ throughout the paper:
\begin{center}
\begin{tabular}{c c l}
$\svrt$ & -- &a set of crossings (\tstrt s) in $\xD$ or in $\sBD$
\\
$\ncrD$ & -- &the number of crossings in $\xD$
\\
$\ncriD$ & -- & the number of B-\tinadq\ crossings in $\xD$
\\
$\gvD$ & -- &  the number of \tBcr s in $\sBD$
\\
$\xfrmD$ & -- & the framing number of $\xD$
\end{tabular}
\end{center}

The following is an easy corollary of the results of section 7.7 of~\cite{Kh99}:
\begin{theorem}
\label{thm:frm}
The numbers $\ncrv{\xL}$ and $\gvv{\xL}$ are topological invariants of a \tBadq\ framed link $\xL$, because they do not depend on the choice of representative \tBadq\ diagram $\xD$ for $\xL$. Moreover, if
\tBadq\ framed links $\xL$ and $\xLp$ differ only by framing, then
\begin{equation}
\label{eq:afrm}
\ncrv{\xLp} - \ncrv{\xL} = \gvv{\xLp} - \gvv{\xL} = - (\xfrmv{\xLp} - \xfrmv{\xL}).
\end{equation}
\end{theorem}

For an unframed \tBadq\ link $\xL$ we define the minimal crossing number $\ncrmL$ as the minimum among the numbers $\ncrD$ for \tBadq\ framed diagrams $\xD$ representing $\xL$.


\subsection{The \tKbr\ and the Jones polynomial}
The \tKbr\ of a framed tangle diagram is defined by the \splng\ relation and the unknot normalization condition:
\begin{equation}
\label{eq:dkbr}
\xKbrBv{\zoverv{0.75}{1}}
\;=\;
\qhlf\xKbrBv{\zunoverv{0.75}{1}} \;+\; \qmhlf \xKbrBv{\zunoverh{0.75}{1}},
\qquad
\xKbrBv{\zcirc{0.75}{1}} \; = \; -(\qpqi).
\end{equation}
Thus defined, the bracket is framing-dependent:
\[
\xKbrBv{
\zposfr{0.75}{1}
}\;= \; -q^{\frac{3}{2}}\xKbrBv{
\zstvr{0.75}{1}
}.
\]

The Jones polynomial of a framed link $\xL$ is the \tKbr\ of its diagram: $\pJqL=\xKbrv{\xL}$.

\subsection{Cables and coloring}

We introduce coloring of tangle and link components through cabling and \tJWp s. A cable of a strand is depicted by using a thicker line with the label indicating the number of strands, and the \tJWp\ is depicted by a box:
\[
\begin{tikzpicture}
\draw [thkln] (0,0) -- (1,0); \node [above] at (0.5,0) {$\stlcl{a}$};
\end{tikzpicture}
\;=\;
\begin{tikzpicture}[baseline=-4]
\draw (0,0.4) -- (1,0.4);
\draw (0,-0.4) -- (1,-0.4);
\node at (0.5,0.1) {$\vdots$};
\draw[decorate,decoration={brace, amplitude=4pt}]
        (1.2,0.4) -- (1.2,-0.4) node[midway, right=2pt]{$a$};
\end{tikzpicture},
\qquad
\begin{tikzpicture}[scale=0.75,baseline=-2.5]
\draw [ptzer] (-0.15,-0.6) rectangle ++(0.3,1.2);
\draw [thkln] (-.65,0) -- (-0.15,0) node [near start, above] {$\scriptstyle a$}
(0.15,0) -- (.65,0);
\end{tikzpicture}
\]

For a positive integer $\xca$ let $\pJqaL$ denote the \uclrd\ Jones polynomial of $\xL$, that is, all components of $\xL$ are colored by the same color $\xca$. A coloring of a link component by $\xca$ means that we assign the $(\xca+1)$-dimensional irreducible representation of $\SUt$ to it. Equivalently, the color $\xca$ means that the link component is $\xca$-cabled and we place the \tJWp\ on this cable.

In this paper we consider \uclrd\ links, that is, links, all of whose components are colored by the same number $\xca$.
Their colored Jones polynomial $\pJqaL$ is a Laurent polynomial of $q^2$ up to an overall factor: if $\xL$ is presented by a diagram $\xD$, then
\[
q^{\hlf\ncrD\xca^2 + \gvD \xca } \pJqaL \in\ZZtqqi.
\]

%

\subsection{Homological notations}
Let $\caA$ be a finitely generated additive category: objects of $\caA$ are finite sums of elements of a finite set $\stA$.
Let $\cKommA$ denote the homotopy category of its complexes bounded from below: an object of $\cKommA$ is a chain
\begin{equation}
\label{eq:complex}
\chA = ( \cdots \rightarrow A_{i+1}\rightarrow A_i \rightarrow\cdots\rightarrow A_m),
\end{equation}
where $A_i = \bigoplus_{\oba\in\stA} m_{i,\oba}\,\oba$ and $ m_{i,\oba}\in\ZZ_{\geq 0}$ are the multiplicities of generators. The notation $m$ for multiplicity is treated in this paper as an arbitrary constant, so the appearance of $m$ in different expressions does not imply that there is a relation between the multiplicities, unless it is stated specifically. The special multiplicities appearing in a presentation of the \cJWp\ are denoted by $\prmlt$.

We use a non-standard notation for the translation functor: $\shcr \chA = \chA[1]$, which allows us to define a functor $p(\shcr)$ for any polynomial $p(x)$ with integer non-negative coefficients. In particular, we use a functor ${i \brace j}_{\shcr}$ based on a combinatorial polynomial
\begin{equation}
\label{eq:combp}
{i \brace j}_{x} =
\frac
{
(1-x^{2i})(1-x^{2i-2})\cdots(1-x^{2i -2j + 2})
}
{
(1-x^2)(1-x^4)\cdots(1-x^{2j})
}.
\end{equation}

We also use a non-standard notation for the cone of two complexes:
\begin{equation}
\label{eq:cnnt}
\Cnv{\shcr \chA \xrightarrow{f} \chB} = \Conv{\chA\xrightarrow{f} \chB}.
\end{equation}
in order to emphasize the fact that the cone $\Conv{\chA\rightarrow\chB}$ can be presented as a sum $\shcr \chA \oplus \chB$ deformed by an extra differential $\chA\xrightarrow{f} \chB$. Moreover, when we work with bi-graded Khovanov complexes, there may be some confusion about which of two gradings is homological, but our non-standard notation\rx{eq:cnnt} specifies all degree shifts explicitly.

The homological order $\hmord{\obO}$ of an object $\obO\in\cKommA$ is the minimum number $m$, for which $\obO$ can be presented by a complex\rx{eq:complex}.

Consider a direct system of complexes of $\cKommA$:
$\chA_0\rightarrow\chA_1\rightarrow\cdots.$ If this system is `Cauchy', that is,
if for the cones $\chB_i =  \Conv{\chA_{i-1} \rightarrow \chA_{i}}$ there is a limit
$\lim_{i\rightarrow\infty}\hmord{\chB_i} = \infty$,
then there exists a direct limit $\drlmA$.

Since $\chA_{i}\hteqv \Conv{\shcr^{-1} \chB_i\rightarrow\chA_{i-1}}$, 
the direct limit $\drlmA$ can be viewed as a result of attaching the complexes $\chB_i$ one after another to the initial complex $\chB_0=\chA_0$, hence we use the following notation for the complex $\drlmA$:
\begin{equation}
\label{eq:mtcn}
\drlmA \hteqv \boxed{
\cdots\rightarrow \chB_i \rightarrow \cdots
}_{\;i=0}^{\;\infty}
\end{equation}
In fact, if all $\chB_i$ are `homologially minimal' representatives of their equivalence classes, then the sum $\bigoplus_{i=0}^{\infty} \chB_i$ is well-defined (every chain object is finitely generated) and $\drlmA$ is homotopy equivalent to $\bigoplus_{i=0}^{\infty} \chB_i$ defomed by adding extra differentials $\chB_i\xrightarrow{f_{ij}}\chB_j$ for all pairs $i>j$.

We refer to the \rhs of \ex{eq:mtcn} as a \emph{\tmcn}, and we also use a similar notation for the complex\rx{eq:complex}: $\chA = \boxed{\cdots\rightarrow \shcr^iA_i\rightarrow\cdots}_{\;i=m}^{\;\infty}$. Note the use of the functor $\shcr$ to set explicitly the correct homological degree of the chain object $A_i$ in the \tmcn.

If a \tmcn\ $\chA$ is generated by complexes $\chB_a$:
\begin{equation}
\label{eq:lsfmc}
\chA = \Cnv{
\cdots\rightarrow \bigoplus_{j,a} m_{ij,a}\shcr^j\, \chB_a\rightarrow\cdots
}\;,
\end{equation}
(where $m_{ij,a}$ are multiplicities) but we do not care how those complexes are arranged within the \tmcn, then we use a `\lumps' notation
\[
\chA = \Pcnv{ \bigoplus_{j,a} \mtotja\,\shcr^j\,\chB_a
}, \qquad
\mtotja = \sum_i m_{ij,a},
\]
because, as a complex, $\chA$ is a sum of $\chB_a$ with total multiplicities $\sum_i m_{ij,a}$ deformed by an extra differential depicted as $\sdff$.

If the category $\caA$ is abelian, then we can compute the homology of the \tmcn\rx{eq:mtcn} with the help of the filtered complex spectral sequence. The $\xEo$ term of this spectral sequence is the sum of homologies of $\chB_i$: $\xEo = \bigoplus_{i=0}^\infty \Hm(\chB_i)$ and it is determined by the \lumps\ form of the \tmcn.
\begin{remark}
\label{rmk:bndss}
Since subsequent terms in the spectral sequence get only smaller, there is a bound on the homological order of the homology of\rx{eq:mtcn} in terms of homological orders of its constituent complexes:
\begin{equation*}
\hlmord{
\Hm\left( \;\boxed{
\cdots\rightarrow \chB_i \rightarrow \cdots
}_{\;i=0}^{\;\infty}
\;
\right)
}
\geq
\min_{i} \hmord{\Hm(\chB_i)}.
\end{equation*}
In particular, for the \lumps\ \tmcn\rx{eq:lsfmc}
\[
\hmord{ \Hm(\chA) } \geq \min\{\hmord{\Hm(\chB_a)} + j\colon \mtotja\neq 0\}.
\]
\end{remark}

\subsection{Khovanov homology}
\hyphenation{ca-no-po-ly}
In defining Khovanov complexes~\cite{Kh99} for tangles we follow the cobordism based approach of D.~Bar-Natan~\cite{BN05}, albeit with a different grading convention. We still have two degrees: \thdgr\ $\dgh$ and \tqdgr\ $\dgq$, and we use the notations $\shcr$ and $\shfr$ for their translation functors (these functors increase the corresponding degrees by 1). The \tqdgr\ is the genuine homological degree: it takes values in $\ZZ$ and its parity determines the sign factors. The \thdgr\ is `pseudo-homological', it takes values in $\hlf\ZZ$ and it has no impact on signs, however it is the \thdgr\ shift functor $\shcr$ which is present explicitly in the \tKhbr.

In our notations, \tKhbr\ of a crossing and of the unknot are
\begin{equation}
\label{eq:dkhbr}
\xvKhv{
\zoverv{0.75}{1}
}
\;=\;
\Cnv{
\shcr^{\hlf}
\xvKhv{\zunoverv{0.75}{1}
}
\xrightarrow{\;\sgmm \; }
\shcr^{-\hlf}
\xvKhv{\zunoverh{0.75}{1}
}
}\;,\qquad
\xvKhv{
\zcirc{0.75}{1}
}
\;=\;
(\shfr + \shfr^{-1})\xalg,
\end{equation}
where $\shcr$ and $\shfr$ are degree shift functors, while
$\sgmm$ is the morphism corresponding to the saddle cobordism. Note that $\dgh\sgmm = -1$, while $\dgq\sgmm = 1$, so $\sgmm$ is odd.

Thus defined, \tKhbr\ is invariant under the first Reidemeister move only up to a degree shift:
\begin{equation}
\label{eq:frsh}
\xvKhv{
\zposfr{0.75}{1}
}\;= \; \shcr^{\hlf}\shfr \xvKhv{
\zstvr{0.75}{1}
}.
\end{equation}

Relations~\eqref{eq:dkhbr} transform into the relations~\eqref{eq:dkbr} after the substitution
$\shcr\mapsto q$, $\shfr\mapsto - q$, hence in our notations the graded Euler characteristic of \tKhom\ of a framed link equals its \tJpol:
\[
\pJqL = \sum_{i,j}(-1)^j q^{i+j} \dim\KHmvv{i}{j}(\xL).
\]

We will use the \tKhbr\ notation $\xKhv{-}$ very sparingly, because it clutters the pictures, especially when the diagrams are big. Nevertheless, we hope that the distinction between diagrams and their Khovanov complexes will be clear. Actually, we blur this distinction further by allowing the presence of \cJWp s within diagrams, since, strictly speaking, projectors are not diagrams but rather complexes within Bar-Natan's universal category.

\subsection{A \cJWp}
%

An $(a,b)$-tangle is an embedding of circles and segments, the segment endpoints coinciding with initial $a$ points or final $b$ points. Imagine that the tangle goes from the bottom up. Depending on the position of its endpoints, the segment is either \tstrght, or a cap, or a cup. If one of its endpoints is initial and the other is final (so the segment goes straight through the tangle), then the segment is \emph{\tstrght}, if both endpoints are initial, then the segment is a \emph{cap}, and if both of its  the segments are final, then the segment is a \emph{cup}.

The \emph{\twd} $\wdv{\tau}$ of a tangle $\tau$ is the number of its \txstrs s.
An $(a,a)$ tangle has an equal number of cups and caps, we call this number a \emph{\twdfc} and denote it as $\wdfcv{\tau}$. Obviously, $\wdfcv{\tau} = \hlf(a-\wdv{\tau})$.

A \emph{\tTL} (\taTL) tangle is a flat tangle which contains no circles. Let $\sTLa$ be the set of all $(a,a)$ \taTLt s. The \cJWp\
\begin{tikzpicture}[scale=0.5,baseline=-2.5]
\draw [ptzer] (0,-0.5) rectangle (0.25,0.5);
\draw [thkln] (0.25,0) -- (0.75,0);
\draw [thkln] (-0.75,0) -- (0,0);
\node [above] at (-0.5,0) {$\scriptstyle a$};
\end{tikzpicture}
was constructed independently by Frenkel, Stroppel and Sussan~\cite{FSS11}, Cooper and Krushkal~\cite{CK10} and by the author~\cite{Ro11}. It satisfies three essential properties:
it is a projector:
\begin{equation}
\label{eq:cmppr}
\begin{tikzpicture}[menvtwo]
\draw [ptzer] (-0.15,-0.6) rectangle ++(0.3,1.2);
\draw [ptzer] (0.65,-0.6) rectangle ++(0.3,1.2);
\draw [thkln] (-.65,0) -- (-0.15,0) node [near start, above] {$\scriptstyle a$}
(0.15,0) -- (.65,0) (0.65+0.3,0) -- (0.65+0.3+0.5,0);
\end{tikzpicture}
\;\hteqv\;
\begin{tikzpicture}[menvtwo]
\draw [ptzer] (-0.15,-0.6) rectangle ++(0.3,1.2);
\draw [thkln] (-.65,0) -- (-0.15,0) node [near start, above] {$\scriptstyle a$}
(0.15,0) -- (.65,0);
\end{tikzpicture}
\;,
\end{equation}
it annihilates cups and caps:
\begin{equation}
\label{eq:ccann}
\def\zext{0.4}
\begin{tikzpicture}[menvtwo]
\draw [ptzer] (-0.15,-0.6) rectangle ++(0.3,1.2);
\draw [thkln] (-.65-\zext,0) -- (-0.15,0) node [near start, above] {$\scriptstyle a$};
\draw [thkln] (0.15,0.45) -- (.65+\zext,0.45) node [near end,above] {$\scriptstyle b$};
\draw [thkln] (0.15,-0.45) -- (.65+\zext,-0.45) node [near end,below] {$\scriptstyle a-b-2$};
\draw (0.15,0.25) to [out=0,in=90] (0.5,0) to [out=-90,in=0] (0.15,-0.25);
\end{tikzpicture}
\;\hteqv\;
\begin{tikzpicture}[menvtwo,xscale=-1]
\draw [ptzer] (-0.15,-0.6) rectangle ++(0.3,1.2);
\draw [thkln] (-.65-\zext,0) -- (-0.15,0) node [near start, above] {$\scriptstyle a$};
\draw [thkln] (0.15,0.45) -- (.65+\zext,0.45) node [near end,above] {$\scriptstyle b$};
\draw [thkln] (0.15,-0.45) -- (.65+\zext,-0.45) node [near end,below] {$\scriptstyle a-b-2$};
\draw (0.15,0.25) to [out=0,in=90] (0.5,0) to [out=-90,in=0] (0.15,-0.25);
\end{tikzpicture}
\;\hteqv\; 0,
\end{equation}
and it
has a presentation as a cone of an identity braid and a complex
$
\begin{tikzpicture}[scale=0.4,baseline=-2.5]
\draw[ptone] (-0.15,-0.6) rectangle ++(0.3,1.2);
\draw [thkln] (-.65,0) -- (-0.15,0) node [near start, above] {$\scriptstyle a$}
(0.15,0) -- (.65,0);
\end{tikzpicture}
$
generated by \taTLt s with positive \twdfc\ and with non-negative \thdgr\ and \tqdgr\ shifts
\begin{equation}
\label{eq:projcn}
\begin{tikzpicture}[scale=0.75,baseline=-2.5]
\draw [ptzer] (-0.15,-0.6) rectangle ++(0.3,1.2);
\draw [thkln] (-.65,0) -- (-0.15,0) node [near start, above] {$\scriptstyle a$}
(0.15,0) -- (.65,0);
\end{tikzpicture}
\;\hteqv\;
\boxed{
\shcr
\begin{tikzpicture}[scale=0.75,baseline=-2.5]
\draw[ptone] (-0.15,-0.6) rectangle ++(0.3,1.2);
\draw [thkln] (-.65,0) -- (-0.15,0) node [near start, above] {$\scriptstyle a$}
(0.15,0) -- (.65,0);
\end{tikzpicture}
\longrightarrow
\begin{tikzpicture}
\draw [thkln] (-0.5,0) -- (0.5,0) node [midway,above] {$\scriptstyle a$};
\end{tikzpicture}
}\;,
\end{equation}
where
\begin{equation}
\label{eq:grproj}
\begin{tikzpicture}[scale=0.75,baseline=-2.5]
\draw[ptone] (-0.15,-0.6) rectangle ++(0.3,1.2);
\draw [thkln] (-.65,0) -- (-0.15,0) node [near start, above] {$\scriptstyle a$}
(0.15,0) -- (.65,0);
\end{tikzpicture}
\;=\;
\boxed{
\cdots\longrightarrow\shcr^i\bigoplus_{\substack{0\le j \le i \\ \gamma\in\sTLa, \wdfcv{\gamma}>0}} \prmltijg\,\shfr^j\,\xKhv{\gamma}
\longrightarrow\cdots
}_{\;i=0}^{\;\infty}
\end{equation}
and $\prmltijg$ are multiplicities.

\subsection{\tKhbr\ of colored tangles}
We define \tKhbr\ of colored tangles by cabling tangle components and adding at least one  \cJWp\ to each tangle component. This means that we allow semi-infinite complexes which may extend infinitely far into positive homological degree.

The colored \tKhbr\ is independent of the framing up to a degree shift:
\begin{equation}
\label{eq:cfrsh}
\zcposfr{0.75}{1}
\;= \; \shcr^{\hlf a^2}\shfr^a
\zcstvr{0.75}{1}
.
\end{equation}

\section{Results}
\subsection{Overview}
\subsubsection{Bounds on colored \tKhom}
Let $\xLcN$  denote the $\xca$-\uclrd\ version of the link $\xL$. For a \tBadq\ framed link $\xL$, a \emph{shifted} \tKhom\ of $\xLcN$ is defined by the formula
\begin{equation}
\label{eq:shdgt}
\tKHm(\xLcN) = \shcr^{\hlf\clN^2\ncrL} \shfr^{\gvL\xca} \KHm(\xLcN).
\end{equation}
In view of \ex{eq:afrm}, if links $\xL$ and $\xLp$ differ only by framing, then their shifted Khovanov homologies are isomorphic: $\tKHm(\xLpcN) = \tKHm(\xLcN)$.
\begin{theorem}
\label{thm:bndd}
There are bounds on degrees of shifted homology of a \tBadq\ link $\xL$:
$\tKHmvv{i}{j}(\xLcN) = 0$, if one of the following conditions is met:
\begin{align}
i&<0
\label{eq:bd1}
\\
\label{eq:bd2}
j&< -\shlf i -\shlf \ncrmL
\\
\label{eq:bd3}
j& < -i
\\
\label{eq:bd4}
j& =-i \neq 0,
\end{align}
where $\ncrmL$ is the minimum crossing number of a diagram representing $\xL$.
Moreover,
\begin{equation}
\label{eq:endi}
\dim\tKHmvv{0}{0} = 1.
\end{equation}
\end{theorem}

\subsubsection{Tail of Khovanov homology}
The main result of this paper is a definition of special degree-preserving maps between shifted Khovanov homologies of \uclrd\ \tBadq\ links, such that these maps are isomorphisms at low \thdgr s.
\begin{theorem}
\label{thm:llviso}
A \tBadq\ diagram of a link $\xL$ determines a sequence of \tdgpr\ maps
\begin{equation}
\label{eq:splmps}
\tKHm(\xLcN)\xrightarrow{\mnfN}
\tKHm(\xLcNo)
\end{equation}
which are isomorphisms on $\tKHmvv{i}{\hem}(\xLcN)$  for $i\leq \xca-1$.
\end{theorem}
Let $\tlH(\xL)$ be the \drlim\ of the \drsys\ determined by the sequence of maps $\mnfN$, $\xca\in\ZZ_+$ : 
\begin{equation}
\label{eq:drlm}
\tlH(\xL) = \dlm \tKHm(\xLcN).
\end{equation}

%
\begin{corollary}
\label{cor:mpis}
The maps
\begin{equation}
\label{eq:shti}
\tKHm(\xLcN)\rightarrow\tlH(\xL)
\end{equation}
 associated with the \drlim\rx{eq:drlm} are isomorphisms on $\tKHmvv{i}{\hem}(\xLcN)$ for $i\leq\xca-1$, hence
the \drlim\ $\tlH(\xL)$ is finite-dimensional in every bi-degree, it satisfies the bounds $\tlHvv{i}{j}(\xL)=0$ at the conditions\rx{eq:bd1}--\rxw{eq:bd4} and
\begin{equation}
\label{eq:hdtl}
\dim\tlHvv{0}{0}(\xLcN) = 1.
\end{equation}
\end{corollary}
%
\begin{remark}
To define the maps\rx{eq:splmps} we have to choose a diagram $\xD$ representing $\xL$, however we expect that the maps can be defined canonically, that is, independently of that choice.
\end{remark}

\subsubsection{Relation to the tail of the Jones polynomial}
The bounds\rx{eq:bd1} and\rx{eq:bd2} on $\tlH(\xL)$ mean that the graded Euler characteristic of the tail homology is well-defined, because in its presentation as an alternating sum of homology dimensions
\[
\pJqLi = \sum_{i,j} (-1)^j\,q^{i+j}\,\dim\tlHvv{i}{j}(\xD)
\]
there is only a finite number of non-trivial terms for any given value of $i+j$.

The bound\rx{eq:bd2} indicates that $\tKHmvv{i}{j}(\xLcN)$ and $\tlHvv{i}{j}(\xL)$ are trivial when $i+j< \hlf i - \hlf\ncrmL$, hence their high \thdgr s  contribute only to coefficients at high powers of $q$ in the graded Euler characteristic. Since the map\rx{eq:shti} is an isomorphism at low \thdgr s, we come to the following:
\begin{theorem}
The graded Euler characteristic of the tail homology determines the lower powers of $q$ in the \uclrd\ Jones polynomial of a \tBadq\ link:
\[
\pJqaL=
(-1)^{\gvL\xca}
 q^{-\hlf\clN^2\ncrL-\gvL\xca} \left(\pJqLi + O\left(q^{\hlf\xca - \hlf\ncrmL}\right)\right).
\]
\end{theorem}
This means that the tail homology categorifies the tail of the \uclrd\ Jones polynomial of \tBadq\ links studied by C.~Armond~\cite{Arm11} and by S.~Garoufalidis and T.~Le~\cite{GL11}.

\subsubsection{Tail homology is determined by a \trBdg\ of a link}


A link diagram $\xDp$ is a \tBrdc\ of a link diagram $\xD$, if the diagram $\sBDp$ is constructed from $\sBD$ in two stages: at first stage for each pair of distinct \tBcr s of $\sBD$ connected by more that one \tstrt\ we remove all connecting \tstrt s but one; at the second stage we remove all \tBcr s which have only one \tstrt\ attached to them. Obviously, if $\xD$ is \tBadq, then so is $\xDp$.

A link $\xLp$ is a \tBrdc\ of a \tBadq\ link $\xL$, if $\xLp$ can be presented by a diagram which is a \tBrdc\ of a \tBadq\ diagram presenting $\xL$.

\begin{theorem}
\label{thm:rddg}
If a link $\xLp$ is a \tBrdc\ of a \tBadq\ link $\xL$, then their tail homologies are isomorphic: $\tlH(\xLp)\cong\tlH(\xL)$.
\end{theorem}
C.~Armond and O.~Dasbach~\cite{AD12} are working on a similar result for the tail of the \uclrd\ Jones polynomial.
\begin{corollary}
If $\xLb$ is a circular closure of a connected negative braid $\betbr$, then the tail homology of $\xLb$ is that of an unknot: $\tlH(\xLb)\cong\tlH(\bigcirc )$.
\end{corollary}
\begin{proof}
It is easy to see that $\xLb$ is \tBadq\ and a \trBdg\ of $\xLb$ consists of a single circle without \tstrt s.
\end{proof}
\begin{corollary}[Invariance under strut doubling]
If $\xL$ is a \tBadq\ link and $\xLp$ is constructed by performing a replacement
\begin{equation}
\label{eq:crdcr}
\def\mxsh{2}
\def\mysh{1.5}
\begin{tikzpicture}[menvfour]
\node (l) at (-3.2,0) {};
\node (r) at (3.2,0) {};
\path[commutative diagrams/.cd, every arrow, every label]
(l) edge[commutative diagrams/squiggly] (r);
\begin{scope}[xshift=-6cm]
\draw [thnln] (-\mxsh+0.15,\mysh) to [out=0,in=180] (\mxsh-0.15,-\mysh);
\draw [lnovr] (-\mxsh+0.15,-\mysh) to [out=0,in=180] (\mxsh-0.15,\mysh);
\draw [thnln] (-\mxsh+0.15,-\mysh) to [out=0,in=180] (\mxsh-0.15,\mysh);
\end{scope}
\begin{scope}[xshift=6cm]
\draw [thnln] (-\mxsh+0.15,\mysh) to [out=0,in=180] (\mxsh-0.15,-\mysh);
\draw [lnovr] (-\mxsh+0.15,-\mysh) to [out=0,in=180] (\mxsh-0.15,\mysh);
\draw [thnln] (-\mxsh+0.15,-\mysh) to [out=0,in=180] (\mxsh-0.15,\mysh);
\begin{scope}[xshift=4cm-0.3cm]
\draw [thnln] (-\mxsh+0.15,\mysh) to [out=0,in=180] (\mxsh-0.15,-\mysh);
\draw [lnovr] (-\mxsh+0.15,-\mysh) to [out=0,in=180] (\mxsh-0.15,\mysh);
\draw [thnln] (-\mxsh+0.15,-\mysh) to [out=0,in=180] (\mxsh-0.15,\mysh);
\end{scope}
\end{scope}
\end{tikzpicture}
\end{equation}
in a \tBadq\ diagram of $\xL$, then tail homologies of $\xL$ and $\xLp$ are isomorphic: $\tlH(\xL)\cong\tlH(\xLp)$.
\end{corollary}
\begin{proof}
Obviously, $\xL$ and $\xLp$ have the same \tBrdc.
\end{proof}
%
The latter corollary is a categorification of a similar property of the tail of the \uclrd\ Jones polynomial observed by C.~Armond and O.~Dasbach~\cites{AD11,Arm11}. This property suggests that a single crossing plays the role of a \cJWp\ in the tail homology. In order to make this statement precise, tail homology has to be defined for knotted graphs, which may include both finite and infinite colors, so that the essential property of contracting cups/caps can be formulated. We hope to address this issue in a subsequent paper. Meanwhile, we prove in Appendix that for large $\xca$ a crossing of two $\xca$-cables is, indeed, homologically close to the \tJWp\ placed on two parallel $\xca$-cables.

\subsection{Technicalities}

We prove most statements of the previous subsection not just for \tBadq\ links, but for any link diagram. However, we conjecture that the results are trivial for \tBiadq\ diagrams, because their tail homology is trivial, if defined as a \drlim\ of the system that we construct. We expect that \tBiadq\ links also have tail homology, but the proof that the tail of their Khovanov homology stabilizes in the limit of large color requires new ideas.

\subsubsection{Shifted Khovanov homology}
Let $\xD$ be a diagram of a tangle which may include single lines, cables and \tJWp s. We define $\yncrv{\xD}$ to be the total number of single line crossings in $\xD$  (that is, a crossing between an $a$-cable and a $b$-cable contributes $ab$ to $\yncrv{\xD}$). The following is an obvious corollary of \ex{eq:dkhbr} and the fact that, according to\rx{eq:projcn} and\rx{eq:grproj},
$\hmord{
\begin{tikzpicture}[scale=0.4,baseline=-2.5]
\draw[line width=\ljwp] (-0.15,-0.6) rectangle ++(0.3,1.2);
\draw [line width=\cblth] (-.65,0) -- (-0.15,0) 
(0.15,0) -- (.65,0);
\end{tikzpicture}
}=0$:
\begin{theorem}
\label{thm:smfr}
The complex $\xKhv{\xD}$ has a lower homological bound:
$\hmord{\xKhv{\xD}}\geq - \hlf\yncrv{\xD}.$
\end{theorem}

Let $\xD$ be a diagram of a link which may include single lines, cables and \tJWp s. Define $\ynccv{\xD}$ to be the total number of circles in the diagram constructed from $\xD$ by replacing the \tJWp s with identity braids and performing \Bsplng s on all crossings. Now we define the shifted Khovanov homology of $\xD$:
\[
\tKHm(\xD) = \shcr^{\hlf\yncrv{\xD}} \shfr^{\ynccv{\xD}} \KHm(\xD).
\]
The following is a particular case of Theorem\rw{thm:smfr}:
\begin{theorem}
\label{thm:eaest}
If $i<0$, then $\tKHmvv{i}{\hem}(\xD)=0$.
\end{theorem}

For a link diagram $\xD$ let $\xDclN$ denote the corresponding \uclrd\ diagram (that is, every link components is $\xca$-cabled and contains at least one \tJWp).
Then, obviously, $\yncrv{\xDclN} = \clN^2\ncrD$ and
$\ynccv{\xDclN} =\xca \gvD$, so
\begin{equation}
\label{eq:shdgd}
\tKHm(\xDclN) = \shcr^{\hlf\clN^2\ncrD} \shfr^{\gvD\xca} \KHm(\xDclN).
\end{equation}
Hence, if $\xD$ is \tBadq\ and represents a link $\xL$, then $\tKHm(\xDclN)$ coincides with the shifted homology defined by \ex{eq:shdgt}.

Now Theorem\rw{thm:bndd} is a corollary of Theorem\rw{thm:eaest} and the following:
\begin{theorem}
\label{thm:kqbnd}
A shifted homology of a \uclrd\ diagram $\xDclN$ has a bound:
$\tKHmvv{i}{j}(\xDclN)=0$ if one of the following conditions is satisfied:
\begin{align}
\label{eq:bd2a}
j&<-\shlf i - \shlf\ncrD - \sthlf\ncriD
\\
\label{eq:bd3a}
j& < -i - \ncriD
\end{align}
Moreover, if $\xD$ is \tBadq, then
\begin{equation}
\label{eq:cndbd}
\dim \tKHmvv{i}{-i} =
\begin{cases}
0,&\text{if $i>0$,}
\\
1,
&\text{if $i=0$.}
\end{cases}
\end{equation}
\end{theorem}
Theorem\rw{thm:llviso} is a special case of the following:
\begin{theorem}
\label{thm:leviso}
For any link diagram $\xD$ there is a sequence of \tdgpr\ maps
\begin{equation}
\label{eq:spmaps}
\tKHm(\xDclN)\xrightarrow{\mnfN}
\tKHm(\xDclNo),
\end{equation}
which are isomorphisms on $\tKHmvv{i}{\hem}$ for $i\leq \xca-1$.
\end{theorem}
This theorem implies that the \drsys\ formed by maps\rx{eq:spmaps} has a limit
\begin{equation}
\label{eq:dglim}
\tlH(\xD) = \dlm \tKHm(\xDclN).
\end{equation}
which is finite-dimensional in every bi-degree.
\begin{conjecture}
If the diagram $\xD$ is not \tBadq, then the \drlim\rx{eq:dglim} is trivial: $\tlH(\xD)=0$.
\end{conjecture}

\subsection{Discussion}
We conjecture that \tBadq\ links have a tri-graded homology $\ttlH(\xL)$, which has an additional `\tbgrd', whose zero-degree part coincides with the tail homology $\tlH(\xL)$:
\[
\ttlH(\xL) = \bigoplus_{\substack{i,j\\k\geq 0}}\ttlHvvv{i}{j}{k}(\xL),\qquad
\ttlHvvv{i}{j}{0}(\xL) = \tlHvv{i}{j}(\xL).
\]
This homology should have a family of mutually anti-commuting differentials $\xfdN$, $\xca\in\ZZ_+$, with degrees $\dgq\xfdN=-1$, $\dgh\xfdN=1$ and $\dgb=-1$
such that homology of $\ttlH(\xL)$ with respect to $\xfdN$ matches the shifted Khovanov homology $\tKHm$ up to a level proportional to $\xca^2$, after the \tbdgr\ is converted into \thdgr:
\begin{equation}
\label{eq:trconj}
\tKHmvv{\xti}{\hem}(\xL) = \bigoplus_{i+\xca k = \xti}\Hm_{i,\hem,k}^{\xfdN}\big(
\ttlH(\xL)\big),\quad\text{if $\xti\geq a_{\xL}\xca^2$},
\end{equation}
where $a_{\xL}$ is a constant determined by $\xL$.

There are three reasons to formulate this conjecture. The first reason is that the proof of Theorem\rw{thm:llviso} is based on numerous long exact sequences\rx{eq:les}, in which the `correction homology' starts at homological degree proportional to $\xca$.

The second reason is that, according to Garoufalidis and Le~\cite{GL11}, the tail of the Jones polynomial of \tBadq\ links has a `telescopic' structure. They show that if $\xL$ is alternating, then there exists a family of Laurent series $\Phi_n(q) = \sum_m a_m q^m$ such that for any $k>0$ the combined series $F_k(q) = \sum_{n=0}^k \Phi_n(q)$ is a better approximation for the tail of the colored Jones polynomial that just the first term $\Phi_0(q)= \pJqLi $. With the help of the colored Kauffman bracket (\cf\rx{eq:colKhbr}) we can prove a similar result for all \tBadq\ links and we expect that the 2-variable series
\begin{equation}
\label{eq:dsrs}
\ptJqLi = \sum_{n=0}^\infty b^n \Phi_n(q)
\end{equation}
is the bi-graded Euler characteristic of the tri-graded homology $\ttlH(\xL)$.

The third reason for our conjecture comes from the paper by Gukov and \Stosic~\cite{GS11}. Based on QFT models of Khovanov homology, they suggest that its dependence on color should be similar to the dependence of the $\SUv{n}$ homology on $n$: this homology  may be presented as a homology of a special differential acting on $\SUv{N}$ homology if $N>n$. We suggest to go half step further. Ultimately, the $\SUv{N}$ homology may be presented, at least, conjecturally, from the tri-graded HOMFLY-PT homology with the help of special differentials $\xfdv{n}$ and we expect that a similar process may work for the tail homology.

We expect that the formation of a stable tail of a \uclrd\ \tBadq\ link is a general feature which originates in the tri-graded homology when the Young diagram describing the color has a very large value of one of the differences between the lengths of rows or columns. In particular, it could be easy to follow the tail formation in case when the diagram consists of a single very large column.

Witten suggested~\cite{Wi11} that a series of the form\rx{eq:dsrs} should represent the graded Euler characteristic of Khovanov homology in the background of a flat $\mathrm{U}(1)$-reducible $\SUt$ connection in the link complement. We conjecture that if a link can be presented as a circular closure of a totally negative braid, then the tail homology coincides with the one related to the flat $\mathrm{U}(1)$-reducible $\SUt$ connection.

\subsection{Acknowledgements}
The author thanks Eugene Gorsky for sharing the results of his unfinished research and, in particular, the conjecture about the structure of Khovanov homology of a colored unknot and its stabilization in the limit of high color.

This work was supported in part by the NSF grants DMS-0808974 and DMS-1108727.

\section{Five tools}

The proof of Theorems\rw{thm:kqbnd} and\rw{thm:leviso} requires five tools: \ltrf s, purging, braid straightening, colored \tKhbr\ and recurrence relations between \cJWp s. \Ltrf s relate homologies of similar diagrams. Purging gets rid of redundant \taTLt s in complexes, which contain \tJWp s, thus improving estimates of homological order. Straightening a braid is a simple observation that braiding within a cable attached to a projector results only in degree shifts. The colored \tKhbr\ is a special presentation of a crossing of two cables attached to \tJWp s. Finally, recurrence relations relate \tJWp s on $\xca$ and $\xca+1$ strands.

\subsection{\Lrpl s and \ltrf s}
A \emph{\lrpl} is a pair of tangles $\ytngsi$ and $\ytngsf$, which may contain single and cabled lines, as well as \tJWp s. Both tangles should have the same sets of incoming legs and the same sets of outgoing legs. Hence if an initial diagram $\xDsi$ contains the tangle $\ytngsi$ attached by its legs to the rest of the diagram, then we can construct a final diagram $\xDsf$ by replacing $\ytngsi$ with $\ytngsf$. If $\ytngsi$ or $\ytngsf$ is not an actual diagram, but rather a complex of diagrams within the universal category, the \lrpl\ still makes sense as a construction of $\xKhv{\xDsf}$ from $\xKhv{\xDsi}$.

A \emph{\ltrf} is a \lrpl\ together with a specified \tdgpr\ morphism $\ytngkfp\xrightarrow{\xmg}\ytngki$, where we use a shortcut $\ytngkfp= \shfr^{\xhshf}\shcr^{\xqshf}\ytngkf$\footnote{The unnatural direction of morphism is chosen for future convenience.} The morphism $\xmg$ determines the presentation of $\ytngki$ as a cone
\begin{equation}
\label{eq:cnrel}
\ytngki \hteqv
\boxed{
\ytngkc
\rightarrow\ytngkfp
}\;,
\qquad\text{where}\quad
\ytngkc = \boxed{
\shcr\ytngkfp\xrightarrow{\xmg}\ytngki
}\;.
\end{equation}
Up to a degree shift, the `correction' complex $\ytngkc$ may be the categorification complex of an actual tangle $\ytngsc$, or it may be just a convenient shortcut.

Let $\xDsi$ be a diagram of a link which contains $\ytngsi$ and let $\xDsf$ and $\xDsc$ be the diagrams constructed by replacing $\ytngsi$ with $\ytngsf$ and $\ytngsc$. The relations\rx{eq:cnrel} imply a long exact sequence
\begin{equation}
\label{eq:les}
\shcr^{-1}\KHm(\xDsc) \longrightarrow  \shfr^{\xhshf}\shcr^{\xqshf}\KHm(\xDsf) \xrightarrow{\;\;\xmg\;\;} \KHm(\xDsi)
\longrightarrow \KHm(\xDsc).
\end{equation}
For all \ltrf s considered in this paper, there are relations
\[
\xqshf = \shlf\yncrv{\xDsf} - \shlf\yncrv{\xDsi},\qquad
\xhshf = \ynccv{\xDsf} - \ynccv{\xDsi},
\]
hence \ex{eq:les} turns into the following sequence of \tdgpr\ maps between shifted homologies:
\[
\shcr^{-1} \KHm(\xDscp) \longrightarrow
\tKHm(\xDsf) \xrightarrow{\;\;\xmg\;\;} \tKHm(\xDsi)
\longrightarrow \KHm(\xDscp),
\]
where $
\KHm(\xDscp) = \shcr^{\shlf\yncrv{\xDsi}}\shfr^{\ynccv{\xDsi}}\KHm(\xDsc)$. This exact sequence implies the following:
\begin{proposition}
\label{prp:gestdg}
If $\KHmib(\xDsc)=0$ for $i \leq \hbnd-1$, then
the \tdgpr\ map
\begin{equation}
\label{eq:dgprmg}
\tKHm(\xDsf) \xrightarrow{\;\xmg\;} \tKHm(\xDsi),
\end{equation}
is an isomorphism on $\tKHmvv{i}{\hem}$ for
\begin{equation}
\label{eq:ineqo}
i\leq\hbnd+
\shlf\yncrv{\xDsi}-2.
\end{equation}
\end{proposition}

%
%
%
%

\subsection{Purging}

Purging is a process of using \ex{eq:ccann} to remove constituent \taTLt s of a complex, whose cups or caps are connected directly to a \tJWp. Let $\sTLab$ be the set of all (a,b) \taTLt s and let
$\sTLabc =\{ \gamma\in\sTLabc\colon \swdv{\gamma} = b\}$. In other words, $\sTLabc$ is a subset of $(a,b)$ \taTLt s, which contain no cups, but only caps and \txstrs s.
\begin{proposition}
\label{prp:purge}
There is a homotopy equivalence
\begin{multline}
\label{eq:bprg}
\boxed{
\cdots\longrightarrow\shcr^i\bigoplus_{\substack{j \\ \gamma\in\sTLab}} m_{ij,\gamma}\,\shfr^j\,
\begin{tikzpicture}[menvtwo]
\draw [ptzer] (-0.6,-0.6) rectangle (0.6,0.6); \node at (0,0) {$\gamma$};
\draw [thkln] (-1.2,0) -- (-0.6,0) node [near start,below] {$\scriptstyle a$};
\draw [thkln] (1.2,0) -- (0.6,0) node [near start,below] {$\scriptstyle b$};
\end{tikzpicture}
\longrightarrow\cdots
}
\begin{tikzpicture}[menvtwo]
\path [use as bounding box] (-1,-1) rectangle (2,1);
\draw [ptzer] (-0.15,-0.6) rectangle (.15,.6);
\draw [thkln] (-0.75-0.22,0) -- (-0.15,0)
node [midway,below] {$\scriptstyle b$}
(0.15,0) -- (.75,0)
node [near end,below] {$\scriptstyle b$};
\end{tikzpicture}
\\
\hteqv\;
\boxed{
\cdots\longrightarrow\shcr^i\bigoplus_{\substack{j \\ \gamma\in\sTLabc}} m_{ij,\gamma}\,\shfr^j\,
\begin{tikzpicture}[menvtwo]
\draw [ptzer] (-0.6,-0.6) rectangle (0.6,0.6); \node at (0,0) {$\gamma$};
\draw [thkln] (-1.2,0) -- (-0.6,0) node [near start,below] {$\scriptstyle a$};
\draw [thkln] (1.8,0) -- (0.6,0) node [midway,below] {$\scriptstyle b$}
(2.1,0) -- (2.7,0) node [near end,below] {$\scriptstyle b$};
\draw[ptzer] (1.8,-0.6) rectangle (2.1,0.6);
\end{tikzpicture}
\longrightarrow\cdots
}
\end{multline}
\end{proposition}


\subsection{Straightening a braid attached to a \tJWp}

\begin{theorem}
The categorification complex of the tangle composition of the \tJWp\ with a braid $\brbet$ is homotopy equivalent to the shifted complex of the \tJWp:
\[
\begin{tikzpicture}[menvtwo]
\draw [ptzer] (-0.6,-0.6) rectangle (0.6,0.6); \node at (0,0) {$\brbet$};
\draw [thkln] (-2.7,0) -- (-2.1,0) node [near start,below] {$\scriptstyle a$} (-1.8,0) -- (-0.6,0)
 node [midway, below] {$\scriptstyle a$} (0.6,0) -- (1.2,0) node [near end, below] {$\scriptstyle a$};
\draw [ptzer] (-2.1,-0.6) rectangle ++(0.3,1.2);
\end{tikzpicture}
\;\hteqv\;
\shcr^{\hlf(\xnp-\xnm)}
\begin{tikzpicture}[menvtwo]
\draw [ptzer] (-0.15,-0.6) rectangle ++(0.3,1.2);
\draw [thkln] (-0.75,0) -- (-0.15,0) node [near start,below] {$\scriptstyle a$}
(0.15,0) -- (0.75,0);
\end{tikzpicture}
\]
where $\xnp$ ($\xnm$) is the number of positive (negative) elementary crossings in a presentation of $\brbet$.
\end{theorem}

\begin{proof}
It is sufficient to prove this equivalence in the case of $\xnp=1$ and $\xnm=0$ (the case of $\xnp=0$ and $\xnm=1$ is similar and other cases can be proved by consequent composition of elementary crossings).
Thus we replace the positive crossing by its Khovanov complex
\[
\begin{tikzpicture}[xscale=0.6,yscale=-0.6,baseline=-2.5]
\draw [thkln] (-0.5,0) -- (0.1,0);
\draw (0.5,0.3) to [out=0,in=180] (2,-0.3);
\draw [line width=6pt, draw=white] (0.5,-0.3) to [out=0,in=180] (2,0.3);
\draw (0.5,-0.3) to [out=0,in=180] (2,0.3);
\draw [thkln] (0.5,0.6) -- (2,0.6);
\draw [thkln] (0.5,-0.6) -- (2,-0.6);
\draw [ptzer] (0.1,-0.8) rectangle (0.5,0.8);
\end{tikzpicture}
\; = \;
\Cnv{
\shcrh\;
\begin{tikzpicture}[scale=0.6,baseline=-2.5]
\draw [thkln] (-0.5,0) -- (0.1,0);
\draw (0.5,0.3) -- (0.75,0.3) to [out=0,in=180] (1.25,0.15) to [out=0,in=180] (1.75,0.3) -- (1.75,0.3) -- (2,0.3);
\draw (0.5,-0.3) -- (0.75,-0.3) to [out=0,in=180] (1.25,-0.15) to [out=0,in=180] (1.75,-0.3) -- (1.75,-0.3) -- (2,-0.3);
\draw [thkln] (0.5,0.6) -- (2,0.6);
\draw [thkln] (0.5,-0.6) -- (2,-0.6);
\draw [ptzer] (0.1,-0.8) rectangle (0.5,0.8);
\end{tikzpicture}
\xrightarrow{\hspace{0.8cm}}
\shcrmh\;
\begin{tikzpicture}[scale=0.6,baseline=-2.5]
\draw [thkln] (-0.5,0) -- (0.1,0);
\draw (0.5,0.3) -- (0.75,0.3) .. controls +(0:0.4) and +(0:0.4) .. (0.75,-0.3) -- (0.5,-0.3);
\draw (2,0.3) -- (1.75,0.3) .. controls +(180:0.4) and +(180:0.4) .. (1.75,-0.3) -- (2,-0.3);
\draw [thkln] (0.5,0.6) -- (2,0.6);
\draw [thkln] (0.5,-0.6) -- (2,-0.6);
\draw [ptzer] (0.1,-0.8) rectangle (0.5,0.8);
\end{tikzpicture}
}
\]
and observe that the second term in the resulting cone is contractible.
\end{proof}

This proposition has important special cases:
\begin{equation}
\label{eq:projtw}
\begin{tikzpicture}[scale=0.75,baseline=-3]
\draw [thkln] (-0.55,0)
-- (0.2,0)
node [near start,above] {$\scriptstyle a+1$};
\draw  [thkln] (0.5,0.3) to [out=0,in=180] (1.25,-0.3)
node[right] {$\scriptstyle a$};
\draw [lnovr] (0.5,-0.3) to [out=0,in=180] (1.25,0.3);
\draw (0.5,-0.3) to [out=0,in=180] (1.25,0.3);
\draw [ptzer] (0.2,-0.6) rectangle (0.5,0.6);
\end{tikzpicture}
\;\hteqv\;
\shcr^{\frac{a}{2}}
\begin{tikzpicture}[scale=0.75,baseline=-3]
\draw [line width=\cblth] (-0.55,0)
-- (0.2,0)
node [midway,above] {$\scriptstyle a+1$\;};
\draw [line width=\cblth] (0.5,0) -- (1.25,0);
\draw [line width=1pt] (0.2,-0.6) rectangle (0.5,0.6);
\end{tikzpicture}
,\qquad
\begin{tikzpicture}[scale=0.75,baseline=-3]
\draw [line width=\cblth] (-0.55,0)
-- (0.2,0)
node [near start,above] {$\scriptstyle a+1$};
\begin{scope}[yscale=-1]
\draw  [line width=\cblth] (0.5,0.3) to [out=0,in=180] (1.25,-0.3)
node [right] {$\scriptstyle a$};
\draw [line width=6pt, draw=white] (0.5,-0.3) to [out=0,in=180] (1.25,0.3);
\draw (0.5,-0.3) to [out=0,in=180] (1.25,0.3);
\end{scope}
\draw [line width=1pt] (0.2,-0.6) rectangle (0.5,0.6);
\end{tikzpicture}
\;\hteqv\;
\shcr^{-\frac{a}{2}}
\begin{tikzpicture}[scale=0.75,baseline=-3]
\draw [line width=\cblth] (-0.55,0)
-- (0.2,0)
node [midway,above] {$\scriptstyle a+1$\;};
\draw [line width=\cblth] (0.5,0) -- (1.25,0);
\draw [line width=1pt] (0.2,-0.6) rectangle (0.5,0.6);
\end{tikzpicture}
\end{equation}
and two similar cases with opposite powers of $\shcr$ when the cable runs over the single line.


\subsection{Colored \tKhbr}
\begin{theorem}[Single strand splicing]
\label{lm:sss}
A \tKhbr\ of the crossing of two equally colored strands can be presented as a cone:
\begin{equation}
\label{eq:colssKhbr}
\begin{tikzpicture}[scale=0.5,rotate=-45,baseline=-3.5]
\draw [thkln] (-1.5,0) -- (1.5,0);
\draw [lnovr] (0,-1.5) -- (0,1.5);
\draw [thkln] (0,-1.5) -- (0,1.5);
%
%
\draw [thkln] (-2.2,0)  -- (-1.8,0);
\node at (-2.6,0) {$\scriptstyle a+1$};
\draw [thkln] (1.8,0) -- (2.2,0);
\node at (2.6,0) {$\scriptstyle a+1$};
\draw [thkln] (0,1.8) -- (0,2.2);
\node at (0,2.6) {$\scriptstyle a+1$};
\draw [thkln] (0,-1.8) -- (0,-2.2);
\node at (0,-2.6) {$\scriptstyle a+1$};
\draw [thkln] (-1.8,-0.6) rectangle ++(0.3,1.2);
\draw [thkln] (1.5,-0.6) rectangle ++(0.3,1.2);
\draw [thkln] (-0.6,1.5) rectangle ++(1.2,0.3);
\draw [thkln] (-0.6,-1.5) rectangle ++(1.2,-0.3);
\end{tikzpicture}
\hteqv
\;\;
\Cnv{
\shcr^{a+\hlf}
\begin{tikzpicture}[scale=0.5,rotate=-45,baseline=-3.5]
\draw  (-1.5,-0.3) to [out=0, in=90] (-0.3,-1.5);
\draw  (1.5,0.3) to [out=180, in=-90] (0.3, 1.5);
\draw [thkln] (-1.5,0) -- (1.5,0);
\draw [line width=\ocrw,draw=white] (0,-1.5) -- (0,1.5);
\draw [thkln] (0,-1.5) -- (0,1.5);
\draw [thkln] (-2.2,0)  -- (-1.8,0);
\node at (-2.6,0) {$\scriptstyle a+1$};
\draw [thkln] (1.8,0) -- (2.2,0);
\node at (2.6,0) {$\scriptstyle a+1$};
\draw [thkln] (0,1.8) -- (0,2.2);
\node at (0,2.6) {$\scriptstyle a+1$};
\draw [thkln] (0,-1.8) -- (0,-2.2);
\node at (0,-2.6) {$\scriptstyle a+1$};
\draw [ptzer] (-1.8,-0.6) rectangle ++(0.3,1.2);
\draw [ptzer] (1.5,-0.6) rectangle ++(0.3,1.2);
\draw [ptzer] (-0.6,1.5) rectangle ++(1.2,0.3);
\draw [ptzer] (-0.6,-1.5) rectangle ++(1.2,-0.3);
\end{tikzpicture}
\longrightarrow
\shcr^{-(a+\hlf)}
\begin{tikzpicture}[scale=0.5,rotate=-45,baseline=-3.5]
\draw (-1.5,0.3) to [out=0, in=-90] (-0.3,1.5);
\draw (1.5,-0.3) to [out=180, in=90] (0.3,-1.5);
\draw [thkln] (-1.5,0) -- (1.5,0);
\draw [line width=\ocrw,draw=white] (0,-1.5) -- (0,1.5);
\draw [thkln] (0,-1.5) -- (0,1.5);
\draw [thkln] (-2.2,0)  -- (-1.8,0);
\node at (-2.6,0) {$\scriptstyle a+1$};
\draw [thkln] (1.8,0) -- (2.2,0);
\node at (2.6,0) {$\scriptstyle a+1$};
\draw [thkln] (0,1.8) -- (0,2.2);
\node at (0,2.6) {$\scriptstyle a+1$};
\draw [thkln] (0,-1.8) -- (0,-2.2);
\node at (0,-2.6) {$\scriptstyle a+1$};
\draw [ptzer] (-1.8,-0.6) rectangle ++(0.3,1.2);
\draw [ptzer] (1.5,-0.6) rectangle ++(0.3,1.2);
\draw [ptzer] (-0.6,1.5) rectangle ++(1.2,0.3);
\draw [ptzer] (-0.6,-1.5) rectangle ++(1.2,-0.3);
\end{tikzpicture}
}
\end{equation}
\end{theorem}
\begin{proof}
Split off a single strand from each crossing cable, apply the \tKhbr\ relation\rx{eq:dkhbr} to their crossing:
\[
\begin{tikzpicture}[scale=0.5,rotate=-45,baseline=-3.5]
\draw [thkln] (-1.5,0.4) -- (1.5,0.4);
\draw (-1.5,-0.4) -- (1.5,-0.4);
\draw [line width=\ocrw,draw=white] (-0.4,-1.5) -- (-0.4,1.5);
\draw [thkln] (-0.4,-1.5) -- (-0.4,1.5);
\draw [line width=\ocrw,draw=white] (0.4,-1.5) -- (0.4,1.5);
\draw  (0.4,-1.5) -- (0.4,1.5);
%
%
\draw [thkln] (-2.2,0)  -- (-1.8,0);
\node at (-2.6,0) {$\scriptstyle a+1$};
\draw [thkln] (1.8,0) -- (2.2,0);
\node at (2.6,0) {$\scriptstyle a+1$};
\draw [thkln] (0,1.8) -- (0,2.2);
\node at (0,2.6) {$\scriptstyle a+1$};
\draw [thkln] (0,-1.8) -- (0,-2.2);
\node at (0,-2.6) {$\scriptstyle a+1$};
\draw [ptzer] (-1.8,-0.6) rectangle ++(0.3,1.2);
\draw [ptzer] (1.5,-0.6) rectangle ++(0.3,1.2);
\draw [ptzer] (-0.6,1.5) rectangle ++(1.2,0.3);
\draw [ptzer] (-0.6,-1.5) rectangle ++(1.2,-0.3);
\end{tikzpicture}
\hteqv\;\;
\Cnv{
\shcrh
\begin{tikzpicture}[scale=0.5,rotate=-45,baseline=-3.5]
\draw [thkln] (-1.5,0.4) -- (1.5,0.4);
\draw (-1.5,-0.4) -- (-0.7,-0.4);
\draw (-0.7,-0.4) to [out=0,in=90] (0.4,-1.5);
\draw [line width=\ocrw,draw=white] (-0.4,-1.5) -- (-0.4,1.5);
\draw [thkln] (-0.4,-1.5) -- (-0.4,1.5);
\draw  (0.4,0.7) -- (0.4,1.5);
\draw [line width=\ocrw,draw=white] (1.5,-0.4) to [out=180,in=-90] (0.4,0.7);
\draw (1.5,-0.4) to [out=180,in=-90] (0.4,0.7);
%
%
\draw [thkln] (-2.2,0)  -- (-1.8,0);
\node at (-2.6,0) {$\scriptstyle a+1$};
\draw [thkln] (1.8,0) -- (2.2,0);
\node at (2.6,0) {$\scriptstyle a+1$};
\draw [thkln] (0,1.8) -- (0,2.2);
\node at (0,2.6) {$\scriptstyle a+1$};
\draw [thkln] (0,-1.8) -- (0,-2.2);
\node at (0,-2.6) {$\scriptstyle a+1$};
\draw [ptzer] (-1.8,-0.6) rectangle ++(0.3,1.2);
\draw [ptzer] (1.5,-0.6) rectangle ++(0.3,1.2);
\draw [ptzer] (-0.6,1.5) rectangle ++(1.2,0.3);
\draw [ptzer] (-0.6,-1.5) rectangle ++(1.2,-0.3);
\end{tikzpicture}
\longrightarrow
\shcrmh
\begin{tikzpicture}[scale=0.5,rotate=-45,baseline=-3.5]
\draw [thkln] (-1.5,0.4) -- (1.5,0.4);
\draw (-1.5,-0.4) -- (-0.7,-0.4);
%
%
\draw  (0.4,0.7) -- (0.4,1.5);
%
%
\draw (0.4,-1.5) to [out=90,in=180] (1.5,-0.4);
\draw [line width=\ocrw,draw=white] (-0.7,-0.4) to [out=0,in=-90] (0.4,0.7);
\draw (-0.7,-0.4) to [out=0,in=-90] (0.4,0.7);
\draw [line width=\ocrw,draw=white] (-0.4,-1.5) -- (-0.4,1.5);
\draw [thkln] (-0.4,-1.5) -- (-0.4,1.5);
%
%
\draw [thkln] (-2.2,0)  -- (-1.8,0);
\node at (-2.6,0) {$\scriptstyle a+1$};
\draw [thkln] (1.8,0) -- (2.2,0);
\node at (2.6,0) {$\scriptstyle a+1$};
\draw [thkln] (0,1.8) -- (0,2.2);
\node at (0,2.6) {$\scriptstyle a+1$};
\draw [thkln] (0,-1.8) -- (0,-2.2);
\node at (0,-2.6) {$\scriptstyle a+1$};
\draw [ptzer] (-1.8,-0.6) rectangle ++(0.3,1.2);
\draw [ptzer] (1.5,-0.6) rectangle ++(0.3,1.2);
\draw [ptzer] (-0.6,1.5) rectangle ++(1.2,0.3);
\draw [ptzer] (-0.6,-1.5) rectangle ++(1.2,-0.3);
\end{tikzpicture}
}
\]
and then use the relations\rx{eq:projtw} to bring both tangles to the form of \ex{eq:colKhbr}.
\end{proof}
%

%
\begin{theorem}[Colored \tKhbr]
\label{thm:colkhovbr}
A \tKhbr\ of the crossing of two equally colored strands can be presented as a \tmcn\ of crossingless colored tangles:
\begin{equation}
\label{eq:lmtcn}
\begin{tikzpicture}[menvrtone]
\draw [thkln] (-1.5,0) -- (1.5,0);
\draw [line width=\ocrw,draw=white] (0,-1.5) -- (0,1.5);
\draw [thkln] (0,-1.5) -- (0,1.5);
%
%
\draw [thkln] (-2.2,0)  -- (-1.8,0);
\node at (-2.6,0) {$\scriptstyle a$};
\draw [thkln] (1.8,0) -- (2.2,0);
\node at (2.6,0) {$\scriptstyle a$};
\draw [thkln] (0,1.8) -- (0,2.2);
\node at (0,2.6) {$\scriptstyle a$};
\draw [thkln] (0,-1.8) -- (0,-2.2);
\node at (0,-2.6) {$\scriptstyle a$};
\draw [ptzer] (-1.8,-0.6) rectangle ++(0.3,1.2);
\draw [ptzer] (1.5,-0.6) rectangle ++(0.3,1.2);
\draw [ptzer] (-0.6,1.5) rectangle ++(1.2,0.3);
\draw [ptzer] (-0.6,-1.5) rectangle ++(1.2,-0.3);
\end{tikzpicture}
\;\hteqv\;
\shcr^{-\hlf a^2}
\;
\Cnv{
\cdots\longrightarrow
\shcr^{\ysvki + n_i}
\begin{tikzpicture}[menvrtsone]
\node at (135:0.55) {$\scriptstyle a-\yvki$};
\node at (135:-0.55) {$\scriptstyle a-\yvki$};
\node at (45:1.5) {$\scriptstyle \yvki$};
\node at (45:-1.5) {$\scriptstyle \yvki$};
\draw [thkln] (-1.5,0.3) to [out=0, in=-90] (-0.3,1.5);
\draw [thkln] (-1.5,-0.3) to [out=0, in=90] (-0.3,-1.5);
\draw [thkln] (1.5,0.3) to [out=180, in=-90] (0.3, 1.5);
\draw [thkln] (1.5,-0.3) to [out=180, in=90] (0.3,-1.5);
%
\draw [thkln] (-2.2,0)  -- (-1.8,0);
\node at (-2.6,0) {$\scriptstyle a$};
\draw [thkln] (1.8,0) -- (2.2,0);
\node at (2.6,0) {$\scriptstyle a$};
\draw [thkln] (0,1.8) -- (0,2.2);
\node at (0,2.6) {$\scriptstyle a$};
\draw [thkln] (0,-1.8) -- (0,-2.2);
\node at (0,-2.6) {$\scriptstyle a$};
\draw [ptzer] (-1.8,-0.6) rectangle ++(0.3,1.2);
\draw [ptzer] (1.5,-0.6) rectangle ++(0.3,1.2);
\draw [ptzer] (-0.6,1.5) rectangle ++(1.2,0.3);
\draw [ptzer] (-0.6,-1.5) rectangle ++(1.2,-0.3);
\end{tikzpicture}
\longrightarrow\cdots
}_{\;i=0}^{\;\infty}\;,
\end{equation}
such that
\begin{equation}
\label{eq:twineq}
n_i\geq 0,\qquad  i\geq 2^{\yki}-1
\end{equation}
and the \lumps\ form of this \tmcn\ is
\begin{equation}
\label{eq:colKhbr}
\begin{tikzpicture}[menvrtone]
\draw [thkln] (-1.5,0) -- (1.5,0);
\draw [line width=\ocrw,draw=white] (0,-1.5) -- (0,1.5);
\draw [thkln] (0,-1.5) -- (0,1.5);
%
%
\draw [thkln] (-2.2,0)  -- (-1.8,0);
\node at (-2.6,0) {$\scriptstyle a$};
\draw [thkln] (1.8,0) -- (2.2,0);
\node at (2.6,0) {$\scriptstyle a$};
\draw [thkln] (0,1.8) -- (0,2.2);
\node at (0,2.6) {$\scriptstyle a$};
\draw [thkln] (0,-1.8) -- (0,-2.2);
\node at (0,-2.6) {$\scriptstyle a$};
\draw [ptzer] (-1.8,-0.6) rectangle ++(0.3,1.2);
\draw [ptzer] (1.5,-0.6) rectangle ++(0.3,1.2);
\draw [ptzer] (-0.6,1.5) rectangle ++(1.2,0.3);
\draw [ptzer] (-0.6,-1.5) rectangle ++(1.2,-0.3);
\end{tikzpicture}
\;\hteqv\;
\shcr^{-\hlf a^2}\;
\Pcnv{
\bigoplus_{\xki=0}^a
\shcr^{\xki^2}
{a \brace \yki}_{\shcr}
\begin{tikzpicture}[menvrtone]
\node at (135:0.65) {$\scriptstyle a-\yki$};
\node at (135:-0.65) {$\scriptstyle a-\yki$};
\node at (45:1.5) {$\scriptstyle \yki$};
\node at (45:-1.5) {$\scriptstyle \yki$};
\draw [thkln] (-1.5,0.3) to [out=0, in=-90] (-0.3,1.5);
\draw [thkln] (-1.5,-0.3) to [out=0, in=90] (-0.3,-1.5);
\draw [thkln] (1.5,0.3) to [out=180, in=-90] (0.3, 1.5);
\draw [thkln] (1.5,-0.3) to [out=180, in=90] (0.3,-1.5);
%
\draw [thkln] (-2.2,0)  -- (-1.8,0);
\node at (-2.6,0) {$\scriptstyle a$};
\draw [thkln] (1.8,0) -- (2.2,0);
\node at (2.6,0) {$\scriptstyle a$};
\draw [thkln] (0,1.8) -- (0,2.2);
\node at (0,2.6) {$\scriptstyle a$};
\draw [thkln] (0,-1.8) -- (0,-2.2);
\node at (0,-2.6) {$\scriptstyle a$};
\draw [ptzer] (-1.8,-0.6) rectangle ++(0.3,1.2);
\draw [ptzer] (1.5,-0.6) rectangle ++(0.3,1.2);
\draw [ptzer] (-0.6,1.5) rectangle ++(1.2,0.3);
\draw [ptzer] (-0.6,-1.5) rectangle ++(1.2,-0.3);
\end{tikzpicture}
}
\end{equation}
\end{theorem}
\begin{proof}
We prove this theorem by induction over $a$. At $a=1$ it amounts to \tKhbr\rx{eq:dkhbr}. Suppose that it holds for some $a$ and consider the crossing of two $(a+1)$-cables. We split each cable into an $a$-cable and a single line and apply \eex{eq:lmtcn} and\rx{eq:colKhbr} to the crossing of $a$-cables:
\def\xscl{0.6}
\begin{equation}
\label{eq:spcrmt}
\begin{split}
\begin{tikzpicture}[scale=\xscl,rotate=-45,baseline=-3.5]
\draw [thkln] (-1.5,0.4) -- (1.5,0.4);
\draw (-1.5,-0.4) -- (1.5,-0.4);
\draw [line width=\ocrw,draw=white] (-0.4,-1.5) -- (-0.4,1.5);
\draw [thkln] (-0.4,-1.5) -- (-0.4,1.5);
\draw [line width=\ocrw,draw=white] (0.4,-1.5) -- (0.4,1.5);
\draw  (0.4,-1.5) -- (0.4,1.5);
\draw [thkln] (-2.2,0)  -- (-1.8,0);
\node at (-2.6,0) {$\scriptstyle a+1$};
\draw [thkln] (1.8,0) -- (2.2,0);
\node at (2.6,0) {$\scriptstyle a+1$};
\draw [thkln] (0,1.8) -- (0,2.2);
\node at (0,2.6) {$\scriptstyle a+1$};
\draw [thkln] (0,-1.8) -- (0,-2.2);
\node at (0,-2.6) {$\scriptstyle a+1$};
\draw [ptzer] (-1.8,-0.6) rectangle ++(0.3,1.2);
\draw [ptzer] (1.5,-0.6) rectangle ++(0.3,1.2);
\draw [ptzer] (-0.6,1.5) rectangle ++(1.2,0.3);
\draw [ptzer] (-0.6,-1.5) rectangle ++(1.2,-0.3);
\end{tikzpicture}
&\;\hteqv\;
\shcr^{-\hlf a^2}\;
\Pcnv{
\bigoplus_{\yki=0}^{a}
\shcr^{\yki^2}
{a \brace \yki}_{\shcr}
\begin{tikzpicture}[scale=\xscl,
rotate=-45,
baseline=-3.5]
\draw [thkln] (-1.5,0.4) to [out=0,in=-90] (-0.4,1.5);
\node at (-1,1) {$\scriptstyle a-\yki$};
\draw (-1.5,-0.4) -- (1.5,-0.4);
\draw [lnovrtw] (-1.5,0) to [out=0,in=90] (-0.4,-1.5);
\draw [thkln] (-1.5,0) to [out=0,in=90] (-0.4,-1.5) ;
\draw [lnovrtw] (0,-1.5) to [out=90,in=-135] (0,0);
\draw [thkln] (0,-1.5) to [out=90,in=-135] (0,0) to [out=45,in=180] (1.5,0);
\draw [thkln]  (1.5,0.4) to [out = 180, in=-90] (0,1.5);
\draw [lnovrtw] (0.4,-1.5) -- (0.4,1.5);
\draw  (0.4,-1.5) -- (0.4,1.5);
%
%
\draw [thkln] (-2.2,0)  -- (-1.8,0);
\node at (-2.6,0) {$\scriptstyle a+1$};
\draw [thkln] (1.8,0) -- (2.2,0);
\node at (2.6,0) {$\scriptstyle a+1$};
\draw [thkln] (0,1.8) -- (0,2.2);
\node at (0,2.6) {$\scriptstyle a+1$};
\draw [thkln] (0,-1.8) -- (0,-2.2);
\node at (0,-2.6) {$\scriptstyle a+1$};
\draw [ptzer] (-1.8,-0.6) rectangle ++(0.3,1.2);
\draw [ptzer] (1.5,-0.6) rectangle ++(0.3,1.2);
\draw [ptzer] (-0.6,1.5) rectangle ++(1.2,0.3);
\draw [ptzer] (-0.6,-1.5) rectangle ++(1.2,-0.3);
\end{tikzpicture}
}
\\
&\;\hteqv\;
\shcr^{-\hlf a^2}\;
\Cnv{
\cdots\longrightarrow
\shcr^{\ysvki + n_i}
\begin{tikzpicture}[scale=\xscl,
rotate=-45,
baseline=-3.5]
\draw [thkln] (-1.5,0.4) to [out=0,in=-90] (-0.4,1.5);
\node at (-1+0.45,1-0.45) {$\scriptstyle a-\yvki$};
\draw (-1.5,-0.4) -- (1.5,-0.4);
\draw [lnovrtw] (-1.5,0) to [out=0,in=90] (-0.4,-1.5);
\draw [thkln] (-1.5,0) to [out=0,in=90] (-0.4,-1.5) ;
\draw [lnovrtw] (0,-1.5) to [out=90,in=-135] (0,0);
\draw [thkln] (0,-1.5) to [out=90,in=-135] (0,0) to [out=45,in=180] (1.5,0);
\draw [thkln]  (1.5,0.4) to [out = 180, in=-90] (0,1.5);
\draw [lnovrtw] (0.4,-1.5) -- (0.4,1.5);
\draw  (0.4,-1.5) -- (0.4,1.5);
%
%
\draw [thkln] (-2.2,0)  -- (-1.8,0);
\node at (-2.6,0) {$\scriptstyle a+1$};
\draw [thkln] (1.8,0) -- (2.2,0);
\node at (2.6,0) {$\scriptstyle a+1$};
\draw [thkln] (0,1.8) -- (0,2.2);
\node at (0,2.6) {$\scriptstyle a+1$};
\draw [thkln] (0,-1.8) -- (0,-2.2);
\node at (0,-2.6) {$\scriptstyle a+1$};
\draw [ptzer] (-1.8,-0.6) rectangle ++(0.3,1.2);
\draw [ptzer] (1.5,-0.6) rectangle ++(0.3,1.2);
\draw [ptzer] (-0.6,1.5) rectangle ++(1.2,0.3);
\draw [ptzer] (-0.6,-1.5) rectangle ++(1.2,-0.3);
\end{tikzpicture}
\longrightarrow\cdots
}_{\;i=0}^{\;\infty}\;
\end{split}
\end{equation}
The categorification complex of a constituent tangle of the resulting \tmcn\ can be simplified:
\begin{equation}
\label{eq:sepbr}
\begin{split}
\begin{tikzpicture}[scale=\xscl,
rotate=-45,
baseline=-3.5]
\draw [thkln] (-1.5,0.4) to [out=0,in=-90] (-0.4,1.5);
\node at (-1,1) {$\scriptstyle a-\yki$};
\draw (-1.5,-0.4) -- (1.5,-0.4);
\draw [lnovrtw] (-1.5,0) to [out=0,in=90] (-0.4,-1.5);
\draw [thkln] (-1.5,0) to [out=0,in=90] (-0.4,-1.5) ;
\draw [lnovrtw] (0,-1.5) to [out=90,in=-135] (0,0);
\draw [thkln] (0,-1.5) to [out=90,in=-135] (0,0) to [out=45,in=180] (1.5,0);
\draw [thkln]  (1.5,0.4) to [out = 180, in=-90] (0,1.5);
\draw [lnovrtw] (0.4,-1.5) -- (0.4,1.5);
\draw  (0.4,-1.5) -- (0.4,1.5);
%
%
\draw [thkln] (-2.2,0)  -- (-1.8,0);
\node at (-2.6,0) {$\scriptstyle a+1$};
\draw [thkln] (1.8,0) -- (2.2,0);
\node at (2.6,0) {$\scriptstyle a+1$};
\draw [thkln] (0,1.8) -- (0,2.2);
\node at (0,2.6) {$\scriptstyle a+1$};
\draw [thkln] (0,-1.8) -- (0,-2.2);
\node at (0,-2.6) {$\scriptstyle a+1$};
\draw [ptzer] (-1.8,-0.6) rectangle ++(0.3,1.2);
\draw [ptzer] (1.5,-0.6) rectangle ++(0.3,1.2);
\draw [ptzer] (-0.6,1.5) rectangle ++(1.2,0.3);
\draw [ptzer] (-0.6,-1.5) rectangle ++(1.2,-0.3);
\end{tikzpicture}
&
\;\hteqv\;
\shcr^{2\xki-a}
\begin{tikzpicture}[scale=\xscl,
rotate=-45,
baseline=-3.5]
\draw [thkln] (-1.5,0.4) to [out=0,in=-90] (-0.4,1.5);
\node at (-1.05,1.05) {$\scriptstyle a-\yki$};
\node at (1,-1) {$\scriptstyle a-\yki$};
\node at (1.,1.) {$\scriptstyle \yki$};
\node at (-1.,-1.) {$\scriptstyle \yki$};
\draw (-1.5,0) -- (1.5,0);
\draw [thkln] (-1.5,-0.4) to [out=0,in=90] (-0.4,-1.5) ;
\draw [thkln] (0.4,-1.5) to [out=90,in=180]  (1.5,-0.4);
\draw [thkln]  (1.5,0.4) to [out = 180, in=-90] (0.4,1.5);
\draw [lnovrtw] (0,-1.5) -- (0,1.5);
\draw  (0,-1.5) -- (0,1.5);
%
%
\draw [thkln] (-2.2,0)  -- (-1.8,0);
\node at (-2.6,0) {$\scriptstyle a+1$};
\draw [thkln] (1.8,0) -- (2.2,0);
\node at (2.6,0) {$\scriptstyle a+1$};
\draw [thkln] (0,1.8) -- (0,2.2);
\node at (0,2.6) {$\scriptstyle a+1$};
\draw [thkln] (0,-1.8) -- (0,-2.2);
\node at (0,-2.6) {$\scriptstyle a+1$};
\draw [ptzer] (-1.8,-0.6) rectangle ++(0.3,1.2);
\draw [ptzer] (1.5,-0.6) rectangle ++(0.3,1.2);
\draw [ptzer] (-0.6,1.5) rectangle ++(1.2,0.3);
\draw [ptzer] (-0.6,-1.5) rectangle ++(1.2,-0.3);
\end{tikzpicture}
\\
&
\;\hteqv\;
\shcr^{2\xki-a}\;
\boxed{
\,\shcr^{\hlf}
\begin{tikzpicture}[scale=\xscl,
rotate=-45,
baseline=-3.5]
\draw [thkln] (-1.5,0.3) to [out=0,in=-90] (-0.3,1.5);
\node at (-0.4,0.4) {$\scriptstyle a-\yki$};
\node at (0.4,-0.4) {$\scriptstyle a-\yki$};
\node at (1.15,1.15) {$\scriptstyle \yki+1$};
\node at (-1.15,-1.15) {$\scriptstyle \yki+1$};
\draw [thkln] (-1.5,-0.3) to [out=0,in=90] (-0.3,-1.5) ;
\draw [thkln] (0.3,-1.5) to [out=90,in=180]  (1.5,-0.3);
\draw [thkln]  (1.5,0.3) to [out = 180, in=-90] (0.3,1.5);
%
\draw [thkln] (-2.2,0)  -- (-1.8,0);
\node at (-2.6,0) {$\scriptstyle a+1$};
\draw [thkln] (1.8,0) -- (2.2,0);
\node at (2.6,0) {$\scriptstyle a+1$};
\draw [thkln] (0,1.8) -- (0,2.2);
\node at (0,2.6) {$\scriptstyle a+1$};
\draw [thkln] (0,-1.8) -- (0,-2.2);
\node at (0,-2.6) {$\scriptstyle a+1$};
\draw [ptzer] (-1.8,-0.6) rectangle ++(0.3,1.2);
\draw [ptzer] (1.5,-0.6) rectangle ++(0.3,1.2);
\draw [ptzer] (-0.6,1.5) rectangle ++(1.2,0.3);
\draw [ptzer] (-0.6,-1.5) rectangle ++(1.2,-0.3);
\end{tikzpicture}
\longrightarrow
\shcr^{-\hlf}
\begin{tikzpicture}[scale=\xscl,
rotate=-45,
baseline=-3.5]
\draw [thkln] (-1.5,0.3) to [out=0,in=-90] (-0.3,1.5);
\node at (-0.4,0.4) {$\scriptstyle a-\yki+1$};
\node at (0.4,-0.4) {$\scriptstyle a-\yki+1$};
\node at (0.9,0.9) {$\scriptstyle \yki$};
\node at (-0.9,-0.9) {$\scriptstyle \yki$};
\draw [thkln] (-1.5,-0.3) to [out=0,in=90] (-0.3,-1.5) ;
\draw [thkln] (0.3,-1.5) to [out=90,in=180]  (1.5,-0.3);
\draw [thkln]  (1.5,0.3) to [out = 180, in=-90] (0.3,1.5);
%
\draw [thkln] (-2.2,0)  -- (-1.8,0);
\node at (-2.6,0) {$\scriptstyle a+1$};
\draw [thkln] (1.8,0) -- (2.2,0);
\node at (2.6,0) {$\scriptstyle a+1$};
\draw [thkln] (0,1.8) -- (0,2.2);
\node at (0,2.6) {$\scriptstyle a+1$};
\draw [thkln] (0,-1.8) -- (0,-2.2);
\node at (0,-2.6) {$\scriptstyle a+1$};
\draw [ptzer] (-1.8,-0.6) rectangle ++(0.3,1.2);
\draw [ptzer] (1.5,-0.6) rectangle ++(0.3,1.2);
\draw [ptzer] (-0.6,1.5) rectangle ++(1.2,0.3);
\draw [ptzer] (-0.6,-1.5) rectangle ++(1.2,-0.3);
\end{tikzpicture}
}
\end{split}
\end{equation}
Here the first homotopy equivalence follows from \ex{eq:projtw} and the second one is the application of Khovanov bracket\rx{eq:dkhbr} to the crossing of two single lines. We substitute \ex{eq:sepbr} for every constituent tangle in both \tmcn s of \ex{eq:spcrmt}. The \lumps\ \tmcn\ transforms into the \rhs of \ex{eq:colKhbr} for the intersection of two $(a+1)$-cables with the help of a simple identity
%
\[
{a \brace \yki-1}_{\shcr} + \shcr^{2\yki} {a\brace \yki}_{\shcr} =
{a+1\brace \yki}_{\shcr}.
\]
Associativity of the cone operation implies that the second \tmcn\ of \ex{eq:spcrmt} can be brought to the linear form of the \rhs of \ex{eq:lmtcn}, so it remains to verify inequalities\rx{eq:twineq}. The first inequality follows from the \lumps\ \tmcn\ formula\rx{eq:colKhbr} which we have just proved. Let us verify the second inequality for two tangles of the cone\rx{eq:sepbr} after they appear through the substitution in the second \tmcn\ of \ex{eq:spcrmt}. Since every constituent tangle of\rx{eq:spcrmt} is replaced by a cone of two tangles, the second tangle of the cone\rx{eq:sepbr} will appear at the \tmcn\ position $i'=2i$ and the second inequality of\rx{eq:twineq} for it obviously holds. The first tangle of \rx{eq:spcrmt} appears at the position $i'=2i+1$ and it carries $\yki'=\yki+1$. The inequality $2i'+1\geq 2^{\yki'}-1$ follows easily from the assumed inequality $i\leq 2^{\yki}-1$.
\end{proof}

\subsection{Recurrence relations for \cJWp s}

\begin{proposition}
A larger \tJWp\ absorbs a smaller one:
\begin{equation}
\label{eq:absorb}
\begin{tikzpicture}[scale=0.5,baseline=-1.5]
\draw [thkln] 
(-0.,0) -- (1-0.15,0)
node [near start,below] {$\scriptstyle \xca$}
 (1+.15,0) -- (1.75,0)
(2.05,0) -- (2.9,0)
node [near end, below] {$\scriptstyle \xca$};
\draw (0,0.9) -- (1.75,0.9) (2.05,0.9) -- (2.9,0.9);
\draw [ptzer] (1 - 0.15,-0.6) rectangle ++(0.3,1.2);
\draw [ptzer] (1.75,-1.2) rectangle ++(0.3,2.4);
\end{tikzpicture}
\;\hteqv\;
\begin{tikzpicture}[scale=0.5,baseline=-1.5]
\draw [ptzer] (1.75,-0.6) rectangle ++(0.3,1.2);
\draw [thkln] (1.75-1.3,0) -- (1.75,0)
node [near start, below] {$\scriptstyle \xca+1$}
(2.05,0) -- (2.05+1.3,0)
node [near end, below] {$\scriptstyle \xca+1$};
\end{tikzpicture}\;.
\end{equation}
\end{proposition}
\begin{proof}
In view of \eex{eq:projcn} and\rx{eq:grproj} for $a=\xca$, this equivalence is a result of purging the smaller projector with the larger one.
\end{proof}

Let us introduce a shortcut notation:
\begin{equation}
\label{eq:xthrpr}
\begin{tikzpicture}[menvone]
\draw[pttwo] (-.15,-0.6) rectangle ++(0.3,1.2);
\draw[thkln] (-1.35,0) -- (-.15,0)
node[near start,below] {$\scriptstyle \xca+1$ }
(.15,0) -- (1.35,0)
node [near end, below] {$\scriptstyle \xca+1$};
\end{tikzpicture}
=
\begin{tikzpicture}[scale=0.5,baseline=-1.5]
\draw[ptone] (-.15,-1.2) rectangle ++(.3,2.4);
\draw[ptzer] (-1-.15,-0.6) rectangle ++(.3,1.2);
\draw[ptzer] (1-.15,-0.6) rectangle ++(.3,1.2);
\draw[vthln] (-2,0) -- (-1-.15,0)
 node[near start, below] {$\scriptstyle \xca$}
 (-1+.15,0) -- (-.15,0) (.15,0) -- (1-.15,0)  (1.15,0) -- (2,0)
 node[near end,below] {$\scriptstyle \xca$};
 \draw (-2,0.9) -- (-.15,0.9) (.15,0.9) -- (2,0.9);
\end{tikzpicture},
\end{equation}
where the complex
$
\begin{tikzpicture}[scale=0.4,baseline=-3]
\draw[ptone] (-.15,-0.6) rectangle ++(0.3,1.2);
\draw[vthln] (-0.75,0) -- (-.15,0) (.15,0) -- (.75,0);
\end{tikzpicture}
$
is defined by \ex{eq:projcn}.
\begin{proposition}
\label{pr:hteqvgr}
Thus defined, the complex
$
\begin{tikzpicture}[scale=0.4,baseline=-3]
\draw[pttwo] (-.15,-0.6) rectangle ++(0.3,1.2);
\draw[vthln] (-0.75,0) -- (-.15,0) (.15,0) -- (.75,0);
\end{tikzpicture}
$
has a \tmcn\ presentation
\begin{equation}
\label{eq:swgrpr}
\begin{tikzpicture}[menvone]
\draw[pttwo] (-.15,-0.6) rectangle ++(0.3,1.2);
\draw[thkln] (-1.35,0) -- (-.15,0)
node[near start,below] {$\scriptstyle \xca+1$ }
(.15,0) -- (1.35,0)
node [near end, below] {$\scriptstyle \xca+1$};
\end{tikzpicture}
\;
\hteqv
\;
\boxed{
\cdots\longrightarrow
\shcr^i \bigoplus_{j=0}^i
\prmltij\shfr^j
\begin{tikzpicture}[menvone]
\draw [line width=6pt, color=white] (-1+0.15,0) -- (0,0);
\draw [thkln] (-1.75,0) -- (-1 - .15,0)
node [near start, below] {$\scriptstyle \xca$}
(-1+0.15,0) -- (0,0) -- (1-0.15,0) (1+.15,0) -- (1.75,0)
node [near end, below] {$\scriptstyle \xca$};
\draw (1, 0.9) to [out=180,in=90] (0.4,0.6) to [out=-90,in=180] (1-.15,0.3);
\draw [xscale=-1] (1, 0.9) to [out=180,in=90] (0.4,0.6) to [out=-90,in=180] (1-.15,0.3);
\draw (-1.75,0.9) -- (-1,0.9) (1,0.9) -- (1.75,0.9);
\draw [ptzer] (-1.15,-0.6) rectangle ++(0.3,1.2);
\draw [ptzer] (1 - 0.15,-0.6) rectangle ++(0.3,1.2);
\draw [dotted] (-0.6,-0.7) rectangle (0.6,1.2);
\end{tikzpicture}
\longrightarrow\cdots
}_{\;i=0}^{\infty},
\end{equation}
where $\prmltij$ are the multiplicities of the \tTLt\ inside the dotted box, with which it appears in the \rhs of \ex{eq:grproj}.
\end{proposition}
\begin{proof}
We purge the complex $
\begin{tikzpicture}[scale=0.4,baseline=-3]
\draw[pttwo] (-.15,-0.6) rectangle ++(0.3,1.2);
\draw[vthln] (-0.75,0) -- (-.15,0) (.15,0) -- (.75,0);
\end{tikzpicture}
$ with the help of two $\xca$-strand \tJWp s.
The tangle in the dotted box is the only
$(\xca+1,\xca+1)$ \taTLt\
which is not contracted when sandwiched between them.
\end{proof}

\begin{theorem}
\label{thm:projpr}
The $(\xca+1)$-strand \cJWp\ is homotopy equivalent to a cone
\begin{equation}
\label{eq:jwpar}
\begin{tikzpicture}[menvone]
\draw[ptzer] (-.15,-0.6) rectangle ++(0.3,1.2);
\draw[thkln] (-1.35,0) -- (-.15,0)
node[near start,below] {$\scriptstyle \xca+1$ }
(.15,0) -- (1.35,0)
node [near end, below] {$\scriptstyle \xca+1$};
\end{tikzpicture}
\hteqv
\boxed{
\shcr
\begin{tikzpicture}[menvone]
\draw[pttwo] (-.15,-0.6) rectangle ++(0.3,1.2);
\draw[thkln] (-1.35,0) -- (-.15,0)
node[near start,below] {$\scriptstyle \xca+1$ }
(.15,0) -- (1.35,0)
node [near end, below] {$\scriptstyle \xca+1$};
\end{tikzpicture}
\longrightarrow
\begin{tikzpicture}[menvone]
\draw[color=white] (-0.15,-1.2) rectangle (0.15,1.2);
\draw[ptzer] (-0.15,-0.6) rectangle (0.15,0.6);
\draw [thkln] (-0.75,0) -- (-0.15,0)
node [near start, below] {$\scriptstyle \xca$} (0.15,0) -- (0.75,0)
node [near end, below] {$\scriptstyle \xca$};
\draw (-0.75,0.9) -- (0.75, 0.9);
\end{tikzpicture}
}
\end{equation}
\end{theorem}
\begin{proof}
Consider a sequence of homotopy equivalences
\begin{multline}
\label{eq:thrprj}
\begin{tikzpicture}[menvone]
\draw[ptzer] (-.15,-0.6) rectangle ++(0.3,1.2);
\draw[thkln] (-1.35,0) -- (-.15,0)
node[near start,below] {$\scriptstyle \xca+1$ }
(.15,0) -- (1.35,0)
node [near end, below] {$\scriptstyle \xca+1$};
\end{tikzpicture}
\;\hteqv\;
\begin{tikzpicture}[scale=0.5,baseline=-1.5]
\draw[ptzer] (-.15,-1.2) rectangle ++(.3,2.4);
\draw[ptzer] (-1-.15,-0.6) rectangle ++(.3,1.2);
\draw[ptzer] (1-.15,-0.6) rectangle ++(.3,1.2);
\draw[vthln] (-2,0) -- (-1-.15,0)
 node[near start, below] {$\scriptstyle \xca$}
 (-1+.15,0) -- (-.15,0) (.15,0) -- (1-.15,0)  (1.15,0) -- (2,0)
 node[near end,below] {$\scriptstyle \xca$};
 \draw (-2,0.9) -- (-.15,0.9) (.15,0.9) -- (2,0.9);
\end{tikzpicture}
\;
\hteqv
\;
\boxed{
\shcr
\begin{tikzpicture}[scale=0.5,baseline=-1.5]
\draw[ptone] (-.15,-1.2) rectangle ++(.3,2.4);
\draw[ptzer] (-1-.15,-0.6) rectangle ++(.3,1.2);
\draw[ptzer] (1-.15,-0.6) rectangle ++(.3,1.2);
\draw[vthln] (-2,0) -- (-1-.15,0)
 node[near start, below] {$\scriptstyle \xca$}
 (-1+.15,0) -- (-.15,0) (.15,0) -- (1-.15,0)  (1.15,0) -- (2,0)
 node[near end,below] {$\scriptstyle \xca$};
 \draw (-2,0.9) -- (-.15,0.9) (.15,0.9) -- (2,0.9);
\end{tikzpicture}
\longrightarrow
\begin{tikzpicture}[menvone]
\draw[ptzer] (-.15,-0.6) rectangle ++(0.3,1.2);
\draw[ptzer] (1-.15,-0.6) rectangle ++(0.3,1.2);
\draw[vthln] (-1,0) -- (-.15,0)
 node[near start, below] {$\scriptstyle \xca$}
(.15,0) -- (1-.15,0) (1+.15,0) --(2,0)
 node[near end, below] {$\scriptstyle \xca$};
 \draw (-1,0.9) -- (2,0.9);
\end{tikzpicture}
}
\\
\;\hteqv\;
\boxed{
\shcr
\begin{tikzpicture}[menvone]
\draw[pttwo] (-.15,-0.6) rectangle ++(0.3,1.2);
\draw[thkln] (-1.35,0) -- (-.15,0)
node[near start,below] {$\scriptstyle \xca+1$ }
(.15,0) -- (1.35,0)
node [near end, below] {$\scriptstyle \xca+1$};
\end{tikzpicture}
\longrightarrow
\begin{tikzpicture}[menvone]
 \draw [color=white] (-0.15,-1.2) rectangle ++ (0.3,2.4);
\draw[ptzer] (-.15,-0.6) rectangle ++(0.3,1.2);
\draw[vthln] (-1,0) -- (-.15,0)
 node[near start, below] {$\scriptstyle \xca$}
(.15,0) -- (1,0)
 node[near end, below] {$\scriptstyle \xca$};
 \draw (-1,0.9) -- (1,0.9);
\end{tikzpicture}
}
\end{multline}
The first homotopy equivalence comes from \ex{eq:absorb}, the second follows from \ex{eq:projcn} and the last one follows from \eex{eq:xthrpr} and\rx{eq:cmppr}.
\end{proof}

\begin{theorem}
\label{thm:jwind}
The $(\xca+1)$-strand \cJWp\ can be presented as the following cone:
\begin{equation}
\label{eq:jwind}
\begin{tikzpicture}[menvone]
\draw[ptzer] (-.15,-0.6) rectangle ++(0.3,1.2);
\draw[thkln] (-1.35,0) -- (-.15,0)
node[near start,below] {$\scriptstyle \xca+1$ }
(.15,0) -- (1.35,0)
node [near end, below] {$\scriptstyle \xca+1$};
\end{tikzpicture}
\hteqv
\boxed{
\shcr^{2\xca+1}\shfr^{2}
\begin{tikzpicture}[menvone]
\draw[pttwo] (-.15,-0.6) rectangle ++(0.3,1.2);
\draw[thkln] (-1.35,0) -- (-.15,0)
node[near start,below] {$\scriptstyle \xca+1$ }
(.15,0) -- (1.35,0)
node [near end, below] {$\scriptstyle \xca+1$};
\end{tikzpicture}
\longrightarrow
\shcr^{\xca}
\begin{tikzpicture}[scale=0.5,baseline=-1.5]
\draw [xscale=-1]  (1, 0.9) to [out=180,in=0] (0,-0.4);
\draw [line width=6pt, color=white] (-1+0.15,0) -- (0,0);
\draw [thkln] (-1.75,0) -- (-1 - .15,0)
node [near start,below] {$\scriptstyle \xca$}
(-1+0.15,0) -- (0,0) -- (1-0.15,0) (1+.15,0) -- (1.75,0)
node [near end, below] {$\scriptstyle \xca$};
\draw [line width=6pt, color=white] (1, 0.9) to [out=180,in=0] (0,-0.4);
\draw (1, 0.9) to [out=180,in=0] (0,-0.4);
\draw (-1.75,0.9) -- (-1,0.9) (1,0.9) -- (1.75,0.9);
\draw [ptzer] (-1.15,-0.6) rectangle ++(0.3,1.2);
\draw [ptzer] (1 - 0.15,-0.6) rectangle ++(0.3,1.2);
\end{tikzpicture}
}
\end{equation}
\end{theorem}

\begin{lemma}
\label{lm:grwind}
There is a homotopy equivalence
\begin{equation}
\label{eq:grwind}
\begin{tikzpicture}[scale=0.5,baseline=-1.5]
\draw [xscale=-1]  (1, 0.9) to [out=180,in=0] (0,-0.4);
\draw [line width=6pt, color=white] (-1+0.15,0) -- (0,0);
\draw [thkln] (-1.75+0.3,0) -- (-1 +0.3 - .15,0)
node [near start,below] {$\scriptstyle \xca$}
(-1+0.15,0) -- (0,0) -- (1,0) (1.3,0) -- (2.15,0)
(2.05,0) -- (2.15,0)
node [near end, below] {$\scriptstyle \xca$};
\draw [line width=6pt, color=white] (1, 0.9) to [out=180,in=0] (0,-0.4);
\draw (1, 0.9) to [out=180,in=0] (0,-0.4);
\draw (-1.75+0.3,0.9) -- (-1,0.9) (1.3,0.9) -- (2.15,0.9); 
\draw [pttwo] (1,-1.2) rectangle ++(0.3,2.4);
\end{tikzpicture}
\hteqv
\shcr^{\xca}\shfr^2
\begin{tikzpicture}[menvone]
\draw[pttwo] (-.15,-0.6) rectangle ++(0.3,1.2);
\draw[thkln] (-1.35,0) -- (-.15,0)
node[near start,below] {$\scriptstyle \xca+1$ }
(.15,0) -- (1.35,0)
node [near end, below] {$\scriptstyle \xca+1$};
\end{tikzpicture}
\end{equation}
\end{lemma}
\begin{proof}
Consider the composition of the line winding around the $\xca$-cable with the left portion of the complex
$
\begin{tikzpicture}[scale=0.4,baseline=-1.5]
\draw [line width=6pt, color=white] (-1+0.15,0) -- (0,0);
\draw [line width=\cblth] (-1.75,0) -- (-1 - .15,0)
(-1+0.15,0) -- (0,0) -- (1-0.15,0) (1+.15,0) -- (1.75,0);
\draw (1, 0.9) to [out=180,in=90] (0.4,0.6) to [out=-90,in=180] (1-.15,0.3);
\draw [xscale=-1] (1, 0.9) to [out=180,in=90] (0.4,0.6) to [out=-90,in=180] (1-.15,0.3);
\draw (-1.75,0.9) -- (-1,0.9) (1,0.9) -- (1.75,0.9);
\draw [line width=\ljwp] (-1.15,-0.6) rectangle ++(0.3,1.2);
\draw [line width=\ljwp] (1 - 0.15,-0.6) rectangle ++(0.3,1.2);
\end{tikzpicture}
$
which generates the \tmcn\rx{eq:swgrpr}:
\begin{equation}
\label{eq:windjw}
\begin{tikzpicture}[scale=0.75,baseline=-1.5]
\draw [xscale=-1]  (1, 0.9) to [out=180,in=0] (0,-0.4);
\draw [line width=6pt, color=white] (-1+0.15,0) -- (0,0);
\draw [thkln] (-1.15,0) -- (-1 + .15,0)
node [near start,below] {$\scriptstyle \xca$}
(-1+0.15,0) -- (0,0) -- (1-0.15,0);
\draw [line width=6pt, color=white] (1, 0.9) to [out=180,in=0] (0,-0.4);
\draw (1, 0.9) to [out=180,in=0] (0,-0.4);
\draw (-1.15,0.9) -- (-1,0.9); 
\draw [line width=\ljwp] (1 - 0.15,-0.6) rectangle ++(0.3,1.2);
\begin{scope}[xshift=2cm]
\draw [thkln]
(-1+0.15,0) -- (-0.15+0.3,0) 
node [near end, below] {$\scriptstyle \xca-1$};
\draw [xscale=-1] (1, 0.9) to [out=180,in=90] (0.4,0.6) to [out=-90,in=180] (1-.15,0.3);
\end{scope}
\end{tikzpicture}
\;\hteqv\;
\begin{tikzpicture}[scale=0.75,baseline=-1.5]
\draw [xscale=-1]  (1, 0.9) to [out=180,in=0] (0,-0.4);
\draw [line width=4pt, color=white] (-1+0.15,0) -- (0,0);
\draw [thkln] (-1.75,0) -- (-1 - .15,0)
node [near start,below] {$\scriptstyle \xca$}
(-1+0.15,0) -- (0,0) -- (1+0.15,0);
\draw [line width=4pt, color=white] (-1+0.15,0.3) -- (0,0.3);
\draw (-1+0.15,0.3)  -- (1+0.15,0.3);
\draw [line width=6pt, color=white] (1, 0.9) to [out=180,in=0] (0,-0.4);
\draw (1, 0.9) to [out=180,in=0] (0,-0.4);
\draw (-1.75,0.9) -- (-1,0.9); 
\draw [ptzer] (-1.15,-0.6) rectangle ++(0.3,1.2);
\begin{scope}[xshift=2cm]
\draw [line width=\cblth]
(-1+0.15,0) -- (-0.15-0.2,0) 
node [near end, below] {$\scriptstyle \xca-1$};
\draw [xscale=-1] (1, 0.9) to [out=180,in=90] (0.4,0.6) to [out=-90,in=180] (1-.15,0.3);
\end{scope}
\end{tikzpicture}
\;\hteqv\;
\shcr \shfr^{2}
\begin{tikzpicture}[menvtwo]
\draw [thkln] (-0.75,0) -- (-0.15,0)
node [near start, below] {$\scriptstyle \xca$}
(0.15,0) -- (1,0);
\draw [lnovr] (0.15,0.3) to [out=0,in=180] (1,-0.4);
\draw (0.15,0.3) to [out=0,in=180] (1,-0.4);
\draw (1,-0.4) .. controls +(0:0.8) and +(0:1.2) .. (0.2,0.9) -- (-0.75,0.9);
\draw [lnovr] (1.1,0) -- (2,0);
\draw [thkln] (1,0) -- (2.2,0)
node [very near end, below] {$\scriptstyle \xca-1$};
\draw[ptzer] (-0.15,-0.6) rectangle (0.15,0.6);
\end{tikzpicture}
\;\hteqv\;
\shcr^{\xca}\shfr^{2}
\begin{tikzpicture}[menvtwo]
\draw [ptzer] (-0.15,-0.6) rectangle (0.15,0.6);
\draw [thkln] (-0.75,0) -- (-0.15,0)
node [near start, below] {$\scriptstyle \xca$} (0.15,0) -- (1,0)
node [near end,below] {$\scriptstyle \xca-1$};
\draw [xscale=-1,xshift=-1cm] (1, 0.9) to [out=180,in=90] (0.4,0.6) to [out=-90,in=180] (1-.15,0.3);
\draw (-0.75,0.9) -- (0,0.9);
\end{tikzpicture}
\end{equation}
Here the first equivalence is purely topological: the projector is moved left along the cable, the second equivalence uses \ex{eq:frsh} to remove two framing kinks on the single line and
 the third equivalence follows from \ex{eq:projtw}.
%
The equivalence\rx{eq:grwind} comes from applying equivalence\rx{eq:windjw} to every constituent complex in the \tmcn\rx{eq:swgrpr}.
\end{proof}
\begin{proof}[Proof of Theorem\rw{thm:jwind}]
Eq.\rx{eq:jwind} follows from a sequence of homotopy equivalences:
\begin{multline*}
\begin{tikzpicture}[scale=0.5,baseline=-1.5]
\draw [ptzer] (1.75,-0.6) rectangle ++(0.3,1.2);
\draw [thkln] (1.75-0.85-0.3,0) -- (1.75,0)
node [very near start, below] {$\scriptstyle \xca+1$}
(2.05,0) -- (2.9+0.3,0)
node [very near end, below] {$\scriptstyle \xca+1$};
\end{tikzpicture}
\;\hteqv\;
\shcr^{\xca}
\begin{tikzpicture}[scale=0.5,baseline=-1.5]
\draw [xscale=-1]  (1, 0.9) to [out=180,in=0] (0,-0.4);
\draw [line width=6pt, color=white] (-1+0.15,0) -- (0,0);
\draw [thkln] (-1.75+0.3,0) -- (-1 +0.3 - .15,0)
node [near start,below] {$\scriptstyle \xca$}
(-1+0.15,0) -- (0,0) -- (1,0) (1.3,0) -- (2.15,0)
(2.05,0) -- (2.15,0)
node [near end, below] {$\scriptstyle \xca$};
\draw [line width=6pt, color=white] (1, 0.9) to [out=180,in=0] (0,-0.4);
\draw (1, 0.9) to [out=180,in=0] (0,-0.4);
\draw (-1.75+0.3,0.9) -- (-1,0.9) (1.3,0.9) -- (2.15,0.9); 
\draw [ptzer] (1,-1.2) rectangle ++(0.3,2.4);
\end{tikzpicture}
\;\hteqv\;
\boxed{
\shcr^{\xca+1}
\begin{tikzpicture}[scale=0.5,baseline=-1.5]
\draw [xscale=-1]  (1, 0.9) to [out=180,in=0] (0,-0.4);
\draw [line width=6pt, color=white] (-1+0.15,0) -- (0,0);
\draw [thkln] (-1.75+0.3,0) -- (-1 +0.3 - .15,0)
node [near start,below] {$\scriptstyle \xca$}
(-1+0.15,0) -- (0,0) -- (1,0) (1.3,0) -- (2.15,0)
(2.05,0) -- (2.15,0)
node [near end, below] {$\scriptstyle \xca$};
\draw [line width=6pt, color=white] (1, 0.9) to [out=180,in=0] (0,-0.4);
\draw (1, 0.9) to [out=180,in=0] (0,-0.4);
\draw (-1.75+0.3,0.9) -- (-1,0.9) (1.3,0.9) -- (2.15,0.9); 
\draw [pttwo] (1,-1.2) rectangle ++(0.3,2.4);
\end{tikzpicture}
\longrightarrow
\shcr^{\xca}
\begin{tikzpicture}[scale=0.5,baseline=-1.5]
\draw [xscale=-1]  (1, 0.9) to [out=180,in=0] (0,-0.4);
\draw [line width=6pt, color=white] (-1+0.15,0) -- (0,0);
\draw [thkln] (-1.75+0.3,0) -- (-1 - .15+0.3,0)
node [near start,below] {$\scriptstyle \xca$}
(-1+0.15,0) -- (0,0) -- (1-0.15,0) (1+.15,0) -- (1.75,0)
node [near end, below] {$\scriptstyle \xca$};
\draw [line width=6pt, color=white] (1, 0.9) to [out=180,in=0] (0,-0.4);
\draw (1, 0.9) to [out=180,in=0] (0,-0.4);
\draw (-1.75+0.3,0.9) -- (-1,0.9) (1,0.9) -- (1.75,0.9);
\draw [line width=\ljwp] (1 - 0.15,-0.6) rectangle ++(0.3,1.2);
\end{tikzpicture}
}
\\
\;\hteqv\;
\boxed{
\shcr^{2\xca+1}\shfr^{2}
\begin{tikzpicture}[menvone]
\draw[pttwo] (-.15,-0.6) rectangle ++(0.3,1.2);
\draw[thkln] (-1.35,0) -- (-.15,0)
node[near start,below] {$\scriptstyle \xca+1$ }
(.15,0) -- (1.35,0)
node [near end, below] {$\scriptstyle \xca+1$};
\end{tikzpicture}
\longrightarrow
\shcr^{\xca}
\begin{tikzpicture}[scale=0.5,baseline=-1.5]
\draw [xscale=-1]  (1, 0.9) to [out=180,in=0] (0,-0.4);
\draw [line width=6pt, color=white] (-1+0.15,0) -- (0,0);
\draw [thkln] (-1.75,0) -- (-1 - .15,0)
node [near start,below] {$\scriptstyle \xca$}
(-1+0.15,0) -- (0,0) -- (1-0.15,0) (1+.15,0) -- (1.75,0)
node [near end, below] {$\scriptstyle \xca$};
\draw [line width=6pt, color=white] (1, 0.9) to [out=180,in=0] (0,-0.4);
\draw (1, 0.9) to [out=180,in=0] (0,-0.4);
\draw (-1.75,0.9) -- (-1,0.9) (1,0.9) -- (1.75,0.9);
\draw [ptzer] (-1.15,-0.6) rectangle ++(0.3,1.2);
\draw [ptzer] (1 - 0.15,-0.6) rectangle ++(0.3,1.2);
\end{tikzpicture}
}
\end{multline*}
Here the first equivalence follows from \ex{eq:projtw}, the second equivalence follows from \ex{eq:jwpar}, the third equivalence follows from \eex{eq:grwind} and\rx{eq:cmppr}.
\end{proof}

\begin{theorem}
\label{thm:projcr}
The $(\xca+1)$-strand \cJWp\ is homotopy equivalent to a cone
\begin{equation}
\label{eq:jwcrs}
\begin{tikzpicture}[menvone]
\draw[ptzer] (-.15,-0.6) rectangle ++(0.3,1.2);
\draw[thkln] (-1.35,0) -- (-.15,0)
node[near start,below] {$\scriptstyle \xca+1$ }
(.15,0) -- (1.35,0)
node [near end, below] {$\scriptstyle \xca+1$};
\end{tikzpicture}
\hteqv
\boxed{
\shcr^{2\xca}\shfr
\begin{tikzpicture}[menvone]
\draw[ptthr] (-.15,-0.6) rectangle ++(0.3,1.2);
\draw[thkln] (-1.35,0) -- (-.15,0)
node[near start,below] {$\scriptstyle \xca+1$ }
(.15,0) -- (1.35,0)
node [near end, below] {$\scriptstyle \xca+1$};
\end{tikzpicture}
\longrightarrow
\shcr^{\hlf}
\begin{tikzpicture}[menvone]
\draw [line width=6pt, color=white] (-1+0.15,0) -- (0,0);
\draw [thkln] (-1.75,0) -- (-1 - .15,0)
node [near start, below] {$\scriptstyle \xca$}
(-1+0.15,0) -- (0,0) -- (1-0.15,0) (1+.15,0) -- (1.75,0)
node [near end, below] {$\scriptstyle \xca$};
\draw [xscale=-1] (-1+0.15,0.3) -- (-0.5,0.3) to [out=0, in=180] (0.5,0.9) -- (1,0.9);
\draw [lnovr] (-0.5,0.3) to [out=0, in=180] (0.5,0.9);
\draw (-1+0.15,0.3) -- (-0.5,0.3) to [out=0, in=180] (0.5,0.9) -- (1,0.9);
\draw (-1.75,0.9) -- (-1,0.9) (1,0.9) -- (1.75,0.9);
\draw [ptzer] (-1.15,-0.6) rectangle ++(0.3,1.2);
\draw [ptzer] (1 - 0.15,-0.6) rectangle ++(0.3,1.2);
\end{tikzpicture}
}
\end{equation}
in which the complex
$
\begin{tikzpicture}[scale=0.4,baseline=-3]
\draw[ptthr] (-.15,-0.6) rectangle ++(0.3,1.2);
\draw[vthln] (-0.75,0) -- (-.15,0) (.15,0) -- (.75,0);
\end{tikzpicture}
$
has the following \tmcn\ presentation:
\begin{equation}
\label{eq:swnepr}
\begin{tikzpicture}[menvone]
\draw[ptthr] (-.15,-0.6) rectangle ++(0.3,1.2);
\draw[thkln] (-1.35,0) -- (-.15,0)
node[near start,below] {$\scriptstyle \xca+1$ }
(.15,0) -- (1.35,0)
node [near end, below] {$\scriptstyle \xca+1$};
\end{tikzpicture}
\;
\hteqv
\;
\boxed{
\cdots\longrightarrow
\shcr^i \bigoplus_{j=0}^i
\tprmltij \shfr^j
\begin{tikzpicture}[menvone]
\draw [line width=6pt, color=white] (-1+0.15,0) -- (0,0);
\draw [thkln] (-1.75,0) -- (-1 - .15,0)
node [near start, below] {$\scriptstyle \xca$}
(-1+0.15,0) -- (0,0) -- (1-0.15,0) (1+.15,0) -- (1.75,0)
node [near end, below] {$\scriptstyle \xca$};
\draw (1, 0.9) to [out=180,in=90] (0.4,0.6) to [out=-90,in=180] (1-.15,0.3);
\draw [xscale=-1] (1, 0.9) to [out=180,in=90] (0.4,0.6) to [out=-90,in=180] (1-.15,0.3);
\draw (-1.75,0.9) -- (-1,0.9) (1,0.9) -- (1.75,0.9);
\draw [ptzer] (-1.15,-0.6) rectangle ++(0.3,1.2);
\draw [ptzer] (1 - 0.15,-0.6) rectangle ++(0.3,1.2);
\end{tikzpicture}
\longrightarrow\cdots
}_{\;i=0}^{\infty},
\quad
\tprmltij =
\begin{cases}
\prmltv{i-1,j-1} & \text{if $i\geq 1$},
\\
1 & \text{if $i=0$}.
\end{cases}
\end{equation}
\end{theorem}

\begin{lemma}
There is a homotopy equivalence
\begin{equation}
\label{eq:wdcr}
\begin{tikzpicture}[scale=0.5,baseline=-1.5]
\draw [xscale=-1]  (1, 0.9) to [out=180,in=0] (0,-0.4);
\draw [line width=6pt, color=white] (-1+0.15,0) -- (0,0);
\draw [thkln] (-1.75,0) -- (-1 - .15,0)
node [near start,below] {$\scriptstyle \xca$}
(-1+0.15,0) -- (0,0) -- (1-0.15,0) (1+.15,0) -- (1.75,0)
node [near end, below] {$\scriptstyle \xca$};
\draw [line width=6pt, color=white] (1, 0.9) to [out=180,in=0] (0,-0.4);
\draw (1, 0.9) to [out=180,in=0] (0,-0.4);
\draw (-1.75,0.9) -- (-1,0.9) (1,0.9) -- (1.75,0.9);
\draw [ptzer] (-1.15,-0.6) rectangle ++(0.3,1.2);
\draw [ptzer] (1 - 0.15,-0.6) rectangle ++(0.3,1.2);
\end{tikzpicture}
\;\hteqv\;
\boxed{
\shcr^{\xca}\shfr
\begin{tikzpicture}[menvone]
\draw [line width=6pt, color=white] (-1+0.15,0) -- (0,0);
\draw [thkln] (-1.75,0) -- (-1 - .15,0)
node [near start, below] {$\scriptstyle \xca$}
(-1+0.15,0) -- (0,0) -- (1-0.15,0) (1+.15,0) -- (1.75,0)
node [near end, below] {$\scriptstyle \xca$};
\draw (1, 0.9) to [out=180,in=90] (0.4,0.6) to [out=-90,in=180] (1-.15,0.3);
\draw [xscale=-1] (1, 0.9) to [out=180,in=90] (0.4,0.6) to [out=-90,in=180] (1-.15,0.3);
\draw (-1.75,0.9) -- (-1,0.9) (1,0.9) -- (1.75,0.9);
\draw [ptzer] (-1.15,-0.6) rectangle ++(0.3,1.2);
\draw [ptzer] (1 - 0.15,-0.6) rectangle ++(0.3,1.2);
\end{tikzpicture}
\longrightarrow
\shcr^{-\xca+\hlf}
\begin{tikzpicture}[menvone]
\draw [line width=6pt, color=white] (-1+0.15,0) -- (0,0);
\draw [thkln] (-1.75,0) -- (-1 - .15,0)
node [near start, below] {$\scriptstyle \xca$}
(-1+0.15,0) -- (0,0) -- (1-0.15,0) (1+.15,0) -- (1.75,0)
node [near end, below] {$\scriptstyle \xca$};
\draw [xscale=-1] (-1+0.15,0.3) -- (-0.5,0.3) to [out=0, in=180] (0.5,0.9) -- (1,0.9);
\draw [lnovr] (-0.5,0.3) to [out=0, in=180] (0.5,0.9);
\draw (-1+0.15,0.3) -- (-0.5,0.3) to [out=0, in=180] (0.5,0.9) -- (1,0.9);
\draw (-1.75,0.9) -- (-1,0.9) (1,0.9) -- (1.75,0.9);
\draw [line width=\ljwp] (-1.15,-0.6) rectangle ++(0.3,1.2);
\draw [line width=\ljwp] (1 - 0.15,-0.6) rectangle ++(0.3,1.2);
\end{tikzpicture}
}
\end{equation}
\end{lemma}
\begin{proof}
The lemma is proved by applying \tKhbr\ formula\rx{eq:dkhbr} to one of the elementary crossings in the \lhs diagram:
\def\cht{0.4}
\[
\begin{tikzpicture}[scale=0.75,baseline=-1.5]
\draw [xscale=-1]  (1, 0.9) to [out=180,in=0] (0,-0.4);
\draw [lnovr] (-1+0.15,0) -- (0,0);
\draw [lnovr] (-1+0.15,\cht) -- (0,\cht);
\draw [thkln] (-1.75,0) -- (-1 - .15,0)
node [near start,below] {$\scriptstyle \xca$}
(-1+0.15,0) -- (0,0) -- (1-0.15,0) (1+.15,0) -- (1.75,0)
node [near end, below] {$\scriptstyle \xca$};
\draw (-1+0.15,\cht) -- (1-0.15,\cht);
\draw [line width=6pt, color=white] (1, 0.9) to [out=180,in=0] (0,-0.4);
\draw (1, 0.9) to [out=180,in=0] (0,-0.4);
\draw (-1.75,0.9) -- (-1,0.9) (1,0.9) -- (1.75,0.9);
\draw [ptzer] (-1.15,-0.6) rectangle ++(0.3,1.2);
\draw [ptzer] (1 - 0.15,-0.6) rectangle ++(0.3,1.2);
\end{tikzpicture}
\hteqv
\boxed{
\shcr^{\hlf}
\begin{tikzpicture}[scale=0.75,baseline=-1.5]
\draw (0,-0.4) to [out=-180,in=-90] (-0.3,-0.05) to [out=90,in=-180] (0,\cht);
\draw [lnovr] (-1+0.15,0) -- (0,0);
\draw [thkln] (-1.75,0) -- (-1 - .15,0)
node [near start,below] {$\scriptstyle \xca$}
(-1+0.15,0) -- (0,0) -- (1-0.15,0) (1+.15,0) -- (1.75,0)
node [near end, below] {$\scriptstyle \xca$};
\draw (0,\cht) -- (1-0.15,\cht);
\draw (-1+0.15,\cht) to [out=0,in=-90] (-0.5,0.6) to [out=90,in=0] (-1,0.9);
\draw [line width=6pt, color=white] (1, 0.9) to [out=180,in=0] (0,-0.4);
\draw (1, 0.9) to [out=180,in=0] (0,-0.4);
\draw (-1.75,0.9) -- (-1,0.9) (1,0.9) -- (1.75,0.9);
\draw [ptzer] (-1.15,-0.6) rectangle ++(0.3,1.2);
\draw [ptzer] (1 - 0.15,-0.6) rectangle ++(0.3,1.2);
\end{tikzpicture}
\longrightarrow
\shcr^{-\hlf}
\begin{tikzpicture}[scale=0.75,baseline=-1.5]
\draw [xscale=-1]  (1-0.15, \cht) to [out=180,in=0] (0,-0.4);
\draw (-1,0.9) to [out=0,in=180] (0,\cht);
\draw [lnovr] (-1+0.15,0) -- (0,0);
\draw [thkln] (-1.75,0) -- (-1 - .15,0)
node [near start,below] {$\scriptstyle \xca$}
(-1+0.15,0) -- (0,0) -- (1-0.15,0) (1+.15,0) -- (1.75,0)
node [near end, below] {$\scriptstyle \xca$};
\draw (0,\cht) -- (1-0.15,\cht);
\draw [line width=6pt, color=white] (1, 0.9) to [out=180,in=0] (0,-0.4);
\draw (1, 0.9) to [out=180,in=0] (0,-0.4);
\draw (-1.75,0.9) -- (-1,0.9) (1,0.9) -- (1.75,0.9);
\draw [ptzer] (-1.15,-0.6) rectangle ++(0.3,1.2);
\draw [ptzer] (1 - 0.15,-0.6) rectangle ++(0.3,1.2);
\end{tikzpicture}
}
\]
The diagrams in the \rhs cone are reduced to those of \ex{eq:wdcr} with the help of \eex{eq:frsh} and\rx{eq:projtw}.
\end{proof}
\begin{proof}[Proof of Theorem\rw{thm:projcr}]
A substitution of \ex{eq:wdcr} into \ex{eq:jwind} yields the cone presentation\rx{eq:jwcrs} with
\[
\begin{tikzpicture}[menvone]
\draw[ptthr] (-.15,-0.6) rectangle ++(0.3,1.2);
\draw[thkln] (-1.35,0) -- (-.15,0)
node[near start,below] {$\scriptstyle \xca+1$ }
(.15,0) -- (1.35,0)
node [near end, below] {$\scriptstyle \xca+1$};
\end{tikzpicture}
=
\boxed{
\shcr\shfr
\begin{tikzpicture}[menvone]
\draw[pttwo] (-.15,-0.6) rectangle ++(0.3,1.2);
\draw[thkln] (-1.35,0) -- (-.15,0)
node[near start,below] {$\scriptstyle \xca+1$ }
(.15,0) -- (1.35,0)
node [near end, below] {$\scriptstyle \xca+1$};
\end{tikzpicture}
\longrightarrow
\begin{tikzpicture}[menvone]
\draw[color=white] (-0.15,-1.2) rectangle (0.15,1.2);
\draw [line width=6pt, color=white] (-1+0.15,0) -- (0,0);
\draw [thkln] (-1.75,0) -- (-1 - .15,0)
node [near start, below] {$\scriptstyle \xca$}
(-1+0.15,0) -- (0,0) -- (1-0.15,0) (1+.15,0) -- (1.75,0)
node [near end, below] {$\scriptstyle \xca$};
\draw (1, 0.9) to [out=180,in=90] (0.4,0.6) to [out=-90,in=180] (1-.15,0.3);
\draw [xscale=-1] (1, 0.9) to [out=180,in=90] (0.4,0.6) to [out=-90,in=180] (1-.15,0.3);
\draw (-1.75,0.9) -- (-1,0.9) (1,0.9) -- (1.75,0.9);
\draw [ptzer] (-1.15,-0.6) rectangle ++(0.3,1.2);
\draw [ptzer] (1 - 0.15,-0.6) rectangle ++(0.3,1.2);
\end{tikzpicture}
}
\]
and \ex{eq:swnepr} follows from \ex{eq:swgrpr}.
\end{proof}

If we connect the endpoints of the upper single line in \ex{eq:jwcrs} and apply the framing relation\rx{eq:frsh} to the last diagram in that relation, then we come to the following corollary
\begin{corollary}
There is a homotopy equivalence
\begin{equation}
\label{eq:htloop}
\begin{tikzpicture}[menvone]
\draw [ptzer] (-0.15,-0.6) rectangle (.15,.6);
\draw [thkln] (-0.75,0) -- (-0.15,0)
node [near start,below] {$\scriptstyle \xca$}
(0.15,0) -- (.75,0)
node [near end,below] {$\scriptstyle \xca$};
\draw (-0.15,0.4) to [out=180,in=-90] (-0.6,0.8) to [out=90,in=180] (0,1.4)
to [out=0,in=90] (0.6,0.8) to [out=-90,in=0] (0.15,0.4);
\end{tikzpicture}
\;\hteqv\;
\boxed{
\shcr^{2\xca}\shfr
\begin{tikzpicture}[menvone]
\draw [color=white] (-0.15,-1) rectangle (0.15,1.6);
\draw [ptthr] (-0.15,-0.6) rectangle (.15,.6);
\draw [thkln] (-0.75,0) -- (-0.15,0)
node [near start,below] {$\scriptstyle \xca$}
(0.15,0) -- (.75,0)
node [near end,below] {$\scriptstyle \xca$};
\draw (-0.15,0.4) to [out=180,in=-90] (-0.6,0.8) to [out=90,in=180] (0,1.4)
to [out=0,in=90] (0.6,0.8) to [out=-90,in=0] (0.15,0.4);
\end{tikzpicture}
\longrightarrow
\shfr^{-1}
\begin{tikzpicture}[menvone]
\draw [ptzer] (-0.15,-0.6) rectangle (.15,.6);
\draw [thkln] (-0.75,0) -- (-0.15,0)
node [near start,below] {$\scriptstyle \xca$}
(0.15,0) -- (.75,0)
node [near end,below] {$\scriptstyle \xca$};
\end{tikzpicture}
}
\end{equation}
where
\begin{equation}
\label{eq:grlp}
\begin{tikzpicture}[menvone]
\draw [color=white] (-0.15,-1) rectangle (0.15,1.6);
\draw [ptthr] (-0.15,-0.6) rectangle (.15,.6);
\draw [thkln] (-0.75,0) -- (-0.15,0)
node [near start,below] {$\scriptstyle \xca$}
(0.15,0) -- (.75,0)
node [near end,below] {$\scriptstyle \xca$};
\draw (-0.15,0.4) to [out=180,in=-90] (-0.6,0.8) to [out=90,in=180] (0,1.4)
to [out=0,in=90] (0.6,0.8) to [out=-90,in=0] (0.15,0.4);
\end{tikzpicture}
\;
\hteqv
\;
\boxed{
\cdots\longrightarrow
\shcr^i \bigoplus_{j=0}^i
\tprmltij \shfr^j
\begin{tikzpicture}[menvone]
\draw [ptzer] (-0.15,-0.6) rectangle (.15,.6);
\draw [thkln] (-0.75,0) -- (-0.15,0)
node [near start,below] {$\scriptstyle \xca$}
(0.15,0) -- (.75,0)
node [near end,below] {$\scriptstyle \xca$};
\end{tikzpicture}
\longrightarrow\cdots
}_{\;i=0}^{\infty}.
\end{equation}
\end{corollary}

\section{The morphism $\mnfN$}
\label{sct:mrph}

\subsection{General setup}

For a link diagram $\xD$ we give a precise definition of a diagram (a complex) $\xDclN$. $\xDclN$ is constructed by first $\xca$-cabling all components of $\xD$ and then placing a \cJWp\ at every edge of $\xD$, an edge being a piece of $\xca$-cabled strand between two crossings.

The map $\mnfN$ of \ex{eq:spmaps} is a composition of many maps between \tKhoms\ of a sequence of diagrams related by \ltrf s, the first diagram in that sequence being $\xDclNo$ and the last being
$\xDclN$ (recall that maps go backwards).

We use three types of \ltrf s, which are based on the following \lrpl s:
\begin{gather}
\def\mxsh{1.5}
\def\mysh{1.2}
\begin{tikzpicture}[menvone]
\node at (-12,0) {(I):};
\node (l) at (-1.8,0) {};
\node (r) at (1.8,0) {};
\path[commutative diagrams/.cd, every arrow, every label]
(l) edge[commutative diagrams/squiggly] (r);
\begin{scope}[xshift=-6cm]
\draw [thkln] (-\mxsh+0.15,\mysh) to [out=0,in=180] (\mxsh-0.15,-\mysh);
\draw [lnovr] (-\mxsh+0.15,-\mysh) to [out=0,in=180] (\mxsh-0.15,\mysh);
\draw [thkln] (-\mxsh+0.15,-\mysh) to [out=0,in=180] (\mxsh-0.15,\mysh);
\draw [bcrc] (-0.75-0.6-\mxsh,-\mysh) -- (0.75+0.6+\mxsh,-\mysh);
\draw [bcrc] (-0.75-0.6-\mxsh,\mysh) -- (0.75+0.6+\mxsh,\mysh);
%
%
\draw [ptzert] (-0.15-\mxsh,-0.6-\mysh) rectangle ++(.3,1.2);
\draw [thkln] (-0.75-0.6-\mxsh,-\mysh) -- (-0.15-\mxsh,-\mysh)
node [very near start,below] {$\scriptstyle \xca+1$};
\begin{scope}[xscale=-1]
\draw [ptzert] (-0.15-\mxsh,-0.6-\mysh) rectangle ++(.3,1.2);
\draw [thkln] (-0.75-0.6-\mxsh,-\mysh) -- (-0.15-\mxsh,-\mysh)
node [very near start,below] {$\scriptstyle \xca+1$};
\end{scope}
\begin{scope}[yscale=-1]
\draw [ptzert] (-0.15-\mxsh,-0.6-\mysh) rectangle ++(.3,1.2);
\draw [thkln] (-0.75-0.6-\mxsh,-\mysh) -- (-0.15-\mxsh,-\mysh)
node [very near start,above] {$\scriptstyle \xca+1$};
\end{scope}
\begin{scope}[yscale=-1,xscale=-1]
\draw [ptzert] (-0.15-\mxsh,-0.6-\mysh) rectangle ++(.3,1.2);
\draw [thkln] (-0.75-0.6-\mxsh,-\mysh) -- (-0.15-\mxsh,-\mysh)
node [near start,above] {$\scriptstyle \xca+1$};
\end{scope}
\end{scope}
\begin{scope}[xshift=6cm]
\draw [thkln] (-\mxsh+0.15,\mysh) to [out=0,in=180]
node [sloped, near end,above] {$\scriptstyle \xca$} (\mxsh-0.15,-\mysh);
\draw [lnovr] (-\mxsh+0.15,-\mysh) to [out=0,in=180] (\mxsh-0.15,\mysh);
\draw [thkln] (-\mxsh+0.15,-\mysh) to [out=0,in=180]
node [sloped, near start,above] {$\scriptstyle \xca$} (\mxsh-0.15,\mysh);
\draw (0.15-\mxsh,\mysh+0.35) -- (-0.15+\mxsh,\mysh+0.35);
\draw (0.15-\mxsh,-\mysh-0.35) -- (-0.15+\mxsh,-\mysh-0.35);
\draw [bcrc] (-0.75-0.6-\mxsh,-\mysh) -- (0.75+0.6+\mxsh,-\mysh);
\draw [bcrc] (-0.75-0.6-\mxsh,\mysh) -- (0.75+0.6+\mxsh,\mysh);
%
%
\draw [ptzert] (-0.15-\mxsh,-0.6-\mysh) rectangle ++(.3,1.2);
\draw [thkln] (-0.75-0.6-\mxsh,-\mysh) -- (-0.15-\mxsh,-\mysh)
node [very near start,below] {$\scriptstyle \xca+1$};
\begin{scope}[xscale=-1]
\draw [ptzert] (-0.15-\mxsh,-0.6-\mysh) rectangle ++(.3,1.2);
\draw [thkln] (-0.75-0.6-\mxsh,-\mysh) -- (-0.15-\mxsh,-\mysh)
node [very near start,below] {$\scriptstyle \xca+1$};
\end{scope}
\begin{scope}[yscale=-1]
\draw [ptzert] (-0.15-\mxsh,-0.6-\mysh) rectangle ++(.3,1.2);
\draw [thkln] (-0.75-0.6-\mxsh,-\mysh) -- (-0.15-\mxsh,-\mysh)
node [very near start,above] {$\scriptstyle \xca+1$};
\end{scope}
\begin{scope}[yscale=-1,xscale=-1]
\draw [ptzert] (-0.15-\mxsh,-0.6-\mysh) rectangle ++(.3,1.2);
\draw [thkln] (-0.75-0.6-\mxsh,-\mysh) -- (-0.15-\mxsh,-\mysh)
node [near start,above] {$\scriptstyle \xca+1$};
\end{scope}
\end{scope}
\end{tikzpicture}
\\
\label{eq:thtr}
\\
\nonumber
\begin{tikzpicture}[menvtwo]
\node at (-4,0) {(II):};
\node (l) at (-0.8,0) {};
\node (r) at (0.8,0) {};
\path[commutative diagrams/.cd, every arrow, every label]
(l) edge[commutative diagrams/squiggly] (r);
\begin{scope}[xshift=-2cm]
\draw [ptzer] (-0.15,-0.6) rectangle (.15,.6);
\draw [thkln] (-0.75,0) -- (-0.15,0)
node [near start,below] {$\scriptstyle \xca$}
(0.15,0) -- (.75,0)
node [near end,below] {$\scriptstyle \xca$};
\draw [bcrc] (-0.75,0) -- (-0.15,0)  (0.15,0) -- (0.75,0);
\draw (-0.75,0.4) -- (-0.15,0.4) (0.15,0.4) -- (0.75,0.4);
\end{scope}
\begin{scope}[xshift=2cm]
\draw [ptzer] (-0.15,-0.6) rectangle (.15,.6);
\draw [thkln] (-0.75,0) -- (-0.15,0)
node [near start,below] {$\scriptstyle \xca$}
(0.15,0) -- (.75,0)
node [near end,below] {$\scriptstyle \xca$};
\draw [bcrc] (-0.75,0) -- (-0.15,0)  (0.15,0) -- (0.75,0);
\draw (-0.75,0.8) -- (0.75,0.8);
\end{scope}
\end{tikzpicture}
\qquad
\begin{tikzpicture}[menvtwo]
\node at (-4,0) {(II):};
\node (l) at (-0.8,0) {};
\node (r) at (0.8,0) {};
\path[commutative diagrams/.cd, every arrow, every label]
(l) edge[commutative diagrams/squiggly] (r);
\begin{scope}[xshift=-2cm]
\draw [ptzer] (-0.15,-0.6) rectangle (.15,.6);
\draw [thkln] (-0.75,0) -- (-0.15,0)
node [near start,below] {$\scriptstyle \xca$}
(0.15,0) -- (.75,0)
node [near end,below] {$\scriptstyle \xca$};
\draw [bcrc] (-0.75,0) -- (-0.15,0)  (0.15,0) -- (0.75,0);
\draw (-0.15,0.4) to [out=180,in=-90] (-0.6,0.8) to [out=90,in=180] (0,1.4)
to [out=0,in=90] (0.6,0.8) to [out=-90,in=0] (0.15,0.4);
\end{scope}
\begin{scope}[xshift=2cm]
\draw [ptzer] (-0.15,-0.6) rectangle (.15,.6);
\draw [thkln] (-0.75,0) -- (-0.15,0)
node [near start,below] {$\scriptstyle \xca$}
(0.15,0) -- (.75,0)
node [near end,below] {$\scriptstyle \xca$};
\draw [bcrc] (-0.75,0) -- (-0.15,0)  (0.15,0) -- (0.75,0);
\end{scope}
\end{tikzpicture}
\;.
\end{gather}
The thick gray lines in these pictures mark the \tBcr s of the diagram $\sBD$.

The transition from the diagram $\xDclNo$ to $\xDclN$ is performed in two stages. At the first stage we apply the first replacement of\rx{eq:thtr} to every crossing of $\xDclNo$. The result is the diagram $\xtDN$, which consists of two parts connected at \tJWp s. The first part is the $\xca$-cabled diagram $\xDclN$ and the second part consists of non-intersecting circles formed by single lines appearing in the final diagrams of replacements (I) of\rx{eq:thtr}. These single line circles go along the \tBcr s. We orient them clockwise and assume that in our pictures the clockwise orientation corresponds to the direction from the left to the right. The circles are attached to $\xDclN$ at the \tJWp s and those junctions have four possible forms:
\begin{equation}
\label{eq:frmprj}
\begin{tikzpicture}[menvtwo]
\draw [ptzer] (-0.15,-0.6) rectangle (.15,.6);
\draw [thkln] (-0.75,0) -- (-0.15,0)
node [near start,below] {$\scriptstyle \xca$}
(0.15,0) -- (.75,0)
node [near end,below] {$\scriptstyle \xca$};
\draw [bcrc] (-0.75,0) -- (-0.15,0)  (0.15,0) -- (0.75,0);
\draw (-0.75,0.4) -- (-0.15,0.4) (0.15,0.4) -- (0.75,0.4);
\end{tikzpicture},
\qquad
\begin{tikzpicture}[menvtwo]
\draw [ptzer] (-0.15,-0.6) rectangle (.15,.6);
\draw [thkln] (-0.75,0) -- (-0.15,0)
node [near start,above] {$\scriptstyle \xca$}
(0.15,0) -- (.75,0)
node [near end,above] {$\scriptstyle \xca$};
\draw [bcrc] (-0.75,0) -- (-0.15,0)  (0.15,0) -- (0.75,0);
\draw (-0.75,-0.4) -- (-0.15,-0.4) (0.15,-0.4) -- (0.75,-0.4);
\end{tikzpicture},
\qquad
\begin{tikzpicture}[menvtwo]
\draw [ptzer] (-0.15,-0.6) rectangle (.15,.6);
\draw [thkln] (-0.75,0) -- (-0.15,0)
node [near start,above] {$\scriptstyle \xca$}
(0.15,0) -- (.75,0)
node [near end,below] {$\scriptstyle \xca$};
\draw [bcrc] (-0.75,0) -- (-0.15,0)  (0.15,0) -- (0.75,0);
\draw (-0.75,-0.4) -- (-0.15,-0.4) (0.15,0.4) -- (0.75,0.4);
\end{tikzpicture},
\qquad
\begin{tikzpicture}[menvtwo]
\draw [thkln] (-0.75,0) -- (-0.15,0)
node [near start,below] {$\scriptstyle \xca$}
(0.15,0) -- (.75,0)
node [near end,above] {$\scriptstyle \xca$};
\draw [bcrc] (-0.75,0) -- (-0.15,0)  (0.15,0) -- (0.75,0);
\draw (-0.75,0.4) -- (-0.15,0.4) (0.15,-0.4) -- (0.75,-0.4);
\draw [ptzer] (-0.15,-0.6) rectangle (.15,.6);
\end{tikzpicture}.
\end{equation}

At the second stage of the transition from $\xDclNo$ to $\xDclN$ we remove the single circle lines
of $\xtDN$ one-by-one. In order to remove a particular circle we select an `initial' \tJWp\ on it and then detach the single lines from other projectors going clockwise. During this process, the single line between the initial and current \tJWp s are kept on the same side of the \tBcr s. If the current projector has the incoming and outgoing single lines on the opposite sides of the \tBcr\ (third and fourth type of\rx{eq:frmprj}) then, prior to detachment, we perform the following transformation
for the junction of the third type (and a similar transformation for the fourth type):
\begin{equation}
\label{eq:twflp}
\begin{tikzpicture}[menvtwo]
\begin{scope}[xshift=1.75cm]
\draw [thkln] (-1.15,0) -- (-0.15,0)
(0.15,0) -- (.75,0)
node [near end,below] {$\scriptstyle \xca$};
\draw [bcrc] (-4.25,0) -- (0.75,0);
\draw 
(0.15,0.4) -- (0.75,0.4);
\draw [ptzert] (-0.15,-0.6) rectangle (.15,.6);
\end{scope}
\begin{scope}[xshift=-1.75cm,xscale=-1]
\draw [thkln] (-1.15,0) -- (-0.15,0)
(0.15,0) -- (.75,0)
node [near end,above] {$\scriptstyle \xca$};
\draw 
(0.15,-0.4) -- (0.75,-0.4);
\draw [ptzert] (-0.15,-0.6) rectangle (.15,.6);
\end{scope}
\node (0,0) {$\cdots$};
\draw (-1.6,-0.4) -- (1.6,-0.4);
\end{tikzpicture}
\;\hteqv\;
\begin{tikzpicture}[menvtwo]
\begin{scope}[xshift=1.75cm]
\draw [bcrc] (-4.25,0) -- (0.75,0);
\draw [thkln] (-1.15,0) -- (-0.15,0)
(0.15,0) -- (.75,0)
node [near end,below] {$\scriptstyle \xca$};
\draw [lnovr]  (-1.05,0.4) to [out=0,in=180] (-0.15,-0.4);
\draw (-1.05,0.4) to [out=0,in=180] (-0.15,-0.4) (0.15,0.4) -- (0.75,0.4);
\draw [ptzert] (-0.15,-0.6) rectangle (.15,.6);
\end{scope}
\begin{scope}[xshift=-1.75cm,xscale=-1]
\draw [thkln] (-1.15,0) -- (-0.15,0)
(0.15,0) -- (.75,0)
node [near end,above] {$\scriptstyle \xca$};
\draw [lnovr]  (-1.05,0.4) to [out=0,in=180] (-0.15,-0.4);
\draw (-1.05,0.4) to [out=0,in=180] (-0.15,-0.4) (0.15,-0.4) -- (0.75,-0.4);
\draw [ptzert] (-0.15,-0.6) rectangle (.15,.6);
\end{scope}
\node (0,0) {$\cdots$};
\draw (-0.7,0.4) -- (0.7,0.4);
\end{tikzpicture}
\;\hteqv\;
\begin{tikzpicture}[menvtwo]
\begin{scope}[xshift=1.75cm]
\draw [bcrc] (-4.25,0) -- (0.75,0);
\draw [thkln] (-1.15,0) -- (-0.15,0)
(0.15,0) -- (.75,0)
node [near end,below] {$\scriptstyle \xca$};
\draw 
(0.15,0.4) -- (0.75,0.4);
\draw [ptzert] (-0.15,-0.6) rectangle (.15,.6);
\end{scope}
\begin{scope}[xshift=-1.75cm,xscale=-1]
\draw [thkln] (-1.15,0) -- (-0.15,0)
(0.15,0) -- (.75,0)
node [near end,above] {$\scriptstyle \xca$};
\draw 
(0.15,-0.4) -- (0.75,-0.4);
\draw [ptzert] (-0.15,-0.6) rectangle (.15,.6);
\end{scope}
\node (0,0) {$\cdots$};
\draw (-1.6,0.4) -- (1.6,0.4);
\end{tikzpicture},
\end{equation}
In these pictures the left projector is initial, the right projector is current, the first homotopy equivalence comes from the Reidemeister moves, while the second equivalence comes from \ex{eq:projtw}. Note that the single line between the initial and current projectors is kept always above the rest of the diagram.

After the single lines attached to the current projector are brought to the same side of the \tBcr, we detach the single line from that projector by the \lrpl\ (II) of\rx{eq:thtr}
and pass to the next projector on the single line.

The single line is kept above the rest of the diagram, so once it is detached from all projectors except the initial one, it can be contracted to a small loop attached to that initial projector with the help of Reidemeister moves. The final step is the removal of that loop by the replacement (III) of\rx{eq:thtr}. After all single line circles are removed, the diagram $\xtDN$ becomes $\xDclN$.

Our transition from $\xDclNo$ to $\xDclN$ is generally similar to that used by C.~Armond\cite{Arm11}, but the details are different. In particular, we do not replace $(\xca+1)$-cable crossings by projectors, but rather apply replacements (I) of\rx{eq:thtr} directly to the crossings.

\subsection{\Ltrf s generate isomorphisms at low \thdgr s}
\label{sct:hest}

We describe the \ltrf s related to replacements\rx{eq:thtr} and show that the corresponding maps\rx{eq:dgprmg} between shifted homologies are isomorphisms at low \thdgr s, thus proving Theorem\rw{thm:leviso}.

\subsubsection{\Ltrf\ \xltone}
Set
\begin{equation}
\label{eq:trone}
\def\mxsh{1.5}
\def\mysh{1.2}
\ytngsi =
\begin{tikzpicture}[menvone]
\draw [thkln] (-\mxsh+0.15,\mysh) to [out=0,in=180] (\mxsh-0.15,-\mysh);
\draw [lnovr] (-\mxsh+0.15,-\mysh) to [out=0,in=180] (\mxsh-0.15,\mysh);
\draw [thkln] (-\mxsh+0.15,-\mysh) to [out=0,in=180] (\mxsh-0.15,\mysh);
\draw [bcrc] (-0.75-0.6-\mxsh,-\mysh) -- (0.75+0.6+\mxsh,-\mysh);
\draw [bcrc] (-0.75-0.6-\mxsh,\mysh) -- (0.75+0.6+\mxsh,\mysh);
%
%
\draw [ptzert] (-0.15-\mxsh,-0.6-\mysh) rectangle ++(.3,1.2);
\draw [thkln] (-0.75-0.6-\mxsh,-\mysh) -- (-0.15-\mxsh,-\mysh)
node [very near start,below] {$\scriptstyle \xca+1$};
\begin{scope}[xscale=-1]
\draw [ptzert] (-0.15-\mxsh,-0.6-\mysh) rectangle ++(.3,1.2);
\draw [thkln] (-0.75-0.6-\mxsh,-\mysh) -- (-0.15-\mxsh,-\mysh)
node [very near start,below] {$\scriptstyle \xca+1$};
\end{scope}
\begin{scope}[yscale=-1]
\draw [ptzert] (-0.15-\mxsh,-0.6-\mysh) rectangle ++(.3,1.2);
\draw [thkln] (-0.75-0.6-\mxsh,-\mysh) -- (-0.15-\mxsh,-\mysh)
node [very near start,above] {$\scriptstyle \xca+1$};
\end{scope}
\begin{scope}[yscale=-1,xscale=-1]
\draw [ptzert] (-0.15-\mxsh,-0.6-\mysh) rectangle ++(.3,1.2);
\draw [thkln] (-0.75-0.6-\mxsh,-\mysh) -- (-0.15-\mxsh,-\mysh)
node [near start,above] {$\scriptstyle \xca+1$};
\end{scope}
\end{tikzpicture}\;,
\quad
\ytngsf=
\begin{tikzpicture}[menvone]
\draw [thkln] (-\mxsh+0.15,\mysh) to [out=0,in=180]
node [sloped, near end,above] {$\scriptstyle \xca$} (\mxsh-0.15,-\mysh);
\draw [lnovr] (-\mxsh+0.15,-\mysh) to [out=0,in=180] (\mxsh-0.15,\mysh);
\draw [thkln] (-\mxsh+0.15,-\mysh) to [out=0,in=180]
node [sloped, near start,above] {$\scriptstyle \xca$} (\mxsh-0.15,\mysh);
\draw (0.15-\mxsh,\mysh+0.35) -- (-0.15+\mxsh,\mysh+0.35);
\draw (0.15-\mxsh,-\mysh-0.35) -- (-0.15+\mxsh,-\mysh-0.35);
\draw [bcrc] (-0.75-0.6-\mxsh,-\mysh) -- (0.75+0.6+\mxsh,-\mysh);
\draw [bcrc] (-0.75-0.6-\mxsh,\mysh) -- (0.75+0.6+\mxsh,\mysh);
%
%
\draw [ptzert] (-0.15-\mxsh,-0.6-\mysh) rectangle ++(.3,1.2);
\draw [thkln] (-0.75-0.6-\mxsh,-\mysh) -- (-0.15-\mxsh,-\mysh)
node [very near start,below] {$\scriptstyle \xca+1$};
\begin{scope}[xscale=-1]
\draw [ptzert] (-0.15-\mxsh,-0.6-\mysh) rectangle ++(.3,1.2);
\draw [thkln] (-0.75-0.6-\mxsh,-\mysh) -- (-0.15-\mxsh,-\mysh)
node [very near start,below] {$\scriptstyle \xca+1$};
\end{scope}
\begin{scope}[yscale=-1]
\draw [ptzert] (-0.15-\mxsh,-0.6-\mysh) rectangle ++(.3,1.2);
\draw [thkln] (-0.75-0.6-\mxsh,-\mysh) -- (-0.15-\mxsh,-\mysh)
node [very near start,above] {$\scriptstyle \xca+1$};
\end{scope}
\begin{scope}[yscale=-1,xscale=-1]
\draw [ptzert] (-0.15-\mxsh,-0.6-\mysh) rectangle ++(.3,1.2);
\draw [thkln] (-0.75-0.6-\mxsh,-\mysh) -- (-0.15-\mxsh,-\mysh)
node [near start,above] {$\scriptstyle \xca+1$};
\end{scope}
\end{tikzpicture}\;,
\quad
\ytngsc= \shcr^{\xca+\hlf}
\begin{tikzpicture}[menvone]
\draw [thkln] (-\mxsh+0.15,\mysh) to [out=0,in=180]
node [sloped, very near end,below] {$\scriptstyle \xca$} (\mxsh-0.15,-\mysh);
\draw [lnovr] (-\mxsh+0.15,-\mysh) to [out=0,in=180] (\mxsh-0.15,\mysh);
\draw [thkln] (-\mxsh+0.15,-\mysh) to [out=0,in=180]
node [sloped, very near start,below] {$\scriptstyle \xca$} (\mxsh-0.15,\mysh);
\draw (0.15-\mxsh,\mysh-0.35) to [out=0,in=90] (-\mxsh*0.5,0) to [out=-90,in=0] (0.15-\mxsh,-\mysh+0.35);
\draw [xscale=-1] (0.15-\mxsh,\mysh-0.35) to [out=0,in=90] (-\mxsh*0.5,0) to [out=-90,in=0] (0.15-\mxsh,-\mysh+0.35);
\draw [bcrc] (-0.75-0.6-\mxsh,-\mysh) -- (0.75+0.6+\mxsh,-\mysh);
\draw [bcrc] (-0.75-0.6-\mxsh,\mysh) -- (0.75+0.6+\mxsh,\mysh);
%
%
\draw [ptzert] (-0.15-\mxsh,-0.6-\mysh) rectangle ++(.3,1.2);
\draw [thkln] (-0.75-0.6-\mxsh,-\mysh) -- (-0.15-\mxsh,-\mysh)
node [very near start,below] {$\scriptstyle \xca+1$};
\begin{scope}[xscale=-1]
\draw [ptzert] (-0.15-\mxsh,-0.6-\mysh) rectangle ++(.3,1.2);
\draw [thkln] (-0.75-0.6-\mxsh,-\mysh) -- (-0.15-\mxsh,-\mysh)
node [very near start,below] {$\scriptstyle \xca+1$};
\end{scope}
\begin{scope}[yscale=-1]
\draw [ptzert] (-0.15-\mxsh,-0.6-\mysh) rectangle ++(.3,1.2);
\draw [thkln] (-0.75-0.6-\mxsh,-\mysh) -- (-0.15-\mxsh,-\mysh)
node [very near start,above] {$\scriptstyle \xca+1$};
\end{scope}
\begin{scope}[yscale=-1,xscale=-1]
\draw [ptzert] (-0.15-\mxsh,-0.6-\mysh) rectangle ++(.3,1.2);
\draw [thkln] (-0.75-0.6-\mxsh,-\mysh) -- (-0.15-\mxsh,-\mysh)
node [near start,above] {$\scriptstyle \xca+1$};
\end{scope}
\end{tikzpicture}\;,
\end{equation}
while $\ytngkfp=\shcr^{-\xca+\hlf}\ytngkf$. Theorem\rw{lm:sss} provides the exact triangle relation\rx{eq:cnrel}.

\begin{proposition}
\label{prp:bndfs}
Let $\xDsi$ be the diagram constructed by performing \lrpl s I of\rx{eq:thtr} on some vertices of $\xDclNo$ and let $\xDsf$ be the diagram constructed by performing the \lrpl\ I on the `current' vertex in $\xDsi$. Then the \tdgpr\ map\rx{eq:dgprmg} is an isomorphism on $\tKHmvv{i}{\hem}$ for $i\leq 2\xca-1$.
\end{proposition}
\begin{proof}
Let $\xDsc$ be the diagram constructed by performing the \lrpl\ $\ytngsi \rightsquigarrow\ytngsc$ on the current vertex. We estimate the homological order of $\KHm(\xDsc)$ with the help of Theorem\rw{thm:smfr}: since $\yncrv{\xDsc} = \yncrv{\xDsi}-2\xca-1$, then $\KHmvv{i}{\hem}(\xDsc)=0$ for $i\leq -\shlf\yncrv{\xDsi} + 2\xca$ (we took into account the shift $\shcr^{\xca+\hlf}$ of $\ytngsc$ in \ex{eq:trone}) and the claim of the theorem follows from Proposition\rw{prp:gestdg}.
\end{proof}
%
%

\subsubsection{\Ltrf\ \xlttwo}
Set
\begin{equation}
\label{eq:lrdpr}
\ytngsi =
\begin{tikzpicture}[menvtwo]
\draw [bcrc] (-0.75,0) -- (0.75,0);
\draw [ptzert] (-0.15,-0.6) rectangle (.15,.6);
\draw [thkln] (-0.75,0) -- (-0.15,0)
node [near start,below] {$\scriptstyle \xca$}
(0.15,0) -- (.75,0)
node [near end,below] {$\scriptstyle \xca$};
\draw (-0.75,0.4) -- (-0.15,0.4) (0.15,0.4) -- (0.75,0.4);
\end{tikzpicture},
\qquad
\ytngsf =
\begin{tikzpicture}[menvtwo]
\draw [bcrc] (-0.75,0) -- (0.75,0);
\draw [ptzert] (-0.15,-0.6) rectangle (.15,.6);
\draw [thkln] (-0.75,0) -- (-0.15,0)
node [near start,below] {$\scriptstyle \xca$}
(0.15,0) -- (.75,0)
node [near end,below] {$\scriptstyle \xca$};
\draw (-0.75,0.8) -- (0.75,0.8);
\end{tikzpicture},
\qquad
\ytngsc =
\shcr
\begin{tikzpicture}[menvone]
\draw [bcrc] (-1.35,0) -- (1.35,0);
\draw[pttwo] (-.15,-0.6) rectangle ++(0.3,1.2);
\draw[thkln] (-1.35,0) -- (-.15,0)
node[near start,below] {$\scriptstyle \xca+1$ }
(.15,0) -- (1.35,0)
node [near end, below] {$\scriptstyle \xca+1$};
\end{tikzpicture}\;,
\end{equation}
while $\ytngkfp=\ytngkf$. The exact triangle relation\rx{eq:cnrel} is provided by Theorem\rw{thm:projpr}.
\begin{proposition}
\label{prp:bndsc}
Let $\xDsi$ be a diagram constructed by removing some single line circles from $\xtDN$ and by detaching the `current' single line circle from the projectors which lie between the initial one and the current one and let $\xDsf$ be the diagram constructed from $\xDsi$ by detaching the single line from the current projector.
Then the \tdgpr\ map\rx{eq:dgprmg} is an isomorphism on $\tKHmvv{i}{\hem}$ for $i\leq \xca-1$.
\end{proposition}
The proof uses the following
\begin{lemma}
\label{lm:cmbnd}
The tangle
\def\mxsh{2}
\def\mysh{1.5}
\begin{equation}
\label{eq:tngcr}
\ztau \; = \;
\begin{tikzpicture}[menvone]
\draw [thkln] (-\mxsh+0.15,\mysh) to [out=0,in=180] (\mxsh-0.15,-\mysh);
\draw [lnovr] (-\mxsh+0.15,-\mysh) to [out=0,in=180] (\mxsh-0.15,\mysh);
\draw [thkln] (-\mxsh+0.15,-\mysh) to [out=0,in=180] (\mxsh-0.15,\mysh);
\draw [lnovr] (-0.75-\mxsh,0) to [out=0,in=180] (0,-\mysh) to [out=0,in=180] (0.75+\mxsh,0);
\draw (-0.75-\mxsh,0) to [out=0,in=180] (0,-\mysh) to [out=0,in=180] (0.75+\mxsh,0);
\draw [ptzer] (-0.15-\mxsh,-0.6-\mysh) rectangle ++(.3,1.2);
\draw [thkln] (-0.75-\mxsh,-\mysh) -- (-0.15-\mxsh,-\mysh)
node [near start,below] {$\scriptstyle \xca$};
\begin{scope}[xscale=-1]
\draw [ptzer] (-0.15-\mxsh,-0.6-\mysh) rectangle ++(.3,1.2);
\draw [thkln] (-0.75-\mxsh,-\mysh) -- (-0.15-\mxsh,-\mysh)
node [near start,below] {$\scriptstyle \xca$};
\end{scope}
\begin{scope}[yscale=-1]
\draw [ptzer] (-0.15-\mxsh,-0.6-\mysh) rectangle ++(.3,1.2);
\draw [thkln] (-0.75-\mxsh,-\mysh) -- (-0.15-\mxsh,-\mysh)
node [near start,above] {$\scriptstyle \xca$};
\end{scope}
\begin{scope}[yscale=-1,xscale=-1]
\draw [ptzer] (-0.15-\mxsh,-0.6-\mysh) rectangle ++(.3,1.2);
\draw [thkln] (-0.75-\mxsh,-\mysh) -- (-0.15-\mxsh,-\mysh)
node [near start,above] {$\scriptstyle \xca$};
\end{scope}
\end{tikzpicture}
\end{equation}
has a homological bound
$\hmord{\ztau} \geq -\shlf\xca^2.$
\end{lemma}
\begin{remark}
\label{rmk:ignr}
This bound is better than the crude bound of Theorem\rw{thm:smfr}. In fact, it coincides with that bound, if we neglect the intersections between the single line and the $\xca$-cables.
\end{remark}
\begin{proof}[Proof of Lemma\rw{lm:cmbnd}]
Applying \ex{eq:colKhbr} to the $\xca$-cable crossing in $\ztau$ we get the presentation
\[
\def\mxsh{2}
\def\mysh{1.5}
\xKhv{\ztau} \hteqv
\shcr^{-\hlf\xca^2}\,
\Pcnv{
\bigoplus_{i=0}^{\xca}
\shcr^{i^2}{\xca \brace i}_{\shcr}\,
\ztau_i}
\qquad
\xKhv{\ztau_i}\;=\;
\begin{tikzpicture}[menvone]
\draw [thkln] (-\mxsh+0.15,\mysh-0.2) to [out=0,in=90] (-\mxsh+1.1,0)
 to [out=-90,in=0] (-\mxsh+0.15,-\mysh+0.2) ;
\begin{scope}[xscale=-1]
\draw [thkln] (-\mxsh+0.15,\mysh-0.2) to [out=0,in=90] (-\mxsh+1.1,0) to [out=-90,in=0] (-\mxsh+0.15,-\mysh+0.2);
\end{scope}
\node at (-\mxsh+1.5,0.4) {$\scriptstyle i$};
\node at (\mxsh-1.5,0.4) {$\scriptstyle i$};
\draw [thkln] (-\mxsh+0.15,\mysh+0.2) -- (\mxsh-0.15,\mysh+0.2) node [midway,above] {$\scriptstyle \xca-i$};
\draw [thkln] (-\mxsh+0.15,-\mysh-0.2) -- (\mxsh-0.15,-\mysh-0.2) node [midway, below] {$\scriptstyle \xca-i$};
\draw [lnovr] (-0.75-\mxsh,0) to [out=0,in=180] (0,-\mysh+0.7) to [out=0,in=180] (0.75+\mxsh,0);
\draw (-0.75-\mxsh,0) to [out=0,in=180] (0,-\mysh+0.7) to [out=0,in=180] (0.75+\mxsh,0);
\draw [ptzer] (-0.15-\mxsh,-0.6-\mysh) rectangle ++(.3,1.2);
\draw [thkln] (-0.75-\mxsh,-\mysh) -- (-0.15-\mxsh,-\mysh)
node [near start,below] {$\scriptstyle \xca$};
\begin{scope}[xscale=-1]
\draw [ptzer] (-0.15-\mxsh,-0.6-\mysh) rectangle ++(.3,1.2);
\draw [thkln] (-0.75-\mxsh,-\mysh) -- (-0.15-\mxsh,-\mysh)
node [near start,below] {$\scriptstyle \xca$};
\end{scope}
\begin{scope}[yscale=-1]
\draw [ptzer] (-0.15-\mxsh,-0.6-\mysh) rectangle ++(.3,1.2);
\draw [thkln] (-0.75-\mxsh,-\mysh) -- (-0.15-\mxsh,-\mysh)
node [near start,above] {$\scriptstyle \xca$};
\end{scope}
\begin{scope}[yscale=-1,xscale=-1]
\draw [ptzer] (-0.15-\mxsh,-0.6-\mysh) rectangle ++(.3,1.2);
\draw [thkln] (-0.75-\mxsh,-\mysh) -- (-0.15-\mxsh,-\mysh)
node [near start,above] {$\scriptstyle \xca$};
\end{scope}
\end{tikzpicture}
\]
The homological order of $\ztau_i$ can be estimated with the help of Theorem\rw{thm:smfr}: $\hmord{\xKhv{\ztau_i}} \geq - i$. Since the polynomial ${\xca \brace i}_{\shcr}$ has only non-negative powers of $\shcr$ and $i^2 - i \geq 0$ for all integer $i$, we come to the estimate of Lemma\rw{lm:cmbnd}.
\end{proof}

\begin{proof}[Proof of Proposition\rw{prp:bndsc}]
Let $\xDsc$ be the diagram constructed from $\xDsi$ by replacing the current projector ($\ytngsi$ of \ex{eq:lrdpr}) with the tangle complex $\ytngsc$ of \ex{eq:lrdpr}.
By Proposition\rw{prp:gestdg}, we have to prove the bound:
\[
\KHmvv{i}{\hem}(\xDsc)=0 \quad\text{for $i\leq-\shlf\yncrv{\xDsi}+\xca.$}
\]
estimate
\[\hmord{\xDsc}\geq-\hlf\yncrv{\xDsi} + \xca.\]
Since the complex $\ytngsc$ of \ex{eq:lrdpr} is a \tmcn\rx{eq:swgrpr} generated by an `elementary' tangle
\begin{equation}
\label{eq:eltng}
\ytngse \;=\;
\begin{tikzpicture}[menvone]
\draw [line width=6pt, color=white] (-1+0.15,0) -- (0,0);
\draw [line width=\cblth] (-1.75,0) -- (-1 - .15,0)
node [near start, below] {$\scriptstyle \xca$}
(-1+0.15,0) -- (0,0) -- (1-0.15,0) (1+.15,0) -- (1.75,0)
node [near end, below] {$\scriptstyle \xca$};
\draw (1, 0.9) to [out=180,in=90] (0.4,0.6) to [out=-90,in=180] (1-.15,0.3);
\draw [xscale=-1] (1, 0.9) to [out=180,in=90] (0.4,0.6) to [out=-90,in=180] (1-.15,0.3);
\draw (-1.75,0.9) -- (-1,0.9) (1,0.9) -- (1.75,0.9);
\draw [line width=\ljwp] (-1.15,-0.6) rectangle ++(0.3,1.2);
\draw [line width=\ljwp] (1 - 0.15,-0.6) rectangle ++(0.3,1.2);
\end{tikzpicture}
\end{equation}
then, according to Remark\rw{rmk:bndss},
it is sufficient to prove
\begin{equation}
\label{eq:scest}
\KHmvv{i}{\hem}(\xDse) = 0\quad\text{for $i\leq -\shlf\yncrv{\xDsi} + \xca-1$,}
\end{equation}
where $\xDse$ is the diagram constructed by replacing $\ytngsi$ in $\xDsi$ with $\ytngse$.

Consider a tangle within $\xDse$ which consists of the right half of $\ytngse$ and the cable crossing which follows the current projector and transform its complex with the help of two homotopy equivalences:
\begin{equation}
\label{eq:prsld}
\def\mxsh{2}
\def\mysh{1.5}
\begin{tikzpicture}[menvone]
\draw [thkln] (-\mxsh+0.15,\mysh) to [out=0,in=180] (\mxsh-0.15,-\mysh);
\draw [lnovr] (-\mxsh+0.15,-\mysh) to [out=0,in=180] (\mxsh-0.15,\mysh);
\draw [thkln] (-\mxsh+0.15,-\mysh) to [out=0,in=180] (\mxsh-0.15,\mysh);
\draw (\mxsh-0.15,\mysh+0.3) to [out=180,in=0]
(-\mxsh, \mysh+0.9) to [out=180,in=90] (-\mxsh-0.6,\mysh+0.6) to [out=-90,in=180] (-\mxsh-.15,\mysh+0.3);
\draw (\mxsh+0.15,\mysh+0.3) -- (\mxsh+0.75,\mysh+0.3);
\draw [dashed] (-\mxsh-0.75,-\mysh-0.3) -- (-\mxsh-0.15,-\mysh-0.3)
(-\mxsh+0.15,-\mysh-0.3) -- (\mxsh-0.15,-\mysh-0.3) (\mxsh+0.15,-\mysh-0.3) -- (\mxsh+0.75,-\mysh-0.3);
\draw [ptzert] (-0.15-\mxsh,-0.6-\mysh) rectangle ++(.3,1.2);
\draw [thkln] (-0.75-\mxsh,-\mysh) -- (-0.15-\mxsh,-\mysh)
node [near start,above] {$\scriptstyle \xca$};
\begin{scope}[xscale=-1]
\draw [ptzert] (-0.15-\mxsh,-0.6-\mysh) rectangle ++(.3,1.2);
\draw [thkln] (-0.75-\mxsh,-\mysh) -- (-0.15-\mxsh,-\mysh)
node [near start,above] {$\scriptstyle \xca$};
\end{scope}
\begin{scope}[yscale=-1]
\draw [ptzert] (-0.15-\mxsh,-0.6-\mysh) rectangle ++(.3,1.2);
\draw [thkln] (-0.75-\mxsh,-\mysh) -- (-0.15-\mxsh,-\mysh)
node [at start,below] {$\scriptstyle \xca-1\;\;\;$};
\end{scope}
\begin{scope}[yscale=-1,xscale=-1]
\draw [ptzert] (-0.15-\mxsh,-0.6-\mysh) rectangle ++(.3,1.2);
\draw [thkln] (-0.75-\mxsh,-\mysh) -- (-0.15-\mxsh,-\mysh)
node [near start,below] {$\scriptstyle \xca$};
\end{scope}
\end{tikzpicture}
\;\hteqv\;
\begin{tikzpicture}[menvone]
\draw (\mxsh+0.15,\mysh+0.3) -- (\mxsh+0.75,\mysh+0.3);
\draw [dashed] (-\mxsh-0.75,-\mysh-0.3) -- (-\mxsh-0.15,-\mysh-0.3)
(-\mxsh+0.15,-\mysh-0.3) -- (\mxsh-0.15,-\mysh-0.3) (\mxsh+0.15,-\mysh-0.3) -- (\mxsh+0.75,-\mysh-0.3);
%
\draw (\mxsh-0.15,\mysh+0.3) to [out=180,in=90]
(\mxsh-1.2,0) to [out=-90,in=180] (\mxsh-0.15,-\mysh+0.3);
\draw [thkln] (-\mxsh+0.15,\mysh) to [out=0,in=180] (\mxsh-0.15,-\mysh);
\draw [lnovr] (-\mxsh+0.15,-\mysh) to [out=0,in=180] (\mxsh-0.15,\mysh);
\draw [thkln] (-\mxsh+0.15,-\mysh) to [out=0,in=180] (\mxsh-0.15,\mysh);
\draw [ptzert] (-0.15-\mxsh,-0.6-\mysh) rectangle ++(.3,1.2);
\draw [thkln] (-0.75-\mxsh,-\mysh) -- (-0.15-\mxsh,-\mysh)
node [near start,above] {$\scriptstyle \xca$};
\begin{scope}[xscale=-1]
\draw [ptzert] (-0.15-\mxsh,-0.6-\mysh) rectangle ++(.3,1.2);
\draw [thkln] (-0.75-\mxsh,-\mysh) -- (-0.15-\mxsh,-\mysh)
node [near start,above] {$\scriptstyle \xca$};
\end{scope}
\begin{scope}[yscale=-1]
\draw [ptzert] (-0.15-\mxsh,-0.6-\mysh) rectangle ++(.3,1.2);
\draw [thkln] (-0.75-\mxsh,-\mysh) -- (-0.15-\mxsh,-\mysh)
node [at start,below] {$\scriptstyle \xca-1\;\;\;$};
\end{scope}
\begin{scope}[yscale=-1,xscale=-1]
\draw [ptzert] (-0.15-\mxsh,-0.6-\mysh) rectangle ++(.3,1.2);
\draw [thkln] (-0.75-\mxsh,-\mysh) -- (-0.15-\mxsh,-\mysh)
node [near start,below] {$\scriptstyle \xca$};
\end{scope}
\end{tikzpicture}
\;\hteqv\;
\shcr^{\hlf\xca}\,
\begin{tikzpicture}[menvone]
\draw (\mxsh+0.15,\mysh+0.3) -- (\mxsh+0.75,\mysh+0.3);
\draw [dashed] (-\mxsh-0.75,-\mysh-0.3) -- (-\mxsh-0.15,-\mysh-0.3)
(-\mxsh+0.15,-\mysh-0.3) -- (\mxsh-0.15,-\mysh-0.3) (\mxsh+0.15,-\mysh-0.3) -- (\mxsh+0.75,-\mysh-0.3);
\draw (\mxsh-0.15,\mysh-0.3) to [out=180,in=90]
(\mxsh-1.2,0) to [out=-90,in=180] (\mxsh-0.15,-\mysh+0.3);
\draw [thkln] (-\mxsh+0.15,\mysh) to [out=0,in=180] (\mxsh-0.15,-\mysh);
\draw [lnovr] (-\mxsh+0.15,-\mysh) to [out=0,in=180] (\mxsh-0.15,\mysh);
\draw [thkln] (-\mxsh+0.15,-\mysh) to [out=0,in=180] (\mxsh-0.15,\mysh);
\draw [ptzer] (-0.15-\mxsh,-0.6-\mysh) rectangle ++(.3,1.2);
\draw [thkln] (-0.75-\mxsh,-\mysh) -- (-0.15-\mxsh,-\mysh)
node [near start,above] {$\scriptstyle \xca$};
\begin{scope}[xscale=-1]
\draw [ptzer] (-0.15-\mxsh,-0.6-\mysh) rectangle ++(.3,1.2);
\draw [thkln] (-0.75-\mxsh,-\mysh) -- (-0.15-\mxsh,-\mysh)
node [near start,above] {$\scriptstyle \xca$};
\end{scope}
\begin{scope}[yscale=-1]
\draw [ptzer] (-0.15-\mxsh,-0.6-\mysh) rectangle ++(.3,1.2);
\draw [thkln] (-0.75-\mxsh,-\mysh) -- (-0.15-\mxsh,-\mysh)
node [at start,below] {$\scriptstyle \xca-1\;\;\;$};
\end{scope}
\begin{scope}[yscale=-1,xscale=-1]
\draw [ptzer] (-0.15-\mxsh,-0.6-\mysh) rectangle ++(.3,1.2);
\draw [thkln] (-0.75-\mxsh,-\mysh) -- (-0.15-\mxsh,-\mysh)
node [near start,below] {$\scriptstyle \xca$};
\end{scope}
\end{tikzpicture}
\end{equation}
The first equivalence comes from sliding the upper left projector down right along its $\xca$-cable, and the second equivalence comes from \ex{eq:projtw}. The dashed line indicates the possible presence of another single line which has not been removed yet, however, it plays no role in these calculations.

Let $\xDsep$ denote the diagram $\xDse$ in which the left tangle of \ex{eq:prsld} has been replaced by the right tangle, then
\begin{equation}
\label{eq:scsh}
\xDse \hteqv \shcr^{\hlf\xca}\xDsep.
\end{equation}
We would like to estimate $\hmord{\xDsep}$ with the help of Theorem\rw{thm:smfr}. In doing so we would have to take into account possible crossings coming from the stretch of the single line between the initial projector and the left projector of the tangle $\ytngse$ of \ex{eq:eltng} and $\xca$-cables participating in the crossings attached to the current single line circle. These new crossings are generated by the Reidemeister moves involved in the first homotopy equivalence of \ex{eq:twflp}: when a single line is flipped to the other side of the circle, it may come across the $\xca$-cable crossings, from which parts of this line originate through replacements I of\rx{eq:thtr} (see the picture\rx{eq:tngcr} of the tangle $\ztau$). However, Remark\rw{rmk:ignr} indicates that these crossings between the single line and the $\xca$-cables may be ignored when applying the estimate of Theorem\rw{thm:smfr}, so $\hmord{\xDsep} \geq -\hlf\yncrpv{\xDsep}$, where $\yncrpv{\xDsep}$ is the number of single line intersections within $\xDsep$, except those which we can ignore.

The cable intersection of the left tangle of \ex{eq:prsld} involves two $\xca$-cables, while the same intersection in the right tangle involves a $\xca$-cable and a $(\xca-1)$-cable, hence $\yncrpv{\xDsep} = \yncrv{\xDsi} - \xca$ and the inequality\rx{eq:scest} follows from \ex{eq:scsh}.
\end{proof}

\subsubsection{\Ltrf\ \xltthree}
Set
\begin{equation}
\label{eq:lrpth}
\ytngsi=
\begin{tikzpicture}[menvtwo]
\draw [bcrc] (-0.75,0) -- (0.75,0);
\draw [ptzert] (-0.15,-0.6) rectangle (.15,.6);
\draw [thkln] (-0.75,0) -- (-0.15,0)
node [near start,below] {$\scriptstyle \xca$}
(0.15,0) -- (.75,0)
node [near end,below] {$\scriptstyle \xca$};
\draw (-0.15,0.4) to [out=180,in=-90] (-0.6,0.8) to [out=90,in=180] (0,1.4)
to [out=0,in=90] (0.6,0.8) to [out=-90,in=0] (0.15,0.4);
\end{tikzpicture},
\qquad
\ytngsf =
\begin{tikzpicture}[menvtwo]
\draw [bcrc] (-0.75,0) -- (0.75,0);
\draw [ptzer] (-0.15,-0.6) rectangle (.15,.6);
\draw [thkln] (-0.75,0) -- (-0.15,0)
node [near start,below] {$\scriptstyle \xca$}
(0.15,0) -- (.75,0)
node [near end,below] {$\scriptstyle \xca$};
\end{tikzpicture},
\qquad
\ytngsc = \shcr^{2\xca}\shfr
\begin{tikzpicture}[menvtwo]
\draw [bcrc] (-0.75,0) -- (-0.15,0) (0.15,0) -- (0.75,0);
\draw [color=white] (-0.15,-1) rectangle (0.15,1.6);
\draw [ptthr] (-0.15,-0.6) rectangle (.15,.6);
\draw [thkln] (-0.75,0) -- (-0.15,0)
node [near start,below] {$\scriptstyle \xca$}
(0.15,0) -- (.75,0)
node [near end,below] {$\scriptstyle \xca$};
\draw (-0.15,0.4) to [out=180,in=-90] (-0.6,0.8) to [out=90,in=180] (0,1.4)
to [out=0,in=90] (0.6,0.8) to [out=-90,in=0] (0.15,0.4);
\end{tikzpicture},
\end{equation}
while $\ytngkfp = \shfr^{-1}\ytngkf$
and the cone relation\rx{eq:cnrel} is \ex{eq:htloop}.
\begin{proposition}
\label{prp:sckin}
Let $\xDsi$ be a diagram constructed by removing some single line circles from $\xtDN$ and by detaching the `current' single line circle from the all of its projectors, except the initial one, to which it is attached as in the picture\rx{eq:lrpth} of tangle $\ytngsi$. Let $\xDsf$ be the diagram $\xDsi$ from which this circle is completely removed.
Then the \tdgpr\ map\rx{eq:dgprmg} is an isomorphism on $\tKHmvv{i}{\hem}$ for $i\leq 2\xca-2$.
\end{proposition}
\begin{proof}
%
Since the complex $\ytngsc$ of \ex{eq:lrpth} is a \tmcn\rx{eq:grlp} generated by the elementary tangle
$\ytngsf$ of \ex{eq:lrpth}, then, according to Remark\rw{rmk:bndss}, the claim of this proposition
would follow from the bound
\[
\KHmvv{i}{\hem}(\xDsf)=0\quad\text{for $i\leq-\shlf\yncrv{\xDsi}-1 $.}
\]
The latter follows from Theorem\rw{thm:smfr} coupled with an obvious relation
$\yncrv{\xDsi}=\yncrv{\xDsf}$.
\end{proof}

\subsection{Proof of Theorem\rw{thm:leviso}}
Of all three types of \ltrf s considered in Propositions\rw{prp:bndfs},\rw{prp:bndsc} and\rw{prp:sckin}, it is the \ltrf\rx{eq:lrdpr} which yields the weakest estimate of the homological degrees at which the map\rx{eq:spmaps} is an isomorphism, and this is the estimate of Theorem\rw{thm:leviso}\qed

\section{Proof of Theorem\rw{thm:kqbnd}}

In order to compute the shifted homology $\tKHm(\xDclN)$, we apply the colored Khovanov bracket\rx{eq:colKhbr} to all crossings of $\xDclN$. As a result, this diagram turns into a \tmcn\ of flat diagrams of a special kind. Let $\svrt$ be the set of crossings of $\xD$.
A \emph{\tstt} of $\xDclN$ is a map $\spmp\colon \svrt\rightarrow \{0,1,\ldots,\xca\}$, $\xvrt\mapsto\sipvr$. It
determines a diagram $\xDs$ constructed by performing the following \ltrf s at each crossing $\xvrt\in\svrt$:
\begin{equation*}
\def\mxsh{2}
\def\mysh{1.5}
\begin{tikzpicture}[menvone]
\node (l) at (-1.6,0) {};
\node (r) at (1.6,0) {};
\path[commutative diagrams/.cd, every arrow, every label]
(l) edge[commutative diagrams/squiggly] (r);
\begin{scope}[xshift=-5cm]
\draw [thkln] (-\mxsh+0.15,\mysh) to [out=0,in=180] (\mxsh-0.15,-\mysh);
\draw [lnovr] (-\mxsh+0.15,-\mysh) to [out=0,in=180] (\mxsh-0.15,\mysh);
\draw [thkln] (-\mxsh+0.15,-\mysh) to [out=0,in=180] (\mxsh-0.15,\mysh);
%
%
\draw [bcrc] (-\mxsh-0.75,\mysh) -- (\mxsh+0.75,\mysh);
\draw [bcrc] (-\mxsh-0.75,-\mysh) -- (\mxsh+0.75,-\mysh);
\draw [ptzert] (-0.15-\mxsh,-0.6-\mysh) rectangle ++(.3,1.2);
\draw [thkln] (-0.75-\mxsh,-\mysh) -- (-0.15-\mxsh,-\mysh)
node [near start,below] {$\scriptstyle \xca$};
\begin{scope}[xscale=-1]
\draw [ptzert] (-0.15-\mxsh,-0.6-\mysh) rectangle ++(.3,1.2);
\draw [thkln] (-0.75-\mxsh,-\mysh) -- (-0.15-\mxsh,-\mysh)
node [near start,below] {$\scriptstyle \xca$};
\end{scope}
\begin{scope}[yscale=-1]
\draw [ptzert] (-0.15-\mxsh,-0.6-\mysh) rectangle ++(.3,1.2);
\draw [thkln] (-0.75-\mxsh,-\mysh) -- (-0.15-\mxsh,-\mysh)
node [near start,above] {$\scriptstyle \xca$};
\end{scope}
\begin{scope}[yscale=-1,xscale=-1]
\draw [ptzert] (-0.15-\mxsh,-0.6-\mysh) rectangle ++(.3,1.2);
\draw [thkln] (-0.75-\mxsh,-\mysh) -- (-0.15-\mxsh,-\mysh)
node [near start,above] {$\scriptstyle \xca$};
\end{scope}
\end{scope}
\begin{scope}[xshift=5cm]
\draw [bcrc] (-0.75-\mxsh,-\mysh) -- (0.75+\mxsh,-\mysh);
\draw [bcrc] (-0.75-\mxsh,\mysh) -- (0.75+\mxsh,\mysh);
\draw [thkln] (-\mxsh+0.15,\mysh-0.2) to [out=0,in=90] (-\mxsh+1.1,0)
 to [out=-90,in=0] (-\mxsh+0.15,-\mysh+0.2) ;
\begin{scope}[xscale=-1]
\draw [thkln] (-\mxsh+0.15,\mysh-0.2) to [out=0,in=90] (-\mxsh+1.1,0) to [out=-90,in=0] (-\mxsh+0.15,-\mysh+0.2);
\end{scope}
\node at (-\mxsh+0.25,0) {$\scriptstyle \spvr$};
\node at (\mxsh-0.25,0) {$\scriptstyle \spvr$};
\draw [thkln] (-\mxsh+0.15,\mysh+0.2) -- (\mxsh-0.15,\mysh+0.2) node [midway,above] {$\scriptstyle \xca-\spvr$};
\draw [thkln] (-\mxsh+0.15,-\mysh-0.2) -- (\mxsh-0.15,-\mysh-0.2) node [midway, below] {$\scriptstyle \xca-\spvr$};
%
%
\draw [ptzert] (-0.15-\mxsh,-0.6-\mysh) rectangle ++(.3,1.2);
\draw [thkln] (-0.75-\mxsh,-\mysh) -- (-0.15-\mxsh,-\mysh)
node [near start,below] {$\scriptstyle \xca$};
\begin{scope}[xscale=-1]
\draw [ptzert] (-0.15-\mxsh,-0.6-\mysh) rectangle ++(.3,1.2);
\draw [thkln] (-0.75-\mxsh,-\mysh) -- (-0.15-\mxsh,-\mysh)
node [near start,below] {$\scriptstyle \xca$};
\end{scope}
\begin{scope}[yscale=-1]
\draw [ptzert] (-0.15-\mxsh,-0.6-\mysh) rectangle ++(.3,1.2);
\draw [thkln] (-0.75-\mxsh,-\mysh) -- (-0.15-\mxsh,-\mysh)
node [near start,above] {$\scriptstyle \xca$};
\end{scope}
\begin{scope}[yscale=-1,xscale=-1]
\draw [ptzert] (-0.15-\mxsh,-0.6-\mysh) rectangle ++(.3,1.2);
\draw [thkln] (-0.75-\mxsh,-\mysh) -- (-0.15-\mxsh,-\mysh)
node [near start,above] {$\scriptstyle \xca$};
\end{scope}
\end{scope}
\end{tikzpicture}
\end{equation*}
The gray strips in $\xDs$ combine into \tBcr s in the background of this diagram.

The diagrams $\xDs$ for all \tstt s $\spmp$ generate a \tmcn\ presentation of $\xDclN$, hence $\tKHm(\xDclN)$ can be computed by spectral sequence, and its $E_1$ term is a sum of appropriately shifted homologies $\KHm(\xDs)$:
\[
\xEo = \shfr^{\xca\gvD}\bigoplus_{\substack{\spmp\\ k\geq 0 }}m_{\spmp,k}\,\shcr^{\xabms+k}\,  \KHm(\xDs),
\]
where $\xabms=\sum_{\xvrt\in\svrt}\spvr^2$.
Hence, a component $\xEoij$ of bi-degree $i,j$ (both are homological and have nothing to do with filtration!) has the form
\begin{equation}
\label{eq:eogr}
\xEoij = \bigoplus_{\substack{\spmp\\ k\geq 0 }}m_{\spmp,k} \KHmvv{i-\xabms-k}{j-\xca\gvD}(\xDs)
\end{equation}

As we already noted in Remark\rw{rmk:bndss}, further steps of spectral sequence may only reduce homology, hence $\xEoij=0$ implies $\tKHmvv{i}{j}(\xDclN)=0$. Moreover, all differentials have bi-grading (-1,1), hence  $\xEovv{i+1}{j-1}=\xEovv{i-1}{j+1}=0$ implies $\tKHmvv{i}{j}(\xDclN) = \xEoij$. These arguments imply that Theorem\rw{thm:kqbnd} follows from the proposition
\begin{proposition}
\label{prp:lm1}
$\xEoij=0$ if one of the following conditions is satisfied:
\begin{align}
\label{eq:bd1b}
i &<0,
\\
\label{eq:bd2b}
j&<- \shlf i - \shlf\ncrD - \sthlf\ncriD,
\\
\label{eq:bd3b}
j& < -i  - \ncriD - \xca\gvD.
\end{align}
Moreover, if $\xD$ is \tBadq, then
\begin{equation}
\label{eq:bd4b}
\xEovv{i}{-i} =
\begin{cases}
0,&\text{if $i\neq 0$,}
\\
\xalg,&\text{if $i=0$.}
\end{cases}
\end{equation}
\end{proposition}
Thus we proved Theorem\rw{thm:leviso}\qed

In view of \ex{eq:eogr}, Proposition\rw{prp:lm1} follows from the next one:
\begin{proposition}
\label{prp:lm2}
$\KHmvv{i}{j}(\xDs)=0$ if one of the following two conditions is satisfied:
\begin{align}
\label{eq:bd1d}
i &<0,
\\
\label{eq:bd2d}
j&<- \shlf \xabms - \shlf\ncrD - \sthlf\ncriD - \xca \gvD,
\\
\label{eq:bd3d}
j& < -\xabms  - \ncriD - \xca\gvD.
\end{align}
Furthermore, if a diagram $\xD$ is \tBadq, then $\KHmvv{i}{j}(\xDs)=0$ for
\begin{equation}
\label{eq:bd4d}
\KHmvv{0}{-\xabms-\xca\gvD}(\xDs) =
\begin{cases}
0,&\text{if $\xabms>0$},
\\
\IQ,&\text{if $\xabms=0$.}
\end{cases}
\end{equation}
\end{proposition}
\begin{proof}[Proof of Proposition\rw{prp:lm1}]
Conditions\rx{eq:bd1b}--\rxw{eq:bd3b} follow easily from the conditions\rx{eq:bd1d}--\rxw{eq:bd3d}. In order to prove \ex{eq:bd4b}, observe that according to \ex{eq:bd4b}, $\xEovv{i}{-i}$ is a sum of homologies $\KHmvv{i'}{j'}(\xDs)$ with $i' =i-\xabms-k$, $j'=-i-\xca\gvD$, hence
\[
j' = -\xabms -\xca\gvD - i' - k.
\]
Since $i'\geq 0$ by \ex{eq:bd1d} and $k\geq 0$ by \ex{eq:eogr}, then in view of the bound\rx{eq:bd3d} with $\ncriD=0$ we conclude that non-trivial contributing homology exists only for $i'=k=0$, so $j'= - \xabms-\xca\gvD$ and $i=\xabms$. Thus we proved that $\xEovv{i}{-i}$ is a sum of homologies $\KHmvv{0}{-\xabms-\xca\gvD}(\xDs)$ with $\xabms=i$, hence \ex{eq:bd4b} follows from \ex{eq:bd4d} and from the fact that the \tstt\ $\spmp$ with $\xabms=0$ is unique (it corresponds to \Bsplng\ all crossings in $\xDclN$) and its multiplicity in the presentation of $\xEovv{i}{-i}$ is one, because complete \Bsplng\ has multiplicity one in \ex{eq:colKhbr}.
\end{proof}

\begin{proof}[Proof of Proposition\rw{prp:lm2}]
First of all, we observe that the bound\rx{eq:bd1d} follows from the fact that $\xDs$ has no crossings, while the formulas\rx{eq:projcn},\rx{eq:grproj} for the \cJWp\ contain only non-negative shifts of \thdgr.

The proof of other bounds requires a simplification of the complex, whose homology yields Khovanov homology $\KHm(\xDs)$. We cut the diagram $\xDs$ into pieces (tangles), simplify their Khovanov complexes and then glue those complexes back together.

%
%
%
%

Consider a neighborhood of a \tBcr\ $c$ within a diagram $\xDs$ and cut in the middle all \tstrtl s which are attached to it. We are going to simplify the complex of the resulting colored tangle $\xtusc$ by inserting two extra \tJWp s in it and then purging all other (preexisting) projectors.

For $a\geq b$ let the box
$
\;
\begin{tikzpicture}[menvthree]
\draw [ptzer] (0.4,0.8) rectangle ++(-0.8,-1.6);
\draw [thkc] (-0.1,-0.2) arc (-90:90:0.2);
\draw [thkln] (-1.2,0) -- (-0.4,0)   node [near start,below] {$\scriptscriptstyle a$}
(0.4,0) -- (1.2,0)  node [near end,below] {$\scriptscriptstyle b$};
\end{tikzpicture}
\;
$
denote any \taTLt\ with the property
\[
\swdv{
\begin{tikzpicture}[menvone]
\draw [ptzer] (0.4,0.8) rectangle ++(-0.8,-1.6);
\draw [thkc] (-0.1,-0.2) arc (-90:90:0.2);
\draw [thkln] (-1,0) -- (-0.4,0) node [near start,below] {$\scriptstyle a$}
(0.4,0) -- (1,0) node [near end,below] {$\scriptstyle b$};
\end{tikzpicture}
} = b.
\]
In other words, a tangle
$
\;
\begin{tikzpicture}[menvthree]
\draw [ptzer] (0.4,0.8) rectangle ++(-0.8,-1.6);
\draw [thkc] (-0.1,-0.2) arc (-90:90:0.2);
\draw [thkln] (-1,0) -- (-0.4,0)  
(0.4,0) -- (1,0);  
\;
\end{tikzpicture}
$
contains no cups, but only caps and \txstrs s.
\begin{lemma}
The Khovanov categorification complex of the colored tangle diagram $\xtusc$ can be presented in the form
\begin{equation}
\label{eq:prcmp}
\xKhv{\xtusc} \hteqv
\boxed{
\cdots\longrightarrow\shcr^i
\bigoplus_{0\leq j\leq i}\shfr^j\left( \bigoplus_{\tau} m_{ij,\tau} \xKhv{\tau}\right)
\longrightarrow\cdots
}_{\;i=0}^{\;\infty}
\end{equation}
where the diagrams $\tau$ are of one of two types depicted in \fg{fg:twodg}:
\begin{figure}
\begin{equation*}
\begin{tikzpicture}[menvtwo]
\draw [bcrct] (-1.25,0) -- (1.25,0) (1.55,0) to [out=0,in=90] (4,-2) to [out=-90,in=0] (0,-5)
(-1.55,0) to [out=180,in=90] (-4,-2) to [out=-90,in=180] (0,-5) ;
\draw [thkln] (-1.25,0) -- (1.25,0) (1.55,0) to [out=0,in=90] (4,-2) to [out=-90,in=0] (0,-5)
(-1.55,0) to [out=180,in=90] (-4,-2) to [out=-90,in=180] (0,-5) ;
\node at (0,-5.5) {$\scriptstyle  \xca_1$};
\node at (0,0.5) {$\scriptstyle \xca_2$};
\draw [ptzert] (-1.55,-0.6) rectangle ++(0.3,1.2);
\draw [ptzert] (1.55,-0.6) rectangle ++(-0.3,1.2);
\draw [ptzer] (0.8,2) rectangle ++(-1.6,-0.8);
\draw [thkc] (0.2,0.1+1.6) arc (0:-180:0.2);
\draw [thkln]  (-0.5,2) -- (-0.5,2.6);
\draw [thkln]  (0.5,2) -- (0.5,2.6);
\node at (0.05,2.4) {$\scriptstyle \cdots$};
\draw [ptzer] (0.8,-2) rectangle ++(-1.6,0.8);
\draw [thkc] (0.2,-0.1-1.6) arc (0:180:0.2);
\draw [thkln]  (-0.5,-2) -- (-0.5,-2.6);
\draw [thkln]  (0.5,-2) -- (0.5,-2.6);
\node at (0.05,-2.4) {$\scriptstyle \cdots$};
\draw [thkln] (-1.25,0.35) to [out=0,in=-90] (-0.5,1.2);
\draw [thkln,xscale=-1] (-1.25,0.35) to [out=0,in=-90] (-0.5,1.2);
\draw [thkln,yscale=-1] (-1.25,0.35) to [out=0,in=-90] (-0.5,1.2);
\draw [thkln,xscale=-1,yscale=-1] (-1.25,0.35) to [out=0,in=-90] (-0.5,1.2);
\draw [decorate,decoration={brace,amplitude=4pt},xshift=0,yshift=-5pt]
(0.5cm+5pt,-2.6) -- (-0.5cm-5pt,-2.6)
node [black,midway,yshift=-0.4cm]
{\scriptsize to struts};
\draw [decorate,decoration={brace,amplitude=4pt},xshift=0,yshift=5pt]
(-0.5cm-5pt,2.6) -- (0.5cm+5pt,2.6)
node [black,midway,yshift=0.4cm]
{\scriptsize to struts};
\end{tikzpicture}
\qquad
\qquad
\begin{tikzpicture}[menvtwo]
\draw [bcrct] (-1.25,0) -- (1.25,0) (1.55,0) to [out=0,in=90] (4,-2) to [out=-90,in=0] (0,-5)
(-1.55,0) to [out=180,in=90] (-4,-2) to [out=-90,in=180] (0,-5) ;
\draw [thkln] (0,1.2) -- (0,-1.2) (1.55,0) to [out=0,in=90] (4,-2) to [out=-90,in=0] (0,-5)
(-1.55,0) to [out=180,in=90] (-4,-2) to [out=-90,in=180] (0,-5) ;
\node at (0,-5.5) {$\scriptstyle  \xca_1$};
\draw [ptzert] (-1.55,-0.6) rectangle ++(0.3,1.2);
\draw [ptzert] (1.55,-0.6) rectangle ++(-0.3,1.2);
\draw [ptzer] (0.8,2) rectangle ++(-1.6,-0.8);
\draw [thkc] (0.2,0.1+1.6) arc (0:-180:0.2);
\draw [thkln]  (-0.5,2) -- (-0.5,2.6);
\draw [thkln]  (0.5,2) -- (0.5,2.6);
\node at (0.05,2.4) {$\scriptstyle \cdots$};
\draw [ptzer] (0.8,-2) rectangle ++(-1.6,0.8);
\draw [thkc] (0.2,-0.1-1.6) arc (0:180:0.2);
\draw [thkln]  (-0.5,-2) -- (-0.5,-2.6);
\draw [thkln]  (0.5,-2) -- (0.5,-2.6);
\node at (0.05,-2.4) {$\scriptstyle \cdots$};
\draw [thkln] (-1.25,0.35) to [out=0,in=-90] (-0.5,1.2);
\draw [thkln,xscale=-1] (-1.25,0.35) to [out=0,in=-90] (-0.5,1.2);
\draw [thkln,yscale=-1] (-1.25,0.35) to [out=0,in=-90] (-0.5,1.2);
\draw [thkln,xscale=-1,yscale=-1] (-1.25,0.35) to [out=0,in=-90] (-0.5,1.2);
%
%
\draw [decorate,decoration={brace,amplitude=4pt},xshift=0,yshift=-5pt]
(0.5cm+5pt,-2.6) -- (-0.5cm-5pt,-2.6)
node [black,midway,yshift=-0.4cm]
{\scriptsize to struts};
\draw [decorate,decoration={brace,amplitude=4pt},xshift=0,yshift=5pt]
(-0.5cm-5pt,2.6) -- (0.5cm+5pt,2.6)
node [black,midway,yshift=0.4cm]
{\scriptsize to struts};
\end{tikzpicture}
\end{equation*}
\caption{A purged vicinity of a \tBcr}
\label{fg:twodg}
\end{figure}
with $\xca_1,\xca_2\leq \xca$.
\end{lemma}

By gluing the complexes\rx{eq:prcmp} back together
we get a complex
\[
\xKhv{\xDs} \hteqv
\boxed{
\cdots\longrightarrow\shcr^i
\bigoplus_{0\leq j\leq i}\shfr^j\left( \bigoplus_{\xDcir} m_{ij,\tau} \xKhv{\xDcir}\right)
\longrightarrow\cdots
}_{\;i=0}^{\;\infty}
\]
such that $\KHm(\xDs) = \Hm(\xKhv{\xDs})$. The `circle diagrams' $\xDcir$ which result from gluing the diagrams of \fg{fg:twodg} at the \tstrtl\ cutting points and replacing projectors with complexes\rx{eq:projcn} and\rx{eq:grproj}, consist of multiple single line circles.
%
In view of the second formula of \ex{eq:dkhbr}, the lowest \tqdgr\ in the homology $\KHm(\xDs)$ may be bounded by the highest number of circles in those circle diagrams.

The circles in circle diagrams are of three types. The first type is \emph{\tjmp} circles: they contain at least one \tstrt\ line. The circles of the second and third type stay within the same \tBcr. A \emph{\txstr} circle goes along its \tBcr, passing straight through each \tJWp\ on its way. A \emph{\twndg} circle changes its direction at least twice, because it contains at least one cup and one cap of a constituent \taTLt\ coming from one of projectors.

Let us prove the inequalities\rx{eq:bd2d} and\rx{eq:bd3d} by finding upper bounds for the numbers $\njcr$, $\nscr$ and $\nwcr$ of \tjmp, \txstr\ and \twndg\ circles respectively in a circle diagram.

We begin with $\njcr$.
A \tjmpc\ must contain at least two \tstrt\ lines of an \tadq\ crossing or at least one \tstrt\ line of an inadequate crossing, so the number of \tjmpc s $\njcr$ has a bound:
\begin{equation}
\label{eq:sbnd}
\njcr \leq \sum_{\xvrt\in\svrta}\spvr + 2\sum_{\xvrt\in\svrti}\spvr,
\end{equation}
where $\svrta,\svrti\subset\svrt$ are the subsets of \tBadq\ and \tBiadq\ crossings.
The obvious inequalities
\[
\spvr\leq\shlf\spvr^2+\shlf,\qquad 2\spvr\leq\shlf\spvr^2 +2,\qquad
\spvr\leq\spvr^2,\qquad 2\spvr\leq\spvr^2 + 1,
\]
(the third inequality uses the fact that $\spvr$ is integer) indicate that the bound\rx{eq:sbnd} implies to other bounds:
\begin{equation}
\label{eq:mbnd}
\njcr \leq \shlf\xabms + \shlf\ncrD + \sthlf\ncriD,\qquad
\njcr \leq \xabms + \ncriD,
\end{equation}
which means that the first three terms in the \rhs of the inequality\rx{eq:bd2d} and the first two terms in the inequality\rx{eq:bd3d} bound the negative contribution of \tjmpc s to the \tqdgr\ of $\KHm(\xDs)$.

Next we prove the bound
\begin{equation}
\label{eq:wbd}
\nscrb + \nwcrb \leq \xca,
\end{equation}
where $\nscrb$ and $\nwcrb$ are the numbers of \txstr\ and \twndg\ circles within any given \tBcr\ $\crb$. It implies the bound $\nscr + \nwcr \leq \xca \gvD$ and combined with the bounds\rx{eq:mbnd} they imply the bounds\rx{eq:bd2d} and\rx{eq:bd3d}. In order to prove the bound\rx{eq:wbd}, we observe that
in the first diagram a \tstrghtc\ contains one strand from the $\xca_1$-cable and one strand from the $\xca_2$-cable, while a \twndgc\ contains at least two strands from one of these cables, hence there is a bound
\begin{equation}
\label{eq:bndNt}
\nscrb+\nwcrb\leq\shlf(\xca_1+\xca_2)\leq\xca.
\end{equation}
The second diagram is treated similarly, if we set $\xca_2=0$ in the previous argument.
Thus we proved the bounds\rx{eq:bd2d} and\rx{eq:bd3d}.

It remains to prove \ex{eq:bd4d}. Since this time $\xD$ is \tBadq, the second inequality of\rx{eq:mbnd} becomes
$ \njcr \leq \xabms$. Since we consider only homology of zeroth \thdgr, then according to \eex{eq:projcn} and\rx{eq:grproj}, we may replace \tJWp s with identity braids, so there is only one circle diagram $\xDcir$ contributing to  $\KHmvv{0}{-\xabms-\xca\gvD}(\xDs)$, and this circle diagram has no \twndg\ circles: $\nwcr=0$. Furthermore, $\nscr\leq\xca\gvD$, but if $\xabms\neq 0$, then there is at least one pair of \tstrtl s in $\xDs$, so $\nscr<\xca$ and $\njcr+\nscr<\xabms + \xca\gvD$, hence $\KHmvv{0}{-\xabms-\xca\gvD}(\xDs) = 0$. If $\xabms=0$, then $\xDs$ has no \tstrtl s and consists of disjoint $\xca$-cabled circles, so the relevant circle diagram $\xDcir$
consists
of $\xca\gvD$ single-line circles, and $\KHmvv{0}{-\xca\gvD}(\xDs)=\IQ$ follows from the second equation of\rx{eq:dkhbr}.
\end{proof}

\begin{proof}[Proof of Lemma\rw{eq:prcmp}]
We prove the lemma by `purging' \cJWp s appearing in the tangle $\xtusc$.
%
In order to bring the complex $\xKhv{\xtusc}$ to the form\rx{eq:prcmp} with diagrams $\tau$ depicted in \fg{fg:twodg}, we insert two extra \tJWp s side-by-side at any place on the cable which runs along the \tBcr. Then we go from the front one (relative to the clockwise orientation) to the back one in the clockwise direction, purging each preexisting projector that appears on our way. It is easy to prove by induction that after every projector purge we get a \tmcn\ presentation
\begin{equation*}
\xKhv{\xtusc} \hteqv
\boxed{
\cdots\longrightarrow\shcr^i
\bigoplus_{0\leq j\leq i}\shfr^j\left( \bigoplus_{\tau} m'_{ij,\tau} \xKhv{\tau}\right)
\longrightarrow\cdots
}_{\;i=0}^{\;\infty}
\end{equation*}
whose constituent diagrams $\tau$ have one of two possible forms between the front projector and the first unpurged  projector (which lies in the pictures to the left of the dashed line) depicted in \fg{fg:prgd}.
\begin{figure}[h]
\[
\begin{tikzpicture}[menvtwo]
\draw [bcrct] (-1.55-0.6-0.5,0) -- (-1.55,0) (-1.25,0) -- (1.25,0) (1.55,0) -- (4.05,0) (4.35,0) -- (4.95+0.5,0);
\draw [thkln] (-1.55-0.6-0.5,0) -- (-1.55,0) (-1.25,0) -- (1.25,0) (1.55,0) -- (4.05,0) (4.35,0) -- (4.95+0.5,0);
\draw [ptzert] (-1.55,-0.6) rectangle ++(0.3,1.2);
\draw [ptzert] (1.55,-0.6) rectangle ++(-0.3,1.2);
\draw [ptzert] (4.05,-0.6) rectangle ++(0.3,1.2);
\draw [ptzer] (0.8,2) rectangle ++(-1.6,-0.8);
\draw [thkc] (0.2,0.1+1.6) arc (0:-180:0.2);
\draw [thkln]  (-0.5,2) -- (-0.5,2.6);
\draw [thkln]  (0.5,2) -- (0.5,2.6);
\node at (0.05,2.4) {$\scriptstyle \cdots$};
\draw [ptzer] (0.8,-2) rectangle ++(-1.6,0.8);
\draw [thkc] (0.2,-0.1-1.6) arc (0:180:0.2);
\draw [thkln]  (-0.5,-2) -- (-0.5,-2.6);
\draw [thkln]  (0.5,-2) -- (0.5,-2.6);
\node at (0.05,-2.4) {$\scriptstyle \cdots$};
\draw [thkln] (-1.25,0.35) to [out=0,in=-90] (-0.5,1.2);
\draw [thkln,xscale=-1] (-1.25,0.35) to [out=0,in=-90] (-0.5,1.2);
\draw [thkln,yscale=-1] (-1.25,0.35) to [out=0,in=-90] (-0.5,1.2);
\draw [thkln,xscale=-1,yscale=-1] (-1.25,0.35) to [out=0,in=-90] (-0.5,1.2);
\draw [thkln,xscale=-1] (-1.55,0.35) to [out=180,in=-90] ++(-0.75,0.85) -- (-2.3,2.6);
\draw [thkln,xshift=5.6cm] (-1.55,0.35) to [out=180,in=-90] ++(-0.75,0.85) -- (-2.3,2.6);
\draw [thkln,xscale=-1,yscale=-1,xshift=-2.8cm] (-1.55,0.35) to [out=180,in=-90] ++(-0.75,0.85) -- (-2.3,2.6);
\draw [decorate,decoration={brace,amplitude=4pt},xshift=0,yshift=-5pt]
(5.1cm+5pt,-2.6) -- (-0.5cm-5pt,-2.6)
node [black,midway,yshift=-0.4cm]
{\scriptsize to struts};
\draw [decorate,decoration={brace,amplitude=4pt},xshift=0,yshift=5pt]
(-0.5cm-5pt,2.6) -- (3.3cm+5pt,2.6)
node [black,midway,yshift=0.4cm]
{\scriptsize to struts};
\draw [dashed] (2.8,2.6) -- (2.8,-2.6);
\end{tikzpicture}
\;,\qquad
\begin{tikzpicture}[menvtwo]
\draw [bcrct] (-1.55-0.6-0.5,0) -- (-1.55,0) (-1.25,0) -- (1.25,0) (1.55,0) -- (4.05,0) (4.35,0) -- (4.95+0.5,0);
\draw [thkln] (-1.55-0.6-0.5,0) -- (-1.55,0) (0,1.2) -- (0,-1.2) (1.55,0) -- (4.05,0) (4.35,0) -- (4.95+0.5,0);
\draw [ptzert] (-1.55,-0.6) rectangle ++(0.3,1.2);
\draw [ptzert] (1.55,-0.6) rectangle ++(-0.3,1.2);
\draw [ptzert] (4.05,-0.6) rectangle ++(0.3,1.2);
\draw [ptzer] (0.8,2) rectangle ++(-1.6,-0.8);
\draw [thkc] (0.2,0.1+1.6) arc (0:-180:0.2);
\draw [thkln]  (-0.5,2) -- (-0.5,2.6);
\draw [thkln]  (0.5,2) -- (0.5,2.6);
\node at (0.05,2.4) {$\scriptstyle \cdots$};
\draw [ptzer] (0.8,-2) rectangle ++(-1.6,0.8);
\draw [thkc] (0.2,-0.1-1.6) arc (0:180:0.2);
\draw [thkln]  (-0.5,-2) -- (-0.5,-2.6);
\draw [thkln]  (0.5,-2) -- (0.5,-2.6);
\node at (0.05,-2.4) {$\scriptstyle \cdots$};
\draw [thkln] (-1.25,0.35) to [out=0,in=-90] (-0.5,1.2);
\draw [thkln,xscale=-1] (-1.25,0.35) to [out=0,in=-90] (-0.5,1.2);
\draw [thkln,yscale=-1] (-1.25,0.35) to [out=0,in=-90] (-0.5,1.2);
\draw [thkln,xscale=-1,yscale=-1] (-1.25,0.35) to [out=0,in=-90] (-0.5,1.2);
\draw [thkln,xscale=-1] (-1.55,0.35) to [out=180,in=-90] ++(-0.75,0.85) -- (-2.3,2.6);
\draw [thkln,xshift=5.6cm] (-1.55,0.35) to [out=180,in=-90] ++(-0.75,0.85) -- (-2.3,2.6);
\draw [thkln,xscale=-1,yscale=-1,xshift=-2.8cm] (-1.55,0.35) to [out=180,in=-90] ++(-0.75,0.85) -- (-2.3,2.6);
\draw [decorate,decoration={brace,amplitude=4pt},xshift=0,yshift=-5pt]
(5.1cm+5pt,-2.6) -- (-0.5cm-5pt,-2.6)
node [black,midway,yshift=-0.4cm]
{\scriptsize to struts};
\draw [decorate,decoration={brace,amplitude=4pt},xshift=0,yshift=5pt]
(-0.5cm-5pt,2.6) -- (3.3cm+5pt,2.6)
node [black,midway,yshift=0.4cm]
{\scriptsize to struts};
\draw [dashed] (2.8,2.6) -- (2.8,-2.6);
\end{tikzpicture}
\]
\caption{Purging \tJWp s along a \tBcr}
\label{fg:prgd}
\end{figure}
In both diagrams the left projector on the grey strip is the front one, the middle projector is the first unpurged one and the right projector is the second unpurged one. It is not hard to see that if we purge the middleprojector, then we get similar diagrams with the third projector becoming the first unpurged one (the left diagram may become of either left or right type after the purge, while the right diagram remains of the same type). The \tqdgr\ shifts remain non-negative, because the purging does not produce any circles: it just makes explicit various line connections that were hidden inside the constituent \taTLt s of the purged projector.
\end{proof}

\section{Proof of invariance of the tail homology under \tBrdc}
\begin{proof}[Proof of Theorem\rw{thm:rddg}]
A removal of a \tBcr\ connected to the rest of the \tBdg\ by a single \tstrt\ corresponds to the first Reidemeister move, hence the invariance of the tail homology under this removal follows from the fact that tail homology of a \tBadq\ link is determined by shifted Khovanov homologies of its \uclrd\ diagrams and the latter are invariant under this type of first Reidemeister moves.

In view of Corollary\rw{cor:mpis}, the invariance of the tail homology under the removal of `extra' \tstrt s follows from the next lemma.
\end{proof}
\begin{lemma}
\label{lm:strm}
Suppose that two distinct \tBcr s of a link diagram $\xD$ are connected by multiple \tstrt s and the diagram $\xDp$ is constructed by removal of one of those \tstrt s. Then there exists a \tdgpr\ map $\tKHm(\xDclN')\xrightarrow{\xmg}\tKHm(\xDclN)$ which is an isomorphism on $\tKHmvv{i}{\hem}$ for $i\leq \xca-1$.
\end{lemma}
The proof of this lemma is similar to proofs of Section\rw{sct:mrph}:
we show that $\xDclN$ can be constructed from $\xDclN'$ with the help of a \ltrf\ and prove the homological smallness of the correction diagram.

We need a simple corollary of Theorem\rw{thm:colkhovbr}
\begin{corollary}
The \tKhbr\ formula\rx{eq:lmtcn} for the colored crossing can be recast in the form
\def\mxsh{2}
\def\mysh{1.2}
\begin{equation}
\label{eq:ccrc}
\begin{tikzpicture}[menvone]
\draw [thkln] (-\mxsh+0.15,\mysh) to [out=0,in=180] (\mxsh-0.15,-\mysh);
\draw [lnovr] (-\mxsh+0.15,-\mysh) to [out=0,in=180] (\mxsh-0.15,\mysh);
\draw [thkln] (-\mxsh+0.15,-\mysh) to [out=0,in=180] (\mxsh-0.15,\mysh);
%
%
\draw [ptzer] (-0.15-\mxsh,-0.6-\mysh) rectangle ++(.3,1.2);
\draw [thkln] (-0.75-\mxsh,-\mysh) -- (-0.15-\mxsh,-\mysh)
node [near start,below] {$\scriptstyle \xca$};
\begin{scope}[xscale=-1]
\draw [ptzer] (-0.15-\mxsh,-0.6-\mysh) rectangle ++(.3,1.2);
\draw [thkln] (-0.75-\mxsh,-\mysh) -- (-0.15-\mxsh,-\mysh)
node [near start,below] {$\scriptstyle \xca$};
\end{scope}
\begin{scope}[yscale=-1]
\draw [ptzer] (-0.15-\mxsh,-0.6-\mysh) rectangle ++(.3,1.2);
\draw [thkln] (-0.75-\mxsh,-\mysh) -- (-0.15-\mxsh,-\mysh)
node [near start,above] {$\scriptstyle \xca$};
\end{scope}
\begin{scope}[yscale=-1,xscale=-1]
\draw [ptzer] (-0.15-\mxsh,-0.6-\mysh) rectangle ++(.3,1.2);
\draw [thkln] (-0.75-\mxsh,-\mysh) -- (-0.15-\mxsh,-\mysh)
node [near start,above] {$\scriptstyle \xca$};
\end{scope}
\end{tikzpicture}
\;\hteqv\;
\shcr^{-\hlf\xca^2}\;
\boxed{
\begin{tikzpicture}[menvone]
\draw [thkln] (-\mxsh+0.15,\mysh) -- (-\mxsh*0.3,\mysh)
node [near start,above] {$\scriptstyle \xca$}
(\mxsh*0.3,\mysh) -- (\mxsh-0.15,\mysh)
node [near end,above] {$\scriptstyle \xca$};
\draw [thkln] (-\mxsh+0.15,-\mysh) -- (-\mxsh*0.3,-\mysh)
node [near start,below] {$\scriptstyle \xca$} (\mxsh*0.3,-\mysh) -- (\mxsh-0.15,-\mysh)
node [near end,below] {$\scriptstyle \xca$};
\draw[ptzer] (-\mxsh*0.3,-\mysh*1-0.6) rectangle ++(\mxsh*0.6,\mysh*2+1.2);
\node at (0,0) {$*$};
\end{tikzpicture}
\longrightarrow
\begin{tikzpicture}[menvone]
\draw [thkln] (-\mxsh+0.15,\mysh) --  (\mxsh-0.15,\mysh);
\draw [thkln] (-\mxsh+0.15,-\mysh) -- (\mxsh-0.15,-\mysh);
\draw [ptzer] (-0.15-\mxsh,-0.6-\mysh) rectangle ++(.3,1.2);
\draw [thkln] (-0.75-\mxsh,-\mysh) -- (-0.15-\mxsh,-\mysh)
node [near start,below] {$\scriptstyle \xca$};
\begin{scope}[xscale=-1]
\draw [ptzer] (-0.15-\mxsh,-0.6-\mysh) rectangle ++(.3,1.2);
\draw [thkln] (-0.75-\mxsh,-\mysh) -- (-0.15-\mxsh,-\mysh)
node [near start,below] {$\scriptstyle \xca$};
\end{scope}
\begin{scope}[yscale=-1]
\draw [ptzer] (-0.15-\mxsh,-0.6-\mysh) rectangle ++(.3,1.2);
\draw [thkln] (-0.75-\mxsh,-\mysh) -- (-0.15-\mxsh,-\mysh)
node [near start,above] {$\scriptstyle \xca$};
\end{scope}
\begin{scope}[yscale=-1,xscale=-1]
\draw [ptzer] (-0.15-\mxsh,-0.6-\mysh) rectangle ++(.3,1.2);
\draw [thkln] (-0.75-\mxsh,-\mysh) -- (-0.15-\mxsh,-\mysh)
node [near start,above] {$\scriptstyle \xca$};
\end{scope}
\end{tikzpicture}
}\;,
\end{equation}
where
\begin{equation}
\label{eq:stbx}
\begin{tikzpicture}[menvone]
\draw [thkln] (-\mxsh+0.15,\mysh) -- (-\mxsh*0.3,\mysh)
node [near start,above] {$\scriptstyle \xca$}
(\mxsh*0.3,\mysh) -- (\mxsh-0.15,\mysh)
node [near end,above] {$\scriptstyle \xca$};
\draw [thkln] (-\mxsh+0.15,-\mysh) -- (-\mxsh*0.3,-\mysh)
node [near start,below] {$\scriptstyle \xca$} (\mxsh*0.3,-\mysh) -- (\mxsh-0.15,-\mysh)
node [near end,below] {$\scriptstyle \xca$};
\draw[ptzer] (-\mxsh*0.3,-\mysh*1-0.6) rectangle ++(\mxsh*0.6,\mysh*2+1.2);
\node at (0,0) {$*$};
\end{tikzpicture}
\;\hteqv\;
\Pcnv{
\bigoplus_{\xki=1}^{\xca}
\shcr^{\xki^2}{a \brace \xki}_{\shcr}
\begin{tikzpicture}[menvone]
\draw [thkln] (-\mxsh+0.15,\mysh-0.2) to [out=0,in=90] (-\mxsh+1.1,0)
 to [out=-90,in=0] (-\mxsh+0.15,-\mysh+0.2) ;
\begin{scope}[xscale=-1]
\draw [thkln] (-\mxsh+0.15,\mysh-0.2) to [out=0,in=90] (-\mxsh+1.1,0) to [out=-90,in=0] (-\mxsh+0.15,-\mysh+0.2);
\end{scope}
\node at (-\mxsh+0.5,0) {$\scriptstyle \xki$};
\node at (\mxsh-0.5,0) {$\scriptstyle \xki$};
\draw [thkln] (-\mxsh+0.15,\mysh+0.2) -- (\mxsh-0.15,\mysh+0.2) node [midway,above] {$\scriptstyle \xca-\xki$};
\draw [thkln] (-\mxsh+0.15,-\mysh-0.2) -- (\mxsh-0.15,-\mysh-0.2) node [midway, below] {$\scriptstyle \xca-\xki$};
\draw [ptzert] (-0.15-\mxsh,-0.6-\mysh) rectangle ++(.3,1.2);
\draw [thkln] (-0.75-\mxsh,-\mysh) -- (-0.15-\mxsh,-\mysh)
node [near start,below] {$\scriptstyle \xca$};
\begin{scope}[xscale=-1]
\draw [ptzert] (-0.15-\mxsh,-0.6-\mysh) rectangle ++(.3,1.2);
\draw [thkln] (-0.75-\mxsh,-\mysh) -- (-0.15-\mxsh,-\mysh)
node [near start,below] {$\scriptstyle \xca$};
\end{scope}
\begin{scope}[yscale=-1]
\draw [ptzert] (-0.15-\mxsh,-0.6-\mysh) rectangle ++(.3,1.2);
\draw [thkln] (-0.75-\mxsh,-\mysh) -- (-0.15-\mxsh,-\mysh)
node [near start,above] {$\scriptstyle \xca$};
\end{scope}
\begin{scope}[yscale=-1,xscale=-1]
\draw [ptzert] (-0.15-\mxsh,-0.6-\mysh) rectangle ++(.3,1.2);
\draw [thkln] (-0.75-\mxsh,-\mysh) -- (-0.15-\mxsh,-\mysh)
node [near start,above] {$\scriptstyle \xca$};
\end{scope}
\end{tikzpicture}
}
\end{equation}
\end{corollary}
\begin{proof}
According to the second inequality of\rx{eq:twineq}, $\yki(0)=0$, hence the the right tangle of the cone\rx{eq:ccrc} is the one that appears at $i=0$ in the \tmcn\rx{eq:lmtcn}. According to \ex{eq:colKhbr}, this tangle has multiplicity one, so this is the only place where it may appear in that \tmcn, and it has a zero shift of \thdgr.
\end{proof}

\begin{proof}[Proof of Lemma\rw{lm:strm}]
Since $\xDp$ is constructed from $\xD$ by a removal of a single crossing (\tstrt\ of $\sBD$), we set
\def\mxsh{2}
\def\mysh{1.2}
\[
\ytngsi =
\begin{tikzpicture}[menvone]
\draw [thkln] (-\mxsh+0.15,\mysh) to [out=0,in=180] (\mxsh-0.15,-\mysh);
\draw [lnovr] (-\mxsh+0.15,-\mysh) to [out=0,in=180] (\mxsh-0.15,\mysh);
\draw [thkln] (-\mxsh+0.15,-\mysh) to [out=0,in=180] (\mxsh-0.15,\mysh);
%
%
\draw [ptzer] (-0.15-\mxsh,-0.6-\mysh) rectangle ++(.3,1.2);
\draw [thkln] (-0.75-\mxsh,-\mysh) -- (-0.15-\mxsh,-\mysh)
node [near start,below] {$\scriptstyle \xca$};
\begin{scope}[xscale=-1]
\draw [ptzer] (-0.15-\mxsh,-0.6-\mysh) rectangle ++(.3,1.2);
\draw [thkln] (-0.75-\mxsh,-\mysh) -- (-0.15-\mxsh,-\mysh)
node [near start,below] {$\scriptstyle \xca$};
\end{scope}
\begin{scope}[yscale=-1]
\draw [ptzer] (-0.15-\mxsh,-0.6-\mysh) rectangle ++(.3,1.2);
\draw [thkln] (-0.75-\mxsh,-\mysh) -- (-0.15-\mxsh,-\mysh)
node [near start,above] {$\scriptstyle \xca$};
\end{scope}
\begin{scope}[yscale=-1,xscale=-1]
\draw [ptzer] (-0.15-\mxsh,-0.6-\mysh) rectangle ++(.3,1.2);
\draw [thkln] (-0.75-\mxsh,-\mysh) -- (-0.15-\mxsh,-\mysh)
node [near start,above] {$\scriptstyle \xca$};
\end{scope}
\end{tikzpicture}\;,
\qquad
\ytngsf =
\begin{tikzpicture}[menvone]
\draw [thkln] (-\mxsh+0.15,\mysh) --  (\mxsh-0.15,\mysh);
\draw [thkln] (-\mxsh+0.15,-\mysh) -- (\mxsh-0.15,-\mysh);
\draw [ptzer] (-0.15-\mxsh,-0.6-\mysh) rectangle ++(.3,1.2);
\draw [thkln] (-0.75-\mxsh,-\mysh) -- (-0.15-\mxsh,-\mysh)
node [near start,below] {$\scriptstyle \xca$};
\begin{scope}[xscale=-1]
\draw [ptzer] (-0.15-\mxsh,-0.6-\mysh) rectangle ++(.3,1.2);
\draw [thkln] (-0.75-\mxsh,-\mysh) -- (-0.15-\mxsh,-\mysh)
node [near start,below] {$\scriptstyle \xca$};
\end{scope}
\begin{scope}[yscale=-1]
\draw [ptzer] (-0.15-\mxsh,-0.6-\mysh) rectangle ++(.3,1.2);
\draw [thkln] (-0.75-\mxsh,-\mysh) -- (-0.15-\mxsh,-\mysh)
node [near start,above] {$\scriptstyle \xca$};
\end{scope}
\begin{scope}[yscale=-1,xscale=-1]
\draw [ptzer] (-0.15-\mxsh,-0.6-\mysh) rectangle ++(.3,1.2);
\draw [thkln] (-0.75-\mxsh,-\mysh) -- (-0.15-\mxsh,-\mysh)
node [near start,above] {$\scriptstyle \xca$};
\end{scope}
\end{tikzpicture}\;,
\qquad
\ytngsc =
\shcr^{-\hlf\xca^2}
\begin{tikzpicture}[menvone]
\draw [thkln] (-\mxsh+0.15,\mysh) -- (-\mxsh*0.3,\mysh)
node [near start,above] {$\scriptstyle \xca$}
(\mxsh*0.3,\mysh) -- (\mxsh-0.15,\mysh)
node [near end,above] {$\scriptstyle \xca$};
\draw [thkln] (-\mxsh+0.15,-\mysh) -- (-\mxsh*0.3,-\mysh)
node [near start,below] {$\scriptstyle \xca$} (\mxsh*0.3,-\mysh) -- (\mxsh-0.15,-\mysh)
node [near end,below] {$\scriptstyle \xca$};
\draw[ptzer] (-\mxsh*0.3,-\mysh*1-0.6) rectangle ++(\mxsh*0.6,\mysh*2+1.2);
\node at (0,0) {$*$};
\end{tikzpicture}\;,
\]
so that $\xDsi = \xD$ and $\xDsf = \xDp$, while the relation\rx{eq:cnrel} comes from\rx{eq:ccrc} if we set $\xhshf=0$ and $\xqshf= -\hlf \xca^2$. Since $\yncrv{\ytngsi}=\xca^2$ , in view of Proposition\rw{prp:gestdg} it remains to establish the bound
$
\tKHmvv{i}{\hem}(\xDsc) = 0,
$
if $i\leq\xca$. Let $\xDsci$ be the diagram $\xDsc$ in which $\ytngsc$ is replaced by a constituent tangle $\ytngsci$ from the \rhs of \ex{eq:stbx}, in which one strand of a $\xki$-cable is separated from the others: 
\def\esh{2.5}
\[
\ytngsci =
\begin{tikzpicture}[menvone]
\draw [thkln] (-\mxsh+0.15,\mysh-0.2) to [out=0,in=90] (-\mxsh+1.1,0)
 to [out=-90,in=0] (-\mxsh+0.15,-\mysh+0.2) ;
\begin{scope}[xscale=-1]
\draw [thkln] (-\mxsh+0.15,\mysh-0.2) to [out=0,in=90] (-\mxsh+1.1,0) to [out=-90,in=0] (-\mxsh+0.15,-\mysh+0.2);
\end{scope}
\node at (-\mxsh+0.2,0) {$\scriptstyle \xki-1$};
\node at (\mxsh-0.5,0) {$\scriptstyle \xki$};
\draw [thkln] (-\mxsh+0.15,\mysh+0.2) -- (\mxsh-0.15,\mysh+0.2) node [midway,above] {$\scriptstyle \xca-\xki$};
\draw [thkln] (-\mxsh+0.15,-\mysh-0.2) -- (\mxsh-0.15,-\mysh-0.2) node [midway, below] {$\scriptstyle \xca-\xki$};
\draw [ptzert] (-0.15-\mxsh,-0.6-\mysh) rectangle ++(.3,1.2);
\draw [thkln] (-\esh-0.75-\mxsh,-\mysh) -- (-\esh-0.15-\mxsh,-\mysh)
node [near start,below] {$\scriptstyle \xca$};
\draw [thkln] (0.15-\esh-\mxsh,-\mysh-0.2) -- (-0.15-\mxsh,-\mysh-0.2)
node [midway, below] {$\scriptstyle \xca-1$};
\begin{scope}[xscale=-1]
\draw [ptzert] (-0.15-\mxsh,-0.6-\mysh) rectangle ++(.3,1.2);
\draw [thkln] (-0.75-\mxsh,-\mysh) -- (-0.15-\mxsh,-\mysh)
node [near start,below] {$\scriptstyle \xca$};
\end{scope}
\begin{scope}[yscale=-1]
\draw [ptzert] (-0.15-\mxsh,-0.6-\mysh) rectangle ++(.3,1.2);
\draw [thkln] (-\esh-0.75-\mxsh,-\mysh) -- (-\esh-0.15-\mxsh,-\mysh)
node [near start,above] {$\scriptstyle \xca$};
\draw [thkln] (0.15-\esh-\mxsh,-\mysh-0.2) -- (-0.15-\mxsh,-\mysh-0.2)
node [midway, above] {$\scriptstyle \xca-1$};
\end{scope}
\begin{scope}[yscale=-1,xscale=-1]
\draw [ptzert] (-0.15-\mxsh,-0.6-\mysh) rectangle ++(.3,1.2);
\draw [thkln] (-0.75-\mxsh,-\mysh) -- (-0.15-\mxsh,-\mysh)
node [near start,above] {$\scriptstyle \xca$};
\end{scope}
\draw [ptzert] (-\esh-0.15-\mxsh,-0.6-\mysh) rectangle ++(.3,1.2);
\draw [ptzert] (-\esh-0.15-\mxsh,0.6+\mysh) rectangle ++(.3,-1.2);
\draw (-\esh-\mxsh+0.15,\mysh-0.2) to [out=0,in=90] (-\esh-\mxsh+1.1,0) to [out=-90,in=0] (-\esh-\mxsh+0.15,-\mysh+0.2);
\end{tikzpicture}
\;\hteqv\;
\begin{tikzpicture}[menvone]
\draw [thkln] (-\mxsh+0.15,\mysh-0.2) to [out=0,in=90] (-\mxsh+1.1,0)
 to [out=-90,in=0] (-\mxsh+0.15,-\mysh+0.2) ;
\begin{scope}[xscale=-1]
\draw [thkln] (-\mxsh+0.15,\mysh-0.2) to [out=0,in=90] (-\mxsh+1.1,0) to [out=-90,in=0] (-\mxsh+0.15,-\mysh+0.2);
\end{scope}
\node at (-\mxsh+0.5,0) {$\scriptstyle \xki$};
\node at (\mxsh-0.5,0) {$\scriptstyle \xki$};
\draw [thkln] (-\mxsh+0.15,\mysh+0.2) -- (\mxsh-0.15,\mysh+0.2) node [midway,above] {$\scriptstyle \xca-\xki$};
\draw [thkln] (-\mxsh+0.15,-\mysh-0.2) -- (\mxsh-0.15,-\mysh-0.2) node [midway, below] {$\scriptstyle \xca-\xki$};
\draw [ptzert] (-0.15-\mxsh,-0.6-\mysh) rectangle ++(.3,1.2);
\draw [thkln] (-0.75-\mxsh,-\mysh) -- (-0.15-\mxsh,-\mysh)
node [near start,below] {$\scriptstyle \xca$};
\begin{scope}[xscale=-1]
\draw [ptzert] (-0.15-\mxsh,-0.6-\mysh) rectangle ++(.3,1.2);
\draw [thkln] (-0.75-\mxsh,-\mysh) -- (-0.15-\mxsh,-\mysh)
node [near start,below] {$\scriptstyle \xca$};
\end{scope}
\begin{scope}[yscale=-1]
\draw [ptzert] (-0.15-\mxsh,-0.6-\mysh) rectangle ++(.3,1.2);
\draw [thkln] (-0.75-\mxsh,-\mysh) -- (-0.15-\mxsh,-\mysh)
node [near start,above] {$\scriptstyle \xca$};
\end{scope}
\begin{scope}[yscale=-1,xscale=-1]
\draw [ptzert] (-0.15-\mxsh,-0.6-\mysh) rectangle ++(.3,1.2);
\draw [thkln] (-0.75-\mxsh,-\mysh) -- (-0.15-\mxsh,-\mysh)
node [near start,above] {$\scriptstyle \xca$};
\end{scope}
\end{tikzpicture}
\]
The \lumps\ presentation\rx{eq:stbx} of $\ytngsc$ allows us to use Remark\rw{rmk:bndss}: it is sufficient to establish a bound
\begin{equation}
\label{eq:bdck}
\tKHmvv{i}{\hem}(\xDsci) = 0,\quad\text{if $i\leq \xca-\xki $},
\end{equation}
because the tangle $\ytngsci$ has an extra \thdgr\ shift $\shcr^{\xki^2}$ in the \lumps\ formula\rx{eq:stbx} and $\xki^2-\xki\geq 0$.

Consider the portion of $\xDsci$ between the left $\xki$-cable of $\ytngsci$ and another crossing which connects the same \tBcr s:
\def\mxash{6}
\def\mxbsh{1}
\def\myash{2}
\begin{equation}
\label{eq:ordg}
\def\mxsh{3}
\def\mysh{1.6}
\begin{tikzpicture}[menvone]
\draw [bcrc] (-0.75-\mxsh-0.3-\mxash,-\mysh) -- (\mxash+1.6,-\mysh);
\draw [bcrc] (-0.75-\mxsh-0.3-\mxash,\mysh) -- (\mxash+1.6,\mysh);
\draw [thkln] (-\mxbsh+0.15,\mysh+\myash) to [out=0,in=180] (\mxbsh-0.15,\mysh);
\draw [lnovr,middle segment=0.3cm] (-\mxbsh+0.15,\mysh) to [out=0,in=180] (\mxbsh-0.15,\mysh+\myash);
\draw [thkln] (-\mxbsh+0.15,\mysh) to [out=0,in=180] (\mxbsh-0.15,\mysh+\myash);
\draw [ptzert] (-\mxbsh-0.15,\mysh-0.6) rectangle ++(.3,1.2);
\draw [ptzert] (\mxbsh-0.15,\mysh-0.6) rectangle ++(.3,1.2);
\begin{scope}[yshift=\myash cm]
\draw [ptzert] (-\mxbsh-0.15,\mysh-0.6) rectangle ++(.3,1.2);
\draw [ptzert] (\mxbsh-0.15,\mysh-0.6) rectangle ++(.3,1.2);
\draw [thkln] (-0.75-\mxbsh,\mysh) -- (-0.15-\mxbsh,\mysh) node [near start,above] {$\scriptstyle \xca$};
\draw [thkln] (0.75+\mxbsh,\mysh) -- (0.15+\mxbsh,\mysh) node [near start,above] {$\scriptstyle \xca$};
\end{scope}
\begin{scope}[xshift=-\mxash cm-0.3cm]
\draw [thkln] (-\mxsh+0.15,\mysh) to [out=0,in=180] (0,-\mysh);
\draw [lnovr,middle segment=0.3cm] (-\mxsh+0.15,-\mysh) to [out=0,in=180] (0,\mysh);
\draw [thkln] (-\mxsh+0.15,-\mysh) to [out=0,in=180] (0,\mysh);
\draw [ptzert] (-0.15-\mxsh,-0.6-\mysh) rectangle ++(.3,1.2);
\draw [ptzert] (0,-0.6-\mysh) rectangle ++(.3,1.2);
\draw [ptzert] (0,-0.6+\mysh) rectangle ++(.3,1.2);
\draw [thkln] (-0.75-\mxsh,-\mysh) -- (-0.15-\mxsh,-\mysh)
node [near start,below] {$\scriptstyle \xca$};
\begin{scope}[yscale=-1]
\draw [ptzert] (-0.15-\mxsh,-0.6-\mysh) rectangle ++(.3,1.2);
\draw [thkln] (-0.75-\mxsh,-\mysh) -- (-0.15-\mxsh,-\mysh)
node [near start,above] {$\scriptstyle \xca$};
\end{scope}
\end{scope}
\begin{scope}[xshift=\mxash cm]
\draw [ptzert] (0,-0.6-\mysh) rectangle ++(.3,1.2);
\draw [ptzert] (0,-0.6+\mysh) rectangle ++(.3,1.2);
\draw [thkln] (0.3,\mysh-0.2) to [out=0,in=90] (1,0) to [out=-90,in=0] (0.3,-\mysh+0.2);
\node at (1.5,0) {$\scriptstyle \xki$};
\draw [thkln] (0.3,\mysh+0.2) -- (1.6,\mysh+0.2) node [near end, above] {$\scriptstyle \xca - \xki $};
\draw [thkln] (0.3,-\mysh-0.2) -- (1.6,-\mysh-0.2) node [near end, below] {$\scriptstyle \xca - \xki $};
\end{scope}
\draw [thkln] (-\mxash,-\mysh) -- (-0.7,-\mysh) (0.7,-\mysh) -- (\mxash,-\mysh);
\node at (0,-\mysh) {$\cdots$};
\draw [thkln] (-\mxash,\mysh) -- (-0.5*\mxash-0.5*\mxbsh-0.7,\mysh)
(-0.5*\mxash-0.5*\mxbsh+0.7,\mysh) -- (-\mxbsh-0.15,\mysh);
\node at (-0.5*\mxash-0.5*\mxbsh,\mysh) {$\cdots$ };
\draw [thkln] (\mxash,\mysh) -- (0.5*\mxash+0.5*\mxbsh+0.7,\mysh)
(0.5*\mxash+0.5*\mxbsh-0.7,\mysh) -- (\mxbsh+0.15,\mysh);
\node at (0.5*\mxash+0.5*\mxbsh,\mysh) {$\cdots$ };
\end{tikzpicture}
\end{equation}
As usual, gray strips indicate \tBcr s of the \tBdg. We showed explicitly one of the crossings attached to a \tBcr s. Consider a modification of this diagram which results from a repeated application of
of Propositions\rw{prp:bndfs} and\rw{prp:bndsc} to these crossings:
\begin{equation}
\label{eq:mddg}
\def\mxsh{3}
\def\mysh{1.6}
\def\smshy{0.35}
\begin{tikzpicture}[menvone]
\draw [bcrc] (-0.75-\mxsh-0.3-\mxash,-\mysh) -- (\mxash+1.6,-\mysh);
\draw [bcrc] (-0.75-\mxsh-0.3-\mxash,\mysh) -- (\mxash+1.6,\mysh);
\begin{scope}[yshift=\smshy cm+0.3 cm]
\draw [thkln] (-\mxbsh+0.15,\mysh+\myash-0.2) to [out=0,in=180] (\mxbsh-0.15,\mysh);
\draw [lnovr,middle segment=0.3cm] (-\mxbsh+0.15,\mysh) to [out=0,in=180] (\mxbsh-0.15,\mysh+\myash-0.2);
\draw [thkln] (-\mxbsh+0.15,\mysh) to [out=0,in=180] (\mxbsh-0.15,\mysh+\myash-0.2);
\draw [thkln] (-\mxbsh+0.15,\mysh+\myash+0.2) -- (\mxbsh-0.15,\mysh+\myash+0.2)
node [midway,above] {$\scriptstyle \xki$};
\draw [ptzert] (-\mxbsh-0.15,\mysh-0.6) rectangle ++(.3,1.2);
\draw [ptzert] (\mxbsh-0.15,\mysh-0.6) rectangle ++(.3,1.2);
\begin{scope}[yshift=\myash cm]
\draw [ptzert] (-\mxbsh-0.15,\mysh-0.6) rectangle ++(.3,1.2);
\draw [ptzert] (\mxbsh-0.15,\mysh-0.6) rectangle ++(.3,1.2);
\draw [thkln] (-0.75-\mxbsh,\mysh) -- (-0.15-\mxbsh,\mysh) node [near start,above] {$\scriptstyle \xca$};
\draw [thkln] (0.75+\mxbsh,\mysh) -- (0.15+\mxbsh,\mysh) node [near start,above] {$\scriptstyle \xca$};
\end{scope}
\end{scope}
\begin{scope}[xshift=-\mxash cm-0.3cm]
\draw [thkln] (-\mxsh+0.15,\mysh) to [out=0,in=180] (0,-\mysh);
\draw [lnovr,middle segment=0.3cm] (-\mxsh+0.15,-\mysh) to [out=0,in=180] (0,\mysh);
\draw [thkln] (-\mxsh+0.15,-\mysh) to [out=0,in=180] (0,\mysh);
\draw [ptzert] (-0.15-\mxsh,-0.6-\mysh) rectangle ++(.3,1.2);
\draw [ptzert] (0,-0.6-\mysh) rectangle ++(.3,1.2);
\draw [ptzert] (0,-0.6+\mysh) rectangle ++(.3,1.2);
\draw [thkln] (-0.75-\mxsh,-\mysh) -- (-0.15-\mxsh,-\mysh)
node [near start,below] {$\scriptstyle \xca$};
\begin{scope}[yscale=-1]
\draw [ptzert] (-0.15-\mxsh,-0.6-\mysh) rectangle ++(.3,1.2);
\draw [thkln] (-0.75-\mxsh,-\mysh) -- (-0.15-\mxsh,-\mysh)
node [near start,above] {$\scriptstyle \xca$};
\end{scope}
\end{scope}
\begin{scope}[xshift=\mxash cm]
\draw [ptzert] (0,-0.4-\mysh-\smshy) rectangle ++(.3,0.8);
\draw [ptzert] (0,-0.4+\mysh+\smshy) rectangle ++(.3,0.8);
\draw [thkln] (0.3,\mysh-\smshy) to [out=0,in=90] (1,0) to [out=-90,in=0] (0.3,-\mysh+\smshy);
\node at (1.5,0) {$\scriptstyle \xki$};
\draw [thkln] (0.3,\mysh+\smshy) -- (1.6,\mysh+\smshy) node [near end, above] {$\scriptstyle \xca - \xki $};
\draw [thkln] (0.3,-\mysh-\smshy) -- (1.6,-\mysh-\smshy) node [near end, below] {$\scriptstyle \xca - \xki $};
\end{scope}
\begin{scope}[yshift=-\smshy cm]
\draw [thkln] (-\mxash,-\mysh) -- (-0.7,-\mysh) (0.7,-\mysh) -- (\mxash,-\mysh);
\node at (0,-\mysh) {$\cdots$};
\end{scope}
\draw [thkln] (-\mxash,-\mysh+\smshy) -- (0.3+\mxash,-\mysh+\smshy);;
\begin{scope}[yshift=\smshy cm]
\draw [thkln] (-\mxash,\mysh) -- (-0.5*\mxash-0.5*\mxbsh-0.7,\mysh)
(-0.5*\mxash-0.5*\mxbsh+0.7,\mysh) -- (-\mxbsh-0.15,\mysh);
\node at (-0.5*\mxash-0.5*\mxbsh,\mysh) {$\cdots$ };
\draw [thkln] (\mxash,\mysh) -- (0.5*\mxash+0.5*\mxbsh+0.7,\mysh)
(0.5*\mxash+0.5*\mxbsh-0.7,\mysh) -- (\mxbsh+0.15,\mysh);
\node at (0.5*\mxash+0.5*\mxbsh,\mysh) {$\cdots$ };
\end{scope}
\draw [thkln] (-\mxash,\mysh-\smshy) -- (\mxash+0.3,\mysh-\smshy);
\end{tikzpicture}
\end{equation}
Let $\xDscip$ be the diagram constructed from $\xDsci$ by replacing the subdiagram\rx{eq:ordg} with\rx{eq:mddg}. According to Propositions\rw{prp:bndfs} and\rw{prp:bndsc}
there is a map of shifted Khovanov homologies $\tKHm(\xDscip)\rightarrow \tKHm(\xDsci)$, which is an isomorphism on $\tKHmvv{i}{\hem}(\xDsci)$ for $i\leq \xca-\xki$. We are going to show that
\begin{equation}
\label{eq:bdckb}
\tKHmvv{i}{\hem}(\xDscip) = 0\quad \text{if $i\leq \xca-\xki $},
\end{equation}
hence this will imply the bound\rx{eq:bdck}.

Consider a sequence of homotopy equivalences:
\begin{equation}
\label{eq:xthm}
\begin{tikzpicture}[menvone]
\begin{scope}[xshift=-\mxsh cm]
\draw [thkln] (-\mxsh+0.15,\mysh) to [out=0,in=180] (0,-\mysh);
\draw [lnovr] (-\mxsh+0.15,-\mysh) to [out=0,in=180] (0,\mysh);
\draw [thkln] (-\mxsh+0.15,-\mysh) to [out=0,in=180] (0,\mysh);
\draw [ptzer] (-0.15-\mxsh,-0.6-\mysh) rectangle ++(.3,1.2);
\draw [thkln] (-0.75-\mxsh,-\mysh) -- (-0.15-\mxsh,-\mysh)
node [near start,below] {$\scriptstyle \xca$};
\begin{scope}[yscale=-1]
\draw [ptzer] (-0.15-\mxsh,-0.6-\mysh) rectangle ++(.3,1.2);
\draw [thkln] (-0.75-\mxsh,-\mysh) -- (-0.15-\mxsh,-\mysh)
node [near start,above] {$\scriptstyle \xca$};
\end{scope}
\end{scope}
\draw [thkln] (-\mxsh+0.15,\mysh-0.2) to [out=0,in=90] (-\mxsh+1.1,0)
 to [out=-90,in=0] (-\mxsh+0.15,-\mysh+0.2);
\node at (-\mxsh+1.5,0) {$\scriptstyle \xki$};
\draw [thkln] (-\mxsh+0.15,\mysh+0.2) -- (0.3,\mysh+0.2) node [midway,above] {$\scriptstyle \xca-\xki$};
\draw [thkln] (-\mxsh+0.15,-\mysh-0.2) -- (0.3,-\mysh-0.2) node [midway, below] {$\scriptstyle \xca-\xki$};
\draw [ptzert] (-0.15-\mxsh,-0.6-\mysh) rectangle ++(.3,1.2);
\begin{scope}[yscale=-1]
\draw [ptzert] (-0.15-\mxsh,-0.6-\mysh) rectangle ++(.3,1.2);
\end{scope}
\end{tikzpicture}
\;\hteqv\;
\begin{tikzpicture}[menvone]
\begin{scope}[xshift=-\mxsh cm]
\draw [thkln] (-\mxsh+0.15,\mysh-0.2) to [out=0,in=180] (0.15+0.75,-\mysh-0.2);
\draw [thkln,yshift=0.4cm] (-\mxsh+0.15,\mysh-0.2) to [out=0,in=180] (0.15+0.75,-\mysh-0.2+0.6);
\draw [lnovrtw,yshift=-0.4cm] (-\mxsh+0.15,-\mysh+0.2) to [out=0,in=180] (0.15+0.75,\mysh+0.2-0.6);
\draw [thkln,yshift=-0.4cm] (-\mxsh+0.15,-\mysh+0.2) to [out=0,in=180] (0.15+0.75,\mysh+0.2-0.6);
\draw [lnovrtw] (-\mxsh+0.15,-\mysh+0.2) to [out=0,in=180] (0.15+0.75,\mysh+0.2);
\draw [thkln] (-\mxsh+0.15,-\mysh+0.2) to [out=0,in=180] (0.15+0.75,\mysh+0.2);
\draw [ptzer] (-0.15-\mxsh,-0.6-\mysh) rectangle ++(.3,1.2);
\draw [thkln] (-0.75-\mxsh,-\mysh) -- (-0.15-\mxsh,-\mysh)
node [near start,below] {$\scriptstyle \xca$};
\begin{scope}[yscale=-1]
\draw [ptzer] (-0.15-\mxsh,-0.6-\mysh) rectangle ++(.3,1.2);
\draw [thkln] (-0.75-\mxsh,-\mysh) -- (-0.15-\mxsh,-\mysh)
node [near start,above] {$\scriptstyle \xca$};
\end{scope}
\end{scope}
\draw [thkln,xshift=0.75cm] (-\mxsh+0.15,\mysh-0.2-0.6) to [out=0,in=90] (-\mxsh+0.5,0)
 to [out=-90,in=0] (-\mxsh+0.15,-\mysh+0.2+0.6) ;
\node at (-\mxsh+1.65,0) {$\scriptstyle \xki$};
\draw [thkln] (-\mxsh+0.15+0.75,\mysh+0.2) -- (0.3,\mysh+0.2) node [midway,above] {$\scriptstyle \xca-\xki$};
\draw [thkln] (-\mxsh+0.15+0.75,-\mysh-0.2) -- (0.3,-\mysh-0.2) node [midway, below] {$\scriptstyle \xca-\xki$};
%
\begin{scope}[yscale=-1]
\end{scope}
\end{tikzpicture}
\;
\hteqv
\;
\shcr^{\xca \xki - \shlf \xki^2}\shfr^\xki
\begin{tikzpicture}[menvone]
\draw [thkln] (-\mxsh+0.15,\mysh-0.2) to [out=0,in=90] (-\mxsh+0.8,0)
 to [out=-90,in=0] (-\mxsh+0.15,-\mysh+0.2) ;
\node at (-\mxsh+0.4,0) {$\scriptstyle \xki$};
\draw [thkln] (-\mxsh+0.15,\mysh+0.2) to [out=0,in=180]
(1.2,-\mysh);
\draw [thkln] (1.2,-\mysh) -- (2,-\mysh)
node [near start,below] {$\scriptstyle \xca-\xki$};
\draw [lnovr] (-\mxsh+0.15,-\mysh-0.2) to [out=0,in=180] (1.2,\mysh);
\draw [thkln] (-\mxsh+0.15,-\mysh-0.2) to [out=0,in=180]
(1.2,\mysh);
\draw [thkln] (1.2,\mysh) -- (2,\mysh)
node [near start,above] {$\scriptstyle \xca-\xki$};
%
\draw [ptzert] (-0.15-\mxsh,-0.6-\mysh) rectangle ++(.3,1.2);
\draw [thkln] (-0.75-\mxsh,-\mysh) -- (-0.15-\mxsh,-\mysh)
node [near start,below] {$\scriptstyle \xca$};
\begin{scope}[yscale=-1]
\draw [ptzert] (-0.15-\mxsh,-0.6-\mysh) rectangle ++(.3,1.2);
\draw [thkln] (-0.75-\mxsh,-\mysh) -- (-0.15-\mxsh,-\mysh)
node [near start,above] {$\scriptstyle \xca$};
\end{scope}
\end{tikzpicture}
\end{equation}
Here the first homotopy equivalence comes from sliding $\xki$-cable projectors to the left along $\xca$-cables and then contracting double projectors into single ones, while the second homotopy comes from \eex{eq:cfrsh} and\rx{eq:projtw}. Let $\xDscipp$ be the diagram constructed from $\xDscip$ by replacing the left tangle of \ex{eq:xthm} with the right tangle. Since $\yncrv{\xDscipp} = \yncrv{\xDscip} + (\xca-\xki)^2 - \xca^2$, homotopy equivalence\rx{eq:xthm} implies the isomorphism of shifted Khovanov homologies
\[
\tKHm(\xDscip) = \shcr^{\xki(2\xca-\xki)}
\shfr^\xki\,\tKHm(\xDscipp)
\]
and the bound\rx{eq:bdckb} follows from Theorem\rw{thm:eaest}.
\end{proof}

\appendix
\section{A single crossing of colored strands approximates a \tJWp}

Let $\xD$ be a diagram which may include both single and cabled lines as well as \tJWp s. Suppose that $\xD$ has a crossing of two $\xca$-cables with a projector on each. Let $\xDp$ be a diagram, in which the crossing is replaced with a \tJWp:
\def\mxsh{2}
\def\mysh{1.2}
\[
\begin{tikzpicture}[menvthree]
\node (l) at (-1.8,0) {};
\node (r) at (1.8,0) {};
\path[commutative diagrams/.cd, every arrow, every label]
(l) edge[commutative diagrams/squiggly] (r);
\begin{scope}[xshift=-5.5cm]
\def\mxsh{1.7}
\draw [thkln] (-\mxsh+0.15,\mysh) to [out=0,in=180] (\mxsh-0.15,-\mysh);
\draw [lnovr] (-\mxsh+0.15,-\mysh) to [out=0,in=180] (\mxsh-0.15,\mysh);
\draw [thkln] (-\mxsh+0.15,-\mysh) to [out=0,in=180] (\mxsh-0.15,\mysh);
%
%
\draw [ptzer] (-0.15-\mxsh,-0.6-\mysh) rectangle ++(.3,1.2);
\draw [thkln] (-0.75-\mxsh,-\mysh) -- (-0.15-\mxsh,-\mysh)
node [near start,below] {$\scriptstyle \xca$};
\begin{scope}[xscale=-1]
\draw [ptzer] (-0.15-\mxsh,-0.6-\mysh) rectangle ++(.3,1.2);
\draw [thkln] (-0.75-\mxsh,-\mysh) -- (-0.15-\mxsh,-\mysh)
node [near start,below] {$\scriptstyle \xca$};
\end{scope}
\begin{scope}[yscale=-1]
\draw [ptzer] (-0.15-\mxsh,-0.6-\mysh) rectangle ++(.3,1.2);
\draw [thkln] (-0.75-\mxsh,-\mysh) -- (-0.15-\mxsh,-\mysh)
node [near start,above] {$\scriptstyle \xca$};
\end{scope}
\begin{scope}[yscale=-1,xscale=-1]
\draw [ptzer] (-0.15-\mxsh,-0.6-\mysh) rectangle ++(.3,1.2);
\draw [thkln] (-0.75-\mxsh,-\mysh) -- (-0.15-\mxsh,-\mysh)
node [near start,above] {$\scriptstyle \xca$};
\end{scope}
\end{scope}
\begin{scope}[xshift=5.5cm]
\draw [thkln] (-\mxsh+0.15,\mysh) -- (-\mxsh*0.3,\mysh) (\mxsh*0.3,\mysh) -- (\mxsh-0.15,\mysh);
\draw [thkln] (-\mxsh+0.15,-\mysh) -- (-\mxsh*0.3,-\mysh) (\mxsh*0.3,-\mysh) -- (\mxsh-0.15,-\mysh);
%
%
\draw [ptzer] (-0.15-\mxsh,-0.6-\mysh) rectangle ++(.3,1.2);
\draw [thkln] (-0.75-\mxsh,-\mysh) -- (-0.15-\mxsh,-\mysh)
node [near start,below] {$\scriptstyle \xca$};
\begin{scope}[xscale=-1]
\draw [ptzer] (-0.15-\mxsh,-0.6-\mysh) rectangle ++(.3,1.2);
\draw [thkln] (-0.75-\mxsh,-\mysh) -- (-0.15-\mxsh,-\mysh)
node [near start,below] {$\scriptstyle \xca$};
\end{scope}
\begin{scope}[yscale=-1]
\draw [ptzer] (-0.15-\mxsh,-0.6-\mysh) rectangle ++(.3,1.2);
\draw [thkln] (-0.75-\mxsh,-\mysh) -- (-0.15-\mxsh,-\mysh)
node [near start,above] {$\scriptstyle \xca$};
\end{scope}
\begin{scope}[yscale=-1,xscale=-1]
\draw [ptzer] (-0.15-\mxsh,-0.6-\mysh) rectangle ++(.3,1.2);
\draw [thkln] (-0.75-\mxsh,-\mysh) -- (-0.15-\mxsh,-\mysh)
node [near start,above] {$\scriptstyle \xca$};
\end{scope}
\draw[ptzer] (-\mxsh*0.3,-\mysh*1-0.6) rectangle ++(\mxsh*0.6,\mysh*2+1.2);
\end{scope}
\end{tikzpicture}
\]

\begin{theorem}
\label{thm:crpr}
There exists a map $\tKHm(\xD)\xrightarrow{\xmg}\tKHm(\xDp)$ which is an isomorphism on $\tKHmvv{i}{\hem}$ for $i\leq 2\xca-2$.
\end{theorem}
\begin{proof}
Consider three tangles
\[
\ytngsi =
\begin{tikzpicture}[menvthree]
\draw [thkln] (-\mxsh+0.15,\mysh) -- (-\mxsh*0.3,\mysh) (\mxsh*0.3,\mysh) -- (\mxsh-0.15,\mysh);
\draw [thkln] (-\mxsh+0.15,-\mysh) -- (-\mxsh*0.3,-\mysh) (\mxsh*0.3,-\mysh) -- (\mxsh-0.15,-\mysh);
\draw [ptzer] (-0.15-\mxsh,-0.6-\mysh) rectangle ++(.3,1.2);
\draw [thkln] (-0.75-\mxsh,-\mysh) -- (-0.15-\mxsh,-\mysh)
node [near start,below] {$\scriptstyle \xca$};
\begin{scope}[xscale=-1]
\draw [ptzer] (-0.15-\mxsh,-0.6-\mysh) rectangle ++(.3,1.2);
\draw [thkln] (-0.75-\mxsh,-\mysh) -- (-0.15-\mxsh,-\mysh)
node [near start,below] {$\scriptstyle \xca$};
\end{scope}
\begin{scope}[yscale=-1]
\draw [ptzer] (-0.15-\mxsh,-0.6-\mysh) rectangle ++(.3,1.2);
\draw [thkln] (-0.75-\mxsh,-\mysh) -- (-0.15-\mxsh,-\mysh)
node [near start,above] {$\scriptstyle \xca$};
\end{scope}
\begin{scope}[yscale=-1,xscale=-1]
\draw [ptzer] (-0.15-\mxsh,-0.6-\mysh) rectangle ++(.3,1.2);
\draw [thkln] (-0.75-\mxsh,-\mysh) -- (-0.15-\mxsh,-\mysh)
node [near start,above] {$\scriptstyle \xca$};
\end{scope}
\draw[ptzer] (-\mxsh*0.3,-\mysh*1-0.6) rectangle ++(\mxsh*0.6,\mysh*2+1.2);
\end{tikzpicture}
,\qquad
\def\mxsh{1.7}
\ytngsf =
\begin{tikzpicture}[menvthree]
\draw [thkln] (-\mxsh+0.15,\mysh) to [out=0,in=180] (\mxsh-0.15,-\mysh);
\draw [lnovr] (-\mxsh+0.15,-\mysh) to [out=0,in=180] (\mxsh-0.15,\mysh);
\draw [thkln] (-\mxsh+0.15,-\mysh) to [out=0,in=180] (\mxsh-0.15,\mysh);
%
%
\draw [ptzer] (-0.15-\mxsh,-0.6-\mysh) rectangle ++(.3,1.2);
\draw [thkln] (-0.75-\mxsh,-\mysh) -- (-0.15-\mxsh,-\mysh)
node [near start,below] {$\scriptstyle \xca$};
\begin{scope}[xscale=-1]
\draw [ptzer] (-0.15-\mxsh,-0.6-\mysh) rectangle ++(.3,1.2);
\draw [thkln] (-0.75-\mxsh,-\mysh) -- (-0.15-\mxsh,-\mysh)
node [near start,below] {$\scriptstyle \xca$};
\end{scope}
\begin{scope}[yscale=-1]
\draw [ptzer] (-0.15-\mxsh,-0.6-\mysh) rectangle ++(.3,1.2);
\draw [thkln] (-0.75-\mxsh,-\mysh) -- (-0.15-\mxsh,-\mysh)
node [near start,above] {$\scriptstyle \xca$};
\end{scope}
\begin{scope}[yscale=-1,xscale=-1]
\draw [ptzer] (-0.15-\mxsh,-0.6-\mysh) rectangle ++(.3,1.2);
\draw [thkln] (-0.75-\mxsh,-\mysh) -- (-0.15-\mxsh,-\mysh)
node [near start,above] {$\scriptstyle \xca$};
\end{scope}
\end{tikzpicture}
,\qquad
\def\mxsh{2}
\ytngsc= \shcr^{\hlf\xca^2}
\begin{tikzpicture}[menvthree]
\draw [thkln] (-\mxsh+0.15,\mysh) -- (-\mxsh*0.3,\mysh) (\mxsh*0.3,\mysh) -- (\mxsh-0.15,\mysh);
\draw [thkln] (-\mxsh+0.15,-\mysh) -- (-\mxsh*0.3,-\mysh) (\mxsh*0.3,-\mysh) -- (\mxsh-0.15,-\mysh);
\def\xmlf{2.5}
\draw [thkln] (-0.15-\mxsh,-\mysh) to [out=180, in=0] (-\xmlf*\mxsh+0.15,\mysh);
\draw [lnovr] (-0.15-\mxsh,\mysh) to [out=180, in=0] (-\xmlf*\mxsh+0.15,-\mysh);
\draw [thkln] (-0.15-\mxsh,\mysh) to [out=180, in=0] (-\xmlf*\mxsh+0.15,-\mysh);
\draw [ptzer] (-0.15-\xmlf*\mxsh,-0.6-\mysh) rectangle ++(.3,1.2);
\draw [thkln] (-0.75-\xmlf*\mxsh,-\mysh) -- (-0.15-\xmlf*\mxsh,-\mysh)
node [near start,below] {$\scriptstyle \xca$};
\begin{scope}[yscale=-1]
\draw [ptzer] (-0.15-\xmlf*\mxsh,-0.6-\mysh) rectangle ++(.3,1.2);
\draw [thkln] (-0.75-\xmlf*\mxsh,-\mysh) -- (-0.15-\xmlf*\mxsh,-\mysh)
node [near start,above] {$\scriptstyle \xca$};
\end{scope}
\draw [ptzer] (-0.15-\mxsh,-0.6-\mysh) rectangle ++(.3,1.2);
\begin{scope}[xscale=-1]
\draw [ptzer] (-0.15-\mxsh,-0.6-\mysh) rectangle ++(.3,1.2);
\draw [thkln] (-0.75-\mxsh,-\mysh) -- (-0.15-\mxsh,-\mysh)
node [near start,below] {$\scriptstyle \xca$};
\end{scope}
\begin{scope}[yscale=-1]
\draw [ptzer] (-0.15-\mxsh,-0.6-\mysh) rectangle ++(.3,1.2);
\end{scope}
\begin{scope}[yscale=-1,xscale=-1]
\draw [ptzer] (-0.15-\mxsh,-0.6-\mysh) rectangle ++(.3,1.2);
\draw [thkln] (-0.75-\mxsh,-\mysh) -- (-0.15-\mxsh,-\mysh)
node [near start,above] {$\scriptstyle \xca$};
\end{scope}
\draw[ptone] (-\mxsh*0.3,-\mysh*1-0.6) rectangle ++(\mxsh*0.6,\mysh*2+1.2);
\end{tikzpicture}\;,
\]
where the complex
$
\begin{tikzpicture}[scale=0.4,baseline=-2.5]
\draw[line width=\ljwp,fill=gray!30] (-0.15,-0.6) rectangle ++(0.3,1.2);
\draw [line width=\cblth] (-.65,0) -- (-0.15,0) node [near start, above] {$\scriptstyle a$}
(0.15,0) -- (.65,0);
\end{tikzpicture}
$
is defined by \ex{eq:grproj}, and set $\xKhv{\ytngsfp}=\shcr^{\hlf\xca^2}\xKhv{\ytngsf}$. These tangles have a relation\rx{eq:cnrel} which comes from a sequence of homotopy equivalences:
\[
\begin{tikzpicture}[menvthree]
\draw [thkln] (-\mxsh+0.15,\mysh) -- (-\mxsh*0.3,\mysh) (\mxsh*0.3,\mysh) -- (\mxsh-0.15,\mysh);
\draw [thkln] (-\mxsh+0.15,-\mysh) -- (-\mxsh*0.3,-\mysh) (\mxsh*0.3,-\mysh) -- (\mxsh-0.15,-\mysh);
\draw [ptzer] (-0.15-\mxsh,-0.6-\mysh) rectangle ++(.3,1.2);
\draw [thkln] (-0.75-\mxsh,-\mysh) -- (-0.15-\mxsh,-\mysh)
node [near start,below] {$\scriptstyle \xca$};
\begin{scope}[xscale=-1]
\draw [ptzer] (-0.15-\mxsh,-0.6-\mysh) rectangle ++(.3,1.2);
\draw [thkln] (-0.75-\mxsh,-\mysh) -- (-0.15-\mxsh,-\mysh)
node [near start,below] {$\scriptstyle \xca$};
\end{scope}
\begin{scope}[yscale=-1]
\draw [ptzer] (-0.15-\mxsh,-0.6-\mysh) rectangle ++(.3,1.2);
\draw [thkln] (-0.75-\mxsh,-\mysh) -- (-0.15-\mxsh,-\mysh)
node [near start,above] {$\scriptstyle \xca$};
\end{scope}
\begin{scope}[yscale=-1,xscale=-1]
\draw [ptzer] (-0.15-\mxsh,-0.6-\mysh) rectangle ++(.3,1.2);
\draw [thkln] (-0.75-\mxsh,-\mysh) -- (-0.15-\mxsh,-\mysh)
node [near start,above] {$\scriptstyle \xca$};
\end{scope}
\draw[ptzer] (-\mxsh*0.3,-\mysh*1-0.6) rectangle ++(\mxsh*0.6,\mysh*2+1.2);
\end{tikzpicture}
\;\hteqv\;
\shcr^{\hlf\xca^2}
\begin{tikzpicture}[menvthree]
\draw [thkln] (-\mxsh+0.15,\mysh) -- (-\mxsh*0.3,\mysh) (\mxsh*0.3,\mysh) -- (\mxsh-0.15,\mysh);
\draw [thkln] (-\mxsh+0.15,-\mysh) -- (-\mxsh*0.3,-\mysh) (\mxsh*0.3,-\mysh) -- (\mxsh-0.15,-\mysh);
\def\xmlf{2.5}
\draw [thkln] (-0.15-\mxsh,-\mysh) to [out=180, in=0] (-\xmlf*\mxsh+0.15,\mysh);
\draw [lnovr] (-0.15-\mxsh,\mysh) to [out=180, in=0] (-\xmlf*\mxsh+0.15,-\mysh);
\draw [thkln] (-0.15-\mxsh,\mysh) to [out=180, in=0] (-\xmlf*\mxsh+0.15,-\mysh);
\draw [ptzer] (-0.15-\xmlf*\mxsh,-0.6-\mysh) rectangle ++(.3,1.2);
\draw [thkln] (-0.75-\xmlf*\mxsh,-\mysh) -- (-0.15-\xmlf*\mxsh,-\mysh)
node [near start,below] {$\scriptstyle \xca$};
\begin{scope}[yscale=-1]
\draw [ptzer] (-0.15-\xmlf*\mxsh,-0.6-\mysh) rectangle ++(.3,1.2);
\draw [thkln] (-0.75-\xmlf*\mxsh,-\mysh) -- (-0.15-\xmlf*\mxsh,-\mysh)
node [near start,above] {$\scriptstyle \xca$};
\end{scope}
\draw [ptzer] (-0.15-\mxsh,-0.6-\mysh) rectangle ++(.3,1.2);
\begin{scope}[xscale=-1]
\draw [ptzer] (-0.15-\mxsh,-0.6-\mysh) rectangle ++(.3,1.2);
\draw [thkln] (-0.75-\mxsh,-\mysh) -- (-0.15-\mxsh,-\mysh)
node [near start,below] {$\scriptstyle \xca$};
\end{scope}
\begin{scope}[yscale=-1]
\draw [ptzer] (-0.15-\mxsh,-0.6-\mysh) rectangle ++(.3,1.2);
\end{scope}
\begin{scope}[yscale=-1,xscale=-1]
\draw [ptzer] (-0.15-\mxsh,-0.6-\mysh) rectangle ++(.3,1.2);
\draw [thkln] (-0.75-\mxsh,-\mysh) -- (-0.15-\mxsh,-\mysh)
node [near start,above] {$\scriptstyle \xca$};
\end{scope}
\draw[ptzer] (-\mxsh*0.3,-\mysh*1-0.6) rectangle ++(\mxsh*0.6,\mysh*2+1.2);
\end{tikzpicture}
\;\hteqv\;
\shcr^{\hlf\xca^2}\;
\boxed{
\shcr
\begin{tikzpicture}[menvthree]
\draw [thkln] (-\mxsh+0.15,\mysh) -- (-\mxsh*0.3,\mysh) (\mxsh*0.3,\mysh) -- (\mxsh-0.15,\mysh);
\draw [thkln] (-\mxsh+0.15,-\mysh) -- (-\mxsh*0.3,-\mysh) (\mxsh*0.3,-\mysh) -- (\mxsh-0.15,-\mysh);
\def\xmlf{2.5}
\draw [thkln] (-0.15-\mxsh,-\mysh) to [out=180, in=0] (-\xmlf*\mxsh+0.15,\mysh);
\draw [lnovr] (-0.15-\mxsh,\mysh) to [out=180, in=0] (-\xmlf*\mxsh+0.15,-\mysh);
\draw [thkln] (-0.15-\mxsh,\mysh) to [out=180, in=0] (-\xmlf*\mxsh+0.15,-\mysh);
\draw [ptzer] (-0.15-\xmlf*\mxsh,-0.6-\mysh) rectangle ++(.3,1.2);
\draw [thkln] (-0.75-\xmlf*\mxsh,-\mysh) -- (-0.15-\xmlf*\mxsh,-\mysh)
node [near start,below] {$\scriptstyle \xca$};
\begin{scope}[yscale=-1]
\draw [ptzer] (-0.15-\xmlf*\mxsh,-0.6-\mysh) rectangle ++(.3,1.2);
\draw [thkln] (-0.75-\xmlf*\mxsh,-\mysh) -- (-0.15-\xmlf*\mxsh,-\mysh)
node [near start,above] {$\scriptstyle \xca$};
\end{scope}
\draw [ptzer] (-0.15-\mxsh,-0.6-\mysh) rectangle ++(.3,1.2);
\begin{scope}[xscale=-1]
\draw [ptzer] (-0.15-\mxsh,-0.6-\mysh) rectangle ++(.3,1.2);
\draw [thkln] (-0.75-\mxsh,-\mysh) -- (-0.15-\mxsh,-\mysh)
node [near start,below] {$\scriptstyle \xca$};
\end{scope}
\begin{scope}[yscale=-1]
\draw [ptzer] (-0.15-\mxsh,-0.6-\mysh) rectangle ++(.3,1.2);
\end{scope}
\begin{scope}[yscale=-1,xscale=-1]
\draw [ptzer] (-0.15-\mxsh,-0.6-\mysh) rectangle ++(.3,1.2);
\draw [thkln] (-0.75-\mxsh,-\mysh) -- (-0.15-\mxsh,-\mysh)
node [near start,above] {$\scriptstyle \xca$};
\end{scope}
\draw[ptone] (-\mxsh*0.3,-\mysh*1-0.6) rectangle ++(\mxsh*0.6,\mysh*2+1.2);
\end{tikzpicture}
\longrightarrow
\begin{tikzpicture}[menvthree]
\def\mxsh{1.7}
\draw [thkln] (-\mxsh+0.15,\mysh) to [out=0,in=180] (\mxsh-0.15,-\mysh);
\draw [lnovr] (-\mxsh+0.15,-\mysh) to [out=0,in=180] (\mxsh-0.15,\mysh);
\draw [thkln] (-\mxsh+0.15,-\mysh) to [out=0,in=180] (\mxsh-0.15,\mysh);
%
%
\draw [ptzer] (-0.15-\mxsh,-0.6-\mysh) rectangle ++(.3,1.2);
\draw [thkln] (-0.75-\mxsh,-\mysh) -- (-0.15-\mxsh,-\mysh)
node [near start,below] {$\scriptstyle \xca$};
\begin{scope}[xscale=-1]
\draw [ptzer] (-0.15-\mxsh,-0.6-\mysh) rectangle ++(.3,1.2);
\draw [thkln] (-0.75-\mxsh,-\mysh) -- (-0.15-\mxsh,-\mysh)
node [near start,below] {$\scriptstyle \xca$};
\end{scope}
\begin{scope}[yscale=-1]
\draw [ptzer] (-0.15-\mxsh,-0.6-\mysh) rectangle ++(.3,1.2);
\draw [thkln] (-0.75-\mxsh,-\mysh) -- (-0.15-\mxsh,-\mysh)
node [near start,above] {$\scriptstyle \xca$};
\end{scope}
\begin{scope}[yscale=-1,xscale=-1]
\draw [ptzer] (-0.15-\mxsh,-0.6-\mysh) rectangle ++(.3,1.2);
\draw [thkln] (-0.75-\mxsh,-\mysh) -- (-0.15-\mxsh,-\mysh)
node [near start,above] {$\scriptstyle \xca$};
\end{scope}
\end{tikzpicture}
}
\]
Here the first homotopy equivalence comes from \ex{eq:projtw}, while the second equivalence comes from \ex{eq:projcn}.

In order to put a bound on the homological order of $\xDsc$, we purge the gray box in $\ytngsc$, that is, we contract all constituent \taTLt s, whose cup or cap is connected directly to \tJWp s sitting on $\xca$-cables. After the purge, the complex of $\ytngsc$ takes the form
\begin{eqnarray*}
\ytngsc &\hteqv&
\boxed{\cdots\longrightarrow \bigoplus_{\substack{0\leq\zmj\leq \zmi \\ \xki\geq 1} }
\prmltijk\,\shcr^\zmi \shfr^\zmj
\begin{tikzpicture}[menvthree]
\begin{scope}[xshift=-\mxsh cm]
\draw [thkln] (-\mxsh+0.15,\mysh) to [out=0,in=180] (0,-\mysh);
\draw [lnovr] (-\mxsh+0.15,-\mysh) to [out=0,in=180] (0,\mysh);
\draw [thkln] (-\mxsh+0.15,-\mysh) to [out=0,in=180] (0,\mysh);
\draw [ptzer] (-0.15-\mxsh,-0.6-\mysh) rectangle ++(.3,1.2);
\draw [thkln] (-0.75-\mxsh,-\mysh) -- (-0.15-\mxsh,-\mysh)
node [near start,below] {$\scriptstyle \xca$};
\begin{scope}[yscale=-1]
\draw [ptzer] (-0.15-\mxsh,-0.6-\mysh) rectangle ++(.3,1.2);
\draw [thkln] (-0.75-\mxsh,-\mysh) -- (-0.15-\mxsh,-\mysh)
node [near start,above] {$\scriptstyle \xca$};
\end{scope}
\end{scope}
\draw [thkln] (-\mxsh+0.15,\mysh-0.2) to [out=0,in=90] (-\mxsh+1.1,0)
 to [out=-90,in=0] (-\mxsh+0.15,-\mysh+0.2) ;
\begin{scope}[xscale=-1]
\draw [thkln] (-\mxsh+0.15,\mysh-0.2) to [out=0,in=90] (-\mxsh+1.1,0) to [out=-90,in=0] (-\mxsh+0.15,-\mysh+0.2);
\end{scope}
\node at (-\mxsh+0.5,0) {$\scriptstyle \xki$};
\node at (\mxsh-0.5,0) {$\scriptstyle \xki$};
\draw [thkln] (-\mxsh+0.15,\mysh+0.2) -- (\mxsh-0.15,\mysh+0.2) node [midway,above] {$\scriptstyle \xca-\xki$};
\draw [thkln] (-\mxsh+0.15,-\mysh-0.2) -- (\mxsh-0.15,-\mysh-0.2) node [midway, below] {$\scriptstyle \xca-\xki$};
\draw [ptzert] (-0.15-\mxsh,-0.6-\mysh) rectangle ++(.3,1.2);
\begin{scope}[xscale=-1]
\draw [ptzert] (-0.15-\mxsh,-0.6-\mysh) rectangle ++(.3,1.2);
\draw [thkln] (-0.75-\mxsh,-\mysh) -- (-0.15-\mxsh,-\mysh)
node [near start,below] {$\scriptstyle \xca$};
\end{scope}
\begin{scope}[yscale=-1]
\draw [ptzert] (-0.15-\mxsh,-0.6-\mysh) rectangle ++(.3,1.2);
\end{scope}
\begin{scope}[yscale=-1,xscale=-1]
\draw [ptzert] (-0.15-\mxsh,-0.6-\mysh) rectangle ++(.3,1.2);
\draw [thkln] (-0.75-\mxsh,-\mysh) -- (-0.15-\mxsh,-\mysh)
node [near start,above] {$\scriptstyle \xca$};
\end{scope}
\end{tikzpicture}
\longrightarrow\cdots
}_{\;\xki=1}^{\;\infty}
\\
&\hteqv&
\def\mxsh{3}
\boxed{
\cdots\longrightarrow \bigoplus_{\substack{0\leq\zmj\leq \zmi \\ \xki\geq 1} }
\prmltijk\,\shcr^{\zmi+\xki(\xca-\xki)+\hlf\xki^2} \shfr^{\zmj+\xki}
\begin{tikzpicture}[menvthree]
\draw [thkln] (-\mxsh+0.15,\mysh-0.2) to [out=0,in=90] (-\mxsh+0.8,0)
 to [out=-90,in=0] (-\mxsh+0.15,-\mysh+0.2) ;
\begin{scope}[xscale=-1]
\draw [thkln] (-\mxsh+0.15,\mysh-0.2) to [out=0,in=90] (-\mxsh+0.8,0) to [out=-90,in=0] (-\mxsh+0.15,-\mysh+0.2);
\end{scope}
\node at (-\mxsh+0.4,0) {$\scriptstyle \xki$};
\node at (\mxsh-0.4,0) {$\scriptstyle \xki$};
\draw [thkln] (-\mxsh+0.15,\mysh+0.2) to [out=0,in=180]
node [sloped, near start,above] {$\scriptstyle \xca-\xki$} (\mxsh-0.15,-\mysh-0.2);
\draw [lnovr] (-\mxsh+0.15,-\mysh-0.2) to [out=0,in=180] (\mxsh-0.15,\mysh+0.2);
\draw [thkln] (-\mxsh+0.15,-\mysh-0.2) to [out=0,in=180]
node [sloped,near start,below] {$\scriptstyle \xca-\xki$}
(\mxsh-0.15,\mysh+0.2);
%
\draw [ptzert] (-0.15-\mxsh,-0.6-\mysh) rectangle ++(.3,1.2);
\draw [thkln] (-0.75-\mxsh,-\mysh) -- (-0.15-\mxsh,-\mysh)
node [near start,below] {$\scriptstyle \xca$};
\begin{scope}[xscale=-1]
\draw [ptzert] (-0.15-\mxsh,-0.6-\mysh) rectangle ++(.3,1.2);
\draw [thkln] (-0.75-\mxsh,-\mysh) -- (-0.15-\mxsh,-\mysh)
node [near start,below] {$\scriptstyle \xca$};
\end{scope}
\begin{scope}[yscale=-1]
\draw [ptzert] (-0.15-\mxsh,-0.6-\mysh) rectangle ++(.3,1.2);
\draw [thkln] (-0.75-\mxsh,-\mysh) -- (-0.15-\mxsh,-\mysh)
node [near start,above] {$\scriptstyle \xca$};
\end{scope}
\begin{scope}[yscale=-1,xscale=-1]
\draw [ptzert] (-0.15-\mxsh,-0.6-\mysh) rectangle ++(.3,1.2);
\draw [thkln] (-0.75-\mxsh,-\mysh) -- (-0.15-\mxsh,-\mysh)
node [near start,above] {$\scriptstyle \xca$};
\end{scope}
\end{tikzpicture}
\longrightarrow\cdots
}_{\;\xki=1}^{\;\infty}
\end{eqnarray*}
We used homotopy equivalence\rx{eq:xthm}.
Note that there are no tangles with $k=0$, because the complex\rx{eq:grproj} does not contain identity braids.

Let $\xDsci$ be the diagram $\xDsc$ in which the complex $\ytngsc$ is replaced by the tangle diagram
\[
\def\mxsh{3}
\ytngsci \;=\;
\begin{tikzpicture}[menvthree]
\draw [thkln] (-\mxsh+0.15,\mysh-0.2) to [out=0,in=90] (-\mxsh+0.8,0)
 to [out=-90,in=0] (-\mxsh+0.15,-\mysh+0.2) ;
\begin{scope}[xscale=-1]
\draw [thkln] (-\mxsh+0.15,\mysh-0.2) to [out=0,in=90] (-\mxsh+0.8,0) to [out=-90,in=0] (-\mxsh+0.15,-\mysh+0.2);
\end{scope}
\node at (-\mxsh+0.4,0) {$\scriptstyle \xki$};
\node at (\mxsh-0.4,0) {$\scriptstyle \xki$};
\draw [thkln] (-\mxsh+0.15,\mysh+0.2) to [out=0,in=180]
node [sloped, near start,above] {$\scriptstyle \xca-\xki$} (\mxsh-0.15,-\mysh-0.2);
\draw [lnovr] (-\mxsh+0.15,-\mysh-0.2) to [out=0,in=180] (\mxsh-0.15,\mysh+0.2);
\draw [thkln] (-\mxsh+0.15,-\mysh-0.2) to [out=0,in=180]
node [sloped,near start,below] {$\scriptstyle \xca-\xki$}
(\mxsh-0.15,\mysh+0.2);
%
\draw [ptzert] (-0.15-\mxsh,-0.6-\mysh) rectangle ++(.3,1.2);
\draw [thkln] (-0.75-\mxsh,-\mysh) -- (-0.15-\mxsh,-\mysh)
node [near start,below] {$\scriptstyle \xca$};
\begin{scope}[xscale=-1]
\draw [ptzert] (-0.15-\mxsh,-0.6-\mysh) rectangle ++(.3,1.2);
\draw [thkln] (-0.75-\mxsh,-\mysh) -- (-0.15-\mxsh,-\mysh)
node [near start,below] {$\scriptstyle \xca$};
\end{scope}
\begin{scope}[yscale=-1]
\draw [ptzert] (-0.15-\mxsh,-0.6-\mysh) rectangle ++(.3,1.2);
\draw [thkln] (-0.75-\mxsh,-\mysh) -- (-0.15-\mxsh,-\mysh)
node [near start,above] {$\scriptstyle \xca$};
\end{scope}
\begin{scope}[yscale=-1,xscale=-1]
\draw [ptzert] (-0.15-\mxsh,-0.6-\mysh) rectangle ++(.3,1.2);
\draw [thkln] (-0.75-\mxsh,-\mysh) -- (-0.15-\mxsh,-\mysh)
node [near start,above] {$\scriptstyle \xca$};
\end{scope}
\end{tikzpicture}
\]
According to Theorem\rw{thm:smfr}, $\KHmvv{i}{\hem}(\xDsci)=0$ for $i\leq -\shlf (\xca-\xki)^2 -\shlf\yncrv{\xDsi}-1$, so, by Remark\rw{rmk:bndss}, $\KHmvv{i}{\hem}(\xDsc)=0$ for $i\leq 
-\shlf\yncrv{\xDsi}+2\xca-2$ (here we used inequality $2\xca\xki - \xki^2\geq 2\xca-1$ for $\xki\geq 1$). Now the claim of Theorem\rw{thm:crpr} follows from \ex{eq:ineqo}.
\end{proof}

\begin{bibdiv}
\begin{biblist}

\bib{Arm11}{article}
{
author={Armond, Cody}
title={The head and tail conjecture for alternating knots}
eprint={arXiv:1112.3995}
}

\bib{BN05}{article}
{
author={Bar-Natan, Dror}
title={Khovanov's homology for tangles and cobordisms}
journal={Geometry and Topology}
volume={9}
year={2005}
pages={1443-1499}
eprint={math.GT/0410495}
}

\bib{CK10}{article}
{
author={Cooper, Benjamin}
author={Krushkal, Vyacheslav}
title={Categorification of the Jones-Wenzl Projectors}
journal={Quantum Topology}
volume={3}
year={2012}
pages={139-180}
eprint={arXiv:1005.5117}
}

\bib{AD11}{article}
{
author={Armond, Cody}
author={Dasbach, Oliver}
title={Rogers-Ramanujan type identities and the head and tail of the colored Jones polynomial}
eprint={arXiv:1106.3948}
}

\bib{AD12}{article}
{
author={Armond, Cody}
author={Dasbach, Oliver}
year={2012}
status={in preparation}
}

\bib{FSS11}{article}
{
author={Frenkel, Igor}
author={Stroppel, Catharina}
author={Sussan, Joshua}
title={Categorifying fractional Euler characteristics, Jones-Wenzl projector and $3j$-symbols}
eprint={arXiv:1007.4680}
}

\bib{GL11}{article}
{
author={Garoufalidis, Stavros}
author={Le, Thang T. Q.}
title={Nahm sums, stability and the colored Jones polynomial}
eprint={arXiv:1112.3905}
}

\bib{GS11}{article}
{
author={Gukov, Sergei}
author={\Stosic, Marko}
title={Homological algebra of knots and BPS states}
eprint={arXiv:1112.0030}
}

\bib{Kh99}{article}
{
author={Khovanov, Mikhail}
title={A categorification of the Jones polynomial}
journal={Duke Mathematical Journal}
year={1999}
volume={101}
pages={359--426}
eprint={arXiv:math/9908171}
}

\bib{Ro11}{article}
{
author={Rozansky, Lev}
title={An infinite torus braid yields a categorified Jones-Wenzl projector}
eprint={arXiv:1005.3266}
}

\bib{Wi11}{article}
{
author={Witten, Edward}
title={Fivebranes and knots}
eprint={arXiv:1101.3216}
}
\end{biblist}
\end{bibdiv}

\end{document}

Let $\xDsci$ be the diagram $\xDsc$ in which the \tmcn\rx{eq:stbx} is replaced by its $i$-th constituent diagram.
\begin{proposition}
\label{prp:smit}
There is a bound $\KHmvv{i}{\hem}(\xDsci)=0$ for $i\geq$.
\end{proposition}

\begin{proof}[Proof of Proposition\rw{prp:smdsc}]

\end{proof}
\begin{proof}[Proof of Proposition\rw{prp:smit}]
Consider a homotopy equivalence transformation of a constituent diagram of the \tmcn\rx{eq:stbx} composed with a crossing tangle on the left:
\[
\begin{tikzpicture}[menvone]
\begin{scope}[xshift=-\mxsh cm]
\draw [thkln] (-\mxsh+0.15,\mysh) to [out=0,in=180] (0,-\mysh);
\draw [lnovr] (-\mxsh+0.15,-\mysh) to [out=0,in=180] (0,\mysh);
\draw [thkln] (-\mxsh+0.15,-\mysh) to [out=0,in=180] (0,\mysh);
\draw [ptzer] (-0.15-\mxsh,-0.6-\mysh) rectangle ++(.3,1.2);
\draw [thkln] (-0.75-\mxsh,-\mysh) -- (-0.15-\mxsh,-\mysh)
node [near start,below] {$\scriptstyle \xca$};
\begin{scope}[yscale=-1]
\draw [ptzer] (-0.15-\mxsh,-0.6-\mysh) rectangle ++(.3,1.2);
\draw [thkln] (-0.75-\mxsh,-\mysh) -- (-0.15-\mxsh,-\mysh)
node [near start,above] {$\scriptstyle \xca$};
\end{scope}
\end{scope}
\draw [thkln] (-\mxsh+0.15,\mysh-0.2) to [out=0,in=90] (-\mxsh+1.1,0)
 to [out=-90,in=0] (-\mxsh+0.15,-\mysh+0.2) ;
\begin{scope}[xscale=-1]
\draw [thkln] (-\mxsh+0.15,\mysh-0.2) to [out=0,in=90] (-\mxsh+1.1,0) to [out=-90,in=0] (-\mxsh+0.15,-\mysh+0.2);
\end{scope}
\node at (-\mxsh+0.5,0) {$\scriptstyle i$};
\node at (\mxsh-0.5,0) {$\scriptstyle i$};
\draw [thkln] (-\mxsh+0.15,\mysh+0.2) -- (\mxsh-0.15,\mysh+0.2) node [midway,above] {$\scriptstyle \xca-i$};
\draw [thkln] (-\mxsh+0.15,-\mysh-0.2) -- (\mxsh-0.15,-\mysh-0.2) node [midway, below] {$\scriptstyle \xca-i$};
\draw [ptzert] (-0.15-\mxsh,-0.6-\mysh) rectangle ++(.3,1.2);
\begin{scope}[xscale=-1]
\draw [ptzert] (-0.15-\mxsh,-0.6-\mysh) rectangle ++(.3,1.2);
\draw [thkln] (-0.75-\mxsh,-\mysh) -- (-0.15-\mxsh,-\mysh)
node [near start,below] {$\scriptstyle \xca$};
\end{scope}
\begin{scope}[yscale=-1]
\draw [ptzert] (-0.15-\mxsh,-0.6-\mysh) rectangle ++(.3,1.2);
\end{scope}
\begin{scope}[yscale=-1,xscale=-1]
\draw [ptzert] (-0.15-\mxsh,-0.6-\mysh) rectangle ++(.3,1.2);
\draw [thkln] (-0.75-\mxsh,-\mysh) -- (-0.15-\mxsh,-\mysh)
node [near start,above] {$\scriptstyle \xca$};
\end{scope}
\end{tikzpicture}
\;\hteqv\;
\begin{tikzpicture}[menvone]
\begin{scope}[xshift=-\mxsh cm]
\draw [thkln] (-\mxsh+0.15,\mysh-0.2) to [out=0,in=180] (0.15+0.75,-\mysh-0.2);
\draw [thkln,yshift=0.4cm] (-\mxsh+0.15,\mysh-0.2) to [out=0,in=180] (0.15+0.75,-\mysh-0.2+0.6);
\draw [lnovrtw,yshift=-0.4cm] (-\mxsh+0.15,-\mysh+0.2) to [out=0,in=180] (0.15+0.75,\mysh+0.2-0.6);
\draw [thkln,yshift=-0.4cm] (-\mxsh+0.15,-\mysh+0.2) to [out=0,in=180] (0.15+0.75,\mysh+0.2-0.6);
\draw [lnovrtw] (-\mxsh+0.15,-\mysh+0.2) to [out=0,in=180] (0.15+0.75,\mysh+0.2);
\draw [thkln] (-\mxsh+0.15,-\mysh+0.2) to [out=0,in=180] (0.15+0.75,\mysh+0.2);
\draw [ptzer] (-0.15-\mxsh,-0.6-\mysh) rectangle ++(.3,1.2);
\draw [thkln] (-0.75-\mxsh,-\mysh) -- (-0.15-\mxsh,-\mysh)
node [near start,below] {$\scriptstyle \xca$};
\begin{scope}[yscale=-1]
\draw [ptzer] (-0.15-\mxsh,-0.6-\mysh) rectangle ++(.3,1.2);
\draw [thkln] (-0.75-\mxsh,-\mysh) -- (-0.15-\mxsh,-\mysh)
node [near start,above] {$\scriptstyle \xca$};
\end{scope}
\end{scope}
\draw [thkln,xshift=0.75cm] (-\mxsh+0.15,\mysh-0.2-0.6) to [out=0,in=90] (-\mxsh+0.5,0)
 to [out=-90,in=0] (-\mxsh+0.15,-\mysh+0.2+0.6) ;
\begin{scope}[xscale=-1]
\draw [thkln] (-\mxsh+0.15,\mysh-0.2) to [out=0,in=90] (-\mxsh+1.1,0) to [out=-90,in=0] (-\mxsh+0.15,-\mysh+0.2);
\end{scope}
\node at (-\mxsh+1.65,0) {$\scriptstyle i$};
\node at (\mxsh-0.65,0) {$\scriptstyle i$};
\draw [thkln] (-\mxsh+0.15+0.75,\mysh+0.2) -- (\mxsh-0.15,\mysh+0.2) node [midway,above] {$\scriptstyle \xca-i$};
\draw [thkln] (-\mxsh+0.15+0.75,-\mysh-0.2) -- (\mxsh-0.15,-\mysh-0.2) node [midway, below] {$\scriptstyle \xca-i$};
%
\begin{scope}[xscale=-1]
\draw [ptzert] (-0.15-\mxsh,-0.6-\mysh) rectangle ++(.3,1.2);
\draw [thkln] (-0.75-\mxsh,-\mysh) -- (-0.15-\mxsh,-\mysh)
node [near start,below] {$\scriptstyle \xca$};
\end{scope}
\begin{scope}[yscale=-1]
\end{scope}
\begin{scope}[yscale=-1,xscale=-1]
\draw [ptzert] (-0.15-\mxsh,-0.6-\mysh) rectangle ++(.3,1.2);
\draw [thkln] (-0.75-\mxsh,-\mysh) -- (-0.15-\mxsh,-\mysh)
node [near start,above] {$\scriptstyle \xca$};
\end{scope}
\end{tikzpicture}
\;
\hteqv
\;
\shcr^{\xca i - \shlf i^2}\shfr^i
\begin{tikzpicture}[menvone]
\draw [thkln] (-\mxsh+0.15,\mysh-0.2) to [out=0,in=90] (-\mxsh+0.8,0)
 to [out=-90,in=0] (-\mxsh+0.15,-\mysh+0.2) ;
\begin{scope}[xscale=-1]
\draw [thkln] (-\mxsh+0.15,\mysh-0.2) to [out=0,in=90] (-\mxsh+0.8,0) to [out=-90,in=0] (-\mxsh+0.15,-\mysh+0.2);
\end{scope}
\node at (-\mxsh+0.4,0) {$\scriptstyle i$};
\node at (\mxsh-0.4,0) {$\scriptstyle i$};
\draw [thkln] (-\mxsh+0.15,\mysh+0.2) to [out=0,in=180]
node [sloped, near start,above] {$\scriptstyle \xca-i$} (\mxsh-0.15,-\mysh-0.2);
\draw [lnovr] (-\mxsh+0.15,-\mysh-0.2) to [out=0,in=180] (\mxsh-0.15,\mysh+0.2);
\draw [thkln] (-\mxsh+0.15,-\mysh-0.2) to [out=0,in=180]
node [sloped,near start,below] {$\scriptstyle \xca-i$}
(\mxsh-0.15,\mysh+0.2);
%
\draw [ptzert] (-0.15-\mxsh,-0.6-\mysh) rectangle ++(.3,1.2);
\draw [thkln] (-0.75-\mxsh,-\mysh) -- (-0.15-\mxsh,-\mysh)
node [near start,below] {$\scriptstyle \xca$};
\begin{scope}[xscale=-1]
\draw [ptzert] (-0.15-\mxsh,-0.6-\mysh) rectangle ++(.3,1.2);
\draw [thkln] (-0.75-\mxsh,-\mysh) -- (-0.15-\mxsh,-\mysh)
node [near start,below] {$\scriptstyle \xca$};
\end{scope}
\begin{scope}[yscale=-1]
\draw [ptzert] (-0.15-\mxsh,-0.6-\mysh) rectangle ++(.3,1.2);
\draw [thkln] (-0.75-\mxsh,-\mysh) -- (-0.15-\mxsh,-\mysh)
node [near start,above] {$\scriptstyle \xca$};
\end{scope}
\begin{scope}[yscale=-1,xscale=-1]
\draw [ptzert] (-0.15-\mxsh,-0.6-\mysh) rectangle ++(.3,1.2);
\draw [thkln] (-0.75-\mxsh,-\mysh) -- (-0.15-\mxsh,-\mysh)
node [near start,above] {$\scriptstyle \xca$};
\end{scope}
\end{tikzpicture}
\]
Here the first homotopy equivalence comes from sliding the middle projectors to the left along $\xca$-cables and then contracting double projectors into single ones, while the second homotopy comes from \eex{eq:cfrsh} and\rx{eq:projtw}.
\end{proof}

\end{document}

In order to prove the remaining bound\rx{eq:bd4a} in the case of \tBadq\ diagram $\xD$, we observe that  apart from inequality it corresponds to the `equal' part of the bound\rx{eq:bd3a}, so relation $\xca \gvD+j =-i -\xabms$ holds only if there are equalities in the second inequality of\rx{eq:mbnd} and in the inequality \ex{eq:bndNt}:
\begin{equation}
\label{eq:threq}
\njcr = \xabms,\qquad \nscrb+\nwcrb=\xca,\qquad \xca_1+\xca_2=2\xca.
\end{equation}
Now consider the case of $\xabms>0$ and the case of $\xabms=0$. If $\xabms>0$, then the diagram $\xDs$ must have at least one \tstrtl. If that line is attached to a \tBcr\ $c$, then it is easy to see that in that circle either $\xca_1<\xca$ or $\xca_2<\xca$, and the third equation of\rx{eq:threq} can't hold. If $\xabms=0$, then $\spvr=0$ for all crossings $\xvrt$ of $\xD$, that is, $\xDs$ contains no \tstrtl s. This means that the diagram $\xDs$ consists of $\xca$-colored circles, each carrying one projector. Let us convert them into circle diagrams by using \eex{eq:projcn} and\rx{eq:grproj} for the projectors. A \twndg\ circle in a \tBcr\ $c$ contains at least two strands of the corresponding $\xca$-cable, so in this case $\nscrb + 2\nwcrb\leq\xca$, hence the second relation of\rx{eq:threq} may hold only if $\nwcrb=0$.
%
%
%
However, since $\xabms=0$, the condition $i+\xabms\neq 0$ of the bound\rx{eq:bd4a} implies $i>0$. A non-trivial \thdgr\ means that the circle diagrams contributing to it must have at least one \taTLt\ $\gamma$ from \ex{eq:grproj} from one of projectors. If this happens on a \tBadq\ circle $c$, then since $\gamma$ has at least one cap and one cup, there must be at least one \twndg\ circle, which contradicts $\nwcrb=0.$\qed

However, the constituent \tTLt s of \tJWp s on a \tBcr\ $c$ produce too many cups and caps, thus potentially creating too many \twndg\ circles in circle diagrams and violating the bound\rx{eq:wbd}.

In order to resolve this difficulty, we will contract the circle diagrams with too many cups and caps within the complex $\xKhv{\xDs}$. Consider a neighborhood of a particular \tBcr\ $c$. We are going to cut temporarily the \tstrtl s connecting it to the rest of the diagram, thus creating a colored tangle diagram $\xtusc$, and then perform some contractions within its Khovanov complex $\xKhv{\xtusc}$.

If the \tstt\ $\spmp$ was such that $c$ had no \tstrtl s attached to it, then we could simply use the projector property of \tJWp s and contract them all to a single projector. In the presence of \tstrtl s we, roughly speaking, contract only parts of the projectors which are connected directly to each other.

We use the following `purging' procedure. We select the initial projector on $c$, the preceding projector being called `final', and purge all other projectors on $c$ one-by-one going clockwise. Purging means that we consider the complex\rx{eq:projcn},\rx{eq:grproj} for the current projector, thus presenting $\xKhv{\xtusc}$ as a \tmcn\ of complexes, each coming from a particular \tTLt\ $\gamma$ in \ex{eq:grproj} (and the identity braid of \ex{eq:projcn}) , and contract all complexes, whose graph $\gamma$ has a cap connected directly to the initial projector or a cup connected directly to the next projector.
%
%
%
%
We claim that after we perform this procedure on all projectors, except the initial and final ones, we get a \tmcn
\begin{equation}
\label{eq:prcmp}
\xKhv{\xtusc} \hteqv
\boxed{
\cdots\longrightarrow\shcr^i
\bigoplus_{0\leq j\leq i}\shfr^j\left( \bigoplus_{\tau} m_{ij,\tau} \xKhv{\tau}\right)
\longrightarrow\cdots
}_{\;i=0}^{\;\infty}
\end{equation}
where the diagrams $\tau$ are of one of two types:
\begin{equation}
\label{eq:twpdgs}
\begin{tikzpicture}[menvtwo]
\draw [bcrct] (-1.25,0) -- (1.25,0) (1.55,0) to [out=0,in=90] (4,-2) to [out=-90,in=0] (0,-5)
(-1.55,0) to [out=180,in=90] (-4,-2) to [out=-90,in=180] (0,-5) ;
\draw [thkln] (-1.25,0) -- (1.25,0) (1.55,0) to [out=0,in=90] (4,-2) to [out=-90,in=0] (0,-5)
(-1.55,0) to [out=180,in=90] (-4,-2) to [out=-90,in=180] (0,-5) ;
\node at (0,-5.5) {$\scriptstyle  \xca_1$};
\node at (0,0.5) {$\scriptstyle \xca_2$};
\draw [ptzert] (-1.55,-0.6) rectangle ++(0.3,1.2);
\draw [ptzert] (1.55,-0.6) rectangle ++(-0.3,1.2);
\draw [ptzer] (0.8,2) rectangle ++(-1.6,-0.8);
\draw [thkc] (0.2,0.1+1.6) arc (0:-180:0.2);
\draw [thkln]  (-0.5,2) -- (-0.5,2.6);
\draw [thkln]  (0.5,2) -- (0.5,2.6);
\node at (0.05,2.4) {$\scriptstyle \cdots$};
\draw [ptzer] (0.8,-2) rectangle ++(-1.6,0.8);
\draw [thkc] (0.2,-0.1-1.6) arc (0:180:0.2);
\draw [thkln]  (-0.5,-2) -- (-0.5,-2.6);
\draw [thkln]  (0.5,-2) -- (0.5,-2.6);
\node at (0.05,-2.4) {$\scriptstyle \cdots$};
\draw [thkln] (-1.25,0.35) to [out=0,in=-90] (-0.5,1.2);
\draw [thkln,xscale=-1] (-1.25,0.35) to [out=0,in=-90] (-0.5,1.2);
\draw [thkln,yscale=-1] (-1.25,0.35) to [out=0,in=-90] (-0.5,1.2);
\draw [thkln,xscale=-1,yscale=-1] (-1.25,0.35) to [out=0,in=-90] (-0.5,1.2);
\draw [thkln] (-1.55,0.35) to [out=180,in=-90] ++(-0.75,0.85) -- (-2.3,2.6);
\draw [thkln,xscale=-1] (-1.55,0.35) to [out=180,in=-90] ++(-0.75,0.85) -- (-2.3,2.6);
\draw [decorate,decoration={brace,amplitude=4pt},xshift=0,yshift=-5pt]
(0.5cm+5pt,-2.6) -- (-0.5cm-5pt,-2.6)
node [black,midway,yshift=-0.4cm]
{\scriptsize to struts};
\draw [decorate,decoration={brace,amplitude=4pt},xshift=0,yshift=5pt]
(-2.3cm-5pt,2.6) -- (2.3cm+5pt,2.6)
node [black,midway,yshift=0.4cm]
{\scriptsize to struts};
\end{tikzpicture}
\qquad
\qquad
\begin{tikzpicture}[menvtwo]
\draw [bcrct] (-1.25,0) -- (1.25,0) (1.55,0) to [out=0,in=90] (4,-2) to [out=-90,in=0] (0,-5)
(-1.55,0) to [out=180,in=90] (-4,-2) to [out=-90,in=180] (0,-5) ;
\draw [thkln] (0,1.2) -- (0,-1.2) (1.55,0) to [out=0,in=90] (4,-2) to [out=-90,in=0] (0,-5)
(-1.55,0) to [out=180,in=90] (-4,-2) to [out=-90,in=180] (0,-5) ;
\node at (0,-5.5) {$\scriptstyle  \xca_1$};
\draw [ptzert] (-1.55,-0.6) rectangle ++(0.3,1.2);
\draw [ptzert] (1.55,-0.6) rectangle ++(-0.3,1.2);
\draw [ptzer] (0.8,2) rectangle ++(-1.6,-0.8);
\draw [thkc] (0.2,0.1+1.6) arc (0:-180:0.2);
\draw [thkln]  (-0.5,2) -- (-0.5,2.6);
\draw [thkln]  (0.5,2) -- (0.5,2.6);
\node at (0.05,2.4) {$\scriptstyle \cdots$};
\draw [ptzer] (0.8,-2) rectangle ++(-1.6,0.8);
\draw [thkc] (0.2,-0.1-1.6) arc (0:180:0.2);
\draw [thkln]  (-0.5,-2) -- (-0.5,-2.6);
\draw [thkln]  (0.5,-2) -- (0.5,-2.6);
\node at (0.05,-2.4) {$\scriptstyle \cdots$};
\draw [thkln] (-1.25,0.35) to [out=0,in=-90] (-0.5,1.2);
\draw [thkln,xscale=-1] (-1.25,0.35) to [out=0,in=-90] (-0.5,1.2);
\draw [thkln,yscale=-1] (-1.25,0.35) to [out=0,in=-90] (-0.5,1.2);
\draw [thkln,xscale=-1,yscale=-1] (-1.25,0.35) to [out=0,in=-90] (-0.5,1.2);
\draw [thkln] (-1.55,0.35) to [out=180,in=-90] ++(-0.75,0.85) -- (-2.3,2.6);
\draw [thkln,xscale=-1] (-1.55,0.35) to [out=180,in=-90] ++(-0.75,0.85) -- (-2.3,2.6);
%
%
\draw [decorate,decoration={brace,amplitude=4pt},xshift=0,yshift=-5pt]
(0.5cm+5pt,-2.6) -- (-0.5cm-5pt,-2.6)
node [black,midway,yshift=-0.4cm]
{\scriptsize to struts};
\draw [decorate,decoration={brace,amplitude=4pt},xshift=0,yshift=5pt]
(-2.3cm-5pt,2.6) -- (2.3cm+5pt,2.6)
node [black,midway,yshift=0.4cm]
{\scriptsize to struts};
\end{tikzpicture}
\end{equation}
with $\xca_1,\xca_2\leq \xca$.
A box with an arc denotes a `\ntwd' \tTLt: for $a \geq b$,
\[
\swdv{
\begin{tikzpicture}[menvone]
\draw [ptzer] (0.4,0.8) rectangle ++(-0.8,-1.6);
\draw [thkc] (-0.1,-0.2) arc (-90:90:0.2);
\draw [thkln] (-1,0) -- (-0.4,0) node [near start,below] {$\scriptstyle a$}
(0.4,0) -- (1,0) node [near end,below] {$\scriptstyle b$};
\end{tikzpicture}
} = b.
\]
In other words, a tangle
$
\begin{tikzpicture}[menvthree,rotate=90]
\draw [ptzer] (0.4,0.8) rectangle ++(-0.8,-1.6);
\draw [thkc] (-0.1,-0.2) arc (-90:90:0.2);
\draw [thkln] (-1,0) -- (-0.4,0)  
(0.4,0) -- (1,0);  
\end{tikzpicture}
$
contains no cups, but only caps and \txstrs s.

Establishing a bound $\njrc\leq\xca$ within diagrams\rx{eq:twpdgs} is easy.  In the first diagram a \tstrghtc\ contains one strand from the $\xca_1$-cable and one strand from the $\xca_2$-cable, while a \twndgc\ contains at least two strands from one of these cables, hence the total number of \trlxc s
has a bound $\njrc\leq\hlf(\xca_1+\xca_2)\leq\xca$. The second diagram contains only \twndgc s, each of them contains at least two strands of the $\xca_1$-cable, hence there $\njrc\leq\hlf\xca_1\leq\xca$.

It remains to show that the purging process ends up with the complex\rx{eq:prcmp} generated by diagrams\rx{eq:twpdgs} with non-negative shifts of \tqdgr. We prove inductively that after we purged projectors between the initial and the current one, we get a \tmcn\
\begin{equation*}
\xKhv{\xDscr} \hteqv
\boxed{
\cdots\longrightarrow\shcr^i
\bigoplus_{0\leq j\leq i}\shfr^j\left( \bigoplus_{\tau} m'_{ij,\tau} \xKhv{\tau}\right)
\longrightarrow\cdots
}_{\;i=0}^{\;\infty}
\end{equation*}
whose constituent diagrams $\tau$ have one of two possible forms between the initial and current projector (which lies to the left of the dashed line):
\[
\begin{tikzpicture}[menvtwo]
\draw [bcrct] (-1.55-0.6-0.5,0) -- (-1.55,0) (-1.25,0) -- (1.25,0) (1.55,0) -- (4.05,0) (4.35,0) -- (4.95+0.5,0);
\draw [thkln] (-1.55-0.6-0.5,0) -- (-1.55,0) (-1.25,0) -- (1.25,0) (1.55,0) -- (4.05,0) (4.35,0) -- (4.95+0.5,0);
\draw [ptzert] (-1.55,-0.6) rectangle ++(0.3,1.2);
\draw [ptzert] (1.55,-0.6) rectangle ++(-0.3,1.2);
\draw [ptzert] (4.05,-0.6) rectangle ++(0.3,1.2);
\draw [ptzer] (0.8,2) rectangle ++(-1.6,-0.8);
\draw [thkc] (0.2,0.1+1.6) arc (0:-180:0.2);
\draw [thkln]  (-0.5,2) -- (-0.5,2.6);
\draw [thkln]  (0.5,2) -- (0.5,2.6);
\node at (0.05,2.4) {$\scriptstyle \cdots$};
\draw [ptzer] (0.8,-2) rectangle ++(-1.6,0.8);
\draw [thkc] (0.2,-0.1-1.6) arc (0:180:0.2);
\draw [thkln]  (-0.5,-2) -- (-0.5,-2.6);
\draw [thkln]  (0.5,-2) -- (0.5,-2.6);
\node at (0.05,-2.4) {$\scriptstyle \cdots$};
\draw [thkln] (-1.25,0.35) to [out=0,in=-90] (-0.5,1.2);
\draw [thkln,xscale=-1] (-1.25,0.35) to [out=0,in=-90] (-0.5,1.2);
\draw [thkln,yscale=-1] (-1.25,0.35) to [out=0,in=-90] (-0.5,1.2);
\draw [thkln,xscale=-1,yscale=-1] (-1.25,0.35) to [out=0,in=-90] (-0.5,1.2);
\draw [thkln] (-1.55,0.35) to [out=180,in=-90] ++(-0.75,0.85) -- (-2.3,2.6);
\draw [thkln,xscale=-1] (-1.55,0.35) to [out=180,in=-90] ++(-0.75,0.85) -- (-2.3,2.6);
\draw [thkln,xshift=5.6cm] (-1.55,0.35) to [out=180,in=-90] ++(-0.75,0.85) -- (-2.3,2.6);
\draw [thkln,xscale=-1,yscale=-1,xshift=-2.8cm] (-1.55,0.35) to [out=180,in=-90] ++(-0.75,0.85) -- (-2.3,2.6);
\draw [decorate,decoration={brace,amplitude=4pt},xshift=0,yshift=-5pt]
(5.1cm+5pt,-2.6) -- (-0.5cm-5pt,-2.6)
node [black,midway,yshift=-0.4cm]
{\scriptsize to struts};
\draw [decorate,decoration={brace,amplitude=4pt},xshift=0,yshift=5pt]
(-2.3cm-5pt,2.6) -- (3.3cm+5pt,2.6)
node [black,midway,yshift=0.4cm]
{\scriptsize to struts};
\draw [dashed] (2.8,2.6) -- (2.8,-2.6);
\end{tikzpicture}
\;,\qquad
\begin{tikzpicture}[menvtwo]
\draw [bcrct] (-1.55-0.6-0.5,0) -- (-1.55,0) (-1.25,0) -- (1.25,0) (1.55,0) -- (4.05,0) (4.35,0) -- (4.95+0.5,0);
\draw [thkln] (-1.55-0.6-0.5,0) -- (-1.55,0) (0,1.2) -- (0,-1.2) (1.55,0) -- (4.05,0) (4.35,0) -- (4.95+0.5,0);
\draw [ptzert] (-1.55,-0.6) rectangle ++(0.3,1.2);
\draw [ptzert] (1.55,-0.6) rectangle ++(-0.3,1.2);
\draw [ptzert] (4.05,-0.6) rectangle ++(0.3,1.2);
\draw [ptzer] (0.8,2) rectangle ++(-1.6,-0.8);
\draw [thkc] (0.2,0.1+1.6) arc (0:-180:0.2);
\draw [thkln]  (-0.5,2) -- (-0.5,2.6);
\draw [thkln]  (0.5,2) -- (0.5,2.6);
\node at (0.05,2.4) {$\scriptstyle \cdots$};
\draw [ptzer] (0.8,-2) rectangle ++(-1.6,0.8);
\draw [thkc] (0.2,-0.1-1.6) arc (0:180:0.2);
\draw [thkln]  (-0.5,-2) -- (-0.5,-2.6);
\draw [thkln]  (0.5,-2) -- (0.5,-2.6);
\node at (0.05,-2.4) {$\scriptstyle \cdots$};
\draw [thkln] (-1.25,0.35) to [out=0,in=-90] (-0.5,1.2);
\draw [thkln,xscale=-1] (-1.25,0.35) to [out=0,in=-90] (-0.5,1.2);
\draw [thkln,yscale=-1] (-1.25,0.35) to [out=0,in=-90] (-0.5,1.2);
\draw [thkln,xscale=-1,yscale=-1] (-1.25,0.35) to [out=0,in=-90] (-0.5,1.2);
\draw [thkln] (-1.55,0.35) to [out=180,in=-90] ++(-0.75,0.85) -- (-2.3,2.6);
\draw [thkln,xscale=-1] (-1.55,0.35) to [out=180,in=-90] ++(-0.75,0.85) -- (-2.3,2.6);
\draw [thkln,xshift=5.6cm] (-1.55,0.35) to [out=180,in=-90] ++(-0.75,0.85) -- (-2.3,2.6);
\draw [thkln,xscale=-1,yscale=-1,xshift=-2.8cm] (-1.55,0.35) to [out=180,in=-90] ++(-0.75,0.85) -- (-2.3,2.6);
\draw [decorate,decoration={brace,amplitude=4pt},xshift=0,yshift=-5pt]
(5.1cm+5pt,-2.6) -- (-0.5cm-5pt,-2.6)
node [black,midway,yshift=-0.4cm]
{\scriptsize to struts};
\draw [decorate,decoration={brace,amplitude=4pt},xshift=0,yshift=5pt]
(-2.3cm-5pt,2.6) -- (3.3cm+5pt,2.6)
node [black,midway,yshift=0.4cm]
{\scriptsize to struts};
\draw [dashed] (2.8,2.6) -- (2.8,-2.6);
\end{tikzpicture}
\]
In both diagrams the left projector on the grey strip is the initial one, the middle projector is the current one and the right projector is the next after the current. It is not hard to see that if we purge the current projector, then we get similar diagrams with the next projector becoming the current one (the first type diagram may become of either type after the purge, while the second type diagram remains of the same type). The \tqdgr\ shifts remain non-negative, because the purging does not produce any circles: it just makes explicit various line connections that were hidden inside the constituent \tTLt s of the current projector.

Our goal is to remove the single line circles one-by-one. Consider a particular circle. We designate one of its projectors as initial and then, going clockwise, we detach the circle from other projectors step-by-step. First, if the current projector is of the third or fourth type of\rx{eq:frmprj}, then we replace it with the first or second projector with the help of the homotopy equivalences
\begin{equation}
\label{eq:smsdr}
\begin{tikzpicture}[menvtwo]
\draw [ptzer] (-0.15,-0.6) rectangle (.15,.6);
\draw [thkln] (-0.75,0) -- (-0.15,0)
node [near start,above] {$\scriptstyle \xca$}
(0.15,0) -- (.75,0)
node [near end,below] {$\scriptstyle \xca$};
\draw (-0.75,-0.4) -- (-0.15,-0.4) (0.15,0.4) -- (0.75,0.4);
\end{tikzpicture}
\;\hteqv\;
\shcr^{\hlf\xca}
\begin{tikzpicture}[menvtwo]
\draw [thkln] (-1.25,0) -- (-0.15,0)
node [very near start,above] {$\scriptstyle \xca$}
(0.15,0) -- (.75,0)
node [near end,below] {$\scriptstyle \xca$};
\draw [lnovr]  (-1.25,-0.4) to [out=0,in=180] (-0.15,0.4);
\draw (-1.25,-0.4) to [out=0,in=180] (-0.15,0.4) (0.15,0.4) -- (0.75,0.4);
\draw [ptzer] (-0.15,-0.6) rectangle (.15,.6);
\end{tikzpicture},
\qquad
\begin{tikzpicture}[menvtwo]
\draw [thkln] (-0.75,0) -- (-0.15,0)
node [near start,below] {$\scriptstyle \xca$}
(0.15,0) -- (.75,0)
node [near end,above] {$\scriptstyle \xca$};
\draw (-0.75,0.4) -- (-0.15,0.4) (0.15,-0.4) -- (0.75,-0.4);
\draw [ptzer] (-0.15,-0.6) rectangle (.15,.6);
\end{tikzpicture}
\;\hteqv\;
\shcr^{-\hlf\xca}
\begin{tikzpicture}[menvtwo]
\draw [thkln] (-1.25,0) -- (-0.15,0)
node [very near start,below] {$\scriptstyle \xca$}
(0.15,0) -- (.75,0)
node [near end,above] {$\scriptstyle \xca$};
\draw [lnovr]  (-1.25,0.4) to [out=0,in=180] (-0.15,-0.4);
\draw (-1.25,0.4) to [out=0,in=180] (-0.15,-0.4) (0.15,-0.4) -- (0.75,-0.4);
\draw [ptzer] (-0.15,-0.6) rectangle (.15,.6);
\end{tikzpicture}
\end{equation}
(note that we keep the level of the outgoing single lines and keep single lines above the $\xca$-cables).

Now suppose that our projector junction is of the first type (the second type is treated similarly). We perform a \ltrf
\begin{equation}
\label{eq:lrdpr_o}
\ytngsi =
\begin{tikzpicture}[menvtwo]
\draw [ptzer] (-0.15,-0.6) rectangle (.15,.6);
\draw [thkln] (-0.75,0) -- (-0.15,0)
node [near start,below] {$\scriptstyle \xca$}
(0.15,0) -- (.75,0)
node [near end,below] {$\scriptstyle \xca$};
\draw (-0.75,0.4) -- (-0.15,0.4) (0.15,0.4) -- (0.75,0.4);
\end{tikzpicture},
\qquad
\ytngsf =
\begin{tikzpicture}[menvtwo]
\draw [ptzer] (-0.15,-0.6) rectangle (.15,.6);
\draw [thkln] (-0.75,0) -- (-0.15,0)
node [near start,below] {$\scriptstyle \xca$}
(0.15,0) -- (.75,0)
node [near end,below] {$\scriptstyle \xca$};
\draw (-0.75,0.8) -- (0.75,0.8);
\end{tikzpicture},
\qquad
\ytngsc =
\shcr
\begin{tikzpicture}[menvtwo]
\draw[ptfour] (-.15,-0.6) rectangle ++(0.3,1.2);
\draw[thkln] (-1.35,0) -- (-.15,0)
node[near start,below] {$\scriptstyle \xca+1$ }
(.15,0) -- (1.35,0)
node [near end, below] {$\scriptstyle \xca+1$};
\end{tikzpicture}
\end{equation}
and the cone relation\rx{eq:cnrel} is \ex{eq:jwpar}.

After we performed the \ltrf\rx{eq:lrdpr} on all projectors of a given single line, except the initial one, the circle remains attached to $\xDclN$ only at the initial projector. We perform the replacement\rx{eq:smsdr} on it, if necessary. Suppose that as a result the projector is of the first type of\rx{eq:frmprj}. The single line circle passes above the $\xca$-cable in all their intersections, hence the circle can be contracted towards the remaining junction, and the junctions has the form
$
\begin{tikzpicture}[menvthree]
\draw [ptzer] (-0.15,-0.6) rectangle (.15,.6);
\draw [thkln] (-0.75,0) -- (-0.15,0)
node [near start,below] {$\scriptstyle \xca$}
(0.15,0) -- (.75,0)
node [near end,below] {$\scriptstyle \xca$};
\draw (-0.15,0.4) to [out=180,in=-90] (-0.6,0.8) to [out=90,in=180] (0,1.4)
to [out=0,in=90] (0.6,0.8) to [out=-90,in=0] (0.15,0.4);
\end{tikzpicture}
$
Now we perform a \ltrf
\[
\ytngsi=
\begin{tikzpicture}[menvtwo]
\draw [ptzer] (-0.15,-0.6) rectangle (.15,.6);
\draw [thkln] (-0.75,0) -- (-0.15,0)
node [near start,below] {$\scriptstyle \xca$}
(0.15,0) -- (.75,0)
node [near end,below] {$\scriptstyle \xca$};
\draw (-0.15,0.4) to [out=180,in=-90] (-0.6,0.8) to [out=90,in=180] (0,1.4)
to [out=0,in=90] (0.6,0.8) to [out=-90,in=0] (0.15,0.4);
\end{tikzpicture},
\qquad
\ytngsf = \shfr^{-1}
\begin{tikzpicture}[menvtwo]
\draw [ptzer] (-0.15,-0.6) rectangle (.15,.6);
\draw [thkln] (-0.75,0) -- (-0.15,0)
node [near start,below] {$\scriptstyle \xca$}
(0.15,0) -- (.75,0)
node [near end,below] {$\scriptstyle \xca$};
\end{tikzpicture},
\qquad
\ytngsc =
\shcr^{2\xca}\shfr
\begin{tikzpicture}[menvtwo]
\draw [color=white] (-0.15,-1) rectangle (0.15,1.6);
\draw [ptthr] (-0.15,-0.6) rectangle (.15,.6);
\draw [thkln] (-0.75,0) -- (-0.15,0)
node [near start,below] {$\scriptstyle \xca$}
(0.15,0) -- (.75,0)
node [near end,below] {$\scriptstyle \xca$};
\draw (-0.15,0.4) to [out=180,in=-90] (-0.6,0.8) to [out=90,in=180] (0,1.4)
to [out=0,in=90] (0.6,0.8) to [out=-90,in=0] (0.15,0.4);
\end{tikzpicture}
\]
and the cone relation\rx{eq:cnrel} is \ex{eq:htloop}.

Performing these steps on each single line circle of $\xtDNo$ we transform it into $\xDclN$, and the composition of elementary maps yields the map
\begin{equation}
\label{eq:spmapsp}
\KHm(\xDclN)\xrightarrow{\;\mntfN\;}
\end{equation}

 Composing all maps, which lead from $\xDclN$ to $\xDclNo$, we get a \tdgpr\ map
\begin{equation}
\label{eq:spmapsp_o}
\KHm(\xDclN)\xrightarrow{\;\mntfN\;} \shcr^{(\xca+\hlf)\ncrD}\, \shfr^{\gvD}\,\KHm(\xDclNo).
\tKHm(\xDclN)\xrightarrow{\;\mntfN\;} \tKHm(\xDclNo).
\end{equation}
Note that the degree shifts of \ex{eq:smsdr} cancel each other, since a single line circle has equal number of the junctions of the third and fourth type of\rx{eq:frmprj}.
After the degree shifts\rx{eq:shdgt}, the map\rx{eq:spmapsp} becomes the map\rx{eq:spmaps}.

\section{Estimates of \thdgr\ bounds on correction diagrams}

Let $\xD$ be a diagram of a link which may involve \tJWp s. Let $\yncrD$ denote the number of single line crossings in $\xD$. If $\xD$ involves cables, then we consider them as multiple single lines for the purpose of defining $\yncrD$: a crossing of an $a$-cable and a $b$-cable contributes $ab$ to $\yncrD$.

Our estimates of \thdgr\ bounds are based on the following:
\begin{proposition}
\label{prp:dgrest}
$\KHmvv{i}{\hem}(\xD)=0$ for $i\leq-\hlf \yncrD-1$, where $\yncrD$ is the number of single line crossings in $\xD$.
\end{proposition}
\begin{proof}
The claim follows easily from the defining relations\rx{eq:dkhbr} of the \tKhbr\ and from the fact that a \cJWp\ has a presentation as a complex with only non-negative homological degrees.
\end{proof}

\subsection{\Ltrf s of the first type}

\def\xpsh{3}

\begin{proof}[Proof of Proposition\rw{prp:bndfs}]

The diagram $\xtDN$ is the result of applying \Arpl s to all crossings of $\xDclNo$. A composition of maps $\xmg$ corresponding to \ltrf s\rx{eq:frstst} produces a map of \tbdgr\ zero
\[
\shcr^{\hlf\yncrv{\xtDN}}
\shfr^{\gvD(\xca+1)}
\KHm(\xtDN) \xrightarrow{\;\xmgN\;} \tKHm(\xDclNo)
\]

We perform a finite induction over the number of \Arpl s on the diagram $\xDclNo$.
Suppose that we performed \Arpl s on first $\incra$ crossings, thus obtaining the diagram $\xDsi$, and consider the \ltrf\rx{eq:frstst} on the $(\incra+1)$-st vertex. By the assumption of induction, we have a 
map
\[
\shcr^{\hlf\yncrv{\xDsi}} \shfr^{\gvD(\xca+1)}
\KHm(\xDsi) \xrightarrow{\mnfi} \tKHm(\xDclNo),
\]
where
\[
\yncrv{\xDsi} = \clN^2\ncrD + (2\xca+1)(\ncrD-\incra)
\]
is the number of single line crossings in $\xDsi$. The map $\mnfi$ has zero \tbdgr\ and it is an isomorphism on $\tKHmvb{i}(\xDclNo)$ for $i\leq 2\xca-1$.

In diagram $\xDsf$ the $(\incra+1)$-st vertex is \Arpld, while is the diagram $\xDsc$ this vertex is \Brpld. Hence the diagrams $\xDsf$ and $\xDsc$ have equal number of single line crossings:
\[
\yncrv{\xDsf} = \yncrv{\xDsc} = \yncrv{\xDsi} - (2\xca + 1).
\]
By Proposition\rw{prp:dgrest},
$
\KHmvb{i}(\xDsc) = 0
$
for $i\leq-\hlf\yncrv{\xDsc}+\xca-\hlf$, where $\xca+\hlf$ accounts for the degree shift of the correction tangle in\rx{eq:frstst}. Hence by Proposition\rw{prp:shest} the map $\KHmvb{i}(\xDsf)\xrightarrow{\xmg}\KHmvb{i}(\xDsi)$ is an isomorphism for
$i \leq-\hlf\yncrv{\xDsc}+\xca-\thlf$ and the composition map
\[
\shcr^{\hlf\yncrv{\xDsf}+\xca+\hlf}
\shfr^{\gvD(\xca+1)}\KHmvb{i}(\xDsf) \xrightarrow{\mnff}\tKHmvb{i}(\xDclNo)
\]
is an isomorphism for $i\leq 2\xca-1$.
\end{proof}

\subsection{\Ltrf s of the second type}

The resulting diagram is $\xDsi$.
 and, by the assumption of the induction we have a map

 of $\xDclNo$. The resulting

Let $\xDNoa$ denote a diagram constructed from $\xDclNo$ by performing \Brpl s on crossings $\xlbb$, $1\leq \incrb \leq \incra$ (in particular,  $\xtDNo= \xDpNof$), and let $\xDpNoa$ denote a diagram constructed by performing \Brpl s on crossings $\xlbb$, $1\leq \incrb \leq \incra$ and \Arpl\ on the crossing $\xlbao$. We define their shifted \tKhom:
\[\tKHm(\xDNoa) = \shcr^{abcd} \KHm(\xDNoa).
\]

According to Lemma\rw{lm:sss}, the complexes of the picture\rx{eq:abrpl} form an exact triangle, hence there is a canonical map
\[
\shcr^{-(\xca+\hlf)}\KHm(\xDNoa)\xrightarrow{\;\xmgNa\;}
\KHm(\xDNoamo)
\]
which has zero degree with respect to all gradings. A composition of maps $\xmgNa$ for $\incra=0,\ldots,\ncrD-1$ is a zero-degree map
\[
\shcr^{-\ncrD(\xca+\hlf)}\KHm(\xDclNo) \xrightarrow{\;\xmgN\;} \KHm(\xtDNo).
\]

%

and let $\xDpNoa$ denote a diagram constructed by performing \Brpl s on crossings $\xlbb$, $1\leq \incrb \leq \incra$ and \Arpl\ on the crossing $\xlba$.

\subsection{Second stage}
Consider the structure of the diagram $\xtDNo$. It consists of two parts. The first part is $\xca$-cabled diagram $\xDclN$. The second part consists of non-intersecting circles formed by single lines, which appeared when \Brpl s were applied to all vertices of $\xDclNo$. These circles are the same as the circles of the diagram $\sBD$ which is constructed by \Bsplng\ all crossings of the original diagram $\xD$.
We orient single line circles clockwise and on our pictures the clockwise orientation goes from the left to the right.

Both parts of $\xtDNo$ are joined by \cJWp s, the junctions having four possible forms:
\begin{equation}
\label{eq:frmprj1}
\begin{tikzpicture}[menvtwo]
\draw [ptzer] (-0.15,-0.6) rectangle (.15,.6);
\draw [thkln] (-0.75,0) -- (-0.15,0)
node [near start,above] {$\scriptstyle \xca$}
(0.15,0) -- (.75,0)
node [near end,above] {$\scriptstyle \xca$};
\draw (-0.75,-0.4) -- (-0.15,-0.4) (0.15,-0.4) -- (0.75,-0.4);
\end{tikzpicture},
\qquad
\begin{tikzpicture}[menvtwo]
\draw [ptzer] (-0.15,-0.6) rectangle (.15,.6);
\draw [thkln] (-0.75,0) -- (-0.15,0)
node [near start,below] {$\scriptstyle \xca$}
(0.15,0) -- (.75,0)
node [near end,below] {$\scriptstyle \xca$};
\draw (-0.75,0.4) -- (-0.15,0.4) (0.15,0.4) -- (0.75,0.4);
\end{tikzpicture},
\qquad
\begin{tikzpicture}[menvtwo]
\draw [ptzer] (-0.15,-0.6) rectangle (.15,.6);
\draw [thkln] (-0.75,0) -- (-0.15,0)
node [near start,above] {$\scriptstyle \xca$}
(0.15,0) -- (.75,0)
node [near end,below] {$\scriptstyle \xca$};
\draw (-0.75,-0.4) -- (-0.15,-0.4) (0.15,0.4) -- (0.75,0.4);
\end{tikzpicture},
\qquad
\begin{tikzpicture}[menvtwo]
\draw [thkln] (-0.75,0) -- (-0.15,0)
node [near start,below] {$\scriptstyle \xca$}
(0.15,0) -- (.75,0)
node [near end,above] {$\scriptstyle \xca$};
\draw (-0.75,0.4) -- (-0.15,0.4) (0.15,-0.4) -- (0.75,-0.4);
\draw [ptzer] (-0.15,-0.6) rectangle (.15,.6);
\end{tikzpicture}.
\end{equation}

Our strategy is to remove the circles one by one by though the following procedure applied to each circle: we choose the zeroth projector on the circle and then go from it clockwise, detaching the single line from the projectors. After we detach the single line circle from all projectors except the initial one, we

 First, if the projector is of the third or fourth type of\rx{eq:frmprj}, then we turn it into the first or second form by the homotopy equivalences
\begin{equation}
\label{eq:smsdr1}
\begin{tikzpicture}[menvtwo]
\draw [ptzer] (-0.15,-0.6) rectangle (.15,.6);
\draw [thkln] (-0.75,0) -- (-0.15,0)
node [near start,above] {$\scriptstyle \xca$}
(0.15,0) -- (.75,0)
node [near end,below] {$\scriptstyle \xca$};
\draw (-0.75,-0.4) -- (-0.15,-0.4) (0.15,0.4) -- (0.75,0.4);
\end{tikzpicture}
\;\hteqv\;
\shcr^{\hlf\xca}
\begin{tikzpicture}[menvtwo]
\draw [thkln] (-1.25,0) -- (-0.15,0)
node [very near start,above] {$\scriptstyle \xca$}
(0.15,0) -- (.75,0)
node [near end,below] {$\scriptstyle \xca$};
\draw [lnovr]  (-1.25,-0.4) to [out=0,in=180] (-0.15,0.4);
\draw (-1.25,-0.4) to [out=0,in=180] (-0.15,0.4) (0.15,0.4) -- (0.75,0.4);
\draw [ptzer] (-0.15,-0.6) rectangle (.15,.6);
\end{tikzpicture},
\qquad
\begin{tikzpicture}[menvtwo]
\draw [thkln] (-0.75,0) -- (-0.15,0)
node [near start,below] {$\scriptstyle \xca$}
(0.15,0) -- (.75,0)
node [near end,above] {$\scriptstyle \xca$};
\draw (-0.75,0.4) -- (-0.15,0.4) (0.15,-0.4) -- (0.75,-0.4);
\draw [ptzer] (-0.15,-0.6) rectangle (.15,.6);
\end{tikzpicture}
\;\hteqv\;
\shcr^{-\hlf\xca}
\begin{tikzpicture}[menvtwo]
\draw [thkln] (-1.25,0) -- (-0.15,0)
node [very near start,below] {$\scriptstyle \xca$}
(0.15,0) -- (.75,0)
node [near end,above] {$\scriptstyle \xca$};
\draw [lnovr]  (-1.25,0.4) to [out=0,in=180] (-0.15,-0.4);
\draw (-1.25,0.4) to [out=0,in=180] (-0.15,-0.4) (0.15,-0.4) -- (0.75,-0.4);
\draw [ptzer] (-0.15,-0.6) rectangle (.15,.6);
\end{tikzpicture}
\end{equation}
(note that we keep the level of the outgoing single lines and keep single lines above the $\xca$-cables).
Second, we replace the projectors of the first two types of\rx{eq:frmprj} by the diagrams
\[
\begin{tikzpicture}[menvtwo]
\draw [ptzer] (-0.15,-0.6) rectangle (.15,.6);
\draw [thkln] (-0.75,0) -- (-0.15,0)
node [near start,above] {$\scriptstyle \xca$}
(0.15,0) -- (.75,0)
node [near end,above] {$\scriptstyle \xca$};
\draw (-0.75,-0.8) -- (0.75,-0.8);
\end{tikzpicture},
\qquad
\begin{tikzpicture}[menvtwo]
\draw [ptzer] (-0.15,-0.6) rectangle (.15,.6);
\draw [thkln] (-0.75,0) -- (-0.15,0)
node [near start,below] {$\scriptstyle \xca$}
(0.15,0) -- (.75,0)
node [near end,below] {$\scriptstyle \xca$};
\draw (-0.75,0.8) -- (0.75,0.8);
\end{tikzpicture}
\]
After we finish, the single line circle is attached only at the zeroth projector. The final step is to `suck' it into the projector.
%
%

We label the single line circles of $\xtDNo$ as $\ylbb$, $1\leq \crcb \leq \gvD$. On each circle $\ylbb$ we choose the zeroth \tJWp\ and then label other projectors clockwise as $\zlbbg$, $0 \leq \prjg \leq\pncrb-1 $, where $\pncrb$ is the total number projectors on the circle $\ylbb$ ($\pncrb$ is also equal to the number of crossings ... ). $\xtDNob$ denotes the diagram $\xtDNo$ in which the single line circles $\ylbbp$ with $\crcbp < \crcb$ are removed. Further, $\xtDNobg$ denotes the diagram $\xtDNob$ in which the circle $\ylbb$ is detached from the projectors $\zlbbgp$ with $1\leq\prjgp<\prjg$.
The homotopy equivalence\rx{eq:jwpar} determines a canonical map
\begin{equation}
\label{eq:mpgo}
\KHm(\xtDNobgo)\xrightarrow{\;\xtmgbg\;}
\KHm(\xtDNobg).
\end{equation}

In the diagram $\xtDNobf$ the cirlce $\ylbb$ is detached only at the zeroth projector, and whenever it crosses the cable (due to replacements\rx{eq:smsdr}) it passes above it. Hence the circle can be contracted towards the zeroth projector. Let $\xhDNob$ denote the resulting diagram, the vicinity of the zeroth projector there has the form:
\[
\begin{tikzpicture}[menvtwo]
\draw [ptzer] (-0.15,-0.6) rectangle (.15,.6);
\draw [thkln] (-0.75,0) -- (-0.15,0)
node [near start,below] {$\scriptstyle \xca$}
(0.15,0) -- (.75,0)
node [near end,below] {$\scriptstyle \xca$};
\draw (-0.15,0.4) to [out=180,in=-90] (-0.6,0.8) to [out=90,in=180] (0,1.4)
to [out=0,in=90] (0.6,0.8) to [out=-90,in=0] (0.15,0.4);
\end{tikzpicture}
\]
(the single line loop may also appear below, but this distinction is not important).
Thus there is an isomorphism $\KHm(\xtDNobf)\cong\KHm(\xhDNob)$ and the composition of all maps\rx{eq:mpgo} for $1\leq\prjg\leq \pncrb-1$ produces a map
\[
\KHm(\xhDNob)\xrightarrow{}\KHm(\xtDNob).
\]
The homotopy equivalence\rx{eq:htloop} determines a canonical map
\[
\KHm(\xtDNobo) \xrightarrow{} \KHm(\xhDNob).
\]

Let $\xhDNob=$ denote the diagram in which the circle $\ylbb$ is attached


\subsection{The tail structure}
\subsubsection{Tail conjecture}

We want to study the structure of the \thead\ of the colored Jones polynomial which is the dominant part of $\pJqaL$ at $q\rightarrow 0$. In other words, we want to describe the coefficients at the highest negative powers of $q$ in $\pJqaL$. If $\xLb$ is the mirror image of $\xL$, then $\pJvv{\xca}{\xLb}(q) = \pJvv{\xca}{\xL}(\qi)$, hence all results about the behavior of $\pJqaL$ at $q\rightarrow 0$ can be applied at $q\rightarrow \infty$ by replacing $\xL$ with $\xLb$.


It has been observed experimentally that at the large values of the color $\xca$ the \thead\ of $\pJqaL$ stabilizes: if $\xca\p - \xca$ is a positive even integer, then $\pJqvv{\xca\p}{\xL}$ and $\pJqaL$ share the same first $\xca$ coefficients.
\begin{conjecture}
\label{conj.tail}
For every link $\xL$ and for $\xnu\in\{0,1\}$ there exists a  series $\zpolnqnL\in\ZZ[[q]]$  and an integer-valued function $\xfsLN$ such that
\[
\pJqaL = q^{-\xfsLN} (\zpolnqnL + O(q^{\xca})),
\]
if $\xnu = \xca \mod 2$.
\end{conjecture}


The strongest result so far regarding this conjecture has been obtained by C.~Armond.
\begin{theorem}[C.~Armond]
If $\xL$ is an alternating link, then Conjecture\rw{conj.tail}  holds true and the \thead\ does not depend on the parity of $\xca$: $\zpolnqvv{0}{\xL}=\zpolnqvv{1}{\xL}$.
\end{theorem}

\subsubsection{Multi-layered structure}
A closer look at the experimental data suggests that the tail of $\pJqaL$ can be better described by the double Laurent series of the form
\[
q^{-\xfsLN} \sum_{m,n} c_{m,n}\, q^{m + \xn\xca}.
\]
 We are going to prove the following:
%
%
%
\begin{theorem}
\label{thm.main}
For a framed link $\xL$ with the minimal crossing number $\ncrL$
there exists a non-negative integers $\gvL$, $\geL$ and $\glL$
and a two-variable
Laurent series
\[
\ypolqtL = \xt^{-\hlf\gvL}\smxnzi\xt^\xn\, \ypolqnL,
\]
where $\ypolqnL$ is a Laurent series with degree bounds
\begin{equation}
\label{eq.bnd}
\degq \ypolqnL\geq  -\shlf\geL \xn(\xn+1) - \shlf\glL\left(2 n + 1\right)^2 ,
\end{equation}
such that for any integer $\xM\geq 1$
\begin{equation}
\label{eq.mrel}
\pJqaL = \Nsgn q^{-\frth\ncrL\xca^2 - \gvL\xca }
\left(
\sum_{\xn = 0}^{\xM-1} q^{\xn\xca}\ypolqnL
+ O\left( q^{\xca(\xM-\hlf)}\right)
\right)
\end{equation}
if $\xca \geq 16\, \ncrtD^2\xM^3$.
The series $\ypolqtL$ is a topological invariant of the framed link $\xL$.
\end{theorem}

This theorem is an easy corollary of the following
\begin{theorem}
\label{thm.ma}
A link diagram $\xD$ determines a two-variable Laurent series
\[
\ypolqtD = t^{-\hlf\nBD} \smxnzi \xt^\xn\,\ypolqnD
\]
where $\ypolqnD$ are Laurent series with degree bounds
\[
\degq \ypolqnL\geq -\shlf\,\geD \xn(\xn+1) - \shlf\,\glD\left( 2n + 1\right)^2,
\]
such that for any integer $\xM\geq 1$
\[
\pJqaD =\Nsgn q^{-\frth\ncrD\xca^2 - \hlf\gvD \xca }
\left(
\sum_{\xn = 0}^{\xM-1} q^{\xn\xca}\ypolqnL
+ O\left( q^{\xca(\xM-\hlf)}\right)
\right)
\]
if $\xca \geq 16\, \ncrtD^2\xM^3$.
\end{theorem}

\begin{proof}[Proof of Theorem]
By choosing the diagram $\xD$ of Theorem\rw{thm.ma} to be minimal and setting
$\ypolqtL = \ypolqtD$, $\gvL=\gvD$, $\geL=\geD$ and $\glL=\glD$ we are led to the bounds and relations.

It is easy to see from the relation\rx{eq.mrel} that the colored Jones polynomial $\pJqaL$ determines the series $\ypolqtL$, hence that series is a topological invariant of $\xL$.
\end{proof}

\subsubsection{Adequate links}

The main drawback of Theorem\rw{thm.main} is that for many links the series $\ypolqtL$
is identically zero: $\ypolqtL\equiv 0$. Examples of this phenomenon are identified with the help of the following simple corollary of Theorem\rw{thm.ma}:
%
\begin{corollary}
\label{cor.equiv}
If for a framed link $\xL$ there exist numbers $\bdA$ and $\xca_0$ such that for $\xca\geq \xca_0$ the colored Jones polynomial $\pJqaL$ has a degree bound
\begin{equation}
\label{eq.dcond1}
\degq \pJqaL \geq  -\sfrth(\ncrD-1)\xca^2 + \bdA \xca,
\end{equation}
then $\ypolqnL\equiv 0$.
\end{corollary}
\begin{proof}
It is easy to see from \ex{eq.mrel} that the Jones polynomials $\pJqaL$  at $\xca\geq \xca_0$ determine the series $\ypolqnL$ and that the condition\rx{eq.dcond1} implies $\ypolqtL\equiv 0$.
\end{proof}

As an example of this Corollary, consider torus knots $\yTmn$, $1< m < n$, their framing being determined by the standard diagram of the twisted $m$-cable of the unknot. Then
\[
\degq \pJqvv{\xca}{\yTmn} \geq -\sfrth n \xca^2 + \shlf n\xca.
\]
Since $\ncrv{\yTmn} = (m-1)n$, then according to Corollary\rw{cor.equiv}, $\ypolqtv{\yTmn}\equiv 0$ if $m\geq 3$. In fact, we conjecture, that this happens to all \tnBadq\ links:
\begin{conjecture}
If a framed link $\xL$ is \tnBadq, then $\ypolqtL\equiv 0$.
\end{conjecture}
The relation $\ypolqtL=0$ means that for the link $\xL$, the series $\ypolqtL$ computed according to the prescription of , misses the actual \thead\ of $\pJqaL$ and this \thead\ has to be computed differently.

For \tBadq\ links the method of captures the actual \thead. Indeed, a theorem by O.~Dasbach and X.-S.~Lin\cx{asdf} implies that if $\xL$ is \tBadq, then $\degq\pJqaL=-\frth\ncrL\xca^2 - \gvL\xca$ while the coefficient at the lowest power of $q$ is $1$. Hence Theorem\rw{thm.main} has the following corollary:
\begin{corollary}
If a framed link $\xL$ is \tBadq, then $\ypolqtL\not\equiv 0$ and $\ypolqvv{0}{\xL} = 1 + O(q)$.
\end{corollary}

\subsubsection{The \thead s of torus links $\yTmmn$}
The bound\rx{eq.bnd} for a \tBadq\ link is stronger:
\[
\degq \ypolqnL\geq  -\shlf\geL \xn(\xn+1).
\]
The computation of the \thead s of torus links indicates  that this bound is sharp. Indeed, according to~\cite{Mort95}, the colored Jones polynomial of $\yTmmn$ is

\subsubsection{The \thead\ and Habiro series}

Still it does not permit us to perform a direct substitution of $\xt = q^\xca$ in the whole double series $\ypolqtL$.

\begin{bibdiv}
\begin{biblist}

\bib{Mort95}{article}
{
author={Morton, H.R.}
title={The coloured Jones function and Alexander polynomial for torus knots}
journal={Math. Proc. Camb. Phil. Soc.}
volume={117}
year={1995}
pages={129-135}
}


\bib{BN1}{article}
{
author={Bar-Natan, Dror}
title={Khovanov's homology for tangles and cobordisms}
journal={Geometry and Topology}
volume={9}
year={2005}
pages={1443-1499}
eprint={arXiv:math.GT/0410495}
}

\bib{CK}{misc}
{
author={Cooper, Ben}
author={Krushkal, Slava}
title={Categorification of the Jones-Wenzl projectors}
note={in preparation}
}

\bib{FSS}{misc}
{
author={Frenkel, Igor}
author={Stroppel, Catharina}
author={Sussan, Joshua}
note={in preparation}
}

\bib{Kh1}{article}
{
author={Khovanov, Mikhail}
title={A categorification of the Jones polynomial}
journal={Duke Journal of Mathematics}
volume={101}
year={2000}
pages={359-426}
eprint={arXiv:math.QA/9908171}
}

\bib{KR1}{article}
{
author={Khovanov, Mikhail}
author={Rozansky, Lev}
title={Matrix factorizations and link homology}
journal={Fundamenta Mathematicae}
volume={199}
year={2008}
pages={1-91}
eprint={arXiv:math.QA/0401268}
}

\bib{KhRS}{misc}
{
author={Khovanov, Mikhail}
author={Rozansky, Lev}
note={in preparation}
}

\bib{St}{article}
{
author={Stosic, Marko}
title={Homological thickness and stability of torus knots}
journal={Algebraic and Geometric Topology}
volume={7}
year={2007}
pages={261-284}
eprint={arXiv:math.GT/0511532}
}

\end{biblist}
\end{bibdiv}

\end{document}

Let us recall the definitions of a pair of categories and a pair of 2-categories related to the Jones polynomial and its categorification. The first category in each pair is purely topological while the second category is associated to $\xSo$ by the 3-dimensional Chern-Simons-Witten \TQFT\ and by the 4-dimensional \TQFT\ corresponding to Khovanov's categorification.

An object of the topological tangle category $\cTng$ is an oriented 2-disc $\xDon$ with $n$ marked points in its interior
placed on a special oriented `equatorial' diameter in the disc. The marked points are framed, that is, there is a choice of a tangent vector at each point. We assume that the framing vectors are tangent to the \eqdiam. The set of morphisms $\Hom_{\cTng}(\xDom,\xDon)$ consists of \ttnglmn s, that is, we glue the discs $\xDom$ and $\xDon$ together along their circle boundaries so that the orientations and end-points of \eqdiam s match, and consider a 3-ball $\xBmpn$, whose boundary is the resulting 2-sphere. Then an \ttnglmn\ is an embedding of framed (unoriented) segments and cirlces into $\xBmpn$ such that the end-points of segments map to the $m+n$ marked points on its boundary. The morphism composition rule is the obvious composition of tangles.

The category $\cTng$ can be promoted to the 2-category $\ctTng$ if  the morphism sets $\Hom(\xDom,\xDon)$ are defined as categories, the morphisms between two tangles being cobordisms.

Let $q$ be a commutative variable and let $\QQqqi$ be the algebra of Laurent polynomials of $q$.
The \tTL\ (\taTL) category $\cTL$ is an additive category over the algebra $\QQqqi$ of Laurent polynomials of $q$. Its objects are the same as those of $\cTng$, but the set of morphisms
$\aTLmn = \Hom_{\cTL}(\xDom,\xDon)$ is a module over $\QQqqi$ generated freely by \taTLt s, which are  tangles consisting of segments embedded (crossinglessly) into the equatorial disc of $\xBmpn$. The composition of \taTLt s may produce disjoint circles, but each disjoint circle is replaced by the factor $\mqpqi$. The sum of all modules $\aTL = \bigoplus_{m,n\geq 0}\aTLmn$
is called the \tTLa.

In order to work with \tJWp s we have to introduce the algebra of formal Laurent series $\QQqqip$. The corresponding \tTLc\ is denoted as $\cTLp$.

The \tKbr\
$
\xKbrd\colon \cTng\rightarrow\cTL
$
is the functor between categories defined by the relation
\[
\xKbrBv{\xcrsp}
\;\;=\;\;
\qvh\;\;
\xKbrBv{\xpver}
\;\;+\;\;
\qvmh\;\;
\xKbrBv{\xphor}
\]
and by the rule that disjoint circles are converted into the factors $\mqpqi$. In particular, the \tKbr\ maps a framed link into its \tJpol.

Khovanov categorified the Jones polynomial as well as the \tKbr\ for tangles. We find it convenient to use Bar-Natan's canopoly version of this categorification with slight modifications. Thus we consider the 2-category $\ctTL$, whose objects are again the discs $\xDon$. For two objects $\xDom$ and $\xDon$ consider the $\ZZZtt$-graded additive category $\tHom_{\ctTL}(\xDom,\xDon)$ generated by objects
$\xKhl$ corresponding to \TLttnglmn s $\xlam$ and by their translations $\xKhl\hgrshklm$ with respect to the $\ZZZtt$-grading. Note that $k$ and $l$ may take half-integer values, but integer and half-integer degrees `do not mix', so we can safely pretend that the grading is $\ZZZtt$.

\subsection{The center of the 2-category $\ctTL$}

Our main goal is the study of the center of the 2-category $\ctTL$. Recall that an element $\elcf$ of the center $\Zcatv{\caC}$ of a category $\caC$ is a choice of an endomorphism $\elcfv{A}$ for every object $A$ of $\caC$ such that for any morphism $\gAB\in\Hom_{\caC}(A,B)$ there is an equality $\elcfv{B} \gAB = \gAB \elcfv{A}$.
 The elements of the center can be composed, so $\Zcatv{\caC}$ is a monoid and  the center $\Zcatv{\ctaC}$ of a 2-category $\ctaC$ is a monoidal category.

The centers of the \TQFT\ categories $\cTL$ and $\ctTL$ are especially interesting, since they are related to the module and, respectively, category associated within these \TQFT s to the 2-torus $\xTt$. To illustrate this point, consider first the centers of the topological categories $\cTng$ and $\ctTng$.

Let $\xIdn$ denote the \TLttnglnn\ corresponding to the $n$-strand identity braid. For a link $\xL$ in $\xStSo$, let $\xIdnL$ denote the \ttnglnn\ constructed by wrapping the link $\xL$ as a meridian (that is, as a band) around $\xIdn$. It is easy to see that a collection of endomorphisms $\xcztL$ defined as
$\xczt_{\xDon} = \xIdnL$ is an element in the center of both $\cTL$ and $\ctTL$, because the $\xL$-band can be slid along any tangle around which it is wrapped. From the \TQFT\ perspective, since the boundary of $\xStSo$ is $\xTt$, the solid torus $\xStSo$ containing a link determines an element (an object) in the module (category) associated with $\xTt$.

Notably, $\xczt$ are not the only elements (objects) in the center of topological categories. Let
$\gbrmn$ be the \ttnglnn\ corresponding to a \trbr\ with $m$ full \clckw\ rotations of $n$ strands having zero framing. Then a collection of endomorphisms

\section{Multi-cones}

Let $\caCt$ be an additive category generated freely by a finite set of objects, that is, the objects of $\caCt$ are finite sums of generators  (we have categories $\tHom_{\ctTL}(\xDom,\xDon)$ in mind). Let $\caC = \cKomm{\caCt} $ be the homotopy category of complexes bounded from above: an object of $\caC$ is a complex
$\cmA = (\cdots \rightarrow A_i \rightarrow A_{i+1} \rightarrow\cdots\rightarrow A_k )$ and morphisms are chain maps up to homotopy.


A \JWp\ $\jwpn$ is a special idempotent element of the $n$-strand
\TLa\ $\cTLn$, whose defining property is
the annihilation of cap and cup tangles.
The coefficients in its expression in terms of \TLb\ tangles are
rational (rather than polynomial) functions of $q$. This suggests
that the categorification $\ctjwn$ of $\jwpn$ in the
universal tangle category $\dTLn$ constructed by D.~Bar-Natan\cx{BN1}
should be presented by a semi-infinite \chcpl. In fact, there are
two mutually dual categorifications: the complex $\ctjwpn$ which is bound from
above and the complex $\ctjwmn$ which is bound from below. We will
consider only $\ctjwpn$ in detail, since the story of $\ctjwmn$ is totally
similar.

%

The construction of $\ctjwmn$ by
B.~Cooper and S.~Krushkal\cx{CK} is based upon the
Frenkel-Khovanov formula for $\jwpn$ and requires the invention of morphisms
between constituent \TL\ tangles as well as non-trivial `thickening'
of the complex. An alternative `representation-theoretic'
approach to the categorification of the \JWp\ is developed by Igor Frenkel,
Catharina Stroppel, and Joshua Sussan\cx{FSS}.

Our approach is rather straightforward: the
categorified projector $\ctjwpn$ is a direct limit of
appropriately shifted
categorification complexes of \cbr s
(\ie braid analogs of torus links) with high \clckw\ twist (the
other projector $\ctjwmn$ comes from high \cclckw\ twists).
The limit  $\ctjwpn$
can be presented as a cone:
\xlee{eq:int1}
\ctjwpn\hteqv
\CnBv{\Ohp\big(2m(n-1)\big)\longrightarrow\cbrmns},
\xeee
where
$\gbrmn$ is a \cbr\ with $m$ full \clckw\ rotations of $n$ strands,
$\symcats{-}$ is the
categorification complex with a special grading shift, and
$\Ohp(k)$ denotes a \chcpl\ which ends at the homological degree
$-k$. Theorem\rw{th:cnpr} imposes even stronger restrictions on
the complex $\Ohp\big(2m(n-1)\big)$ in \ex{eq:int1}.


The advantage of our approach is that one can use \cbr s with high
twist as approximations to $\ctjwpn$ in a computation of \Kh\ of a
spin network which involves \JWp s:
if a spin network $\xnu$ is constructed by connecting $\jwpn$ to an \ttngnn\
$\xtau$ such that $\symcat{\xtau}\hteqv\Ohp(k)$, while a spin network $\xnum$ is constructed
by replacing $\jwpn$ in $\xnu$ with $\gbrmn$, then the homology of
$\symcat{\xnu}$ coincides with the shifted homology of
$\symcat{\xnum}$ in all homological degrees $i$
such that $i> -k - 2m(n-1)$. Thus one may say that
there is a stable limit
\xlee{eq:stlimsn}
\symcat{\xnu} =
\lim_{m\rightarrow+\infty}\symcats{\xnum}.
\xeee
We will define homological limits more precisely in
subsection\rw{sss.homcal}.

The practical
importance of the relation between $\symcat{\xnu}$
and $\symcat{\xnum}$ stems from the fact that $\xnum$ is an
ordinary link and its homology
can be computed with the help of
existing efficient computer programs even for high values
of $m$.
%
%

The simplest example of a spin network  is the unknot
`colored' by the $(n+1)$-dimensional representation of
$\mathrm{SU}(2)$ with the help of the projector $\jwpn$. Its
\Kh\ is approximated by the homology
of torus links $\mathrm{T}_{n,-mn}$ which appear as cyclic closures of
$\gbrmn$. The \Kh\ of torus links has been studied by Marko Stosic\cx{St}, who
observed that it stabilizes at lower degrees as $m$ grows. This is
a particular case of the `stable limit'\rx{eq:stlimsn}.

In Section\rw{s:notres}
we explain all notations and conventions
which are used in the paper. In particular, in
subsection\rw{sss:trgr} we define a non-traditional grading of
\Kh, which is convenient for our computations.
Then we formulate our results.


In Section\rw{s:elhomcal} we review basic facts about homological
`calculus' required to work with limits of sequences of
complexes in a homotopy category. In Section\rw{s:cbr} we construct
a sequence of categorification complexes of
\cbr s related by special \chmp s. This sequence yields $\ctjwpn$
as its direct limit. In Section\rw{s:prfs} we use homological
calculus of Section\rw{s:elhomcal} in order to prove that
$\ctjwpn$ is a categorification of the \JWp.

\def\bbS{ \mathbb{S} }
\def\So{ \bbS^1 }
\def\St{ \bbS^2 }
\def\Sot{ \So\times\St }

\subsection*{Acknowledgements}

This paper is a spinoff of a joint project with Mikhail
Khovanov\cx{KhRS} which is
dedicated to the study of categorification complexes of \cbr s and their
relation to the categorification of the Witten-Reshetikhin-Turaev
invariant of links in $\Sot$. I am deeply indebted to Mikhail for
numerous discussions and suggestions.

I would like to thank Slava Krushkal for sharing the results of
his ongoing research. I am also indebted to organizers of the M.S.R.I.
workshop `Homology Theories of Knots and Links' which stimulated
me to write this paper.

This work is supported by the NSF grant DMS-0808974.

\end{document}

\section{Notations and results}
\label{s:notres}
\subsection{Notations}
\label{ss:not}
\subsubsection{Tangles and \TLa}

All tangles in this paper are framed and we assume the blackboard
framing in pictures. We use the symbol 
$\xygraph{
!{0;/r1.5pc/:}
[u(0.5)]
!{\xcapv@(0)}
[u(0.45)r(0.23)]
*{\symfr\;\scriptstyle{k}}
[u(1.5)]
}
$to indicate an
addition of $k$ framing twists to a tangle strand:
\xlee{ae1.1b}
\xygraph{
!{0;/r1.5pc/:}
[u(0.5)]
!{\hover}
!{\hcap}
[u(0.5)l(0.25)]
}
\;\; = \;\;
\xygraph{
!{0;/r1.5pc/:}
[u(0.5)]
!{\xcapv@(0)}
[u(0.45)r(0.23)]
*{\symfr\;\scriptstyle{1}}
[u(1.5)]
}
\xeee

A tangle is called \emph{\plnr} if it can be presented by a diagram
without crossings. A \plnr\ tangle is called \emph{connected} or
\emph{\TLb} (\TLba) if
it does not contain disjoint circles. Let $\rTNG$ denote the set of all
framed tangles,
$\rTNGmn$ -- the set of \ttngmn s and $\rTNGn$ -- the set of
\ttngnn s.
We adopt similar notations for the set  $\rTL$ of \TLba-tangles.

We use the symbol $\tcmp$ to denote the composition of tangles:
$\xtauo\tcmp\xtaut$. The same symbol is used to denote the
multiplication in \TLa\ and the composition bifunctor in the
category $\dTL$.

A \TLa\ $\cTL$ over the ring of Laurent polynomials $\Zqqi$\footnote{It is clear from our normalization of the
Kauffman  bracket relation\rx{ae1.2} that we should rather use the
ring $\Zqqhi$. However, in all expressions in this paper the
half-integer power of $q$ appears only as a common factor, so the terms with integer
and half-integer powers of $q$ do not mix. Hence
we refer to $\Zqqi$, while keeping in mind that $\qh$ may
appear as a common factor is some expressions.}
is a quiver ring. The vertices $v_n$ of the quiver are indexed by
non-negative integers $n$ and each pair of vertices $v_m$, $v_n$,
such that $m-n$ is even, is connected
by an edge $e_{mn}$. To a vertex $v_n$ we associate a ring $\cTLnn$ (also denoted as
$\cTLn$)
and to an edge $e_{mn}$ we associate a
$\cTLn\otimes\cTLm^{\mathrm{op}}$-module $\cTLmn$. As a module,
$\cTLmn$ is generated freely by elements $\clam$ corresponding to \TL\ \ttngmn s $\xlam$, while ring
and module structures come from the composition of tangles modulo
the relation
\xlee{ae1.1}
\Bsymalg{\lcir}
 = -(\qpqi),
\xeee
which is needed to remove disjoint circles that may appear in the composition
of \TLb\ tangles.

The map $\rTNG\xrightarrow{\symalg{-}}\cTL$
associates an element $\ctau$ to a tangle $\xtau$ with the help of
\ex{ae1.1} and the Kauffman bracket relation
\xlee{ae1.2}
\Bsymalg{\xcrsp}
\;\;=\;\;
\qvh\;\;
\Bsymalg{\xpver}
\;\;+\;\;
\qvmh\;\;
\Bsymalg{\xphor}.
\xeee
This relation removes crossings and disjoint circles from the
diagram of $\xtau$, hence
\xlee{ae1.2a0}
\ctau = \sltln \xcalt\, \clam,\qquad
\xcalt = \sum_{i\in\ZZ}\xcalit\,q^i
\xeee
with only finitely many coefficients $\xcalit$ being non-zero.
%

If two tangles differ only by the framing of their strands, then
the corresponding algebra elements differ by the $q$
power factor coming from the following relation associated with
the first Reidemeister move:
\xlee{ae1.2a}
\Bsymalg{\xvfro\hspace*{-0.2cm}}
\;\; = \;\;
-q^{\frac{3}{2}}\;\;
\Bsymalg{\;\xvert\hspace*{-0.5cm}}
\xeee

A \ttngzz\ $\xL$ is a framed link, so $\symalg{\xL}$
is
the framing dependent Jones polynomial defined by the
Kauffman bracket.



We use the notations $\QcTL$ and $\cTLpinf$ for \TLa s defined over
the field $\Qq$ of rational functions of $q$ and over the field
$\Zsqqi$ of Laurent power series.
A sequence of injective homomorphisms
$\Zqqi\hookrightarrow\Qq\hookrightarrow\Zsqqi$, the latter one
generated by the expansion in powers of $q$,
produce a sequence of injective homomorphisms of the corresponding
\TLa s.

%

%

\subsubsection{The \JWp}

Let $\gcupni\in\rTLvv{n-2}{n}$ and $\gcapni\in\rTLvv{n}{n-2}$,
$1\leq i\leq n-1$, denote the following \TL\ tangles:
\ylee{ae1.3}
\gcupni=\xygraph{
!{0;/r1.5pc/:}
[r(0.25)u(0.5)]
!{\xcapv@(0)}
[u(0.5)r(1)]
*{\cdots}
[r(01)u(0.5)]
!{\xcapv@(0)}
[r(0.5)u(1)]
!{\vcap-}
[r(1.5)]
!{\xcapv@(0)}
[u(0.5)r(1)]
*{\cdots}
[r(01)u(0.5)]
!{\xcapv@(0)}
[u(1.5)l(3.5)]
*{\scriptstyle{i}}
[r(1)]
*{\scriptstyle{i+1}}
[l(3.5)]
*{\scriptstyle{1}}
[r(6)]
*{\scriptstyle{n}}
}
,
\quad\quad
\gcapni=
\xygraph{
!{0;/r1.5pc/:}
[r(0.25)u(0.5)]
!{\xcapv@(0)}
[u(0.5)r(1)]
*{\cdots}
[r(01)u(0.5)]
!{\xcapv@(0)}
[r(0.5)]
!{\vcap}
[r(1.5)u(1)]
!{\xcapv@(0)}
[u(0.5)r(1)]
*{\cdots}
[r(01)u(0.5)]
!{\xcapv@(0)}
[d(0.5)l(3.5)]
*{\scriptstyle{i}}
[r(1)]
*{\scriptstyle{i+1}}
[l(3.5)]
*{\scriptstyle{1}}
[r(6)]
*{\scriptstyle{n}}
}
\yeee
Their compositions $\xUni = \gcupni\tcmp \gcapni$ are standard
generators of the \TLa\ $\cTLn$.

The \JWp\ $\jwpn\in\QcTLn$ is the unique non-trivial idempotent element satisfying the
condition
\xlee{ae1.4}
\acapni\;\tcmp\jwpn =0,\qquad 1\leq i\leq n-1.
\xeee
The \JWp\ also satisfies the relation
\xlee{ae1.4a}
\jwpn\tcmp\;\acupni =0,\qquad 1\leq i\leq n-1.
\xeee

We denote the idempotent element of
$\cTLpinf_n$  corresponding to $\jwpn$ as $\jwpnp$.

\subsubsection{Basic notions of homological algebra}
Let $\xChA$ be a category of \chcpls\ associated with an additive
category $\xctA$. An object of $\xChA$ is a \chcpl\
\ylee{ae1.ch1}
\xbA  = (\cdots \rightarrow
\xAi\xrightarrow{\xdi}\xAio\rightarrow\cdots),
\yeee
and a morphism
between two chain complexes is a \chmp\ defined as a \mmp
\xlee{ae1.10d}
\vcenter{\xymatrix{
\xbA \ar[d]^-{\xbf} &&
\cdots\ar[r]^-{\xdimo} & \xAi \ar[r]^-{\xdi} \ar[d]^-{\yfi} &
\xAio
\ar[r]^-{\xdio} \ar[d]^{\yfio} & \cdots
\\
\xbB &&
\cdots\ar[r]^-{\xdpimo} & \xBi \ar[r]^{\xdpi} & \xBio
\ar[r]^-{\xdpio} & \cdots
}
}
\xeee
which commutes with the chain differential: $\xdpi\,\yfi = \yfio\,\xdi$ for all $i$.
The cone of a \chmp\ $\xbA\xrightarrow{\xbf}\xbB$ is a complex
\ylee{ae1.10b1}
\Cnbf
=
\lrbc{
\vcenter{
\xymatrix@C=1.5cm@R=0.5cm{
\cdots \ar[dr] \ar[r] & \xAi
\ar@{}[d] |{\oplus} \ar[r]^-{-\xdi} \ar[dr]^{-\xfi} &
\xAio
\ar@{}[d] |{\oplus}
\ar[r] \ar[dr]& \cdots
\\
\cdots \ar[r] & \xBimo\ar[r]_{\xdpimo} & \xBi \ar[r] & \cdots
}
}
}
\yeee
in which the object $\xAio\oplus\xBi$ has the homological degree
$i$.
There are two special \chmp s
$\xbB\xrightarrow{\idlbf}\Cnbf$ and
$\Cnbf[1]\xrightarrow{\chdlbf}\xbA$ associated to
the cone:
\ylee{ae1.10b2}
\xymatrix{
\xbB \ar[d]^-{\idlbf}&&
\cdots \ar[r] &
\xBi \ar[r] \ar[d]^-{0\oplus \xId}
&
\xBio \ar[r] \ar[d]^-{0\oplus \xId}
&
\cdots
\\
\Cnbf \ar[d]^-{\chdlbf} &&
\cdots \ar[r] &
\xAio \oplus \xBi \ar[r] \ar[d]^-{\xId\oplus 0} &
\xAit\oplus \xBio \ar[r] \ar[d]^-{\xId\oplus 0} &
\cdots
\\
\xbA[-1] &&
\cdots \ar[r]
&
\xAio \ar[r]
&
\xAit \ar[r]
&
\cdots
}
\yeee
These complexes and \chmp s form a \dstt:
\xlee{ae1.ch2}
\xymatrix{
\xbA\ar[r]^-{\xbf} &
\xbB \ar[r]^-{\idlbf} &
\Cnbf \ar[r]^-{\chdlbf} &
\xbA[-1]
}.
\xeee

The homotopy category of complexes $\xKhA$ has the same objects as
$\xChA$ and the morphisms are the morphisms of $\xChA$ modulo
homotopies.
%
%
We denote
homotopy equivalence by the sign $\hteqv$.
The notion of a cone extends to $\xKhA$ and there
are additional relations in that category: $\Cnv{\idlbf} \hteqv \xbA[-1]$ and
$\Cnv{\chdlbf} \hteqv \xbB[-1]$, so all vertices of a \dstt\ have
equal properties.

\subsubsection{A triply graded categorification of the Jones
polynomial}
\label{sss:trgr}
In his famous paper\cx{Kh1}, M.~Khovanov
introduced a categorification of
the Jones polynomial of links. To a diagram $\xL$ of a
link he associates a complex of graded modules
\xlee{ae1.5}
\dL = \lrbc{ \cdots \rightarrow \dLi \rightarrow \dLio\rightarrow\cdots}
\xeee
so that
if two diagrams represent the same link then the corresponding
complexes are homotopy equivalent, and the graded Euler
characteristic of $\dL$ is equal to the Jones polynomial of $\xL$.

Thus, overall, the complex\rx{ae1.5} has two gradings: the first one
is
the grading related to powers of $q$ and the second one is the
homological grading of the complex itself, the corresponding
degree being equal to $i$.
In this paper we adopt a slightly different convention which is
convenient for working with framed links and tangles. It is
inspired by matrix factorization categorification\cx{KR1} and its
advantage is that it is no longer necessary to assign orientation to
link strands in order to obtain the grading of the categorification
complex\rx{ae1.5} which would make it invariant under the second
Reidemeister move.

To a framed link
diagram $\xL$ we associate a $\ZZ \oplus\ZZ\oplus\ZZ_2$-graded complex\rx{ae1.5} with
degrees $\dgo$, $\dgt$ and $\dgh$.
The first two gradings are of the same nature as in\cx{Kh1} and, in
particular, $\dgo\dLi=i$. The third grading is an inner grading of
chain modules defined modulo 2 and of homological
nature, that is, the homological parity of an element of $\dL$,
which affects various sign factors, is the sum of $\dgo$ and
$\dgh$. Both homological degrees are either integer or
half-integer simultaneously, so the homological parity is integer
and takes values in $\ZZ_2$. The $q$-degree $\dgt$ may also take
half-integer values.


Let $\tgrshv{l}{m}{n}$ denote the shift of three degrees by $l$,
$m$ and $n$ units respectively\footnote{
Our degree shift is defined in such a way that if an object $M$
has a homogeneous degree $n$, then the shifted object $M[1]$ has a
homogeneous degree $n+1$.
}. We use abbreviated notations
$$
\tgrsshv{m}{l} = \tgrshv{m}{l}{0},\qquad
\qshv{m} = \tgrshv{m}{0}{0}
$$
as well as the following `power' notation:
$$
\tgrshv{m}{l}{n}^k = \tgrshv{km}{kl}{kn}.
$$

With new grading conventions, the categorification
formulas of\cx{Kh1} take the following form:
the module associated with an unknot is still $\ZZ[x]/(x^2)$ but with
a different degree assignment:
\begin{eqnarray}
\label{ae1.6}
&
\Bsymcat{\lcir}=\ZZ[x]/(x^2)\,
\tgrshv{-1}{0}{1},
\\
&\dgt 1 = 0, \quad \dgt x = 2,
\quad\dgo 1 = \dgo x = \dgh 1=\dgh x =0,
\end{eqnarray}
and the categorification complex of a crossing is the same as
in\cx{Kh1} but with a different degree shift:
\xlee{ae1.7}
\Bsymcat{\xcrsp}
\;\;=\;\;
\Bigg(\;\;
\Bsymcat{\xpver}
\;\tgrshv{\vthf}{-\vthf}{\vthf}
\xrightarrow{\;\;\;\;\xmrf\;\;\;\;}
\Bsymcat{\xphor}
\;\tgrshv{-\vthf}{\vthf}{-\vthf}
\vspace*{18pt}
\;\;
\Bigg),
\xeee
where $f$ is either a multiplication or a comultiplication of the
ring $\ZZ[x]/(x^2)$ depending on how the arcs in the \rhs are
closed into circles.
The resulting categorification complex\rx{ae1.5} is invariant
up to homotopy under the second and third Reidemeister moves, but
it acquires a degree shift under the first Reidemeister move:
\xlee{ae1.8}
\Bsymcat{\xvfro\hspace*{-0.2cm}}
\;\; = \;\;
\Bsymcat{\;\xvert\hspace*{-0.5cm}}
\tgrshv{\vthh}{-\vthf}{-\vthf}.
\xeee
It is easy to see that the whole categorification complex\rx{ae1.5} has a
homogeneous degree $\dgh$.

\subsubsection{A universal categorification of the \TLa}
D.~Bar-Natan\cx{BN1} described the universal category $\dTL$, whose
Grothendieck \Kzg\ is $\cTL$ considered as a $\Zqqi$-module.
We will use this category with obvious adjustments required by the new
grading conventions.

Let $\dTLt$ be an additive category whose objects are in
one-to-one correspondence with \TLb\ tangles, morphisms being generated
by tangle cobordisms (see\cx{BN1} for details). The universal category
$\dTL$ is the homotopy category of bounded complexes associated with
$\dTLt$. In other words, an object of $\dTL$ is a complex
\xlee{ae1.8a}
\xbC =
\lrbc{\cdots\rightarrow\xCi\rightarrow\xCipo\rightarrow\cdots},\qquad
\xCi =
\bigoplus_{j,\xmu}
\oltln \cjilam\,\dlam \tgrshv{j}{0}{\mu},
\xeee
where
non-negative integers $\cjilam$ are multiplicities; since the
complex is bounded, they are non-zero for only finitely many
values of $i$.

%

A categorification map $\rTNG\xrightarrow{\mpcat}\dTL$ turns a framed
tangle diagram $\xtau$ into a complex $\dtau$ according to the
rules\rx{ae1.6} and\rx{ae1.7}, the morphism $\xmrf$ in the
complex\rx{ae1.7} being the saddle cobordism. A composition of
tangles becomes a composition bi-functor
$\dTL\times\dTL\rightarrow\dTL$ if we apply
the categorified version of the rule\rx{ae1.1} in order to remove
disjoint circles:
\xlee{ae1.01}
\Bsymcat{\lcir}= \cnot \tgrshv{1}{0}{1} + \cnot\tgrshv{-1}{0}{1},
\xeee
where $\xnot$ is the empty \TL\ \ttngzz.

A complex $\dtau$ associated to a tangle $\xtau$ is defined only up to
homotopy. We use a notation $\spcc{\dtau}$ for a particular complex
with special properties which represents $\dtau$.
%

Overall,
we have the following commutative diagram:
%
\begin{equation}
\xymatrix@C=1.5cm@R=0.3cm{
& {}\dTL \ar[dd]^{\Kz}
\\
{}\rTNG \ar[ur]^{\mpcat} \ar[dr]^{\mpalg}
\\
& {}\cTL
}
\end{equation}
where the map $\Kz$ turns the complex\rx{ae1.8a} into the
sum\rx{ae1.2a0}:
\xlee{ae1.9a}
\Kz(\xbC)
=
\sltln\sum_{j}
\xcalj \,q^j\,\clam,
\qquad
\xcalj = \sum_{i,\xmu}
(-1)^{i+\xmu}\, \cjilam.
\xeee
Since the complex is bounded, the sum in the expression for
$\xcalj$ is finite.


In addition to $\dTL$ we consider
a category $\dTLp$ of complexes
bounded from above, that is, the multiplicity coefficients in the
sum\rx{ae1.8a} are zero if $i$ is greater than certain value.
Define the $q^+$ order of a \qcmd\ $\xCi$:
$\yordq{\xCi} = \xinfv{j\colon \exists\mu\colon\cjilam\neq
0}$.
A complex $\xbC$ in $\dTLp$ is \emph{\qpb} if $\lmii\yordq{\xCmi} =
+\infty$.
%
For a \qpb\ complex,
the sum in the expression\rx{ae1.9a} for $\xcalj$ is
finite, hence the element $\Kz(\xbC)$ is well defined.


\subsection{Results}
\label{ss:res}

\subsubsection{Infinite \cbr\ as a \JWp\ in a \TLa}
A braid with $n$ strands is a particular example of a \ttngnn.
A \emph{\cbr} is a braid
that can be drawn on a cylinder $\So\times[0,1]$
without intersections. In fact, all \cbr s have the form
$\btcyln^m$, $m\in\ZZ$, where $\btcyln$ is the elementary
clockwise winding \cbr:
\xlee{ae1.10p}
\btcyln \;\;=\;\;
\xygraph{
!{0;/r1.5pc/:}
!{\vover}
[u(1.5)l(0.5)]
!{\xbendr[0.5]}
[u(1.25)l(1.25)]
!{\xbendd[-0.5]}
[u(1.25)l(1)]
!{\xcapv[0.25]@(0)}
[r(1.75)]
!{\xcaph@(0)}
[u(1)]
!{\vover[-1]}
[r(1)]
!{\xbendr[-0.5]}
[u(0.75)l(0.75)]
!{\xbendd[0.5]}
[u(0.5)l(0.5)]
!{\xcapv[0.25]@(0)}
[u(1)l(1.5)]
*{\cdots}
[u(1.5)l(1.5)]
*{\cdots}
[u(0.75)l(1.5)]
*{\scriptstyle{1}}
[r(2.5)]
*{\scriptstyle{n-1}}
[r(1.25)]
*{\scriptstyle{n}}
[d(3)l(3)]
*{\scriptstyle{1}}
[r(1)]
*{\scriptstyle{2}}
[r(2.75)]
*{\scriptstyle{n}}
[u(2.2)l(0.35)]
}
\xeee
%
%
%
%
%
%
We introduce a special notation for the \cbr\ which corresponds to
$m$ full rotations of $n$ strands:
\ylee{eq:brdf}
\gbrmn = \btcyl^{mn}.
\yeee

Let $\Opqm$ denote any element of $\cTLpinf$ of the form
$\sltln\sum_{j\geq m} \xcalj\,q^j\,\clam$.
We define a \emph{\qord} of an element $\yal\in\cTLpinf$ as
$\yordq{\yal} = \xsupv{m\colon \yal = \Opqm}$.

\begin{definition}
\label{df:qlm}
A sequence of elements
\wlee{ae1.2a1}
\yal_1,\yal_2,\ldots\in\cTLpinf%
\weee
has a limit
$\lim_{k\rightarrow \infty} \yal_k = \ybet$,
if $\lmii\yordq{\ybet-\yal_k} = +\infty$.
\end{definition}

The following theorem may be known, so we do not claim
credit for it. It appears here as a by-product
and it is an easy corollary of \ex{ae2.m4}.
\begin{theorem}
\label{th:alg}
The \TL\ element corresponding to the infinite \cbr\ equals the
\JWp:
\xlee{ae1.9}
\lim_{m\rightarrow+\infty} q^{\vthf mn(n-1)}\abrmn = \jwpnp,
\xeee
where $\jwpnp\in \cTLpinf_n$ corresponds to the \JWp\
$\jwpn\in\QcTLn$.
\end{theorem}
In fact, a more general statement is also true:
\xlee{ae1.10}
\lim_{m\rightarrow+\infty} q^{\vthf m(n-1)}\symalg{\btcyln^m} =
\jwpnp,
\xeee
but its proof is more technical and we omit it here.

\subsubsection{A bit of homological calculus}
\label{sss.homcal}

Let $\xKhA$ denote the homotopy category of complexes associated
with an additive category $\xctA$ (we have in mind a particular case of $\xKhA = \dTLp$).

A \chcpl\ is considered `homologically small' if it ends at a low
(that is, high negative)
homological degree.
Let
$\Ohpm$ denote a complex which ends at $(-m)$-th homological
degree: $\Ohpm = (\cdots \xAv{-m-1} \rightarrow\xAv{-m})$. We define
a homological order of a complex $\xbA$ as $\yordh{\xbA} =
\xsupv{m\colon\xbA\hteqv\Ohpm}$.

Two complexes connected by a \chmp: $\xbA\xrightarrow{\xbf}\xbB$
are considered `homologically close' if $\Cnbf$ is homologically
small.

A \emph{\chsq} is a sequence of complexes connected by \chmp s:
\ylee{ae1.10d1}
\scA = (\xbAz\xrightarrow{\xbfz} \xbAo
\xrightarrow{\xbfo}\cdots).
\yeee
\begin{definition}
\label{df:cauchy}
A \chsq\ $\scA$ is \emph{\Cch} if $\lmii \yordh{\Cnbfi} = \pinft$.
\end{definition}
\begin{definition}
\label{df:sqlm}
A \chsq\ has a limit
\footnote{This definition differs
from the standard categorical definition of a direct limit, however
Theorem\rw{pr:spmp} indicates that our definition implies the standard one. We expect that
both definitions are equivalent.}
: $\dlm\scA = \xbA$, where $\xbA$ is a \chcpl, if
there exist \chmp s $\xbAi\xrightarrow{\xbtfi}\xbA$ such that
they form commutative triangles
\xlee{ae1.10e1}
\cmtr{\xbfi}{\xbtfio}{\xbtfi}{\xbAi}{\xbAio}{\xbA}
\xeee
%
and $\lmii\yordhr{\Cnv{\xbtfi}} =\pinft$.
\end{definition}

In Section\rw{s:elhomcal} we prove the following homology versions
of standard theorems about limits
(Propositions\rw{pr:chlm},\rw{pr:lmch} and\rw{pr:lmun}):
\begin{theorem}
\label{th:lmt}
A \chsq\ $\scA$ has a limit if and only if it is \Cch.
\end{theorem}
\begin{theorem}
\label{th:lmt2}
The limit of a \chsq\ is unique up to homotopy equivalence.
\end{theorem}

\subsubsection{Infinite \cbr\ as a \JWp\ in the universal category}

For a tangle diagram $\xtau$ let
$\dtaus$ denote the categorification complex $\dtau$
with a degree shift proportional to the number $\crsv{\xtau}$ of crossings
in the diagram $\xtau$:
\xlee{ae1.10b1}
\dtaus = \dtau\tgrshv{\vthf}{-\vthf}{\vthf}^{\crsv{\xtau}}.
\xeee
%

In subsection\rw{ss:brchsq} we define a
\chsq\ of categorification complexes of \cbr s connected by special
\chmp s
%
%
%
%
\begin{multline}
\label{ae1.10c}
\xctBn =
\Big(
\cidbrn
\xratv{\mrfz}
\cbrons \xratv{\mrfo}
\cdots
\\
\cdots
\xrightarrow{\mrfmmo}
\cbrmns \xraov{\mrfm}
\cbrmons \xrightarrow{\mrfmo}\cdots\Big).
\end{multline}
We prove that
$\yordhr{\Cnv{\mrfm}}\geq2m(n-1) + 1$,
so
$\xctBn$ is a \Csq\ and by  Theorem\rw{th:lmt} it has a unique limit:
$\dlm\xctBn =\ctjwpn \in\dTLnp $.

\begin{theorem}
\label{th:enum}
The limiting complex $\ctjwpn$ has the following properties:
\begin{enumerate}

\item A composition of $\ctjwpn$ with cap- and \uptg s is contractible:
\ylee{eq:auc}
\ccapni \;\tcmp\ctjwpn \hteqv \ctjwpn\tcmp\; \ccupni\hteqv 0.
\yeee
\item The complex $\ctjwpn$ is idempotent with respect to tangle composition:
$\ctjwpn \tcmp\ctjwpn
\hteqv \ctjwpn$.

\end{enumerate}
\end{theorem}

We provide a glimpse into the structure of $\ctjwpn$.
A complex $\xbC$  in $\cTLn$ is called \emph{\odct} if
$\gidbrn$ never appears in \qcmds\ $\xCi$.
A complex $\xbC$ in $\cTLn$ is called \emph{\otbl} if the multiplicities
$\cjilam$ of \ex{ae1.8a} satisfy the property
\xlee{ae2.m1}
\cjmilam=0\qquad\text{if $i<0$, or $j<i$, or $j>2i$.}
\xeee
%


Let $\cbrmns\xraov{\xbtfm}\ctjwpn$ be \chmp s associated
with the limit $\dlm\xctBn = \ctjwpn$ in accordance with
Definition\rw{df:sqlm}.
\begin{theorem}
\label{th:cnpr}
There exist  \odct\ \otbl\ complexes $\wbCmn$ such that
$$\Cnv{\xbtfm}\hteqv\wbCmn\spshmnm\tgrsshv{1}{-1}.$$
\end{theorem}
\noindent
In other words, there exists a distinguished triangle
%
\ylee{ae2.m2}
\wbCmn\spshmnm\qsho \xratv{\chdlbtfm} \cbrmns \xrahv{\xbtfm} \ctjwpn
\xrightarrow{\;\;\;\;\;\;\;} \wbCmn\spshmnm\tgrsshv{1}{-1}
\yeee
so there is a presentation
%
\xlee{ae2.m4}
\ctjwpn \hteqv \CnBv{ \wbCmn\spshmnm\qsho
\xrahv{\chdlbtfm} \cbrmns},
\xeee
%
where the complex $\wbCmn$ is
\odct\ and \otbl.

At $m=0$ the formula\rx{ae2.m4} becomes
\xlee{ae2.m5}
\ctjwpn \hteqv \CnBv{ \wbCzn\qsho
\xrahv{\chdlbtfz} \cidbrn},
\xeee
where the complex $\wbCzn$ is \odct\ and \otbl.

%
%

Since $\wbCzn$ is \otbl, the complex $\Cnchdlbtfz$ is
also \otbl\ and consequently \qpb. Hence $\Kctjwpn$ is well-defined. Also
$\Kctjwpn\neq 0$, because it contains $\cidbrn$ with coefficient 1.
Theorem\rw{th:enum} indicates that
$\Kctjwpn$ satisfies
defining properties of the \JWp, hence by uniqueness it is the \JWp:
\begin{corollary}
The complex $\ctjwpn$ categorifies the \JWp\ in
$\cTLpinf$:
\xlee{eq:catKz}
\Kctjwpn = \jwpn.
\xeee
\end{corollary}




\section{Elementary homological calculus}
\label{s:elhomcal}

\subsection{Limits in the category of complexes}

Consider a category $\xChA$ of \chcpls\ associated with
an additive category $\xctA$.
An $i$-th \emph{\trnc} $\xtrniv{\xbA}$ of a \chcpl\ $\xbA$ is
the \chcpl\
$\xAmi\xrightarrow{\xdmi}\xAmio\rightarrow\cdots$.
An
\trnci\ of a \chmp\ $\xbf$ is defined similarly.

Define an \emph{\isor}
$\ysiobf$
of a chain map $\xbA\xrightarrow{\xbf}\xbB$  as
the largest number $i$ for which a truncated \chmp\ $\xtrniv{\xbf}$ is an
isomorphism of truncated complexes.

\begin{remark}
\label{rm:cnord}
Consider a \dstt\rx{ae1.ch2}.
If $\xbA\hteqv\Ohpm$, then $\ysiov{\idlbf}\geq m-1$.
\end{remark}

\begin{definition}
A \chsq\
$\scA = (\xbAo\xrightarrow{\xbfo}\xbAt\xrightarrow{\xbft}\cdots)$ is
\emph{\stblz} if
$\lim_{i\rightarrow\infty} \ysiobfi=\pinft$.
\end{definition}

\begin{definition}
A \chsq\ $\scA$ has a \tchlm\ $\chlm\scA=\xbA$ if there exist
\chmp s $\xbAi\xrightarrow{\xbtfi}\xbA$ such that
$\xbtfi = \xbtfio\,\xbfi$ and $\lmii \ysiorv{\xbtfi} = \pinft$.
\end{definition}

The following two theorems are easy to prove:
\begin{theorem}
A \chsq\ has a \tchlm\ if and only if it is \stblz.
If a \tchlm\ exists then it is unique.
\end{theorem}

%

\begin{theorem}
\label{pr:fnctchlm}
Suppose that $\chlm\scA = \xbA$.
Then for a complex $\xbB$ and \chmp s
$\xbAi\xrightarrow{\ybgi}\xbB$ such that $\ybgi = \ybgio\xbfi$,
 there exists a unique \chmp\ $\xbA\xrightarrow{\ybg}\xbB$
such that $\ybgi = \ybg\,\xbtfi$.
\end{theorem}

\begin{definition}
\label{df:chlmmp}
A sequence of \chmp s $\xbA\xrightarrow{\xbfz,\xbfo,\cdots}\xbB$
has a \tchlm\ $\lmii\xbfi = \xbf$ if for any $N$ there exists $N\p$
such that $\xtrnNv{\xbfi} = \xtrnNv{\xbf}$ for any $i\geq N\p$.
\end{definition}

\subsection{Limits in the homotopy category}

Definitions\rw{df:cauchy} and\rw{df:sqlm} extend the notion of a
\stblz\ \chsq\ and its limit to the homotopy category $\xKhA$:
obviously, a \stblz\ \chsq\ is \Cch, while $\chlm\scA=\xbA$ implies
$\dlm\scA=\xbA$.

\begin{proposition}
\label{pr:chlm}
A \Csq\ has a limit.
\end{proposition}
\proof
Consider a \Csq\ $\scA$. We construct a special \chcpl\
$\xbAs$ such that $\dlm\scA=\xbAs$ in accordance with
Definition\rw{df:sqlm}. Roughly speaking, we take $\xbAz$ and
attach to it the cones $\Cnbfi$ represented by homologically small
complexes, one by one. The result is a sequence $\scAs=\xbApz,\xbApo,\ldots$ of \stblz\
complexes $\xbApi$ such that $\xbApi\hteqv\xbAi$, and
$\xbAs=\chlm\scAs$ is their \tchlm.

Here is a detailed explanation.
By Definition\rw{df:cauchy}, there exist complexes $\ybCi$ such
that
\xlee{ae1.10a1}
\Cnv{\xbfi} \hteqv \ybCi[1] = \Ohpmi,\qquad\lmii m_i=+\infty.
\xeee
The complexes $\xbAi$, $\xbAio$ and $\ybCi$ form exact triangles:
\ylee{ae1.10a2}
\xymatrix{
\ybCi \ar[r]^-{\chdlbfi} & \xbAi \ar[r]^-{\xbfi} & \xbAio \ar[r] &
\ybCi[-1]
}
\yeee
and $\xbAio \hteqv \Cnv{\chdlbfi}$. We define recursively a new sequence
of complexes $\scAp = (\xbApz \xrightarrow{\idlbgz} \xbApo\xrightarrow{\idlbgo}\cdots)$ by
the relations $\xbApz = \xbAz$, $\xbApi\hteqv\xbAi$ and
$\xbApio = \Cnv{\ybgi}$, where the \chmp\
$\ybCi\xrightarrow{\ybgi}\xbApi$ is homotopy equivalent to the
\chmp\ $\chdlbfi$. In other words,
\xlee{ae1.10g1}
\xbApio = \Cnv{\ybCi\xrightarrow{\ybgi}
\underbrace{
\Cnv{\ybCimo\xrightarrow{\ybgimo}\cdots\xrightarrow{\ybgt}
\underbrace{
\Cnv{\ybCo\xrightarrow{\ybgo}
\underbrace{
\Cnv{\ybCz\xrightarrow{\chdlbfz}\xbAz
}}_{\xbApo}\;
}
}_{\xbApt}\;
}
}_{\xbApi}\;
}
\xeee

According to Remark\rw{rm:cnord}, $\ysiov{\idlbgi}\geq m_i$,
hence the sequence
$\scAp$ is \stblz, so there exists a chain limit
$\chlm\scAp = \xbAs$ and consequently there is a limit
$\dlm\scA=\xbAs$.\qed

Simply saying, the complex $\xbAs$ is an infinite \mtcn\ extension
of the complex\rx{ae1.10g1}:
\xlee{ae1.10g2}
\xbAs =
\cdots\xrightarrow{\ybgh}\Cnv{\ybCt
\xrightarrow{\ybgt}
\Cnv{\ybCo\xrightarrow{\ybgo}
\Cnv{\ybCz\xrightarrow{\chdlbfz}\xbAz
}
}
}.
\xeee

For our applications it is important to express $\Cnbtfz$ in terms
of complexes $\ybCi$. This can be done by rearranging
the infinite \mtcn\rx{ae1.10g2} with the help of associativity of
cone formation, which exists even within the category $\xChA$:
%
%
\xlee{ae1.10h1}
\xbAs = \Cnv{\ybtC\xrightarrow{\ybtg}\xbAz},\qquad
\ybtC =\cdots\xrightarrow{\ybht}
\Cnv{\ybCt[1]\xrightarrow{\ybho}\Cnv{\ybCo[1]\xrightarrow{\ybhz}\ybCz}},
\xeee
so that $\xbtfz \hteqv \idlv{\ybtg}$, and $\Cnbtfz\hteqv\ybtC[-1]$
is expressed in terms of complexes $\ybCi$
arranged into an infinite \mtcn\ $\ybtC$. Here is a more formal
statement.
\begin{theorem}
\label{th:rshfl}
For a \Csq\ $\scA$ there exists another \Csq\
$\yctC = (\ybCz
\xrightarrow{\ybhpz} \ybtCo\xrightarrow{\ybhpo}\cdots)$
and \chmp s
$\ybCi[1]\xrightarrow{\ybhi} \ybtCi$ such that
$\Cnv{\ybhi}=\ybtCio$, $\ybhpi = \idlv{\ybhi}$ and for the
limiting complex $\ybtC=\chlm\yctC$ there exists a \chmp\
$\ybtC\xrightarrow{\ybtg}\xbAz$ such that $\xbAs = \Cnv{\ybtg}$,
$\;\xbtfz \hteqv \idlv{\ybtg}$ and consequently $\Cnbtfz\hteqv\ybtC[-1]$.
\end{theorem}

\proof
%
%
Let us recall the associativity of cones in a general setting.
For a \chmp\ $\xbA\xrightarrow{\xbf}\xbB$,
a \chmp\ $\xbC\xrightarrow{\ybg} \Cnbf$
is a sum: $\ybg = \ybgA \oplus\ybgB$
\ylee{ae1.10f1}
\xymatrix{
 & \xbA\ar[d]^-{\xbf}
\\
\xbC \ar[ur]|{[1]}^-{\ybgA} \ar[r]_-{\ybgB}
& \xbB
}
\yeee
where $\xbC\xrightarrow{\ybgA}\xbA[-1]$ is a \chmp\ and
$\xbC\xrightarrow{\ybgB}\xbB$ is a \mmp. Now it is obvious
that
\xlee{ae1.10f2}
\Cnv{\xbC\xrightarrow{\ybg}\Cnv{\xbA\xrightarrow{\xbf}\xbB}}
=\Cnv{\Cnv{\xbC[1]\xrightarrow{\ybgA}\xbA}\xrightarrow{\ybgB\oplus\xbf}\xbB
}.
\xeee

We apply the associativity relation\rx{ae1.10f2} to
\mtcn s\rx{ae1.10g1} consecutively for $i=1,2,\ldots$ in order to rearrange
them, so that $\xbApi = \Cnv{\ybtCi\xrightarrow{\ybtgi}\xbAz}$,
while the complexes $\ybtCi$ and \chmp s $\ybtgi$ are defined
recursively: $\ybtCz=\ybCz$, $\ybtgz = \chdlbfz$, $\ybtCio = \Cnv{\ybhi}$,
while the \chmp s $\ybCi[1]\xrightarrow{\ybhi} \ybtCi$ and
$\ybtCio\xrightarrow{\ybtgio}\xbAz$
are defined by applying the associativity
relation\rx{ae1.10f2} to the  double cone on the second line of
the following equation:
\begin{equation}
\label{ae1.10f3}
\begin{split}
\xbApio & = \Cnv{\ybCi\xrightarrow{\ybgi}\xbApi}
\\
& = \Cnv{\ybCi\xrightarrow{\ybgi}\Cnv{\ybtCi\xrightarrow{\ybtgi}\xbAz} }
\\
& = \Cnv{\Cnv{\ybCi[1]\xrightarrow{\ybhi} \ybtCi
} \xrightarrow{\ybtgio}
\xbAz
}
\\
& = \Cnv{\ybtCio\xrightarrow{\ybtgio}\xbAz}.
\end{split}
\end{equation}
Distinguished triangles
\ylee{ae1.10f3}
\xymatrix{
\ybCi[1]\ar[r]^-{\ybhi}
&
\ybtCi \ar[r]^-{\idlbhi}
&
\ybtCio \ar[r]
&
\ybCi
}
\yeee
determine  \chmp s $\ybhpi=\idlbhi$ of the \chsq\  $\yctC = (\ybtCz
\xrightarrow{\ybhpz} \ybtCo\xrightarrow{\ybhpo}\cdots)$. By
Remark\rw{rm:cnord} it has a \tchlm: $\chlm\yctC =
\ybtC$, which is an infinite \mtcn:
\ylee{ae1.10f4}
\ybtC =\cdots\xrightarrow{\ybht}
\Cnv{\ybCt[1]\xrightarrow{\ybho}\Cnv{\ybCo[1]\xrightarrow{\ybhz}\ybCz}}.
\yeee
The \chmp s $\ybtCi\xrightarrow{\ybhpi}\ybtCio$ satisfy a relation
$\ybtgi = \ybtgio\,\ybhpi$, so by Theorem\rw{pr:fnctchlm} there
exists a unique \chmp\ $\ybtC \xrightarrow{\ybtg}\xbAz$ such that
$\ybtgi=\ybtg\,\ybhtpi$.
It is easy to show that $\xbAs = \Cnv{\ybtC\xrightarrow{\ybtg}\xbAz}$,
and $\xbtfz = \idlv{\ybtg}$, hence $\Cnbtfz\hteqv \ybtC$.\qed

It is easy to prove the analog of Theorem\rw{pr:fnctchlm}:
\begin{theorem}
\label{pr:spmp}
For a complex $\xbB$ and \chmp s
$\xbAi\xrightarrow{\ybgi}\xbB$ such that $\ybgi \hteqv \ybgio\xbfi$,
 there exists a unique (up to homotopy) \chmp\ $\xbAs\xrightarrow{\ybg}\xbB$
which forms commutative triangles
\xlee{ae1.10f1}
\cmtr{\xbtfi}{\ybg}{\ybgi}{\xbAi}{\xbAio}{\xbB}
\xeee
%
\end{theorem}

In order to complete the proof
of Theorems\rw{th:lmt} and\rw{th:lmt2},
we need two simple propositions. The first one establishes a
triangle inequality for homological orders of cones.
\begin{proposition}
If three \chmp s form a commutative triangle
\xlee{ae1.10c1}
\xymatrix{
\xbA \ar[r]_-{\xbfAB} \ar@/^1pc/[rr]^-{\xbfAC} &
\xbB \ar[r]_-{\xbfBC} &
\xbC
},\qquad \xbfAC\hteqv \xbfBC\xbfAB.
\xeee
then the homological orders of their cones satisfy the inequalities
\begin{align}
\label{eq:in1}
\yordch{\xbfAB}\geq \min\big(\yordch{\xbfAC},\yordch{\xbfBC}-1\big),
\\
\label{eq:in2}
\yordch{\xbfBC}\geq \min\big(\yordch{\xbfAB}+1,\yordch{\xbfAC}\big).
\end{align}
\end{proposition}
\proof
If \chmp s form a commutative triangle\rx{ae1.10c1}, then their cones form a \dstt
\ylee{ae1.10c2}
\Cnv{\xbfAB}\xrightarrow{\ybgo} \Cnv{\xbfAC} \xrightarrow{\ybgt} \Cnv{\xbfBC}
\xrightarrow{\ybgh}\Cnv{\xbfAB}[1],
\yeee
so the first inequality follows from the relation
$\Cnv{\xbfAB}\hteqv\Cnv{\ybgt}[1]$ and the second inequality
follows from the relation $\Cnv{\xbfBC} \hteqv \Cnv{\ybgo}$.\qed

The second proposition says that if a complex is homologically
infinitely small then it is contractible.
\begin{proposition}
\label{pr:ismc}
If $\yordh{\xbA} = +\infty$ then $\xbA$ is contractible.
\end{proposition}
\proof Since $\yordh{\xbA} = +\infty$, there exist complexes
$\xbAi\hteqv\xbA$, such that $\xbAi=\Ohpmi$ and $\lmii m_i =
+\infty$. Consider a sequence of \chmp s establishing homotopy
equivalence between the complexes:
\ylee{ae1.10c3}
\xymatrix@C=0.5cm{
\xbA \ar@<0.6ex>[r]^-{\xbfz}
&
\xbAo
\ar@<0.3ex>[l]^-{\ybgz}
\ar@<0.6ex>[r]^-{\xbfo}
&
\xbAt
\ar@<0.3ex>[l]^-{\ybgo}
\ar@<0.6ex>[r]
&
\cdots
\ar@<0.3ex>[l]
\ar@<0.6ex>[r]
&
\xbAi
\ar@<0.3ex>[l]
\ar@<0.6ex>[r]^-{\xbfi}
&
\xbAio
\ar@<0.3ex>[l]^-{\ybgi}
\ar@<0.6ex>[r]
&
\cdots
\ar@<0.3ex>[l]
}
,\qquad
\yIdAi-\ybgi\xbfi =
\atcmv{\xbdi}{\xbhi},
\yeee
where $\yIdAi$ is the identity
\chmp\ of $\xbAi$, while $\xbAi[1]\xrightarrow{\xbhi}\xbAi$ is a homotopy \chmp\ (it does
not commute with the chain differential $\xbdi$ in the complex $\xbAi$).

Consider the compositions $\xbhfi = \xbfi\cdots\xbfo\xbfz$,
$\xbhgi = \ybgz\ybgo\cdots\ybgi$ and $\xbhhi =
\xbhgimo\,\xbhi\,\xbhfimo$. It is easy to see
that $\xbhgimo\,\xbhfimo - \xbhgi\,\xbhfi = \atcmv{\xbd}{\xbhhi}$,
hence $\yIdA - \xbhgi\,\xbhfi = \atcmv{\xbd}{\xbchi}$, where
$\xbchi = \xbhhz + \xbhho +\cdots + \xbhhi$.
There is a limit (\cf Definition\rw{df:chlmmp}) $\lmii\xbchi = \xbch$, while
$\lmii\xbhgi\,\xbhfi = 0$, hence $\yIdA = \atcmv{\xbd}{\xbch}$
which means that the complex $\xbA$ is contractible.
\qed

\begin{proposition}
\label{pr:lmch}
If a \chsq\ $\scA$ has a limit, then it is \Cch.
\end{proposition}
\proof
The inequality\rx{eq:in1} applied to the commutative
triangle\rx{ae1.10e1} says that
$$\yordch{\xbfi}\geq
\min\lrbc{\zordch{\xbtfi},\zordch{\xbtfio}-1},$$
hence the limit $\lmii\zordch{\xbtfi} = +\infty$ implies the \Cch\
property of $\scA$.

\begin{proposition}
\label{pr:lmun}
If a \chsq\ $\scA$ has a limit then it is unique.
\end{proposition}
\proof
If $\scA$ has a limit then by Proposition\rw{pr:lmch} it is \Cch.
Hence it has a special limit $\xbAs$ described in the
proof of Proposition\rw{pr:chlm}. If $\scA$ has another limit
$\xbAp$ with \chmp s $\xbAi\xrightarrow{\xbtfpi}\xbAp$ then by
Theorem\rw{pr:spmp} there is a \chmp\
$\xbAs\xrightarrow{\ybg}\xbAp$ with commutative
triangles\rx{ae1.10f1}. The inequality\rx{eq:in2} says
\ylee{ae1.10c3}
\yordch{\ybg}
\geq \min\big(\zordch{\xbtfi}+1,\zordch{\ybgi}\big).
\yeee
Since both cones in the \rhs become homologically infinitely small
at $i\rightarrow +\infty$, the cone $\Cnv{\ybg}$ is also
homologically infinitely small. Then Proposition\rw{pr:ismc} says
that $\Cnv{\ybg}$ is contractible and as a result
$\xbAp\hteqv\xbAs$.\qed

We end this section with a theorem which follows easily from
Definition\rw{df:sqlm}.
\begin{theorem}
\label{th:zl}
If a \chsq\ $\scA$ satisfies the property $\lmii\yordh{\xbAi} =
+\infty$ then its limit is contractible: $\dlm\scA = 0$.
\end{theorem}

\section{A \chsq\ of
categorification complexes of  \cbr s} 
\label{s:cbr}
\subsection{A special categorification complex of a \ngbr}

Let $\xsgi$ denote an elementary negative $n$-strand braid:
\ylee{ae2.1}
\xsgi=\xygraph{
!{0;/r1.5pc/:}
[r(0.25)u(0.5)]
!{\xcapv@(0)}
[u(0.5)r(1)]
*{\cdots}
[r(01)u(0.5)]
!{\xcapv@(0)}
[r(0.5)u(1)]
!{\vcross}
[r(1.5)u(1)]
!{\xcapv@(0)}
[u(0.5)r(1)]
*{\cdots}
[r(01)u(0.5)]
!{\xcapv@(0)}
[d(0.5)l(3.5)]
*{\scriptstyle{i}}
[r(1)]
*{\scriptstyle{i+1}}
[l(3.5)]
*{\scriptstyle{1}}
[r(6)]
*{\scriptstyle{n}}
}
\yeee

\begin{theorem}
\label{th:prop}
If an $n$-strand braid $\brb$ can be presented as a product of elementary
negative braids: $\brb = \xsgiv{k}\cdots\xsgiv{2}\xsgiv{1}$, then
its categorification complex has a special presentation $\cbrbs$:
\xlee{aea2.1}
\cbrbas =
\Big(\ldots\rightarrow\xCmt\rightarrow\xCmo\rightarrow\cidbrn\Big)
\xeee
such that the complex
\xlee{aea2.2}
\xbC = (\ldots\rightarrow\xCmt\rightarrow\xCmo)\tgrsshv{-1}{1}
\xeee
is
\odct\ and \otbl.



\end{theorem}

More abstractly, the theorem says that there exists a
\odct\ and \otbl\ complex $\xbC$ and a \chmp\
$\xbC\rightarrow\cidbrn$ such that $\cbrba \hteqv
\CnBv{\xbC\qsho\rightarrow\cidbrn}$.

\begin{remark}
\label{rm:spbl}
Theorem\rw{th:prop} implies that the special complex
$\cbrbas$ is  \otbl.
\end{remark}

\pr{Theorem}{th:prop}
Let $\xlam$ be a \TL\ \ttngnn. Fix $i$ such that $1\leq i\leq n-1$. If the
composition $\gcapni\tcmp\xlam$ does not contain a disjoint circle,
then, in accordance with \ex{ae1.7},
we define the special categorification complex of $\xsgi\tcmp\xlam$ as
%
\xlee{ae2.3}
\symcatps{\xsgi\tcmp\xlam}  =
\Big(\symbcat{\xUni\tcmp\xlam}\tgrshv{1}{-1}{1}
\rightarrow \dlam \Big)
\xeee
%
If $\gcapni\tcmp\xlam$ contains a disjoint circle, then $\xlam$ must
have the form $\gcupni\tcmp\xlamp$. Hence
$\xsgi\tcmp\xlam=\xsgi\tcmp\gcupni\tcmp\xlamp$. The tangle $\xsgi\tcmp\gcupni$
is the same as $\gcupni$ with a positive framing twist, so
according to \ex{ae1.8},
$\bsymcat{\xsgi\tcmp\gcupni} = \ccupni \tgrshv{\vthh}{-\vthf}{-\vthf}$.
Hence in this case we define the special categorification complex
of $\xsgi\tcmp\xlam$ simply as shifted $\dlam$:
\xlee{ae2.4}
\symcatps{\xsgi\tcmp\xlam} = 
\dlam
\tgrshv{2}{-1}{0}.
\xeee
%

Now we define a recursive algorithm for constructing the complex
$\cbrbas$. For $\brb = \gidbrn$ we define $\cbrbas = \cidbrn$. Let
$\brb = \xsgiv{k}\tcmp\cdots\tcmp\xsgiv{1}$ and
suppose that we have defined its special complex $\cbrbas$. We
define the special categorification complex of a
braid $\brbp=\xsgikpo\tcmp\brb$ by applying the rules\rx{ae2.3}
and\rx{ae2.4} to all constituent tangles $\xlam$ in the complex
$\cbrbs$ (see the formula\rx{ae1.8a}).

We prove the properties of $\cbrbas$ by induction over $k$.
If $k=0$ then $\brb = \gidbrn$ and the properties of $\cbrbas$ are
obvious.

Suppose that the special categorification complex
$\cbrbas$ of a braid $\brb = \xsgiv{k}\tcmp\cdots\tcmp\xsgiv{1}$
has the form\rx{aea2.1} and its tail\rx{aea2.2} is \odct\ and
\otbl.
Consider
a longer braid
$\brbp=\xsgikpo\tcmp\brb$. The object $\cidbrn$ may appear in $\cbrbpas$ if
and only if $\xlam=\gidbrn$ and the extra crossing $\xsgikpo$ is
\nsplcd\ in \ex{ae2.3}, hence
$\cbrbpas$ has the form\rx{aea2.1} and its tail\rx{aea2.2} is
\odct.

If the negative crossing $\xsgikpo$ is composed with the head
$\cidbrn$ of the complex\rx{aea2.1}, then the formula\rx{ae2.3}
applies and the tangle $\xUv{n}{i_k+1}$ appearing in the tail of
$\cbrbpas$ satisfies the property\rx{ae2.m1}.


If the crossing $\xsgikpo$ is composed with a
\TL\ tangle $\xlam$ from the $(-i)$-th \qcmd\ $\xCmi$ (see \ex{ae1.8a})
in the tail of the complex $\cbrbas$ with the $q$-degree shift $j$
satisfying the inequality $i-1 \leq j-1 \leq 2(i-1)$, then the
shifted objects in the \rhs of \eex{ae2.3} and\rx{ae2.4}
also satisfy this inequality.\qed

The picture\rx{ae1.10p} presents a \cbr\ as a product of negative
crossings, hence
\begin{corollary}
\label{cr:otbl}
A \cbr\ $\gbrmn$ has a special \otbl\ categorification complex
$\cbrmnps$. In particular, for $m=1$
\xlee{ae2.5}
\cbronps
 = \CnBv{\xbCon\qsho\rightarrow\cidbrn},
\xeee
where the complex
$\xbCon$ is \odct\ and \otbl.
\end{corollary}

\subsection{Special morphisms between \cbr\ complexes}
\label{ss:brchsq}

Relation\rx{ae2.5}  indicates that there is a \dstt\
%
%
%
\ylee{ae2.6}
\xbCon\qsho \longrightarrow
\cidbrn \xratv{\mrfo}
\cbrons \longrightarrow
\xbCon\tgrsshomo
\yeee
and
\xlee{ae2.6a}
\Cnv{\mrfo} \hteqv \xbCon\tgrsshomo.
\xeee
%
%
Composing both sides of the morphism $\mrfo$ with
the \cbr\ complex
$\cbrmns$,
we get a morphism
%
\ylee{ae2.7}
\cbrmns \xratv{\mrfm}\cbrmons
\yeee
such that
\xlee{ae2.8}
\Cnv{\mrfm} \hteqv \Cnv{\mrfo}\tcmp\cbrmns.
\xeee
\begin{theorem}
\label{th:2.1}
The cone\rx{ae2.8} can be presented by a shifted complex
\ylee{ae2.9}
\Cnv{\mrfm} \hteqv \xbCmn \tgrsshnontm\tgrsshomo,
\yeee
%
such that $\xbCmn$ is \odct\ and \otbl.
\end{theorem}

The proof is based on a simple geometric lemma:
\begin{lemma}
\label{l:1}
For $n\geq 2$, the following two compositions of framed tangles are isotopic:
%
\xlee{ae2.b}
\gcapni \tcmp\;\gbron = \gbronmt\;\tcmp\gcapnit
\xeee
where $\gcapnit$ is the tangle $\gcapni$ with double framing twist
on the cap:
\ylee{ae2.10}
\gcapnik=
\xygraph{
!{0;/r1.5pc/:}
[r(0.25)u(0.5)]
!{\xcapv@(0)}
[u(0.5)r(1)]
*{\cdots}
[r(01)u(0.5)]
!{\xcapv@(0)}
[r(0.5)]
!{\vcap}
[r(1.5)u(1)]
!{\xcapv@(0)}
[u(0.5)r(1)]
*{\cdots}
[r(01)u(0.5)]
!{\xcapv@(0)}
[d(0.5)l(3.5)]
*{\scriptstyle{i}}
[r(1)]
*{\scriptstyle{i+1}}
[l(3.5)]
*{\scriptstyle{1}}
[r(6)]
*{\scriptstyle{n}}
[l(3)u(1)]
*{\symfr}
[u(0.5)]
*{\scriptstyle{k}}
}
\yeee
\end{lemma}
\proof
This lemma is geometrically obvious: a cap on a pair of adjacent strands slides down
through the \cbr\ to the
bottom.\qed

An immediate corollary of \ex{ae2.b} and of the framing change
formula\rx{ae1.8} is the following relation:
%
\xlee{ae2.11}
\bsymcats{\gcapni \tcmp\gbrmn} \hteqv \bsymcats{\gbrmnmt\;\tcmp\gcapni}
\tgrsshnontm.
\xeee


In order to prove Theorem\rw{th:2.1}, we need three simple
propositions.
For a positive integer $d\leq \frac{n}{2}$,
let $\stI=(i_1,\ldots,i_d)$ be a sequence of positive integer
numbers such that $i_k<n-2k+2$ for all $k\in\{1,\ldots,d\}$.
A \emph{\aptg} $\gcapnI$
is a $(n,n-2d)$-tangle which
can be presented as a product of $d$ tangles of the form
$\gcapv{m}{i}{0.75}$:
%
\ylee{aes2.1a}
\gcapnI =
\gcapv{n-2d+2}{i_d}{2}\tcmp\cdots\tcmp
\gcapv{n-2}{i_2}{1.5}
\tcmp
\gcapv{n}{i_1}{0.75}.
\yeee
A \emph{\uptg} $\gcupnI$ is defined similarly:
%
\ylee{aes2.2a}
\gcupnI =
\gcupv{n}{i_1}{-0.75}
\tcmp
\gcupv{n-2}{i_2}{-1.25}
\tcmp
\cdots
\tcmp
\gcupv{n-2d+2}{i_d}{-2.25}
.
\yeee
The first proposition is obvious:
\begin{proposition}
\label{pr:3}
Every \TL\ \ttngnn\ $\xlam$ has a presentation
\xlee{aes2.3a}
\xlam = \gcupnIp\tcmp
\gcapnI,\qquad
\nI=\nIp.
\xeee
\end{proposition}
The number $\cpdlam=\nI=\nIp$ is determined by the tangle $\xlam$
and we call it  the \apdg\ (or \updg) of $\xlam$.

The second proposition is also obvious:
\begin{proposition}
\label{pr:1}
If at least one of two complexes $\xbCo$ and $\xbCt$ in $\dTLn$ is
\odct\ then their composition $\xbCo\tcmp\xbCt$ is \odct.
\end{proposition}
Note that even if
both complexes are \otbl,  then their composition is not necessarily
\otbl. Indeed, in contrast to the homological degree,
the \qdgr\ is not additive with respect to the composition of
tangles:
%
if  the composition of two \TL\ tangles contains a disjoint
circle then the \qdgr\ shifts of the rule\rx{ae1.01}
violate additivity. However, if the upper tangle in the composition has no caps or the
lower tangle has no cups then no circles are created and the
\otblc\ is maintained:
\begin{proposition}
\label{pr:2}
If a complex $\xbC$ in $\dTLv{n-2\cpdlam}$ is \otbl, then the complexes
$\ccupnI\tcmp\xbC$
and
$\xbC\tcmp\ccapnI$
are also \otbl.
\end{proposition}


\pr{Theorem}{th:2.1}
In order to construct the \odct\ and \otbl\ complex $\xbCmn$, we
use the presentation
\xlee{ae2.12}
\Cnv{\mrfm} \hteqv
\xbCon\tcmp\cbrmns\tgrsshomo,
\xeee
which follows from \eex{ae2.8} and\rx{ae2.6a}.
We construct $\xbCmn$ by
simplifying the complexes
$\bsymcats{\xlam\tcmp\gbrmn}$
for \TL\ \ttngnn s $\xlam$
appearing in the \qcmds\ of $\xbCon$, with the help of the
relation\rx{ae2.11}, thus creating necessary degree shifts, and then
using Corollary\rw{cr:otbl} which says that emerging \cbr s have \otbl\ categorification
complexes.




Let
$\dlam\tgrsshjmi$
be an object appearing in the $(-i)$-th \qcmd\ of
$\xbCon$ with a non-zero multiplicity (we made its homological degree explicit by
including $-i$ in the shift).
%
We apply \ex{ae2.11} consequently to every cap $\gcapnk$ appearing
in the \aptg\ $\gcapnI$ in the presentation\rx{aes2.3a} of
$\xlam$:
\begin{multline}
\label{ae2.13}
\dlam\tgrsshjmi\tcmp\cbrmns
\\
\hteqv
\Bigg(
\ccupnIp\tcmp
\bsymcatps{\gbrv{m}{n-2\cpdlam}{2.5}}\tcmp\ccapnI
\tgrsshv{\alm}{-\blm}^{2m} \tgrsshjmi\Bigg)
\tgrsshnontm,
\end{multline}
where
\xlee{ae2.14}
\alm = \sum_{k=1}^{\cpdlam-1}(n-2k)
,\qquad
\blm = \sum_{k=1}^{\cpdlam-1}(n-2k-1).
\xeee

The object $\dlam$ comes from the \odct\ complex $\xbCon$,
hence $\cpdlam>0$ and
the complex
in big brackets in the \rhs of \ex{ae2.13} is \odct\ in view of
Proposition\rw{pr:1}. Proposition\rw{pr:2} implies that the complex
$\ccupnIp\tcmp
\bsymcatps{\gbrv{m}{n-2\cpdlam}{2.5}}\tcmp\ccapnI$
is also \otbl. Since $\dlam$
comes from the \otbl\ complex $\xbCon$, the numbers $i$ and $j$
satisfy inequalities $i\geq 0$ and $i\leq j\leq 2i$. It is easy to check that the numbers
$\alm$ and $\blm$ of \ex{ae2.14} satisfy the same inequalities:
$\blm\geq 0$, $\blm\leq \alm \leq 2\blm$, hence the complex in big
brackets in the \rhs of \ex{ae2.13} is also \otbl. The
complex
$\xbCon\tcmp\cbrmns$
in the \rhs of \ex{ae2.12} is
composed of complexes\rx{ae2.13}, hence Theorem\rw{th:2.1} is proved.
\qed

\section{A categorified \JWp}
\label{s:prfs}

Consider the \chsq\rx{ae1.10c}. Theorem\rw{th:2.1} implies that
$\yordhr{\Cnv{\mrfm}} \geq 2m(n-1)+1$,
hence $\xctBn$ is
\Cch\ and it has a unique limit $\dlm\xctBn =\ctjwpn \in\dTLnp$.

Now we prove Theorems\rw{th:enum} and Theorem\rw{th:cnpr} which
describe the properties of $\ctjwpn$.

\pr{Theorem}{th:cnpr}
Consider the \chsq\rx{ae1.10c} truncated from below:
\ylee{eq:np1}
\xctBmn =
\Big(
\cbrmns \xraov{\mrfm}
\cbrmons \xrightarrow{\mrfmo}\cdots\Big)\longrightarrow\ctjwpn.
\yeee
According to Theorem\rw{th:rshfl}, the limit $\ctjwpn$ can be
presented as a cone\rx{ae2.m4}, where $\wbCmnp = \wbCmn\spshmnm$
and $\wbCmn$ is an infinite
\mtcn:
\begin{multline}
\nonumber
\wbCmn =\cdots\rightarrow\Cnv{\xbCvn{m+k}\tgrsshv{2kn}{-2k(n-1)+1}
\rightarrow
\cdots
\\
\cdots
\rightarrow
\Cnv{
\xbCvn{m+1}\tgrsshv{2n}{-2n+3}\rightarrow\xbCmn}
}
\end{multline}
with \odct\ and \otbl\ complexes $\xbCmn$ introduced in
Theorem\rw{th:2.1}. Hence the complex $\wbCmn$ itself is \odct\ and
\otbl.\qed

%

\pr{part 1 of Theorem}{th:enum}
The tangle composition with $\ccapni$ is a `continuous'
functor, that is, it can be applied to both sides of
$\dlm\xctBn = \ctjwpn$, hence $ \dlm\;
\ccapni\tcmp\xctBn = \ccapni\tcmp\ctjwpn$. According to \ex{ae2.11},
\begin{equation}
\nonumber
\begin{split}
\ccapni\tcmp\xctBn & = \Big(\ccapni\tcmp\cidbrn\rightarrow \cdots\rightarrow
\ccapni\tcmp\cbrmns\rightarrow\cdots\Big)
\\
& = \Big( \ccapni\rightarrow\cdots\rightarrow
\cbrmnmts\tcmp\ccapni
\spshmnm
\rightarrow\cdots
\Big).
\end{split}
\end{equation}
Since
\ylee{ae3.3}
\yordhb{\cbrmnmts\tcmp\ccapni
\spshmnm}
= 2m(n-1)\xrightarrow[m\rightarrow +\infty]{}+ \infty,
\yeee
according
to Theorem\rw{th:zl}, $\dlm\ccapni\tcmp\xctBn=0$, hence
$\ccapni\tcmp\ctjwpn$ is contractible.\qed

\begin{remark}
The contractibility of $\ctjwpn\tcmp\ccupni$ is proved similarly.
\end{remark}

\begin{corollary}
\label{cr:odct}
If $\xbC$ is a \odct\ complex in $\dTLnp$, then $\xbC\tcmp\ctjwpn$ is
contractible.
\end{corollary}

\pr{part 2 of Theorem}{th:enum}
According to
\ex{ae2.m5},
\begin{multline}
\nonumber
\ctjwpn\tcmp\ctjwpn \hteqv \CnBv{\wbCzn\qsho\longrightarrow\cidbrn}\tcmp\ctjwpn
\\
\hteqv\CnBv{\wbCzn\tcmp\ctjwpn\qsho\longrightarrow\cidbrn\tcmp\ctjwpn}
\hteqv\ctjwpn,
\end{multline}
where we used the fact that $\wbCzn$ is \odct\ and Corollary\rw{cr:odct} in order to establish the last
homotopy equivalence.\qed

\pr{Theorem}{th:alg}
The complexes $\ctjwpn$, $\wbCmn$ and $\cbrmns$ in \ex{ae2.m4} are
\otbl, hence they are \qpb\ and their $\Kz$ images are
well-defined. Applying $\Kz$ to this equation and taking into account \ex{eq:catKz} and
the definition\rx{ae1.10b1}, we find
\ylee{ae3.5}
\jwpn = q^{\vthf mn(n-1)}\abrmn - q^{2mn+1}\Kz(\wbCmn).
\yeee
The complex $\wbCmn$ is \otbl, so $\yordq{\Kz(\wbCmn)}\geq 0$ and
by Definition\rw{df:qlm} there is a limit\rw{ae1.9}.\qed

\section{The other projector}
A dual of an \ttngmn\ $\xtau$ is the \ttngnm\ tangle
$\xtaud$ which is its mirror image. Duality extends to an
isomorphism $\cTL \xrightarrow{\dsym} \cTLop$ combined with the
isomorphism of the ground ring $\Zqqi\xrightarrow{\dsym}\Zqqi$, such that
$\dsymv{q} = q^{-1}$. Furthermore, duality establishes an
isomorphism $\cTLpinf\xrightarrow{\dsym}\cTLminfop$, where
$\cTLminf$ is the analog of $\cTLpinf$ constructed over the ring
$\Zsqiq$ of Laurent series in $q^{-1}$.

Since the relations\rx{ae1.4} and\rx{ae1.4a} are dual to each other,
while the idempotency condition $\jwpn\tcmp\jwpn=\jwpn$ is duality
invariant, the uniqueness of the \JWp\ implies that it is duality
invariant: $\dsymv{\jwpn} = \jwpn$. Hence the corresponding
idempotents $\jwpnp\in\cTLpinf$ and $\jwpnm\in\cTLminf$ are also
dual to each other: $\jwpnm = \dsymv{(\jwpnp)}$. Taking the dual
of \ex{ae1.9} we find that $\jwpnm$ is the limit of \cbr s with
high positive (that is, \cclckw) twist:
\xlee{ae1.9b}
\lim_{m\rightarrow+\infty} q^{-\vthf mn(n-1)}\aobrmn = \jwpnm,
\xeee
because $\dsymv{\Big(\gbrmn\Big)} = \gobrmn$.


Duality extends to a contravariant
equivalence functor $\dTL\xrightarrow{\dsym}\dTLop$, where
$\dTLop$ is the same category as $\dTL$, except that the
composition of tangles is performed in reversed order. The functor
$\dsym$ also switches the signs of all three gradings of $\dTL$.
Applying the duality functor to the construction of $\ctjwpn$ we
find that there exists a \chsq
\begin{multline}
\label{ae1.10f}
\xctBnd =
\Big(
\cidbrn
\rxratv{\dmrfz}
\cobrons \rxratv{\dmrfo}
\cdots
\\
\cdots
\xleftarrow{\dmrfmmo}
\cobrmns \rxraov{\dmrfm}
\cobrmons \xleftarrow{\dmrfmo}\cdots\Big),
\end{multline}
where $-\xspsh$ denotes the grading shift which is opposite
to\rx{ae1.10b1}.
The system\rx{ae1.10f} is
dual to the system\rx{ae1.10c}
and it has an inductive limit $\ilm \xctBnd =\ctjwmn $, which satisfies
projector properties
\ylee{ae1.10f1}
\ctjwmn\tcmp \ctjwmn \hteqv \ctjwmn,\qquad
\ccapni \tcmp\ctjwmn \hteqv \ctjwmn\tcmp \ccupni\hteqv 0
\yeee
and has a presentation
\ylee{ae1.10f2}
\ctjwpn \hteqv \CnBv{ \dsymv{\wbCmn}\ospshmnm\qshmo
\xrahv{\dsymv{\chdlbtfm}} \cobrmns},
\yeee
%
where the complex $\wbCmn$ is
\odct\ and \otbl. In particular, at $m=0$ we get the dual of
presentation\rx{ae2.m5}:
\ylee{ae1.10f3}
\ctjwmn \hteqv \CnBv{ \dsymv{\wbCzn}\qshmo
\xrahv{\dsymv{\chdlbtfz}} \cidbrn},
\yeee
where the complex $\wbCzn$ is \odct\ and \otbl.





\begin{bibdiv}
\begin{biblist}


\bib{BN1}{article}
{
author={Bar-Natan, Dror}
title={Khovanov's homology for tangles and cobordisms}
journal={Geometry and Topology}
volume={9}
year={2005}
pages={1443-1499}
eprint={arXiv:math.GT/0410495}
}

\bib{CK}{misc}
{
author={Cooper, Ben}
author={Krushkal, Slava}
title={Categorification of the Jones-Wenzl projectors}
note={in preparation}
}

\bib{FSS}{misc}
{
author={Frenkel, Igor}
author={Stroppel, Catharina}
author={Sussan, Joshua}
note={in preparation}
}

\bib{Kh1}{article}
{
author={Khovanov, Mikhail}
title={A categorification of the Jones polynomial}
journal={Duke Journal of Mathematics}
volume={101}
year={2000}
pages={359-426}
eprint={arXiv:math.QA/9908171}
}

\bib{KR1}{article}
{
author={Khovanov, Mikhail}
author={Rozansky, Lev}
title={Matrix factorizations and link homology}
journal={Fundamenta Mathematicae}
volume={199}
year={2008}
pages={1-91}
eprint={arXiv:math.QA/0401268}
}

\bib{KhRS}{misc}
{
author={Khovanov, Mikhail}
author={Rozansky, Lev}
note={in preparation}
}

\bib{St}{article}
{
author={Stosic, Marko}
title={Homological thickness and stability of torus knots}
journal={Algebraic and Geometric Topology}
volume={7}
year={2007}
pages={261-284}
eprint={arXiv:math.GT/0511532}
}

\end{biblist}
\end{bibdiv}
\end{document}
